\numberwithin{equation}{section}
\DeclareMathOperator{\ind}{ind}
\DeclareMathOperator{\im}{im}
\DeclareMathOperator{\id}{id}
\DeclareMathOperator{\sign}{sign}
\newcommand{\NN}{\mathbb{N}}
\newcommand{\RR}{\mathbb{R}}
\newcommand{\ZZ}{\mathbb{Z}}
\newcommand{\bRR}{\overline{\mathbb{R}}}
\DeclareMathOperator{\codim}{codim}
\newcommand{\LL}{\mathcal{L}}
\newcommand{\WW}{\mathcal{W}}
\newcommand{\XX}{\mathfrak{X}}
\newcommand{\XXb}{\mathfrak{X}^{\text{back}}}
\newcommand{\pib}{\pi^{\text{back}}}
\newcommand{\XXtilde}{\tilde{\mathfrak{X}}}
\newcommand{\algint}{\#_{\text{alg}}}
\newcommand{\orint}{\#_{\text{or}}}
\DeclareMathOperator{\Crit}{Crit }
\newcommand{\Eunder}{\underline{E}}
\newcommand{\fatX}{\mathbf{X}}
\newcommand{\fatY}{\mathbf{Y}}
\newcommand{\fatZ}{\mathbf{Z}}
\newcommand{\Wtilde}{\widetilde{W}}
\newcommand{\Mtilde}{\widetilde{\mathcal{M}}}
\newcommand{\gammaunder}{\underline{\gamma}}
\newcommand{\Cloc}{C^\infty_{\mathrm{loc}}}
\newcommand{\vout}{v_{\mathrm{out}}}
\newcommand{\vin}{v_{\mathrm{in}}}
\newcommand{\RTree}{\mathrm{RTree}}
\newcommand{\BinTree}{\mathrm{BinTree}}
\newcommand{\Split}{\mathrm{split}}
\newcommand{\pdd}[2]{\frac{\partial #1}{\partial #2}}
\newcommand{\MM}{\mathcal{M}}
 \newcommand{\PP}{\mathcal{P}}
\newcommand{\BB}{\mathcal{B}}
\newcommand{\GG}{\mathcal{G}}
\newcommand{\bWW}{\overline{\mathcal{W}}}
\newcommand{\Acal}{\mathcal{A}}
\newcommand{\bAcal}{\overline{\mathcal{A}}}
\newcommand{\morseind}{\mu_{\text{Morse}}}
\newcommand{\zero}{\mathbb{O}}
\newcommand{\tv}{\pitchfork}
\theoremstyle{plain}
\newtheorem{theorem}{Theorem}[section]
\newtheorem{prop}[theorem]{Proposition}
\newtheorem{lemma}[theorem]{Lemma}
\newtheorem{cor}[theorem]{Corollary}  
\newtheorem*{theorem*}{Theorem}
\theoremstyle{definition}
\newtheorem{definition}[theorem]{Definition}
\newtheorem*{definition*}{Definition}
\theoremstyle{remark}
\newtheorem{remark}[theorem]{Remark}
\newtheorem{example}[theorem]{Example}
\begin{document}
\setlength{\parindent}{0cm}
\setcounter{tocdepth}{1}

\title{Perturbed gradient flow trees and $A_\infty$-algebra structures on Morse cochain complexes}
\author{Stephan Mescher}
\address{School of Mathematical Sciences \\ Queen Mary University of London \\ Mile End Road \\ London E1 4NS}
\email{s.mescher@qmul.ac.uk}

\date{\today}

\begin{abstract}
We elaborate on an idea of M. Abouzaid of equipping the Morse cochain complex of a smooth Morse function on a closed oriented manifold with the structure of an $A_\infty$-algebra. This is a variation on K. Fukaya's definition of Morse-$A_\infty$-categories for closed oriented manifolds involving families of Morse functions. The purpose of this article is to provide a coherent and detailed treatment of Abouzaid's approach including a discussion of all relevant analytic notions and results.
\end{abstract} 

\maketitle

\tableofcontents

\section*{Introduction}  
  
\subsection*{Morse (co)homology}  
  
The upshot of the construction of Morse homology is that with every complete finite-dimensional manifold $M$ and every Morse function $f:M \to \RR$ which is bounded from below one can associate a chain complex of free abelian groups generated by the critical points of $f$. We denote this complex by
  \begin{equation*}
   C_*(f) := \bigoplus_{x \in \Crit f} \ZZ \cdot x \ , \quad \text{where} \ \ C_j(f) = \bigoplus_{\morseind(x,f)=j} \ZZ \cdot x \ .
  \end{equation*}
  Here, $\Crit f \subset M$ denotes the set of critical points of a Morse function and $\morseind$ denotes the Morse index of a critical point. Since the Morse index is bounded from below by zero and from above by the dimension of $M$, the group $C_j(f)$ is only non-zero if $j \in \{0,1,\dots,n\}$. 
  
  A differential on $C_*(f)$ is defined via counting elements of zero-dimensional moduli spaces of negative gradient flow trajectories. More precisely, for $x,y \in \Crit f$ we define 
  \begin{equation*} 
   \MM(x,y,g) := \Bigl\{\gamma:\RR \stackrel{C^\infty}{\longrightarrow} M \ \Big| \ \dot{\gamma} = - (\nabla^g f) \circ \gamma, \ \lim_{s \to -\infty} \gamma(s) = x , \ \lim_{s \to +\infty}\gamma(s) = y \Big\} \ , 
  \end{equation*} 
  where $g$ is a Riemannian metric on $M$ and $\nabla^g f$ denotes the gradient vector field of $f$ with respect to $g$. It is shown in \cite{Schwarz} that for generic choice of $g$, this space is a smooth manifold of dimension $\mu(x)-\mu(y)$ for any such $x$ and $y$. Moreover, the reparametrization $\RR \times \MM(x,y,g) \to \MM(x,y,g)$, $(s,\gamma) \mapsto \gamma(\cdot+s)$, is a smooth, free and proper $\RR$-action on $\MM(x,y,g)$ whenever $x \neq y$, implying that the quotient space by this group action, the space of unparametrized flow lines from $x$ to $y$ 
  \begin{equation*}
   \widehat{\MM}(x,y,g) := \MM(x,y,g) / \RR \ ,
  \end{equation*} 
  is a manifold of dimension $\mu(x)-\mu(y)-1$. Moreover, it is shown in \cite{Schwarz} that if $\mu(x) = \mu(y)+1$, the space $\widehat{\MM}(x,y,g)$ is a finite set and we can use certain orientation constructions to define an algebraic count of elements of $\widehat{\MM}(x,y,g)$. Define this number by $n(x,y,g) := \algint \widehat{\MM}(x,y,g)$.
  
  For a generically chosen metric $g$, the differential on $C_*(f)$ is then defined by 
  \begin{equation*}
   \partial^g: C_*(f) \to C_{*-1}(f) \ , \quad (x \in \Crit f) \mapsto \sum_{\stackrel{y \in \Crit f}{\mu(y)=\mu(x)-1}} n(x,y,g) \cdot y \ ,
  \end{equation*}
  and by extending this definition $\ZZ$-linearly to all of $C_*(f)$. \bigskip
    
  There is also the dual notion of Morse cohomology. The Morse cochain complex is defined by \index{Morse cochain complex} 
   \begin{align*}
   &C^*(f) := \prod_{x \in \Crit f} \ZZ \cdot x \ , \quad \text{where} \ \ C^j(f) = \prod_{\morseind(x,f)=j} \ZZ \cdot x \ , \\
   &\delta^g: C^*(f) \to C^{*+1}(f) \ , \quad (x \in \Crit f) \mapsto \sum_{\stackrel{z \in \Crit f}{\mu(z)=\mu(x)+1}} n(z,x,g) \cdot z \ ,
  \end{align*}
  and by extending the codifferential $\ZZ$-linearly to $C^*(f)$. The duality between the Morse chain and cochain complex is obvious. 
  
  By considering compactifications of one-dimensional spaces of unparametrized flow lines, one shows that indeed $\partial^g \circ \partial^g = 0$ and $\delta^g \circ \delta^g=0$. Moreover, the (co)ho\-mo\-lo\-gy of these complexes is isomorphic to singular (co)homology with integer coefficients: 
  \begin{equation*}
   H_*(f) \cong H^{\text{sing}}_*(M;\ZZ) \ , \quad H^*(f) \cong H^*_{\text{sing}}(M;\ZZ) \ ,
  \end{equation*} 
  where $H_*(f) := H_*(C_*(f),\partial^g)$ and $H^*(f) := H^*(C^*(f),\delta^g)$. In particular, the (co)ho\-mo\-lo\-	gy of the Morse (co)chain complex is independent of the choice of the Morse function and the Riemannian metric. The original proof of this statement is spread throughout the literature and usually attributed to Thom, Smale and Witten.
  
  \subsection*{$A_\infty$-categories of Morse cochain complexes}
  Having realized the singular cohomology of a manifold in terms of Morse theory, it is natural to ask whether the cup product of singular cohomology possesses a Morse-theoretic realization as well. The answer is yes and such a description of the cup product was discovered by Kenji Fukaya, as well as a family of higher order multiplications on Morse cochain complexes 
  \begin{equation*}
  C^*(f_1) \otimes C^*(f_2) \otimes \dots \otimes C^*(f_d) \to C^*(f_0) 
  \end{equation*}
  for every $d \geq 2$, where $f_0, f_1,\dots,f_d \in C^{\infty}(M)$ are Morse functions and $M$ is a closed oriented manifold. \index{Morse cup product}
  
  These higher order multiplications had their first appearance in \cite{FukayaAinfty} and were further elaborated on in \cite{FukayaQuant}. The analytic details were carried out by Fukaya and Yong-Geun Oh in \cite{FukayaOh}. See also \cite{AbouzaidMorseTrop} for a generalization to compact manifolds with boundary. \bigskip
  
  However, there is a huge difference between the singular and the Morse-theoretic cup product. The singular one has the property of being associative on the level of cochains, while the Morse-theoretic one is \emph{not} associative on the cochain level. Indeed, Fukaya has shown that the family of higher order multiplications satisfies the $A_\infty$-equations. 
  
  A technical difficulty that occurs is that the multiplications are maps from the product of the Morse cochain complexes of $d$ different Morse functions instead of the $d$-fold product of the Morse cochain complex of a single Morse function. Here, it is not possible to choose the same Morse function in every factor. To understand this, we have to delve deeper into the definitions of the higher order multiplications. \bigskip
  
%  Before we do so, we note that the singular cochain complex together with the cup product is an (associative) differential graded algebra, so we can consider its Hochschild homology using the classical definition for differential graded algebras. Due to the lack of associativity of the Morse-theoretic cup product, we need a different framework for constructing a Morse-theoretic version of this Hochschild homology group and it will eventually turn out that a Morse cochain complex can be equipped with a family of multiplications which makes it an $A_\infty$-algebra over $\ZZ$. \bigskip
  
  Similar to the Morse differential and codifferential, the higher order multiplications are defined via counting elements of zero-dimensional moduli spaces. For $d \geq 2$, these moduli spaces consist of continuous maps from a rooted tree (seen as a one-dimensional CW complex) to the manifold $M$ which map the root and the leaves of the tree to fixed critical points, while they edgewise fulfill negative gradient flow equations. For example, let $f_i \in C^\infty(M)$, $x_i \in \Crit f_i$ for $i \in \{0,1,\dots,d\}$ and $T$ be a $d$-leafed rooted binary tree. We consider maps $I:T \to M$ with $I(v_i) = x_i$ for every $i$, where $v_0$ denotes the root and $v_i$ denotes the $i$-th leaf of $T$ for every $i \in \{1,2,\dots,d\}$. Moreover, we want that for every edge $e$ there is a Morse function $f_e$ such that \index{Morse ribbon tree}
  \begin{equation*} 
  \dot{I}_e = - (\nabla^g f_e)\circ I_e
  \end{equation*}
  where $I_e$ denotes the restriction of $I$ to $e$. (The edges of $T$ will be parametrized by suitable intervals.) The property of the corresponding moduli spaces being smooth manifolds can be rephrased as a transverse intersection problem. Therefore, the Morse functions involved have to satisfy certain transversality conditions which make it impossible to choose $f_1=\dots=f_d$. 
  
  Fukaya's method is to start with arbitrary, but distinct, Morse functions associated with every edge leading to a leaf of $T$. For every other edge $e$ we define $f_e$ as the sum of the functions $f_{e'}$ for every $e'$ whose incoming vertex coincides with the outgoing vertex of $e$. For a generic choice of Riemannian metric $g$, the corresponding moduli space is then a smooth manifold. 
  
  For example, the multiplication for $d=2$, which is nothing but the cochain-level cup product, is defined as a map
  \begin{equation*}
   C^*(f_1)\otimes C^*(f_2) \to C^*(f_1+f_2) \ ,
  \end{equation*}
  if $f_1$, $f_2$ and $f_1+f_2$ are Morse functions on $M$. Using these properties, Fukaya was able to define an $A_\infty$-category whose objects are smooth functions on $M$ and whose morphism sets are Morse cochain complexes. 
  
\subsection*{Abouzaid's approach: perturbing gradient flow lines}
  
In several situations, for example if one is interested in the Hochschild homology of Morse cochain complexes, it is difficult to work with an $A_\infty$-category due to the presence of infinitely many cochain complexes as objects. Instead, it is preferrable to work with an $A_\infty$-algebra, i.e. an $A_\infty$-category that has precisely one object, see \cite{MescherHochschild}. Such a construction is provided by an alternative approach to Fukaya's $A_\infty$-structures in Morse theory, which was suggested by Mohammed Abouzaid in \cite{AbouzaidPlumbings}. Abouzaid defines products $C^*(f)^{\otimes d} \to C^*(f)$ for a closed oriented manifold $M$, a fixed Morse function $f:M \to \RR$ and every $d \geq 2$ by considering moduli spaces of continuous maps from a tree to the manifold, which are trajectories of \emph{perturbations} of the negative gradient flow of a Morse function on a closed manifold. 
  Using transversality theory, one shows that for generic choices of perturbations, the resulting moduli spaces will be manifolds. Then one can construct the higher order multiplications along the same lines as in Fukaya's works by counting elements of zero-dimensional moduli spaces. 
  
  Abouzaid has shown that for good choices of perturbations, the Morse cochain complex $C^*(f)$ with the resulting higher order multiplications becomes an $A_\infty$-algebra. Moreover, his results imply that the Hochschild homology of this $A_\infty$-algebra is independent of the choices of perturbations and isomorphic to the Hochschild homology of the singular cochains of the manifold. \bigskip
  
  In fact, Abouzaid's perturbation method is the Morse-theoretic analogue of the more sophisticated Floer-theoretic perturbations constructed by Paul Seidel in \cite[Part II]{SeidelBook}. Seidel defined perturbations for pseudo-holomorphic curves which are modelled on boundary-pointed disks, which require more subtle analytic methods, especially concerning compactness properties of the resulting moduli spaces. \bigskip
  
  The aim of this article is to give a detailed construction of the higher order multiplications on the Morse cochains of a fixed Morse function using the perturbation methods of Abouzaid. Moreover, we want to show that for convenient choices of perturbations the Morse cochain complex equipped with these multiplications becomes an $A_\infty$-algebra. We will provide the reader with many analytic details and a self-contained construction of higher order multiplications after Abouzaid's ideas. \bigskip
  
  %The Hochschild homology of this $A_\infty$-algebra turns out to be a Morse-theoretic version of the Hochschild homology of the singular cochain complex of the manifold. \\
  
  \emph{The basic ideas of this article are implicitly contained in \cite{AbouzaidPlumbings}. Many of the results are stated or partially proven in Abouazid's work and we will provide the analytic details underlying Abouzaid's methods. For the sake of readability, we refrain from giving a reference to the corresponding result in Abouzaid's work at every result we are considering and see this paragraph as a general reference to \cite{AbouzaidPlumbings}.} \bigskip

  We assume that the reader is familiar with standard results of Morse theory as well as the construction of Morse homology. The classic reference for Morse theory is \cite{MilMorse}, while newer treatments include \cite{NicoMorse}, \cite{MatsuMorse} and Chapter 6 of \cite{JostRiemGeom}. 
  
  There are also certain references specializing on Morse homology. We will loosely orient our treatment on the textbook \cite{Schwarz}, which stresses the meaning of Morse trajectory spaces in the construction of Morse homology. Further references include \cite{BanyagaHurtubise}, which takes a classic differential-topological approach, \cite{HutchingsNotes}, \cite{WeberMorseWitten} and \cite{AbboMajer}. \bigskip
  
  We assume that the reader is familiar with certain constructions related to Morse homology. Throughout this article and especially in Section \ref{SectionPerturbationsGradFlow} we will use several results for spaces of (unperturbed) negative gradient flow trajectories and give references to those results whenever they are needed. Moreover, we will rely on certain gluing analysis results for Morse trajectories which transfer with only minor modifications to our situation. The necessary results are contained in \cite{Schwarz}, \cite{SchwarzEqui} and \cite{WehrheimMWC}.  
  
  Furthermore, Sections \ref{SectionModuliSpacesPerturbed} to \ref{CompactificationsOneDimAinfty} require certain results from graph theory which we will mostly state without providing proofs. The proofs of those results are elementary and therefore left to the reader. \bigskip
  
  In Section 1, we carefully introduce the necessary spaces of perturbations and the different kinds of perturbed negative gradient flow trajectories. We will rely on certain results on unperturbed negative gradient flow trajectories which we will briefly present. Thereupon, we discuss moduli spaces of these perturbed trajectories and certain evaluation maps defined on the moduli spaces.
  
  The results about the evaluation maps are further generalized in Section 2. After introducing certain more general perturbation spaces, we consider moduli spaces of families of perturbed Morse trajectories satisfying endpoint conditions, which are in a certain sense nonlocal. Finally, we state and prove a general regularity theorem about these moduli spaces.
  
  This nonlocal regularity theorem is applied to a special situation in Section 3. This section starts with the introduction of several notions from graph theory. Afterwards, we discuss how the results from Section 2 can be applied to consider moduli spaces of maps from a tree to a manifold which edgewise fulfill a perturbed negative gradient flow equation.
 
  In Section 4 we investigate sequential compactness properties of the moduli spaces from Section 3. We introduce a notion of geometric convergence with which we can describe the possible limiting behaviour of sequences in the moduli spaces of perturbed negative gradient flow trees. We further give explicit descriptions of the moduli spaces in which the respective limits are lying. 
  
  The objects of study in Section 5 are zero- and one-dimensional moduli spaces of perturbed negative gradient flow trees. By applying the results of Section 4 to zero-dimensional spaces we are able to define multiplications on Morse cochain complexes via counting elements of these moduli spaces. Applying the abovementioned gluing results from Morse theory, we will describe compactifications of one-dimensional moduli spaces and finally use the counts of their boundary components to prove that the Morse cochain complex equipped with the multiplications becomes an $A_\infty$-algebra. \bigskip

  A discussion of orientability and orientations of the moduli spaces of perturbed Morse ribbon trees is required in order to construct higher order multiplications with integer coefficients, since they will be constructed by using oriented intersection numbers. For the benefit of the flow of reading, all results about orientations are not contained in the main part of this article, but are found in Appendix \ref{AppendixOrient}. We will give references to the respective parts of this appendix whenever required. \bigskip

\textit{Throughout this article, let $M$ be a closed $n$-dimensional smooth oriented manifold, $n \in \NN_0$, and $f \in C^\infty (M)$ be a Morse function. Let $\Crit f := \{x \in M \ | \ df_x=0\}$ denote the set of critical points of $f$ and for any $x \in \Crit f$ let $\mu(x)$ denote its Morse index with respect to $f$. We will further use the convention that the empty set is a manifold of every dimension.} 
  
\section*{Acknowledgements}

The contents of this article are part of the author's Ph. D. thesis \cite{MescherThesis}, which was submitted to the University of Leipzig and supervised by Prof. Dr. Matthias Schwarz. The author thanks Prof. Schwarz for his guidance and support. Moreover, the author is grateful to Prof. Dr. Alberto Abbondandolo for many helpful discussions.

The author further thanks Dr. Matti Schneider and Dr. Roland Voigt for helpful comments on earlier versions of this article.  
  
 \section{Perturbations of gradient flow trajectories}
\label{SectionPerturbationsGradFlow}
 We start by repeating some necessary definitions and results from finite-dimensional Morse theory. A discussion of these notions and results can be found in many sources, among them \cite[Chapter 6]{JostRiemGeom}, \cite{SchwarzEqui} and \cite{BanyagaHurtubise}.
 
\begin{definition}
Let $x \in \Crit f$. For every Riemannian metric $g$ on $M$, the \emph{unstable manifold of $x$ with respect to $(f,g)$} is defined by 
\begin{equation*}
 W^u(x,f,g) := \Bigl\{ y \in M \ \Big| \ \lim_{t \to - \infty} \phi^{- \nabla^g f}_t (y) = x \  \Bigr\} \ ,
\end{equation*} 
where $\phi^{- \nabla^g f}$ denotes the flow of the vector field $- \nabla^g f$, i.e. the negative gradient flow of $f$ with respect to $g$. The \emph{stable manifold of $x$ with respect to $(f,g)$} is defined by
\begin{equation*}
 W^s(x,f,g) := \Bigl\{ y \in M \ \Big| \ \lim_{t \to + \infty} \phi^{- \nabla^g f}_t (y) = x \ \Bigr\} \ .
\end{equation*} 
\end{definition}

It is well-known from Morse theory, see \cite[Section 6.3]{JostRiemGeom} or \cite[Theorem 2.7]{WeberMorseWitten}, that for every $x \in \Crit f$ the unstable and stable manifolds of $x$ with respect to $(f,g)$ are smooth manifolds of dimension
\begin{equation*}
 \dim  W^u(x,f,g) = \mu(x) \ , \quad \dim W^s(x,f,g) = n - \mu(x) \ , 
\end{equation*}
respectively, and that both $W^u(x,f,g)$ and $W^s(x,f,g)$ are embedded submanifolds of $M$. Moreover, $M$ can be decomposed into the unstable and stable manifolds with respect to $(f,g)$: 
\begin{equation*}
 M =  \bigcup_{x \in \Crit f}W^u(x,f,g) = \bigcup_{x \in \Crit f} W^s(x,f,g) \ . 
\end{equation*}

\begin{definition} \index{Morse-Smale pair}
 We call a pair $(f,g)$ consisting of a smooth function $f \in C^\infty(M)$ and a smooth Riemannian metric $g$ on $M$ a \emph{Morse-Smale pair} if $f$ is a Morse function and if $W^u(x,f,g)$ and $W^s(y,f,g)$ intersect transversely for all $x,y \in \Crit f$. 
\end{definition}

Equipping the set of smooth Riemannian metrics on $M$ with a suitable topology, one shows that for any given Morse function $f \in C^\infty(M)$ the set of Riemannian metrics $g$ for which $(f,g)$ is a Morse-Smale pair is a generic subset of the space of Riemannian metrics, see \cite[Section 2.3]{Schwarz} or \cite[Section 6.1]{BanyagaHurtubise} for details. \bigskip

We next want to rephrase the definition of unstable and stable manifolds in terms of spaces of negative half-trajectories as it is done in \cite{SchwarzEqui}. This reformulation will be helpful for introducing the notion of perturbed gradient flow half-trajectories. Define
\begin{equation*}
 \bRR := \RR \cup \{- \infty,+\infty\} \ , \quad \bRR_{\geq 0} := [0,+\infty) \cup \{+\infty\} \ , \quad \bRR_{\leq 0} := (-\infty,0] \cup \{-\infty\} \ .
\end{equation*}
In \cite[Section 2.1]{Schwarz}, $\bRR$ is equipped with a smooth structure by choosing a diffeomorphism $h: \RR \to (-1,1)$ with
\begin{equation*}
 h(0)=0 \ , \quad \lim_{s \to - \infty} h(s) = -1 \ , \quad  \lim_{s \to + \infty} h(s) = 1 \ ,
\end{equation*}
by extending $h$ in the obvious way to a bijection $h: \bRR \to [-1,1]$ and requiring that this extended map is a diffeomorphism of manifolds with boundary. In the following we always assume such a map $h$ to be chosen and fixed. We equip $\bRR_{\leq 0}$ and $\bRR_{\geq 0}$ with the unique smooth structures which make them smooth submanifolds with boundary of $\bRR$.  \bigskip

For a Riemannian metric $g$ on $M$ and $x \in \Crit f$ define spaces of curves
\begin{align*}
&\PP_-(x) := \Bigl\{ \gamma \in H^1\left(\bRR_{\leq 0},M\right) \ \Big| \ \lim_{s \to - \infty} \gamma(s) = x  \Bigr\} \ , \\
&\PP_+(x) := \Bigl\{ \gamma \in H^1\left(\bRR_{\geq 0},M\right) \ \Big| \ \lim_{s \to + \infty} \gamma(s) = x  \Bigr\} \ ,
\end{align*}
where $H^1$ always denotes spaces of maps of Sobolev class $W^{1,2}$ with respect to the Riemannian metric $g$. These spaces can be equipped with structures of Hilbert manifolds, see \cite[Appendix A]{Schwarz}. The following spaces can be equipped with structures of Banach bundles over $\PP_-(x)$, $\PP_+(x)$, respectively, by
$$\LL_-(x) := \bigcup_{\gamma \in \PP_-(x)} L^2 (\gamma^*TM) \ , \qquad \LL_+(x) := \bigcup_{\gamma \in \PP_+(x)} L^2(\gamma^*TM) \ , $$
together with the obvious bundle projections, where $L^2(\gamma^*TM)$ denotes the space of sections of the vector bundle $\gamma^*TM$ which are $L^2$-integrable with respect to $g$. It can be shown by the methods of \cite[Appendix A]{Schwarz} that $\LL_-(x) \to \PP_-(x)$ and $\LL_+(x) \to \PP_+(x)$ are indeed smooth Banach bundles over Hilbert manifolds. 

The following notion is implicitly used in \cite{Schwarz} and \cite{SchwarzEqui}, though its name seems to have its first appearance in the literature in \cite{HWZ1} in a much more general context. We will use it in our Morse-theoretic framework in the upcoming theorems.

\begin{definition}
\label{DefFredholmSection} \index{Fredholm section}
 Let $M$ be a Banach manifold and $E \to M$ be a Banach bundle over $M$ with typical fiber $F$. A section of the bundle $s:M \to E$ is called a \emph{Fredholm section of index $r \in \ZZ$}, if there is a local trivialization 
 \begin{equation*}
  \left\{\left(U_\alpha, \varphi_\alpha: E|_{U_\alpha} \stackrel{\cong}{\to} U_\alpha \times F\right) \right\}_{\alpha \in I} \ ,
 \end{equation*}
 where $I$ is an index set, such that for every $\alpha \in I$ the map
 \begin{equation*}
  pr_2 \circ \varphi_\alpha \circ s|_{U_\alpha} : U_\alpha \to F \ ,
 \end{equation*}
where $pr_2:U_\alpha \times F \to F$ denotes the projection onto the second factor, is a Fredholm map of index $r$. 
 \end{definition}

The following theorem is well-known in Morse theory and can be proven by the methods of \cite[Section 2.2]{Schwarz}:

\begin{theorem}
\label{unperturbedFredholm} \label{unstable} Let $g$ be a Riemannian metric on $g$ and $\nabla^g f$ be the associated gradient vector field. The maps $\partial^g_- : \PP_-(x) \to \LL_-(x)$ and $\partial^g_+:\PP_+(x) \to \LL_+(x)$, both given by $\gamma\mapsto \dot{\gamma} + \nabla^g f \circ \gamma$, are smooth sections of $\LL_-(x)$ and $\LL_+(x)$, resp., and Fredholm sections of index
\begin{equation*}
\ind \partial^g_- = \mu(x) \ , \qquad \ind \partial^g_+ = n - \mu(x) \ . 
\end{equation*}
Furthermore, the spaces 
\begin{align*}
\WW^u(x,g) := \{ \gamma \in \PP_-(x) \ | \ \dot{\gamma} + \nabla^g f \circ \gamma = 0 \} = (\partial^g_-)^{-1}(\zero_{\LL_-(x)}) \ , \\
\WW^s(x,g) := \{ \gamma \in \PP_+(x) \ | \ \dot{\gamma} + \nabla^g f \circ \gamma = 0 \} = (\partial^g_+)^{-1}(\zero_{\LL_+(x)}) \ , 
\end{align*}
 are smooth manifolds with $\dim \WW^u(x,g) = \mu(x)$ and $\dim \WW^s(x,g)=n -\mu(x)$, where $\zero_{\LL_\pm(x)}$ denotes the image of the zero section of $\LL_\pm(x)$.
\end{theorem}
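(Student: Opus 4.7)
The plan is to follow the Fredholm-section strategy developed by Schwarz \cite[Section 2.2]{Schwarz} for full-line trajectory spaces, adapted to the half-line. I will treat the unstable case $\partial^g_-$ in detail; the stable case is entirely parallel after reversing the time parameter, exchanging $\mu(x) \leftrightarrow n - \mu(x)$.

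\textbf{Smoothness.} Around a curve $\gamma_0 \in \PP_-(x)$, a chart on $\PP_-(x)$ is furnished by the exponential map: small $H^1$-sections $\xi$ of $\gamma_0^*TM$ correspond to curves $s \mapsto \exp_{\gamma_0(s)} \xi(s)$, and parallel transport along the radial geodesics trivializes $\LL_-(x)$ over this neighborhood. In these coordinates, $\partial^g_-$ becomes the sum of the first-order operator $\xi \mapsto \dot\xi$ plus smooth pointwise terms involving $\nabla^g f$ and Christoffel symbols of $g$. Smoothness of the section in the $H^1$-topology then follows from the Sobolev multiplication and composition results used in \cite[Appendix A]{Schwarz}, combined with the fact that the asymptotic convergence $\gamma_0(s) \to x$ controls the $L^2$-integrability of the image.

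\textbf{Fredholm property and index.} Linearizing $\partial^g_-$ at a zero $\gamma \in \WW^u(x,g)$ in the above trivialization yields an operator
\begin{equation*}
D\xi = \dot\xi + A(s)\xi, \qquad A(s) \longrightarrow H_x f \ \text{as} \ s \to -\infty,
\end{equation*}
where $H_x f$ is the Hessian of $f$ at $x$, self-adjoint with respect to $g$ and invertible because $f$ is Morse. The convergence $A(s) \to H_x f$ is exponential, since $\gamma$ approaches $x$ exponentially under the gradient flow. The analytic input I would cite is the Fredholm theorem for operators of the form $\partial_s + A(s)$ on a half-line with asymptotically constant, hyperbolic coefficient matrix and no imposed boundary condition at the finite endpoint: the kernel of such an operator is isomorphic to the negative eigenspace of $H_x f$ and the cokernel vanishes, so $\ind D = \mu(x)$. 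Together with the chart from the smoothness step, this verifies Definition \ref{DefFredholmSection}.

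\textbf{The zero set.} The equality $\WW^u(x,g) = (\partial^g_-)^{-1}(\zero_{\LL_-(x)})$ is immediate from the definitions, since a curve $\gamma \in \PP_-(x)$ with $\dot\gamma + \nabla^g f \circ \gamma = 0$ is precisely a negative gradient half-trajectory converging to $x$ at $-\infty$. Evaluation at $s=0$ defines a bijection $\WW^u(x,g) \to W^u(x,f,g)$, $\gamma \mapsto \gamma(0)$, whose inverse $p \mapsto \phi^{-\nabla^g f}_\cdot(p)$ is smooth by smooth dependence of the flow on initial conditions; this identification transports the known smooth-manifold structure of $W^u(x,f,g)$ onto $\WW^u(x,g)$, and the dimension $\mu(x)$ matches the Fredholm index computed above. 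The stable case proceeds identically, giving $\ind \partial^g_+ = n - \mu(x)$.

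\textbf{Main obstacle.} The principal analytic subtlety is the Fredholm index calculation on the half-line, since textbook treatments typically phrase this for the full line via spectral flow, and one must check that the absence of a boundary condition at $s = 0$ still leaves the cokernel trivial. I would address this by a partition-of-unity argument that exploits the exponential decay of $A(s) - H_x f$ to reduce Fredholm stability to the constant-coefficient model $\dot\xi + H_x f\, \xi$ on $\bRR_{\leq 0}$, whose kernel and cokernel are read off directly from the spectral decomposition of $H_x f$.
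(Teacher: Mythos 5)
Your proposal is correct and follows the approach the paper points to---Schwarz's Fredholm-section machinery---which the paper cites without a written proof. You supply the right content: a parallel-transport chart in which the linearization becomes $\partial_s + A(s)$ with $A(s) \to H_x f$ exponentially, and a half-line index computation giving kernel dimension $\mu(x)$ and trivial cokernel because no boundary condition is imposed at $s=0$; this is exactly the subtlety you flag, and your sketch of resolving it by comparison to the constant-coefficient model is the standard argument. The one place you take a slightly different route is the manifold structure on $\WW^u(x,g)$: you transport it from $W^u(x,f,g)$ via the evaluation map, which requires knowing that $p \mapsto \phi^{-\nabla^g f}_\cdot(p)$ is a smooth embedding into the Hilbert manifold of $H^1$-curves, a fact the paper itself relegates to Remark~\ref{EvalTrajUnstable} \emph{after} the theorem. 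The more economical route---and the one the paper actually uses for the perturbed analogue in Corollary~\ref{PerturbedUnstMfld}---is to apply the Banach-space implicit function theorem directly to the Fredholm section: your cokernel computation is precisely surjectivity of the linearization, so $\WW^u(x,g)$ is a Banach submanifold of $\PP_-(x)$ of dimension equal to the index, with no extra work and no appeal to the evaluation map.
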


\begin{remark}
\label{EvalTrajUnstable} \index{Morse half-trajectories!relation to (un)stable manifolds}
\begin{enumerate}
\item It is well-known that the evaluation map $\WW^u(x,f,g) \to M$, $\gamma \mapsto \gamma(0)$, is an embedding onto $W^u(x,f,g)$ and the map $\WW^s(x,f,g) \to M$, $\gamma \mapsto \gamma(0)$, is an embedding onto $W^s(x,f,g)$.
%Note that in general, the evaluation fails to be an embedding and the unstable manifolds are in general only immersed submanifolds of $M$, see \cite[Section 6.3]{JostRiemGeom} or \cite{SchwarzEqui}. In strict analogy, one shows that the evaluation map $\WW^s(x,f,g) \to M$, $\gamma \mapsto \gamma(0)$, is an injective immersion onto $W^s(x,f,g)$.

\item Using standard regularity results for solutions of ordinary differential equations, one can show that $\WW^u(x,f,g)$ and $\WW^s(x,f,g)$ consist of smooth curves only, see \cite[Proposition 2.9]{Schwarz}. More precisely, we can identify as sets:
\begin{align*}
 \WW^u(x,f,g) &= \Bigl\{ \gamma \in C^\infty\left((-\infty,0],M \right) \ \Big| \ \lim_{s \to - \infty} \gamma(s) = x \ , \ \ \dot{\gamma} + \nabla^g f \circ \gamma = 0  \Bigr\} \ , \\
 \WW^s(x,f,g) &= \Bigl\{ \gamma \in C^\infty\left([0,+\infty),M \right) \ \Big| \ \lim_{s \to + \infty} \gamma(s) = x \ , \ \ \dot{\gamma} + \nabla^g f \circ \gamma = 0  \Bigr\} \ . 
\end{align*}
\end{enumerate}
\end{remark}
In the course of this article, we will consider moduli spaces which are similar to those of the form 
\begin{equation*}
\{ \gamma \in \WW^u(x,f,g) \ | \ \gamma(0) \in N \} \ ,
\end{equation*}
where $N$ is an arbitrary smooth submanifold of $M$, see Figure \ref{UnstableUnter}. More precisely, we would like to use transversality theorems to show that for a generic choice of Riemannian metric $g$, the former space is a smooth manifold. 

\begin{figure}[h]
 \centering
 \includegraphics[scale=0.5]{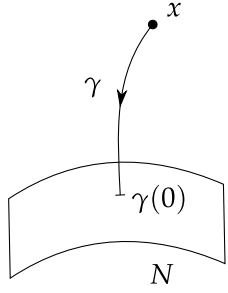}
 \caption{An element of $\{ \gamma \in \WW^u(x,f,g) \ | \ \gamma(0) \in N \}$.}
 \label{UnstableUnter}
\end{figure}

The problem is that we will not be able to apply transversality theory and in particular the Sard-Smale transversality theorem if $N$ contains critical points of $f$ without an additional condition on the Morse index of $x$. (For statements in this direction see \cite[Lemma 4.10]{SchwarzEqui} and \cite[Appendix A.2]{AbboSchwarzGT}.) We overcome this difficulty by using time-dependent vector fields to perturb the negative gradient flow equation following the approach of \cite{AbouzaidPlumbings}.  \bigskip

Define 
\begin{align*} 
\XX_-(M) := &\left\{ X \in C^{n+1} \left( (-\infty,0] \times M,TM\right) \ \left| \ X(s,\cdot) \in C^{n+1}(TM) \quad \forall s \in (-\infty,0] , \right. \right.\\
  &\left. \phantom{C^{n+1}} \qquad \qquad \qquad \qquad \qquad \qquad \qquad \qquad X(s,x) = 0 \quad \forall s \leq -1, \ x \in M \ \right\} \ ,
\end{align*} 
where $C^{n+1}(TM)$ denotes the space of sections of the bundle $TM$ of class $C^{n+1}$, i.e. the space of vector fields of class $C^{n+1}$. Then $\XX_-(M)$ is a closed linear subspace of the Banach space of $C^{n+1}$-sections of $\pi^* TM$, where $\pi: (-\infty,0] \times M \to M$
denotes the projection and where we equip $C^{n+1}(\pi^*TM)$ with the $C^{n+1}$-norm induced by the chosen metric $g$ on $M$ and the standard metric on $(-\infty,0]$.

\begin{definition}
 Let $x \in \Crit f$ and $X \in \XX_-(M)$. A curve $\gamma \in \PP_-(x)$ which satisfies the equation
 \begin{equation*}
  \dot{\gamma}(s) + \nabla^g f(\gamma(s)) + X(s,\gamma(s)) = 0 
 \end{equation*}
for every $s \in (-\infty,0]$ is called a \emph{perturbed negative gradient flow half-trajectory} (or simply a \emph{perturbed negative half-trajectory}) with respect to $(f,g,X)$.  
\end{definition}
\begin{remark}
 We consider vector fields of class $C^{n+1}$ as perturbations instead of smooth vector fields, since we want to momentarily apply the Sard-Smale transversality theorem. This theorem requires the space of parameters to be a Banach manifold modelled on a separable Banach space. 
 While this property obviously holds true for $\XX_-(M)$, the subspace of smooth vector fields in $\XX_-(M)$ fails to be complete, as it is well-known from functional analysis.
 
 There are two ways to overcome this problem in the literature, see \cite{FloerUnregularized} and \cite[Section 2.3]{Schwarz} for the one approach and \cite{FHSTransversality} for the other. Since for every important result of this article it suffices to consider time-dependent vector fields which are finitely many times differentiable, we will not further elaborate on these approaches. 
 
 Moreover, we choose the perturbations to be $(n+1)$ times differentiable, since this is the minimal amount of regularity that will be required in our situation for the application of the Sard-Smale transversality theorem in the proof of Theorem \ref{NegEvalSubmersion}.
\end{remark}

\textit{Throughout the rest of this article, we fix a Riemannian metric $g$ on $M$ and will mostly leave it out of the notation.} \bigskip

In the following, we write $\XX_- := \XX_-(M)$. 

\begin{theorem}
\label{PerturbedFredholm}
Let $x \in \Crit f$. The map 
\begin{equation*}
F^- : \XX_- \times \PP_-(x) \to \LL_-(x) \ , \qquad (Y,\gamma) \mapsto \left( s \mapsto \dot{\gamma}(s) + \nabla f(\gamma(s)) + Y(s,\gamma(s)) \right) \ ,	
\end{equation*}
is an $(n+1)$ times differentiable map of between Banach manifolds and for every $Y \in \XX_-$, the map $F^-_Y := F^-(Y,\cdot)$
is a $C^{n+1}$-section of $\LL_-(x)$. Furthermore, every $F^-_Y$ is a Fredholm section of index
\begin{equation*}
 \ind F^-_Y =\mu(x) \ .
\end{equation*}
\end{theorem}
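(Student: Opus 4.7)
The plan is to write $F^-$ as the sum of the unperturbed section from Theorem \ref{unperturbedFredholm} and a correction term coming from the perturbation, and then to deduce both regularity and the Fredholm property by comparison with the unperturbed case. More precisely, set
\begin{equation*}
F^-(Y,\gamma) = \partial^g_-(\gamma) + G(Y,\gamma), \qquad G(Y,\gamma)(s) := Y(s,\gamma(s)).
\end{equation*}
By Theorem \ref{unperturbedFredholm}, $\partial^g_-$ is a smooth Fredholm section of $\LL_-(x)$ of index $\mu(x)$, so it suffices to analyse $G$. The map $G$ is linear in $Y$, hence $C^\infty$ in $Y$, and smoothness/differentiability in $\gamma$ is a Nemytskii-type statement: since the elements of $\XX_-$ are of class $C^{n+1}$ jointly in $(s,x)$, the composition operator $\gamma \mapsto Y(\cdot,\gamma(\cdot))$ from $H^1((-\infty,0],M)$ into $L^2$-sections of $\gamma^*TM$ is $C^{n+1}$. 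Joint $C^{n+1}$-differentiability of $F^-$ then follows from the product structure of the two arguments, together with the fact that $G$ is linear in the first variable.

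For the Fredholm statement, fix $Y \in \XX_-$ and $\gamma \in \PP_-(x)$; working in a local trivialization of $\LL_-(x)$ near $\gamma$ one obtains
\begin{equation*}
D(F^-_Y)(\gamma) = D\partial^g_-(\gamma) + DG_Y(\gamma),
\end{equation*}
where the second summand is the bounded linear operator $\xi \mapsto \bigl(s \mapsto D_2 Y(s,\gamma(s))\,\xi(s)\bigr)$ from $T_\gamma\PP_-(x) \subset H^1(\gamma^*TM)$ into $L^2(\gamma^*TM)$. The key point is that vector fields in $\XX_-$ vanish for $s \leq -1$, so the operator $DG_Y(\gamma)$ is supported on the compact interval $[-1,0]$, where it is nothing but a bounded pointwise multiplication. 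Compactness of this operator then follows from the Rellich-Kondrachov compact embedding $H^1([-1,0], \gamma^*TM|_{[-1,0]}) \hookrightarrow L^2([-1,0], \gamma^*TM|_{[-1,0]})$. Hence $D(F^-_Y)(\gamma)$ differs from $D\partial^g_-(\gamma)$ by a compact operator.

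By the stability of the Fredholm property and of the Fredholm index under compact perturbations, this shows that $D(F^-_Y)(\gamma)$ is Fredholm of index $\mu(x)$ for every $\gamma$, and hence in a local trivialization the coordinate representation of $F^-_Y$ is a Fredholm map of index $\mu(x)$ in the sense of Definition \ref{DefFredholmSection}. The main technical obstacle is the verification that the correction term is genuinely a compact operator between the relevant function spaces; the reason this works despite the noncompactness of the half-line $(-\infty,0]$ is precisely the support condition built into the definition of $\XX_-$, which localises the perturbation to a compact time interval and reduces the compactness to a standard Sobolev embedding. Once this is in hand, both the regularity of $F^-$ and the Fredholm index formula $\ind F^-_Y = \mu(x)$ follow.
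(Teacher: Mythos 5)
Your proof is correct, but it takes a genuinely different route from the paper. The paper establishes the Fredholm property of $F^-_Y$ by redoing the estimates from \cite[Proposition 2.2]{Schwarz} ``in the same way as in the unperturbed case,'' and then separately determines the index by a homotopy/connectedness argument: $F^-_0 = \partial^g_-$ has index $\mu(x)$ by Theorem \ref{unperturbedFredholm}, $\XX_-$ is connected, and the Fredholm index is locally constant, hence $\ind F^-_Y = \mu(x)$ for every $Y$. You instead observe that the difference $DF^-_Y(\gamma) - D\partial^g_-(\gamma)$ is a zeroth-order multiplication operator that, by the support condition $Y(s,\cdot)=0$ for $s \leq -1$, is localised to the compact interval $[-1,0]$, and therefore factors through the compact embedding $H^1([-1,0]) \hookrightarrow L^2([-1,0])$. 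This yields Fredholmness \emph{and} the index identity $\ind F^-_Y = \ind \partial^g_- = \mu(x)$ simultaneously, by stability of the Fredholm property and the index under compact perturbations. Your argument is more self-contained and makes the role of the support condition on $\XX_-$ explicit as the mechanism that makes the perturbation compact despite the non-compact time domain; the paper's route is more modular (re-using the reference's Fredholm analysis wholesale) and invokes the index homotopy invariance, a technique that recurs in Morse/Floer theory and is therefore often preferred for exposition. Both treatments of the joint $C^{n+1}$-differentiability are somewhat abbreviated — yours appeals to Nemytskii-type arguments and linearity in $Y$, the paper's to affine dependence in $Y$ plus the total differential theorem — and are comparable in rigor.
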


\begin{remark}
 Note that if $0 \in \XX_-$ denotes the zero vector field, then 
 \begin{equation}
 \label{F-0}
  F^-_0 = \partial^g \ . 
 \end{equation}
Thus, Theorem \ref{PerturbedFredholm} generalizes Theorem \ref{unperturbedFredholm}.
\end{remark}

\begin{proof}[Proof of Theorem \ref{PerturbedFredholm}]
For a fixed $Y \in \XX_-$, one shows in in strict analogy with the proof of \cite[Theorem 12]{Schwarz} that the map $F^-(Y, \cdot): \PP_-(x) \to \LL_-(x)$ is $(n+1)$ times differentiable for every $Y \in \XX_-$. Moreover, $F^-$ is continuous and affine, hence smooth, in the $\XX_-$-component. So $F^-$ is $(n+1)$ times differentiable by the total differential theorem.

%One checks using (\ref{F-0}) that for every $\gamma \in \PP_-(x)$, the differential of $F^-_Y$ at $\gamma$ is a compact perturbation of the differential of $F^-_0 = \partial^g$ at $\gamma$. Since compact perturbations preserve the Fredholm property and the index of the operator, one deduces that $F^-_Y$ is a Fredholm section of equal index as $F^-_0=\partial^g$ for every $Y \in \XX_-$. By part \ref{unstable} of Theorem \ref{unperturbedFredholm}, this index is given by $\mu(x)$. %Mattis Version
The Fredholm property of the $F^-_Y$ follows in precisely the same way as in the unperturbed case in part \ref{unstable} of Theorem \ref{unperturbedFredholm}, i.e. by similar methods as in the proof of \cite[Proposition 2.2]{Schwarz}. %Meine Version

By part \ref{unstable} of Theorem \ref{unperturbedFredholm}, the operator $F^-_0 = \partial^g$ has index $\mu(x)$. Since $\XX_-$ is connected and the Fredholm index is locally constant, it follows that $\ind F^-_Y = \mu(x)$ for every $Y \in \XX_-$.
\end{proof}

\begin{theorem}
\label{PerturbedUnstable}
The map $F^-_Y: \PP_-(x) \to \LL_-(x)$ is transverse to $\zero_{\LL_-(x)}$ for all $Y \in \XX_-$ and $x \in \Crit f$.
\end{theorem}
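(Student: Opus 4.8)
The plan is to verify the transversality $F^-_Y \tv \zero_{\LL_-(x)}$ fibrewise: it suffices to show that at every zero $\gamma \in (F^-_Y)^{-1}(\zero_{\LL_-(x)})$ the linearization $L := (DF^-_Y)_\gamma : T_\gamma\PP_-(x) \to L^2(\gamma^*TM)$ is surjective (and if $(F^-_Y)^{-1}(\zero_{\LL_-(x)}) = \emptyset$ there is nothing to prove). In contrast to moduli spaces of full negative gradient trajectories, no genericity of the data is needed here; the conceptual reason is that the half-line boundary value problem in question is overdetermined on the side of the formal adjoint.

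First I would record a few preliminaries about a fixed zero $\gamma$. Standard regularity for ordinary differential equations applied to $\dot\gamma = -\nabla f\circ\gamma - Y(\cdot,\gamma(\cdot))$, whose right-hand side is of class $C^{n+1}$, shows $\gamma \in C^{n+2}$; moreover, since $Y(s,\cdot) \equiv 0$ for $s\leq -1$, the restriction $\gamma|_{(-\infty,-1]}$ is an unperturbed negative gradient half-trajectory converging to $x$. Trivializing $\gamma^*TM$ by $g$-parallel transport along $\gamma$, the linearization takes the standard form $L\xi = \nabla_s\xi + A(s)\xi$, where $A$ is a $C^{n+1}$ time-dependent bundle endomorphism built from the covariant Hessian of $f$ and the fibre derivative of $Y(s,\cdot)$ along $\gamma$, in complete analogy with the unperturbed computation in \cite[Section 2.2]{Schwarz}. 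Because $\gamma(s)\to x$ and $Y(s,\cdot)\equiv 0$ near $-\infty$, one has $A(s) \to H_x$ as $s \to -\infty$, where $H_x$ is the Hessian of $f$ at $x$ with respect to $g$; this operator is invertible and $g$-self-adjoint since $f$ is Morse. By Theorem \ref{PerturbedFredholm} (or rather the computation behind it) $L$ is Fredholm, in particular it has closed range, so $\operatorname{coker} L \cong \ker L^*$ where $L^*$ denotes the formal $L^2$-adjoint.

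The heart of the argument is the identification of $L^*$ together with its domain. Writing $\langle L\xi,\psi\rangle_{L^2} = \langle\xi,L^*\psi\rangle_{L^2}$ and integrating by parts over $\bRR_{\leq 0}$: the boundary contribution at $-\infty$ vanishes because every element of $H^1(\bRR_{\leq 0},\gamma^*TM)$ decays to $0$ there; and since $\PP_-(x)$ imposes a condition only at $-\infty$, the value $\xi(0)$ is unconstrained, which forces the boundary term $\langle\xi(0),\psi(0)\rangle$ at $s = 0$ to vanish for all admissible $\xi$. Hence $L^*\psi = -\nabla_s\psi + A(s)^*\psi$ on the domain $\{\psi \in H^1(\gamma^*TM)\ |\ \psi(0) = 0\}$. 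If now $\psi \in \ker L^*$, then $\psi$ solves the linear first-order ordinary differential equation $\nabla_s\psi = A(s)^*\psi$ with continuous coefficients and satisfies $\psi(0) = 0$; by uniqueness of solutions of linear ODEs it follows that $\psi \equiv 0$. Therefore $\operatorname{coker} L = 0$, i.e.\ $L$ is surjective, which proves the claim for this $\gamma$, and hence for every zero of $F^-_Y$.

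I expect the only genuinely delicate point to be the careful bookkeeping of boundary terms in this integration by parts — namely turning ``$L$ carries no constraint at $s=0$'' into ``$\psi(0)=0$ for $L^*$'' and confirming that the $-\infty$ boundary term really vanishes — together with a short remark that elements of $\ker L^*$ are regular enough (in particular $C^1$) for ODE uniqueness to apply; both are routine and parallel the corresponding unperturbed arguments in \cite{Schwarz}. All the remaining ingredients — the $C^{n+2}$ regularity of $\gamma$, the normal form $\nabla_s + A(s)$ of the linearization, the asymptotics $A(s)\to H_x$, and the Fredholm property — have been assembled above or are already contained in Theorem \ref{PerturbedFredholm}.
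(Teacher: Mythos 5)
Your proof is correct, but it takes a genuinely different route from the one the paper sketches. The paper's proof refers to the flow-theoretic argument of \cite[Section 6.3]{JostRiemGeom} (together with \cite{AbboMajer}), adapted to the time-dependent flow of $-\nabla f - Y$: roughly, the local stable/unstable manifold theorem and the implicit function theorem identify $W^-(x,Y)$ as a graph near the critical point, and the transversality of $F^-_Y$ to the zero section is read off from that parametrization. Your argument instead reduces surjectivity of $L := (DF^-_Y)_\gamma$ to the vanishing of $\ker L^*$, extracts the boundary condition $\psi(0)=0$ from integration by parts, and kills $\ker L^*$ by uniqueness for first-order linear ODEs. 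The gain is conceptual: it makes it transparent why no genericity is needed on the half-line --- the free endpoint at $s=0$ becomes a Dirichlet condition for the adjoint, overdetermining the adjoint ODE --- and it leans directly on the Fredholm property already established in Theorem~\ref{PerturbedFredholm} rather than repeating stable-manifold analysis. One point to tidy up: the boundary term at $-\infty$ does not vanish merely because $\xi(-\infty)=0$; you also need $\psi$ not to blow up there. This is automatic since any $\psi\in L^2$ solving the adjoint ODE, whose coefficient matrix converges to the invertible Hessian $H_x$, must decay exponentially; alternatively, you can sidestep the $-\infty$ boundary entirely by testing only against sections $\xi$ supported in a neighbourhood of $s=0$ when deriving $\psi(0)=0$.
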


\begin{proof}
 This can be shown by transferring the methods used in \cite[Section 6.3]{JostRiemGeom} and in \cite[Section 2]{AbboMajer} to the formalism of \cite{Schwarz}. Compared to \cite[Section 6.3]{JostRiemGeom}, we need to replace the flow of the vector field $-\nabla f$ by the time-dependent flow of the time-dependent vector field $- \nabla f - X$ and repeat the line of arguments used to prove \cite[Corollary 6.3.1]{JostRiemGeom} (see the preparations of Proposition \ref{FiniteLengthManifold} below for a more detailed discussion of flows of time-dependent vector fields).
\end{proof}

\begin{cor}
\label{PerturbedUnstMfld}
 For every $Y \in \XX_-$, the space
 \begin{equation*}
 W^-(x,Y) := \left\{\gamma \in \PP_-(x) \ | \  \dot{\gamma}(s) + \nabla f(\gamma(s)) + Y(s,\gamma(s)) = 0 \ \ \forall s \in (-\infty,0] \right\}
 \end{equation*}
 is a submanifold of $\PP_-(x)$ of class $C^{n+1}$ with $\dim W^-(x,Y) = \mu(x)$.
\end{cor}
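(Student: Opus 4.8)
This is an immediate consequence of Theorems \ref{PerturbedFredholm} and \ref{PerturbedUnstable} via the implicit function theorem for sections of Banach bundles, and the plan is to spell out this reduction.

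By definition $W^-(x,Y) = (F^-_Y)^{-1}(\zero_{\LL_-(x)})$. By Theorem \ref{PerturbedFredholm}, $F^-_Y$ is a $C^{n+1}$-section of $\LL_-(x)$ which is Fredholm of index $\mu(x)$, so by Definition \ref{DefFredholmSection} we may fix a trivializing cover $\{(U_\alpha,\varphi_\alpha)\}_{\alpha \in I}$ of $\LL_-(x)$, with $\varphi_\alpha : \LL_-(x)|_{U_\alpha} \xrightarrow{\cong} U_\alpha \times F$, such that each composition $s_\alpha := pr_2 \circ \varphi_\alpha \circ F^-_Y|_{U_\alpha} : U_\alpha \to F$ is a $C^{n+1}$ Fredholm map of index $\mu(x)$; here $F$ denotes the typical fibre. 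For every $\gamma \in W^-(x,Y)$ and every $\alpha$ with $\gamma \in U_\alpha$ one has $W^-(x,Y) \cap U_\alpha = s_\alpha^{-1}(0)$, so it suffices to check that $Ds_\alpha(\gamma)$ is a split surjection for each such $\gamma$ and $\alpha$ and then to quote the implicit function theorem.

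So fix such $\gamma$ and $\alpha$. Since $\gamma$ is a zero of $F^-_Y$, the differential $DF^-_Y(\gamma)$ has a well-defined vertical part $D^v F^-_Y(\gamma) : T_\gamma\PP_-(x) \to (\LL_-(x))_\gamma$, independent of any choice of connection, and the transversality of $F^-_Y$ to $\zero_{\LL_-(x)}$ provided by Theorem \ref{PerturbedUnstable} is exactly the statement that $D^v F^-_Y(\gamma)$ is surjective. Read through $\varphi_\alpha$, this says precisely that $Ds_\alpha(\gamma) : T_\gamma U_\alpha \to F$ is onto; and since $s_\alpha$ is Fredholm of index $\mu(x)$, the kernel of $Ds_\alpha(\gamma)$ is finite-dimensional of dimension $\mu(x)$, hence admits a closed complement, so $Ds_\alpha(\gamma)$ is indeed a split surjection. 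The implicit function theorem for $C^{n+1}$-maps between Banach manifolds now shows that, in a neighbourhood of $\gamma$, $s_\alpha^{-1}(0)$ is a $C^{n+1}$-submanifold of dimension $\dim\ker Ds_\alpha(\gamma) = \ind F^-_Y = \mu(x)$. As $\gamma$ ranges over $W^-(x,Y)$ the resulting local dimension is constantly $\mu(x)$, so these submanifold charts glue to exhibit $W^-(x,Y)$ as a $C^{n+1}$-submanifold of $\PP_-(x)$ with $\dim W^-(x,Y) = \mu(x)$; if $W^-(x,Y) = \emptyset$ the claim holds by the convention that the empty set is a manifold of every dimension. The only step requiring any care --- and hence the ``main obstacle,'' though it is entirely routine --- is the identification of the transversality assertion of Theorem \ref{PerturbedUnstable} with the surjectivity-plus-splitting of $Ds_\alpha(\gamma)$, which is immediate once one recalls that a Fredholm operator has finite-dimensional kernel and closed image.
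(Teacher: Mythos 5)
Your proof is correct and takes the same route as the paper: the paper's proof is a one-line observation that $W^-(x,Y) = (F^-_Y)^{-1}(\zero_{\LL_-(x)})$ combined with Theorems~\ref{PerturbedFredholm} and~\ref{PerturbedUnstable}, and you have simply unpacked the standard Fredholm/transversality/implicit-function-theorem argument that this one line is implicitly invoking.
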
 
\begin{proof}
 This follows from Theorem \ref{PerturbedUnstable} and the observation that $W^-(x,Y) = (F_Y^-)^{-1}(\zero_{\LL_-(x)})$.
\end{proof}

\begin{remark} 
 This corollary generalizes the submanifold statement of part \ref{unstable} of Theorem \ref{unperturbedFredholm}, since it obviously holds that
 \begin{equation*}
  W^-(x,0) = \WW^u(x,f,g) \ .
 \end{equation*}
\end{remark}

So far, we have shown that the space of perturbed negative half-trajectories emanating from a given critical point can be described as a differentiable manifold. The introduction of time-dependent perturbations gives us a much higher flexibility considering evaluation maps and couplings with other trajectory spaces. We will see this in the next theorems and in Section \ref{NonlocalGeneralizations}.

\begin{remark}
The following differences occur between the space of unperturbed negative-half-trajectories $\WW^u(x,f,g)$ and the perturbed trajectory spaces $W^-(x,Y)$:
\begin{enumerate}
 \item While the constant half-trajectory $(t \mapsto x)$ is always an element of $\WW^u(x,f,g)$, it is not an element of $W^-(x,Y)$ for $Y \in \XX_-$ if there is some $s \in (-1,0]$ such that $Y(s,x) \neq 0$.
 \item While $\WW^u(x,f,g)$ will be a smooth submanifold of $\PP_-(x)$ if the Morse function $f$ is smooth, the differentiability of $W^-(x,Y)$ is always given by the differentiability of $Y$. For our later considerations, it will only be of importance for $W^-(x,Y)$ to be of class $C^1$. This is guaranteed for our choices of $Y$.
\end{enumerate}
\end{remark}

Define
\begin{equation*}
 \Wtilde^-(x,\XX_-) := \left(F^-\right)^{-1}\left( \zero_{\LL_-(x)}\right) \subset \XX_- \times \PP_-(x) \ .
\end{equation*}
Since $F^-_Y$ is transverse to $\zero_{\LL_-(x)}$ for every $Y \in U$, the total map $F^-$ is transverse to $ \zero_{\LL_-(x)}$, so $\Wtilde^-(x,\XX_-)$ is a Banach submanifold of $\XX_- \times \PP_-(x)$. 

The following statement is a straightforward analogue of \cite[Lemma 4.10]{SchwarzEqui} for perturbed half-trajectories and we will prove it by the same methods.

\begin{theorem}
\label{transverTotal} 
 The map $E^- : \Wtilde^-(x,\XX_-) \to M$, $(Y,\gamma) \mapsto \gamma(0)$, is a submersion of class $C^{n+1}$.
\end{theorem}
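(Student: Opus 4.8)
The plan is to show that $E^-$ is a submersion by examining its differential at an arbitrary point $(Y,\gamma) \in \Wtilde^-(x,\XX_-)$ and producing, for any prescribed tangent vector $v \in T_{\gamma(0)}M$, a tangent vector of $\Wtilde^-(x,\XX_-)$ at $(Y,\gamma)$ that maps to $v$ under $dE^-$. Since $\Wtilde^-(x,\XX_-) = (F^-)^{-1}(\zero)$ and $F^-$ is transverse to the zero section, its tangent space at $(Y,\gamma)$ is the kernel of the linearization
\begin{equation*}
 D F^-_{(Y,\gamma)} : \XX_- \times T_\gamma \PP_-(x) \to L^2(\gamma^*TM)\,,
\end{equation*}
after projecting the image to the fiber via a local trivialization of $\LL_-(x)$. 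Concretely, differentiating the defining equation in the pair $(Y,\gamma)$ shows that a pair $(\eta, \xi) \in \XX_- \times T_\gamma\PP_-(x)$ lies in $T_{(Y,\gamma)}\Wtilde^-(x,\XX_-)$ if and only if $\xi$ solves a linear ODE along $\gamma$ of the form
\begin{equation*}
 \nabla_s \xi(s) + A(s)\xi(s) + \eta(s,\gamma(s)) = 0 \qquad \forall s \in (-\infty,0]\,,
\end{equation*}
where $A(s)$ is the (pointwise linear) endomorphism of $\gamma(s)^*TM$ obtained by linearizing $\nabla f + Y(s,\cdot)$ at $\gamma(s)$; and $dE^-_{(Y,\gamma)}(\eta,\xi) = \xi(0)$.

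The key step is then to realize an arbitrary $v \in T_{\gamma(0)}M$ as $\xi(0)$ for some solution of this linear equation with an appropriate choice of perturbation $\eta$. The idea is to use the freedom in $\eta$ to steer the solution. Fix a cutoff function $\beta \in C^\infty(\RR)$ with $\beta \equiv 1$ near $0$ and $\supp \beta \subset (-1/2, 0]$. Solve the homogeneous linear ODE $\nabla_s \xi + A(s)\xi = 0$ on $(-\infty,0]$ with the terminal condition $\xi(0) = v$; call the solution $\xi_v$. By the uniqueness and (backward) existence theory for linear ODEs, $\xi_v$ is a smooth section of $\gamma^*TM$ defined on all of $(-\infty,0]$, and since $\gamma$ converges exponentially to $x$ and $A(s)$ stabilizes to the (invertible) Hessian-type operator at $x$, a standard estimate shows $\xi_v \in T_\gamma\PP_-(x)$, i.e. it decays appropriately as $s \to -\infty$ so as to lie in the $H^1$-tangent space. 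Now set $\xi := \beta \cdot \xi_v$ and define $\eta \in \XX_-$ by requiring $\eta(s,\gamma(s)) := -\nabla_s(\beta \xi_v)(s) - A(s)(\beta\xi_v)(s) = -\beta'(s)\xi_v(s)$, extended to a global $C^{n+1}$ time-dependent vector field on $(-\infty,0] \times M$ supported in $s \in (-1,0]$ (e.g. by extending the section $s \mapsto -\beta'(s)\xi_v(s)$ along $\gamma$ to a neighborhood using a tubular-neighborhood bump function, then multiplying by a spatial cutoff). This $\eta$ is legitimate because $-\beta'$ is supported in $(-1/2,0]$, so $\eta$ vanishes for $s \le -1$ as required by the definition of $\XX_-$; and because everything is manifestly at least $C^{n+1}$. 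By construction $(\eta,\xi)$ solves the linearized equation, hence lies in $T_{(Y,\gamma)}\Wtilde^-(x,\XX_-)$, and $dE^-_{(Y,\gamma)}(\eta,\xi) = \xi(0) = \beta(0)\xi_v(0) = v$. Thus $dE^-_{(Y,\gamma)}$ is surjective at every point, so $E^-$ is a submersion; its $C^{n+1}$-regularity is inherited from that of $F^-$ (Theorem \ref{PerturbedFredholm}) via the implicit function theorem for Banach manifolds, since $\Wtilde^-(x,\XX_-)$ is a $C^{n+1}$-submanifold and evaluation at $0$ is smooth.

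The main obstacle I expect is the analytic bookkeeping needed to confirm that $\xi_v \in T_\gamma\PP_-(x)$ and, dually, that $\eta$ can be chosen in $\XX_-$ with the right regularity and support: one must check that the backward solution of the homogeneous linear equation decays as $s \to -\infty$ (using hyperbolicity of the asymptotic operator at the critical point $x$, exactly as in the analysis underlying Theorem \ref{unperturbedFredholm} and \cite[Lemma 4.10]{SchwarzEqui}), and that extending the section $-\beta'(s)\xi_v(s)$ off the curve $\gamma$ to an honest element of $\XX_-$ costs nothing — which is where the compactly-supported-in-time condition $X(s,\cdot)=0$ for $s \le -1$ is genuinely used and must be respected. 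Apart from this, the argument is a direct adaptation of the proof of \cite[Lemma 4.10]{SchwarzEqui}, replacing the unperturbed negative gradient flow by the flow of the time-dependent vector field $-\nabla f - Y$, and I would cite that proof for the routine parts rather than reproducing all estimates.
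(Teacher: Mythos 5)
Your construction is correct and is essentially the same mechanism as the paper's proof: build a $C^{n+1}$ section $\xi$ of $\gamma^*TM$ with $\xi(0)=v$, then use the freedom in the $\XX_-$-direction to absorb the defect $\nabla_s\xi + A(s)\xi$ in the linearized equation, taking care that this defect is supported in $s > -1$ so that the compensating $\eta$ is really an element of $\XX_-$. You do this by cutting off a locally defined homogeneous solution with $\beta$; the paper instead takes $\xi_0$ to solve the linearized ODE exactly on $(-\infty,-1]$, interpolates it freely to $v$ on $[-1,0]$, and compensates in the same way. Your version is arguably slightly cleaner since $\xi=\beta\xi_v$ is compactly supported and therefore trivially lies in the $H^1$-tangent space, with no asymptotic analysis needed.

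However, the intermediate assertion that the backward solution $\xi_v$ of $\nabla_s\xi + A(s)\xi=0$ on all of $(-\infty,0]$ lies in $T_\gamma\PP_-(x)$ is false for generic $v$, and your identification of this as ``the main obstacle'' points to a misconception worth correcting. Since $A(s)$ converges to the Hessian of $f$ at $x$ as $s\to-\infty$, the backward flow of the homogeneous equation expands exponentially in the directions of $\ker(\mathrm{Hess}_xf - \lambda)$ with $\lambda>0$; only initial conditions $v$ tangent to the unstable manifold of $x$ (a $\mu(x)$-dimensional subspace of $T_{\gamma(0)}M$) give decaying solutions. Indeed, if $\xi_v\in T_\gamma\PP_-(x)$ held for all $v$, then $(0,\xi_v)$ would already lie in the kernel of $DF^-_{(Y,\gamma)}$ and the \emph{unperturbed} evaluation $W^-(x,Y)\to M$, $\gamma\mapsto\gamma(0)$, would be a submersion — contradicting $\dim W^-(x,Y)=\mu(x)\leq n$, and making the entire perturbation apparatus of the paper pointless. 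The reason your argument still works is precisely that the cutoff $\beta$ makes the global behavior of $\xi_v$ irrelevant: you only ever use $\xi_v$ on $\mathrm{supp}\,\beta\subset(-1/2,0]$. So the claimed ``main obstacle'' is not an obstacle at all; what the argument genuinely relies on is that $\beta'$ is supported in $(-1,0]$, which is exactly where $\XX_-$ allows nonzero perturbations, and that an extension of the vector field $-\beta'(s)\xi_v(s)$ along $\gamma$ to a global time-dependent $C^{n+1}$ vector field exists — both of which you do note.
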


\begin{proof}
 We start by determining the tangent bundle of $\Wtilde^-(x,\XX_-)$. Let $\nabla$ be a Riemannian connection on $TM$. (To distinguish the notation from $\nabla f$, we will write $\nabla^g f$ for the gradient vector field with respect to $g$ in this proof.) 
 
 One uses the methods of \cite[Appendix A]{Schwarz} and the proof of \cite[Lemma 4.10]{SchwarzEqui} as well as the simple fact that $F^-(\cdot,\gamma)$ is an affine map for every $\gamma \in \PP_-(x)$ to show that the differential of $F^-$ is given by
 \begin{equation}
 \label{diffofF}
  \left(DF^-_{(Y,\gamma)}(Z,\xi)\right)(t) = (\nabla_t \xi)(t) + \left(\nabla_\xi \nabla^g f\right)( \gamma(t)) + \left(\nabla_\xi Y\right)(\gamma(t)) + Z(t,\gamma(t)) \ ,
 \end{equation}
 at every $(Y,\gamma) \in U$ and for all $Z \in T_Y \XX_- \cong \XX_-$, $\xi \in T_\gamma \PP_-(x)$, where $\nabla_t \xi$ denotes the covariant derivative of $\xi$ along the curve $\gamma$. By definition of $\Wtilde^-(x,\XX_-)$ as a submanifold of the product, we know that
\begin{align*}
 &T_{(Y,\gamma)} \Wtilde^-(x,\XX_-) = \ker (DF^-)_{(Y,\gamma)} \\
    &= \left\{(Z,\xi) \in T_{(Y,\gamma)}(\XX_- \times \PP_-(x)) \ \left| \ \nabla_t \xi + \left(\nabla_\xi \nabla^g f\right)\circ \gamma + \left(\nabla_\xi Y\right) \circ \gamma + Z \circ \gamma = 0 \right. \right\} \ .
\end{align*}

Moreover, we can extend the map $E^-$ in the obvious way to a map defined on the whole product $E^-: \XX_- \times \PP_-(x) \to M$ and for all $Z \in T_Y \XX_-$ and $\xi \in T_\gamma \PP_-(x)$, the differential of $E^-$ is given by
\begin{equation}
\label{diffofE}
 DE^-_{(Y,\gamma)}[(Z,\xi)] = \xi(0) \ .
\end{equation}
Let $v \in T_{\gamma(0)}M$ be given and choose $\xi_0 \in T_\gamma \PP_-(x)$ with the following properties:
\begin{align}
 &\xi_0 \in C^{n+1}(\gamma^* TM) \label{xibed1} \ , \\
 &\left(\nabla_t \xi_0 + \left(\nabla_{\xi_0} \nabla^g f\right)\circ \gamma + \left(\nabla_{\xi_0} Y\right) \circ \gamma \right)(s) = 0\quad \forall s \in (-\infty,-1] \label{xibed2} \ , \\
 &\xi_0(0) = v \label{xibed3} \ .
\end{align}

The existence of a $\xi_0 \in T_{\gamma} \PP_-(x)$ satisfying (\ref{xibed1}), (\ref{xibed2}) and (\ref{xibed3}) is easy to see: 

We define $\xi_0$ on $(-\infty,-1]$ as an arbitrary solution of the first order ordinary differential equation (\ref{xibed2}) which exists by the standard results for uniqueness and existence of solutions. One can then show iteratively (similar as in the proof of \cite[Proposition 2.9]{Schwarz}), using that $f$ is smooth and $Y$ is of class $C^{n+1}$, that $\xi_0$ can be chosen to be $(n+1)$ times differentiable on $(-\infty,-1]$. Moreover, one applies \cite[Lemma 2.10]{Schwarz} to solutions of (\ref{xibed2}) and derives that $\left\|\xi_0\right\|_{C^{n+1}} < + \infty$. This implies (\ref{xibed1}).

We continue this $\xi_0$ by an arbitrary vector field along $\gamma|_{[-1,0]}$ such that (\ref{xibed1}) and (\ref{xibed3}) are satisfied. By (\ref{diffofE}) and (\ref{xibed3}), we know that 
\begin{equation*}
 DE^-_{(Y,\gamma)}[(0,\xi_0)] = v \ .
\end{equation*}
We then pick $Z_0 \in \XX_-$ with the following property:
\begin{equation*}
 Z_0(t,\gamma(t)) = - (DF^-_{(Y,\gamma)}[(0,\xi_0)])(t) \quad \forall t \in (-\infty,0] .
\end{equation*}
This property does not contradict the definition of $\XX_-$ since (\ref{xibed2}) is equivalent to 
\begin{equation*}
(DF^-_{(Y,\gamma)}[(0,\xi_0)])(t) = 0 \ \ \text{for every} \ \  t \in (-\infty,-1] \ .
\end{equation*}
The existence of such a $Z_0$ is obvious. By the definitions of $\xi_0$ and $Z_0$ we obtain using (\ref{diffofF}) and (\ref{diffofE}):
$$ DE^-_{(Y,\gamma)}[(Z_0,\xi_0)] = v \ , \qquad DF^-_{(Y,\gamma)}[(Z_0,\xi_0)] = 0 \ . $$
The last line implies that $(Z_0,\xi_0) \in T_{(Y,\gamma)}\Wtilde^-(x,U)$. So for an arbitrary $v \in T_{\gamma(0)}M$ we have found such a $(Z_0,\xi_0)$ which maps to $v$ under $DE^-$ and therefore proven the claim.
\end{proof}

The following theorem is derived from Theorem \ref{transverTotal} by standard methods of proving transversality results in Morse and Floer homology (see e.g. \cite[Section 2.3]{Schwarz}, \cite{FHSTransversality} or \cite[Section 5]{HutchingsNotes}).

\begin{theorem}
\label{NegEvalSubmersion}
 Let $N \subset M$ be a closed submanifold. There is a generic set $\GG \subset \XX_-$ such that for every $Y \in \GG$, the map
\begin{equation*}
 E^-_Y: W^-(x,Y) \to M \ , \quad   \gamma \mapsto E^-(Y,\gamma) \ , 
\end{equation*} \index{endpoint evaluation map!for negative half-trajectories}
is transverse to $N$. Consequently, for $Y \in \GG$, the space
\begin{equation*}
 W^-(x,Y,N) := \{ \gamma \in W^-(x,Y) \ | \ \gamma(0) \in N \} = \left(E^-_Y\right)^{-1}(N)
\end{equation*}
is a manifold of class $C^{n+1}$ with 
\begin{equation*}
 \dim W^-(x,Y,N) = \mu(x) - \codim_M N \ .
\end{equation*}
\end{theorem}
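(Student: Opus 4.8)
The plan is to deduce this from the parametric transversality result, Theorem~\ref{transverTotal}, together with the Sard--Smale theorem. The essential point is to transfer the fact that the \emph{total} evaluation map $E^-\colon \Wtilde^-(x,\XX_-)\to M$ is a submersion into a statement about a generic slice $Y\in\XX_-$. I would first rephrase the problem: a curve $\gamma\in W^-(x,Y)$ satisfies $\gamma(0)\in N$ precisely when the pair $(Y,\gamma)$ lies in $(E^-)^{-1}(N)$, where $E^-$ is restricted to the Banach submanifold $\Wtilde^-(x,\XX_-)$. Since $E^-$ is a $C^{n+1}$-submersion and $N\subset M$ is a (closed, hence finite-codimensional and without boundary) submanifold, the preimage
\begin{equation*}
 \Wtilde^-(x,\XX_-,N) := (E^-)^{-1}(N)\subset\Wtilde^-(x,\XX_-)
\end{equation*}
is a $C^{n+1}$-Banach submanifold of codimension $\codim_M N$.

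Next I would introduce the projection $\Pi\colon \Wtilde^-(x,\XX_-,N)\to\XX_-$, $(Y,\gamma)\mapsto Y$. The key claim is that $\Pi$ is a $C^{n+1}$-Fredholm map whose index equals $\mu(x)-\codim_M N$; this follows because the fibre $\Pi^{-1}(Y)$ is (as a set) exactly $W^-(x,Y,N)$, and the linearization of $\Pi$ at $(Y,\gamma)$ has kernel and cokernel controlled by the Fredholm operator $DF^-_Y$ cut down by the transverse submanifold $N$ — concretely, $\ker D\Pi$ is the tangent space to $\{Y\}\times$ the fibre, whose expected dimension is $\ind F^-_Y-\codim_M N=\mu(x)-\codim_M N$ by Theorem~\ref{PerturbedFredholm}, and the cokernel is finite-dimensional because $E^-$ being a submersion forces the relevant range conditions to be of finite defect. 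Here the regularity hypothesis matters: one needs $\Pi$ to be at least $C^{\ell}$ with $\ell$ strictly larger than the Fredholm index so that Sard--Smale applies, and $n+1$ differentiability was arranged exactly for this (note $\mu(x)-\codim_M N\le n$). Applying the Sard--Smale theorem to $\Pi$ yields a generic set $\GG\subset\XX_-$ of regular values.

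It then remains to check that $Y$ being a regular value of $\Pi$ is equivalent to $E^-_Y\colon W^-(x,Y)\to M$ being transverse to $N$. This is the standard argument: for $Y\in\GG$ and $\gamma\in W^-(x,Y,N)$, surjectivity of $D\Pi_{(Y,\gamma)}$ combined with the submersivity of the total map $E^-$ (Theorem~\ref{transverTotal}) implies, via an elementary diagram chase in the short exact sequences relating $T\Wtilde^-(x,\XX_-)$, $T\XX_-$ and $TW^-(x,Y)$, that $DE^-_Y$ hits a complement of $T_{\gamma(0)}N$ in $T_{\gamma(0)}M$; equivalently $W^-(x,Y,N)=(E^-_Y)^{-1}(N)$ is a $C^{n+1}$-submanifold of $W^-(x,Y)$ of codimension $\codim_M N$. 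Since $\dim W^-(x,Y)=\mu(x)$ by Corollary~\ref{PerturbedUnstMfld}, the dimension count $\dim W^-(x,Y,N)=\mu(x)-\codim_M N$ follows. I would also remark that when this number is negative the set is empty, consistent with the convention that the empty set is a manifold of every dimension.

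The main obstacle is the Fredholm property of the projection $\Pi$ and the bookkeeping of its index — one has to be careful that intersecting $\Wtilde^-(x,\XX_-)$ with $(E^-)^{-1}(N)$ does not destroy Fredholmness of the projection to the parameter space, and that the index drops by exactly $\codim_M N$. This is where one genuinely uses that $N$ is \emph{closed} (so of finite, well-defined codimension and with empty boundary, avoiding any subtlety about manifolds with boundary) and that $E^-$ is a submersion in the strong sense of Theorem~\ref{transverTotal}, i.e.\ surjective on differentials with split kernel. Everything else is a routine transcription of the Morse-theoretic parametric transversality scheme (as in \cite[Section 2.3]{Schwarz} or \cite{FHSTransversality}) to the perturbed, $C^{n+1}$-regular setting.
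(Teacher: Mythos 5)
Your proof is essentially the same argument as the paper's, modulo a choice of packaging. The paper re-expresses the constraint $\gamma(0)\in N$ as a bundle condition, setting $\zero_{\LL_-(x),N}:=\{(\gamma,0)\in\LL_-(x)\mid\gamma(0)\in N\}$, checks that $E_Y^{-1}(N)=(F_Y^-)^{-1}(\zero_{\LL_-(x),N})$, verifies (via Theorems~\ref{PerturbedUnstable} and~\ref{transverTotal}) that the total section $F^-$ is transverse to $\zero_{\LL_-(x),N}$, and then cites a black-box parametric Sard--Smale theorem (the version for Fredholm sections of Banach bundles). You instead cut down the universal moduli space $\Wtilde^-(x,\XX_-)$ directly by $(E^-)^{-1}(N)$ using the submersivity of $E^-$ from Theorem~\ref{transverTotal}, and then run the projection-to-parameter-space argument by hand. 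Since $(F^-)^{-1}(\zero_{\LL_-(x),N})$ and $(E^-|_{\Wtilde^-(x,\XX_-)})^{-1}(N)$ are the same set, the two universal moduli spaces coincide; what you have done is unpack the content of the cited parametric transversality theorem (which internally proceeds by exactly this projection / regular-value argument) rather than invoke it as a lemma.

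Two small points worth tightening. First, your justification of the Fredholm index of $\Pi$ is stated circularly: the sentence ``the fibre $\Pi^{-1}(Y)$ is exactly $W^-(x,Y,N)$, whose expected dimension is $\ind F_Y^- - \codim_M N$'' presumes that the fibre is cut out transversally, which is the conclusion you want. The clean statement is linear-algebraic: $\ker D\Pi_{(Y,\gamma)}$ and $\mathrm{coker}\,D\Pi_{(Y,\gamma)}$ are identified with the kernel and cokernel of the operator $\xi\mapsto \bigl(DF_Y^-\xi,\, DE_Y^-\xi \bmod T_{\gamma(0)}N\bigr)$ on $T_\gamma\PP_-(x)$; since $DF_Y^-$ is Fredholm of index $\mu(x)$ by Theorem~\ref{PerturbedFredholm} and the second component imposes $\codim_M N$ further transverse conditions (Theorem~\ref{transverTotal}), this operator is Fredholm of index $\mu(x)-\codim_M N$, and hence so is $\Pi$, independently of any regularity of $Y$. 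Second, the statement of the theorem holds with $x$ free, hence simultaneously for all $x\in\Crit f$. The paper makes this explicit by constructing a generic set $\GG_x$ for each $x$ and then passing to the finite intersection $\GG:=\bigcap_{x\in\Crit f}\GG_x$, which is still generic because $\Crit f$ is finite. Your argument produces a generic set for a single fixed $x$, so you should add this intersection step at the end to match what the theorem asserts.
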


\begin{proof}
 We want to apply the Sard-Smale transversality theorem in the form of \cite[Proposition 2.24]{Schwarz} or \cite[Theorem 5.4]{HutchingsNotes} using Theorem \ref{transverTotal}. For this purpose, we have to express $W^-(x,Y,N)$ in a slightly different way. Define
\begin{equation*}
  \zero_{\LL_-(x),N} :=  \{(\gamma,0) \in \LL_-(x) \ | \ \gamma(0) \in N \} \ .
\end{equation*}
Since the map $\PP_-(x) \to M$, $\gamma \mapsto \gamma(0)$, is a submersion, $\zero_{\LL_-(x),N}$ is the image of the restriction of the zero-section to a Banach submanifold of $\PP_-(x)$ whose codimension is given by $\codim N$.

Comparing the definitions of both sides of the following equation, one checks that for every $Y \in \XX_-$ we have
\begin{equation}
  E_Y^{-1}(N) = \left(F^-_Y\right)^{-1}(\zero_{\LL_-(x),N} ) \ .
\end{equation}
But it is easy to see that Theorems \ref{PerturbedUnstable} and \ref{transverTotal} together imply that the total map $F^-$ is transverse to $\zero_{\LL_-(x),N} $. The map $F^-$ is defined on the product $\XX_- \times \PP_-(x)$. Moreover, $F^-$ is a Fredholm map of index
\begin{equation*}
 0 \leq \mu(x) \leq n
\end{equation*}
by Theorem \ref{PerturbedFredholm}. The same theorem implies that $F^-$ is of class $C^{n+1}$ which shows that the differentiability requirements of the Sard-Smale transversality theorem are satisfied by $F^-$.
Viewing $\XX_-$ as a parameter space, one sees that Theorem \ref{transverTotal} implies all the remaining requirements for the Sard-Smale theorem. We derive:

There is a generic subset $\GG_x \subset \XX_-$ such that for $Y \in \GG_x$, the map $F^-_Y$ is transverse to $\zero_{\LL_-(x),N}$, so it follows that for $Y \in \GG_x$, the space $\left(E^-_Y\right)^{-1}(N) = W^-(x,Y,N)$ is a manifold of class $C^{n+1}$ with
\begin{align*}
 \dim W^-(x,Y,N) &= \ind F^-_Y - \codim_{\zero_{\LL_-(x)}} \zero_{\LL_-(x),N}  = \mu(x) - \codim_M N \ .
\end{align*}
We have so far constructed an individual generic set $\GG_x$ for each $x \in \Crit f$. Define
\begin{equation*}
 \GG := \bigcap_{x \in \Crit f} \GG_x \subset \XX_-(M) \ .
\end{equation*}
Since $\Crit f$ is a finite set, $\GG$ is a finite intersection of generic sets, hence itself generic. Moreover, for every $Y \in \GG$, the map $E^-_{Y}$ is transverse to $N$.
\end{proof}

We obtain analogous results for positive half-trajectories, i.e. for curves 
\begin{equation*}
[0,+\infty) \to M \ .
\end{equation*} 
We next give a brief account of these results and omit the proofs, since they can be done along the same lines as for perturbed negative half-trajectories. \\

The analogous space of perturbations for curves $[0,\infty) \to M$ is given by:
\begin{align*}
\XX_+(M) := &\left\{ Y \in C^{n+1} \left( [0,+\infty) \times M,TM\right) \ \left| \ Y(s,\cdot) \in C^{n+1}(TM) \quad \forall s \in [0,+\infty) , \right. \right. \\
  &\left. \phantom{C^{n+1} booooooooooooooooooooooooooooooooo} Y(s,x) = 0 \quad \forall s \geq 1, \ x \in M \ \right\} \ .
\end{align*}
We summarize the corresponding results in the following theorem. It can be derived from part 2 of Theorem \ref{unperturbedFredholm} in the same way as Corollary \ref{PerturbedUnstMfld} and Theorems \ref{transverTotal} and \ref{NegEvalSubmersion} are derived from part 1 of Theorem \ref{unperturbedFredholm}.
\begin{theorem}
\label{PerturbedStable}
\begin{enumerate}
 \item For all $Y \in \XX_+(M)$ and $x \in \Crit f$ the space
\begin{equation*}
W^+(x,Y) := \{\gamma \in \PP_+(x) \ | \ \dot{\gamma}(s) + \nabla f (\gamma(s)) + Y(s,\gamma(s)) = 0 \}
\end{equation*} 
is a submanifold of $\PP_+(x)$ of class $C^{n+1}$ with $\dim W^+(x,Y) = n - \mu(x)$.
\item For $x \in \Crit f$ define $\Wtilde^+(x,\XX_+) := \{(Y,\gamma) \in \XX_+(M) \times \PP_+(x) \ | \ \gamma \in W^+(x,Y) \}$. The space $\Wtilde^+(x,\XX_+)$ is a Banach submanifold of $\XX_+(M) \times \PP_+(x)$, and the map
\begin{equation*}
 E^+: \Wtilde^+(x,\XX_+) \to M \ , \quad   (Y,\gamma) \mapsto \gamma(0) \ , 
\end{equation*}
is a submersion of class $C^{n+1}$. \index{endpoint evaluation map!for positive half-trajectories}
\item Let $N \subset M$ be a closed submanifold. There is a generic subset $\GG \subset \XX_+$ such that for all $Y \in \GG$ and $x \in \Crit f$ the space
\begin{equation*}
 W^+(x,Y,N) := \{ \gamma \in W^+(x,Y) \ | \ \gamma(0)=x  \}
\end{equation*}
is a manifold of class $C^{n+1}$ of dimension $\dim W^+(x,Y,N) = \dim N - \mu(x)$.
\end{enumerate}
\end{theorem}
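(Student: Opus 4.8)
The plan is to replay the entire negative-half-trajectory argument with the roles of $-\infty$ and $+\infty$ interchanged. First I would introduce the section
\begin{equation*}
 F^+ : \XX_+(M) \times \PP_+(x) \to \LL_+(x) \ , \qquad
 (Y,\gamma) \mapsto \bigl( s \mapsto \dot{\gamma}(s) + \nabla f(\gamma(s)) + Y(s,\gamma(s)) \bigr) \ ,
\end{equation*}
and check, exactly as in Theorem \ref{PerturbedFredholm}, that it is $(n+1)$ times differentiable --- being affine, hence smooth, in $Y$ and $(n+1)$ times differentiable in $\gamma$ by the argument of \cite[Theorem 12]{Schwarz} --- and that each $F^+_Y := F^+(Y,\cdot)$ is a $C^{n+1}$ Fredholm section. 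Its index equals that of $F^+_0 = \partial^g_+$, which is $n-\mu(x)$ by part 2 of Theorem \ref{unperturbedFredholm}, because $\XX_+(M)$ is connected and the Fredholm index is locally constant. Transversality of $F^+_Y$ to $\zero_{\LL_+(x)}$ is the exact analogue of Theorem \ref{PerturbedUnstable}, proven by transferring the arguments of \cite[Section 6.3]{JostRiemGeom} and \cite[Section 2]{AbboMajer}, now using the time-dependent flow of $-\nabla f - Y$ on $[0,+\infty)$. Since $W^+(x,Y) = (F^+_Y)^{-1}(\zero_{\LL_+(x)})$, part 1 follows, with $\dim W^+(x,Y) = \ind F^+_Y = n - \mu(x)$.

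For part 2, transversality of each $F^+_Y$ to the zero section makes the total map $F^+$ transverse to $\zero_{\LL_+(x)}$, so $\Wtilde^+(x,\XX_+) = (F^+)^{-1}(\zero_{\LL_+(x)})$ is a Banach submanifold of $\XX_+(M) \times \PP_+(x)$, with tangent space at $(Y,\gamma)$ equal to $\ker DF^+_{(Y,\gamma)}$, where $DF^+$ is given by the analogue of \eqref{diffofF} with $t$ ranging over $[0,+\infty)$. Extending $E^+$ to the whole product gives $DE^+_{(Y,\gamma)}[(Z,\xi)] = \xi(0)$ as in \eqref{diffofE}. Given $v \in T_{\gamma(0)}M$, I would choose $\xi_0 \in T_\gamma\PP_+(x)$ solving the linearized equation $\nabla_t\xi_0 + (\nabla_{\xi_0}\nabla^g f)\circ\gamma + (\nabla_{\xi_0}Y)\circ\gamma = 0$ on $[1,+\infty)$; by the existence and iterated-regularity argument of \cite[Proposition 2.9]{Schwarz} together with the decay estimate \cite[Lemma 2.10]{Schwarz}, this $\xi_0$ can be taken of class $C^{n+1}$ with finite $C^{n+1}$-norm, and I would extend it over $[0,1]$ so that $\xi_0(0) = v$. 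Then I would pick $Z_0 \in \XX_+(M)$ with $Z_0(t,\gamma(t)) = -\bigl(DF^+_{(Y,\gamma)}[(0,\xi_0)]\bigr)(t)$ for all $t \in [0,+\infty)$; this is compatible with the support condition in the definition of $\XX_+(M)$ precisely because the right-hand side vanishes for $t \geq 1$. By construction $(Z_0,\xi_0) \in T_{(Y,\gamma)}\Wtilde^+(x,\XX_+)$ and $DE^+_{(Y,\gamma)}[(Z_0,\xi_0)] = v$, so $E^+$ is a $C^{n+1}$ submersion.

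For part 3, I would set $\zero_{\LL_+(x),N} := \{(\gamma,0) \in \LL_+(x) \mid \gamma(0) \in N\}$, which is the zero section restricted to a Banach submanifold of $\PP_+(x)$ of codimension $\codim_M N$ since $\gamma \mapsto \gamma(0)$ is a submersion, and check as in the proof of Theorem \ref{NegEvalSubmersion} that $(E^+_Y)^{-1}(N) = (F^+_Y)^{-1}(\zero_{\LL_+(x),N})$ and that parts 1 and 2 jointly force $F^+ \tv \zero_{\LL_+(x),N}$. Since $F^+$ is Fredholm of index $n - \mu(x) \in \{0,1,\dots,n\}$ and of class $C^{n+1}$, the differentiability hypothesis of the Sard--Smale theorem in the form of \cite[Proposition 2.24]{Schwarz} or \cite[Theorem 5.4]{HutchingsNotes} is met with $\XX_+(M)$ as parameter space, so there is a generic $\GG_x \subset \XX_+(M)$ with $F^+_Y \tv \zero_{\LL_+(x),N}$ for $Y \in \GG_x$; then $W^+(x,Y,N) = (E^+_Y)^{-1}(N)$ is a $C^{n+1}$ manifold of dimension $(n - \mu(x)) - \codim_M N = \dim N - \mu(x)$. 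Intersecting the finitely many $\GG_x$, $x \in \Crit f$, yields the required generic $\GG \subset \XX_+(M)$.

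I expect essentially no new difficulty: the content is the mirror-symmetric bookkeeping relative to the negative case. The only points that need attention are that the ``free window'' in which $Z_0$ may be prescribed, and the window on which $\xi_0$ is solved rather than specified, now sit at $[0,1]$ and $[1,+\infty)$ respectively --- with the asymptotic decay of $\xi_0$ coming from the stable rather than the unstable directions at $x$ --- and that the $C^{n+1}$-regularity of $F^+$ is exactly what is needed for Sard--Smale, since the index $n - \mu(x)$ can be as large as $n$.
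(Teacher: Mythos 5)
Your proof is correct and is precisely the mirror-symmetric replay of the negative-half-trajectory arguments (Theorems \ref{PerturbedFredholm}, \ref{PerturbedUnstable}, \ref{transverTotal}, \ref{NegEvalSubmersion}) that the paper itself invokes but leaves to the reader; you have correctly swapped the windows $(-\infty,-1]\leftrightarrow[1,+\infty)$ and $[-1,0]\leftrightarrow[0,1]$ and tracked the resulting decay/support conditions, and the Sard--Smale and intersection steps are identical. (You also silently corrected the evident typo in the statement of part 3, where $\gamma(0)=x$ should read $\gamma(0)\in N$.)
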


Next we consider a third type of perturbed trajectories of the negative gradient flow of $f$, namely curves $\gamma: [0,l] \to M$, where $l \in [0,+\infty)$ is allowed to vary, satisfying a perturbed negative gradient flow equation. Once again we start by considering the unperturbed case and derive our results for perturbed trajectories from this case.  \\

Consider the set
\begin{equation*}
 \MM(f,g) := \left\{ (l,\gamma) \ | \ l \in [0,+\infty) \ , \ \ \gamma: [0,l] \to M \ , \ \ \dot{\gamma}+\nabla^g f \circ \gamma = 0 \right\} \ .
\end{equation*} 
By the unique existence of solutions of ordinary differential equations for given initial values, the following map is easily identified as a bijection:
\begin{equation}
\label{DiffeoFiniteLength}
\varphi: \MM(f,g)  \to [0,+\infty) \times M \ , \quad  (l,\gamma) \mapsto (l, \gamma(0)) \ ,
\end{equation}
whose inverse is given by the map 
\begin{equation*}
\varphi^{-1}: [0,+\infty) \times M \to \MM(f,g) \ , \quad (l,x) \mapsto \left(l, \left(t \mapsto \phi^{-\nabla^g f}_t(x), \ t \in [0,l]\right) \right) \ , 
\end{equation*}
where $\phi^{-\nabla^g f}$ again denotes the negative gradient flow of $f$ with respect to $g$. 

We equip $[0,+\infty)$ with the canonical structure of a smooth manifold with boundary. Then the product manifold $[0,+\infty) \times M$ is a smooth manifold with boundary as well. \\

\textit{We can therefore equip $\MM(f,g)$ with a topology and a smooth structure such that $\varphi$ becomes a diffeomorphism of manifolds with boundary. From now on, let $\MM(f,g)$ always be equipped with this smooth structure.} 

\begin{definition}
 We call $\MM(f,g)$ the space of \emph{finite-length trajectories} of the negative gradient flow of $f$. 
\end{definition}

Spaces of finite-length trajectories as well as their smooth structures are also considered in \cite[Section 2]{WehrheimMWC} in greater detail. To study perturbed finite-length gradient flow trajectories, we momentarily introduce a convenient space of perturbations. The construction of the perturbation spaces requires more effort since we want the interval length of the trajectory to be a parameter of the perturbing vector fields as well. This becomes necessary in Section \ref{ConvergenceBehaviour} when we consider the limiting behaviour of sequences of perturbed finite-length trajectories if the length tends to $+ \infty$. \\

Consider a smooth function $\chi: [0,+\infty)\to [0,1]$ with the following properties:
\begin{itemize}
 \item $\chi(l) = \tfrac{l}{3}$ if $l \leq 3- \delta$ for some small $\delta>0$ \ , 
 \item $\chi(l) = 1$ if $l \geq 3$ \ ,
 \item $\dot{\chi}(l) > 0$ if $l < 3$.
\end{itemize}
See Figure \ref{GraphChi} for a picture of the graph of a possible choice of $\chi$. 
\begin{figure}[h]
 \centering
 \includegraphics[scale=0.8]{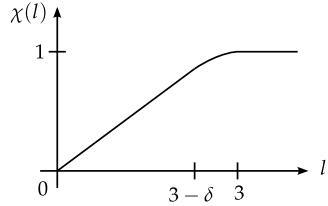}
 \caption{The graph of $\chi$}
 \label{GraphChi}
\end{figure}

\textit{We choose such a function $\chi$ once and for all and keep it fixed throughout the rest of this article.} \\

Moreover, for any map 
\begin{equation*} 
Z: [0,+\infty) \times [0,+\infty) \times M \to TM \ , \quad \left(l,t,x \right) \mapsto Z\left(l,t,x \right) \ , 
\end{equation*}
and any $l \geq 0$ define maps
\begin{align*}
 s_0(l,Z): [0,+\infty) \times M &\to TM \ , \qquad   \left(t,x\right) \mapsto \begin{cases}
                                      Z\left(l,t,x\right) & \text{if} \ \ t \in [0,1] \ , \\
                                      0 \in T_xM & \text{if} \ \ t > 1 \ ,
                                     \end{cases} \ , \\
 e_0(l,Z): [0,+\infty) \times M &\to TM \ , \qquad \left(t,x\right) \mapsto \begin{cases}
                                    Z\left(l,t+l,x\right) & \text{if} \ \ t \in [-1,0] \ , \\
                                    0 \in T_x M & \text{if} \ \ t < -1 \ .
                                   \end{cases}
\end{align*}
One checks from these definitions that if $Z$ is chosen such that $Z(l,t,\cdot)$ is a vector field on $M$ for all $l,t \in [0,+\infty)$, then both $s_0(l,Z)$ and $e_0(l,Z)$ are time-dependent sections of $TM$, although they might not be continuous. 

We further put for all $l \in [0,+\infty)$ and $Z: [0,+\infty) \times [0,+\infty) \times M \to TM$ with the above properties:
\begin{equation}
\label{split0}
\Split_{0}(l,Z) := \left(s_0(l,Z),e_0(l,Z) \right) \ .
\end{equation}
Define 
\begin{align}
&\XX_0(M) := \left\{ X \in C^{n+1}\left([0,+\infty)^2\times M,TM \right)  \ \left| \ X(l,t,\cdot) \in C^{n+1}(TM) \ \ \forall \ l,t \in [0,+\infty) \ , \right. \right. \notag \\ 
	      &\qquad \qquad \qquad X(l,t,x) = 0 \  \text{ if } \ t \in [\chi(l),l-\chi(l)]  \ , \label{XXcond1} \\
	      &\qquad \qquad \qquad (D^kX)_{(0,t,x)} = 0 \quad  \forall k \in \{0,1,\dots,n+1\}, \ (t,x) \in [0,+\infty)\times M \ , \label{XXcond3} \\
	      &\qquad \qquad \Bigl. \phantom{C^{n+1}} \lim_{l \to +\infty} \Split_0(l,Y) \ \text{ exists in } \ \XX_+(M) \times \XX_-(M) \ \Bigr\} \ .  \label{XXcond2}
\end{align}
 Concerning condition (\ref{XXcond1}), it suffices for the rest of this section to note that this condition especially implies that for each $l \in \RR_{> 0}$ there is a $\delta > 0$ such that
\begin{equation*}
X(l,t,x) = 0 \quad \text{for all } \ \ t \in \left(\tfrac{l}{2}-\delta, \tfrac{l}{2}+\delta\right)  , \ \ x \in M \ \text{and} \	 X \in \XX_0(M) \ .
\end{equation*}
Moreover, if $l \geq 3$, then
\begin{equation}
\label{conseqgeq3}
X(l,t,x) = 0 \quad \text{for every } \ \ t \in [1,l-1] \ .
\end{equation}
See Figure \ref{GraphsXX0} for an illustration of these observations. Condition (\ref{XXcond3}) will not be required until Section \ref{CompactificationsOneDimAinfty}, see Remark \ref{RemXXcond3}. 

From (\ref{conseqgeq3}) and the definition of the maps $s_0$ and $e_0$ we derive the following observation.

\begin{lemma}
 $\XX_0(M)$ is a Banach space with the $C^{n+1}$-norm induced by the given metrics. 
\end{lemma}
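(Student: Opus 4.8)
The plan is to exhibit $\XX_0(M)$ as a closed linear subspace of the Banach space $E := C^{n+1}(\pi^* TM)$ of $C^{n+1}$-sections of the pullback of $TM$ along the projection $\pi : [0,+\infty)^2 \times M \to M$, equipped with the $C^{n+1}$-norm induced by the given metrics; since $M$ is compact this space is complete, and a closed subspace of a Banach space is again a Banach space. (The first line of the definition of $\XX_0(M)$ merely records that one is dealing with a section of $\pi^*TM$.) That $\XX_0(M)$ is a linear subspace is immediate: \eqref{XXcond1} and \eqref{XXcond3} are pointwise vanishing conditions on $X$ and on its derivatives, while \eqref{XXcond2} is linear because $Z \mapsto \Split_0(l,Z)$ is linear for each fixed $l$ and because the set of $[0,+\infty)$-indexed families in a Banach space that converge as $l \to +\infty$ forms a linear subspace of the space of all such families.

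For closedness, suppose $(X_k)_{k \in \NN} \subset \XX_0(M)$ converges to $X$ in $E$; I must verify that $X$ satisfies \eqref{XXcond1}, \eqref{XXcond3} and \eqref{XXcond2}. Convergence in $C^{n+1}$ implies uniform convergence of all partial derivatives of order at most $n+1$, and for fixed arguments the evaluations $X \mapsto (D^j X)_{(l,t,x)}$ are continuous linear functionals on $E$; hence \eqref{XXcond1} and \eqref{XXcond3} pass to the limit. In particular $X$, all the $X_k$, and all the differences $X_k - X_m$ and $X_k - X$ satisfy \eqref{conseqgeq3}.

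The remaining assertion, that $X$ satisfies \eqref{XXcond2}, is the main obstacle, since \eqref{XXcond2} involves a limit over the \emph{noncompact} parameter $l$, and a priori $\Split_0(l,\cdot)$ only produces time-dependent sections that need not be continuous. The key observation is that for $l \geq 3$ these maps are genuine bounded linear maps: if $Z$ satisfies \eqref{conseqgeq3}, then by that vanishing property $s_0(l,Z)$ is of class $C^{n+1}$ and vanishes for $t \geq 1$, and $e_0(l,Z)$ is of class $C^{n+1}$ and vanishes for $t \leq -1$, so $s_0(l,Z) \in \XX_+(M)$ and $e_0(l,Z) \in \XX_-(M)$; moreover each $(t,x)$-derivative of $s_0(l,Z)$ or $e_0(l,Z)$ of order $\leq n+1$ equals, at every point, a partial derivative of $Z$, whence
\begin{equation*}
\left\| \Split_0(l,Z) \right\|_{\XX_+(M) \times \XX_-(M)} \leq C\, \|Z\|_{C^{n+1}} \qquad \text{for all } l \geq 3,
\end{equation*}
with $C$ depending only on the chosen product norm. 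Applying this estimate to $Z = X_k - X_m$ and passing to the limit $l \to +\infty$ shows that the limits $\Phi(X_k) := \lim_{l \to +\infty} \Split_0(l, X_k)$, which exist in $\XX_+(M) \times \XX_-(M)$ by hypothesis, satisfy $\|\Phi(X_k) - \Phi(X_m)\| \leq C\,\|X_k - X_m\|_{C^{n+1}}$; thus $(\Phi(X_k))_k$ is Cauchy in the Banach space $\XX_+(M) \times \XX_-(M)$ and converges to some $P$. Finally, a routine three-term estimate, bounding $\|\Split_0(l,X) - P\|$ for $l \geq 3$ by $\|\Split_0(l, X - X_k)\| + \|\Split_0(l,X_k) - \Phi(X_k)\| + \|\Phi(X_k) - P\|$ and using the displayed inequality on the first term, the convergence $\Split_0(l,X_k) \to \Phi(X_k)$ as $l \to +\infty$ on the second, and $\Phi(X_k) \to P$ on the third, yields $\lim_{l \to +\infty}\Split_0(l,X) = P$, i.e.\ $X$ satisfies \eqref{XXcond2}. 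Therefore $\XX_0(M)$ is closed in $E$, hence a Banach space with the $C^{n+1}$-norm.
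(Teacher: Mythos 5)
Your proof is correct and follows essentially the same route the paper takes: realize $\XX_0(M)$ as a subset of $E = C^{n+1}(\pi^*TM)$, check linearity, and verify closedness, with the crux being that $\Split_0(l,\cdot)$ is well-defined and continuous for $l \geq 3$ (which in turn uses that \eqref{XXcond1} forces vanishing near $t=1$ and $t=l-1$ so the restrictions remain $C^{n+1}$). The paper states this compressively ("it follows from this observation and the definition"), whereas you spell out the uniform bound $\|\Split_0(l,Z)\| \le C\|Z\|_{C^{n+1}}$ independent of $l \geq 3$ and the resulting Cauchy/three-$\varepsilon$ argument that makes the passage to the limit rigorous; this is a genuine filling-in of a step the paper leaves implicit, but not a different method.
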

\begin{proof}
Apparently, $\XX_0(M)$ is a subset of $C^{n+1}(\pi^*TM)$, where $\pi: [0,+\infty)^2 \times M \to M$ denotes the projection onto the second factor. One checks without difficulties that the space 
$$\XXtilde_0(M) := \left\{X \in C^{n+1}(\pi^*TM) \ \middle| \ \text{$X$ satisfies \eqref{XXcond1} and \eqref{XXcond3}} \right\} $$
is a closed linear subspace of $C^{n+1}(\pi^*TM)$. One shows that for $l \geq 3$ and $X \in \XXtilde_0(M)$, the vector fields $s_0(l,X)$ and $e_0(l,x)$ from above are of class $C^{n+1}$, implying that the map 
$$\XX_0(M) \to \XX_+(M) \times \XX_-(M), \ \ X \mapsto \Split_0(l,X) \ , $$
is well-defined and continuously linear for every $l \geq 3$. It follows from this observation and the definition of $\XX_0(M)$ that the latter is a closed linear subspace of $\XXtilde_0(M)$, hence of $C^{n+1}(\pi^*TM)$, inheriting a Banach space structure. 
\end{proof}

\begin{figure}[h!]
 %\centering
 \begin{subfigure}{0.3\textwidth}
 \centering
 \includegraphics[scale=0.8]{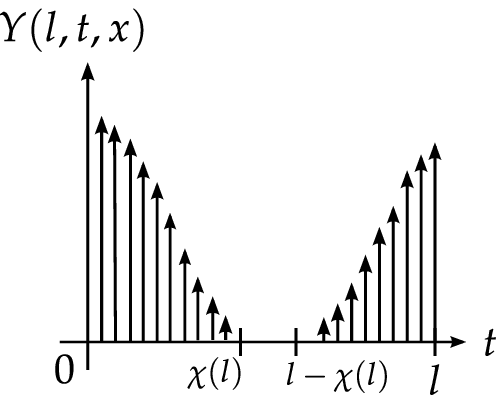}
 \caption{for small $l < 3$}
 \end{subfigure} \qquad  
 \begin{subfigure}{0.3\textwidth}
 \centering
 \includegraphics[scale=0.8]{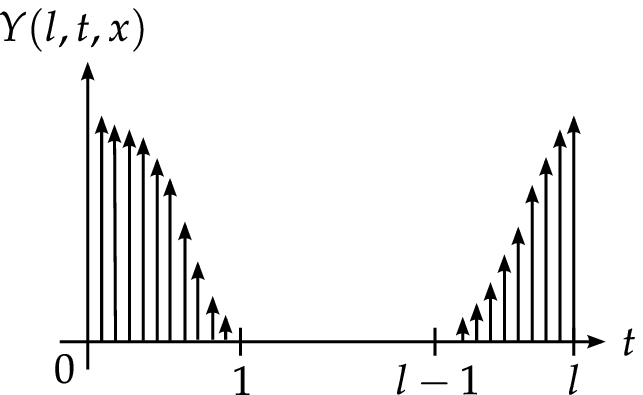}
 \caption{for large $l \geq 3$}
 \end{subfigure}  \phantom{3} \\
 \begin{subfigure}{\textwidth}
 %\centering
 \hspace*{3.3cm}
 \includegraphics[scale=0.8]{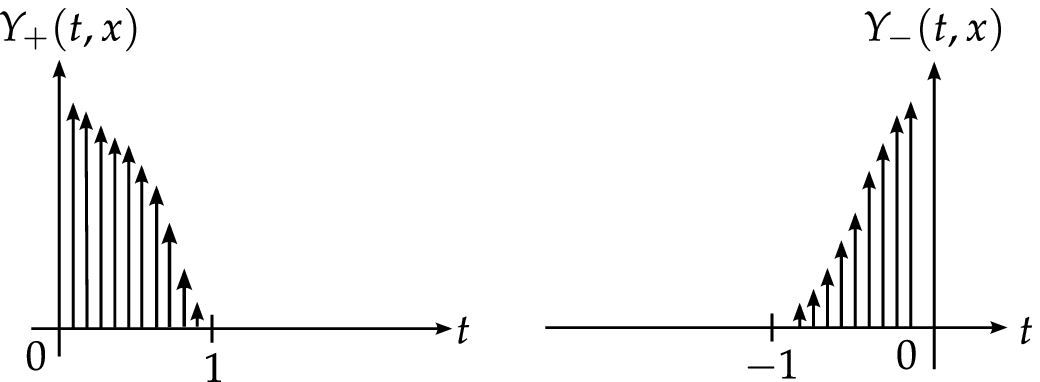}
 \caption{The graph of $\displaystyle\lim_{l \to + \infty} \Split_0(l,Y) = (Y_+,Y_-)$}
 \end{subfigure}
 \caption{The values of $t \mapsto Y(l,t,x)$ for $Y \in \XX_0(M)$ and fixed $l \geq 0$, $x \in M$ and an illustration of the map $\Split_0$.}
 \label{GraphsXX0}
\end{figure}

\begin{remark}
\label{reasonforXXcond1}
One derives from (\ref{conseqgeq3}) that for any $Y\in C^{n+1}\left([0,+\infty)\times [0,+\infty) \times M, TM \right)$ which obeys condition (\ref{XXcond1}) and for which the limit $Y_0 :=  \lim_{l \to + \infty} \Split_0(l,Y)$ exists pointwise, the pair of time-dependent vector fields $Y_0$ is of class $C^{n+1}$ and lies in $\XX_+(M) \times \XX_-(M)$. This observation is the reason for introducing condition (\ref{XXcond1}), since it will be needed for the compactifications of certain moduli spaces in the upcoming sections.
\end{remark}

For any $l$ we can view $X_l$ as a time-dependent vector field on $M$, where 
\begin{equation*}
 X_l(t,x) := X(l,t,x) \quad \forall t \in [0,+\infty) \ , \ \ x \in M \ .
\end{equation*}
In the following, we will consider flows of time-dependent vector fields, see \cite[Chapter 17]{LeeSmooth} for details. The \emph{time-dependent flow} of a time-dependent vector field $Y: \RR \times M \to TM$, is the map $\phi^Y: \RR \times \RR \times M \to M$ that is uniquely defined by the following two properties:
\begin{itemize}
\item $\phi^Y_{t,t}(x)= x$ for every $t \in \RR$ and $x \in M$, 
\item For every $t \in \RR$ and $x \in M$ the map $\RR \to M$, $s \mapsto \phi^Y_{s,t}(x)$ is the unique solution of 
\begin{equation*}
\begin{cases}
\gamma(t) &=x \ , \\
\dot{\gamma}(s)&=Y(s,\gamma(s)) \quad \forall \ s \in \RR \ , 
\end{cases}
\end{equation*}
\end{itemize}
where $\phi^Y_{s,t}(x) := \phi^Y(s,t,x)$ for every $s,t \in [0,+\infty)$ and $x \in M$. Consider the space \index{perturbed finite-length trajectories}
\begin{align}
\Mtilde := \{ (Y,l,\gamma) \ | \ &Y \in \XX_0(M), \ l \in [0,+\infty) , \ \gamma:[0,l] \to M , \notag \\
	      &\qquad \qquad \dot{\gamma}(s) + (\nabla f)(\gamma(s)) + Y_l(s,\gamma(s)) = 0 \} \ . \label{pertgradflowfinite}
\end{align}
The following map is then easily identified as a bijection: 
\begin{equation}
\label{Defofpsi}
\begin{aligned}
 \psi: \XX_0(M) \times \MM(f,g) &\to \Mtilde \ , \\
   (Y,l,\gamma) &\mapsto \left(Y, \left(l, \left(s \mapsto \phi^{Y,l}_{s,\frac{l}{2}}\left(\gamma\left(\tfrac{l}{2}\right) \right) \right), \ s \in [0,l] \right) \right) \ ,
\end{aligned}
\end{equation}
where $\phi^{Y,l}_{s,t}$ denotes the flow of the time-dependent vector field 
\begin{equation*}
(s,x) \mapsto - Y_l(s,x) - (\nabla f)(x) \ .
\end{equation*}
We want to describe $\psi$ more intuitively. First of all, for any $x \in M$, by definition of the flow the curve $\alpha_x: [0,l] \to M$, 
\begin{equation*}
\alpha_x(s) := \phi^{Y,l}_{s,\frac{l}{2}}(x) \ ,
\end{equation*}
denotes the unique solution of \eqref{pertgradflowfinite} which is defined on $[0,l]$ and which fulfills $\alpha_x\left(\frac{l}{2}\right)=x$.
Therefore, if $(Y,l,\gamma) \in \XX_0(M) \times \MM(f,g)$ and if we define $\tilde{\gamma}:[0,l] \to M$ by putting
\begin{equation*}
 \left(Y,l,\tilde{\gamma} \right) := \psi(Y,l,\gamma) \ , 
\end{equation*}
then $\tilde{\gamma}$ will be the unique solution of (\ref{pertgradflowfinite}) with $\tilde{\gamma}\left(\frac{l}{2} \right) = \gamma \left(\frac{l}{2} \right)$. Loosely speaking, $\psi$ maps a solution of the negative gradient flow equation to the solution of (\ref{pertgradflowfinite}) defined on the same interval and coinciding with $\gamma$ at time $\frac{l}{2}$. 

Moreover, the inverse of $\psi$ is given by the map $\Mtilde \to \XX_0(M) \times \MM(f,g)$, 
\begin{equation*}
  (Y, (l,\gamma)) \to \Bigl(Y, \Bigl(l, \Bigl(s \mapsto \phi^{-\nabla f}_{s-\frac{l}{2}}\left(\gamma\left(\tfrac{l}{2}\right) \right) \Bigr) , \ s \in [0,l] \Bigr) \Bigr) \ . 
\end{equation*}
Since $\psi$ is a bijection and its domain is the product of a Banach space and a smooth Banach manifold with boundary, we can equip $\Mtilde$ with the unique structure of a Banach manifold with boundary such that $\psi$ becomes a smooth diffeomorphism of Banach manifolds with boundary. Let $\Mtilde$ always be equipped with this smooth structure. 

\begin{prop} \index{perturbed finite-length trajectories}
\label{FiniteLengthManifold}
 For every $Y \in \XX_0(M)$, the space
 \begin{equation*}
  \MM(Y) := \{(l,\gamma) \ | \ l \in [0,+\infty), \ \gamma: [0,l] \to M, \ \dot{\gamma}(s) + (\nabla f)(\gamma(s)) + Y_l(s,\gamma(s)) = 0 \}
 \end{equation*} 
has the structure of a smooth manifold with boundary such that it is diffeomorphic to $\MM(f,g)$. In particular:
\begin{equation*}
 \dim \MM(Y) = n+1 \ .
\end{equation*}
\end{prop}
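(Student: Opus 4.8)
The plan is to exploit the diffeomorphism $\psi$ constructed just above the statement, which already identifies $\Mtilde$ with the product $\XX_0(M) \times \MM(f,g)$ as smooth Banach manifolds with boundary. Since $\MM(Y)$ is by definition the fiber of the first projection $\mathrm{pr}_1 : \Mtilde \to \XX_0(M)$ over $Y$, the whole proposition reduces to showing that this projection, transported through $\psi$, behaves nicely, and indeed under $\psi$ it becomes simply the projection $\XX_0(M) \times \MM(f,g) \to \XX_0(M)$ onto the first factor. First I would make this precise: restricting $\psi$ to $\{Y\} \times \MM(f,g)$ gives a bijection onto $\{Y\} \times \MM(Y)$ (inspecting the formula \eqref{Defofpsi}, the first component is fixed equal to $Y$ and the assignment $(l,\gamma) \mapsto (l,\tilde\gamma)$ is a bijection onto $\MM(Y)$ by the uniqueness of solutions of the initial value problem for \eqref{pertgradflowfinite}, exactly as recorded in the discussion following \eqref{Defofpsi}).

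Next I would endow $\MM(Y)$ with the smooth structure making the composite
\begin{equation*}
\MM(f,g) \xrightarrow{\ \psi(Y,\cdot)\ } \{Y\} \times \MM(Y) \xrightarrow{\ \mathrm{pr}_2\ } \MM(Y)
\end{equation*}
a diffeomorphism of manifolds with boundary; this is legitimate since $\MM(f,g)$ is itself a smooth manifold with boundary (diffeomorphic to $[0,+\infty) \times M$ via \eqref{DiffeoFiniteLength}). What must then be checked is that this smooth structure on $\MM(Y)$ agrees with the subspace / submanifold structure inherited from $\Mtilde$, i.e. that $\{Y\} \times \MM(Y)$ is a smooth submanifold of $\Mtilde$ and that the above map realizes a diffeomorphism onto it. This follows because $\{Y\} \times \MM(f,g)$ is a closed smooth submanifold of the product $\XX_0(M) \times \MM(f,g)$ (a slice of a product) and $\psi$ is a diffeomorphism; hence its image $\{Y\} \times \MM(Y)$ is a smooth submanifold of $\Mtilde$, and $\mathrm{pr}_2$ restricted to this slice is a diffeomorphism onto $\MM(Y)$. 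The dimension count is then immediate: $\dim \MM(Y) = \dim \MM(f,g) = \dim\bigl([0,+\infty) \times M\bigr) = n+1$, using the diffeomorphism \eqref{DiffeoFiniteLength}.

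The one genuinely substantive point — and the step I expect to be the main obstacle — is the claim that $\psi$ as written in \eqref{Defofpsi} is actually a \emph{smooth} diffeomorphism and not merely a bijection, because the later regularity arguments (Fredholm and transversality statements) all rest on this. Concretely, one must verify that the map $(Y,l,x) \mapsto \phi^{Y,l}_{s,l/2}(x)$ depends smoothly on all its arguments, jointly, including the basepoint time $l/2$ which itself varies with $l$, and that it produces elements of the appropriate Hilbert manifold of $H^1$-curves (or, here, of $\MM(f,g)$ with its manifold-with-boundary structure). This is a smooth-dependence-on-parameters result for solutions of ODEs, applied to the time-dependent vector field $(s,x) \mapsto -Y_l(s,x) - (\nabla f)(x)$ which is of class $C^{n+1}$ in space but depends on the parameter $Y \in \XX_0(M)$ affinely and continuously; combined with the explicit formula for $\psi^{-1}$ already displayed (which is manifestly smooth, being built from the unperturbed flow $\phi^{-\nabla f}$ and the diffeomorphism \eqref{DiffeoFiniteLength}), one concludes $\psi$ is a diffeomorphism. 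Since the excerpt already asserts this smooth-structure transfer in the paragraph immediately preceding the proposition (``we can equip $\Mtilde$ with the unique structure \ldots such that $\psi$ becomes a smooth diffeomorphism''), in the actual write-up I would simply invoke that and spend the proof on the slice argument above, treating $\psi$'s diffeomorphism property as already established.
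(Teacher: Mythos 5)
Your proof takes essentially the same route as the paper's: the paper likewise observes that $\psi(\{Y\} \times \MM(f,g)) = \{Y\} \times \MM(Y)$, notes that $\{Y\} \times \MM(f,g)$ is a submanifold slice of the product, transfers the structure through the already-established diffeomorphism $\psi$, and concludes by dropping the redundant factor $\{Y\}$. Your extra discussion of why $\psi$ is genuinely a diffeomorphism is a reasonable elaboration, but, as you yourself note, it duplicates what the paper has already posited when defining the smooth structure on $\Mtilde$, so the content of the proof is the same.
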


\begin{proof}
 Let $Y \in \XX_0(M)$. The space $\MM(Y)$ can be reformulated as
 \begin{equation*}
  \MM(Y) = \{(l,\gamma) \ | \ (Y,l,\gamma) \in \Mtilde  \} \ .
 \end{equation*}
This implies that  $\psi \left(\{Y\} \times \MM(f,g) \right) = \{Y\} \times \MM(Y) \ .$

Since $\{Y\} \times \MM(f,g)$ is a smooth submanifold with boundary of $\XX_0(M) \times \MM(f,g)$, the space $\{Y\} \times \MM(Y)$ is a smooth submanifold of $\Mtilde$ diffeomorphic to $\{Y\} \times \MM(f,g)$ by definition of the smooth structure on $\Mtilde$. Forgetting about the factor $\{Y\}$ shows the claim.
\end{proof}

For $l \geq 0$, let $\Mtilde_l := \left\{(Y,\lambda,\gamma) \in \Mtilde \ \middle| \ \lambda = l  \right\}$.

\begin{theorem}
\label{FiniteLengthSubmersion}
 \begin{enumerate}[a)]
  \item The map $E: \Mtilde \to M^2$, \ $(Y,(l,\gamma)) \mapsto (\gamma(0),\gamma(l))$, \ is of class $C^{n+1}$. \index{endpoint evaluation map!for finite-length trajectories}
\item Its restriction to $\Mtilde_l$ is a submersion for every $l > 0$. 
\end{enumerate}
\end{theorem}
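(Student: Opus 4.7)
The plan is to use the diffeomorphism $\psi\colon\XX_0(M)\times\MM(f,g)\to\Mtilde$ of \eqref{Defofpsi} to pull the problem back to the product domain, where both the smoothness and the surjectivity of derivatives can be read off from the behaviour of the time-dependent flow of $-\nabla f - Y_l$.

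For part (a), one computes
\[
E\circ\psi(Y,l,\gamma) = \Bigl(\phi^{Y,l}_{0,l/2}\bigl(\gamma(l/2)\bigr),\;\phi^{Y,l}_{l,l/2}\bigl(\gamma(l/2)\bigr)\Bigr).
\]
The map $(l,\gamma)\mapsto\gamma(l/2)$ is smooth on $\MM(f,g)$ since, under the diffeomorphism $\varphi$ of \eqref{DiffeoFiniteLength}, it reduces to $(l,x)\mapsto\phi^{-\nabla f}_{l/2}(x)$. The time-dependent vector field $(s,x)\mapsto -\nabla f(x)-Y(l,s,x)$ depends jointly $C^{n+1}$-smoothly on the parameters $(Y,l)$ and on $(s,x)$, so the standard ODE result on smooth dependence of flows on initial data and parameters (see e.g.\ \cite[Chapter 17]{LeeSmooth}) ensures that $E\circ\psi$, hence $E$, is of class $C^{n+1}$.

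For part (b), fix $(Y_0,l,\gamma_0)\in\Mtilde_l$ with $l>0$. By the support condition \eqref{XXcond1}, every $Z\in\XX_0(M)$ satisfies $Z(l,t,x)=0$ for $t\in[\chi(l),l-\chi(l)]$; hence a perturbation of $Y_0$ whose $l$-slice is supported in $[0,\chi(l))$ affects the flow only on $[0,\chi(l)]$ and therefore alters $\gamma_0(0)$ while fixing $\gamma_0(l)$, and mirror statements hold for perturbations supported near $t=l$. Since $l$ is held fixed, no variation of $l$ needs to be considered, and it suffices to show that variations of $Y$ alone already generate $T_{\gamma_0(0)}M\oplus T_{\gamma_0(l)}M$ under $DE$.

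To build such variations, given any $W\in C^{n+1}([0,+\infty)\times M,TM)$ supported in a small $t$-neighbourhood of $0$ and any cut-off $\eta\colon[0,+\infty)\to[0,1]$ of compact support with $\eta(l)=1$ and vanishing to infinite order at $0$, the vector field $Z(l',t,x):=\eta(l')W(t,x)$ lies in $\XX_0(M)$: conditions \eqref{XXcond1}, \eqref{XXcond3} and the split-limit condition in \eqref{XXcond2} are all immediate from the compact support in $l'$ and the localisation of $W$ in $t$. Let $\Phi(\cdot,\cdot)$ denote the fundamental solution of the linearisation of the perturbed flow equation along $\gamma_0$. A variation-of-constants computation on the interval $[0,\chi(l)]$, starting from the boundary condition $\delta\gamma(\chi(l))=0$ coming from the support of $Z$, yields
\[
\frac{d}{d\epsilon}\bigg|_{\epsilon=0}\phi^{Y_0+\epsilon Z,l}_{0,l/2}\bigl(\gamma_0(l/2)\bigr)=\int_{0}^{\chi(l)}\Phi(0,s)\,W(s,\gamma_0(s))\,ds,
\]
which can be made to equal any prescribed $v_0\in T_{\gamma_0(0)}M$ by taking $W$ to be a bump near some $s_0\in(0,\chi(l))$ pointing in the direction $\Phi(0,s_0)^{-1}v_0$. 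A symmetric choice of $Z$ with $t$-support near $l$ produces any $v_l\in T_{\gamma_0(l)}M$ as the variation of $\gamma_0(l)$ while fixing $\gamma_0(0)$; adding the two variations exhibits a tangent vector in $T_{(Y_0,l,\gamma_0)}\Mtilde_l$ mapping to $(v_0,v_l)$, and $E|_{\Mtilde_l}$ is a submersion. The main technical care is in checking that the cut-off constructions genuinely land in $\XX_0(M)$; the surjectivity itself then follows from elementary controllability of the linearised flow.
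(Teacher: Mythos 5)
Your part (a) is essentially the paper's argument: pull back through $\psi$ and $\varphi$ and invoke smooth dependence of the time-dependent flow on parameters. (The paper spells this out by constructing an explicit auxiliary vector field $\sigma$ on $\XX_0(M)\times[0,+\infty)\times\RR\times M$ whose flow realises the parametrised flow, but this is the same content.)

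Your part (b), however, takes a genuinely different route that is worth comparing. The paper first varies the initial condition $x$ (the $M$-factor in $\XX_0(M)\times\{l\}\times M$), using the fact that the flow maps $\phi^{Y,l}_{0,l/2}$ and $\phi^f_{l/2}$ are diffeomorphisms to hit the first endpoint component $v_1$ exactly; this necessarily produces some uncontrolled contribution in the second component, and the residual is then killed by a perturbation $Z\in\XX_0(M)$ built from the isotopy extension theorem. You instead hold $x$ fixed and produce \emph{both} endpoint variations purely through perturbations of $Y$: one supported (in $t$) near $0$, one near $l$. The support condition \eqref{XXcond1} guarantees these two families decouple the two endpoints, and a variation-of-constants computation along the linearised flow shows each family of perturbations generates the full tangent space at the corresponding endpoint. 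Your argument therefore proves a slightly stronger statement — the restriction of $E$ to a single fibre $\{x=\text{const}\}$ of $\Mtilde_l$ is already a submersion — and avoids the isotopy extension theorem in favour of elementary linear ODE controllability; the price is that you must exhibit the variation-of-constants formula and be careful that the cut-off $(l',t,x)\mapsto\eta(l')W(t,x)$ genuinely lies in $\XX_0(M)$. On the latter point, the check you gloss over is the one worth writing out: one must choose the $t$-support $[0,\delta)$ of $W$ small enough that $\delta\le\chi(l')$ for every $l'$ in the (compact, bounded away from $0$) support of $\eta$, which is possible since $\chi>0$ on $(0,\infty)$; with that, \eqref{XXcond1}, \eqref{XXcond3} and \eqref{XXcond2} do indeed hold. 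With these details supplied, your proof is correct.
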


\begin{proof}
\begin{enumerate}[a)]
 \item By definition of the smooth structure on $\Mtilde$, we need to show that the map
 \begin{equation*}
  E \circ \psi: \XX_0(M) \times \MM(f,g) \to M^2 
 \end{equation*}
is of class $C^{n+1}$ and that its restriction to the interior of its domain is a submersion. By definition of the smooth structure on $\MM(f,g)$, this is in turn equivalent to showing that the map
\begin{equation*}
 E_0: \XX_0(M) \times [0,+\infty) \times M \to M^2 \ , \quad E_0 := E \circ \psi \circ \left(\id_{\XX_0(M)} \times \varphi^{-1} \right) \ , 
\end{equation*}
is of class $C^{n+1}$ and that its restriction to $\XX_0(M) \times [0,+\infty)\times M$ is a submersion of class $C^{n+1}$. 

For this purpose, we want to write down the map $E_0$ more explicitly. Let $\phi^f$ denote the negative gradient flow of $(f,g)$. For $(Y,l,x) \in \XX_0(M) \times [0,+\infty) \times M$, we compute
\begin{align*}
 \left(\psi \circ \varphi^{-1} \right)(Y,l,x) &= \psi \left(Y,\left(l,\left(s \mapsto \phi^f_s(x), \ s \in [0,l]\right) \right)\right) \\
    &= \left(Y,\left(l, \left(s \mapsto \phi^{Y,l}_{s,\frac{l}{2}}\left(\phi^f_{\frac{l}{2}}(x) \right), \ s \in [0,l] \right) \right) \right) \ .
\end{align*}
Using this result, we further derive
\begin{align}
 E_0(Y,l,x) &= E\Bigl(Y,\Bigl(l, \Bigl(s \mapsto \phi^{Y,l}_{s,\frac{l}{2}}\Bigl(\phi^f_{\frac{l}{2}}(x) \Bigr), \ s \in [0,l] \Bigr) \Bigr) \Bigr) \notag \\
	    &= \Bigl( \Bigl(\phi^{Y,l}_{0,\frac{l}{2}}\circ \phi^f_{\frac{l}{2}}\Bigr)(x) ,\Bigl(\phi^{Y,l}_{l,\frac{l}{2}}\circ\phi^f_{\frac{l}{2}}\Bigr)(x) \Bigr) \ . \label{formulaforE0}
\end{align}
Consider the map
\begin{align*}
 \sigma: \XX_0(M) \times [0,+\infty) \times \RR \times M &\to T\XX_0(M) \times T [0,+\infty) \times T \RR \times TM \ , \\
   (Y,l,t,x) &\mapsto (0,0,0,Y(l,t,x)) \ .
\end{align*}
The evaluation map defined on a mapping space consisting of maps of class $C^{n+1}$ is itself of class $C^{n+1}$. Therefore, $\sigma$ is a vector field of class $C^{n+1}$ on $\XX_0(M) \times [0,+\infty)\times \RR \times M$ and has a flow of class $C^{n+1}$ (defined for non-negative time values):
\begin{align*}
 \phi^\sigma: [0,+\infty) \times \XX_0(M) \times [0,+\infty) \times \RR \times M &\to \XX_0(M) \times [0,+\infty) \times \RR \times M \ ,  \\
  (s,Y,l,t,x) &\mapsto \phi^\sigma_s(Y,l,t,x) \ .
\end{align*}
Let $\pi: \XX_0(M) \times [0,+\infty) \times \RR \times M \to M$ denote the projection onto the last factor and put
\begin{equation*}
 \tilde{\phi}^\sigma: [0,+\infty) \times \XX_0(M) \times [0,+\infty) \times \RR \times M \to  M \ ,  \quad \tilde{\phi}^\sigma := \pi \circ \phi^\sigma \ .
\end{equation*}
By definition of $\sigma$, the uniqueness of integral curves of vector fields implies that
\begin{equation*}
 \tilde{\phi}^\sigma(s,Y,l,t,x) = \phi^{Y,l}_s(t,x) \ . 
\end{equation*}
Therefore, the map $(s,Y,l,t,x) \mapsto \phi^{Y,l}_s(t,x)$ is of class $C^{n+1}$. Applying this result and the fact that $\phi^f$ is a smooth map to (\ref{formulaforE0}), one sees that $E_0$ is a composition of maps of class $C^{n+1}$ and therefore itself of class $C^{n+1}$. 

\item The claim is equivalent to $E_0|_{\XX_0(M) \times \{l\} \times M}$ being a submersion for every $l > 0$. 

We can w.l.o.g. assume that $l=2$ and put $\phi^Y_{s,t}:=\phi^{Y,2}_{s,t}$ for all $s$ and $t$. This simplifies the notation since by (\ref{formulaforE0}):
\begin{equation*}
 E_{0,2}(Y,x) := E_0(Y,2,x) = \left(\left(\phi^{Y}_{0,1}\circ\phi^f_1\right)(x), \left(\phi^{Y}_{2,1}\circ\phi^f_1\right)(x) \right) \ .
\end{equation*}
Moreover, for every $v \in T_xM$ the following holds:
\begin{equation}
 \label{DiffofE0on0v}
 \left(DE_{0,2}\right)_{(Y,x)}[(0,v)]= \left( \left(D \phi^{Y}_{0,1} \right)_{\phi^f_1(x)}\left[\left(D \phi^f_1 \right)_x[v]\right],\left(D \phi^{Y}_{2,1} \right)_{\phi^f_1(x)}\left[\left(D \phi^f_1 \right)_x[v]\right] \right) \ .
\end{equation}
Let $(v_1,v_2) \in T_{E_{0,2}(Y,x)}M^2$. We need to find an element of $T_{(Y,x)} (\XX_0(M) \times M)$ which maps to $(v_1,v_2)$ under $(DE_{0,2})_{(Y,x)}$. Since flow maps are diffeomorphisms, there exists a unique $w_0 \in T_{\phi^f_1(x)} M$ with $\left(D\phi^{Y}_{0,1}\right)_{\phi^f_1(x)}[w_0] = v_1$ and a unique $v_0 \in T_xM$ with $\left(D\phi^f_1\right)_x[v_0] = w_0$. For this choice of tangent vectors, equation \eqref{DiffofE0on0v} yields:
\begin{equation*}
 \left(DE_{0,2}\right)_{(Y,x)}[(0,v_0)] = \left(v_1, \left(D\phi^{Y}_{2,1}\right)_{\phi^f_1(x)}[w_0] \right) \ .
\end{equation*}
To show the claim, we therefore need to find $Z \in T_Y \XX_0(M)$ with
\begin{equation*}
 \left(DE_{0,2}\right)_{(Y,x)}[(Z,0)]= \left(0,v_2- \left(D\phi^{Y}_{2,1}\right)_{\phi^f_1(x)}[w_0] \right) \ ,
\end{equation*}
since this would imply $\left(DE_{0,2}\right)_{(Y,x)}(Z,v_0)=(v_1,v_2)$. We will construct such a $Z$ using isotopy theory. Put $y_0 := \left(\phi^{Y}_{2,1} \circ \phi^f_1 \right)(x)$ and pick a smooth curve $\alpha:[0,1] \to M$ with
\begin{equation}
\alpha(0)  = y_0 \ , \qquad  \dot{\alpha}(0) = v_2 - \left(D\phi^{Y}_{2,1}\right)_{\phi^f_1(x)}[w_0] \in T_{y_0} M \ . \label{alphadotbeginning}
\end{equation}
Let $y_1 :=\alpha(1)$. We can view $\alpha$ as a smooth isotopy from $y_0$ to $y_1$, where we see $y_0$ and $y_1$ as zero-dimensional submanifolds of $M$. By the isotopy extension theorem (see \cite[Section 8.1]{Hirsch}), we can extend $\alpha$ to a smooth diffeotopy 
\begin{equation*}
 F: [0,1] \times M \to M \ .
\end{equation*}
Moreover, we can choose $F$ to have its support in a small neighborhood of $\alpha([0,1])$ and especially such that
\begin{equation}
\label{vanishesbelow43}
 F\left(t,\phi^{Y}_{s,1}\left(\phi^f_1(x)\right)\right) = \phi^{Y}_{s,1}\left(\phi^f_1(x) \right) \quad \forall s \in \left[0,\tfrac{4}{3}\right] \ .
\end{equation}
For every $s \in [0,2]$ we then define a time-dependent tangent vector at $\phi^{Y_2}_{s,1}\left(\phi^f_1(x) \right)$ by
\begin{equation*}
 Z_2\left(s,\phi^{Y_2}_{s,1}\left(\phi^f_1(x) \right)\right) := \pdd{}{t} F\left(t,\phi^{Y_2}_{s,1}\left(\phi^f_1(x) \right)\right)\Big|_{t=0} \ ,
\end{equation*}
where $\pdd{}{t}$ denotes the derivative of $F$ in the $[0,1]$-direction. Using (\ref{alphadotbeginning}) and (\ref{vanishesbelow43}), this yields:
\begin{equation}
 \label{conditionsonZ2}
 \begin{aligned}
  &Z_2\left(s,\phi_{s,1}\left(\phi^f_1(x)\right)\right) = 0 \quad \forall s \in \left[0,\tfrac{4}{3} \right] \ , \\
  &Z_2\left(2,\phi_{2,1}\left(\phi^f_1(x)\right)\right) = v_2 - \left(D\phi^{Y}_{2,1}\right)_{\phi^f_1(x)}[w_0] \ .
 \end{aligned}
\end{equation}
We can extend $Z_2$ to a smooth, time-dependent vector field fulfilling
\begin{equation}
\label{extensionofZ2}
 Z_2(t,x) = 0 \quad \forall t \in \left[0,\tfrac{4}{3}\right] \ , \ \ x \in M \ .
\end{equation}
Such a vector field satisfying (\ref{extensionofZ2}) can in turn be extended to a parametrized vector field $Z \in \XX_0(M)$ satisfying
\begin{equation*}
 Z(2,t,x) = Z_2(t,x)\qquad \forall t \in \RR, \ x \in M \ .
\end{equation*}
(Note that by definition of $\XX_0(M)$, condition (\ref{extensionofZ2}) is required for this extendability of $Z_2$.) For such a particular choice of $Z$ we obtain
\begin{align*}
 \left(DE_{2,0}\right)_{(Y,x)}(Z,0) &= \left(\left(D \tilde{\phi}^\sigma_{0,1}\right)_{(Y,\phi_f^1(x))}[(Z,0)],\left(D \tilde{\phi}^\sigma_{2,1}\right)_{(Y,\phi_f^1(x))}[(Z,0)] \right) \\
    &= \left(Z_2\left(0,\phi^{Y}_{0,1}\left(\phi^f_1(x)\right)\right), Z_2\left(2,\phi^{Y}_{2,1}\left(\phi^f_1(x)\right)\right) \right) \\
    &\stackrel{(\ref{conditionsonZ2})}{=} \left(0,v_2 - \left(D\phi^{Y}_{2,1}\right)_{\phi^f_1(x)}[w_0]\right) \ .
\end{align*}
Hence we have shown that for any choice of $v_1$ and $v_2$ there iss $(Z,v_0) \in T_{(Y,x)}\left(\XX_0(M) \times M\right)$ with $\left(DE_{0,2}\right)_{(Y,x)}[(Z,v_0)]=(v_1,v_2)$, which shows the claim.
\end{enumerate}
\end{proof}

\begin{remark}
 Note that for every $(Y,(0,\gamma)) \in \Mtilde$, we obtain $E(Y,0,\gamma) = (\gamma(0),\gamma(0))$. So the restriction of $E$ to $\Mtilde_0$ is \emph{not} submersive if $n > 0$.
\end{remark}

In the same way as Theorem \ref{NegEvalSubmersion} is derived from Theorem \ref{transverTotal}, we can deduce the following statement from Theorem \ref{FiniteLengthSubmersion} and the Sard-Smale theorem. The attentive reader will have no problem providing a detailed proof.

\begin{theorem}
\label{FiniteLengthSardSmale}
 Let $N \subset M^2$ be a submanifold. There is a generic set $\GG \subset \XX_0(M)$, such that for every $Y \in \GG$ the space
 \begin{equation*}
  \MM(Y,N) := \left\{(l,\gamma) \in \MM(Y) \ | \ (\gamma(0),\gamma(l)) \in N \right\}
 \end{equation*}
is a submanifold with boundary of $\MM(Y)$ of class $C^{n+1}$ with
$$\dim \MM(Y,N) = n+1 - \codim_{M^2} N = \dim N + 1-n \ . $$
\end{theorem}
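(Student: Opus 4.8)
The plan is to mimic the derivation of Theorem \ref{NegEvalSubmersion} from Theorem \ref{transverTotal}, now using Theorem \ref{FiniteLengthSubmersion} in place of Theorem \ref{transverTotal}. Concretely, one wants to apply the Sard-Smale transversality theorem (in the form of \cite[Proposition 2.24]{Schwarz} or \cite[Theorem 5.4]{HutchingsNotes}) to the total evaluation map $E: \Mtilde \to M^2$, viewed as a family parametrized by $\XX_0(M)$, and then take preimages of $N$. The first step is to observe that
\begin{equation*}
 \MM(Y,N) = \left(E|_{\{Y\} \times \MM(Y)}\right)^{-1}(N) \ ,
\end{equation*}
so that fiberwise transversality of $E$ to $N$ yields the claimed submanifold structure; here one uses Proposition \ref{FiniteLengthManifold} to know $\MM(Y)$ itself is a smooth $(n+1)$-manifold with boundary.

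The second step is to check the hypotheses of Sard-Smale for the map $E$ on $\Mtilde$. Differentiability of class $C^{n+1}$ is exactly part a) of Theorem \ref{FiniteLengthSubmersion}. The parametrized transversality condition --- that the total map $E$ is transverse to $N$ --- follows from part b) of Theorem \ref{FiniteLengthSubmersion}: since $E|_{\Mtilde_l}$ is already a submersion for every $l > 0$, and $\Mtilde = \bigcup_{l \geq 0} \Mtilde_l$ with $\Mtilde_l$ of finite codimension $1$ in $\Mtilde$ (the $l$-direction being the only extra parameter), the restriction of $E$ to $\Mtilde \setminus \Mtilde_0$ is a submersion, hence a fortiori transverse to $N$. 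The locus $\Mtilde_0$ is a closed submanifold on which $E$ need not be submersive; here one must argue that $\Mtilde_0$ contributes nothing, for instance by noting that on $\Mtilde_0$ one has $\gamma(0) = \gamma(l) = \gamma(0)$, so $E(\Mtilde_0)$ lies in the diagonal $\Delta \subset M^2$, and handling this diagonal locus separately --- either by imposing that $N$ meets $\Delta$ nicely or, following the standard Morse-homology treatment, by a dimension count showing the relevant zero-length component is negligible. One then needs to verify the Fredholm index is in the admissible range: the relevant operator, i.e. the vertical differential of the section whose zero set is $\Mtilde$, has index $n+1$ (matching $\dim \MM(Y)$ from Proposition \ref{FiniteLengthManifold}), and after cutting down by $\codim_{M^2} N$ one is dealing with finite-dimensional intersection, well within the differentiability budget of $C^{n+1}$ perturbations.

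The third step is the conclusion: Sard-Smale gives a generic set $\GG \subset \XX_0(M)$ such that for $Y \in \GG$ the fiber map $E|_{\{Y\} \times \MM(Y)}$ is transverse to $N$, whence $\MM(Y,N)$ is a $C^{n+1}$-submanifold with boundary of $\MM(Y)$ of dimension
\begin{equation*}
 \dim \MM(Y,N) = \dim \MM(Y) - \codim_{M^2} N = n+1 - \codim_{M^2} N = \dim N + 1 - n \ ,
\end{equation*}
as claimed. The boundary arises precisely from $\partial \MM(Y) = \Mtilde_0 \cap \MM(Y)$, which contributes $\partial \MM(Y,N)$.

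I expect the main obstacle to be the treatment of the zero-length stratum $\Mtilde_0$, where Theorem \ref{FiniteLengthSubmersion} b) explicitly fails to give submersivity. One must either restrict attention to submanifolds $N \subset M^2$ transverse to the diagonal (so the intersection with the image of $\Mtilde_0$ is controlled by a dimension count), or --- as is standard in Morse homology --- verify that for generic $Y$ the length-zero trajectories land in $N$ only in a set of the expected (negative, hence empty, or boundary) dimension, so that $\MM(Y,N)$ is still a manifold with boundary. The rest is a faithful transcription of the argument already carried out for Theorem \ref{NegEvalSubmersion}, which is why the excerpt leaves the detailed proof to the reader.
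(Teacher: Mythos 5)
Your plan follows the paper's intended derivation --- apply Sard-Smale with Theorem \ref{FiniteLengthSubmersion} in place of Theorem \ref{transverTotal} --- and you are right to flag the zero-length stratum $\Mtilde_0$ as the genuine obstacle. However, the second remedy you propose, namely using genericity in $Y$ to control the length-zero trajectories, cannot work: condition \eqref{XXcond3} in the definition of $\XX_0(M)$ forces $Y$ together with all its derivatives through order $n+1$ to vanish at $l=0$, so every $Y\in\XX_0(M)$ induces the \emph{same} restriction of $E$ to $\partial\Mtilde=\Mtilde_0$, namely the map $(Y,0,\gamma)\mapsto(\gamma(0),\gamma(0))$ factoring through the diagonal $\Delta\subset M^2$. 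Genericity in $Y$ therefore provides no freedom on the boundary, and transversality of $E|_{\partial\Mtilde}$ to $N$ is equivalent to $N$ being transverse to $\Delta$.

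Your first remedy is the correct one: the statement has to be read with the implicit hypothesis that $N$ is either disjoint from $\Delta$ or meets it transversely. Without this the conclusion can fail outright (take $N=\Delta$: for every $Y$ the whole boundary $\partial\MM(Y)\cong M$ lies in $\MM(Y,N)$, contradicting the dimension formula). This restriction is harmless for the paper's purposes, which is presumably why it is left tacit: in the proof of Theorem \ref{NonlocalTransversality} the length parameters are confined to $V\subset(0,+\infty)^{k_2}$ and the finite-length domains are $\Mtilde_{>0}$, and in Definition \ref{DefPertMorseRT} every $l_e$ is required to be strictly positive, so the boundary stratum never enters the relevant transversality problem. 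With the diagonal-transversality hypothesis in place, the remainder of your argument is a faithful transcription of the proof of Theorem \ref{NegEvalSubmersion} and is correct.
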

See Figure \ref{FiniteLengthUnter} for an illustration of a space of the form $\MM(Y,N)$. 
\begin{figure}[h]
 \centering
 \includegraphics[scale=0.5]{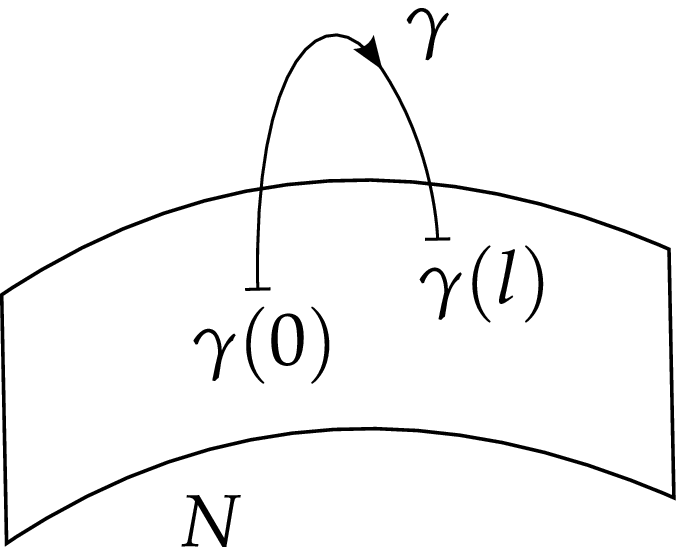}
 \caption{An element of $\MM(Y,N)$}
 \label{FiniteLengthUnter}
\end{figure}

\section{Nonlocal generalizations}
\label{NonlocalGeneralizations}

In this section we derive a more general, and in a certain sense nonlocal, version of the transversality results from the previous section. More precisely, we want to derive a transversality theorem, our Theorem \ref{NonlocalTransversality}, in which:
\begin{itemize}
 \item all three types of trajectories from the previous section are considered at once, 
 \item the perturbations may depend on the interval length parameters of all of the finite-length trajectories involved,
 \item the limiting behaviour of the perturbations for length parameters becoming infinitely large can be controlled a priori.
\end{itemize}

The first bullet point makes it possible to consider more sophisticated constructions of moduli spaces of trajectories starting and ending in submanifolds of the manifolds we are considering. The second and third bullet point give us the possibility to control the compactifications of these moduli spaces in a convenient way, as we will see for the special case of Morse ribbon trees in Sections \ref{ConvergenceBehaviour} and \ref{CompactificationsOneDimAinfty}.

We start by defining generalizations of the perturbation spaces $\XX_{\pm}(M)$ and $\XX_0(M)$ from the previous section. These will be needed to construct perturbations fulfilling the condition in the second bullet. \bigskip

For $k >0$, $j \in \{1,2,\dots,k\}$, $\lambda \geq 0$, $Y \in C^{n+1} \left([0,+\infty)^k \times (-\infty,0] \times M, TM\right)$ we define
\begin{equation}
\label{DefOfcj}
\begin{aligned}
 &c_j(\lambda,Y) \in C^{n+1} \left([0,+\infty)^{k-1} \times (-\infty,0] \times M, TM\right) \ , \\
 &c_j(\lambda,Y)\left(l_1,\dots,l_{k-1},t,x \right) := Y\left(l_1,\dots,l_{j-1},\lambda,l_j,\dots,l_{k-1},t,x\right) \ .
\end{aligned}
\end{equation}
In other words, $c_j(\lambda,Y)$ is the contraction of $Y$ obtained by inserting $\lambda$ into the $j$-th component. 

\begin{definition}
For $k \in \NN_0$ we recursively define a space $\XX_-(M,k)$ by putting
 \begin{align*}
  &\XX_-(M,0) := \XX_-(M) \ , \\
  &\XX_-(M,k) := \left\{Y \in C^{n+1}\left( [0,+\infty)^k \times (-\infty,0]\times M, TM \right) \left| \ Y\left( \vec{l},\cdot, \cdot\right) \in \XX_-(M) \ \ \forall \ \vec{l} \in [0,+\infty)^k, \right. \right. \\
  &\qquad \qquad \qquad  \qquad \qquad \qquad \Bigl. \lim_{\lambda \to +\infty} c_j(\lambda,Y)\text{ exists in } \XX_-(M,k-1) \ \ \forall j \in \{1,2,\dots,k\} \Bigr\} \ \ \text{for $k>0$.}
 \end{align*} 
In strict analogy with the spaces $\XX_-(M,k)$ we define
 \begin{equation*}	
  \XX_+(M,k) \subset C^{n+1}\left([0,+\infty)^{k} \times [0,+\infty) \times M, TM\right)
 \end{equation*} 
for every $k \in \NN_0$ with $\XX_+(M,0) = \XX_+(M)$.
\end{definition}

We next want to define analogous perturbation spaces $\XX_0(M,k)$ for finite-length trajectories for every $k \in \NN_0$. This requires some additional preparations, namely we need to introduce straightforward generalizations of the map $\Split_0$ from (\ref{split0}).  \bigskip

For $k \in \NN_0$, $l \geq 3$ and a map 
$$ Z: [0,+\infty)^k \times[0,+\infty) \times [0,+\infty) \times M \to TM \ , \quad \left(\vec{l},l,t,x \right) \mapsto Z\left(\vec{l},l,t,x \right) \ ,$$
we define maps
\begin{align*}
 s_k(l,Z): [0,+\infty)^k \times [0,+\infty) \times M &\to TM , \ \ 
   \left(\vec{l},t,x\right) \mapsto \begin{cases}
                                      Z\left(\vec{l},l,t,x\right) & \text{if} \ \  t \in [0,1] \ , \\
                                      0 \in T_xM & \text{if} \ \ t > 1 \ ,
                                     \end{cases} \\
 e_k(l,Z): [0,+\infty)^k \times (-\infty,0] \times M &\to TM , \  \ \left(\vec{l},t,x\right) \mapsto \begin{cases}
                                    Z\left(\vec{l},l,t+l,x\right) & \text{if} \ \ t \in [-1,0] \ , \\
                                    0 \in T_x M & \text{if} \ \ t < -1 \ .
                                   \end{cases}
\end{align*}
We then put
\begin{equation}
\label{EqDefSplitk}
\Split_{k}(l,Z) := \left(s_k(l,Z),e_k(l,Z) \right) \ .
\end{equation}

\begin{definition}
\label{pmPerturbationSpaces}
 For every $k \in \NN_0$ we define a space $\XX_0(M,k)$ recursively by putting
 \begin{align}
  &\XX_0(M,0) := \XX_0(M) \ , \notag \\
   &\XX_0(M,k) := \left\{Y \in C^{n+1}\left( [0,+\infty)^k \times [0,+\infty) \times [0,+\infty) \times M, TM \right) \right. \notag \\
  &\qquad \qquad \qquad \qquad \Big| \ Y\left( \vec{l},\cdot,\cdot,\cdot\right) \in \XX_0(M) \ \ \forall \  \vec{l} \in [0,+\infty)^k \ , \label{nonlocalfinitecond} \\
  &\qquad \qquad \qquad \qquad  \lim_{\lambda \to +\infty} c_j(\lambda,Y)\text{ exists in } \XX_0(M,k-1) \ \ \forall j \in \{1,2,\dots,k\} \ , \notag \\
  &\qquad \qquad \qquad \qquad \Bigl. \lim_{l \to + \infty} \Split_k(l,Y) \text{ exists in } \XX_+(M,k) \times \XX_-(M,k) \ \Bigr\}  \ \ \  \text{for $k>0$}, \notag
 \end{align} 
 where the maps $c_1,\dots,c_k$ are defined in strict analogy with (\ref{DefOfcj}).
\end{definition}

\begin{remark}
 Arguing as in Remark \ref{reasonforXXcond1}, one observes that for any 
 \begin{equation*}
  Y\in C^{n+1}\left( [0,+\infty)^k \times [0,+\infty) \times [0,+\infty)\times M, TM \right)
 \end{equation*}
which fulfills condition (\ref{nonlocalfinitecond}) and for which the limit $Y_0 :=  \lim_{l \to + \infty} \Split_k(l,Y)$ exists pointwise, $Y_0$ is a pair of parametrized vector fields of class $C^{n+1}$ and already lies in $\XX_+(M,k) \times \XX_-(M,k)$.
\end{remark}

One checks that all the spaces $\XX_\pm(M,k)$ and $\XX_0(M,k)$ are closed linear subspaces of Banach spaces, hence themselves Banach spaces. \bigskip

Before we provide the nonlocal transversality theorem we are longing for, we still need to define another analytic notion, namely the notion of background perturbations. These perturbations are introduced with regard to the limiting behaviour of perturbations. Technically, this method will result in the consideration of affine subspaces of the spaces $\XX_\pm(M,k)$ and $\XX_0(M,k)$. 

\begin{definition}
 For $k \in \NN$ the spaces $\XXb_-(M,k)$ and $\XXb_+(M,k)$ are defined by 
 \begin{align}
  \XXb_\pm(M,k) := &\left\{ \fatX = \left(X_1,\dots,X_k\right) \in  \XX_\pm (M,k-1)^k \ \right. \notag \\
   &\quad \Bigl| \Bigl. \lim_{\lambda \to \infty} c_i \left(\lambda,X_j\right) =  \lim_{\lambda \to \infty} c_{j-1}\left(\lambda,X_i \right) \ \ \forall \ i < j \in \{1,2,\dots,k\} \Bigr. \Bigr\} \ . \label{CommutingCondition}
 \end{align} 
 $\XXb_-(M,k)$ (resp. $\XXb_+(M,k)$) is called the space of \emph{negative (resp. positive) $k$-parametrized background perturbations on $M$}. For $\fatX=(X_1,X_2,\dots,X_k) \in \XXb_\pm(M,k)$ define 
\begin{align*}
 \XX_\pm(M,k,\fatX) := &\Bigl\{ Y \in \XX_\pm(M,k) \ \Big| \ \lim_{\lambda \to +\infty} c_j(\lambda,Y) = X_j \in \XX_\pm(M,k-1) \ \ \forall j \in \{1,2,\dots,k\} \ \Bigr\} \ .
\end{align*}
\end{definition}

\begin{lemma}
\label{SemiInfAffineSubspace}
 For all $k \in \NN$ and $\fatX \in \XXb_\pm(M,k)$, the space $\XX_\pm(M,k,\fatX)$ is a Banach submanifold of $C^{n+1}\left( [0,+\infty)^k \times \RR_{\pm} \times M, TM \right)$, where $\RR_+ := [0,+\infty)$, $\RR_- := (-\infty,0]$.
\end{lemma}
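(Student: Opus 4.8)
The plan is to exhibit $\XX_\pm(M,k,\fatX)$ as an affine subspace of the Banach space $\XX_\pm(M,k)$, modelled on a closed linear subspace. Concretely, note first that $\XX_\pm(M,k)$ is itself a closed linear subspace of the Banach space $C^{n+1}\bigl([0,+\infty)^k \times \RR_\pm \times M, TM\bigr)$, as already observed in the excerpt. The maps $Y \mapsto \lim_{\lambda \to +\infty} c_j(\lambda,Y)$ are, by the very definition of $\XX_\pm(M,k)$, well-defined linear maps $L_j : \XX_\pm(M,k) \to \XX_\pm(M,k-1)$ for each $j \in \{1,\dots,k\}$; the first step is to check that each $L_j$ is bounded. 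This follows because convergence in the $C^{n+1}$-norm is uniform and the contraction-then-limit operation does not increase the relevant $C^{n+1}$-seminorms: $\|L_j(Y)\|_{C^{n+1}} \leq \|Y\|_{C^{n+1}}$, so each $L_j$ is continuous.

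Next I would observe that $\XX_\pm(M,k,\fatX)$ is precisely the preimage
\begin{equation*}
 \XX_\pm(M,k,\fatX) = \bigcap_{j=1}^{k} L_j^{-1}(X_j) = L^{-1}(\fatX) \ ,
\end{equation*}
where $L := (L_1,\dots,L_k) : \XX_\pm(M,k) \to \XX_\pm(M,k-1)^k$ is the bounded linear map assembled from the $L_j$. The compatibility condition \eqref{CommutingCondition} defining $\XXb_\pm(M,k)$ guarantees that $\fatX$ lies in the closed subspace $\XXb_\pm(M,k) \subseteq \XX_\pm(M,k-1)^k$; this is the condition under which the various iterated limits can be realized simultaneously, so it is needed precisely to ensure $L^{-1}(\fatX)$ is nonempty. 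Assuming nonemptiness, pick any $Y_0 \in \XX_\pm(M,k,\fatX)$; then $\XX_\pm(M,k,\fatX) = Y_0 + \ker L$, and $\ker L = \XX_\pm(M,k, \zero)$ is a closed linear subspace (closed because $L$ is continuous). An affine translate of a closed linear subspace of a Banach space is trivially a smooth Banach submanifold — a single chart given by translation by $-Y_0$ identifies it with the Banach space $\ker L$ — which gives the claim.

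The one genuine point requiring care, and the step I expect to be the main obstacle, is verifying that $\XX_\pm(M,k,\fatX)$ is actually nonempty for every $\fatX \in \XXb_\pm(M,k)$ — otherwise the "submanifold" statement is vacuous or needs reinterpreting. This is where the compatibility condition \eqref{CommutingCondition} does real work: one must construct, from the prescribed boundary data $(X_1,\dots,X_k)$, an element $Y \in \XX_\pm(M,k)$ whose $j$-th limit is exactly $X_j$. The natural construction is an inductive interpolation: using cutoff functions $\rho(\lambda) \in C^\infty([0,+\infty),[0,1])$ that vanish near $0$ and equal $1$ near $+\infty$, one builds $Y$ as a sum of terms of the form $\rho(l_j)\cdot(\text{suitable extension of }X_j)$ with lower-order correction terms to handle the overlaps when several $l_j \to \infty$ simultaneously; the condition \eqref{CommutingCondition} is exactly what makes these corrections consistent and keeps the iterated limits well-defined. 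One should also confirm that such a $Y$ genuinely lands in $\XX_\pm(M,k)$, i.e. that it is $C^{n+1}$, satisfies the fiberwise vector-field condition, and that all the required limits exist in the appropriate recursively-defined spaces. Once nonemptiness is in hand, the remainder of the argument is the soft functional-analytic observation above; I would present the nonemptiness construction only as far as needed, since the combinatorics of the simultaneous limits, while straightforward, is notationally heavy.
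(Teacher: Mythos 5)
Your proof is correct and follows essentially the same route as the paper's: both exhibit $\XX_\pm(M,k,\fatX)$ as the affine translate of the closed linear subspace $\XX_\pm(M,k,0) = \ker L$ by a reference element whose existence is secured by the commuting condition \eqref{CommutingCondition}. You are somewhat more explicit than the paper about why the reference element exists (the paper simply asserts "one can easily find $X$") and about why the kernel is closed (via continuity of the limit maps $L_j$), but these are refinements of the same argument rather than a different one.
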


\begin{proof}
 Since every $\fatX \in \XXb_\pm(M,k)$ satisfies condition (\ref{CommutingCondition}) by definition, one can easily find $X \in \XX_{\pm}(M,k)$ satisfying $\lim_{\lambda \to + \infty} c_j(\lambda,X) = X_j$. Given such an $X$, we can reformulate $\XX_\pm(M,k,\fatX)$ as an affine subspace
\begin{equation}
\label{affineequation}
 \XX_\pm(M,k,\fatX) = X + \XX_\pm(M,k,0) \ .
\end{equation}
 where $0 \in \XXb_\pm(M,k)$ denotes the background perturbation consisting only of vector fields which are constant to zero. It is easy to check that the space $\XX_{\pm}(M,k,0)$ is a closed linear subspace of $\XX_{\pm}(M,k)$.
 
 Hence by (\ref{affineequation}), the space $\XX_\pm(M,k,\fatX)$ is a closed affine subspace and therefore a Banach submanifold of $C^{n+1}\left( [0,+\infty)^k \times \RR_{\pm} \times M, TM \right)$.
\end{proof}

For the definition of background perturbations for finite-length trajectories, we additionally need an analogous condition to (\ref{CommutingCondition}) involving the maps $\Split_k$. 

\begin{definition} \index{background perturbations!for finite-length trajectories}
\begin{enumerate}
 \item For every $k \in \NN_0$ \emph{the space of $k$-parametrized finite-length background perturbations} is defined as 
 \begin{align}
  \XXb_0(M,k) := \Big\{ (X_1,\dots,&X_k,X^+,X^-) \in \left(\XX_0(M,k-1)\right)^{k} \times \XX_+(M,k) \times \XX_-(M,k) \phantom{\lim_{\lambda \to \infty}}  \notag \\
    &\Big| \  \lim_{\lambda \to +\infty} c_i \left(\lambda,X_j\right) =  \lim_{\lambda \to +\infty} c_{j-1}\left(\lambda,X_i \right) \ \ \forall \ i < j \in \{1,2,\dots,k\} \ , \label{finitecommute} \\
    &\Bigl. \ \lim_{\lambda \to +\infty} \Split_{k-1}(\lambda,X_j) = \lim_{\lambda \to +\infty} \left(c_j(\lambda,X^+), c_j(\lambda,X^-)\right) \ \Bigr\} \ . \label{finitecommutesplit}
 \end{align} 
 \item For $\fatX = (X_1,\dots,X_k,X^+, X^-) \in \XXb_0(M,k)$, we define 
 \begin{align*}
&\XX_0(M,k,\fatX)\\  &= \Big\{Y \in \XX_0(M,k) \ \Big| \  \lim_{\lambda \to \infty} c_j(\lambda,Y) = X_j \ \ \forall \ j \in \{1,2,\dots,k\} \ , \ \ \lim_{l \to \infty} \Split_k(l,Y) = \left(X^+,X^-\right) \ \Big\} \ .
 \end{align*}
 \end{enumerate} 
\end{definition}

\begin{lemma}
 For all $k \in \NN_0$ and $\fatX \in \XXb_0(M,k)$, the space $\XX_0(M,k,\fatX)$ is a Banach submanifold of $\XX_0(M,k)$.
\end{lemma}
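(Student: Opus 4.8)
The plan is to mimic exactly the proof of Lemma \ref{SemiInfAffineSubspace}: show that $\XX_0(M,k,\fatX)$ is a closed affine subspace of $\XX_0(M,k)$, which is itself a Banach space, hence a Banach submanifold. The two ingredients are (i) the existence of at least one element of $\XX_0(M,k,\fatX)$, so that the affine subspace is nonempty, and (ii) the identification of the difference space $\XX_0(M,k,\fatX) - Y_0$ (for any fixed $Y_0$ in it) with a fixed closed linear subspace, namely $\XX_0(M,k,0)$, where $0 \in \XXb_0(M,k)$ is the background perturbation all of whose components vanish identically.

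First I would observe that the defining conditions of $\XX_0(M,k,\fatX)$ are affine in $Y$: the maps $Y \mapsto c_j(\lambda,Y)$ and $Y \mapsto \Split_k(l,Y)$ are linear and continuous (the latter for $l \geq 3$, as recorded in the proof of the lemma that $\XX_0(M)$ is a Banach space), so taking the limits $\lambda \to \infty$ and $l \to \infty$ preserves linearity, and the equations $\lim_{\lambda \to \infty} c_j(\lambda,Y) = X_j$ and $\lim_{l \to \infty} \Split_k(l,Y) = (X^+,X^-)$ are inhomogeneous linear constraints. Hence if $Y_0, Y_1 \in \XX_0(M,k,\fatX)$ then $Y_1 - Y_0 \in \XX_0(M,k,0)$, and conversely $Y_0 + \XX_0(M,k,0) \subset \XX_0(M,k,\fatX)$; this gives the identity $\XX_0(M,k,\fatX) = Y_0 + \XX_0(M,k,0)$ once a single $Y_0$ is produced. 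That $\XX_0(M,k,0)$ is a closed linear subspace of $\XX_0(M,k)$ follows because it is cut out by continuous linear conditions (the limits exist by membership in $\XX_0(M,k)$, and are required to equal zero), exactly as in the previous lemma.

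The one genuinely nontrivial step — and the main obstacle — is the existence of a witness $Y_0 \in \XX_0(M,k,\fatX)$ for an arbitrary $\fatX = (X_1,\dots,X_k,X^+,X^-) \in \XXb_0(M,k)$. This is precisely where the compatibility conditions \eqref{finitecommute} and \eqref{finitecommutesplit} in the definition of $\XXb_0(M,k)$ are used: one must build a $C^{n+1}$ parametrized vector field on $[0,+\infty)^k \times [0,+\infty)^2 \times M$ whose $j$-th asymptotic contraction is $X_j$ and whose $\Split_k$-limit is $(X^+,X^-)$, and the fact that these prescribed limits agree on the overlaps (the inner limits $\lim_\lambda c_i(\lambda,X_j)$ and $\lim_\lambda c_{j-1}(\lambda,X_i)$ coincide, and the iterated limit $\lim_\lambda \Split_{k-1}(\lambda,X_j)$ matches $\lim_\lambda(c_j(\lambda,X^+),c_j(\lambda,X^-))$) is exactly what makes the interpolation consistent. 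Concretely one can interpolate using cutoff functions in the length parameters $l_1,\dots,l_k,l$: multiply each $X_j$ (suitably embedded back into one more parameter) and $(X^+,X^-)$ (embedded via $\Split_k^{-1}$-type formulas on $t \in [0,1] \cup [l-1,l]$) by bump functions that are supported near the relevant faces at infinity and sum them; the commuting relations guarantee that the overlapping contributions on higher-codimension faces do not conflict, so the sum still satisfies condition (\ref{nonlocalfinitecond}) and has the correct iterated limits. I would phrase this as "arguing exactly as in the proof of Lemma \ref{SemiInfAffineSubspace}, and using conditions (\ref{finitecommute}) and (\ref{finitecommutesplit}), one can construct $Y_0 \in \XX_0(M,k)$ with $\lim_\lambda c_j(\lambda,Y_0) = X_j$ and $\lim_l \Split_k(l,Y_0) = (X^+,X^-)$," since the construction, while somewhat technical, is routine bookkeeping with cutoffs and is of the same character as the (omitted) argument already invoked in the semi-infinite case.

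Finally, with the witness $Y_0$ in hand, I would conclude: $\XX_0(M,k,\fatX) = Y_0 + \XX_0(M,k,0)$ is a closed affine subspace of the Banach space $\XX_0(M,k)$, hence carries the structure of a Banach manifold modelled on $\XX_0(M,k,0)$ and is a Banach submanifold of $\XX_0(M,k)$ (and a fortiori of the ambient $C^{n+1}$-section space). This mirrors the last line of the proof of Lemma \ref{SemiInfAffineSubspace} verbatim, with $\Split_k$-conditions added alongside the $c_j$-conditions.
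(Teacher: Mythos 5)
Your proposal follows the paper's own proof exactly: both produce a single witness $X \in \XX_0(M,k)$ with the prescribed $c_j$- and $\Split_k$-limits (using conditions \eqref{finitecommute} and \eqref{finitecommutesplit} to ensure the data are consistent), write $\XX_0(M,k,\fatX) = X + \XX_0(M,k,0)$, and conclude from the closedness of the linear subspace $\XX_0(M,k,0)$. Your extra discussion of the cutoff interpolation simply makes explicit what the paper leaves implicit in the phrase ``we can find a parametrized vector field $X$,'' so the two arguments are the same in substance.
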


\begin{proof}
 We show the claim in analogy with the proof of Lemma \ref{SemiInfAffineSubspace}. Since $\fatX \in \XXb_0(M,k)$ is defined to satisfy conditions (\ref{finitecommute}) and (\ref{finitecommutesplit}), we can find a parametrized vector field $X \in \XX_0(M,k)$ satisfying
 \begin{equation*}
   \lim_{\lambda \to +\infty} c_j \left(\lambda,X\right) = X_j  \ \ \forall \ j \in \{1,2,\dots,k\} \quad \text{and} \quad  \lim_{\lambda \to + \infty} \Split_k(\lambda,X) = (X^+,X^-) \ .
 \end{equation*}
 Given such an $X$, we can reformulate $\XX_0(M,k,\fatX)$ as a closed affine subspace
 \begin{equation*}
  \XX_0(M,k,\fatX) = X + \XX_0(M,k,0)
 \end{equation*}
 where $0 \in \XXb_0(M,k)$ denotes the background perturbations consisting only of vector field which are constantly zero. The rest of the proof is carried out along the same lines as the one of Lemma \ref{SemiInfAffineSubspace}. 
\end{proof}

\begin{lemma}
\label{pertparamevalsub}
 For $k \geq 1$, $\fatX_- \in \XXb_-(M,k)$, $\fatX_+ \in \XXb_+(M,k)$ and $\fatX_0 \in \XXb_0(M,k)$, consider the maps:
 \begin{align*}
  p_k^-: &[0,+\infty)^k \times \XX_-(M,k,\fatX_-) \to \XX_-(M) \ , \\
  p_k^+: &[0,+\infty)^k \times \XX_+(M,k,\fatX_+) \to \XX_+(M) \ , \\
  p_k^0: &[0,+\infty)^k \times \XX_0(M,k,\fatX_0) \to \XX_0(M) \ ,
 \end{align*}
which are all three defined by $\left(\vec{l},Y\right) \mapsto Y\left(\vec{l},\cdot\right)$. All three maps are of class $C^{n+1}$. Moreover, for every $\vec{l} \in [0,+\infty)^k$, the maps 
\begin{align*}
p_k^\pm \left(\vec{l},\cdot\right): \XX_{\pm}(M,k,\fatX_{\pm}) &\to \XX_{\pm}(M) \ , \qquad
p^0_k\left(\vec{l},\cdot \right): \XX_0(M,k,\fatX_0) \to \XX_0(M) \ , 
\end{align*}
are surjective submersions.
\end{lemma}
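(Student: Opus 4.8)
The plan is to treat all three cases in parallel, since the evaluation/restriction maps $p_k^\pm$ and $p_k^0$ are formally of the same type: each takes a parametrized vector field $Y$ on $[0,+\infty)^k$ together with a point $\vec l$ in the parameter space and returns the ``slice'' $Y(\vec l,\cdot)$, which lands in the corresponding unparametrized perturbation space by the very definition of $\XX_\pm(M,k)$, resp. $\XX_0(M,k)$ (the first defining condition in each recursive definition, e.g. condition \eqref{nonlocalfinitecond}). So the map is at least well-defined; one then argues differentiability and, for fixed $\vec l$, surjectivity and submersivity.

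First I would establish the $C^{n+1}$-regularity. Here the key observation is that $p_k^\bullet$ is, up to composition with the affine identification $\XX_\bullet(M,k,\fatX_\bullet) = X + \XX_\bullet(M,k,0)$ of Lemma \ref{SemiInfAffineSubspace} and its finite-length analogue, nothing but an evaluation map of the form $(\vec l, Z) \mapsto Z(\vec l, \cdot)$, where $Z$ ranges over a closed linear subspace of a $C^{n+1}$ mapping space on $[0,+\infty)^k \times \RR_\pm \times M$ (resp.\ with the extra finite-length factor). Evaluation maps on spaces of $C^{n+1}$-maps lose regularity in general, but here the target is itself a space of $C^{n+1}$-sections in the remaining variables, so no derivatives in the $M$- or $t$-directions are consumed by the evaluation; only the $\vec l$-variables are being plugged in, and $Z$ being $C^{n+1}$ jointly means that $\vec l \mapsto Z(\vec l, \cdot) \in C^{n+1}(\dots)$ is itself $C^{n+1}$. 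This is exactly the same mechanism already invoked in the proof of Theorem \ref{FiniteLengthSubmersion}(a) (``the evaluation map defined on a mapping space consisting of maps of class $C^{n+1}$ is itself of class $C^{n+1}$''), so I would cite that argument. Since $p_k^\bullet$ is affine in the $Y$-variable and $C^{n+1}$ jointly, the total-differential theorem gives $C^{n+1}$-regularity of the whole map, just as in the proof of Theorem \ref{PerturbedFredholm}.

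Next, fix $\vec l \in [0,+\infty)^k$ and show $p_k^\bullet(\vec l, \cdot)$ is a surjective submersion. Because this restricted map is affine, its differential at any point is the associated bounded linear map $\XX_\bullet(M,k,0) \to \XX_\bullet(M)$, $Z \mapsto Z(\vec l, \cdot)$, and it suffices to prove that \emph{this linear map is surjective with a complemented (split) kernel}; then $p_k^\bullet(\vec l,\cdot)$ is a surjective submersion onto $\XX_\bullet(M)$. Surjectivity: given a target vector field $W \in \XX_-(M)$ (the $+$ and $0$ cases are identical in spirit), I want to produce $Z \in \XX_-(M,k,0)$ with $Z(\vec l,\cdot) = W$; the natural candidate is to make $Z$ essentially independent of the parameters near $\vec l$ and to damp it to zero as any parameter $l_j \to +\infty$ — concretely, pick smooth bump functions $\beta_j:[0,+\infty)\to[0,1]$ with $\beta_j \equiv 1$ near $l_j$ and $\beta_j(t) \to 0$ as $t\to\infty$, and set $Z(\vec m, t, x) := \bigl(\prod_j \beta_j(m_j)\bigr)\, W(t,x)$. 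One checks this lies in $\XX_-(M,k,0)$: it is $C^{n+1}$, each slice $Z(\vec m,\cdot,\cdot)$ is a scalar multiple of $W \in \XX_-(M)$ hence in $\XX_-(M)$, each contraction $c_j(\lambda,Z)$ converges (to a product with one fewer factor, which again has the right form), and the iterated limits vanish so the prescribed background perturbation is $0$; by construction $Z(\vec l,\cdot) = W$. For the finite-length space $\XX_0(M,k,0)$ one must additionally respect conditions \eqref{XXcond1}, \eqref{XXcond3} and the $\Split_k$-convergence \eqref{finitecommutesplit}, but since $W$ itself already satisfies the unparametrized versions and we are only multiplying by parameter-dependent scalars, all of these are inherited; in particular $\lim_{l\to\infty}\Split_k(l,Z)$ exists and equals a scalar multiple of $\lim_{l\to\infty}\Split_0(l,W)$, which lies in the appropriate space by Remark \ref{reasonforXXcond1}. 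For the split-kernel property, note that $\ker\bigl(Z \mapsto Z(\vec l,\cdot)\bigr)$ is closed, and the map $W \mapsto \bigl(\prod_j\beta_j(m_j)\bigr)W$ is a bounded linear right inverse, so its image is a closed complement to the kernel; this exhibits $p_k^\bullet(\vec l,\cdot)$ as a submersion in the Banach-manifold sense.

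The main obstacle — really the only subtle point — is bookkeeping: verifying that the explicitly constructed right inverse $Z$ genuinely lands in the recursively defined spaces $\XX_\bullet(M,k,0)$, i.e.\ that \emph{all} the iterated $\lambda$-limits $c_j(\lambda,\cdot)$ and (in the finite-length case) the $\Split_k$-limits exist and produce the required zero background data at every level of the recursion. This is an induction on $k$ that is conceptually trivial but notationally heavy; I would phrase it as a short inductive lemma ``a product of a parameter-independent element of $\XX_\bullet(M)$ with damping bump functions lies in $\XX_\bullet(M,k,0)$'' and then the whole proposition follows. Everything else — well-definedness, $C^{n+1}$-regularity, affineness, and the submersion conclusion — is routine given the earlier results in the paper.
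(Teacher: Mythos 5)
The $C^{n+1}$-regularity and affine-structure parts of your argument track the paper's (equally terse) proof; the paper simply asserts that one checks the maps $p_k^\pm(\vec{l},\cdot)$, $p_k^0(\vec{l},\cdot)$ are surjective continuous linear maps between Banach spaces and hence submersions, and you supply details. Your observation that one must actually produce a bounded linear right inverse (a split kernel is not automatic from surjectivity of a bounded operator between Banach spaces) is more careful than what the paper writes, and the bump-function construction $Z(\vec{m},t,x) = \bigl(\prod_j\beta_j(m_j)\bigr)W(t,x)$ works for the two semi-infinite cases $\XX_\pm(M,k,0)$: each $c_j$-limit acquires a factor $\beta_j(\lambda)\to 0$, so the limit vanishes as required, and the slice at $\vec{m}=\vec{l}$ recovers $W$.

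There is, however, a genuine gap in the finite-length case. To be a valid right inverse you need $Z\in\XX_0(M,k,0)$, whose definition requires \emph{both} $\lim_{\lambda\to\infty}c_j(\lambda,Z)=0$ for all $j$ \emph{and} $\lim_{l\to\infty}\Split_k(l,Z)=(0,0)$. You verify the first, but your own computation gives $\lim_{l\to\infty}\Split_k(l,Z)=\bigl(\prod_j\beta_j(m_j)\bigr)(W_+,W_-)$ where $(W_+,W_-)=\lim_{l\to\infty}\Split_0(l,W)$; you only note that this limit exists and lies in the right space, not that it vanishes. Since $\beta_j(l_j)=1$, it does not vanish unless $W_+=W_-=0$. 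This cannot be repaired by a cleverer cutoff: for \emph{any} $Z\in\XX_0(M,k,0)$, evaluating $\lim_{l\to\infty}\Split_k(l,Z)=(0,0)$ at $\vec{m}=\vec{l}$ forces $\lim_{l\to\infty}\Split_0(l,Z(\vec{l},\cdot))=(0,0)$, so the linear slice map on $\XX_0(M,k,0)$ lands in the proper closed subspace of $\XX_0(M)$ of fields with vanishing $\Split_0$-limit, and $p_k^0(\vec{l},\cdot)$ lands in the corresponding proper affine subspace determined by the split-components $(X^+,X^-)$ of $\fatX_0$. Your inductive ``conceptually trivial'' bookkeeping lemma would therefore not go through for $\XX_0(M,k,0)$; the surjectivity onto all of $\XX_0(M)$ is precisely where the obstruction sits, and you would need to either restrict the target of $p_k^0$ or address the $\Split_k$-constraint head on.
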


\begin{proof}
 The $(n+1)$-fold differentiability of the maps follows again since evaluation maps considered on spaces of maps of class $C^{n+1}$ are themselves of class $C^{n+1}$. Moreover, one checks that for any $\vec{l} \in [0,+\infty)^k$ the maps $p_k^\pm\left(\vec{l},\cdot\right)$ and $p_k^0\left(\vec{l},\cdot\right)$ are surjective and continuous linear maps between Banach spaces, hence they are submersions.
\end{proof}

In the notation of Lemma \ref{pertparamevalsub}, we will occasionally write:
\begin{equation*}
 Y_\pm \left(\vec{l}\right) := p^{\pm}_k\left(\vec{l},Y_{\pm}\right) \ , \quad Y_0 \left(\vec{l}\right) := p^{0}_k\left(\vec{l},Y_{0}\right) \ ,
\end{equation*}
for $\vec{l} \in [0,+\infty)^k$, $Y_\pm \in \XX_\pm(M,k)$ and $Y_0 \in \XX_0(M,k)$. \\

The ultimate aim of this section is to derive a nonlocal transversality theorem which considers perturbed negative semi-infinite, positive semi-infinite and finite-length Morse trajectories all at once. To improve the clarity of the exposition, we introduce some additional notation:

\begin{definition}
\label{DefNonlocalBackgroundPert}
 For $k_1,k_2,k_3 \in \NN_0$ with $k_2>0$ define \index{perturbation space!of type $(k_1,k_2,k_3)$}
 \begin{equation*}
  \XX(k_1,k_2,k_3) := \left(\XX_-(M,k_2)\right)^{k_1} \times \left(\XX_0(M,k_2-1)\right)^{k_2} \times \left(\XX_+(M,k_2) \right)^{k_3} \ .
 \end{equation*} 
We call $\XX(k_1,k_2,k_3)$ \emph{the space of perturbations of type $(k_1,k_2,k_3)$}. We further define 
\begin{equation*}
\XXb(k_1,k_2,k_3) := \left(\XXb_-(M,k_2)\right)^{k_1} \times \left(\XXb_0(M,k_2-1)\right)^{k_2} \times \left(\XXb_+(M,k_2) \right)^{k_3}
\end{equation*} 
and call it \emph{the space of background perturbations of type $(k_1,k_2,k_3)$}.

For $\fatX \in \XX(k_1,k_2,k_3)$, where we write
\begin{equation*}
\fatX:= \left(\fatX^-_1,\fatX^-_2,\dots,\fatX^-_{k_1}, \fatX^0_{1},\fatX^0_{2},\dots,\fatX^0_{k_2}, \fatX^+_{1},\fatX^+_{2},\dots,\fatX^+_{k_3}\right) 
\end{equation*}
with $\fatX^-_i \in \XXb_-(M,k_2)$ for every $i \in \{1,2,\dots,k_1\}$, $\fatX^0_j \in \XXb_0(M,k_2-1)$ for every $j \in \{1,2,\dots,k_2\}$ and $\fatX^+_k \in \XXb_+(M,k_2)$ for every $k \in \{1,2,\dots,k_3\}$, we define
\begin{equation*}
 \XX(k_1,k_2,k_3,\fatX) := \prod_{i=1}^{k_1} \XX_-\left(M,k_2,\fatX^-_i\right) \times \prod_{j=1}^{k_2} \XX_0\left(M,k_2-1,\fatX^0_j\right) \times \prod_{k=1}^{k_3} \XX_+\left(M,k_2,\fatX^+_k \right)
\end{equation*}
\end{definition}

Having collected all necessary ingredients we are finally able to state and prove the aforementioned nonlocal transversality theorem. 

\begin{theorem} \index{nonlocal transversality theorem}
\label{NonlocalTransversality}
 Let $k_1,k_2,k_3 \in \NN_0$. Let $V$ be a smooth submanifold of $(0,+\infty)^{k_2}$ and $N$ be a smooth submanifold of $M^{k_1+2 k_2+k_3}$. If $k_2>0$, then let additionally $\fatX \in \XXb(k_1,k_2,k_3)$. There is a generic subset $\GG \subset \XX(k_1,k_2,k_3,\fatX)$ if $k_2>0$ and $\GG\subset\XX(k_1,0,k_3)$ if $k_2=0$, such that for every $\fatY = \left(Y^-_1,\dots,Y^-_{k_1},Y_1,\dots,Y_{k_2},Y^+_1,\dots,Y^+_{k_3}\right) \in \GG$ the space
\begin{align*} 
 &\MM_{\fatY}(x_1,x_2,\dots,x_{k_1},y_1,y_2,\dots,y_{k_3},(k_1,k_2,k_3),V,N)  \\
 &:= \left\{\left. \left(\gamma^-_1,\dots,\gamma^-_{k_1}, (l_1,\gamma_1),\dots,(l_{k_2},\gamma_{k_2}),\gamma^+_1,\dots,\gamma^+_{k_3} \right) \ \right| \right. (l_1,l_2,\dots,l_{k_2}) \in V \ , \\	
 &\qquad \qquad \ \ \gamma^-_i \in W^-\left(x_i, Y^-_i(l_1,l_2,\dots,l_{k_2})\right) \ \forall i \in \{1,2,\dots,k_1\} \ , \\
 &\qquad \qquad \ \ \gamma^+_j \in W^-\left(y_j, Y^+_j(l_1,l_2,\dots,l_{k_2})\right) \ \forall j \in \{1,2,\dots,k_3\} \ , \\
 &\qquad \qquad \ \ (l_j,\gamma_j) \in \MM\left(Y_j(l_1,\dots,l_{j-1},l_{j+1},\dots,l_{k_2}) \right) \ \forall j \in \{1,2,\dots,k_2\} \ , \\
 &\qquad \qquad \ \  \Big((E^-_{Y^-_i(l_1,\dots,l_{k_2})}(\gamma^-_i))_{i=1,2,\dots,k_1},(E_{Y_j(l_1,\dots,l_{j-1},l_{j+1},\dots,l_{k_2})}(l_j,\gamma_j))_{j=1,2,\dots,k_2},  \\ &\left.\phantom{\gamma_{k_3}^+ boooooooooooooooooooooooooooooooooooooooooo}(E^+_{Y^+_k(l_1,\dots,l_{k_2})}(\gamma^+_k))_{k=1,2,\dots,k_3}\Big) \in N \ \right\}
\end{align*}
is a manifold of class $C^{n+1}$ for all $x_1,x_2,\dots,x_{k_1},y_1,y_2\dots,y_{k_3} \in \Crit f$. For every $\fatY \in \GG$ its dimension is given by 
\begin{align*}
 &\dim \MM_{\fatY}(x_1,x_2,\dots,x_{k_1},y_1,y_2,\dots,y_{k_3},(k_1,k_2,k_3),V,N) \\
 &\qquad \qquad = \sum_{i=1}^{k_1} \mu(x_i) - \sum_{k=1}^{k_3} \mu(y_k) + (k_2+k_3)n + \dim V - \codim N \ .
\end{align*}
\end{theorem}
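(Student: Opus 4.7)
My strategy mirrors the three-step argument used in Theorems~\ref{transverTotal}, \ref{NegEvalSubmersion}, \ref{FiniteLengthSubmersion} and \ref{FiniteLengthSardSmale}: build a universal moduli space over the full perturbation parameter space, establish that the total endpoint-and-length evaluation is a submersion, and then apply Sard--Smale. Concretely, I would first form
\[
\widetilde{\MM} := \Bigl\{ \bigl(\fatY, \vec l, (\gamma^-_i)_i, (l_j,\gamma_j)_j, (\gamma^+_k)_k \bigr) \Bigr\},
\]
where $\fatY$ ranges over $\XX(k_1,k_2,k_3,\fatX)$ (or $\XX(k_1,0,k_3)$ if $k_2=0$), $\vec l =(l_1,\dots,l_{k_2}) \in (0,+\infty)^{k_2}$, and each trajectory satisfies its perturbed gradient flow equation with parameters as prescribed by the statement. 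Realising $\widetilde{\MM}$ as the common zero set of the parametrised Fredholm sections of Theorem~\ref{PerturbedFredholm} and (the $\XX_+$-analogue in) Theorem~\ref{PerturbedStable}, together with the finite-length factors of Proposition~\ref{FiniteLengthManifold}, gives $\widetilde{\MM}$ the structure of a Banach submanifold of class $C^{n+1}$; the fibrewise transversality needed to make the intersection transverse is provided by Theorems~\ref{PerturbedUnstable} and \ref{PerturbedStable}.

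The core step is to verify that the joint endpoint-length evaluation
\[
E : \widetilde{\MM} \longrightarrow M^{k_1 + 2k_2 + k_3} \times (0,+\infty)^{k_2}
\]
is a submersion. Projection onto $(0,+\infty)^{k_2}$ is trivially submersive, so it suffices to control the endpoint part at each fixed slice $\vec l = \vec l_0$. By Lemma~\ref{pertparamevalsub}, evaluation at $\vec l_0$ sends $\XX_\pm(M,k_2,\fatX^\pm_i)$ and $\XX_0(M,k_2-1,\fatX^0_j)$ surjectively, as submersions, onto $\XX_\pm(M)$ and $\XX_0(M)$; consequently the isotopy-extension tangent-vector constructions of Theorems~\ref{transverTotal} and \ref{FiniteLengthSubmersion} (and the positive analogue in Theorem~\ref{PerturbedStable}) transfer verbatim to produce, for each individual trajectory, a lift of any prescribed endpoint vector to a tangent direction of $\widetilde{\MM}$. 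Because distinct trajectories are governed by distinct factors of $\XX(k_1,k_2,k_3,\fatX)$, these lifts combine into a single tangent vector mapping to an arbitrary target, establishing the joint submersion.

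With the submersion established, $E^{-1}(N \times V)$ is a Banach submanifold of $\widetilde{\MM}$ of codimension $\codim N + (k_2 - \dim V)$, and the restricted projection $E^{-1}(N \times V) \to \XX(k_1,k_2,k_3,\fatX)$ is Fredholm. Its index is computed by additivity over the trajectories, using the Fredholm indices $\mu(x_i)$ and $n - \mu(y_k)$ from Theorem~\ref{PerturbedFredholm} together with the dimension $n+1$ per finite-length factor from Proposition~\ref{FiniteLengthManifold}, and subtracting the codimension of $N \times V$; the result is
\[
\sum_{i=1}^{k_1} \mu(x_i) - \sum_{k=1}^{k_3} \mu(y_k) + (k_2+k_3)n + \dim V - \codim N.
\]
Sard--Smale, in the form used throughout Section~\ref{SectionPerturbationsGradFlow}, then produces for each fixed tuple of critical points $(x_1,\dots,x_{k_1},y_1,\dots,y_{k_3})$ a generic set of regular values of this projection; since $\Crit f$ is finite, intersecting over the finitely many such tuples yields the generic $\GG$ of the statement, and the fibers over $\fatY \in \GG$ are exactly the manifolds $\MM_{\fatY}(\dots,V,N)$ of the stated dimension.

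The principal obstacle is the submersion step under the affine constraint imposed by the fixed background perturbations $\fatX$: one must perform the endpoint-realisation constructions of Theorem~\ref{transverTotal} entirely within the affine subspace $\XX(k_1,k_2,k_3,\fatX)$ rather than in the ambient unconstrained perturbation space. This is handled, following the pattern of Lemma~\ref{SemiInfAffineSubspace}, by observing that $\XX(k_1,k_2,k_3,\fatX)$ is modelled on the closed linear subspace of $\XX(k_1,k_2,k_3)$ whose asymptotic limits vanish, and that this subspace still contains every parametrised vector field compactly supported away from the asymptotic regions in the length parameters. That is precisely the freedom required to reproduce the Section~\ref{SectionPerturbationsGradFlow} constructions in the present parametrised setting, so no new transversality input beyond the earlier theorems is needed.
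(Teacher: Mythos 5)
Your proposal is correct and follows the same overall strategy as the paper (universal moduli over the full perturbation space, joint submersion via Lemma~\ref{pertparamevalsub} and the earlier single-trajectory submersion theorems, then Sard--Smale), but it organizes one ingredient differently. The paper builds the constraint $\vec l \in V$ into the source by first defining the Banach submanifold $\Mtilde(V) \subset \Mtilde^{k_2}$ of families whose length vector lies in $V$, and then its evaluation map $\Eunder$ lands only in $M^{k_1+2k_2+k_3}$, with $\codim V$ entering the dimension count via $\dim\overline{\WW}$. You instead build $V$ into the target, mapping the unconstrained universal moduli $\widetilde{\MM}$ into $M^{k_1+2k_2+k_3}\times(0,+\infty)^{k_2}$ and taking the preimage of $N\times V$; the codimension of $N\times V$ then produces the same numerical shift $-(k_2-\dim V)$. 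Both packagings are valid and give the same dimension formula. Yours requires one extra observation you invoke correctly: the joint map to $M^{\cdots}\times(0,+\infty)^{k_2}$ is a submersion because the length projection is trivially submersive and the endpoint evaluation is submersive on each slice $\{\vec l = \vec l_0\}$ (which is exactly where the slice-wise submersion supplied by Lemma~\ref{pertparamevalsub} together with Theorems~\ref{transverTotal}, \ref{PerturbedStable} and \ref{FiniteLengthSubmersion} is used). The paper's packaging avoids that small argument at the cost of explicitly introducing $\Mtilde(V)$ as an auxiliary Banach manifold; in practice the two are interchangeable. Your final paragraph about performing the endpoint-realisation construction inside the affine subspace $\XX(k_1,k_2,k_3,\fatX)$ is precisely the point absorbed into the statement of Lemma~\ref{pertparamevalsub}, where the evaluations $p^{\pm}_k(\vec l,\cdot)$ and $p^0_k(\vec l,\cdot)$ are shown to be surjective submersions on the constrained spaces $\XX_{\pm}(M,k,\fatX_{\pm})$ and $\XX_0(M,k,\fatX_0)$ directly, so the paper addresses your concern by proving the lemma on the constrained spaces from the start rather than arguing afterwards that the constraint does not obstruct surjectivity.
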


\begin{remark}
 This transversality theorem is nonlocal with respect to two different viewpoints. Firstly, observe that the perturbations of negative and positive semi-infinite and finite-length curves depend \emph{on all the lengths $l_1,\dots,l_{k_2}$ of the perturbed finite-length trajectories involved}. So the perturbations of the semi-infinite trajectories change with the interval lengths of the perturbed finite-length trajectories. Secondly, if $V= \left(0,+\infty\right)^{k_2}$ and if $N$ is of the form
 \begin{equation*}
  N = \prod_{i=1}^{k_1} N^-_i \times \prod_{j=1}^{k_2} N^0_j \times \prod_{k=1}^{k_3} N^+_k \ , 
 \end{equation*}
 where $N^-_1,\dots,N^-_{k_1},N^+_1,\dots,N^+_{k_3} \subset M$ and $N^0_1,\dots,N^0_{k_2} \subset M^2$ are smooth submanifolds, then the statement of Theorem \ref{NonlocalTransversality} can be derived in the spirit of the analogous results of the previous section (up to the introduction of parameters). 
 
 Since $V$ can be an arbitrary submanifold of $\left(0,+\infty\right)^{k_2}$ and $N$ can be an arbitrary smooth submanifold of $M^{k_1+2 k_2+k_3}$ and \emph{does not need to be a product of submanifolds of the factors}, Theorem \ref{NonlocalTransversality} can be regarded as a nonlocal generalization of the corresponding results in the previous section.
\end{remark}

\begin{proof}[Proof of Theorem \ref{NonlocalTransversality}]
 For $k_2=0$, the statement already follows from Theorem \ref{transverTotal}, part 2 of Theorem \ref{PerturbedStable} and the Sard-Smale transversality theorem. In the following we therefore assume that $k_2 > 0$. Consider the map
 \begin{align*}
  &p: V \times \XX(k_1,k_2,k_3,\fatX) \to \XX_-(M)^{k_1} \times \XX_0(M)^{k_2} \times \XX_+(M)^{k_3} \\
  &\left(\vec{l},\left(Y^-_i\right)_{i=1,\dots,k_1},\left(Y^0_j\right)_{j=1,\dots,k_2},\left(Y^+_k\right)_{k=1,\dots,k_3} \right) \mapsto \\ 
  &\left(\left(p^-_{k_2}\left(\vec{l},Y^-_i\right)\right)_{i},\left(p^0_{k_2-1}\left((l_1,\dots,l_{j-1},l_{j+1},\dots,l_{k_2}),Y^0_j\right)\right)_{j},\left(p^+_{k_2}\left(\vec{l},Y^+_k\right)\right)_{k} \right) \ .
 \end{align*}
 Since the maps $p^-_{k_2}\left(\vec{l},\cdot\right)$, $p^0_{k_2-1}\left((l_1,\dots,l_{j-1},l_{j+1},\dots,l_{k_2}),\cdot\right)$ and $p^+_{k_2}\left(\vec{l},\cdot\right)$ are surjective submersions of class $C^{n+1}$ for every $\vec{l}$ by Lemma \ref{pertparamevalsub}, the map $p$ is a surjective submersion of class $C^{n+1}$.  
 
 Consider the space
 \begin{equation*}
  \widetilde{\MM}(V) := \Bigl\{\left((Y_1,(l_1,\gamma_1)),\dots,(Y_{k_2},(l_{k_2},\gamma_{k_2})) \right) \in \left(\widetilde{\MM}\right)^{k_2} \ \Big| \ (l_1,\dots,l_{k_2}) \in V \Bigr\} \ .
 \end{equation*}
 One checks that $\widetilde{\MM}(V)$ is a Banach submanifold of $\widetilde{\MM}^{k_2}$. Moreover, it follows from part 2 of Theorem \ref{FiniteLengthSubmersion} that the restriction of $(E^0)^{k_2}: \widetilde{\MM}^{k_2} \to M^{2k_2}$ to $\widetilde{\MM}(V)$ is a surjective submersion of class $C^{n+1}$. 
 
 Let $x_1,x_2,\dots,x_{k_1},y_1,y_2,\dots,y_{k_3} \in \Crit f$. Combining the above considerations of $\widetilde{\MM}(V)$ with Theorem \ref{transverTotal} and part 2 of Theorem \ref{PerturbedStable}, we obtain that the map
 \begin{equation*}
  \left(E^-\right)^{k_1} \times \left(E^0\right)^{k_2} \times \left(E^+\right)^{k_3}: \prod_{i=1}^{k_1} \Wtilde^-(x_i,\XX_-) \times \Mtilde(V) \times \prod_{k=1}^{k_3} \Wtilde^+(y_k,\XX_+) \to M^{k_1+2k_2+k_3}
 \end{equation*}
 is a surjective submersion of class $C^{n+1}$, where $\Mtilde_{>0} := \left\{(l,\gamma) \in \Mtilde \ \middle| \ l > 0\right\}$. We further consider the map
 \begin{align*}
  &\bar{p}: \XX(k_1,k_2,k_3,\fatX) \times \prod_{i=1}^{k_1} \PP_-(x_i) \times \Mtilde(V) \times \prod_{k=1}^{k_3} \PP_+(y_k) \to \\ &\qquad \prod_{i=1}^{k_1} \left(\XX_-(M)\times \PP_-(x_i)\right) \times \prod_{j=1}^{k_2}\left(\XX_0(M)\times \Mtilde_{>0}\right) \times \prod_{k=1}^{k_3} \left( \XX_+(M) \times \PP_+(y_k) \right) \ , \\
  &\left((Y_i^-)_{i=1,\dots,k_1},(Y_j^0)_{j=1,\dots,k_2},(Y^+_k)_{k=1,\dots,k_3}, (\gamma^-_i)_{i=1,\dots,k_1},(Y_j,l_j,\gamma_j)_{j=1,\dots,k_2},(\gamma^+_k)_{k=1,\dots,k_3}\right) \mapsto \\
  &\left(\left(p^-_{k_2}\left(\vec{l},Y^-_i\right),\gamma^-_i\right)_{i=1,\dots,k_1} \left(p^-_{k_2}\left((l_1,\dots,l_{j-1},l_{j+1},\dots,l_{k_2}),Y^0_j\right),(Y_j,l_j,\gamma_j)\right)_{j=1,\dots,k_2}, \right. \\ 
  &\phantom{booooooooooooooooooooooooooooooooooooooooooooooooooo}\left. \left(p^+_{k_2}\left(\vec{l},Y^+_k\right),\gamma^+_k\right)_{k=1,\dots,k_3} \right) \ .
 \end{align*}
 $\bar{p}$ can be written as a composition of the surjective submersion $p$ and several permutations of the different components involved, so $\bar{p}$ is itself a surjective submersion. Let
 \begin{equation*}
 \overline{\WW} := \left(\bar{p}\right)^{-1}\left(\prod_{i=1}^{k_1} \Wtilde^-(x_i,\XX_-) \times \prod_{j=1}^{k_2} \Mtilde_{>0} \times \prod_{k=1}^{k_3} \Wtilde^+(y_k,\XX_+) \right) \ ,
 \end{equation*}
 where for convenience we identify $\Mtilde_{>0} \cong \left\{(Y,(Y,l,\gamma)) \left| (Y,l,\gamma) \in \Mtilde_{>0} \right. \right\}$. Then
 \begin{equation*}
  \Eunder: \overline{\WW} \to M^{k_1+2k_2+k_3} \ , \quad \Eunder:= \left(\left(E^-\right)^{k_1} \times \left(E^0\right)^{k_2} \times \left(E^+\right)^{k_3}\right) \circ \bar{p}|_{\overline{\WW}} \ ,
 \end{equation*}
 is well-defined, where $E^-$, $E^0$ and $E^+$ are defined as in the previous section. Moreover, $\Eunder$ is a composition of surjective submersions of class $C^{n+1}$ and therefore has the very same property. For any $\fatY \in \XX(k_1,k_2,k_3,\fatX)$ put 
 \begin{align*}
 \overline{\WW}_{\fatY} :=  &\left\{ \left(\left(\gamma^-_1,\dots,\gamma^-_{k_1}\right), \left((l_1,\gamma_1),\dots,(l_{k_2},\gamma_{k_2})\right),\left(\gamma^+_1,\dots,\gamma^+_{k_3}\right) \right) \ \right. \\
 &\quad \left. \Big| \ \left(\fatY, \left(\left(\gamma^-_1,\dots,\gamma^-_{k_1}\right), \left((l_1,\gamma_1),\dots,(l_{k_2},\gamma_{k_2})\right),\left(\gamma^+_1,\dots,\gamma^+_{k_3}\right) \right) \right) \in \overline{\WW} \right\} \ .
 \end{align*}
 Comparing the dimensions of the respective spaces, one checks that for every choice of $\fatY \in \XX(k_1,k_2,k_3,\fatX)$ it holds that
 \begin{equation}
 \label{decisiveidentity}
  \MM_{\fatY}(x_1,\dots,x_{k_1},y_1,\dots,y_{k_3},(k_1,k_2,k_3),N) = \Eunder_{\fatY}^{-1}(N) \ ,
 \end{equation}
 where $\Eunder_{\fatY} := \Eunder|_{\bar{\WW}_{\fatY}}$. We then proceed as in the proofs of Theorem \ref{NegEvalSubmersion} and \ref{FiniteLengthSardSmale} to write $\Eunder$ as a map defined on a product manifold in which $\XX(k_1,k_2,k_3,\fatX)$ is among the factors. Applying the Sard-Smale transversality theorem yields that for generic choice of $\fatY \in \XX(k_1,k_2,k_3,\fatX)$, the map $\Eunder_{\fatY}$ is transverse to $N$ which by (\ref{decisiveidentity}) shows the claim for this particular choice of $x_1,\dots,x_{k_1}$ and $y_1,\dots,y_{k_3}$. The simultaneous transversality for all choices of critical points for a generic $\fatY$ can be shown in precisely the same way as in the proof of Theorem \ref{NegEvalSubmersion}. We omit the details. \\
 
 It remains to compute the dimension of $\MM_{\fatY}(x_1,\dots,x_{k_1},y_1,\dots,y_{k_3},(k_1,k_2,k_3),N)$. One checks that for an arbitrary fixed $\vec{l} \in \left(0,+\infty \right)^{k_2}$ the following equation holds:
 \begin{align*}
  \dim \overline{\WW}_{\fatY} = \dim \left( \prod_{i=1}^{k_1} \Wtilde^- \left(x_i,Y^-_i \left(\vec{l}\right)\right) \times \right. &\prod_{j=1}^{k_2} \Mtilde_{>0} \left(Y^0_j \left(l_1,\dots,l_{j-1},l_{j+1},\dots,l_{k_2}\right)\right) \\ &\left. \times \prod_{k=1}^{k_3} \Wtilde^+\left(y_k,Y^+_k\left(\vec{l}\right) \right) \right) - \codim V \ .
 \end{align*}
 Combining this identity with the dimension formula from the Sard-Smale theorem and the results of the previous section, we can conclude:
 \begin{align*}
  &\dim \MM_{\fatY}(x_1,\dots,x_{k_1},y_1,\dots,y_{k_3},(k_1,k_2,k_3),N) =\dim \overline{\WW} - \codim N \\
% &=\sum_{i=1}^{k_1} \dim W^-\left(x_i,Y^-_i(l_1,\dots,l_{k_1})\right) +  \sum_{j=1}^{k_2} \dim \MM\left(Y^0_j(l_1,\dots,,l_{j-1},l_{j+1},\dots,l_{k_2})\right) \\ 
% &\qquad \qquad + \sum_{k=1}^{k_3} \dim W^+\left(y_k,Y^+_k(l_1,\dots,l_{k_2})\right) - \codim V - \codim N \\
 &=\sum_{i=1}^{k_1} \mu(x_i) + k_2 (n+1) + \sum_{k=1}^{k_3} (n-\mu(y_k)) - \codim V - \codim N \\
 &=\sum_{i=1}^{k_1} \mu(x_i) - \sum_{k=1}^{k_3} \mu(y_k) + (k_2+k_3)n + \dim V - \codim N \ .
 \end{align*}
\end{proof}

\begin{remark}
 Compared to Abouzaid's results, Theorem \ref{NonlocalTransversality} generalizes Lemma 7.3 from \cite{AbouzaidPlumbings}.
\end{remark}

In the rest of this article we will focus our attention on moduli spaces of perturbed Morse ribbon trees which will be shown to occur as a special case of Theorem \ref{NonlocalTransversality}. 

\section{Moduli spaces of perturbed Morse ribbon trees}
\label{SectionModuliSpacesPerturbed}

Throughout the rest of this article, we will discuss perturbed Morse ribbon trees, which can be interpreted as continuous maps from a tree to the manifold $M$ which edgewise fulfill a perturbed negative gradient flow equation. In this section, we will make this notion precise in terms of the constructions of Sections \ref{SectionPerturbationsGradFlow} and \ref{NonlocalGeneralizations}. Moreover, we will apply Theorem \ref{NonlocalTransversality} to equip moduli spaces of perturbed Morse ribbon trees with the structures of finite-dimensional manifolds of class $C^{n+1}$. \bigskip

We begin by giving a brief account of trees and their terminology, which we will use in the remainder of this article. We refrain from giving an elaborate discussion of trees and related notions from graph theory. Instead, we give a topological definition in terms of CW complexes. We will state and use several facts from graph theory without giving proofs, since this would lead us afar from the Morse-theoretic constructions we want to consider. 

\begin{definition}
 A \emph{tree complex} is a connected and simply connected one-dimensional CW complex. A 0-cell of a tree complex is called \emph{external} if it is connected to precisely one 1-cell. A 0-cell which is not external is called \emph{internal}. \bigskip
 
 We call a tree complex \emph{ordered} if it is equipped with an ordering of its external 0-cells, i.e. a bijection 
 \begin{equation*}
 \{\text{external 0-cells}\} \to \{0,1,\dots,d\}
 \end{equation*}
 for suitable $d \in \NN_0$.
\end{definition}

We want to define ordered trees as equivalence classes of ordered tree complexes. For this purpose, we define a notion of isomorphisms of ordered CW complexes.  

\begin{definition}
A map between two ordered tree complexes $T_1 \to T_2$ is called an \emph{isomorphism of ordered tree complexes} if it is a cellular homeomorphism which preserves the ordering of the external 0-cells, i.e. it maps the $i$-th external vertex of $T_1$ to the $i$-th external vertex of $T_2$ for every $i \in \{0,1,\dots,d\}$.
\end{definition}

One checks without further difficulties that this notion of isomorphism defines an equivalence relation on the class of ordered tree complexes.

Moreover, since isomorphisms of ordered tree complexes are by definition cellular maps which preserve the ordering of the external vertices, an elementary line of argument shows that these isomorphisms induce equivalence relations on the sets of 0-cells and 1-cells as well. 

\begin{definition} \index{tree}
An isomorphism class $T$ of ordered tree complexes is called \emph{an ordered tree}. Isomorphism classes of 0-cells of $T$ are called \emph{vertices of $T$} and isomorphism classes of 1-cells of $T$ are called \emph{edges of $T$}. Denote the set of all vertices of $T$ by $V(T)$ and the set of all edges of $T$ by $E(T)$.

A vertex is called \emph{external (internal)} if it is represented by an external (internal) 0-cell. Denote the set of all external (internal) vertices of $T$ by $V_{ext}(T)$ (resp. $V_{int}(T))$. 

An edge is called \emph{external} if it is represented by a 1-cell which is connected to an external 0-cell. An edge is called \emph{internal} if it is not external. Denote the set of all external (internal) edges by $E_{ext}(T)$ (resp. $E_{int}(T)$).
\end{definition}

Since by definition isomorphisms of ordered tree complexes preserve the orderings of the external 0-cells, these orderings in turn induce orderings on the external vertices of ordered trees.  

\begin{definition} 
Let $T$ be an ordered tree.
\begin{enumerate}
 \item Denote the set of external vertices of $T$ according to their ordering by
 \begin{equation*}
 V_{ext}(T) = \left\{v_0(T),v_1(T),\dots ,v_d(T) \right\}
 \end{equation*}
for suitable $d \in \NN_0$. Furthermore, for $d \geq 2$ denote for $i \in \{0,1,\dots,d\}$ the unique external edge connected to $v_i(T)$ by $e_i(T)$.  We will occasionally just write $v_0,\dots,v_d$ if it is clear which tree we are referring to. 
\item The vertex $v_0(T)$ is called the \emph{root of $T$}, $v_1(T),\dots, v_d(T)$ are called the \emph{leaves of $T$} and an ordered tree is called \emph{$d$-leafed} if it has precisely $d$ leaves. 
 \item A vertex of $T$ is called \emph{$n$-valent}, $n \in \NN$, (univalent, bivalent, trivalent, ...) if it is connected to precisely $n$ different edges of $T$.
 \item For $d\geq 2$, an ordered $d$-leafed tree is called \emph{ribbon tree} if every internal vertex is at least trivalent, i.e. $n$-valent for some $n \geq 3$. The set of all $d$-leafed ribbon trees for a fixed $d \geq 2$ is denoted by $\RTree_d$. 
 \item If $e$ denotes the unique ordered tree having one single edge and two (external) vertices, then we will put $\RTree_1 := \{e\}$.
 \item A ribbon tree is called \emph{binary tree} if every internal vertex is trivalent. 
 \end{enumerate}
\end{definition}

\begin{example}
 \begin{figure}[h]
\centering
\includegraphics[scale=1.3]{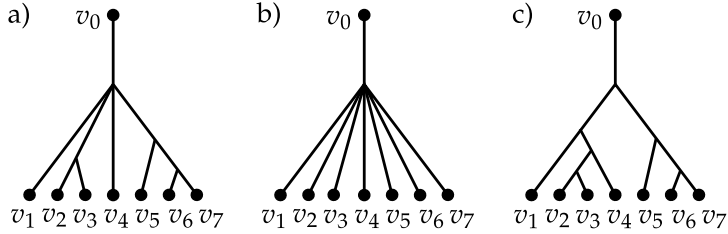}
\caption{Elements of $\RTree_7$}
 \label{RibbonTreeExamples}
\end{figure}

Figure \ref{RibbonTreeExamples} depicts three different examples of $7$-leafed ribbon trees. Note that tree b) has no internal edges and a single internal vertex and that tree c) is a binary tree. 
\end{example}

%Note that tautologically, a vertex of an ordered tree is univalent if and only if it is external.

Let $T \in \RTree_d$ for some $d \in \NN$. For any $v \in V(T)$ let $P_v$ denote the minimal subtree of $T$ which connects the root with $v$, i.e. the unique subtree of $T$ with $V_{ext}(P_v) = \left\{v_0,v \right\}$.
The following statement can be shown by elementary methods of graph theory:

\begin{lemma}
\label{LemmaDirected} 
 Let $T \in \RTree_d$ and $e \in E(T)$. Let $v, w \in V(T)$ be the two distinct vertices which are connected to $e$. Then either $v \in V\left(P_w\right)$ or  $w \in V \left(P_v\right)$.
\end{lemma}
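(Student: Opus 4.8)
The plan is to reduce the statement to the standard fact that in a tree complex there is a \emph{unique} simple edge-path between any two $0$-cells, since the ribbon condition on $T$ plays no role whatsoever and only the tree property is used. Existence of such a path holds because $T$ is connected; uniqueness holds because $T$ is simply connected, as two distinct simple paths with the same endpoints would trace out an embedded circle. Moreover, for $v \in V(T)$ this unique simple $v_0$-to-$v$ path is precisely the minimal subtree $P_v$, i.e. the unique subtree with $V_{ext}(P_v)=\{v_0,v\}$. I would open the proof by recording these two observations as the elementary graph-theoretic facts the paper leaves to the reader.

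Granting this, the proof is a short case distinction according to whether $w \in V(P_v)$. If $w \in V(P_v)$, then the second alternative holds and there is nothing more to do. If $w \notin V(P_v)$, I would append the edge $e$ --- which joins $v \in V(P_v)$ to $w$ --- to the path $P_v$; the resulting edge-path runs from $v_0$ to $w$ and its vertex set is $V(P_v) \cup \{w\}$, which consists of pairwise distinct vertices precisely because $w \notin V(P_v)$. Hence it is a \emph{simple} path from $v_0$ to $w$, so by uniqueness it coincides with $P_w$; in particular $v \in V(P_v) \subseteq V(P_w)$, which is the first alternative.

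I expect no real obstacle: the entire content sits in the uniqueness of simple paths in a tree, which is standard, and everything else is bookkeeping. The only point demanding a little care is translating between the CW-topological notion of ``tree complex'' used here and the combinatorial language of vertices, edges and paths --- in particular the identification of $P_v$ with the unique simple $v_0$-to-$v$ path --- but this is routine and exactly the kind of elementary fact the paper explicitly defers.
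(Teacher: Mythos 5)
Your proof is correct. Note that the paper itself offers no proof of this lemma --- it declares the argument elementary, defers it to the reader, and supplies only Figure \ref{ExampleDirected} as illustration --- so there is no authorial argument to compare against. Your reduction to the uniqueness of the simple $v_0$-to-$v$ path in a tree (which is precisely $P_v$, the minimal subtree with external vertices $\{v_0,v\}$), followed by the case distinction on whether $w \in V(P_v)$, is exactly the right argument: in the nontrivial case, adjoining $e$ to $P_v$ produces a subtree whose external vertices are $\{v_0,w\}$ (since $v$ becomes bivalent and $w$ is new, hence univalent), so by uniqueness it equals $P_w$ and $v\in V(P_w)$ follows.
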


\begin{definition}
 In the situation of Lemma \ref{LemmaDirected}, assume that $v \in V \left(P_w \right)$. We then call $v$ the \emph{incoming vertex of $e$} and $w$ the \emph{outgoing vertex of $e$}. For any given edge $e$ we further denote its incoming and outgoing vertex by
 \begin{equation*}
  \vin(e) \quad \text{and} \quad \vout(e) \ . 
 \end{equation*}
\end{definition} 

The proof of Lemma \ref{LemmaDirected} is elementary. Instead of providing the proof, we illustrate the situation with Figure \ref{ExampleDirected} which depicts an example with $d=5$. In the picture, the tree $P_v$ is the subtree whose edges are the dashed edges in the picture. Here, $w \in V(P_v)$.

\begin{figure}[h]
\centering
\includegraphics[scale=1.3]{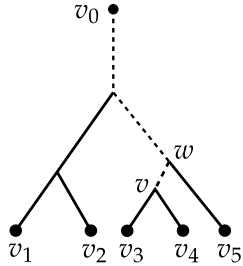}
 \caption{An illustration of Lemma \ref{LemmaDirected}.}
 \label{ExampleDirected}
\end{figure}

\begin{remark}
 In this terminology, the root of a ribbon tree is always the incoming vertex of the edge it is connected to while the leaves of a ribbon tree are always the outgoing vertices of the respective edges they are connected to, i.e.
 \begin{equation*}
  v_0(T) = \vin\left(e_0(T) \right) \ , \quad v_i(T) = \vout \left(e_i(T)\right) \ \ \forall i \in \{1,\dots,d\} \ .
 \end{equation*}
\end{remark}

\begin{lemma}
\label{OutgoingUniqueness}
 Let $T \in \RTree_d$ and $v \in V(T) \setminus \{v_0(T)\}$. There is a unique edge $e_v \in E(T)$ with $\vout(e_v) = v$.
\end{lemma}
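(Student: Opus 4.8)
## Proof Proposal for Lemma \ref{OutgoingUniqueness}

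The plan is to derive both existence and uniqueness directly from Lemma \ref{LemmaDirected} together with the tree structure, without invoking any deeper graph theory. First I would establish \emph{existence}. Since $T$ is connected, there is at least one edge incident to $v$; pick any such edge $e$, and let $w$ be its other endpoint. By Lemma \ref{LemmaDirected}, either $v \in V(P_w)$ or $w \in V(P_v)$. In the first case $v = \vin(e)$ and $w = \vout(e)$, which is the wrong orientation for our purposes, but then I claim $v$ must have another incident edge lying "closer to the root": indeed, the minimal subtree $P_v$ connecting $v_0(T)$ to $v$ is nontrivial because $v \neq v_0(T)$, so $v$ is incident to at least one edge $e'$ of $P_v$; the other endpoint $v'$ of $e'$ satisfies $v' \in V(P_v)$ (it lies on the path from the root to $v$), hence by Lemma \ref{LemmaDirected} we get $v = \vout(e')$. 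So in all cases such an edge exists, and I can take $e_v := e'$.

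Next I would prove \emph{uniqueness}. Suppose $e_1, e_2 \in E(T)$ both satisfy $\vout(e_i) = v$, with incoming vertices $u_1 := \vin(e_1)$ and $u_2 := \vin(e_2)$. By definition of incoming vertex, $u_i \in V(P_v)$ for $i = 1,2$, i.e. both $u_1$ and $u_2$ lie on the path from the root $v_0(T)$ to $v$. If $e_1 \neq e_2$, then since both $e_1$ and $e_2$ terminate at $v$, the edges $e_1$ and $e_2$ together with two sub-paths of $P_v$ — the segment of $P_v$ from $v_0(T)$ through $u_1$ and the segment through $u_2$ — would produce two distinct reduced paths from $\{u_1\ \text{or}\ v_0(T)\}$ to $v$, hence a cycle in $T$. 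More carefully: the concatenation of $e_1$, the sub-path of $P_v$ from $u_1$ back to $v_0(T)$, then the sub-path of $P_v$ from $v_0(T)$ to $u_2$, then $e_2$, gives a closed edge-path at $v$ using $e_1$ and $e_2$; after reducing (cancelling any backtracking along the shared initial segment of $P_v$) one is left with a nontrivial reduced cycle, contradicting that $T$ is simply connected (a tree complex has trivial fundamental group, so it contains no reduced cycles). Therefore $e_1 = e_2$.

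The step I expect to be the main obstacle is making the uniqueness argument fully rigorous at the level of the CW-complex definition of a tree: one must translate "simply connected one-dimensional CW complex" into the concrete combinatorial fact that between any two vertices there is a unique reduced edge-path, and that no reduced closed edge-path of positive length exists. This is standard — it is exactly the statement that a tree in the graph-theoretic sense has unique paths — but since the paper deliberately works with the topological definition, I would either cite this as one of the "elementary facts from graph theory" the introduction says will be used without proof, or spend a sentence deducing it from $\pi_1(T) = 0$ and the fact that reduced edge-paths represent distinct homotopy classes. Given the paper's stated convention of leaving such elementary graph-theoretic facts to the reader, the cleanest route is: existence as above, and uniqueness by observing that two distinct edges with outgoing vertex $v$ would both have incoming vertices on the unique reduced path $P_v$ from the root to $v$, forcing two distinct reduced paths between a pair of vertices and hence contradicting uniqueness of paths in a tree.
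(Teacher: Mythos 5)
Your proof follows essentially the same route as the paper's: existence comes from the fact that $P_v$ is nontrivial and hence its terminal edge at $v$ has $v$ as outgoing vertex, and uniqueness comes from observing that two distinct outgoing edges would yield two distinct paths from $v_0(T)$ to $v$, whose concatenation produces a homotopically nontrivial loop, contradicting that $T$ is simply connected. The extra preliminary step in your existence argument (picking an arbitrary incident edge and invoking Lemma \ref{LemmaDirected} before falling back to $P_v$) is harmless but unnecessary, since you end up constructing $e'$ from $P_v$ regardless of the outcome of that dichotomy.
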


\begin{proof}
 Since $v \neq v_0(T)$, the subtree $P_v$ has at least one edge. By definition of $P_v$ there is an edge $e \in E(P_v) \subset E(T)$ with $\vout(e) = v$. 

 Suppose there is an $f \in E(T)$ with $e \neq f$ and $\vout(f) = v$. By definition of an outgoing vertex, this would imply that there is a minimal subtree $P$ of $T$ with 
 \begin{equation*}
 V_{ext}(P) = \{v_0(T),v\}
 \end{equation*}
 and with $P \neq P_v$. Given $P_v$ and $P$, one could easily construct two continuous paths $\alpha_1,\alpha_2: [0,1] \to T$ with $\alpha_i(0) = v_0(T)$ and $\alpha_i(1) = v$ for $i \in \{1,2\}$ and such that 
 \begin{equation*}
  \im \alpha_1 \subset P_v \ , \quad \im \alpha_2 \subset P \ .
 \end{equation*}
Via reparametrization and concatenation, one could construct a loop in $T$ out of $\alpha_1$ and $\alpha_2$ having nontrivial homotopy class. Such a loop can not exist, since $T$ is by definition simply connected. Therefore, the claim follows.
\end{proof}

After this graph-theoretic digression, we will focus our attention on maps $T \to M$. 

For any $T \in \RTree_d$ and $e \in E(T)$ let 
\begin{equation*}
a_e: [0,1] \to T
\end{equation*}
denote its attaching map. (Actually, it is an isomorphism class of attaching maps, but the meaning of this term is obvious.) We always assume that 
\begin{equation*}
 a_e(0) = \vin(e) \ , \quad a_e(1) = \vout(e) \ .
\end{equation*}
Via precomposing with the attaching maps, we can express every continuous map $I: T \to M$ as a collection of maps 
\begin{equation*}
 \left\{I \circ a_e: [0,1] \to M \ | \ e \in E(T) \right\} \ .
\end{equation*}
Conversely, a family of maps
\begin{equation*}
 \left\{J_e: [0,1] \to M \ | \ e \in E(T) \right\}
\end{equation*}
induces a continuous map $J: T \to M$ if and only if it satisfies the compatibility condition
\begin{equation}
\label{ContinuityCondition}
  J_e(1) = J_f(0)  \quad \text{if} \quad \vout(e) = \vin(f) \qquad \forall e,f \in E(T) \ . 
\end{equation}
To apply Theorem \ref{NonlocalTransversality} in this context, we want to express condition (\ref{ContinuityCondition}) as a certain submanifold condition for a submanifold of $M^N$ for some $N \in \NN$, which will lead us to the definition of $T$-diagonals after some minor preparations. \\

For the rest of this article, we introduce the following notation for a given $d \in \NN$: 
\begin{equation*}
 k: \RTree_d  \to \NN_0 \ , \quad   T \mapsto | E_{int}(T)| \ . 
\end{equation*} 
Let $T \in \RTree_d$ and consider the product manifold $M^{1+ 2k(T)+d}$. For convenience, we label its factors by the edges of $T$, and write a point in $M^{1+2k(T)+d}$ as
\begin{equation*}
(q_0, (q^e_{in},q^e_{out})_{e \in E_{int}(T)},q_1,\dots, q_d ) \in M^{1+2k(T)+d}
\end{equation*}
We actually need an ordering on $E_{int}(T)$ to make this well-defined, but since the ordering is irrelevant for what follows, we silently \emph{assume that we have chosen an arbitrary ordering on $E_{int}(T)$ throughout the following discussion and leave it out of the notation.}  \\

Let $w_0 \in V(T)$ be given by $w_0 = \vout(e_0(T))$. (Recall that $\vin(e_0(T)) = v_0(T)$.) Define
\begin{align*}
\Delta_{w_0} := &\left\{ (q_0, (q^e_{in},q^e_{out})_{e \in E_{int}(T)},q_1,\dots, q_d ) \in M^{1+2k(T)+d} \right. \\ 
&\qquad \Big| \  q_0 = q_{in}^e \ \text{ for every } \ e \in E_{int}(T) \ \text{ with } \ \vin(e) = w_0 \ ,  \\
&\qquad \qquad  q_0 = q_j \ \text{ for every } \ j \in \{1,2,\dots,d\} \ \text{ with } \ \vin(e_j(T)) = w_0 \ \Big\} \ .
\end{align*}
Let $v \in V_{int}(T) \setminus \{w_0\}$. By Lemma \ref{OutgoingUniqueness} there is a \emph{unique} internal edge $e_v \in E_{int}(T)$ with $v_{out}(e_v) = v$. For any such $v$ and associated $e_v$ we define
\begin{align}
\Delta_v := &\left\{ (q_0, (q^e_{in},q^e_{out})_{e \in E_{int}(T)},q_1,\dots, q_d ) \in M^{1+2k(T)+d} \right. \label{DefDeltav} \\ 
&\qquad \Big| \  q_{out}^{e_v} = q_{in}^e \ \text{ for every } \ e \in E_{int}(T) \ \text{ with } \  \vin(e) = v \ , \notag \\
&\qquad \qquad q_{out}^{e_v} = q_j \ \text{ for every } \ j \in \{1,\dots,d\} \ \text{ with } \ \vin(e_j(T)) = v \ \Big\} \ . \notag
\end{align}

\begin{definition} 
\label{DefDeltaT}
 The \emph{$T$-diagonal} $\Delta_T \subset M^{1+2k(T)+d}$ is the space given by
\begin{equation*}
 \Delta_T := \bigcap_{v \in V_{int}(T)} \Delta_v \ .
\end{equation*}
\end{definition}

The definition of $\Delta_T$ will be motivated in Remark \ref{DeltaTexplicitly}.

\begin{prop}
\label{PropcodimDeltaT}
 For every $T \in \RTree_d$ with $d \geq 2$, the $T$-diagonal is a submanifold of $M^{1+2k(T)+d}$ with
\begin{equation}
\label{codimDeltaT}
 \codim \Delta_T = (k(T)+d) \cdot n \ .
\end{equation}
\end{prop}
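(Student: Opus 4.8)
The plan is to realise $\Delta_T$ as the graph of a smooth map between products of copies of $M$; once this is done the submanifold property is automatic and the codimension formula drops out of a dimension count. First I would split the $1+2k(T)+d$ factors of $M^{1+2k(T)+d}$ into two groups. Call $q_0$ together with the $q^e_{out}$ for $e\in E_{int}(T)$ the \emph{incoming factors}, of which there are $1+k(T)$, and call the $q^e_{in}$ for $e\in E_{int}(T)$ together with $q_1,\dots,q_d$ the \emph{outgoing factors}, of which there are $k(T)+d$; this yields a diffeomorphism $M^{1+2k(T)+d}\cong M^{1+k(T)}\times M^{k(T)+d}$.

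Before using this decomposition I would assemble the combinatorial facts about $T$ on which the argument rests, all of them consequences of $d\geq 2$ together with Lemmas \ref{LemmaDirected} and \ref{OutgoingUniqueness}. Since external vertices are univalent, for $d\geq 2$ no edge of $T$ joins two external vertices: connectedness of $T$ would otherwise force $T$ to consist of that single edge, hence $d=1$. Consequently $w_0=\vout(e_0(T))$ is internal, and $\vin(e_j(T))$ is internal for every $j\in\{1,\dots,d\}$; moreover every internal edge has both endpoints internal, directly by definition. In particular no internal edge has $w_0$ as its outgoing vertex, so by Lemma \ref{OutgoingUniqueness} the assignment $v\mapsto e_v$ is a bijection from $V_{int}(T)\setminus\{w_0\}$ onto $E_{int}(T)$. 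These facts give the bookkeeping I need: each incoming factor is attached to exactly one internal vertex ($q_0$ to $w_0$ and $q^e_{out}$ to $\vout(e)$, an assignment that hits every internal vertex exactly once, so in particular $|V_{int}(T)|=1+k(T)$), and each outgoing factor is attached to a well-defined internal vertex as well ($q^e_{in}$ to $\vin(e)$ and $q_j$ to $\vin(e_j(T))$).

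With this dictionary the conditions defining $\Delta_v$, for each internal vertex $v$, say precisely that every outgoing factor attached to $v$ equals the incoming factor attached to $v$; and these conditions, ranging over all $v\in V_{int}(T)$, involve each outgoing factor in exactly one equation. Writing the incoming factors as a tuple $p=(p_v)_{v\in V_{int}(T)}\in M^{1+k(T)}$ and indexing the outgoing factors by a set $A$ of cardinality $k(T)+d$, with $\alpha\in A$ attached to the internal vertex $v(\alpha)$, we obtain
\[
 \Delta_T \;=\; \bigcap_{v\in V_{int}(T)}\Delta_v \;=\; \left\{(p,q)\in M^{1+k(T)}\times M^{k(T)+d}\ \middle|\ q_\alpha=p_{v(\alpha)}\ \text{ for all }\alpha\in A\right\},
\]
which is the graph of the map $g\colon M^{1+k(T)}\to M^{k(T)+d}$, $p\mapsto(p_{v(\alpha)})_{\alpha\in A}$. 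Each component of $g$ is a coordinate projection, so $g$ is smooth.

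To conclude I would invoke the standard fact that the graph of a smooth map $M^{1+k(T)}\to M^{k(T)+d}$ is an embedded submanifold of the product diffeomorphic to $M^{1+k(T)}$. Thus $\Delta_T$ is a submanifold with $\dim\Delta_T=(1+k(T))\,n$, and therefore
\[
 \codim\Delta_T\;=\;(1+2k(T)+d)\,n-(1+k(T))\,n\;=\;(k(T)+d)\,n,
\]
as claimed. I expect the only real obstacle to be the combinatorial step of the second paragraph — verifying that for $d\geq 2$ the incoming and outgoing factors partition the $1+2k(T)+d$ copies of $M$ as described and that each outgoing factor is pinned by exactly one of the defining equations; this is precisely where the hypothesis $d\geq 2$ and Lemma \ref{OutgoingUniqueness} are used, and everything afterwards is formal.
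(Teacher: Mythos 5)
Your argument is correct, and it takes a genuinely different route from the paper's. The paper proves the claim by showing each $\Delta_v$ is a submanifold of codimension $\left|\{e \in E(T) : \vin(e) = v\}\right| \cdot n$, checking that these submanifolds intersect pairwise transversely, building $\Delta_T$ as a chain of transverse intersections, and then summing codimensions over $v \in V_{int}(T)$; the combinatorial endgame there is identifying $\{e \in E(T) : \vin(e) \in V_{int}(T)\}$ with $E(T) \setminus \{e_0(T)\}$. You instead partition the factors of $M^{1+2k(T)+d}$ into incoming and outgoing groups, use the bijection $V_{int}(T)\setminus\{w_0\} \to E_{int}(T)$, $v \mapsto e_v$ from Lemma \ref{OutgoingUniqueness} to see that there are exactly $1+k(T)$ incoming and $k(T)+d$ outgoing factors with each outgoing factor pinned by exactly one defining equation, and realise $\Delta_T$ outright as the graph of a smooth map $M^{1+k(T)} \to M^{k(T)+d}$. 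This buys two things: you avoid any transversality check (graphs are automatically embedded), and you obtain the diffeomorphism $M^{1+k(T)} \cong \Delta_T$ directly — which is exactly the content of the paper's later Proposition \ref{DeltaTDiffeo}, established there separately for the purposes of orienting $\Delta_T$. The paper's incremental intersection picture does make the role of each internal vertex more visible and meshes with how $\Delta_T$ is introduced via the $\Delta_v$'s, but your parametrisation is the more economical argument and also sidesteps the order-of-intersection orientation subtlety that the paper flags and defers to Appendix \ref{SectionOrientPerturbedMRT}.
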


\begin{proof}
For every $v \in V_{int}(T)$, the space $\Delta_v$ is easily seen to be a submanifold of $M^{1+2k(T)+d}$ with
\begin{equation*}
 \codim \Delta_v = \left|  \{e \in E(T) \ | \ \vin(e) = v \} \right|\cdot n \ .
\end{equation*}
Furthermore, for every $V \subset V_{int}(T)$ and $v_0 \in V_{int}(T)$ with $v_0 \notin V$, the spaces $\Delta_{v_0}$ and $\bigcap_{v \in V} \Delta_{v}$ are smooth submanifolds and transverse to each other. Then one can build up the space $\Delta_T$ as a finite sequence of transverse intersections of submanifolds and derive that it is itself a smooth submanifold, whose codimension is given by
\begin{align*}
 \codim \Delta_T &= \sum_{v \in V_{int}(T)} \codim \Delta_v = \sum_{v \in V_{int}(T)} \left| \{e \in E(T) \ | \ \vin(e) = v \} \right| \cdot n \\
   &= \left| \{e \in E(T) \ | \ \vin(e) \in V_{int}(T) \} \right| \cdot n \ .
\end{align*}
All that remains to do is to determine the cardinality of $\{e \in E(T) \ | \ \vin(e) \in V_{int}(T)\}$. By definition of $e_0(T)$ we know that $\vin(e_0(T)) = v_0 \notin V_{int}(T)$, such that $e_0(T)$ does not lie in this set. But $e_0(T)$ is the \emph{only} edge not lying in this set. The incoming vertex of an internal edge is by definition internal. Moreover, for all external edges $e$ with $e \neq e_0$ it holds that
\begin{equation*}
 \vout(e) \in \{v_1(T),\dots,v_d(T)\} \subset V_{ext}(T) \ ,
\end{equation*}
so the incoming vertex of $e$ has to be internal since $d \geq 2$. Consequently, we obtain
\begin{equation*}
 \codim \Delta_T = \left|(E(T) \setminus \{e_0(T)\}) \right| \cdot n = (k(T) + d)\cdot n \ .
\end{equation*}
\end{proof}

%Before continuing with the construction of general Morse ribbon trees, we want to discuss two easy cases which can immediately be deduced from Theorem \ref{NonlocalTransversality}.

\begin{remark} \index{tree diagonal}
\label{DeltaTexplicitly}
 Combining the conditions in the definitions of the different $\Delta_v$, $v \in V_{int}(T)$, one derives that the $T$-diagonal is explicitly described by 
 \begin{align*}
  \Delta_T := &\left\{ (q_0, (q^e_{\mathrm{in}},q^e_{\mathrm{out}})_{e \in E_{int}(T)},q_1,\dots, q_d ) \in M^{1+2k(T)+d}\right. \\ 
&\quad \left| \  q_{\mathrm{out}}^{e} = q_{\mathrm{in}}^f \ \text{ for all } \ e,f \in E_{int}(T) \ \text{ with } \  \vout(e) = \vin(f) \ , \right. \\
&\quad \ \ q_{\mathrm{out}}^{e} = q_j \ \text{ for every } \ e \in E_{int}(T),  \ j \in \{1,2,\dots,d\} \ \text{ with } \ \vout(e) =\vin(e_j(T)) \ , \\
  &\quad \left. \ q_0 = q_{\mathrm{in}}^e \ \text{ for every } \ e \in E_{int}(T) \ \text{ with } \ \vout(e_0(T)) = \vin(e) \ \right\} \ .
 \end{align*}
 Let $\{J_e: [0,1] \to M \ | e \in E(T) \}$ be a family of maps. If one writes down condition (\ref{ContinuityCondition}) in greater detail for the different types of edges, one will obtain that the family induces a continuous map $T \to M$ if and only if
 \begin{equation*}
  \left(J_{e_0}(1), \left(J_e(0),J_e(1) \right)_{e \in E_{int}(T)},J_{e_1}(0),\dots,J_{e_d}(0) \right) \in \Delta_T \ ,
 \end{equation*}
 where $e_i := e_i(T)$ for every $i \in \{0,1,\dots,d\}$. In other words, we have expressed condition \eqref{ContinuityCondition} in terms of the $T$-diagonal.
\end{remark}

After all these preparations, we build the bridge to the Morse-theoretic constructions from Sections \ref{SectionPerturbationsGradFlow}  and \ref{NonlocalGeneralizations}. We want to consider maps from ribbon trees to $M$ which are not only continuous, but whose attaching maps fulfill (up to reparametrization) a perturbed negative gradient flow equation. Similar as in Remark \ref{DeltaTexplicitly}, we will piece these maps together out of a family of maps indexed by the set of edges of the ribbon tree. 

We start with describing suitable perturbation spaces. Let $d \geq 2$ and $T \in \RTree_d$ be fixed. 

\begin{definition} \index{perturbation space!for ribbon trees}
We introduce \emph{the space of perturbations for $T$} as 
\begin{equation*}
 \XX(T) := \XX(1,k(T),d) \ ,
\end{equation*}
In the case that $k(T)>0$, we further consider \emph{the space of background perturbations for $T$} \
 \begin{equation*}
  \XXb(T) := \XXb(1,k(T),d) \ ,
 \end{equation*} 
 using the notation of Section \ref{NonlocalGeneralizations}. For $\fatX \in \XXb(T)$, \emph{the space of perturbations for $T$ with background perturbation $\fatX$} is defined by
 \begin{equation*}
  \XX(T,\fatX) := \XX(1,k(T),d,\fatX) \ ,
 \end{equation*}
 once more using the notation of Section \ref{NonlocalGeneralizations}.
\end{definition}

We are again silently assuming that an ordering of $E_{int}(T)$ has been chosen and keep it out of the notation. We therefore write perturbations $\fatY \in \XX(T)$ as
$$\fatY=(Y_0,(Y_e)_{e \in E_{int}(T)},Y_1,\dots,Y_d) \ , $$
where $Y_0 \in \XX_-(M,k(T))$, $Y_e \in \XX_0(M,k(T)-1)$ for all $e \in E_{int}(T)$ and $Y_i \in \XX_+(M,k(T))$ for all $i \in \{1,2,\dots,d\}$. In strict analogy with the notation from Definition \ref{DefNonlocalBackgroundPert}, we further write $\fatX \in \XXb(T)$ as 
\begin{equation*}
 \fatX = \Bigl(\fatX^-, \left(\fatX^0_e\right)_{e \in E_{int}(T)}, \fatX^+_1,\dots,\fatX^+_d \Big) \ , 
\end{equation*}
where
\begin{itemize}
 \item $\fatX^- \in \XXb_-(M,k(T))$ is written as $\fatX^- = \left(X^-_e\right)_{e \in E_{int}(T)}$\ , 
\item $\fatX^0_e \in \XXb_0(M,k(T)-1)$ is denoted by
\begin{equation*}
\fatX^0_e = \Big(\left(X^0_{ef}\right)_{f \in E_{int}(T)\setminus\{e\}},X_{e+},X_{e-} \Big) \quad \text{for every} \  e \in E_{int}(T) \ ,
\end{equation*}
\item $\fatX^+_i \in \XXb_+(M,k(T))$ is given by 
\begin{equation*}
\fatX^+_i = \left(X^+_e \right)_{e \in E_{int}(T)} \quad \text{for every} \  i \in \{1,2,\dots,d\} \ .
\end{equation*}
\end{itemize}
Note that if $\fatY \in \XX(T,\fatX)$, then the above notation implies that
\begin{equation*}
Y_0 \in \XX_-(M,k(T),\fatX^-) \ , \quad  Y_e \in \XX_0(M,k(T),\fatX^0_e)  \ \ \text{and} \ \  Y_i \in \XX_+(M,k(T),\fatX^+_i)
\end{equation*}
for every $e \in E_{int}(T)$ and $i \in \{1,\dots,d\}$. 

\begin{definition} \index{perturbed Morse ribbon trees}
\label{DefPertMorseRT} 
 Let $\fatX \in \XXb(T)$ and $\fatY \in \XX(T,\fatX)$ and $x_0,x_1,\dots,x_d \in \Crit f$. \emph{The moduli space of $\fatY$-perturbed Morse ribbon trees modelled on $T$} starting in $x_0$ and ending in $x_1,\dots,x_d$ is defined as 
 \begin{align*}
  &\Acal^d_{\fatY}(x_0,x_1,\dots,x_d,T) \\ &:= \left\{ \left. \left(\gamma_0,\left(l_e,\gamma_e\right)_{e \in E_{int}(T)},\gamma_1,\dots,\gamma_d \right) \ \right| \ \gamma_0 \in \PP_-(x_0), \ \gamma_i \in \PP_+(x_i) \ \forall i \in \{1,2,\dots,d\} \ , \right. \\
  &\quad l_e>0, \ \gamma_e \in H^1([0,l_e],M) \ \forall e \in E_{int}(T) \ ,    \\
  &\quad \dot{\gamma}_0(s) + (\nabla f)(\gamma_0(s)) + Y_0\left(\left(l_e\right)_{e \in E_{int}(T)}, s,\gamma_0(s)\right) = 0 \quad \forall s \in (-\infty,0] \ , \\
  &\quad \dot{\gamma}_i(s) + (\nabla f)(\gamma_i(s)) + Y_i\left(\left(l_e\right)_{e \in E_{int}(T)}, s,\gamma_i(s)\right) = 0 \quad \forall s \in [0,+\infty) \ \forall i \in \{1,\dots,d\} \ , \\
  &\quad \dot{\gamma}_e(s) + (\nabla f)(\gamma_e(s)) + Y_e\left(\left(l_f\right)_{f \in E_{int}(T)\setminus\{e\}},l_e, s,\gamma_e(s)\right) = 0 \quad \forall s \in [0,l_e] \ \forall e \in E_{int}(T)\ , \\
  &\quad \gamma_e(l_e) = \gamma_f(0) \ \text{ for all } \ e,f \in E_{int}(T) \ \text{ with } \  \vout(e) = \vin(f) \ ,  \\
  &\quad \gamma_e(l_e) = \gamma_j(0) \ \text{ for all } \ e \in E_{int}(T),  \ j \in \{1,2,\dots,d\} \ \text{ with } \ \vout(e) =\vin(e_j(T)) \ , \\
  &\quad  \gamma_0(0) = \gamma_e(0) \ \text{ for all } \ e \in E_{int}(T) \ \text{ with } \ \vout(e_0(T)) = \vin(e) \ \Big\} \ .
 \end{align*}
\end{definition}

\begin{figure}[h]
 \centering
 \includegraphics[scale=0.8]{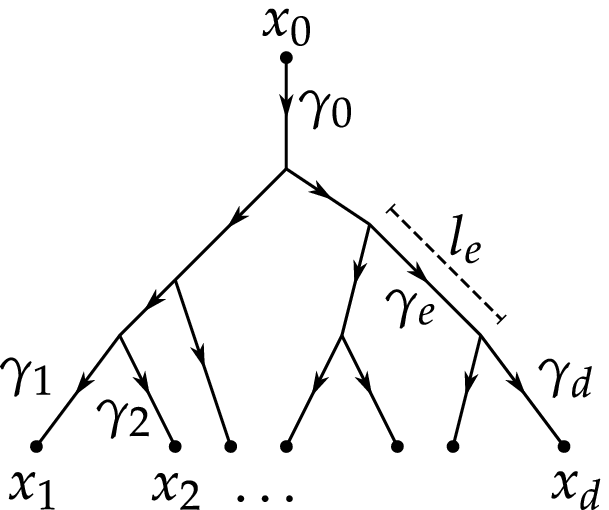}
 \caption{An element of $\Acal^d_{\fatY}(x_0,x_1,\dots,x_d,T)$.}
 \label{PMRT}
\end{figure}

See Figure \ref{PMRT} for a picture of a perturbed Morse ribbon tree. The conditions in the last three lines of the definition of $\Acal^d_{\fatY}(x_0,x_1,\dots,x_d,T)$ can be loosely rephrased by saying that all the trajectories associated to the edges of $T$ are coupled at the common associated vertices. They ensure that the family of trajectories induces a continuous map $T \to M$. 

The proof of the following theorem is a major application of the results of Section \ref{NonlocalGeneralizations} and especially of Theorem \ref{NonlocalTransversality}.

\begin{theorem} \index{perturbed Morse ribbon trees!dimension formula}
\label{MainTheoremMorseTrees}
 Let $\fatX \in \XXb(T)$. For generic choice of $\fatY \in \XX(T,\fatX)$, the moduli space $\Acal^d_{\fatY}(x_0,x_1,\dots,x_d,T)$ can be equipped with the structure of a manifold of class $C^{n+1}$ of dimension
\begin{equation*}
 \dim \Acal^d_{\fatY}(x_0,x_1,\dots,x_d,T) = \mu(x_0) - \sum_{j=1}^d \mu(x_j) + k(T)
\end{equation*}
for all $x_0,x_1,\dots,x_d \in \Crit f$.
\end{theorem}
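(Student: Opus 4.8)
The plan is to recognize $\Acal^d_{\fatY}(x_0,x_1,\dots,x_d,T)$ as a special instance of the moduli space appearing in Theorem \ref{NonlocalTransversality} and then to read off both the manifold structure and the dimension from that theorem together with Proposition \ref{PropcodimDeltaT}. Concretely, one takes $k_1 = 1$, $k_2 = k(T)$ and $k_3 = d$, so that $\XX(T,\fatX) = \XX(1,k(T),d,\fatX)$ and $\XX(T) = \XX(1,k(T),d)$ by definition of the perturbation space for $T$. The single negative half-trajectory slot is filled by $\gamma_0 \in \PP_-(x_0)$, the $k(T)$ finite-length slots by the pairs $(l_e,\gamma_e)$ for $e \in E_{int}(T)$ (after fixing the auxiliary ordering of $E_{int}(T)$ that is already implicit in Definition \ref{DefPertMorseRT}), and the $d$ positive half-trajectory slots by $\gamma_1,\dots,\gamma_d$ with $\gamma_i \in \PP_+(x_i)$. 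First I would check that the length-dependence of the perturbing vector fields prescribed in Definition \ref{DefPertMorseRT} coincides slot-by-slot with the one hard-wired into the definition of $\MM_{\fatY}(\dots)$ in Theorem \ref{NonlocalTransversality}: $Y_0$ and each $Y_i$ depend on all of the $(l_e)_{e \in E_{int}(T)}$, while $Y_e$ depends on $(l_f)_{f \neq e}$ as its external parameters and on $l_e$ as the interval-length parameter of the finite-length trajectory it governs, which is exactly the role played by $Y_j \in \XX_0(M,k_2-1)$ and $\MM(Y_j(l_1,\dots,l_{j-1},l_{j+1},\dots,l_{k_2}))$ there.

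The second ingredient is the translation of the coupling conditions into an endpoint condition, which is precisely the content of Remark \ref{DeltaTexplicitly}. Setting $V := (0,+\infty)^{k(T)}$ and $N := \Delta_T \subset M^{1+2k(T)+d}$, the three families of equalities in the last three lines of Definition \ref{DefPertMorseRT} are exactly the requirement that the evaluation tuple
\begin{equation*}
 \Bigl(\gamma_0(0),\,(\gamma_e(0),\gamma_e(l_e))_{e \in E_{int}(T)},\,\gamma_1(0),\dots,\gamma_d(0)\Bigr)
\end{equation*}
lies in $\Delta_T$, once one matches $\gamma_0(0)$ with the coordinate $q_0$, each pair $(\gamma_e(0),\gamma_e(l_e))$ with $(q^e_{\mathrm{in}},q^e_{\mathrm{out}})$, and each $\gamma_i(0)$ with $q_i$. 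Since these evaluations are exactly the maps $E^-_{Y_0(\dots)}$, $E_{Y_e(\dots)}$ and $E^+_{Y_i(\dots)}$ from Sections \ref{SectionPerturbationsGradFlow} and \ref{NonlocalGeneralizations}, this yields the identification
\begin{equation*}
 \Acal^d_{\fatY}(x_0,x_1,\dots,x_d,T) = \MM_{\fatY}\bigl(x_0,x_1,\dots,x_d,(1,k(T),d),(0,+\infty)^{k(T)},\Delta_T\bigr)
\end{equation*}
for every $\fatY \in \XX(T,\fatX)$.

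With this identification in hand, Theorem \ref{NonlocalTransversality} applies directly: $V = (0,+\infty)^{k(T)}$ is (trivially) a smooth submanifold of $(0,+\infty)^{k(T)}$ with $\dim V = k(T)$, and $\Delta_T$ is a smooth submanifold of $M^{1+2k(T)+d}$ with $\codim \Delta_T = (k(T)+d)n$ by Proposition \ref{PropcodimDeltaT}. The theorem then produces a generic set $\GG \subset \XX(T,\fatX)$ (and $\GG \subset \XX(T)$ when $k(T)=0$, where one uses the $k_2=0$ case of Theorem \ref{NonlocalTransversality} and no background perturbation is required) such that for $\fatY \in \GG$ the space $\Acal^d_{\fatY}(x_0,\dots,x_d,T)$ is a manifold of class $C^{n+1}$, simultaneously for all $x_0,\dots,x_d \in \Crit f$; if one prefers, one may instead intersect the finitely many generic sets coming from the individual tuples of critical points. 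Finally the dimension formula of Theorem \ref{NonlocalTransversality} gives
\begin{align*}
 \dim \Acal^d_{\fatY}(x_0,x_1,\dots,x_d,T)
 &= \mu(x_0) - \sum_{j=1}^d \mu(x_j) + (k(T)+d)n + \dim V - \codim \Delta_T \\
 &= \mu(x_0) - \sum_{j=1}^d \mu(x_j) + (k(T)+d)n + k(T) - (k(T)+d)n \\
 &= \mu(x_0) - \sum_{j=1}^d \mu(x_j) + k(T) \ .
\end{align*}

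The only genuine work lies in the first two paragraphs: carefully matching the tree-indexed data with the abstract $(k_1,k_2,k_3)$-setup, in particular getting the reparametrizations of the half-trajectories right (the root half-trajectory is glued at $\gamma_0(0)$, each leaf half-trajectory at $\gamma_i(0)$) and verifying that the length-dependence of the perturbations in Definition \ref{DefPertMorseRT} is exactly the one built into $\MM_{\fatY}$. Once the equality $\Acal^d_{\fatY}(\cdots) = \MM_{\fatY}(\cdots)$ is established, nothing further is needed beyond substituting $\dim V = k(T)$ and $\codim \Delta_T = (k(T)+d)n$ into the formula of Theorem \ref{NonlocalTransversality}; in particular no new analysis is involved.
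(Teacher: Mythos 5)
Your proposal is correct and follows exactly the route the paper takes: recognize $\Acal^d_{\fatY}(x_0,\dots,x_d,T)$ as $\MM_{\fatY}(x_0,(x_1,\dots,x_d),(1,k(T),d),(0,+\infty)^{k(T)},\Delta_T)$ via Remark \ref{DeltaTexplicitly}, then invoke Theorem \ref{NonlocalTransversality} and substitute $\codim \Delta_T = (k(T)+d)n$ from Proposition \ref{PropcodimDeltaT} into the dimension formula. The only difference is that you spell out the slot-by-slot matching of the length-dependence somewhat more explicitly than the paper does, which is a welcome clarification rather than a divergence.
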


\begin{proof}
 In strict analogy with Remark \ref{DeltaTexplicitly}, the endpoint conditions, i.e. the last three lines in the definition of $\Acal^d_{\fatY}(x_0,x_1,\dots,x_d,T)$, are equivalent to
 \begin{equation*}
    \left(\gamma_0(0), \left(\gamma_e(0),\gamma_e(l_e) \right)_{e \in E_{int}(T)},\gamma_1(0),\dots,\gamma_d(0) \right) \in \Delta_T \ .
 \end{equation*}
 Using the notation of Section \ref{NonlocalGeneralizations}, this condition can be reformulated as 
 \begin{equation*}
  \Eunder_{\fatY} \left(\gamma_0,\left(l_e,\gamma_e\right)_{e \in E_{int}(T)},\gamma_1,\dots,\gamma_d\right) \in \Delta_T \ ,
 \end{equation*}
 where $\Eunder_{\fatY}$ is defined as the corresponding product of evaluation maps as in the definition of the moduli space in Theorem \ref{NonlocalTransversality}.

 This reformulation implies that in the notation of Theorem \ref{NonlocalTransversality} the following equality holds true:
 \begin{equation*}
   \Acal^d_{\fatY}(x_0,x_1,\dots,x_d,T) =\MM_{\fatY}\left(x_0,(x_1,\dots,x_d),(1,k(T),d),\left(0,+\infty\right)^{k(T)},\Delta_T\right) \ .
 \end{equation*}
 Thus, the statement is nothing but a straightforward consequence of Theorem \ref{NonlocalTransversality}. It remains to show that the claimed dimension formula coincides with the one from Theorem \ref{NonlocalTransversality}. The latter one yields:
 \begin{align*}
  \dim \Acal^d_{\fatY}(x_0,x_1,\dots,x_d,T,\fatX) &= \mu(x_0)- \sum_{i=1}^d \mu(x_i) + (k(T)+d)n + k(T) - \codim \Delta_T \\
      &= \mu(x_0)- \sum_{i=1}^d \mu(x_i) + k(T) \ ,
 \end{align*}
where the second equality is a consequence of Proposition \ref{PropcodimDeltaT}. 
\end{proof}

\begin{definition} \index{regular!perturbations for Morse ribbon trees}
A perturbation $\fatY \in \XX(T)$ for which $\Acal^d_{\fatY}(x_0,x_1,\dots,x_d,T,\fatX)$ can be equipped with the structure of a manifold of class $C^{n+1}$ for all $x_0,x_1,\dots,x_d \in \Crit f$ is called \emph{regular}.
\end{definition}

\section{The convergence behaviour of sequences of perturbed Morse ribbon trees}
\label{ConvergenceBehaviour}

After having constructed moduli spaces of perturbed Morse ribbon trees in the previous section, we want to investigate sequential compactness properties of these moduli spaces. Our starting point is the consideration of certain sequential compactness results for spaces of perturbed Morse trajectories of the three different types we introduced in Section \ref{SectionPerturbationsGradFlow}. 
We will show that in all three cases, every sequence in the respective moduli space without a convergent subsequence has a subsequence that converges geometrically against a family of trajectories. The notion of geometric convergence will be made precise in the course of this section. 

\bigskip

Afterwards, we will draw conclustions from these results for spaces of perturbed Morse ribbon trees. The main result of this section is Theorem \ref{CompactnessMorseTrees}, following immediately from the results for perturbed Morse trajectories. It roughly states that every sequence of perturbed Morse ribbon trees without a subsequence which converges in its respective domain, has a geometrically convergent subsequence. This notion of geometric convergence will be defined later. 

Having established the main compactness result, we will focus our attention on certain special cases. Theorems \ref{LengthToZero}, \ref{ConvergenceHalfTraj} and \ref{Fconvergence} describe possibilites for the limiting behaviour of sequences of perturbed Morse ribbon trees in greater detail. 

Finally, we formulate a theorem for describing the situation of simultaneous occurence of the convergence phenomena described in the aforementioned three theorems. This theorem will enable us to investigate the spaces of geometric limits of sequences of perturbed Morse ribbon trees. Eventually, we will use the structure of the limit spaces to construct higher order multiplications on Morse cochain complexes in Section \ref{CompactificationsOneDimAinfty}. \bigskip

The sequential compactness theorems for moduli spaces of perturbed Morse trajectories can be shown in strict analogy with the compactness results in the unperturbed case. The compactness result for semi-infinite Morse trajectories is shown in \cite[Section 4]{SchwarzEqui}, while the compactness result for finite-length Morse trajectories is proven in \cite{WehrheimMWC}, along with a more general statement covering the semi-infinite case as well. 

The line of argument of the respective reference can be transferred to the case of perturbed Morse trajectories with only minor changes. Therefore, we will omit parts of the proofs of the decisive compactness theorems in this section and apply the respective trajectory compactness theorem instead. \bigskip

\textit{Throughout this section, we assume that every ribbon tree is equipped with an ordering of its internal edges. All results of this section are independent of the chosen orderings. We will come back to these choices in Section \ref{CompactificationsOneDimAinfty} and Appendix \ref{AppendixOrient}.} \bigskip

We begin this section by stating the crucial sequential compactness theorem for perturbed semi-infinite Morse trajectories. For every $x,y \in \Crit f$ let 
\begin{equation*}
 \widehat{\MM}(x,y) := \widehat{\MM}(x,y,g)
\end{equation*}
denote the space of unparametrized negative gradient flow lines of $f$ with respect to $g$, as defined in the introduction of this article.

\begin{theorem} \index{compactness of moduli spaces!of negative half-trajectories}
\label{CompactnessHalfTrajectories}
%Let $k \in \NN_0$ and  $\left\{\vec{l}_n=\left(l_{1n},\dots,l_{kn}\right)\right\}_{n \in \NN} \subset [0,+\infty)^k$ be a sequence.
  \begin{enumerate}[(i)]
   \item Let $x \in \Crit f$, $\{\gamma_n\}_{n \in \NN} \subset \PP_-(x)$, $Y_\infty \in \XX_-(M)$ and $\{Y_n\}_{n\in \NN}$ be a sequence in $\XX_-(M)$ which converges against $Y_\infty$ and such that
   \begin{equation*}
   \gamma_n \in W^-\left(x,Y_n \right) \quad \text{for every} \ \  n \in \NN \ .
   \end{equation*}
    If $\{\gamma_n\}_{n \in \NN}$ does not have a subsequence which converges in $\PP_-(x)$, then there will be $m \in \NN$, $x_1,\dots,x_m \in \Crit f$ with $\mu(x) > \mu(x_1) > \dots > \mu(x_m)$ and curves 
 \begin{equation*}
 (\hat{g}_1,\dots,\hat{g}_m, \gamma_-) \in \widehat{\MM}(x,x_1) \times \widehat{\MM}(x_1,x_2) \times \dots \times \widehat{\MM}(x_{m-1},x_m) \times W^-\left(x_m,Y_\infty\right) \ ,
 \end{equation*}
 as well as a sequence $\{\tau^j_q \}_{q \in \NN} \subset \RR$ diverging to $-\infty$ for every $j \in \{1,2,\dots,m\}$ and a subsequence $\{\gamma_{n_q}\}_{q \in \NN}$ such that
\begin{itemize}
 \item $\{\gamma_{n_q}\}_{q \in \NN}$ converges against $\gamma_-$ in the $C^\infty_{loc}$-topology,
 \item $\Bigl\{\left. \gamma_{n_q} \left(\cdot + \tau^j_q\right)\right|_{(-\infty,T]}\Bigr\}_{q \geq q_{T,j}}$ converges against $g_j|_{(-\infty,T]}$ in the $C^\infty_{loc}$-topology for all $T>0$ and $q_{T,j} \in \NN$ for which the restrictions are well-defined, where $g_j$ is a representative of $\hat{g}_j$ for every $j \in \{1,2,\dots,m\}$. 	
\end{itemize}
\item Let $y \in \Crit f$, $\{\gamma_n\}_{n \in \NN} \subset \PP^+(y)$, $\{Y_n\}_{n\in\NN} \subset \XX_+(M)$ and $Y_\infty \in \XX_+(M)$ such that  $\{Y_n\}_{n\in\NN}$ converges against $Y_\infty$ and
\begin{equation*}
   \gamma_n \in W^+\left(x,Y_n\right) \quad \text{for every} \ \  n \in \NN \ .
   \end{equation*}
If $\{\gamma_n\}_{n\in\NN}$ does not have a subsequence which converges in $\PP_+(y)$, then there will be $m \in \NN$, $y_1,\dots,y_m \in \Crit f$ with $\mu(y_1) > \dots > \mu(y_m) > \mu(y)$ and curves 
 \begin{equation*}
 (\gamma_+, \hat{g}_1,\dots,\hat{g}_m) \in W^+\left(y_1, Y_\infty\right) \times \widehat{\MM}(y_1,y_2) \times \dots \times \widehat{\MM}(y_{m-1},y_m) \times \widehat{\MM}(y_m,y) \ ,
 \end{equation*}
as well a sequence $\{\tau^j_q\}_{q \in \NN} \subset \RR$ diverging to $+\infty$ for every $j \in \{1,\dots,m\}$ and a subsequence $\{\gamma_{n_q}\}_{q \in \NN}$ such that
\begin{itemize}
 \item $\{\gamma_{n_q}\}_{q \in \NN}$ converges against $\gamma_+$ in the $C^\infty_{loc}$-topology,
 \item $\Bigl\{\left. \gamma_{n_q}\left(\cdot + \tau^j_q\right)\right|_{[-T,+\infty)}\Bigr\}_{q \geq q_{T,j}}$ converges against $g_j|_{[-T,+\infty)}$ in the $C^\infty_{loc}$-topology for every $T > 0$ and every $q_{n,T} \in \NN$ such that the restrictions are well-defined, for a representative $g_j$ of $\hat{g}_j$ for every $j \in \{1,2,\dots,m\}$. 
\end{itemize}
\index{compactness of moduli spaces!of positive half-trajectories}
  \end{enumerate}
\end{theorem}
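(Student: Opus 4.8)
The plan is to reduce both statements to the corresponding compactness results for \emph{unperturbed} semi-infinite Morse trajectories, exploiting the structure of the perturbation spaces $\XX_\pm(M)$. The key observation is that any $Y \in \XX_-(M)$ satisfies $Y(s,x) = 0$ for all $s \leq -1$, so a perturbed negative half-trajectory $\gamma \in W^-(x,Y)$ is, on the half-line $(-\infty,-1]$, an honest unperturbed negative gradient flow half-trajectory converging to $x$. Thus the "escaping to infinity" phenomenon — which is exactly the failure of the $\PP_-(x)$-convergence — happens entirely in the unperturbed regime, where the classical broken-trajectory compactness of \cite[Section 4]{SchwarzEqui} applies verbatim. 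First I would set up notation and recall the unperturbed statement: for a sequence $\{\delta_n\} \subset \WW^u(x,f,g)$ with no $\PP_-(x)$-convergent subsequence, one obtains a broken trajectory $(\hat g_1,\dots,\hat g_m,\gamma_-)$ with $\mu(x) > \mu(x_1) > \dots > \mu(x_m)$, reparametrization sequences $\{\tau^j_q\}$ diverging to $-\infty$, and $C^\infty_{\mathrm{loc}}$-convergence of the reparametrized pieces.

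Next I would address the only genuine new ingredient, namely that the metric/function data is \emph{not} fixed but varies with $n$: we have a converging sequence $Y_n \to Y_\infty$ in $\XX_-(M)$ rather than a single perturbation. Here I would invoke the standard continuous-dependence estimates for solutions of ODEs: on any compact time interval $[-T,0]$, the $C^\infty_{\mathrm{loc}}$-behaviour of $\gamma_n$ is governed by the vector field $-\nabla f - Y_n(\cdot,\cdot)$, and since $Y_n \to Y_\infty$ in $C^{n+1}$, elliptic bootstrapping (as in the proof of \cite[Proposition 2.9]{Schwarz}) together with the a priori $C^1$-bounds coming from compactness of $M$ gives, after passing to a subsequence, $C^\infty_{\mathrm{loc}}$-convergence of $\{\gamma_n\}$ on $(-\infty,0]$ to some limit curve. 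If this limit lies in $\PP_-(x)$ we would be done (no breaking); otherwise the limit curve $\gamma_-$ is a solution of the $Y_\infty$-perturbed equation on all of $(-\infty,0]$ but fails to converge to $x$ as $s \to -\infty$, and — crucially — on $(-\infty,-1]$ it is an unperturbed half-trajectory that does not limit to $x$. This is precisely the input needed to produce the broken piece: the "mass" that escapes must reassemble, via the unperturbed compactness theorem applied to the restrictions $\gamma_n|_{(-\infty,-1]}$, into the chain of unparametrized flow lines $\hat g_1,\dots,\hat g_m$ together with a tail $\gamma_- \in W^-(x_m,Y_\infty)$ (where $\gamma_-$ is the $C^\infty_{\mathrm{loc}}$-limit just constructed, now seen as emanating from $x_m$). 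The reparametrization sequences $\{\tau^j_q\}$ and the index thresholds $q_{T,j}$ are inherited directly from the unperturbed theorem.

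For part (ii), the positive half-trajectory case, the argument is formally identical after reflecting the time variable: $Y \in \XX_+(M)$ vanishes for $s \geq 1$, so $\gamma|_{[1,+\infty)}$ is unperturbed and converges to $y$, and one applies the unperturbed compactness result for positive half-trajectories (equivalently, the negative case applied to $-f$), obtaining the chain $\gamma_+ \in W^+(y_1,Y_\infty)$, $\hat g_1,\dots,\hat g_m$ with $\mu(y_1) > \dots > \mu(y_m) > \mu(y)$ and reparametrizations diverging to $+\infty$. I would simply remark that the proof of (ii) is obtained from that of (i) by the substitutions $s \mapsto -s$, $f \mapsto -f$, and omit the repetition. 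The main obstacle — and the one place where some care is genuinely required — is the handling of the \emph{varying} perturbations $Y_n$: one must verify that the continuous-dependence and bootstrapping estimates are uniform enough in $n$ that the $C^\infty_{\mathrm{loc}}$-limits and the extraction of the broken configuration go through with the converging sequence $Y_n$ in place of a fixed perturbation. Since $\{Y_n\}$ converges in the $C^{n+1}$-norm on $\XX_-(M)$, the relevant bounds are indeed uniform, and this is exactly the "minor change" to the arguments of \cite{SchwarzEqui} alluded to in the text; I would state this explicitly as a lemma on uniform $C^\infty_{\mathrm{loc}}$-compactness of perturbed half-trajectory sequences with converging perturbation data, and otherwise defer to \cite{SchwarzEqui} for the combinatorial extraction of the broken chain.
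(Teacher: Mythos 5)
Your proposal takes essentially the same approach as the paper, which itself gives no explicit proof but simply asserts that the result follows from the unperturbed compactness theorems of \cite{SchwarzEqui} and \cite{WehrheimMWC} with minor modifications. You have correctly identified what those modifications amount to: exploiting that $Y$ vanishes on $(-\infty,-1]$ (so the escape-to-infinity dynamics is governed by the unperturbed flow, precisely the structural point the paper flags in the remark following the theorem), and using $C^{n+1}$-convergence of $Y_n \to Y_\infty$ to obtain the uniform estimates that let the $C^\infty_{\mathrm{loc}}$-precompactness and broken-chain extraction go through with varying perturbation data, plus time-reversal for part (ii).
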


Theorem \ref{CompactnessHalfTrajectories} is shown by applying the methods used to prove the corresponding result for unperturbed half-trajectories. For compactness theorems for spaces of unperturbed Morse trajectories, see \cite[Theorem 2.6]{WehrheimMWC} or \cite[Section 2.4]{Schwarz} for analogous results for trajectories defined on $\RR$.
 % after passing to the subsequence $\left\{\vec{l}_{n_q}\right\}_{q \in \NN}$, which exists by Lemma \ref{BolWeier}. 
\begin{figure}[h]
 \centering
 \includegraphics[scale=1.2]{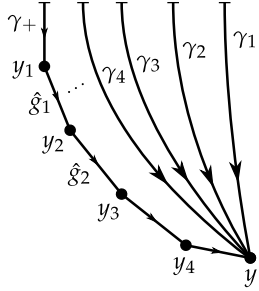}
 \caption{A geometrically convergent sequence of perturbed semi-infinite half-\-tra\-jec\-to\-ries ending in $y \in \Crit f$.}
 \label{PosGeomConv}
\end{figure}

Figure \ref{PosGeomConv} illustrates part (ii) of Theorem \ref{CompactnessHalfTrajectories}. For an illustration of part (i), turn Figure \ref{PosGeomConv} upside down and invert the directions of all arrows. 

\begin{remark}
   Concerning Theorem \ref{CompactnessHalfTrajectories}, it is of great importance that the spaces $\XX_\pm(M)$ are defined such that for every $Y \in \XX_{\pm}(M)$, the value $Y\left(t,x\right)$ vanishes whenever $t$ lies below the fixed value $-1$ and above the fixed value $1$ in the positive case. 
  
  If the perturbations were allowed to be non-vanishing for arbitrary time parameters, the sequences of reparametrization times $\{\tau^j_n\}_{n \in \NN}$ would in general not exist in the above setting. 
\end{remark}

\begin{definition} 
\begin{enumerate}[(i)]
\item In the situation of Theorem \ref{CompactnessHalfTrajectories} (i), we say that $\{\gamma_n\}_{n \in \NN}$ \emph{converges geometrically against $(\hat{g}_1,\dots,\hat{g}_m,\gamma_-)$} and we call $(\hat{g}_1,\dots,\hat{g}_m,\gamma_-)$ the \emph{geometric limit of $\left\{\gamma_n\right\}_{n \in \NN}$}. 
\item In the situation of Theorem \ref{CompactnessHalfTrajectories} (ii), we say that $\{\gamma_n\}_{n \in \NN}$ \emph{converges geometrically against $(\gamma_+, \hat{g}_1,\dots,\hat{g}_m)$} and we call $(\gamma_+, \hat{g}_1,\dots,\hat{g}_m)$ the \emph{geometric limit of $\left\{\gamma_n\right\}_{n \in \NN}$}. 
\end{enumerate}
% (Note that we suppress the sequence $\left\{\vec{l}_n\right\}_{n \in \NN}$ from the notation in both cases.)
\end{definition}

\begin{remark} 
 The following special case of Theorem \ref{CompactnessHalfTrajectories} will be relevant in the discussion of the convergence of perturbed Morse ribbon trees.  
 Let $Y \in \XX_-(M,k,\fatX)$ for a given background perturbation $\fatX=(X_1,\dots,X_k) \in \XXb_-(M,k)$. Let $\{(l_{1n},\dots,l_{kn})\}_{n \in \NN}$ be a sequence in $(0,+\infty)^k$, such that $\{l_{in}\}_{n\in \NN}$ diverges against $+\infty$ for a unique $i \in \{1,2,\dots,k\}$ and that $\{l_{jn}\}_{n\in \NN}$ converges against $l_{j\infty}\in [0,+\infty)$ if $j \neq i$. Define $Y_n \in \XX_-(M)$ for every $n \in \NN$ by 
 $$Y_n(t,x) := Y\left((l_{1n},\dots,l_{kn}),t,x\right) \quad \forall t \in (-\infty,0] \ , \ \ x \in M \ , $$
 let $x \in \Crit f$ and let $\{\gamma_n\}_{n\in \NN}$ be a sequence with $\gamma_n \in W^-(x,Y_n)$ for every $n \in \NN$. If $\{\gamma_n\}_{n\in\NN}$ converges geometrically in this situation, then it will hold for the geometric limit $(\hat{g}_1,\dots,\hat{g}_{m},\gamma_-)$ by definition of a background perturbation that
 \begin{equation*}
  \gamma_- \in W^-(x_m,X_i(l_{1\infty},\dots,l_{(i-1)\infty},l_{(i+1)\infty},\dots,l_{k\infty})) \ .
 \end{equation*}
 The analogous remark is true for positive semi-infinite half-trajectories. 
\end{remark}

The convergence theorem for perturbed finite-length Morse trajectories takes a similar form, but we need to introduce the right notion of convergence in advance.

\begin{definition}
\label{DefFiniteLengthConvergent}
Let $\{Y_n\}_{n \in \NN} \subset \XX_0(M)$ be a sequence converging against $Y_\infty\in \XX_0(M)$ and let $\left\{ (l_n,\gamma_n)\right\}_{n \in \NN}$ be a sequence with $l_n \geq 0$,  $\gamma_n: [0,l_n] \to M$ and $(l_n,\gamma_n) \in \MM \left(Y_n \right)$ for all $n \in \NN$. The sequence  $\left\{ (l_n,\gamma_n)\right\}_{n \in \NN}$ is called \emph{convergent} iff the sequence $\left\{ \psi_{Y_n}(l_n,\gamma_n) \right\}_{n \in \NN}$ converges in $\MM(f,g)$. Here, we use the map $\psi$ from \eqref{Defofpsi} and put $\psi_{Y_n}:= \psi\left( Y_n,\cdot\right)$ for every $n \in \NN$. Every such  
 \begin{equation*}
  \psi_{Y_n}: \MM\left(Y_n\right) \stackrel{\cong}{\to} \MM(f,g)
 \end{equation*}
 is a diffeomorphism of class $C^{n+1}$.
\end{definition}

\begin{theorem}
\label{FiniteLengthCompactness} \index{compactness of moduli spaces!of finite-length trajectories}
Let $k \in \NN_0$ and let $\{Y_n\}_{n\in\NN} \subset \XX_0(M)$ be a sequence converging against some $Y_\infty \in \XX_0(M)$. Consider a sequence $\{(l_n,\gamma_n)\}_{n \in \NN}$ with $(l_n,\gamma_n) \in \MM\left(Y_n\right)$ for every $n \in \NN$. If the sequence $\{(l_n,\gamma_n)\}_{n \in \NN}$ does not have a convergent subsequence, then there are a subsequence $\{(l_{n_q},\gamma_{n_q})\}_{q \in \NN}$ with 
\begin{equation*}
\lim_{q\to \infty} l_{n_q} = +\infty \ ,
\end{equation*}
 $m \in \NN$, $x_1,\dots,x_m \in \Crit f$, $\{s^j_q\}_{q \in \NN} \subset \RR$ for every $j \in \{1,\dots,m-1\}$ as well as curves
\begin{equation*}
\left(\gamma_+,\hat{g}_1,\dots,\hat{g}_{m-1},\gamma_-\right) \in W^+\left(x_1,Y_+\right) \times \widehat{\MM}(x_1,x_2) \times \dots \times \widehat{\MM}(x_{m-1},x_{,m}) \times W^-\left(x_m,Y_-\right)
\end{equation*}
where $Y_+ \in \XX_+(M)$ and $Y_- \in \XX_-(M,k)$ are defined by 
\begin{equation*}
 (Y_+,Y_-) = \lim_{q  \to \infty} \Split_0\left(l_{n_q},Y_{n_q}\right) \ .
\end{equation*}
with $\Split_0$ is given as in \eqref{split0}, such that:
\begin{itemize}
 \item $\Bigl\{\left.\gamma_{n_q}\right|_{[0,T]}\Bigr\}_{q\geq q_T}$ converges against $\gamma_+|_{[0,T]}$ in the $C^\infty$-topology for every $T > 0$ and $q_T \in \NN$ such that the restrictions are well-defined,
 \item $\Bigl\{\left.\gamma_{n_q}\left(\cdot + s^j_q\right)\right|_{[-T,T]} \Bigr\}_{q \geq q_{T,j}}$ converges against $g_j|_{[-T,T]}$ in the $C^\infty$-topology for all $T > 0$ and $q_{T,j} \in \NN$ for which the restrictions are well-defined, where $g_j$ is a representative of $\hat{g}_j$ for every $j \in \{1,\dots,m-1\}$,
 \item $\Bigl\{\left.\gamma_{n_q}\left(\cdot + l_{n_q}\right)\right|_{[-T,0]}\Bigr\}_{q \geq q_T}$ converges against $\gamma_-|_{[-T,0]}$ in the $C^\infty$-topology for all $T > 0$ and $q_T \in \NN$ for which the restrictions are well-defined.
\end{itemize}
\end{theorem}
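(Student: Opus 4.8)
The plan is to follow the scheme of the corresponding unperturbed compactness result in \cite{WehrheimMWC}, isolating the effect of the perturbation to the two fixed-length collars $[0,1]$ and $[l_n-1,l_n]$. First I would show that a sequence without a convergent subsequence must, after relabeling a subsequence, satisfy $l_n\to+\infty$: by Definition \ref{DefFiniteLengthConvergent} and the diffeomorphism $\varphi$ of \eqref{DiffeoFiniteLength}, convergence of $\{(l_n,\gamma_n)\}$ is equivalent to convergence of $\{\psi_{Y_n}(l_n,\gamma_n)\}$ in $\MM(f,g)\cong[0,+\infty)\times M$, so by compactness of $M$ a subsequence along which $\{l_n\}$ stays bounded would contain a convergent sub-subsequence. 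Hence I may assume $l_n\ge 3$ for all $n$, and then \eqref{conseqgeq3} gives $Y_n(l_n,t,\cdot)=0$ for all $t\in[1,l_n-1]$, so $\gamma_n|_{[1,l_n-1]}$ is a genuine negative gradient flow line of $(f,g)$. Its energy equals $f(\gamma_n(1))-f(\gamma_n(l_n-1))\le \sup_M f-\inf_M f$, a bound uniform in $n$; this is the input that will make the number $m$ of breaking pieces finite.

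\textbf{The unperturbed middle.} To $\gamma_n|_{[1,l_n-1]}$, a genuine flow line of length $l_n-2\to+\infty$ with uniformly bounded energy, I would apply the unperturbed finite-length compactness theorem \cite[Theorem 2.6]{WehrheimMWC} (equivalently the breaking analysis of \cite[Section 2.4]{Schwarz} applied to the shifted curves). This produces, after a further subsequence, an integer $m\in\NN$, critical points $x_1,\dots,x_m$, shift sequences $\{s^j_q\}_q\subset\RR$ for $j\in\{1,\dots,m-1\}$, and unparametrized flow lines $\hat g_j\in\widehat\MM(x_j,x_{j+1})$ such that $\gamma_{n_q}(\cdot+s^j_q)\to g_j$ in $\Cloc$ on every compact interval, while $\gamma_{n_q}|_{[0,T]}$ and $\gamma_{n_q}(\cdot+l_{n_q})|_{[-T,0]}$ converge in $\Cloc$ on the two ends to curves asymptotic to $x_1$ and $x_m$ respectively. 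The uniform energy bound forces $m<\infty$, and the Morse--Smale property forces $\mu(x_1)>\dots>\mu(x_m)$, so the spaces $\widehat\MM(x_j,x_{j+1})$ are the correct targets.

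\textbf{The two perturbed collars.} On $[0,1]$ the curve $\gamma_n$ solves the equation perturbed by $s_0(l_n,Y_n)$ and on $[l_n-1,l_n]$ by $e_0(l_n,Y_n)$, i.e. by the two components of $\Split_0(l_n,Y_n)$ from \eqref{split0}. Since $Y_n\to Y_\infty$ in $\XX_0(M)$ and the maps $\Split_0(l,\cdot)$ are uniformly bounded linear maps, condition \eqref{XXcond2} for $Y_\infty$ gives, after a further subsequence if necessary, $\Split_0(l_{n_q},Y_{n_q})\to(Y_+,Y_-)=\lim_{l\to+\infty}\Split_0(l,Y_\infty)$ in the appropriate product of perturbation spaces. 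Because the collars have fixed length $1$, no breaking can occur on them; uniform $C^1$-bounds from the uniform bound on the perturbations together with the compactness of $M$, followed by the bootstrapping of \cite[Appendix A]{Schwarz}, upgrade the $\Cloc$-limits on the two ends to curves $\gamma_+$ and $\gamma_-$ solving the $Y_+$- and $Y_-$-perturbed equations; matching their asymptotics with the outermost pieces of the previous step yields $\gamma_+\in W^+(x_1,Y_+)$ and $\gamma_-\in W^-(x_m,Y_-)$.

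\textbf{Main obstacle.} I expect the delicate part to be the bookkeeping that welds the middle piece and the two collars into a single statement: one must arrange that the shift parameters $s^j_q$ lie inside the unperturbed window $[1,l_{n_q}-1]$, rule out loss of compactness "between" a collar and the adjacent broken flow line by a no-return estimate (a flow segment of bounded length loses only a bounded amount of energy, hence cannot interpolate an extra break), and run one diagonal subsequence extraction making all the $\Cloc$-limits simultaneously valid. This is precisely the argument of \cite{WehrheimMWC} in the unperturbed case; the only genuinely new ingredient is the uniform control of the perturbations on the fixed-length collars, which is built into the definition of $\XX_0(M)$, and the remaining estimates — elliptic bootstrapping and exponential decay near critical points — transfer from \cite{Schwarz} with only cosmetic changes, since the $Y_n$ are uniformly bounded and of class $C^{n+1}$.
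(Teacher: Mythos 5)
The paper does not spell out a proof of this theorem; it states explicitly that ``the line of argument of [\cite{WehrheimMWC}] can be transferred to the case of perturbed Morse trajectories with only minor changes'' and defers to that reference. Your proposal is a correct reconstruction of precisely this strategy: pass to a subsequence with $l_n\to+\infty$, observe via \eqref{conseqgeq3} that the middle portion $\gamma_n|_{[1,l_n-1]}$ is an honest negative gradient flow line with a uniform energy bound and apply the unperturbed breaking theorem of \cite{WehrheimMWC} there, and then use the continuity of the maps $\Split_0(l,\cdot)$ together with condition \eqref{XXcond2} to identify the limiting perturbations $(Y_+,Y_-)$ governing the two collar pieces. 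The point that the operator norms of $\Split_0(l,\cdot)$ are uniformly bounded in $l$, so that $\Split_0(l_{n_q},Y_{n_q})\to\lim_{l\to\infty}\Split_0(l,Y_\infty)$ follows without a further subsequence extraction, is a correct observation; your insertion of ``after a further subsequence if necessary'' is harmless but not needed. Your identification of the genuine bookkeeping obstacles — the shift parameters needing to sit in the unperturbed window, the no-return estimate, the diagonal extraction — is accurate and is exactly what is implicit in the paper's reference to \cite{WehrheimMWC}.
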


\begin{remark}
 For the validity of Theorem \ref{FiniteLengthCompactness}, the compactness of $M$ is required. If $M$ was non-compact, there might be sequences of finite-length trajectories whose interval lengths tends to infinity, but which are not geometrically convergent. 
 
 Speaking in terms of geometric intuition this corresponds to the starting points or end points of the trajectory sequence ``escaping to infinity``. This phenomenon can obviously not occur in compact manifolds. 
\end{remark}

\begin{definition}
\label{DefFiniteLenthGeomConv} 
 In the situation of Theorem \ref{FiniteLengthCompactness}, we say that the sequence $\{(l_{n_q},\gamma_{n_q})\}_{q \in \NN}$ \emph{converges geometrically against $(\gamma_+,\hat{g}_1,\dots,\hat{g}_{m-1},\gamma_-)$} and we call $(\gamma_+,\hat{g}_1,\dots,\hat{g}_{m-1},\gamma_-)$ the \emph{geometric limit of $\{(l_{n_q},\gamma_{n_q})\}_{q \in \NN}$}. %(Note that we suppress the sequence $\left\{\vec{l}_n\right\}_{n \in \NN}$ from the notation.)
\end{definition}

See Figure \ref{FinGeomConv} for an illustration of the geometric convergence of Theorem \ref{FiniteLengthCompactness}. %\bigskip

\begin{figure}[h]
 \centering
 \includegraphics[scale=0.9]{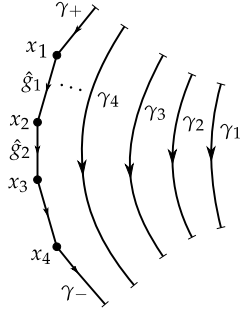}
 \caption{A geometrically convergent sequence of perturbed finite-length Morse trajectories.}
 \label{FinGeomConv}
\end{figure}

We want to derive a convergence theorem for sequences of perturbed Morse ribbon trees from Theorems \ref{CompactnessHalfTrajectories} and \ref{FiniteLengthCompactness}. Before stating the theorem, we will make some general observations and then motivate it by drawing immediate conclusions from Theorems \ref{CompactnessHalfTrajectories} and \ref{FiniteLengthCompactness}. \bigskip

We consider a sequence of perturbed Morse ribbon trees 
\begin{equation*}
 \left\{ \gammaunder_n \right\}_{n\in\NN}=\left\{\left(\gamma_{0n},\left(l_{en},\gamma_{en} \right)_{n \in \NN},\gamma_{1n},\dots,\gamma_{dn} \right)  \right\}_{n\in \NN} \subset \Acal^d_{\fatY}(x_0,x_1,\dots,x_d,T)
\end{equation*}
for some $d\geq 2$, $T \in \RTree_d$, $x_0,x_1,\dots,x_d \in \Crit f$ and 
\begin{equation*}
 \fatY=(Y_0,(Y_e)_{e \in E_{int}(T)},Y_1,\dots,Y_d) \in \XX(T) \ .
\end{equation*}
By definition of $\Acal^d_{\fatY}(x_0,x_1,\dots,x_d,T)$, we can regard each component sequence of $\left\{ \gammaunder_n\right\}_n$ as a sequence of perturbed Morse trajectories. Before applying the compactness results from above to the component sequences, we make an observation on the convergence of the edge length sequences. \bigskip 

The following lemma is a simple consequence of the Bolzano-Weierstra\ss{} theorem.

\begin{lemma}
 \label{BolWeier}
 Let $k \in \NN$ and $\left\{\vec{l}_n=\left(l_{1n},\dots,l_{kn}\right)\right\}_{n \in \NN} \subset [0,+\infty)^k$ be a sequence. Then there are a subsequence $\left\{ \vec{l}_{n_q}\right\}_{q \in \NN}$ and two disjoint subsets 
$I_1,I_2 \subset \{1,2,\dots,k\}$ with $I_1 \cup I_2 = \{1,2,\dots,k\}$, such that
\begin{itemize}
\item for every $i \in I_1$, the sequence $\{l_{in_q}\}_{q \in \NN}\subset [0,+\infty)$ diverges to $+\infty$,
\item for every $i \in I_2$, the sequence $\{l_{in_q}\}_{q \in \NN}$ converges with $l_{i\infty}:= \lim_{q \to \infty} l_{in_q}$.
\end{itemize}
\end{lemma}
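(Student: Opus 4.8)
The plan is to proceed by induction on $k$, successively passing to subsequences so as to control one coordinate at a time.

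For the base case $k=1$, consider the sequence $\{l_{1n}\}_{n \in \NN} \subset [0,+\infty)$. Either it is unbounded, in which case it admits a subsequence diverging to $+\infty$, and we set $I_1 = \{1\}$, $I_2 = \emptyset$; or it is bounded, in which case the Bolzano-Weierstra\ss{} theorem provides a convergent subsequence, and we set $I_1 = \emptyset$, $I_2 = \{1\}$. In either case the asserted dichotomy holds.

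For the inductive step, suppose the claim holds for $k-1$. Given $\{\vec{l}_n\}_{n \in \NN} \subset [0,+\infty)^k$, apply the inductive hypothesis to the truncated sequence $\{(l_{1n},\dots,l_{(k-1)n})\}_{n \in \NN} \subset [0,+\infty)^{k-1}$ to obtain a subsequence, which we re-index by $n$, together with disjoint subsets $I_1', I_2' \subset \{1,\dots,k-1\}$ with $I_1' \cup I_2' = \{1,\dots,k-1\}$ having the stated divergence, resp. convergence, properties. Now examine the $k$-th coordinate sequence $\{l_{kn}\}_{n \in \NN}$ of this subsequence: if it is unbounded, extract a further subsequence along which it diverges to $+\infty$ and set $I_1 = I_1' \cup \{k\}$, $I_2 = I_2'$; if it is bounded, extract a further subsequence along which it converges by Bolzano-Weierstra\ss{}, and set $I_1 = I_1'$, $I_2 = I_2' \cup \{k\}$. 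Since passing to a further subsequence preserves both divergence to $+\infty$ and convergence of each of the first $k-1$ coordinate sequences, the resulting subsequence together with the pair $(I_1,I_2)$ satisfies all requirements.

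I do not expect any substantial obstacle here; the only point worth recording is the elementary fact that a subsequence of a sequence diverging to $+\infty$ (resp. converging) again diverges to $+\infty$ (resp. converges to the same limit), which is precisely what guarantees that the coordinatewise extractions do not undo earlier progress. Alternatively, one could dispense with the induction altogether and simply perform $k$ successive extractions directly, partitioning $\{1,\dots,k\}$ according to whether the respective coordinate sequence of the final subsequence is unbounded or bounded.
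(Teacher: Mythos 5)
Your proof is correct and carries out precisely what the paper alludes to when it calls the lemma ``a simple consequence of the Bolzano-Weierstra\ss{} theorem''; the paper itself supplies no proof, and your induction (or equivalently the $k$ successive coordinate-wise extractions you mention at the end) is the standard way to make that assertion precise.
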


In the situation of Lemma \ref{BolWeier}, define $\vec{l}_\infty \in [0,+\infty)^{|I_2|}$ by 
\begin{equation}
\label{veclinfty}
 \vec{l}_\infty :=\left(l_{i\infty}\right)_{i \in I_2} \ .
\end{equation}

We want to apply Lemma \ref{BolWeier} to the sequence $\left\{\gammaunder_n\right\}_{n\in\NN}$ from above. For all $n \in \NN$ we let $\vec{l}_n \in [0,+\infty)^{k(T)}$ denote the vector whose entries are given by the family $(l_{en})_{e \in E_{int}(T)}$, ordered by the given ordering of $E_{int}(T)$. We assume w.l.o.g. that every component sequence of $\left\{\vec{l}_n\right\}_{n\in\NN}$ as a sequence in $(0,+\infty)^{k(T)}$ either converges or diverges to $+\infty$, since by Lemma \ref{BolWeier} this property holds true up to passing to a subsequence. 

Then the following will hold for every $n \in \NN$:
\begin{align*}
 &\gamma_{0n} \in W^-\left(x_0,Y_0\left(\vec{l}_n\right)\right) \ , \quad \gamma_{in} \in W^+\left(x_i,Y_i\left(\vec{l}_n\right)\right) \  \forall i \in \{1,2,\dots,d\} \ , \\
 &(l_{en},\gamma_{en}) \in \MM\left(Y_e\left((l_{fn})_{f \in E_{int}(T)\setminus\{e\}}\right)\right) \quad \forall e \in E_{int}(T) \ .
\end{align*}
By applying Theorem \ref{CompactnessHalfTrajectories} with $Y_n = Y_i\left(\vec{l}_n\right)$ for every $n \in \NN$ we derive that at least one of the following is true for each $i \in \{0,1,\dots,d\}$, i.e. for each of the component sequences associated with external edges of $T$:
\begin{itemize}
 \item $\{\gamma_{in}\}_{n\in\NN}$ has a subsequence that converges in $\PP_\pm(x_i)$,  
 \item $\{\gamma_{in}\}_{n\in\NN}$ has a subsequence that converges geometrically against a family of trajectories.
\end{itemize}
Applying Theorem \ref{FiniteLengthCompactness} with $Y_n = Y_e((l_{fn})_{f \in E_{int}(T)\setminus\{e\}})$ to the component sequences associated with elements of $E_{int}(T)$ yields that for every $e \in E_{int}(T)$ at least one of the following holds:
\begin{itemize}
 \item $\{(l_{en},\gamma_{en})\}_{n\in\NN}$ has a subsequence that converges in the sense of Definition \ref{DefFiniteLengthConvergent} and the length of its limit curve is positive,
 \item $\{(l_{en},\gamma_{en})\}_{n\in\NN}$ has a subsequence that converges in the sense of Definition \ref{DefFiniteLengthConvergent} and the length of its limit curve is zero,
 \item $\{(l_{en},\gamma_{en})\}_{n\in\NN}$ has a subsequence that converges geometrically against a family of trajectories.
\end{itemize}
In both situations, we have used that by definition of the spaces $\XX_{\pm}(M,k)$ and $\XX_0(M,k)$, the respective sequence $\{Y_n\}_{n\in\NN}$ from above is indeed convergent in the respective space.

Moreover, since $\left\{\gammaunder_n\right\}_{n\in\NN}$ has finitely many components, we can find a ''common`` subsequence for all edges of $T$ such that one of the above holds. More precisely, there is a subsequence $\left\{\gammaunder_{n_q}\right\}_{q\in\NN}$ such that all the sequences $\{\gamma_{in_q}\}_{q\in\NN}$, $i \in \{0,1,\dots,d\}$, and $\{(l_{en_q},\gamma_{en_q})\}_{q\in\NN}$, $e \in E_{int}(T)$, either converge or converge geometrically. \bigskip 

While we have not distinguished betweed the first two bullet points for finite-length components in Theorem \ref{FiniteLengthCompactness}, we will do so in the following considerations. The reason for this distinction lies in the definition of the moduli spaces of perturbed Morse ribbon trees. 

Suppose that every component sequence of $\left\{\gammaunder_n\right\}_{n\in\NN}$ converges. If $\lim_{n\to\infty} l_{en} > 0$ for every $e \in E_{int}(T)$, then the family of the limits will again be identified with an element of $\Acal^d_{\fatY}(x_0,x_1,\dots,x_d,T)$. 

If there is an $e \in E_{int}(T)$ for with $\lim_{n \to \infty} l_{en}=0$, then there will be no such identification, since by definition of $\Acal^d_{\fatY}(x_0,x_1,\dots,x_d,T)$, every finite-length component of an element of this moduli space must have positive length. Instead, in the course of this section we will identify such limits with perturbed Morse ribbon trees \emph{modelled on different trees than $T$.}

\bigskip

These considerations motivate the distinction between the sets $E_1$ and $E_3$ in the following convergence theorem for sequences of perturbed Morse ribbon trees, which summarizes our elaborations on components of sequences of perturbed Morse ribbon trees:

\begin{theorem} \index{compactness of moduli spaces!of Morse ribbon trees}
\label{CompactnessMorseTrees}
 Let $d \geq 2$, $T \in \RTree_d$, $\fatY \in \XX(T)$ and $x_0,x_1,\dots,x_d \in \Crit f$.  Let
 \begin{equation*}
  \left\{\left(\gamma_{0n},\left(l_{en},\gamma_{en} \right)_{n \in \NN},\gamma_{1n},\dots,\gamma_{dn} \right) \right\}_{n \in \NN}
 \end{equation*}
 be a sequence in $\Acal^d_{\fatY}(x_0,x_1,\dots,x_d,T)$. For every $e \in E(T)$ define a sequence $\left\{\bar{\gamma}_{en} \right\}_{n \in \NN}$ by putting
 \begin{equation*}
 \bar{\gamma}_{en} := \begin{cases}
                       \gamma_{in} & \text{if} \quad  e = e_i(T) \quad \text{for some} \ i \in \{0,1,\dots,d\} \ , \\
                       \left(l_{en},\gamma_{en} \right) & \text{if} \quad  e \in E_{int}(T) \ ,
                      \end{cases}
 \end{equation*}
 for every $n \in \NN$. There are sets $E_1,E_2 \subset E(T)$, $E_3 \subset E_{int}(T)$ with
 \begin{equation*}
  E(T) = E_1 \sqcup E_2 \sqcup E_3 \ , 
 \end{equation*}
 such that
 \begin{itemize}
  \item for every $e \in E_1$, the sequence $\left\{\bar{\gamma}_{en} \right\}_{n \in \NN}$ has a convergent subsequence and 
	  \begin{equation*}
	  \liminf_{n\to \infty} l_{en} > 0 \quad \text{if} \ e \in E_{int}(T) \ ,
	  \end{equation*}
  \item for every $e \in E_2$, the sequence $\left\{\bar{\gamma}_{en} \right\}_{n \in \NN}$ has a geometrically convergent subsequence,
  \item for every $e \in E_3$, the sequence $\left\{(l_{en},\gamma_{en}) \right\}_{n\in \NN}$ has a convergent subsequence 
  \begin{equation*}
  \left\{(l_{en_q},\gamma_{en_q}) \right\}_{q\in \NN} \ \ \text{with} \ \ \lim_{q \to \infty} l_{en_q} = 0 \ .
  \end{equation*}
 \end{itemize}
\end{theorem}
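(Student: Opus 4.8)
The plan is to reduce the statement to the three edgewise compactness theorems — Theorem \ref{CompactnessHalfTrajectories} for the external edges and Theorem \ref{FiniteLengthCompactness} for the internal edges — along the lines of the discussion preceding the statement. First I would apply Lemma \ref{BolWeier} to the sequence $\vec{l}_n \in [0,+\infty)^{k(T)}$ of length vectors and pass to a subsequence so that, for each internal edge $e$, the sequence $\{l_{en}\}_{n\in\NN}$ either converges to some $l_{e\infty}\in[0,+\infty)$ or diverges to $+\infty$; this turns $\liminf_n l_{en}$ into a genuine limit whenever it is finite, which is exactly what separates $E_1$ from $E_3$.

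Next I would check that the perturbation sequences attached to the edges converge in their Banach spaces: for an external edge $i$ this is the sequence $Y_i(\vec{l}_n)\in\XX_\pm(M)$, and for an internal edge $e$ the sequence $Y_e((l_{fn})_{f\ne e})\in\XX_0(M)$. Here one invokes the recursive definitions of $\XX_\pm(M,k(T))$ and $\XX_0(M,k(T)-1)$: for every length variable that diverges, the corresponding contraction $c_j(\lambda,\cdot)$ is required to have a limit, and for the variables that converge one uses continuity of evaluation; together these produce the limit. Checking this carefully — in particular that evaluation at a sequence in which several coordinates diverge simultaneously still has a limit, not merely each single contraction — is the one point needing real care, and is where the nested structure built into the perturbation spaces of Section \ref{NonlocalGeneralizations} is used. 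I expect this bookkeeping to be the main obstacle; everything else is routine.

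With the perturbations convergent, I would feed each component sequence into the appropriate trajectory compactness theorem: Theorem \ref{CompactnessHalfTrajectories}(i) for $\{\gamma_{0n}\}_n$, Theorem \ref{CompactnessHalfTrajectories}(ii) for each $\{\gamma_{in}\}_n$ with $1\le i\le d$, and Theorem \ref{FiniteLengthCompactness} for each $\{(l_{en},\gamma_{en})\}_n$. Each yields a dichotomy — a subsequence converging in the parameter space, or a subsequence converging geometrically — refined for internal edges into a trichotomy by recording whether $l_{e\infty}$ is positive or zero, and noting that $l_{e\infty}=+\infty$ cannot occur in the convergent case since convergence in the sense of Definition \ref{DefFiniteLengthConvergent} forces bounded lengths. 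Since $E(T)$ is finite, I would then refine once for each edge to obtain a single common subsequence, using that both ordinary and geometric convergence persist under passage to further subsequences.

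Finally I would read off the partition: let $E_3$ be the set of internal edges along which the length sequence converges to $0$; let $E_1$ be the set of remaining edges along which the trajectory sequence converges (so internal edges of $E_1$ automatically have $l_{e\infty}>0$, hence $\liminf_n l_{en}>0$); and let $E_2$ be the complement, which by the dichotomies above must consist of edges possessing a geometrically convergent subsequence. Pairwise disjointness and $E(T)=E_1\sqcup E_2\sqcup E_3$ are then immediate from the construction, which completes the argument.
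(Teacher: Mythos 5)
Your proposal is correct and takes essentially the same route as the paper: pass to a subsequence via Lemma \ref{BolWeier} so each length sequence converges or diverges, observe that the attached perturbation sequences converge in $\XX_\pm(M)$ and $\XX_0(M)$, feed the component sequences into Theorems \ref{CompactnessHalfTrajectories} and \ref{FiniteLengthCompactness}, refine to a common subsequence over the finitely many edges, and read off the partition $E_1 \sqcup E_2 \sqcup E_3$. The one point you flag as delicate — that $Y_i(\vec{l}_n)$ has a limit when several length coordinates diverge simultaneously — is indeed glossed over in the paper (which simply states that the sequence is "by definition" convergent); it does hold, since the recursive condition $\lim_{\lambda\to\infty} c_j(\lambda,Y)\in\XX_\pm(M,k-1)$ is a statement of $C^{n+1}$-norm convergence uniform in the remaining parameters, so iterating the estimates produces a joint Cauchy property, but you are right that this deserves a line of justification that the paper does not supply.
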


Theorem \ref{CompactnessMorseTrees} immediately leads to the following notion of convergence for sequences of perturbed Morse ribbon trees: 

\begin{definition} 
Let $\left\{\left(\gamma_{0n},\left(l_{en},\gamma_{en}\right)_{e \in E_{int}(T)},\gamma_{1n},\dots,\gamma_{dn} \right) \right\}_{n \in \NN}$ be a sequence of perturbed Morse ribbon trees. 
\begin{enumerate} 
 \item Let $i \in \{0,1,\dots,d\}$. Whenever the sequence $\{\gamma_{in}\}_{n \in \NN}$ converges, we denote its limit by $\gamma_{i\infty}$.
 \item Let $e \in E_{int}(T)$. Whenever the sequence $\{(l_{en},\gamma_{en})\}_{n \in \NN}$ converges, we denote its limit by $(l_{e\infty},\gamma_{e\infty})$.  
 \item We say that the sequence is  \emph{convergent} if every component sequence $\left\{\bar{\gamma}_{en}\right\}_{n \in \NN}$, $e \in E(T)$, converges (where we have used the notation from Theorem \ref{CompactnessMorseTrees}).
 \item The \emph{limit} of a convergent sequence of perturbed Morse ribbon trees is defined as the product of the limits of the component sequences.
 \end{enumerate}
\end{definition}

Note that a sequence of perturbed Morse ribbon trees has a convergent subsequence in the sense of the previous definition if and only if we can choose $E_2 = \emptyset$ in Theorem \ref{CompactnessMorseTrees}. \bigskip 

In the following we will describe the convergence behaviour of sequences of perturbed Morse ribbon trees in several special cases in greater detail. %We will see that the (geometric) limits of (geometrically) convergent sequences can themselves be identified with elements of moduli spaces of perturbed Morse ribbon trees. We will use these observations in Section \ref{CompactificationsOneDimAinfty} to construct higher order multiplications on Morse cochains and to show that they satisfy the defining equations of an $A_\infty$-algebra. 

\begin{definition}
\label{QuotientTree} 
Let $d \in \NN$, $d \geq 2$. For $T \in \RTree_d$ and $e \in E_{int}(T)$, we define
 \begin{equation*}
  T/e \in \RTree_d
 \end{equation*}
 as the unique tree we obtain from $T$ after collapsing the edge $e$. More precisely, $T/e$ is the unique tree with $E(T/e) = E(T) \setminus \{e\}$ and such that $\vout(f)=\vin(f')$ for all $f,f' \in E(T)\setminus\{e\}$ with 
 \begin{equation*}
 \vout(f)=\vin(e) \quad \text{and} \quad \vin(f')=\vout(e) \ .
 \end{equation*}
 Let $d \in \NN$, $d \geq 2$. For $T \in \RTree_d$ and $F \subset E_{int}(T)$, we define 
 \begin{equation*}
  T/F \in \RTree_d
 \end{equation*}
 as the unique tree we obtain from $T$ after collapsing every edge contained in $F$. More precisely, if $F=\{f_1,f_2,\dots,f_m\}$  for suitable $m \in \NN$, then we define $T/F$ inductively by 
   $$ T/F := \left( \ldots \left( \left( \left( T/f_1 \right) / f_2 \right) / f_3 \right) \ldots \right) / f_m \ .$$
 One checks that the tree $T/F$ is independent of the choice of ordering of $F$.  
 \end{definition}
 
\begin{figure}[h]
 \centering
 \includegraphics[scale=0.6]{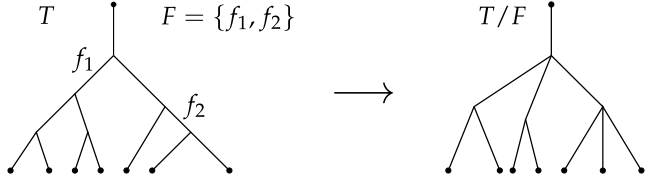}
 \caption{An example of the collapse of internal edges of a ribbon tree.}
 \label{FigQuotientTree}
\end{figure}

Figure \ref{FigQuotientTree} shows an example of such a ``quotient tree''. Note that especially 
\begin{equation*}
 k(T/F) = k(T) - |F| \ 
\end{equation*}
for all $T \in \bigcup_{d\geq 2}\RTree_d$ and $F\subset E_{int}(T)$. 

In the following we will relate moduli spaces of perturbed Morse ribbon trees modelled on $T$ with those modelled on $T/F$ for some $F \subset E_{int}(T)$. For this purpose, we need to find a way of considering perturbations in $\XX(T)$ and $\XX(T/F)$ at the same time. Before we investigate these relations, we therefore introduce a technique for relating perturbations associated with different ribbon trees to each other. 

\begin{definition}
 Let $d \in \NN$, $T \in \RTree_d$ and $F \subset E_{int}(T)$. 
\begin{enumerate}
 \item For $Y \in \XX_{\pm}(M,k(T))$ we define 
 \begin{align*}
&Y/F \in \XX_{\pm}(M,k(T/F)), \ \ (Y/F)\left((l_e)_{e \in E_{int}(T/F)},t,x\right) := Y\left((\tilde{l}_e)_{e \in E_{int}(T)},t,x \right) \ , \\
&\text{where} \quad \tilde{l}_e := \begin{cases}
                               l_e & \text{if} \ \ e \in E_{int}(T)\setminus F \ , \\
                               0 & \text{if} \ \ e \in F \ . 
                             \end{cases}
\end{align*}
Analogously, for $Z \in \XX_0(M,k(T)-1)$ and $f \in E_{int}(T)\setminus F$ we define 
\begin{align*}
&Z/F \in \XX_0(M,k(T/F)-1), \\ 
&(Z/F)\left((l_e)_{e \in E_{int}(T/F)\setminus \{f\}},l_f,t,x\right):= Z\left((\tilde{l}_e)_{e \in E_{int}(T)\setminus \{f\}},l_f,t,x \right) \ , 
\end{align*}
where $\tilde{l}_e$ is given as above. 
\item We define a map 
\begin{equation}
\label{piF} 
\begin{aligned}
 \pi_F: \XX(T) &\to \XX(T/F) \ ,  \\
\left(Y_0,(Y_e)_{e \in E_{int}(T)},Y_1,\dots,Y_d\right) &\mapsto \left(Y_0/F,(Y_e/F)_{e \in E_{int}(T)\setminus F},Y_1/F,\dots,Y_d/F\right) \ , 
\end{aligned}
\end{equation}
$\pi_F$ is a composition of an evaluation map and a projection. Therefore, $\pi_F$ is easily seen to be continuous and surjective. 
\item Assume that $k:= k(T)>0$. We define maps
$$\pib_{\pm,F}: \XXb_{\pm}(M,k) \to \XXb_{\pm}(M,k(T/F)) \ , \quad (X_1,\dots,X_k) \mapsto (X_1/F,\dots,X_k/F)\ .$$
If $k(T/F)>0$, then  we further let $e \in E_{int}(T) \setminus F$ and define 
\begin{align*}
&\pib_{0,F}: \XXb_0(M,k(T)-1) \to \XXb_0(M,k(T/F)-1) \ , \\
&((X_f)_{f \in E_{int}(T)\setminus \{e\}},X_+,X_-) \mapsto ((X_f/F)_{f \in E_{int}(T/F)\setminus \{e\}},X_+/F,X_-/F) \ .
\end{align*}
In terms of these maps, we define for $k(T/F)>0$ a map
\begin{align*}
\pib_F: \XXb(T) &\to \XXb(T/F) \ , \\
\left(\fatX_0,(\fatX_e)_{e},\fatX_1,\dots,\fatX_d\right) &\mapsto \left(\pib_{-,F}(\fatX_0),(\pib_{0,F}(\fatX_e))_{e},\pib_{+,F}(\fatX_1),\dots,\pib_{+,F}(\fatX_d)\right) \ ,
\end{align*}
and for $k(T/F)=0$ a map 
\begin{align*}
\pib_F: \XXb(T) &\to \XX_-(M)\times \XX_+(M)^d \ , \\
\left(\fatX_0,(\fatX_e)_{e},\fatX_1,\dots,\fatX_d\right) &\mapsto \left(\pib_{-,F}(\fatX_0),\pib_{+,F}(\fatX_1),\dots,\pib_{+,F}(\fatX_d)\right) \ ,
\end{align*}
In analogy with $\pi_F$, the map $\pib_F$ is continuous and surjective. 
\end{enumerate}
\end{definition}
To relate perturbations in $\XX(T)$ with those in $\XX(T/F)$, we further need to develop a formalism for simultaneous choices of perturbations for families of ribbon trees. At the same time, we introduce analogous notions for background perturbations, which will not be required in this section, but in the following one. 

\begin{definition}
\label{PertDatum} 
 \begin{enumerate}
  \item A \emph{$d$-perturbation datum} is a family $\fatY = \left(\fatY_T\right)_{T \in \RTree_d}$ with $\fatY_T \in \XX(T)$ for each $T \in \RTree_d$.
 \item A $d$-perturbation datum $\fatY = \left(\fatY_T\right)_{T \in \RTree_d}$ is called \emph{universal} if for all $T \in \RTree_d$ and $F \subset E_{int}(T)$ it holds that $\pi_F\left(\fatY_T\right) = \fatY_{T/F}$.
 \item Given a $d$-perturbation datum $\fatY = \left(\fatY_T\right)_{T \in \RTree_d}$ and for $T \in \RTree_d$ we write
 \begin{equation*}
 \Acal^d_{\fatY} (x_0,x_1,\dots,x_d,T) := \Acal^d_{\fatY_T} (x_0,x_1,\dots,x_d,T)
 \end{equation*}
 for all $x_0,x_1,\dots,x_d \in \Crit f$. 
  \item Let $\RTree_d^*:= \{T \in \RTree_d \ | \ k(T)>0\}$.  A \emph{background $d$-perturbation datum} is a family $\fatX = \left(\fatX_T\right)_{T \in \RTree^*_d}$, such that $\fatX_T \in \XXb(T)$ for each $T \in \RTree^*_d$. 
 \item We call a background $d$-perturbation datum $\fatX = \left(\fatX_T\right)_{T \in \RTree^*_d}$  \emph{universal} if is satisfies both of the following conditions: 
 \begin{itemize}
 \item for all $T \in \RTree^*_d$ and $F \subset E_{int}(T)$ with $k(T/F)>0$ it holds that 
 $\pib_F\left(\fatX_T\right) = \fatX_{T/F}$,
 \item for all $T,T' \in \RTree^*_d$ it holds with $F:=E_{int}(T)$ and $F':=E_{int}(T')$ that
 $$\pib_F(\fatX_T) = \pib_{F'}(\fatX_{T'}) \ . $$
 \end{itemize}
 \end{enumerate}
\end{definition}

The next theorem describes the case that all component sequences of a sequence of perturbed Morse ribbon trees converge and that there are internal edges whose associated sequences of edge lengths tend to zero. This corresponds to the case 
\begin{equation*}
E_1 = E(T) \setminus F \ , \quad E_2 = \emptyset \ , \quad E_3=F
\end{equation*}
for some $F \subset E_{int}(T)$ in Theorem \ref{CompactnessMorseTrees}.

\begin{theorem} 
\label{LengthToZero}
 Let $d \in \NN$, $d \geq 2$, $T \in \RTree_d$ and let $\fatY \in \prod_{T \in \RTree_d} \XX(T)$ be a universal $d$-perturbation datum.
 Let $x_0,x_1,\dots,x_d \in \Crit f$ and let 
\begin{equation*}
  \left\{\left(\gamma_{0n},\left(l_{en},\gamma_{en}\right)_{e \in E_{int}(T)},\gamma_{1n},\dots,\gamma_{dn} \right) \right\}_{n \in \NN} \subset \Acal^d_{\fatY}(x_0,x_1,\dots,x_d,T)
\end{equation*}
be a sequence, for which all of the sequences $\left\{\gamma_{in}\right\}_{n \in \NN}$, $i \in \{0,1,\dots,d\}$, and $\left\{(l_{en},\gamma_{en})\right\}_{n \in \NN}$, $e \in E_{int}(T)$, converge and for which there is an $F \subset E_{int}(T)$ such that
\begin{align*}
&l_{f\infty} = 0 \  \text{for every} \ f \in F \  , \\
&l_{f\infty} > 0 \  \text{for every} \ f \in E_{int}(T) \setminus F \  .
\end{align*}
Then $\left(\gamma_{0\infty},\left(l_{e\infty},\gamma_{e\infty}\right)_{e \in E_{int}(T)\setminus F},\gamma_{1\infty},\dots,\gamma_{d\infty} \right) \in \Acal^d_{\fatY}(x_0,x_1,\dots,x_d,T/F) \ .$
\end{theorem}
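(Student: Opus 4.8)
The plan is to verify directly that the limiting family of trajectories satisfies every defining condition of $\Acal^d_{\fatY}(x_0,x_1,\dots,x_d,T/F)$, using that all component sequences converge in their respective spaces and that the perturbation datum is universal. First I would recall that, by hypothesis, $\gamma_{i\infty} \in \PP_\mp(x_i)$ (negative for $i=0$, positive for $i \geq 1$) and $(l_{e\infty},\gamma_{e\infty}) \in \MM(\dots)$ with $l_{e\infty} > 0$ for $e \in E_{int}(T) \setminus F = E_{int}(T/F)$, while for $f \in F$ the curve $\gamma_{f\infty}$ degenerates to the constant curve at a single point since $l_{f\infty} = 0$ and $M$ is compact (so no escape to infinity occurs). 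Thus the surviving data is precisely indexed by $E(T/F) = E(T) \setminus F$.

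The key step is to check that the limiting curves still solve the correct perturbed negative gradient flow equations, now with the perturbations $\fatY_{T/F}$ rather than $\fatY_T$. Here the universality of $\fatY$ enters: for $e \in E_{int}(T) \setminus F$ the component $\gamma_{en}$ satisfies the equation with perturbing vector field $Y_e\big((l_{fn})_{f \in E_{int}(T)\setminus\{e\}}, l_{en}, s, \cdot\big)$; since $l_{fn} \to 0$ for $f \in F$ and $l_{fn} \to l_{f\infty}$ for $f \in E_{int}(T)\setminus F$, and since $Y_e$ is continuous (indeed $C^{n+1}$), the perturbing vector fields converge in $\XX_0(M)$ to $Y_e\big((\tilde{l}_f)_{f}, l_{e\infty}, s, \cdot\big)$ where $\tilde l_f = l_{f\infty}$ off $F$ and $\tilde l_f = 0$ on $F$ --- and by the definition of $Y_e/F$ and the universality identity $\pi_F(\fatY_T) = \fatY_{T/F}$, this limit is exactly the $e$-component of $\fatY_{T/F}$. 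The same argument applies verbatim to the external components $\gamma_{0n}, \gamma_{1n},\dots,\gamma_{dn}$ using $\pi_F$. Since $\gamma_{en}$ (resp. $\gamma_{in}$) converges in $\MM(f,g)$-sense (resp. in $\PP_\pm(x_i)$) and the perturbations converge, passing to the limit in the flow equations --- which is legitimate because convergence in these Hilbert/Banach spaces implies, by elliptic regularity as in Remark \ref{EvalTrajUnstable} and \cite[Proposition 2.9]{Schwarz}, $C^1_{loc}$-convergence --- shows the limiting curves solve the $\fatY_{T/F}$-perturbed equations.

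Finally I would check the coupling (endpoint matching) conditions. The limiting family automatically satisfies the matching conditions inherited from $T$: for $e, f \in E_{int}(T)$ with $\vout(e) = \vin(f)$ we have $\gamma_{e\infty}(l_{e\infty}) = \gamma_{f\infty}(0)$ by taking limits in $\gamma_{en}(l_{en}) = \gamma_{fn}(0)$ (evaluation maps being continuous on the relevant spaces). One then observes that for $f \in F$, since $l_{f\infty} = 0$, the curve $\gamma_{f\infty}$ is constant, so its start and end coincide; this means that the matching conditions ``across'' a collapsed edge $f$ combine into a single matching condition between the neighbours of $f$, which is precisely the matching condition in $T/F$ (recall $\vout(e') = \vin(e'')$ in $T/F$ whenever $\vout(e') = \vin(f)$ and $\vout(f) = \vin(e'')$ in $T$, by Definition \ref{QuotientTree}). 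Tracing through the three cases (internal-internal, internal-leaf, root-internal) as in Remark \ref{DeltaTexplicitly}, one concludes the limiting family lies in $\Acal^d_{\fatY}(x_0,x_1,\dots,x_d,T/F)$. The main obstacle I anticipate is the bookkeeping in this last step: one must carefully verify that collapsing a chain of edges in $F$ (not just a single edge) still produces the correct matching, and that the constancy of $\gamma_{f\infty}$ for all $f \in F$ is genuinely forced --- this follows since a perturbed trajectory on $[0,l_{fn}]$ with $l_{fn} \to 0$ has its image shrinking to a point by the bound on $\|\nabla f + Y_{f}\|$ on the compact manifold $M$, but making this precise requires the uniform bound provided by the convergence of the perturbations in $\XX_0(M)$.
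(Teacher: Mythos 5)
Your proof is correct and follows essentially the same route as the paper's: pass to the limit in the flow equations (using that the perturbation vector fields converge to those of $\fatY_{T/F}$ by universality) and in the endpoint-matching conditions (using that edges in $F$ collapse to points). The only cosmetic difference is that you verify the coupling conditions componentwise, whereas the paper phrases this step via closedness of the $T$-diagonal $\Delta_T$ and an explicit projection $\Delta_T \supset \{q : q^f_{in}=q^f_{out}\ \forall f \in F\} \cong \Delta_{T/F}$, which handles the chain-collapsing bookkeeping you flag as a concern in one stroke.
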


\begin{proof}
By definition of $\Acal^d_{\fatY}(x_0,x_1,\dots,x_d,T)$, it holds for every $n \in \NN$ that
\begin{equation*}
  \left(\gamma_{0n}(0),\left(\gamma_{en}(0),\gamma_{en}(l_{en})\right)_{e \in E_{int}(T)},\gamma_{1n}(0),\dots,\gamma_{dn}(0)\right) \in \Delta_T \ .
\end{equation*}
Since $\Delta_T$ is closed, the convergence of the sequence implies
\begin{equation*}
  \lim_{n \to \infty} \left(\gamma_{0n}(0), (\gamma_{en}(0),\gamma_{en}(l_{en}))_{e \in E_{int}(T)},\gamma_{1n}(0),\dots,\gamma_{dn}(0) \right) \in \Delta_T \  .
\end{equation*}
Moreover, since $\lim_{n \to \infty} l_{fn} = 0$ for every $f \in F$, we conclude
\begin{equation*}
 \lim_{n \to \infty} \gamma_{fn}(0) = \lim_{n \to \infty} \gamma_{fn}(l_{fn}) \quad \forall f \in F \ , 
\end{equation*}
which yields
\begin{equation}
\label{limitcondDeltaT}
\begin{aligned}
 &\lim_{n \to \infty} \left(\gamma_{0n}(0), (\gamma_{en}(0),\gamma_{en}(l_{en}))_{e \in E_{int}(T)},\gamma_{1n}(0),\dots,\gamma_{dn}(0) \right) \\ &\qquad \qquad \in \left\{(q_0,(q_{in}^e,q_{out}^e)_{e \in E_{int}(T)},q_1,\dots,q_d) \in \Delta_T \ \left| \ q_{in}^f = q_{out}^f \quad \forall f \in F \right. \right\} \ .
\end{aligned}
\end{equation}
From the definition of the $T$-diagonal (see Remark \ref{DeltaTexplicitly}) we derive
\begin{align*}
 &\left\{(q_0,(q_{in}^e,q_{out}^e)_{e \in E_{int}(T)},q_1,\dots,q_d) \in \Delta_T \ \left| \ q_{in}^f = q_{out}^f \quad \forall f \in F \right. \right\} \\
&=\left\{(q_0,(q_{in}^e,q_{out}^e)_{e \in E_{int}(T)},q_1,\dots,q_d) \in \Delta_T \left| \ q_{in}^e = q_{out}^{e'} \ \forall e,e' \in E_{int}(T) \ \text{ s.t. }\right. \right.  \\ 
&\left.\qquad \qquad \exists f_1,\dots,f_m \in F \  \text{ with } \ q_{out}^e=q_{in}^{f_1}, \ q_{out}^{f_1}=q_{in}^{f_2},\ \dots, q_{out}^{f_{n-1}}=q_{in}^{f_n}, \  q_{out}^{f_n} = q_{in}^{e'} \ \right\} \ .
\end{align*}
By definition of $\Delta_T$ and $\Delta_{T/F}$, the projection $M^{1+2k(T)+d} \to M^{1+2k(T/F)+d}$ which projects away from the components associated with elements of $F$ maps this space diffeomorphically onto 
\begin{align*}
 &\Delta_{T/F}=\left\{(q_0,(q_{in}^e,q_{out}^e)_{e \in E_{int}(T/F)},q_1,\dots,q_d) \in M^{1+2k(T/F)+d} \ \right. \\
  &\quad \ \left| \ q_{out}^{e} = q_{in}^f \ \text{ for every } \ e,f \in E_{int}(T/F)\ \text{ with } \  \vout(e) = \vin(f) \ , \right. \\
&\quad \ \ \ \ q_{out}^{e} = q_j \ \text{ for every } \ e \in E_{int}(T/F),  \ j \in \{1,\dots,d\} \ \text{ with } \ \vout(e) =\vin(e_j(T/F))  \ , \\
  &\quad \left. \phantom{q_{in}^f} q_0 = q_{in}^e \ \text{ for every } \ e \in E_{int}(T/F)\ \text{ with } \ \vout(e_0(T/F)) = \vin(e)  \ \right\} \ .
\end{align*}
Thus, condition (\ref{limitcondDeltaT}) implies
\begin{equation}
\label{inDeltaTF}
\begin{aligned}
&\left(\gamma_{0\infty}(0), (\gamma_{e\infty}(0),\gamma_{e\infty}(l_{e\infty}))_{e \in E_{int}(T)\setminus F},\gamma_{1\infty}(0),\dots,\gamma_{d\infty}(0) \right) \\
&=\lim_{n \to \infty} \left(\gamma_{0n}(0), (\gamma_{en}(0),\gamma_{en}(l_{en}))_{e \in E_{int}(T)\setminus F},\gamma_{1n}(0),\dots,\gamma_{dn}(0) \right) \in \Delta_{T/F} \ . 
\end{aligned}
\end{equation}
Furthermore, since $\displaystyle\lim_{n \to \infty} l_{fn}=0$ for every $f \in F$, we obtain
\begin{align*}
 &\gamma_{0\infty} \in W^-\left(x_0,(Y_0/F)\left((l_e)_{e \in E_{int}(T/F)} \right) \right) \ , \\
 &(l_{e\infty},\gamma_{e\infty}) \in \MM\left((Y_e/F)\left((l_f)_{f \in E_{int}(T/F)\setminus\{e\}} \right) \right) \ \forall e \in E_{int}(T/F) \ , \\
 &\gamma_{i\infty} \in W^+\left(x_i,(Y_i/F)\left((l_e)_{e \in E_{int}(T/F)}\right) \right) \ .
\end{align*}
Together with \eqref{inDeltaTF}, these observations yield:
\begin{equation*}
 \Rightarrow \left(\gamma_{0\infty},\left(l_{e\infty},\gamma_{e\infty}\right)_{e \in E_{int}(T)\setminus F},\gamma_{1\infty},\dots,\gamma_{d\infty} \right) \in \Acal^d_{\pi_F\left(\fatY_T\right)}(x_0,x_1,\dots,x_d,T/F) \ . 
\end{equation*}
Finally, we make use of the universality property. Up to this point, we only know that the limit is a perturbed Morse ribbon tree modelled on $T/F$, but with respect to a perturbation which is induced by $\fatY_T$. 

For a general perturbation datum, this perturbation is not related to $\fatY_{T/F}$. But since the $d$-perturbation datum $\fatY$ is universal, it holds that $\pi_F(\fatY_T)=\fatY_{T/F}$, which shows the claim.
\end{proof}

The following theorem describes the case that all sequences of curves associated with internal edges of the tree converge and that all but one of the sequences associated with external edges converge while the remaining one is geometrically convergent. Formally speaking, it gives a precise description of the cases
\begin{equation*}
 E_1 = E(T) \setminus \{e_i(T)\}, \quad E_2 = \{e_i(T)\}, \quad E_3=\emptyset , \quad \text{for some} \ i \in \{0,1,\dots,d\},
\end{equation*}
in Theorem \ref{CompactnessMorseTrees}.

\begin{theorem} \index{perturbed Morse ribbon trees!convergence of sequences}
\label{ConvergenceHalfTraj} 
 Let $d \geq 2$, $T \in \RTree_d$ and $\fatY \in \XX(T)$. Let $x_0,x_1,\dots,x_d \in \Crit f$ and let 
\begin{equation*}
  \left\{\left(\gamma_{0n},\left(l_{en},\gamma_{en}\right)_{e \in E_{int}(T)},\gamma_{1n},\dots,\gamma_{dn} \right) \right\}_{n \in \NN} \subset \Acal^d_{\fatY}(x_0,x_1,\dots,x_d,T) 
\end{equation*}
such that
\begin{equation*}
 \liminf_{n \to \infty} l_{en} > 0 \quad \forall e \in E_{int}(T) \ .
\end{equation*}
For $e \in E(T)$ and $n \in \NN$ we put 
\begin{equation*}
\bar{\gamma}_{en} := \begin{cases}  
                      (l_{en},\gamma_{en}) & \text{if} \ \ e \in E_{int}(T) \ , \\
                      \gamma_{in} & \text{if} \ \ e = e_i(T), \ i \in \{0,1,\dots,d\} \ . 
                     \end{cases}
\end{equation*}
For any such $e$ and $i$ we further put $(l_{e\infty},\gamma_{e\infty}) := \lim_{n\to\infty} (l_{en},\gamma_{en})$ if $\{(l_{en},\gamma_{en})\}_{n\in\NN}$ converges and $\gamma_{i\infty} := \lim_{n\to \infty} \gamma_{in}$ if $\{\gamma_{in}\}_{n \in \NN}$ converges.
 \begin{enumerate}
  \item Assume that the sequence $\left\{\bar{\gamma}_{en}\right\}_{n \in \NN}$ converges for every $e \in E(T) \setminus \{e_0(T)\}$ and that $\left\{\gamma_{0n} \right\}_{n \in \NN}$ converges geometrically against some 
  \begin{equation*}
  (\hat{g}_1,\dots,\hat{g}_{m-1},\gamma_-) \in \widehat{\MM}(x_0,y_1) \times \prod_{j=1}^{m-1} \widehat{\MM}(y_j,y_{j+1}) \times W^-\left(y_m,Y_0\left(\vec{l}_\infty\right)\right) \ ,
  \end{equation*}
  where $\vec{l}_\infty = (l_{e\infty})_{e \in E_{int}(T)} \in (0,+\infty)^{E_{int}(T)}$. Then
  \begin{equation*}
   \left(\gamma_{-},\left(l_{e\infty},\gamma_{e\infty}\right)_{e \in E_{int}(T)},\gamma_{1\infty},\dots,\gamma_{d\infty} \right) \in \Acal^d_{\fatY}(y_m,x_1,\dots,x_d,T) \ .
  \end{equation*} 
  \item Assume that there is an $i \in \{1,\dots,d\}$ such that the sequence $\left\{\bar{\gamma}_{en}\right\}_{n \in \NN}$ converges for every $e \in E(T) \setminus \{e_i(T)\}$ and that $\left\{\gamma_{in} \right\}_{n \in \NN}$ converges geometrically against some 
  \begin{equation*}
  (\gamma_+,\hat{g}_1,\dots,\hat{g}_{m}) \in W^+\left(y_1,Y_i\left(\vec{l}_\infty\right)\right)  \times \prod_{j=1}^{m-1} \widehat{\MM}(y_j,y_{j+1}) \times \widehat{\MM}(y_m,x_i) \ ,
  \end{equation*}
    where $\vec{l}_\infty = (l_{e\infty})_{e \in E_{int}(T)} \in (0,+\infty)^{E_{int}(T)}$. Then
  \begin{align*}
   &\left(\gamma_{0\infty},\left(l_{e\infty},\gamma_{e\infty}\right)_{e \in E_{int}(T)},\gamma_{1\infty},\dots,\gamma_{(i-1)\infty},\gamma_+,\gamma_{(i+1)\infty},\dots,\gamma_{d\infty} \right) \\ 
   &\qquad \qquad \qquad \in \Acal^d_{\fatY}(x_0,x_1,\dots,x_{i-1},y_1,x_{i+1},\dots,x_d,T) \ .
  \end{align*}
 \end{enumerate}
\end{theorem}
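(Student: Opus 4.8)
The plan is to reduce the statement to the fact that the relevant endpoint conditions—encoded via the $T$-diagonal $\Delta_T$—are preserved under the various modes of convergence, together with the fact that the trajectory components of the geometric limit again satisfy the correct perturbed flow equations by definition of geometric convergence. Both cases (1) and (2) are proved in the same manner; I would write case (1) in detail and indicate that case (2) is analogous after interchanging the roles of negative and positive half-trajectories.

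First I would recall that, since each $\gammaunder_n \in \Acal^d_{\fatY}(x_0,\dots,x_d,T)$, the vector
\begin{equation*}
\left(\gamma_{0n}(0),(\gamma_{en}(0),\gamma_{en}(l_{en}))_{e \in E_{int}(T)},\gamma_{1n}(0),\dots,\gamma_{dn}(0)\right)
\end{equation*}
lies in $\Delta_T$ for every $n$. For case (1): by hypothesis, for every $e \in E(T)\setminus\{e_0(T)\}$ the sequence $\{\bar{\gamma}_{en}\}$ converges (in $\PP_+$, resp. in the sense of Definition \ref{DefFiniteLengthConvergent}), hence $\gamma_{en}(0) \to \gamma_{e\infty}(0)$, $\gamma_{en}(l_{en}) \to \gamma_{e\infty}(l_{e\infty})$ and $\gamma_{in}(0) \to \gamma_{i\infty}(0)$ for all relevant indices; here I would note that the evaluation maps at the endpoints are continuous with respect to the $H^1$-topology, resp. with respect to the diffeomorphism $\psi_{Y_n}$ and the convergence in $\MM(f,g)$. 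For the remaining component, geometric convergence of $\{\gamma_{0n}\}$ against $(\hat g_1,\dots,\hat g_{m-1},\gamma_-)$ implies in particular $C^\infty_{\mathrm{loc}}$-convergence $\gamma_{0n} \to \gamma_-$, so $\gamma_{0n}(0) \to \gamma_-(0)$. Since $\Delta_T$ is closed in $M^{1+2k(T)+d}$, passing to the limit gives
\begin{equation*}
\left(\gamma_-(0),(\gamma_{e\infty}(0),\gamma_{e\infty}(l_{e\infty}))_{e \in E_{int}(T)},\gamma_{1\infty}(0),\dots,\gamma_{d\infty}(0)\right) \in \Delta_T \ ,
\end{equation*}
which is exactly the system of endpoint conditions in Definition \ref{DefPertMorseRT} for the tuple $(\gamma_-,(l_{e\infty},\gamma_{e\infty})_{e},\gamma_{1\infty},\dots,\gamma_{d\infty})$.

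Next I would verify that each component of the proposed limit solves the correct perturbed flow equation with the correct perturbation parameters. Because $\liminf l_{en}>0$ for all $e \in E_{int}(T)$ and each $\{(l_{en},\gamma_{en})\}$ converges, we have $l_{e\infty} \in (0,+\infty)$, so $\vec{l}_\infty \in (0,+\infty)^{E_{int}(T)}$; by continuity of the equation in the parameters and of the perturbations $Y_e,Y_i$, the limits $\gamma_{e\infty}$ and $\gamma_{i\infty}$ satisfy the perturbed negative gradient flow equations with perturbations $Y_e((l_f)_{f\ne e})$ and $Y_i(\vec{l}_\infty)$ respectively, and $(l_{e\infty},\gamma_{e\infty}) \in \MM(Y_e(\cdot))$. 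Finally, $\gamma_- \in W^-(y_m,Y_0(\vec{l}_\infty))$ holds \emph{by the very definition} of geometric convergence in Theorem \ref{FiniteLengthCompactness}: the geometric limit of $\{\gamma_{0n}\}$ is required to lie in $W^-(y_m,Y_0(\vec{l}_\infty))$, and $\gamma_-$ has $\lim_{s\to-\infty}\gamma_-(s)=y_m$ since it emanates from the critical point reached at the end of the broken trajectory $(\hat g_1,\dots,\hat g_{m-1})$. Assembling these facts yields $(\gamma_-,(l_{e\infty},\gamma_{e\infty})_e,\gamma_{1\infty},\dots,\gamma_{d\infty}) \in \Acal^d_{\fatY}(y_m,x_1,\dots,x_d,T)$, as claimed. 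Case (2) is identical with $\gamma_{in}$ in place of $\gamma_{0n}$, the geometric limit $(\gamma_+,\hat g_1,\dots,\hat g_m)$ in place of $(\hat g_1,\dots,\hat g_{m-1},\gamma_-)$, Theorem \ref{CompactnessHalfTrajectories}(ii) in place of Theorem \ref{FiniteLengthCompactness}, and the $i$-th leaf critical point replaced by $y_1$.

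The only mildly delicate point—and the one I would be most careful about—is the continuity of the various endpoint evaluation maps in the topologies for which convergence is asserted, particularly for the finite-length components, where ``convergence'' means convergence of $\{\psi_{Y_n}(l_n,\gamma_n)\}$ in $\MM(f,g)$ rather than naive pointwise convergence; one must unwind Definition \ref{DefFiniteLengthConvergent} to see that this does imply $\gamma_{en}(0)\to\gamma_{e\infty}(0)$ and $\gamma_{en}(l_{en})\to\gamma_{e\infty}(l_{e\infty})$, using that the $\psi_{Y_n}$ are $C^{n+1}$-diffeomorphisms and that $l_{en}\to l_{e\infty}>0$. Everything else is a direct bookkeeping of the definitions, so I would keep the exposition brief and refer back to Definitions \ref{DefPertMorseRT} and \ref{DefFiniteLengthConvergent} and to Theorems \ref{CompactnessHalfTrajectories} and \ref{FiniteLengthCompactness} for the facts being invoked.
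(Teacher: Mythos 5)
Your proposal takes essentially the same route as the paper's proof: both pass the endpoint vector $(\gamma_{0n}(0),(\gamma_{en}(0),\gamma_{en}(l_{en}))_e,\gamma_{1n}(0),\dots,\gamma_{dn}(0))$ through the closed $T$-diagonal, using the $\Cloc$-convergence $\gamma_{0n}\to\gamma_-$ from the geometric limit to obtain $\gamma_{0n}(0)\to\gamma_-(0)$, and then read off membership in $\Acal^d_{\fatY}(y_m,x_1,\dots,x_d,T)$ from Definition~\ref{DefPertMorseRT}. One small slip: the geometric-convergence fact you invoke for $\gamma_-$ belongs to Theorem~\ref{CompactnessHalfTrajectories}(i), not Theorem~\ref{FiniteLengthCompactness}; and in fact $\gamma_-\in W^-(y_m,Y_0(\vec{l}_\infty))$ is already asserted in the hypothesis of the theorem and requires no separate verification.
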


See Figure \ref{FigSemiInfConv} for an illustration of the geometric limits in both parts of Theorem \ref{ConvergenceHalfTraj}. The left-hand picture illustrates the first part, while the right-hand picture illustrates the second part.

\begin{figure}[h]
 \centering
 \includegraphics[scale=0.6]{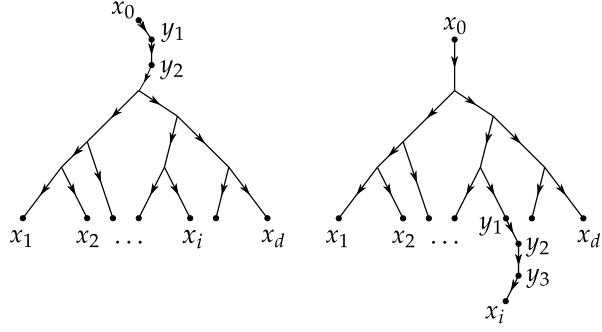}
 \caption{The geometric convergence in Theorem \ref{ConvergenceHalfTraj}.}
 \label{FigSemiInfConv}
\end{figure}

\begin{proof} 
We first note that in both parts of the theorem, it obviously holds that 
\begin{equation}
\label{EqPertLimit}
\lim_{n \to \infty} Y_i\left((l_{en})_{e\in E_{int}(T)}\right) = Y_i\left(\vec{l}_\infty\right) \quad 	\text{in} \ \ \XX(T) \quad \forall i \in \{0,1,\dots,d\} \ . 
\end{equation}
\begin{enumerate}
\item  By \eqref{EqPertLimit} and part 1 of Theorem \ref{CompactnessHalfTrajectories}, $\{\gamma_{0n}\}_{n \in \NN}$ converges to $\gamma_-$ in the $\Cloc$-topology, which especially implies that $\lim_{n \to \infty} \gamma_{0n}(0)= \gamma_-(0)$. This has the following consequence:
\begin{align*}
 &\left(\gamma_-(0), \left(\gamma_{e\infty}(0),\gamma_{e\infty}(l_{e \infty})\right)_{e \in E_{int}(T)},\gamma_{1\infty}(0),\dots,\gamma_{d\infty}(0)\right) \\
&= \lim_{n \to \infty} \left(\gamma_{0n}(0), \left(\gamma_{en}(0),\gamma_{en}(l_{en})\right)_{e \in E_{int}(T)},\gamma_{1n}(0),\dots,\gamma_{dn}(0)\right) \in \Delta_T \ ,
\end{align*}
which yields that
\begin{equation*}
 \left( \gamma_-, \left(l_{e \infty},\gamma_{e \infty}\right)_{e \in E_{int}(T)},\gamma_{1\infty},\dots,\gamma_{d\infty}\right) \in \Acal^d_{\fatY}(y_m,x_1,\dots,x_d,T) \ .
\end{equation*}
\item This is shown along the lines of the first part by applying \eqref{EqPertLimit} and part 2 of Theorem \ref{CompactnessHalfTrajectories}.
\end{enumerate}
\end{proof}

The next theorem covers the last of the special cases of convergence that we are considering. It discusses the case of geometric convergence of a family of sequences associated to internal edges of a ribbon tree while all other sequences are converging. This corresponds to the case
\begin{equation*}	
 E_1 = E(T)\setminus E_2, \quad E_2 \subset E_{int}(T), \quad E_3 = \emptyset \ , 
\end{equation*}
in Theorem \ref{CompactnessMorseTrees}. \bigskip

Remember that we have equipped $E_{int}(T)$ with a fixed ordering for every $T \in \RTree_d$, which is necessary to make sense of the identification
\begin{equation*}
 \XX(T) = \XX(1,k(T),d) \ . 
\end{equation*}
So for any $F \subset E_{int}(T)$ we can view $F$ as a subset of $\{1,2,\dots,k\}$, with $k:=k(T)$.

In view of this identification we consider for $Y \in \XX_{*}(M,k)$ and $F= \{i_1,\dots,i_d\}$ the vector field 
\begin{align}
&Y_F \in \XX_\pm(M,k-|F|) \ , \notag \\
 &Y_F := \lim_{\lambda_1\to+\infty} \lim_{\lambda_2\to+\infty} \dots \lim_{\lambda_{i_d}\to+\infty} c_{i_1}\left(\lambda_1,c_{i_2}\left(\lambda_2,\dots, c_{i_d}\left(\lambda_d,Y\right)\dots \right) \right) \ , \label{Ylimlimlim}
\end{align} 
where $\XX_*(M,k)$ denotes one of the three spaces $\XX_-(M,k)$, $\XX_+(M,k)$ and $\XX_0(M,k)$. Here, $c_{i}(\lambda,Z)$ again denotes the contraction map given by inserting $\lambda$ into the $i$-th parameter component of $Z$. 

Since $Y$ is of class $C^{n+1}$ and the limits in \eqref{Ylimlimlim} exist by definition of $\XX_\pm(M,k)$, the parametrized vector field $Y_F$ is well defined and actually independent of the order of the limits in \eqref{Ylimlimlim}.

\begin{theorem} \index{perturbed Morse ribbon trees!convergence of sequences}
\label{Fconvergence}
 Let $d \geq 2$, $T \in \RTree_d$ and $\fatY=(Y_0,(Y_e)_{e\in E_{int}(T)},Y_1,\dots,Y_d) \in \XX(T)$. Let $x_0,x_1,\dots,x_d \in \Crit f$ and 
\begin{equation*}
  \left\{\left(\gamma_{0n},\left(l_{en},\gamma_{en}\right)_{e \in E_{int}(T)},\gamma_{1n},\dots,\gamma_{dn} \right) \right\}_{n \in \NN} \subset \Acal^d_{\fatY}(x_0,x_1,\dots,x_d,T)
\end{equation*}
be a sequence with $\liminf_{n\to \infty} l_{en}> 0 \quad \forall e \in E_{int}(T)$. Assume that there is $F \subset E_{int}(T)$, such that for every $f \in F$ the sequence $\left\{(l_{fn},\gamma_{fn}) \right\}_{n \in \NN}$ converges geometrically against some 
\begin{align*}
 &\left(\gamma_{f+},\hat{g}_1,\dots,\hat{g}_m,\gamma_{f-}\right) \in W^+\left(y_{f+},(Y_{f+})_F\left(\vec{l}_{\infty}\right)\right) \times \widehat{\MM}(y_{f+},y_{f1}) \times \prod_{j=1}^{m-2} \widehat{\MM}(y_{fj},y_{f(j+1)}) \\ &\qquad \qquad \qquad \qquad \qquad \qquad \qquad \times \widehat{\MM}(y_{f(m-1)},y_{f-}) \times W^-\left(y_{f-},(Y_{f-})_F\left(\vec{l}_\infty \right) \right) \ ,
\end{align*}
where $(Y_{f+},Y_{f-}) = \lim_{\lambda \to \infty} \Split_{k(T)-1}(\lambda,Y_f)$, and that $\left\{\bar{\gamma}_{en} \right\}_{n \in \NN}$ converges for every $e \in E(T) \setminus F$, where $\bar{\gamma}_{en}$ is defined as in Theorem \ref{ConvergenceHalfTraj}. Then, using the notation from Theorem \ref{NonlocalTransversality}):
\begin{align*}
 &\left(\gamma_{0\infty},\left(\gamma_{f-} \right)_{f \in F}, \left(l_{e\infty},\gamma_{e\infty}\right)_{e \in E_{int}(T) \setminus F},\gamma_{1\infty},\dots,\gamma_{d\infty},\left(\gamma_{f+} \right)_{f \in F} \right) \\
 & \in \MM_{\fatY_{F}}\left(\left(x_0,\left(y_{f-}\right)_{f \in F} \right),\left(x_1,\dots,x_d,\left(y_{f+}\right)_{f \in F} \right),(1+|F|,k(T)-|F|,d+|F|) , \right. \\ &\qquad \qquad \left. \qquad \qquad \qquad \qquad \qquad \qquad\qquad \qquad \qquad \qquad \left(\RR_{>0}\right)^{k(T)-|F|},\sigma_F(\Delta_T) \right) \ ,
\end{align*}
 for some diffeomorphism $\sigma_F: M^{1+2k(T)+d} \stackrel{\cong}{\to} M^{1+2k(T)+d}$,
which is a permutation of the different copies of $M$, and where we put
\begin{equation}
\label{YF}
\begin{aligned}
&\fatY_F \in \XX(1+|F|,k(T)-|F|,d+|F|) \ , \\
&\fatY_F := \left((Y_0)_F,\left((Y_{f-})_F \right)_{f \in F},\left((Y_e)_F \right)_{e \in E_{int}(T) \setminus F} ,(Y_1)_F,\dots,(Y_d)_F,\left((Y_{f+})_F \right)_{f \in F} \right) \ .
\end{aligned}
\end{equation}
\end{theorem}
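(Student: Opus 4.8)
The plan is to reduce Theorem \ref{Fconvergence} to a combination of the compactness statements for the individual edge sequences (Theorems \ref{CompactnessHalfTrajectories} and \ref{FiniteLengthCompactness}) together with the identification of the limit object as an element of a moduli space of the type appearing in Theorem \ref{NonlocalTransversality}, exactly as was done in the proofs of Theorems \ref{LengthToZero} and \ref{ConvergenceHalfTraj}. The essential new feature compared to Theorem \ref{ConvergenceHalfTraj} is that the geometric breaking happens along a family $F$ of \emph{internal} edges, so that each broken edge $f \in F$ contributes a positive semi-infinite half-trajectory $\gamma_{f+}$ and a negative semi-infinite half-trajectory $\gamma_{f-}$, which become two new external legs of a tree with fewer internal edges. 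First I would record, as in \eqref{EqPertLimit}, that since $\liminf_{n\to\infty} l_{en}>0$ for all $e\in E_{int}(T)$ and the lengths of the edges in $F$ diverge to $+\infty$ (by Theorem \ref{FiniteLengthCompactness}, the geometric convergence of $\{(l_{fn},\gamma_{fn})\}_n$ forces $l_{fn}\to+\infty$), the relevant perturbations converge: for every $f\in F$ one has $\Split_{k(T)-1}(l_{fn},Y_f((l_{en})_{e\ne f})) \to (Y_{f+},Y_{f-})$ evaluated at $\vec l_\infty$ after passing to the limit in the non-diverging length parameters, and the limit agrees with the $(\cdot)_F$-contraction in \eqref{Ylimlimlim} because the length parameters indexed by $F$ also go to infinity. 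This is precisely the content of the background-perturbation bookkeeping set up before Lemma \ref{SemiInfAffineSubspace} and in the remark following Theorem \ref{CompactnessHalfTrajectories}.

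Next I would assemble the limit. For $e\in E(T)\setminus F$ the sequence $\{\bar\gamma_{en}\}_n$ converges by hypothesis, so we obtain limit curves $\gamma_{0\infty}$, $(l_{e\infty},\gamma_{e\infty})_{e\in E_{int}(T)\setminus F}$ and $\gamma_{1\infty},\dots,\gamma_{d\infty}$, lying in the respective perturbed trajectory spaces for the limiting perturbations $(Y_\bullet)_F(\vec l_\infty)$ — here one uses that $W^-(x_0,\cdot)$, $W^+(x_i,\cdot)$ and $\MM(\cdot)$ are closed under $C^1_{loc}$-limits of their elements paired with convergent perturbations, exactly as in the proofs of Theorems \ref{LengthToZero} and \ref{ConvergenceHalfTraj}. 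For $f\in F$, the geometric convergence of $\{(l_{fn},\gamma_{fn})\}_n$ produces $\gamma_{f+}\in W^+(y_{f+},(Y_{f+})_F(\vec l_\infty))$ and $\gamma_{f-}\in W^-(y_{f-},(Y_{f-})_F(\vec l_\infty))$, and — crucially — the $C^\infty$-convergence of $\gamma_{fn}|_{[0,T]}$ to $\gamma_{f+}|_{[0,T]}$ and of $\gamma_{fn}(\cdot+l_{fn})|_{[-T,0]}$ to $\gamma_{f-}|_{[-T,0]}$ gives $\gamma_{fn}(0)\to\gamma_{f+}(0)$ and $\gamma_{fn}(l_{fn})\to\gamma_{f-}(0)$. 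Therefore, passing to the limit in the relation
\begin{equation*}
\bigl(\gamma_{0n}(0),(\gamma_{en}(0),\gamma_{en}(l_{en}))_{e\in E_{int}(T)},\gamma_{1n}(0),\dots,\gamma_{dn}(0)\bigr)\in\Delta_T
\end{equation*}
(valid for every $n$ since $\Delta_T$ is closed) and reorganizing the components — moving the $q^f_{in}$-slot of each $f\in F$ to where a new leaf-leg $\gamma_{f+}$ sits and the $q^f_{out}$-slot to where a new root-leg $\gamma_{f-}$ sits — yields that the limit tuple lies in $\sigma_F(\Delta_T)$, where $\sigma_F$ is the evident permutation of the $M$-factors. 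Combining this endpoint condition with the trajectory memberships just listed, and using the reformulation of $\Acal$-type conditions as $\Eunder_{\fatY}$-preimages of tree diagonals exactly as in the proof of Theorem \ref{MainTheoremMorseTrees}, gives membership in $\MM_{\fatY_F}(\dots,\sigma_F(\Delta_T))$ with the perturbation datum $\fatY_F$ as in \eqref{YF}.

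The main obstacle I expect is the careful matching of \emph{indices and limit orders}: one must check that contracting $Y_e$ (for $e\in E_{int}(T)\setminus F$), $Y_0$, $Y_i$ in all the length parameters indexed by $F$ really does reproduce the perturbations appearing in the target moduli space of Theorem \ref{NonlocalTransversality} for the tuple type $(1+|F|,k(T)-|F|,d+|F|)$, and that $(Y_{f\pm})_F$ is well-defined and order-independent — this is where the defining conditions of $\XX_\pm(M,k)$, $\XX_0(M,k)$ and the commuting conditions \eqref{CommutingCondition}, \eqref{finitecommute}, \eqref{finitecommutesplit} for background perturbations are used; for a general (non-universal) $\fatY$ the limits $(Y_{f\pm})_F$ need not be related to any prescribed datum, but since Theorem \ref{Fconvergence} does not assert universality this causes no problem. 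A second, more bookkeeping-heavy point is writing down $\sigma_F$ explicitly and checking $\sigma_F(\Delta_T)$ is again a submanifold of the correct codimension $(k(T)+d)\cdot n$ so that the membership statement is meaningful; this is immediate since $\sigma_F$ is a diffeomorphism of $M^{1+2k(T)+d}$. I would structure the write-up so that the first two paragraphs above constitute the proof body, relegating the index-matching verification to a short concluding remark ``which we leave to the reader'', in keeping with the paper's stated conventions.
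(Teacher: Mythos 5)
Your proposal is correct and follows essentially the same route as the paper's proof: extract the endpoint convergence $\gamma_{fn}(0)\to\gamma_{f+}(0)$ and $\gamma_{fn}(l_{fn})\to\gamma_{f-}(0)$ from Theorem~\ref{FiniteLengthCompactness}, pass to the limit in the closed condition defining $\Delta_T$, and observe that a permutation $\sigma_F$ of the $M$-factors reorganizes the slots so that the limit tuple lies in $\sigma_F(\Delta_T)$, the trajectory memberships coming from the limiting perturbations $(Y_\bullet)_F(\vec l_\infty)$. The paper's proof is terser (it suppresses most of the perturbation bookkeeping into ``the claim immediately follows''), while you make the $l_{fn}\to+\infty$ deduction and the role of the $(\cdot)_F$-contractions explicit, which is a slight expository improvement but not a different argument.
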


See Figure \ref{FigureIntGeomConv} for an illustration of the geometric convergence in Theorem \ref{Fconvergence}.

\begin{figure}[h]
 \centering
 \includegraphics[scale=0.7]{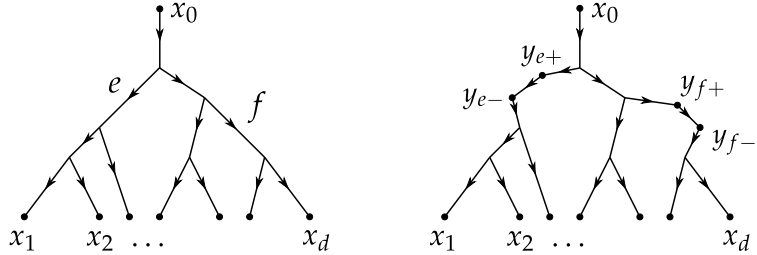}
 \caption{Geometric convergence along $F= \{e,f\}$.}
 \label{FigureIntGeomConv}
\end{figure}

\begin{proof}
 Let $f \in F$. By Theorem \ref{FiniteLengthCompactness}, we know that $\left\{\gamma_{fn}|_{[0,T]}\right\}_{n \geq n_T}$ converges against $\gamma_{f+}|_{[0,T]}$ in the $C^\infty$-topology for every $T \geq 0$ and sufficiently big $n_T \in \NN$, such that the restrictions are well-defined. This especially implies that
 \begin{equation}
  \label{fpluslimit}
  \lim_{n \to \infty} \gamma_{fn}(0) = \gamma_{f+}(0) \ .
 \end{equation}
 Moreover, we know by Theorem \ref{FiniteLengthCompactness} that $\left\{\gamma_{fn}(\cdot +l_{fn})|_{[-T,0]}\right\}_{n \in \NN}$ converges against $\gamma_{f-}|_{[-T,0]}$ in the $C^\infty$-topology for every $T \geq 0$ and sufficiently big $n_T \in \NN$, such that the restrictions are well-defined, which yields
 \begin{equation}
 \label{fminuslimit}
  \lim_{n \to \infty} \gamma_{fn}(l_n) = \gamma_{f-}(0) \ .
 \end{equation}
 By definition of $\Acal^d_{\fatY}(x_0,x_1,\dots,x_d,T)$, the following holds for every $n \in \NN$:
 \begin{equation*}
  \left(\gamma_{0n}(0),\left(\gamma_{en}(0),\gamma_{en}(l_{en})\right)_{e \in E_{int}(T)},\gamma_{1n}(0),\dots,\gamma_{dn}(0) \right) \in \Delta_T \ .
 \end{equation*}
Thus, equations (\ref{fpluslimit}) and (\ref{fminuslimit}) imply
\begin{align*}
 &\left(\gamma_{0 \infty}(0),\left(\gamma_{f-}(0)\right)_{f \in F},\left(\gamma_{en}(0),\gamma_{en}(l_{en})\right)_{e \in E_{int}(T) \setminus F},\gamma_{1\infty}(0),\dots,\gamma_{d\infty}(0), \left(\gamma_{f+}(0)\right)_{f \in F} \right)  \\
 &\in \Bigl\{\Bigl(q_0,\left(q^f_{in} \right)_{f \in F},\left(q^e_{in},q^e_{out}\right)_{e \in E_{int}(T)\setminus F},q_1,\dots,q_d,\left(q^f_{out}\right)_{f \in F} \Bigr) \in M^{1+2k(T)+d} \Bigr. \\
&\left. \qquad \qquad \qquad \qquad \qquad \qquad \left| \ \left(q_0,\left(q^e_{in},q^e_{out}\right)_{e \in E_{int}(T)},q_1,\dots,q_d \right) \in \Delta_T \right. \right\} \ .
\end{align*}
Obviously, there is a permutation of the factors $\sigma_F: M^{1 + 2k(T)+d} \to M^{1+2k(T)+d}$ which maps $\Delta_T$ diffeomorphically onto this set. The claim immediately follows. 
\end{proof}

\begin{definition} \index{boundary spaces of moduli spaces!of perturbed Morse ribbon trees}
\label{DefMYF}
 In the situation of Theorem \ref{Fconvergence}, we define:
 \begin{align*}
  &\BB_{\fatY}\left(\left(x_0,\left(y_{e-}\right)_{e \in F} \right),\left(x_1,\dots,x_d,\left(y_{e+}\right)_{e \in F} \right),T,F\right) \\ &:=\MM_{\fatY_{F}}\Bigl(\left(x_0,\left(y_{e-}\right)_{e \in F} \right),\left(x_1,\dots,x_d,\left(y_{e+}\right)_{e \in F} \right),(1+|F|,k(T)-|F|,d+|F|), \Bigr. \\
  &\qquad \qquad \qquad \qquad \qquad \qquad \qquad \qquad \qquad \qquad \qquad \qquad \left. \left( 0,+\infty \right)^{k(T)-|F|},\sigma_F(\Delta_T) \right) \ .
 \end{align*} 
\end{definition}

Before we conclude  this section by a general theorem describing limit spaces of sequences of perturbed Morse ribbon trees and a final regularity proposition, we consider the following regularity result for the spaces from Definition \ref{DefMYF}.

\begin{prop} 
\label{RegularityIntBreak}
 Let $d \geq 2$, $T \in \RTree_d$. For generic choice of $\fatY \in \XX(T)$, the following holds: For all $F \subset E_{int}(T)$ and $x_0,x_1,\dots,x_d,y_0,y_1,\dots,y_d \in \Crit f$  with 
 \begin{equation}
 \label{Assumxy}
 \mu(x_0) \geq \mu(y_0) \quad \text{and} \quad \mu(y_i) \geq \mu(x_i)\quad  \text{for every} \quad i \in \{1,2,\dots,d\} \ ,
 \end{equation}
 and for all $e \in F$ and $y_{e+},y_{e-} \in \Crit f$ with
 \begin{equation}
 \label{AssumF}
 \mu(y_{e+}) \geq \mu(y_{e-}) \ ,
 \end{equation}
  the spaces $\Acal^d_{\fatY}(x_0,x_1,\dots,x_d,T)$ and $\BB_{\fatY}\left(\left(y_0,\left(y_{e-}\right)_{e \in F} \right),\left(y_1,\dots,y_d,\left(y_{e+}\right)_{e \in F} \right),T,F\right)$ are manifolds of class $C^{n+1}$. The following inequality is true for any of these choices:
 \begin{equation*}
  \dim \BB_{\fatY}\left(\left(y_0,\left(y_{e-}\right)_{e \in F} \right),\left(y_1,\dots,y_d,\left(y_{e+}\right)_{e \in F} \right),T,F\right) \leq \dim \Acal^d_{\fatY}(x_0,x_1,\dots,x_d,T) - |F| \ .
 \end{equation*}
\end{prop}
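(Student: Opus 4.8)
The plan is to apply Theorem \ref{NonlocalTransversality} simultaneously to all the moduli spaces $\Acal^d_{\fatY}$ and $\BB_{\fatY}$ that occur, for a single generic $\fatY$, and then to combine the resulting dimension formulas with the assumptions \eqref{Assumxy} and \eqref{AssumF} to obtain the claimed inequality. The key observation is that there are only finitely many relevant data: finitely many $F \subset E_{int}(T)$, finitely many critical points of $f$, and hence finitely many spaces of the form $\BB_{\fatY}(\dots,T,F)$ and $\Acal^d_{\fatY}(\dots,T)$. By Theorem \ref{MainTheoremMorseTrees} there is a generic set of $\fatY$ making all the $\Acal^d_{\fatY}(x_0,\dots,x_d,T)$ into $C^{n+1}$-manifolds, and by Theorem \ref{NonlocalTransversality} (applied with $k_1 = 1+|F|$, $k_2 = k(T)-|F|$, $k_3 = d+|F|$, $V = (0,+\infty)^{k(T)-|F|}$ and $N = \sigma_F(\Delta_T)$) there is, for each $F$ and each tuple of critical points, a generic set of $\fatY$ making the corresponding $\BB_{\fatY}$ a manifold of class $C^{n+1}$. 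Intersecting these finitely many generic sets yields a single generic choice of $\fatY$ for which all the spaces in the statement are simultaneously manifolds of class $C^{n+1}$.

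Next I would compute the two dimensions. By Theorem \ref{MainTheoremMorseTrees},
\begin{equation*}
 \dim \Acal^d_{\fatY}(x_0,x_1,\dots,x_d,T) = \mu(x_0) - \sum_{j=1}^d \mu(x_j) + k(T) \ .
\end{equation*}
For $\BB_{\fatY}$ I would feed the parameters above into the dimension formula of Theorem \ref{NonlocalTransversality}. Since $\sigma_F$ is a permutation of the $M$-factors, $\codim \sigma_F(\Delta_T) = \codim \Delta_T = (k(T)+d)\cdot n$ by Proposition \ref{PropcodimDeltaT}, and $\dim V = k(T)-|F|$. The critical points playing the role of the $x_i$ (the "$-$"-side) are $y_0$ together with the $y_{e-}$ for $e \in F$, so $\sum \mu(x_i) = \mu(y_0) + \sum_{e\in F}\mu(y_{e-})$; the critical points playing the role of the $y_k$ (the "$+$"-side) are $y_1,\dots,y_d$ together with the $y_{e+}$, so $\sum \mu(y_k) = \sum_{j=1}^d \mu(y_j) + \sum_{e\in F}\mu(y_{e+})$. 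With $k_2+k_3 = (k(T)-|F|)+(d+|F|) = k(T)+d$ the term $(k_2+k_3)n$ cancels against $\codim N$, exactly as in the proof of Theorem \ref{MainTheoremMorseTrees}, and one obtains
\begin{align*}
 \dim \BB_{\fatY}&\left(\left(y_0,\left(y_{e-}\right)_{e \in F} \right),\left(y_1,\dots,y_d,\left(y_{e+}\right)_{e \in F} \right),T,F\right) \\
 &= \mu(y_0) + \sum_{e \in F}\mu(y_{e-}) - \sum_{j=1}^d \mu(y_j) - \sum_{e\in F}\mu(y_{e+}) + k(T) - |F| \ .
\end{align*}

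Finally I would do the comparison. Subtracting, using $\dim \Acal^d_{\fatY}(x_0,\dots,x_d,T) = \mu(x_0) - \sum_j \mu(x_j) + k(T)$, gives
\begin{align*}
 \dim \Acal^d_{\fatY} - |F| - \dim \BB_{\fatY}
 &= \bigl(\mu(x_0) - \mu(y_0)\bigr) + \sum_{j=1}^d \bigl(\mu(y_j) - \mu(x_j)\bigr) \\
 &\qquad + \sum_{e\in F}\bigl(\mu(y_{e+}) - \mu(y_{e-})\bigr) \ .
\end{align*}
Each summand on the right is $\geq 0$: the first by the first inequality in \eqref{Assumxy}, each term of the second sum by the second inequality in \eqref{Assumxy}, and each term of the last sum by \eqref{AssumF}. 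Hence the right-hand side is nonnegative, which is exactly the claimed inequality. The only genuinely non-routine point is bookkeeping: one must match the labels of the critical points and the integers $(k_1,k_2,k_3)$ in Definition \ref{DefMYF} correctly to the indexing in Theorem \ref{NonlocalTransversality}, and verify that the $(k_2+k_3)n$ term really does cancel $\codim \sigma_F(\Delta_T)$; once this is set up, the inequality is immediate. I expect this label-matching to be the main (and essentially the only) obstacle.
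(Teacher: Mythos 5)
Your dimension computation and the final inequality argument match the paper's proof exactly and are correct. However, there is a genuine gap in the genericity step. You write that Theorem \ref{NonlocalTransversality}, applied with $(k_1,k_2,k_3)=(1+|F|,k(T)-|F|,d+|F|)$, produces ``a generic set of $\fatY$ making the corresponding $\BB_{\fatY}$ a manifold.'' But the generic subset supplied by that theorem lives in the perturbation space $\XX(1+|F|,k(T)-|F|,d+|F|,\fatX)$ for the \emph{modified} moduli problem, whereas $\fatY$ lives in $\XX(T)=\XX(1,k(T),d,\fatX')$. The space $\BB_{\fatY}$ is defined (Definition \ref{DefMYF}) in terms of the \emph{derived} perturbation $\fatY_F$, not $\fatY$ itself. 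So what Theorem \ref{NonlocalTransversality} gives you is a generic set $\GG_F$ of admissible $\fatY_F$'s, and you still need to explain why the set $\{\fatY : \fatY_F \in \GG_F\}$ is generic in $\XX(T)$. This is precisely what the paper supplies: it observes that the map $p_F : \XX(T)\to\XX(1+|F|,k(T)-|F|,d+|F|)$, $\fatY\mapsto\fatY_F$, is continuous and surjective, so $p_F^{-1}(\GG_F)$ is generic in $\XX(T)$, and one finally intersects over all (finitely many) $F$ and with the generic set coming from Theorem \ref{MainTheoremMorseTrees}. Your write-up silently identifies genericity of $\fatY_F$ with genericity of $\fatY$, which is where the ``label-matching'' you flag as the main obstacle actually hides a real substantive point. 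Once this transfer of genericity via $p_F$ is added, the rest of your argument is correct and is the same as the paper's.
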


\begin{proof}
 Let $F \subset E_{int}(T)$. It follows immediately from Theorem \ref{NonlocalTransversality} that there is a generic subset 
 \begin{equation*}
 \GG_F \subset \XX(1+|F|,k(T)-|F|,d+|F|)
 \end{equation*}
 such that for every $\fatZ \in \GG_F$, the space
 \begin{align*}
  &\MM_{\fatZ}\left(\left(y_0,\left(y_{e-}\right)_{e \in F} \right),\left(y_1,\dots,y_d,\left(y_{e+}\right)_{e \in F} \right),(1+|F|,k(T)-|F|,d+|F|), \right. \\
  &\qquad \qquad \qquad \qquad \qquad \qquad \qquad \qquad \qquad \qquad \qquad \qquad \left. \left(0,+\infty\right)^{k(T)-|F|},\sigma_F(\Delta_T) \right)
 \end{align*}
 is a manifold of class $C^{n+1}$. Furthermore, Theorem \ref{NonlocalTransversality} implies the existence of a generic subset $\GG \subset \XX(T)$ such that for every $\fatY \in \GG$, the space $\Acal^d_{\fatY}(x_0,\dots,x_d,T)$ is a manifold of class $C^{n+1}$. 
 
 To compare these perturbations, note that the map $p_F: \XX(T) \to \XX(1+|F|,k(T)-|F|,d+|F|)$, $\fatY \mapsto \fatY_F$, where $\fatY_F$ is defined as in (\ref{YF}), is continuous and surjective. Therefore, the set $p_F^{-1}(\GG_F)$ is generic in $\XX(T)$. It follows that the regularity statement holds for every 
 \begin{equation*}
 \fatY \in \bigcap_{F \subset E_{int}(T)} p_F^{-1}(\GG_F) \cap \GG \ , 
 \end{equation*}
 and since $\bigcap_{F \subset E_{int}(T)} p_F^{-1}(\GG_F) \cap \GG$ is a finite  intersection of generic sets, it is itself generic in $\XX(T)$. 
 
 It remains to show the dimension inequality. By Theorem \ref{NonlocalTransversality}, the dimension is computed as follows:
 \begin{align*}
  &\dim \BB_{\fatY}\left(\left(y_0,\left(y_{e-}\right)_{e \in F} \right),\left(y_1,\dots,y_d,\left(y_{e+}\right)_{e \in F} \right),T,F\right) \\
  &= \dim \MM_{\fatY_{F}}\Big(\left(y_0,\left(y_{e-}\right)_{e \in F} \right),\left(y_1,\dots,y_d,\left(y_{e+}\right)_{e \in F} \right),(1+|F|,k(T)-|F|,d+|F|),  \\
  &\phantom{booooooooooooooooooooooooooooooooooooooooooooooooooooooo}\left. \left(0,+\infty \right)^{k(T)-|F|},\sigma_F(\Delta_T) \right) \\
%  &= \mu(y_0) + \sum_{e \in F} \mu(y_{e-}) - \sum_{i=1}^d \mu(y_i) - \sum_{e \in F} \mu(y_{e+}) +(k(T)-|F|+d+|F|)n + k(T)-|F| \\ 
 % &\phantom{booooooooooooooooooooooooooooooooooooooooooooooooooooooo} - \codim \sigma_F\left(\Delta_{T}\right) \\
  &= \mu(y_0) - \sum_{i=1}^d \mu(y_i) + \sum_{e \in F} \left(\mu(y_{e-})-\mu(y_{e+}) \right) +(k(T)+d)n + k(T)-|F| - \codim \Delta_{T} \ . 
 \end{align*}
 Applying formula (\ref{codimDeltaT}) for the codimension of $\Delta_T$, we obtain:
\begin{align*}
 &\dim \BB_{\fatY}\left(\left(y_0,\left(y_{e-}\right)_{e \in F} \right),\left(y_1,\dots,y_d,\left(y_{e+}\right)_{e \in F} \right),T,F\right) \\
 &= \mu(y_0) - \sum_{i=1}^d \mu(y_i) + \sum_{e \in F} \left(\mu(y_{e-})-\mu(y_{e+}) \right) + k(T)-|F| \ .
\end{align*}
 Using assumptions (\ref{Assumxy}) and (\ref{AssumF}), we derive:
 \begin{align*}
   \dim \BB_{\fatY}\left(\left(y_0,\left(y_{e-}\right)_{e \in F} \right),\left(y_1,\dots,y_d,\left(y_{e+}\right)_{e \in F} \right),T,F\right) &\leq \mu(x_0) - \sum_{i=1}^d \mu(x_i) + k(T) - |F| \\
   &= \dim \Acal_{\fatY}(x_0,x_1,\dots,x_d) - |F| \ .
 \end{align*}
\end{proof}

Note that by Theorem \ref{CompactnessMorseTrees}, the different convergence phenomena described in Theorems \ref{LengthToZero}, \ref{ConvergenceHalfTraj} and \ref{Fconvergence} can occur simultaneously. 

The upcoming Theorem \ref{MostGeneralCompactness} subsumizes all convergence phenomena for sequences of perturbed Morse ribbon trees by stating a general compactness property of the moduli spaces $\Acal^d_{\fatY}(x_0,x_1,\dots,x_d,T)$.  It completes our discussion of geometric convergence phenomena for these sequences and is shown by applying the arguments used to prove the aforementioned theorems simultaneously. We omit the details.  

\begin{definition}
 Let $T \in \RTree_d$, $d \geq 2$, $Y \in \XX(T)$, $x_0,x_1,\dots,x_d \in \Crit f$ and let
 \begin{equation*}
  \left\{\left(\gamma_{0n},\left(l_{en},\gamma_{en}\right)_{e \in E_{int}(T)},\gamma_{1n},\dots,\gamma_{dn}\right) \right\}_{n \in \NN} \subset \Acal^d_{\fatY}(x_0,x_1,\dots,x_d,T)
 \end{equation*}
 be a sequence of perturbed Morse ribbon trees. 
 
%  The sequence \emph{converges} if every component sequence converges. More precisely, it converges if and only if $\{\gamma_{in}\}_{n\in\NN}$ converges in $\PP_\pm(x_i)$ for every $i \in \{0,1,\dots,d\}$ and if $\{(l_{en},\gamma_{en})\}_{n\in\NN}$ converges for every $e \in E_{int}(T)$ in the sense of Definition \ref{DefFiniteLengthConvergent}. 
 
 The sequence \emph{converges in $\Acal^d_{\fatY}(x_0,x_1,\dots,x_d,T)$} if all of its component sequences $\left\{\gamma_{in}\right\}_n$, $i \in \{0,1,\dots,d\}$, and $\left\{(l_{en},\gamma_{en})\right\}_n$, $e \in E_{int}(T)$, converge and if the product of the limits of the component sequences lies in $\Acal^d_{\fatY}(x_0,x_1,\dots,x_d,T)$. \index{perturbed Morse ribbon trees!convergence of sequences}
 
 The sequence \emph{converges geometrically} if every component sequence is either convergent or geometrically convergent and at least one of the following holds true:  \index{geometric convergence of sequences!of perturbed Morse ribbon trees}
 \begin{itemize}
  \item there exists a component sequence that converges geometrically,
  \item there exists an $e \in E_{int}(T)$ with $\lim_{n \to \infty} l_{en}=0$. 
 \end{itemize}

\end{definition}

\begin{remark}
 Since by definition of $\Acal^d_{\fatY}(x_0,x_1,\dots,x_d,T)$, the components of its elements include only finite-length trajectories $(l,\gamma)$ with $l > 0$, it follows that a sequence 
 \begin{equation*}
  \left\{\left(\gamma_{0n},\left(l_{en},\gamma_{en}\right)_{e \in E_{int}(T)},\gamma_{1n},\dots,\gamma_{dn} \right) \right\}_{n \in \NN} \subset \Acal^d_{\fatY}(x_0,x_1,\dots,x_d,T) \ , 
 \end{equation*}
 whose component sequences are all convergent, converges in $\Acal^d_{\fatY}(x_0,x_1,\dots,x_d,T)$ if and only if 
 \begin{equation*}
 \lim_{n \to \infty} l_{en} > 0 \  \quad \forall e \in E_{int}(T) \ .
 \end{equation*}
\end{remark}

\begin{theorem}
\label{MostGeneralCompactness} \index{compactness of moduli spaces!of perturbed Morse ribbon trees}
 Let $d \geq 2$, $T \in \RTree_d$, $\fatY \in \XX(T)$ and $x_0,x_1,\dots,x_d \in \Crit f$.  Let
 \begin{equation*}
  \left\{\gammaunder_n\right\}_{n \in \NN} = \left\{\left(\gamma_{0n},\left(l_{en},\gamma_{en} \right)_{e \in E_{int}(T)},\gamma_{1n},\dots,\gamma_{dn} \right) \right\}_{n \in \NN}
 \end{equation*}
 be a sequence in $\Acal^d_{\fatY}(x_0,x_1,\dots,x_d,T)$ which does not have a convergent subsequence. Then there are sets $F_1,F_2 \subset E_{int}(T) \ , \quad F_1 \cap F_2 = \emptyset$ and a subsequence $\left\{\gammaunder_{n_k}\right\}_{k \in \NN}$, such that the following holds:

 Up to a permutation of its components, $\left\{\gammaunder_{n_k}\right\}_{k \in \NN}$ converges geometrically against a product of unparametrized Morse trajectories and an element of
 \begin{equation*}
  \BB_{\fatY}\left(\left(y_0,\left(y_{e-}\right)_{e \in F_1} \right),\left(y_1,\dots,y_d,\left(y_{e+}\right)_{e \in F_1} \right),T/F_2,F_1 \right) \ , 
 \end{equation*}
  where $y_0,y_1,\dots,y_d \in \Crit f$, $y_{e-},y_{e+} \in \Crit f$ for $e \in F_1$ satisfy
 \begin{equation}
 \label{ConditionsMostGeneral}
  \mu(y_0) \leq \mu(x_0) \ , \quad \mu(y_i) \geq \mu(x_i) \ \forall i \in \{1,2,\dots,d\} \ , \quad \mu(y_{e+}) \geq \mu(y_{e-}) \ \forall e \in F_1 \ .
 \end{equation}
 Moreover, for every inequality in (\ref{ConditionsMostGeneral}) equality holds if and only if the respective critical points are identical.
\end{theorem}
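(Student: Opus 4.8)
The plan is to derive the statement by running the proofs of Theorems~\ref{LengthToZero}, \ref{ConvergenceHalfTraj} and \ref{Fconvergence} simultaneously, on three pairwise disjoint sets of edges. First I would apply Theorem~\ref{CompactnessMorseTrees} to $\left\{\gammaunder_n\right\}_{n\in\NN}$ and, after passing to a subsequence, obtain a decomposition $E(T)=E_1\sqcup E_2\sqcup E_3$ with $E_3\subset E_{int}(T)$ enjoying the three listed properties. Since $T$ has only finitely many edges, Lemma~\ref{BolWeier} applied to the finitely many length sequences and a finite diagonal argument let me pass to a single further subsequence along which, in addition, every component sequence $\left\{\bar\gamma_{en}\right\}_n$ with $e\in E_1\cup E_3$ is convergent, every such sequence with $e\in E_2$ is geometrically convergent, and every $\{l_{en}\}_n$ converges or diverges to $+\infty$. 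I would then set
\begin{equation*}
 F_2:=E_3\ ,\qquad F_1:=E_2\cap E_{int}(T)\ ,\qquad E_2^{\mathrm{ext}}:=E_2\cap E_{ext}(T)\ .
\end{equation*}
Because by hypothesis no subsequence of $\left\{\gammaunder_n\right\}$ converges in $\Acal^d_{\fatY}(x_0,\dots,x_d,T)$, the remark after Theorem~\ref{CompactnessMorseTrees} together with the remark immediately preceding the present theorem forces $E_2\cup E_3\neq\emptyset$ for this subsequence; hence it converges geometrically in the sense of the preceding definition.

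Next I would identify the geometric limit by treating each edge according to the part it belongs to. Edges in $E_1$ simply contribute their limits $\gamma_{i\infty}$, resp.\ $(l_{e\infty},\gamma_{e\infty})$ with $l_{e\infty}>0$. For an external edge $e=e_i(T)\in E_2$ the corresponding half-trajectory breaks as in Theorem~\ref{CompactnessHalfTrajectories}, and, exactly as in the proof of Theorem~\ref{ConvergenceHalfTraj}, the surviving piece replaces $x_i$ by a new critical point $y_i$ (with $\gamma_-\in W^-(y_0,\cdot)$ when $i=0$ and $\gamma_+\in W^+(y_i,\cdot)$ when $i\ge1$); if $e_i(T)\notin E_2$ I put $y_i:=x_i$. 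For $e\in F_1$ the finite-length component breaks as in Theorem~\ref{FiniteLengthCompactness}, and, as in the proof of Theorem~\ref{Fconvergence}, the two surviving halves $\gamma_{e+}\in W^+(y_{e+},\cdot)$ and $\gamma_{e-}\in W^-(y_{e-},\cdot)$ appear as extra components carrying new critical points $y_{e\pm}$; for $e\in F_2$ the edge-length collapses to zero. The mechanism making all of this pass to the limit is, as in the three cited proofs, that the continuity/endpoint conditions defining $\Acal^d_{\fatY}(\cdot\,,T)$ are encoded by the \emph{closed} condition $\bigl(\gamma_{0n}(0),(\gamma_{en}(0),\gamma_{en}(l_{en}))_{e\in E_{int}(T)},\gamma_{1n}(0),\dots,\gamma_{dn}(0)\bigr)\in\Delta_T$ from Remark~\ref{DeltaTexplicitly}, while at every matched endpoint the surviving piece converges in the $\Cloc$-topology. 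Projecting away the $F_2$-components (which identifies the relevant subset of $\Delta_T$ with $\Delta_{T/F_2}$, cf.\ the proof of Theorem~\ref{LengthToZero}) and applying the factor-permutation $\sigma_{F_1}$ of Theorem~\ref{Fconvergence}, the limiting tuple is then seen, via the identity of Theorem~\ref{Fconvergence} and Definition~\ref{DefMYF}, to be an element of
\begin{equation*}
 \BB_{\fatY}\bigl((y_0,(y_{e-})_{e\in F_1}),(y_1,\dots,y_d,(y_{e+})_{e\in F_1}),T/F_2,F_1\bigr)\ ,
\end{equation*}
where the perturbation on $T/F_2$ entering the right-hand side is the one induced by $\fatY$ via $\pi_{F_2}$; the passage from $\fatY$ to this perturbation, and to its contraction along $F_1$ as in~\eqref{YF}, is the same bookkeeping of the limit operations in the definitions of $\XX_\pm(M,k)$ and $\XX_0(M,k)$ that was carried out in the proof of Theorem~\ref{LengthToZero}. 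The remaining constituents of the geometric limit are the unparametrized gradient flow lines $\hat g_j$ produced by the various breakings, and their product with this $\BB_{\fatY}$-element is the asserted limit.

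Finally I would verify the index constraints~\eqref{ConditionsMostGeneral}. If $e_0(T)\in E_2$, Theorem~\ref{CompactnessHalfTrajectories}(i) produces a nonempty chain of unparametrized negative gradient flow lines whose Morse indices strictly decrease from $\mu(x_0)$ down to $\mu(y_0)$, so $\mu(y_0)<\mu(x_0)$; if $e_0(T)\notin E_2$, then $y_0=x_0$. Symmetrically, Theorem~\ref{CompactnessHalfTrajectories}(ii) yields $\mu(y_i)>\mu(x_i)$ whenever $e_i(T)\in E_2$ and $y_i=x_i$ otherwise. For $e\in F_1$, Theorem~\ref{FiniteLengthCompactness} exhibits the limit of $\gamma_e$ as a product of $W^+(y_{e+},\cdot)$, a (possibly trivial) broken trajectory from $y_{e+}$ to $y_{e-}$ along which the Morse indices strictly decrease, and $W^-(y_{e-},\cdot)$; hence $\mu(y_{e+})\ge\mu(y_{e-})$, with equality exactly when that broken trajectory is trivial, i.e.\ when $y_{e+}=y_{e-}$. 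In each of these cases equality holds if and only if the two critical points coincide, which is the final assertion.

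The main obstacle I expect is purely organizational: one must arrange all of the above convergences — one subsequence per component, together with the finitely many extractions from Lemma~\ref{BolWeier} — to hold along a single common subsequence, and then check that the endpoint conditions at internal vertices incident to edges of several different types ($E_1$, external edges of $E_2$, $F_1$, $F_2$) pass to the limit coherently. Given the closedness of the diagonals $\Delta_T$ and $\Delta_{T/F_2}$ and the local uniform convergence of the surviving pieces at the matched endpoints, this is routine and proceeds exactly as in the proofs of Theorems~\ref{LengthToZero}, \ref{ConvergenceHalfTraj} and \ref{Fconvergence}; accordingly I would omit the further details, as the authors do.
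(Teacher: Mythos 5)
Your proof is correct and follows the paper's own indicated approach: you apply Theorem~\ref{CompactnessMorseTrees} to decompose the edge set and then run the arguments of Theorems~\ref{LengthToZero}, \ref{ConvergenceHalfTraj}, and \ref{Fconvergence} simultaneously on the pieces, setting $F_2=E_3$ and $F_1=E_2\cap E_{int}(T)$. The paper itself states only that the result ``is shown by applying the arguments used to prove the aforementioned theorems simultaneously'' and omits the details, which you have supplied accurately, including the observation that the failure to converge forces $E_2\cup E_3\neq\emptyset$ and the case-by-case verification of~\eqref{ConditionsMostGeneral}.
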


We conclude this section by providing a regularity result for the moduli spaces occuring in Theorem \ref{MostGeneralCompactness}.

\begin{prop}
\label{RegularityMostGeneral}
For generic choice of $\fatY \in \XX(T)$, it holds for all $F_1,F_2 \subset E_{int}(T)$ with $F_1 \cap F_2 = \emptyset$, $y_0,y_1,\dots,y_d \in \Crit f$ and $y_{e+},y_{e-} \in \Crit f$ for $e \in F_1$ satisfying (\ref{ConditionsMostGeneral}) that the space  
 \begin{equation}
 \label{thebreakingspace}
  \BB_{\fatY}\left(\left(y_0,\left(y_{e-}\right)_{e \in F_1} \right),\left(y_1,\dots,y_d,\left(y_{e+}\right)_{e \in F_1} \right),T/F_2,F_1 \right)
 \end{equation}
 is a manifold of class $C^{n+1}$. Moreover, the following inequality holds for any of these choices:
 \begin{equation}
  \label{GeneralRegIneq}
  \begin{aligned}
  &\dim   \BB_{\fatY}\left(\left(y_0,\left(y_{e-}\right)_{e \in F_1} \right),\left(y_1,\dots,y_d,\left(y_{e+}\right)_{e \in F_1} \right),T/F_2,F_1 \right) \\
      &\qquad \qquad \leq \dim \Acal^d_{\fatY}(x_0,x_1,\dots,x_d,T) - |F_1| - |F_2| \ .
 \end{aligned}
 \end{equation}
\end{prop}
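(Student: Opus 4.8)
The plan is to reduce the statement to a finite intersection of generic conditions, exactly as in the proof of Proposition \ref{RegularityIntBreak}, but now also ranging over the collapsed tree $T/F_2$. First I would observe that, just as in Theorem \ref{LengthToZero}, collapsing the edges in $F_2$ is encoded by the continuous surjective map $\pi_{F_2}: \XX(T) \to \XX(T/F_2)$, and that a breaking along $F_1 \subset E_{int}(T/F_2) = E_{int}(T) \setminus F_2$ of a perturbed Morse ribbon tree modelled on $T/F_2$ is governed by the perturbation $(\pi_{F_2}(\fatY))_{F_1} \in \XX(1+|F_1|,k(T/F_2)-|F_1|,d+|F_1|)$ in the notation of \eqref{YF}. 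Theorem \ref{NonlocalTransversality} then supplies, for each admissible pair $(F_1,F_2)$ and each tuple of critical points satisfying \eqref{ConditionsMostGeneral}, a generic set $\GG_{F_1,F_2} \subset \XX(1+|F_1|,k(T/F_2)-|F_1|,d+|F_1|)$ for which the space \eqref{thebreakingspace} is a $C^{n+1}$-manifold; pulling back along the continuous surjection $\fatY \mapsto (\pi_{F_2}(\fatY))_{F_1}$ yields a generic subset of $\XX(T)$. Intersecting these over the finitely many pairs $(F_1,F_2)$ with $F_1 \cap F_2 = \emptyset$, together with the generic set from Theorem \ref{MainTheoremMorseTrees} making $\Acal^d_{\fatY}(x_0,\dots,x_d,T)$ a manifold, produces a generic $\fatY$ for which all the asserted regularity statements hold simultaneously.

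For the dimension inequality \eqref{GeneralRegIneq} I would run the same bookkeeping as in Proposition \ref{RegularityIntBreak}, now keeping careful track of the collapse. By Theorem \ref{NonlocalTransversality} and the codimension formula \eqref{codimDeltaT} applied to $\Delta_{T/F_2}$ — noting $k(T/F_2) = k(T) - |F_2|$ from Definition \ref{QuotientTree} — the dimension of \eqref{thebreakingspace} equals
\begin{align*}
 &\mu(y_0) - \sum_{i=1}^d \mu(y_i) + \sum_{e \in F_1}\bigl(\mu(y_{e-}) - \mu(y_{e+})\bigr) + k(T/F_2) - |F_1| \\
 &\qquad = \mu(y_0) - \sum_{i=1}^d \mu(y_i) + \sum_{e \in F_1}\bigl(\mu(y_{e-}) - \mu(y_{e+})\bigr) + k(T) - |F_1| - |F_2| \ .
\end{align*}
Substituting the index inequalities of \eqref{ConditionsMostGeneral}, namely $\mu(y_0) \leq \mu(x_0)$, $\mu(y_i) \geq \mu(x_i)$ and $\mu(y_{e-}) \leq \mu(y_{e+})$, bounds this by
\begin{equation*}
 \mu(x_0) - \sum_{i=1}^d \mu(x_i) + k(T) - |F_1| - |F_2| = \dim \Acal^d_{\fatY}(x_0,x_1,\dots,x_d,T) - |F_1| - |F_2| \ ,
\end{equation*}
where the last equality is the dimension formula of Theorem \ref{MainTheoremMorseTrees}. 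This is precisely \eqref{GeneralRegIneq}.

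The only genuine subtlety — and the step I expect to require the most care — is making sure the pullback argument actually delivers a \emph{generic} (not merely dense or residual) subset of $\XX(T)$ and that the relevant background-perturbation constraints are respected. Concretely, one must check that $\fatY \mapsto (\pi_{F_2}(\fatY))_{F_1}$ is a continuous surjection onto the relevant perturbation space (or onto an affine subspace thereof cut out by a fixed background perturbation, as in Lemmas \ref{SemiInfAffineSubspace} and \ref{pertparamevalsub}), so that preimages of generic sets are generic; this is where one would invoke the surjectivity of $\pi_F$ and the structure of the spaces $\XX_\pm(M,k)$, $\XX_0(M,k)$ as in Section \ref{NonlocalGeneralizations}. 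Once this is in place the remainder is the routine finite-intersection argument already used twice above, so I would state it briefly and omit the repeated details.
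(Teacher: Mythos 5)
Your proof is correct and follows essentially the same route as the paper's. The only difference is one of factoring: the paper invokes Proposition~\ref{RegularityIntBreak} as a black box — for each fixed $F_2$ that proposition already supplies a single generic $\GG_{F_2}\subset\XX(T/F_2)$ handling \emph{all} $F_1\subset E_{int}(T/F_2)$, so one only needs to pull back along $\pi_{F_2}$ and intersect over the finitely many $F_2$ — whereas you inline that proposition's content, applying Theorem~\ref{NonlocalTransversality} directly and pulling back along the composite $\fatY\mapsto(\pi_{F_2}(\fatY))_{F_1}$ for each pair $(F_1,F_2)$. Both produce a finite intersection of generic sets, and your dimension computation agrees with the one obtained by chaining Proposition~\ref{RegularityIntBreak}'s inequality with the dimension formula of Theorem~\ref{MainTheoremMorseTrees}. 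The ``subtlety'' you flag about surjectivity and background constraints is a fair thing to be careful about, but note that the paper resolves it simply by observing that $\pi_{F_2}$ and $p_{F_1}$ are each continuous surjections, so preimages of generic sets under their composite are generic; the background-perturbation affine subspaces only become relevant later, in Lemma~\ref{ExistRegUniv}, not here.
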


\begin{proof}
 The statement follows almost immediately from Proposition \ref{RegularityIntBreak}. 
 
 By Proposition \ref{RegularityIntBreak}, we can find a generic subset $\GG_{F_2}$ for any $F_2 \subset E_{int}(T)$, such that the space in (\ref{thebreakingspace}) is a manifold of class $C^{n+1}$ for any $F_1 \subset E(T/F_2) = E(T) \setminus F_2$ and any choice of critical points. 
 
 Consider the maps $\pi_{F_2}: \XX(T) \to \XX(T/F_2)$ from (\ref{piF}). Since these maps are surjective and continuous, the set $\pi_{F_2}^{-1}(\GG_{F_2})$ is a generic subset of $\XX(T)$ for every $F_2$. Therefore, the set 
 \begin{equation*}
  \bigcap_{F_2 \subset E_{int}(T)} \pi_{F_2}^{-1}(\GG_{F_2})
 \end{equation*}
is a generic subset of $\XX(T)$ having the desired properties.

Considering the inequality \eqref{GeneralRegIneq}, note that by Proposition \ref{RegularityIntBreak} the following holds for every $F_1,F_2 \subset E_{int}(T)$ as in the statement:
 \begin{align*}
  &\dim \BB_{\fatY}\left(\left(y_0,\left(y_{e-}\right)_{e \in F_1} \right),\left(y_1,\dots,y_d,\left(y_{e+}\right)_{e \in F_1} \right),T/F_2,F_1 \right) \\
      &\leq \dim \Acal^d_{\fatY}(x_0,x_1,\dots,x_d,T/F_2) - |F_1|   = \dim \Acal^d_{\fatY}(x_0,x_1,\dots,x_d,T) - |F_1|-|F_2| \ ,
 \end{align*} 
 where we use the dimension formula from Theorem \ref{MainTheoremMorseTrees}.
\end{proof}

\begin{definition} \index{regular!perturbation data}
 Let $d \geq 2$. A $d$-perturbation datum $\fatY = \left(\fatY_T\right)_{T\in \RTree_d}$ is called \emph{regular} if for every $T \in \RTree_d$ and every $F_1,F_2 \subset E_{int}(T)$ with $F_1 \cap F_2 = \emptyset$ the spaces 
 \begin{equation*}
 \BB_{\fatY}\left(\left(y_0,\left(y_{e-}\right)_{e \in F_1} \right),\left(y_1,\dots,y_d,\left(y_{e+}\right)_{e \in F_1} \right),T/F_2,F_1 \right) \quad \text{and} \quad \Acal^d_{\fatY}(x_0,x_1,\dots,x_d,T)
 \end{equation*}
  are manifolds of class $C^{n+1}$ for all $x_0,\dots,x_d,y_0,\dots,y_d \in \Crit f$ and $y_{e-},y_{e+} \in \Crit f$, $e \in F_1$.
\end{definition}

Proposition \ref{RegularityMostGeneral} and Theorem \ref{MainTheoremMorseTrees} together imply that for any $d \geq 2$, the set of regular $d$-perturbation data is generic in the space of all $d$-perturbation data $\displaystyle\prod_{T \in \RTree_d} \XX(T)$.

\section{\texorpdfstring{Higher order multiplications and the $A_\infty$-relations}{Higher order multiplications and the A-infinity-relations}}
\label{CompactificationsOneDimAinfty}

We continue by taking a closer look at zero- and one-dimensional moduli spaces of perturbed Morse ribbon trees. The results from Section \ref{ConvergenceBehaviour} enable us to show that zero-dimensional moduli spaces are in fact finite sets. This basic observation allows us to define homomorphisms $C^*(f)^{\otimes d} \to C^*(f)$ for every $d \geq 2$ via counting elements of these zero-dimensional moduli spaces. 

After constructing these higher order multiplications explicitly, we will study the compactifications of one-dimensional moduli spaces of perturbed Morse ribbon trees. The results from Section \ref{ConvergenceBehaviour} imply that one-dimensional moduli spaces can be compactified to one-dimensional manifolds with boundary, and we will explicitly describe their boundaries.

We will be able to show via counting elements of these boundaries that the higher order multiplications will satisfy the defining equations of an $A_\infty$-algebra, if we impose an additional consistency condition on the chosen perturbations. This consistency condition will be formulated in terms of the background perturbations that we introduced in Sections \ref{NonlocalGeneralizations} and \ref{SectionModuliSpacesPerturbed}. \bigskip

\textit{Throughout this section, we assume again that every ribbon tree is equipped with an ordering of its internal edges.} \bigskip 

We have introduced the notions of regular and universal perturbation data in Section \ref{ConvergenceBehaviour}. It will turn out that the perturbation data which allow us to draw the desired consequences are those which are \emph{both} regular \emph{and} universal. It is easy to see that there are perturbation data with either of these properties, but it is not obvious that there are indeed perturbation data with both properties. Before focussing on zero- and one-dimensional moduli spaces of perturbed Morse ribbon trees, we therefore begin this section with a nontrivial existence result. 

\begin{lemma} 
\label{ExistRegUniv}
 Let $d \geq 2$ and let $\fatX=(\fatX_T)_{T \in \RTree_d^*}$ be a universal background $d$-perturbation datum. Then there exists a regular and universal $d$-perturbation datum $\fatY=(\fatY_T)_{T \in \RTree_d}$ with $\fatY_T \in \XX(T,\fatX_T)$ for every $T \in \RTree_d$. 
\end{lemma}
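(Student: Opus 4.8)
The plan is to construct $\fatY$ by induction on the number $k(T)$ of internal edges, using throughout that $\RTree_d$ is finite, so that every finite intersection of generic subsets of the Banach manifolds in play is again generic (and, these being Baire spaces, nonempty). Universality will force the behaviour of $\fatY_T$ along each face $\{l_e=0\}$, $e\in E_{int}(T)$, to agree with the already-constructed $\fatY_{T/\{e\}}$; the remaining freedom is the choice of $\fatY_T$ inside the affine subspace of $\XX(T,\fatX_T)$ cut out by these conditions, and in that subspace one selects a regular perturbation. For the base case, the unique tree with $k(T)=0$ is the $d$-corolla $T_0$, for which universality is vacuous and no background perturbation has to be respected; one simply takes $\fatY_{T_0}\in\XX(T_0)$ in the generic set provided by Proposition \ref{RegularityMostGeneral} (which for $k=0$ reduces to Theorem \ref{MainTheoremMorseTrees}).

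For the inductive step, suppose $\fatY_{T'}\in\XX(T',\fatX_{T'})$ has been chosen for all $T'$ with $k(T')<k$, forming a partial datum that is universal wherever defined and such that the conclusion of Proposition \ref{RegularityMostGeneral} is valid for each such $T'$. Fix $T$ with $k(T)=k$ and put
\[
 \UU_T:=\Bigl\{\,Y\in\XX(T,\fatX_T)\ \Big|\ \pi_{\{e\}}(Y)=\fatY_{T/\{e\}}\ \text{ for every }\ e\in E_{int}(T)\,\Bigr\}.
\]
The first thing to check is that $\UU_T$ is a nonempty closed affine subspace. Closedness and affineness are immediate since the maps $\pi_{\{e\}}$ are continuous and affine. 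For nonemptiness one verifies that the prescribed face-values are mutually compatible: for distinct $e,e'\in E_{int}(T)$, inductive universality of $\fatY_{T/\{e\}}$ and $\fatY_{T/\{e'\}}$ gives $\pi_{\{e'\}}(\fatY_{T/\{e\}})=\fatY_{T/\{e,e'\}}=\pi_{\{e\}}(\fatY_{T/\{e'\}})$, while their behaviour as edge lengths tend to $+\infty$ is compatible with $\fatX_T$ because $\pib_{\{e\}}(\fatX_T)=\fatX_{T/\{e\}}$ by universality of $\fatX$ (when $k(T/\{e\})=0$ there is no background to match and compatibility is automatic). A standard extension argument — of the type used to exhibit elements of $\XX(T,\fatX_T)$ in the first place — then produces some $Y_0\in\UU_T$, and $\UU_T=Y_0+\{Z\in\XX(T,0):\pi_{\{e\}}(Z)=0\ \forall e\}$ exhibits $\UU_T$ as a translate of a closed linear subspace, hence a Banach submanifold of $\XX(T,\fatX_T)$.

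The decisive step is to show that regular perturbations are generic in $\UU_T$. For this one re-runs the Sard--Smale argument behind Theorems \ref{NonlocalTransversality} and \ref{MainTheoremMorseTrees} and Propositions \ref{RegularityIntBreak} and \ref{RegularityMostGeneral}, but with the parameter space $\UU_T$ in place of $\XX(T)$. The key observation is that the constraints defining $\UU_T$ only pin the perturbing vector fields down along the codimension-one faces $\{l_e=0\}$, whereas every configuration in $\Acal^d_{\fatY}(x_0,\dots,x_d,T)$ — and in every space $\BB_{\fatY}(\,\cdot\,,T,F_1)$ built from $T$ — has all of its internal lengths strictly positive. Hence the first-order perturbations used to verify the submersion (transversality) property in the proofs of Theorems \ref{transverTotal} and \ref{NonlocalTransversality} may be chosen supported away from $\bigcup_{e}\{l_e=0\}$, so they still lie in the tangent space of $\UU_T$; for the same reason the evaluation/contraction maps of Proposition \ref{RegularityIntBreak} continue to pull generic sets back to generic sets when restricted to $\UU_T$, as they only see behaviour for lengths tending to $+\infty$. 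One then chooses $\fatY_T$ in the resulting generic subset of $\UU_T$. I expect this ``transversality survives restriction to $\UU_T$'' claim to be the main obstacle; everything else is bookkeeping.

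It remains to assemble the statement. By construction $\fatY_T\in\XX(T,\fatX_T)$ and $\pi_{\{e\}}(\fatY_T)=\fatY_{T/\{e\}}$ for every $e\in E_{int}(T)$; iterating this, using the composition law $\pi_F=\pi_{F\setminus\{e\}}\circ\pi_{\{e\}}$ and the inductive universality of the $\fatY_{T/\{e\}}$, yields $\pi_F(\fatY_T)=\fatY_{T/F}$ for all $F\subset E_{int}(T)$, so $\fatY=(\fatY_T)_{T\in\RTree_d}$ is universal. For regularity, every space appearing in the definition of a regular $d$-perturbation datum is, via the universality just proved (so that $\BB_{\fatY}(\,\cdot\,,T/F_2,F_1)$ depends only on $\fatY_{T/F_2}=\pi_{F_2}(\fatY_T)$), of the form $\BB_{\fatY_{T'}}(\,\cdot\,,T',F_1)$ for some $T'\in\RTree_d$ (namely $T'=T/F_2$, or $T'=T$ with $F_1=\emptyset$ in the case of $\Acal^d_{\fatY}(x_0,\dots,x_d,T)$) and some $F_1\subset E_{int}(T')$; and the manifold property of all such spaces for a fixed $T'$ is exactly the conclusion of Proposition \ref{RegularityMostGeneral} for $T'$, which was arranged when $\fatY_{T'}$ was selected at stage $k(T')$. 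Hence $\fatY$ is regular, universal, and satisfies $\fatY_T\in\XX(T,\fatX_T)$ for every $T$, which is what was to be shown.
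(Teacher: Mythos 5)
Your proof follows essentially the same route as the paper: induction on $k(T)$ starting from the $d$-corolla, passing to the closed affine subspace of $\XX(T,\fatX_T)$ cut out by the matching conditions with already-chosen lower strata, verifying it is nonempty, and re-running the Sard--Smale argument there. The only substantive difference is cosmetic — you impose only the codimension-one conditions $\pi_{\{e\}}(Y)=\fatY_{T/\{e\}}$ and recover the full universality $\pi_F(\fatY_T)=\fatY_{T/F}$ at the end by composing, whereas the paper builds all $\pi_F$-conditions into the subspace $\tilde\XX$ directly; these give the same subspace, and your version of the ``transversality survives the restriction'' step (perturbations supported away from the faces $\{l_e=0\}$) makes explicit exactly what the paper asserts can be ``checked without difficulty.''
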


\begin{proof}
One computes that since $\fatX$ is universal, the map$\pi_F:\XX(T) \to \XX(T/F)$ restricts to a map 
$$\pi_F|_{\XX(T,\fatX_T)}: \XX(T,\fatX_T) \to \XX(T/F,\fatX_{T/F}) \ , $$
for all $T \in \RTree_d$ and $F \subset E_{int}(T)$ with $k(T/F)>0$, which is a necessary condition for such a universal $d$-perturbation datum to exist.
 We will prove the claim by inductively constructing regular and universal perturbation data over the number of internal edges of the trees.
 
 Fix $d \geq 2$, let $T_0 \in \RTree_d$ be the unique $d$-leafed ribbon tree with $k(T_0)=0$ and put $\XX(T,\fatX_T):=\XX(T)$. (See picture b) in Figure \ref{RibbonTreeExamples}.) For any $x_0,x_1,\dots,x_d \in \Crit f$, the space $\Acal^d_{\fatY}(x_0,x_1,\dots,x_d,T_0)$ is then a subset of a product of spaces of semi-infinite perturbed Morse trajectories only, so the conditions defining universality are irrelevant for perturbed Morse ribbon trees modelled on $T_0$. Moreover, the geometric limits of all sequences $\Acal^d_{\fatY}(x_0,x_1,\dots,x_d,T_0)$ lie in products of unparametrized trajectory spaces and a space of perturbed Morse ribbon trees which are again modelled on $T_0$. Therefore, if $\fatY_{T_0} \in \XX(T_0)$ is regular, all possible boundary spaces of $\Acal^d_{\fatY}(x_0,x_1,\dots,x_d,T_0)$ are again smooth manifolds. \bigskip

 As induction hypothesis, we assume that we have found a regular perturbation $\fatY_T \in \XX(T,\fatX_T)$ for every $T \in \RTree_d$ \emph{with $k(T) \leq k$ for some fixed $k \in \{0,1,\dots,d-3\}$}, such that
 \begin{equation*}
  \pi_F(\fatY_T) =\fatY_{T/F}
 \end{equation*}
 for every $F \subset E_{int}(T)$ and such that every boundary space of the form
 \begin{equation*}
    \BB_{\fatY}\left(\left(y_0,\left(y_{e-}\right)_{e \in F_1} \right),\left(y_1,\dots,y_d,\left(y_{e+}\right)_{e \in F_1} \right),T/F_2,F_1 \right)
 \end{equation*}
 appearing in Theorem \ref{MostGeneralCompactness} is a manifold of class $C^{n+1}$. (These assumptions are well-defined, since $k(T) \leq k$ implies $k(T/F) \leq k$ for every $F \subset E_{int}(T)$.)
 
 Loosely speaking, this means that we assume that we have chosen a family of perturbations which is regular and universal for trees with \emph{up to $k$ internal edges}.
 
 Let now $T_1 \in \RTree_d$ with $k(T_1) = k+1$ and consider the perturbation space
 \begin{equation*}
 \tilde{\XX}:= \left\{ \fatY_1 \in \XX(T_1, \fatX_{T_1}) \ | \ \pi_F\left(\fatY_1\right) = \fatY_{T_1/F} \ \ \forall F \subset E_{int}(T_1) \right\} \ .
 \end{equation*}
 This is again well-defined since $k(T_1/F) \leq k$ for every non-empty $F \subset E_{int}(T_1)$, so $\fatY_{T_1/F}$ has already been chosen. $\tilde{\XX}$ as a closed affine subspace of $\XX(T_1)$, whose underlying linear subspace is given by 
 $$ \left\{ \fatY_1 \in \XX(T_1,0) \ | \ \pi_F\left(\fatY_1\right)=0 \ \ \forall F \subset E_{int}(T_1) \right\} \  ,  $$
 where $0$ denotes the family consisting of vanishing vector fields in $\XXb(T_1)$ and $\XX(T/F)$, respectively. Hence, $\tilde{\XX}$ is a Banach submanifold of $\XX(T)$. Remember that the proof of the regularity statement in Theorem \ref{MainTheoremMorseTrees} is essentially an application of Theorem \ref{NonlocalTransversality}. By taking a closer look at the proof of Theorem \ref{NonlocalTransversality} one checks without difficulty that the whole line of argument will still hold if we restrict to the (obviously non-empty) perturbation space $\tilde{\XX}$. More precisely, there is a generic subset $\GG\subset \tilde{\XX}$, such that for $\fatY_{T_1} \in \tilde{\XX}$ the space $\Acal^d_{\fatY_{T_1}}(x_0,x_1,\dots,x_d,T_1)$ is a manifold of class $C^{n+1}$ for all $x_0,x_1,\dots,x_d \in \Crit f$. 
 
 Moreover, the same is true in proof of Proposition \ref{RegularityIntBreak}, i.e. we can again restrict to elements of $\tilde{\XX}$ in the situation of this proposition. It follows that for generic choice of $\fatY_{T_1} \in \tilde{\XX}$, the space $\Acal^d_{\fatY_{T_1}}(x_0,x_1,\dots,x_d,T_1)$ as well as all boundary spaces of the form $\BB_{\fatY_{T_1}}\left(\dots\right)$ are manifolds of class $C^{n+1}$. 
 
 For every $T_1 \in \RTree_1$ we choose such a generic $\fatY_{T_1} \in \XX(T_1,\fatX_{T_1})$. Then the family
 \begin{equation*}
   \left(\fatY_T\right)_{T \in \RTree_d, \ k(T) \leq k+1}
 \end{equation*}
satisfies the regularity and universality condition for every tree with up to $k+1$ internal edges. Proceeding inductively, we can therefore construct a regular and universal $d$-perturbation datum. 
\end{proof}

The following theorem will allow us to define higher order multiplications on the Morse cochain complex of $f$. 

\begin{theorem} 
\label{ZerodimFinite}
 Let $\fatY = \left(\fatY_T\right)_{T \in \RTree_d}$ be a regular and universal $d$-perturbation datum, $d \geq 2$. Then for all $T \in \RTree_d$ and $x_0,x_1,\dots,x_d \in \Crit f$ with
 \begin{equation}
  \label{equzerodim}
   \mu(x_0) = \sum_{i=1}^d \mu(x_i)-k(T) \ ,
 \end{equation}
 the space $\Acal^d_{\fatY}(x_0,x_1,\dots,x_d,T)$ is a finite set.
\end{theorem}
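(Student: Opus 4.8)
The plan is to combine the dimension formula of Theorem \ref{MainTheoremMorseTrees} with the compactness results of Section \ref{ConvergenceBehaviour}. First I would observe that under assumption \eqref{equzerodim} the dimension formula gives
\begin{equation*}
\dim \Acal^d_{\fatY}(x_0,x_1,\dots,x_d,T) = \mu(x_0) - \sum_{i=1}^d \mu(x_i) + k(T) = 0,
\end{equation*}
so that, by regularity of $\fatY$, the space $\Acal^d_{\fatY}(x_0,x_1,\dots,x_d,T)$ is a zero-dimensional manifold of class $C^{n+1}$, hence a discrete set. It therefore suffices to prove that it is compact, since a compact discrete set is finite.

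To show compactness, I would argue by contradiction: suppose $\Acal^d_{\fatY}(x_0,x_1,\dots,x_d,T)$ is not finite. Being a discrete set, it then contains a sequence $\{\gammaunder_n\}_{n \in \NN}$ of pairwise distinct elements, which in particular has no convergent subsequence (a convergent subsequence of distinct points in a discrete space is impossible). By Theorem \ref{MostGeneralCompactness}, after passing to a subsequence and up to a permutation of components, $\{\gammaunder_n\}_{n \in \NN}$ converges geometrically against a product of unparametrized Morse trajectory spaces $\widehat{\MM}(\cdot,\cdot)$ together with an element of a boundary space
\begin{equation*}
\BB_{\fatY}\left(\left(y_0,\left(y_{e-}\right)_{e \in F_1} \right),\left(y_1,\dots,y_d,\left(y_{e+}\right)_{e \in F_1} \right),T/F_2,F_1 \right)
\end{equation*}
for suitable disjoint $F_1,F_2 \subset E_{int}(T)$ and critical points satisfying the inequalities \eqref{ConditionsMostGeneral}. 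Since the sequence has no convergent subsequence, Theorem \ref{MostGeneralCompactness} forces at least one of the following: a nontrivial unparametrized trajectory factor appears, or $F_1 \cup F_2 \neq \emptyset$; in either case the "defect" is strictly positive.

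The key step is then a dimension count that yields a contradiction. Each unparametrized trajectory space $\widehat{\MM}(z,z')$ that appears nontrivially satisfies $\mu(z) > \mu(z')$, hence $\mu(z)-\mu(z')-1 \geq 0$, and it contributes at least a rigid broken piece; more precisely, every breaking drops the sum of Morse indices. Using \eqref{ConditionsMostGeneral} together with Proposition \ref{RegularityMostGeneral}, I would estimate
\begin{equation*}
0 \leq \dim \BB_{\fatY}\left(\left(y_0,\left(y_{e-}\right)_{e \in F_1} \right),\left(y_1,\dots,y_d,\left(y_{e+}\right)_{e \in F_1} \right),T/F_2,F_1 \right) \leq \dim \Acal^d_{\fatY}(x_0,x_1,\dots,x_d,T) - |F_1| - |F_2| = -|F_1|-|F_2| \ ,
\end{equation*}
which forces $F_1 = F_2 = \emptyset$. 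Moreover, the presence of any nontrivial unparametrized Morse trajectory factor $\widehat{\MM}(z,z')$ in the geometric limit would likewise increase the total index deficit: combining the index inequalities of \eqref{ConditionsMostGeneral} with the fact that each such factor decreases $\mu(x_0) - \sum_i \mu(x_i)$ by at least one (and the limiting perturbed tree still has nonnegative dimension), one again obtains a strict inequality contradicting \eqref{equzerodim}. Hence no breaking and no splitting can occur, so the limit lies in $\Acal^d_{\fatY}(x_0,x_1,\dots,x_d,T)$ itself, contradicting the assumption that $\{\gammaunder_n\}$ has no convergent subsequence. Therefore $\Acal^d_{\fatY}(x_0,x_1,\dots,x_d,T)$ is compact, and being discrete, finite.

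The main obstacle I anticipate is bookkeeping the index inequalities precisely across \emph{all} the pieces of the geometric limit at once — the unparametrized trajectory factors attached to the root, to each leaf, and to each internal edge collapsed in $F_1$ — and checking that each such factor, when nontrivial, strictly lowers the relevant dimension while \eqref{equzerodim} pins that dimension to zero. This is exactly the content packaged into Theorem \ref{MostGeneralCompactness} and Proposition \ref{RegularityMostGeneral}, so the argument is essentially a careful invocation of those two results; the only care needed is to make the strictness of the inequalities explicit so that every nontrivial degeneration is excluded.
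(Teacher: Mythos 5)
Your proof is correct and follows essentially the same strategy as the paper: use the dimension formula to conclude the space is zero-dimensional and discrete, then invoke Theorem~\ref{MostGeneralCompactness} together with Proposition~\ref{RegularityMostGeneral} to show that any sequence without a convergent subsequence would have a geometric limit lying in a space of negative formal dimension, hence empty, giving sequential compactness. One small remark: the paper's written-out case~2 explicitly assumes $F_1 \neq \emptyset$, which need not hold when the only degeneration is a breaking along an external edge; you handle this more carefully by noting that a nontrivial unparametrized factor by itself already forces some $y_i\neq x_i$ and thus a strict drop in the dimension bound, which closes that residual case cleanly.
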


\begin{proof}
 Since $\fatY$ is a regular $d$-perturbation datum, Theorem \ref{MainTheoremMorseTrees} implies that $\Acal^d_{\fatY}(x_0,x_1,\dots,x_d,T)$ is a zero-dimensional manifold, hence a discrete set. To show that it is finite, it therefore suffices to show that $\Acal^d_{\fatY}(x_0,x_1,\dots,x_d,T)$ is sequentially compact, which we will do by using the results from Section \ref{ConvergenceBehaviour}.
 
 Assume that there is a sequence $\left\{\left(\gamma_{0n},\left(l_{en},\gamma_{en}\right)_{e \in E_{int}(T)},\gamma_{1n},\dots,\gamma_{dn}\right) \right\}_{n \in \NN}$, which does not have a subsequence converging in $\Acal^d_{\fatY}(x_0,x_1,\dots,x_d,T)$. By Theorem \ref{MostGeneralCompactness}, there are only two cases that can occur:
 \begin{enumerate}
  \item The sequence has a subsequence for which all component sequences converge and there is $F \subset E_{int}(T)$, $F \neq \emptyset$, with $\displaystyle\lim_{n \to \infty} l_{en} = 0$ for every $e \in F$. %\bigskip 
	
	By Theorem \ref{LengthToZero} and the universality of $\fatY$, we can identify the limit with an element of $\Acal^d_{\fatY}(x_0,x_1,\dots,x_d,T/F)$. Since $\fatY$ is regular, $\Acal^d_{\fatY}(x_0,x_1,\dots,x_d,T/F)$ is a manifold of class $C^{n+1}$ of expected dimension	
$$	 \dim \Acal^d_{\fatY}(x_0,x_1,\dots,x_d,T/F) = \mu(x_0) - \sum_{i=1}^d \mu(x_i) + k(T) - F \stackrel{(\ref{equzerodim})}{=} -F < 0 \ ,$$
	since $F$ is non-empty. This yields $\Acal^d_{\fatY}(x_0,x_1,\dots,x_d,T/F) = \emptyset$. Therefore, since $\fatY$ is regular, the limit can not exist and the collapse of internal edges can not occur.
	
  \item The sequence has a geometrically convergent subsequence, such that at least one of the component sequences converges geometrically. %\bigskip
  
    By Theorem \ref{MostGeneralCompactness}, there exist $F_1,F_2 \subset E(T)$ with $F_1 \neq \emptyset$, such that the subsequence converges geometrically against an element of 
     \begin{equation*}
  \BB_{\fatY}\left(\left(y_0,\left(y_{e-}\right)_{e \in F_1} \right),\left(y_1,\dots,y_d,\left(y_{e+}\right)_{e \in F_1} \right),T/F_2,F_1 \right)
 \end{equation*} 
 for some critical points having the same properties as in Theorem \ref{MostGeneralCompactness}. But by Proposition \ref{RegularityMostGeneral}, this space is a manifold of class $C^{n+1}$ whose expected dimension fulfils:
  \begin{align*}
  &\dim \BB_{\fatY}\left(\left(y_0,\left(y_{e-}\right)_{e \in F_1} \right),\left(y_1,\dots,y_d,\left(y_{e+}\right)_{e \in F_1} \right),T/F_2,F_1 \right) \\
  &\leq \dim \Acal^d_{\fatY}(x_0,x_1,\dots,x_d,T/F_2) - |F_1| \stackrel{(\ref{equzerodim})}{=} - |F_1|-|F_2|  <0 \ ,
  \end{align*}
since $F_1$ is nonempty. Thus, the limit can not exist and this case of geometric convergence does not occur in our situation. 
 \end{enumerate}

 By Theorem \ref{MostGeneralCompactness}, these two are the only possible cases for any such sequence. 
 
 Therefore, every sequence in $\Acal^d_{\fatY}(x_0,x_1,\dots,x_d,T)$ has a convergent subsequence. We conclude that for any choice of $x_0,x_1,\dots,x_d \in \Crit f$ satisfying (\ref{equzerodim}) the space $\Acal^d_{\fatY}(x_0,x_1,\dots,x_d,T)$ is compact and discrete, hence finite. 
\end{proof}

Theorem \ref{ZerodimFinite} is of great importance, since it allows us to count elements of  zero-dimensional moduli spaces and thereby construct maps, similar to the counting of elements of zero-\-di\-men\-sio\-nal spaces of unparametrized Morse trajectories to define the differentials of Morse (co)chain complexes, see \cite{Schwarz}. \bigskip

Before we do so, we need to consider the orientability of moduli spaces of perturbed Morse ribbon trees. The consideration of orientations on these moduli spaces will be necessary in order to construct higher order multiplications which satisfy the defining equations of an $A_\infty$-algebra. Theorem \ref{ZerodimFinite} implies that an oriented zero-dimensional moduli space consists of a finite number of points which are each equipped with an orientation, i.e. a sign.

For this purpose, we want rephrase the question of regularity of moduli spaces of Morse ribbon trees explicitly as a transverse intersection problem. Constructing orientations on the spaces involved and using oriented intersection theory, we will therefore be able to define algebraic intersection numbers. Up to a minor modification, these algebraic intersection numbers will be the coefficients of the higher order multiplications. 

For the necessary results from oriented intersection theory, we refer to the textbooks \cite[Section 5.2]{Hirsch}, \cite[Section 3.3]{GuilleminPollack} and \cite[Section 5.6]{BanyagaHurtubise}. \bigskip

Let $d \geq 2$, $T \in \RTree_d$ and $\fatY=(Y_0,(Y_e)_{e \in E_{int}(T)},Y_1,\dots,Y_d ) \in \XX(T)$. For given critical points $x_0,x_1,\dots,x_d \in \Crit f$, consider the space 
\begin{align*}
 \MM^d_{\fatY}(x_0,x_1,\dots,x_d,T):= &\left\{ \left(\gamma_0,(l_e,\gamma_e)_{e \in E_{int}(T)},\gamma_1,\dots,\gamma_d\right) \ \left| \ \gamma_0 \in W^-\left(x_0,Y_0\left((l_e)_{e \in E_{int}(T)}\right)\right)  ,   \right. \right. \\
      &\qquad(l_e,\gamma_e) \in \MM \left(Y_e\left((l_f)_{f \in E_{int}(T) \setminus \{e\}} \right) \right) \ \text{and} \ l_e > 0 \quad \forall e \in E_{int}(T)  , \\
      &\qquad \qquad   \left. \gamma_i \in W^+\left(x_i,Y_i\left((l_e)_{e \in E_{int}(T)} \right) \right) \ \forall i \in \{1,2,\dots,d\} \right\} \ .
\end{align*} 
Note that we introduced this space in the proof of Theorem \ref{NonlocalTransversality} under the name $\bWW_{\fatY}$.

As discussed in the proof of Theorem \ref{NonlocalTransversality}, $\MM^d_{\fatY}(x_0,x_1,\dots,x_d,T)$ is a smooth manifold of dimension
\begin{equation*}
 \dim \MM^d_{\fatY}(x_0,x_1,\dots,x_d,T) = \mu(x_0) - \sum_{i=1}^d \mu(x_i) + (k(T)+d)n + k(T) \ .
\end{equation*}
Consider the map 
\begin{align}
\Eunder_{\fatY}: \MM^d_{\fatY}(x_0,x_1,\dots,x_d,T) &\to M^{1+2k(T)+d} \ , \label{DefofEunderY}\\
\left(\gamma_0,(l_e,\gamma_e)_{e \in E_{int}(T)},\gamma_1,\dots,\gamma_d\right) &\mapsto \left(\gamma_0(0),(\gamma_e(0),\gamma_e(l_e))_{e \in E_{int}(T)},\gamma_1(0),\dots,\gamma_d(0)\right) \ , \notag
\end{align}
which we also introduced in the proof of Theorem \ref{NonlocalTransversality}, see especially (\ref{decisiveidentity}). By definition of regularity, the perturbation $\fatY$ is regular if and only if the map $\Eunder_{\fatY}$ intersects the $T$-diagonal $\Delta_T \subset M^{1+2k(T)+d}$ transversely, and we can write
\begin{equation*}
 \Acal^d_{\fatY}(x_0,x_1,\dots,x_d,T) = \Eunder_{\fatY}^{-1}(\Delta_T) \ .
\end{equation*}

The manifolds $\MM^d_{\fatY}(x_0,x_1,\dots,x_d,T)$ are orientable, but the definition of an orientations on them requires further considerations of orientations on the three types of moduli spaces of (unperturbed) negative gradient flow trajectories we introduced in Section \ref{SectionPerturbationsGradFlow}. We postpone these considerations to Appendix \ref{AppendixOrientMorseTraj}. The details about  technical and the interested reader can find the details about orienting the spaces $\MM^d_{\fatY}(x_0,x_1,\dots,x_d,T)$ in Appendix \ref{SectionOrientPerturbedMRT}. \bigskip

The orientation of $\MM^d_{\fatY}(x_0,x_1,\dots,x_d,T)$ especially depends on the choice of ordering of $E_{int}(T)$.  \medskip 

\emph{We will make special choices of these orderings in the appendix and assume that from here on all ribbon trees are equipped with the canonical orderings of their internal edges from Definition \ref{DefCanonicalOrdering}.} \bigskip

For any $T \in \RTree_d$, the $T$-diagonal is constructed in Definition \ref{DefDeltaT} as an intersection of the spaces $\Delta_v$, which are oriented manifolds, since $M$ is oriented. Hence, $\Delta_T$ is a transverse intersection of oriented manifolds and therefore itself oriented. A delicate issue is that the orientation of $\Delta_T$ depends on the order of the intersection of the $\Delta_v$. In Appendix \ref{SectionOrientPerturbedMRT} with an orientation that is independent of this order. \index{orientations!of $T$-diagonals} \bigskip 

Throughout the rest of this article, we will view the $T$-diagonals as oriented manifolds with the orientations from Appendix \ref{SectionOrientPerturbedMRT}.  \bigskip 

Since the space $\MM^d_{\fatY}(x_0,x_1,\dots,x_d,T)$ is oriented and the $T$-diagonal is an oriented submanifold of the oriented manifold $M^{1+2k(T)+d}$ (with the product orientation), $\Acal^d_{\fatY}(x_0,x_1,\dots,x_d,T)$ is an \emph{oriented} manifold in the case of transverse intersection. \bigskip \index{orientations!of $\Acal^d_{\fatY}(x_0,x_1,\dots,x_d)$}

\emph{Throughout the rest of this article, we will equip the spaces $\Acal^d_{\fatY}(x_0,x_1,\dots,x_d,T)$ with the orientations from Appendix \ref{SectionOrientPerturbedMRT}.} \bigskip

In the course of this section, we will further use the following fact from graph theory without giving a proof:

\begin{center}
 \centering
 \emph{A $d$-leafed ribbon tree has at most $(d-2)$ internal edges for every $d \geq 2$. Moreover, a $d$-leafed ribbon tree has $(d-2)$ internal edges if and only if it is a binary tree. }
\end{center} \bigskip

Let $x_0,x_1,\dots,x_d \in \Crit f$ with $\mu(x_0) =\sum_{i=1}^d \mu(x_i) +2-d$. With the above fact, Theorem \ref{MainTheoremMorseTrees} implies that for a regular and universal $d$-perturbation datum $\fatY$, the space $\Acal^d_{\fatY}(x_0,x_1,\dots,x_d,T)$ will be
\begin{itemize}
 \item a finite set if $T$ is a binary tree,
 \item empty if $T$ is non-binary. 
\end{itemize}
Thus, the coefficients we are going to introduce will be well-defined. 

\begin{definition}
\label{AinftyMorseCoefficients}
 Let $d \in \NN$, $d \geq 2$, $\fatY= \left(\fatY_T\right)_{T \in \RTree_d}$ be a regular and universal $d$-perturbation datum and let $x_0,x_1,\dots,x_d \in \Crit f$ with
 \begin{equation*}
  \mu(x_0) = \sum_{i=1}^d \mu(x_i)+2-d \ .
 \end{equation*}
We define
\begin{equation*}
 a^d_{\fatY}(x_0,x_1,\dots,x_d) := (-1)^{\sigma(x_0,x_1,\dots,x_d)} \sum_{T \in \RTree_d} \algint \Acal^d_{\fatX}(x_0,x_1,\dots,x_d,T) \in \ZZ \ ,	
\end{equation*} 
where
\begin{equation*}
 \sigma(x_0,x_1,\dots,x_d) := (n+1)\Bigl(\mu(x_0) + \sum_{i=1}^d (d+1-i)\mu(x_i) \Bigr) \ .
\end{equation*} 
Here, $\algint \Acal^d_{\fatX}(x_0,x_1,\dots,x_d,T)$ denotes a twisted oriented intersection number of 
\begin{equation*}
\Eunder_{\fatY_T}:\MM^d_{\fatY}(x_0,x_1,\dots,x_d,T) \to M^{1+2k(T)+d}
\end{equation*}
with $\Delta_T$ for $T \in \RTree_d$, as defined in Appendix \ref{SectionOrientPerturbedMRT}. The number $\algint \Acal^d_{\fatX}(x_0,x_1,\dots,x_d,T)$ differs from the oriented intersection number by a sign that only depends on the topological type of $T$. 
\end{definition}

\begin{remark}
 The sign twist by the parity of $\sigma(x_0,x_1,\dots,x_d)$ is chosen for technical reasons. More precisely, the sign twists will turn out to make the higher order multiplications we are going to define fulfil the defining equations of an $A_\infty$-algebra. If this correction was omitted, the defining equations would only be satisfied up to signs.
 Our choice of $\sigma(x_0,x_1,\dots,x_d)$ is in accordance with Abouzaid's works \cite{AbouzaidMorseTrop} and \cite{AbouzaidPlumbings} up to notational changes.
\end{remark}

We have completed all the necessary preparations for the definition of the higher order multiplications on Morse cochain complexes. Remember that the underlying groups of the Morse chain complex of $f$ are the free abelian groups generated by the critical points of $f$ of corresponding Morse index.

\begin{definition}
\label{AinftyMorseMaps} 
 Let $d \geq 2$ and $\fatY = \left(\fatY_T\right)_{T\in\RTree_d}$ be a regular and universal $d$-perturbation datum. Define a graded homomorphism
 \begin{equation*}
  \mu_{d,\fatY}: C^*(f)^{\otimes d} \to C^*(f)
 \end{equation*}
 of degree  $\deg \mu_{d,\fatY} = 2-d$ as the $\ZZ$-linear extension of 
 \begin{equation*}
  x_1 \otimes x_2 \otimes \dots \otimes x_d \mapsto \sum_{\stackrel{x_0 \in \Crit f}{\mu(x_0) = \sum_{i=1}^d \mu(x_i)+2-d}} a^d_{\fatY}(x_0,x_1,\dots,x_d) \cdot x_0 
 \end{equation*}
 for all $x_1,x_2,\dots,x_d \in \Crit f$. 
\end{definition}

Choosing a regular and universal $d$-perturbation datum $\fatY_d$ for every $d \geq 2$, we thus construct a family of operations $\left\{\mu_{d,\fatY_d}:C^*(f)^{\otimes d} \to C^*(f) \right\}_{d \geq 2}$. \bigskip

In the remainder of this section, we will show that the Morse cochain complex of $f$ with these multiplications (and with an appropriate choice of an operation $C^*(f) \to C^*(f)$)  will be an $A_\infty$-algebra, if we impose a further condition on the family $\left(\fatY_d \right)_{d \geq 2}$.

In particular, it will turn out that the different perturbation data can not be chosen independently of each other, but have to be related in some way. We will make these relations precise in the language of background perturbations. \bigskip

{\it Until further mention, we consider a fixed $d \geq 2$, a fixed family of critical points $x_0,x_1,\dots,x_d$ satisfying
\begin{equation}
\label{CritPointOneDim}
 \mu(x_0) = \sum_{i=1}^d \mu(x_i) +3-d 
\end{equation}
and a fixed regular and universal $d$-perturbation datum $\fatY = \left(\fatY_T \right)_{T \in \RTree_d}$.} \bigskip 

We derive from Theorem \ref{MainTheoremMorseTrees} that for $T \in \RTree_d$ the space $\Acal^d_{\fatY}(x_0,x_1,\dots,x_d,T)$ is
\begin{itemize}
 \item a one-dimensional manifold, if $T$ is a binary tree,
 \item a finite set, if $T$ has precisely $(d-3)$ internal edges,
 \item empty in all other cases, i.e. if $T$ has less than $(d-3)$ internal edges.
\end{itemize}
We investigate the one-dimensional case in greater detail using the methods of Section \ref{ConvergenceBehaviour}.

\begin{theorem} \index{compactness of moduli spaces!of Morse ribbon trees!one-dimensional} \index{tree!collapsing edges}
\label{CompactnessOneDimModuli}
 Let $\fatY$ be a regular and universal perturbation datum and $T \in \RTree_d$ be a binary tree. Assume that a sequence in $\Acal_{\fatY}^d(x_0,x_1,\dots,x_d,T)$ does not have a convergent subsequence. Then the sequence will have a geometrically convergent subsequence whose geometric limit lies (up to permutation of its components) in
 \begin{align}
  &\bigcup_{e \in E_{int}(T)} \Acal^d_{\fatY}(x_0,x_1,\dots,x_d,T/e) \cup \bigcup_{\stackrel{y_0 \in \Crit f}{\mu(y_0) = \mu(x_0)-1}}\widehat{\MM}(x_0,y_0) \times \Acal^d_{\fatY}(y_0,x_1,\dots,x_d,T) \label{onedim1} \\ 
  &\cup \bigcup_{i=1}^d \bigcup_{\stackrel{y_i \in \Crit f}{\mu(y_i) = \mu(x_i)+1}} \Acal^d_{\fatY}(x_0,x_1,\dots,x_{i-1},y_i,x_{i+1},\dots,x_d,T) \times \widehat{\MM}(y_i,x_i) \label{onedim2} \\
  &\cup \bigcup_{e \in E_{int}(T)} \bigcup_{x_e \in \Crit f} \BB_{\fatY}\left((x_0,x_{e}),(x_1,\dots,x_d,x_{e}),T,\{e\} \right) \ . \label{onedim3}
 \end{align}
\end{theorem}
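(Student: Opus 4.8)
The plan is to derive Theorem \ref{CompactnessOneDimModuli} as a direct specialization of the general compactness result Theorem \ref{MostGeneralCompactness} together with the dimension bookkeeping from Propositions \ref{RegularityIntBreak} and \ref{RegularityMostGeneral}, exploiting that $T$ is binary and that the critical points satisfy the one-dimensional index relation \eqref{CritPointOneDim}. First I would invoke Theorem \ref{MostGeneralCompactness}: a sequence in $\Acal^d_{\fatY}(x_0,\dots,x_d,T)$ without a convergent subsequence has, up to permutation of components, a geometrically convergent subsequence whose limit is a product of unparametrized Morse trajectory spaces together with an element of
\begin{equation*}
 \BB_{\fatY}\left(\left(y_0,\left(y_{e-}\right)_{e \in F_1} \right),\left(y_1,\dots,y_d,\left(y_{e+}\right)_{e \in F_1} \right),T/F_2,F_1 \right)
\end{equation*}
for some disjoint $F_1,F_2 \subset E_{int}(T)$ and critical points satisfying \eqref{ConditionsMostGeneral}. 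The task is then to show that for a regular and universal $\fatY$ the only nonempty such limit spaces are the ones listed in \eqref{onedim1}, \eqref{onedim2}, \eqref{onedim3}.

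The key step is the dimension count. By Proposition \ref{RegularityMostGeneral}, the relevant breaking space has dimension at most
\begin{equation*}
 \dim \Acal^d_{\fatY}(x_0,x_1,\dots,x_d,T) - |F_1| - |F_2| = 1 - |F_1| - |F_2| \ ,
\end{equation*}
using that $\dim \Acal^d_{\fatY}(x_0,\dots,x_d,T) = 1$ since $T$ is binary and \eqref{CritPointOneDim} holds. For the limit to be nonempty this dimension must be $\geq 0$, and moreover each unparametrized trajectory factor $\widehat{\MM}(a,b)$ contributes a further strictly positive dimension unless it is a single point (i.e.\ $\mu(a) = \mu(b)+1$), which by \eqref{ConditionsMostGeneral} forces equality of the corresponding indices or a drop of exactly one. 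Carefully accounting: $|F_1| + |F_2| \leq 1$, and if a genuine geometric breaking (not just length-to-zero) occurs it must involve exactly one unparametrized trajectory of index difference $1$ and nothing else. I would then run through the three possibilities. (a) $F_2 = \{e\}$, $F_1 = \emptyset$, no trajectory breaking: by Theorem \ref{LengthToZero} and universality the limit lies in $\Acal^d_{\fatY}(x_0,\dots,x_d,T/e)$, giving \eqref{onedim1} first term. (b) $F_1 = F_2 = \emptyset$ but a semi-infinite component breaks: by Theorem \ref{ConvergenceHalfTraj}, either the root trajectory breaks off one index-one unparametrized piece, landing in $\widehat{\MM}(x_0,y_0) \times \Acal^d_{\fatY}(y_0,x_1,\dots,x_d,T)$ with $\mu(y_0) = \mu(x_0)-1$, or a leaf trajectory breaks, landing in $\Acal^d_{\fatY}(x_0,\dots,y_i,\dots,x_d,T) \times \widehat{\MM}(y_i,x_i)$ with $\mu(y_i)=\mu(x_i)+1$; these are \eqref{onedim1} second term and \eqref{onedim2}. (c) $F_1 = \{e\}$, $F_2 = \emptyset$: by Theorem \ref{Fconvergence} the limit lies in $\BB_{\fatY}((x_0,x_e),(x_1,\dots,x_d,x_e),T,\{e\})$ with $\mu(x_e)$ forced to equal the index appearing in \eqref{AssumF}; the dimension bound $1 - |F_1| = 0$ forces all attached unparametrized trajectory factors to be trivial and the index differences in \eqref{ConditionsMostGeneral} to vanish, so the endpoint critical points coincide with $x_0,\dots,x_d$ and we get exactly \eqref{onedim3}.

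Finally I would note that no case with $|F_1|+|F_2| \geq 2$ can contribute, since then the limit space has negative expected dimension and is empty by regularity of $\fatY$; and that simultaneous occurrence of a trajectory break with an $F_1$- or $F_2$-collapse is likewise excluded, since each phenomenon already consumes the full available dimension. The hard part will be the bookkeeping in case (c): one must check that in $\BB_{\fatY}$ the auxiliary unparametrized trajectories appearing implicitly in Theorem \ref{Fconvergence} (the $\widehat{\MM}(y_{f+},y_{f1})$ etc.) are all forced to be constant, so that $x_e = y_{e+} = y_{e-}$ and the endpoint critical points of the tree component are unchanged; this is exactly where the dimension inequality of Proposition \ref{RegularityIntBreak} together with the rigidity of index-preserving unparametrized trajectory spaces (which are empty unless the endpoints agree) must be combined. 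Everything else is a routine translation of Theorems \ref{LengthToZero}, \ref{ConvergenceHalfTraj}, \ref{Fconvergence} into the stated union, and I would omit the details there, as the paper's style suggests.
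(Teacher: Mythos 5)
Your proposal is correct and follows essentially the same route as the paper: invoke Theorem \ref{MostGeneralCompactness} to put the geometric limit in a space $\BB_{\fatY}(\dots,T/F_2,F_1)$, bound its dimension by $1-|F_1|-|F_2|$ via Proposition \ref{RegularityMostGeneral}, conclude $|F_1|+|F_2|\leq 1$, and then treat the three resulting cases by Theorems \ref{LengthToZero}, \ref{ConvergenceHalfTraj} and \ref{Fconvergence}, with the index relations of \eqref{ConditionsMostGeneral} forcing the intermediate critical points to coincide and the extra unparametrized factors to vanish. The one subtlety you flag in case (c) --- that the auxiliary $\widehat{\MM}$-factors inside the $\BB_{\fatY}$-limit are forced to be trivial once $y_{e+}=y_{e-}$ --- is exactly what the paper settles via inequality \eqref{CompareDimBoundary} together with the rigidity clause at the end of Theorem \ref{MostGeneralCompactness}.
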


\begin{figure}[h]
 \centering
 \includegraphics[scale=0.6]{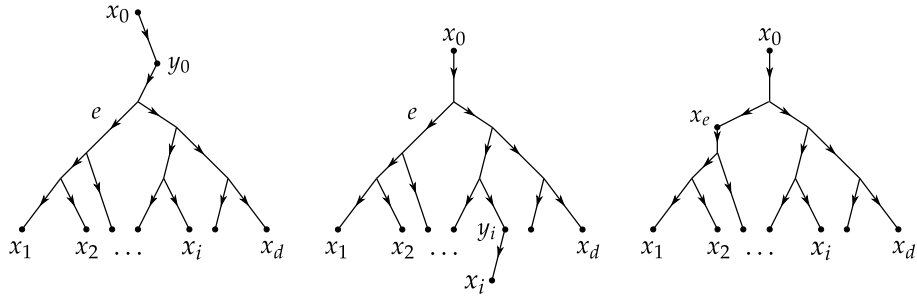}
 \caption{The limit spaces from Theorem \ref{CompactnessOneDimModuli}.}
 \label{FigureOneDimConv}
\end{figure}

Apart from the one involving a quotient tree, the different types of limit spaces in Theorem \ref{CompactnessOneDimModuli} are illustrated in Figure \ref{FigureOneDimConv}. The left-hand picture shows a space of type \eqref{onedim1}, while the  picture in the middle shows a space of type \eqref{onedim2} and the right-hand one depicts a space of type \eqref{onedim3}.

\begin{proof}[Proof of Theorem \ref{CompactnessOneDimModuli}]
 From Theorem \ref{MostGeneralCompactness} we deduce that any sequence without convergent subsequence has a subsequence which converges geometrically against an element of a product of a space of the form
 \begin{equation*}
   \BB_{\fatY}\left(\left(y_0,\left(y_{e-}\right)_{e \in F_1} \right),\left(y_1,\dots,y_d,\left(y_{e+}\right)_{e \in F_1} \right),T/F_2,F_1 \right)
 \end{equation*}
 with certain spaces of unparameterized Morse trajectories. By Proposition \ref{RegularityMostGeneral}, this space is a smooth manifold whose dimension satisfies
 \begin{align*}
  &\dim \BB_{\fatY}\left(\left(y_0,\left(y_{e-}\right)_{e \in F_1} \right),\left(y_1,\dots,y_d,\left(y_{e+}\right)_{e \in F_1} \right),T/F_2,F_1 \right) \\
  &\leq \dim \Acal^d_{\fatY}(x_0,x_1,\dots,x_d,T) - |F_1|-|F_2|  \stackrel{\eqref{CritPointOneDim}}{\leq} 1 - |F_1|-|F_2| \ .
 \end{align*}
Therefore, the space can only be nonempty if $\#(F_1 \cup F_2) \leq 1$, and we have to study the different cases that can occur. \bigskip

Consider the case of a geometrically convergent subsequence for which $F_1 \neq \emptyset$ in the above notation. This implies that $F_2 = \emptyset$ and that $F_1 = \{e\}$ for some $e \in E_{int}(T)$. One checks without difficulty (using once again Theorem \ref{NonlocalTransversality}) that the dimension of any space of the form 
\begin{equation*}
 \BB_{\fatY}\left(\left(y_0,y_{e-} \right),\left(y_1,\dots,y_d,y_{e+}\right),T,\{e\} \right)
\end{equation*}
is given by 
\begin{equation*}
 \dim \BB_{\fatY}\left(\left(y_0,y_{e-} \right),\left(y_1,\dots,y_d,y_{e+}\right),T,\{e\} \right) = \mu(y_0) - \sum_{q=1}^d \mu(y_q)+\mu(e_-) - \mu(e_+) +d-3 \ .
\end{equation*}
Inequality (\ref{GeneralRegIneq}) from Proposition \ref{RegularityMostGeneral} implies:
\begin{align}
 \mu(y_0) - \sum_{q=1}^d \mu(y_q) +\mu(y_{e-}) - \mu(y_{e+})+d-3&\leq (\mu(x_0)-\sum_{q=1}^d \mu(x_q) +d-2)-1 \notag \\
 \Leftrightarrow \mu(y_0) + \sum_{q=1}^d \mu(x_q) + \mu(y_{e-}) &\leq \mu(x_0) + \sum_{q=1}^d \mu(y_q) + \mu(y_{e+}) \ . \label{CompareDimBoundary}
\end{align}
But by (\ref{ConditionsMostGeneral}), this inequality is valid if and only if equality holds and 
\begin{equation*}
 x_0= y_0 \ , \quad x_i = y_i \ \ \forall i \in \{1,2,\dots,d\} \ , \quad y_{e+}=y_{e-} =: x_e \ .
\end{equation*}
This means that if a sequence in $\Acal^{d}_{\fatY}(x_0,x_1,\dots,x_d,T)$ has a geometrically convergent component sequence associated with an internal edge, then the subsequence will converge against an element of 
\begin{equation*}
  \bigcup_{x_e \in \Crit f} \BB_{\fatY}\left(\left(x_0,x_{e} \right),\left(x_1,\dots,x_d,x_{e}\right),T,\{e\} \right) \ .
\end{equation*}
Consider the case of a geometrically convergent subsequence with $F_1 =F_2= \emptyset$ in the above notation. Then all of the component sequences associated with internal edges have subsequences converging against a trajectories with positive interval length. Since we have assumed that the whole sequence does not have a convergent subsequence, there exist component sequences associated with external edges that have geometrically convergent subsequences.

Consequently, the subsequence converges geometrically against a product of un\-pa\-ra\-me\-trized Morse trajectories and an element of a space of the form 
\begin{equation*}
 \Acal^d_{\fatY}(y_0,y_1,\dots,y_d,T)
\end{equation*}
for some $y_0,\dots,y_d \in \Crit f$ satisfying the conditions from (\ref{ConditionsMostGeneral}). Define $I \subset \{0,1,\dots,d\}$ by demanding that $y_i\neq x_i$ if and only if $i \in I$. In particular, this means that for every $i\in I $, there is a strict inequality of the form 
\begin{equation}
\label{MorseIndIneq}
\mu(y_i) > \mu(x_i) \ \ \text{if} \ i \in \{1,2,\dots,d\} \quad \text{and} \quad \mu(y_i) < \mu(x_i) \ \ \text{if} \ i=0 \ .
\end{equation}
The dimension of $\Acal^d_{\fatY}(y_0,y_1,\dots,y_d,T)$ is then computed as follows:
\begin{align*}
 \dim  \Acal^d_{\fatY}(y_0,y_1,\dots,y_d,T) &= \mu(y_0) - \sum_{q=1}^d \mu(x_q)+d-2 \\
      &\stackrel{(\ref{MorseIndIneq})}{\leq} \mu(x_0) - \sum_{q=1}^d \mu(x_q) +d-2 - |I| =1-|I| \ ,
\end{align*}
by assumption on $x_0,x_1,\dots,x_d$. Thus, the space $\Acal^d_{\fatY}(y_0,y_1,\dots,y_d,T)$ is only non-empty if $|I| = 1$, i.e. $I = \{i\}$ for a unique $i \in \{0,1,\dots,d\}$. For $i \in \{1,2,\dots,d\}$ this means that the sequence converges geometrically against an element of 
\begin{equation*}
 \Acal^d_{\fatY}(x_0,x_1,\dots,x_{i-1},y_{i1},x_{i+1},\dots,x_d,T) \times \prod_{j=1}^{m-1} \widehat{\MM}(y_{ij},y_{i(j+1)}) \times \widehat{\MM}(y_{im},x_i)
\end{equation*}
for some $m \in \NN$ and $y_{i1},\dots,y_{im} \in \Crit f$ with $\mu(y_{i1}) > \mu(y_{i2}) > \dots > \mu(y_{im}) > \mu(x_i)$. But this obviously implies $\mu(y_{i1}) \leq \mu(x_i)+m$, and therefore 
$$\dim \Acal^d_{\fatY}(x_0,x_1,\dots,x_{i-1},y_{i1},x_{i+1},\dots,x_d,T) \leq \mu(x_0)-\sum_{q=1}^d \mu(x_q) +2-d-m = 1 - m \ ,$$
which immediately implies $m = 1$. Hence, the geometric limit we are considering lies in a space of the form
\begin{equation*}
 \Acal^d_{\fatY}(x_0,x_1,\dots,x_{i-1},y_{i},x_{i+1},\dots,x_d,T) \times \widehat{\MM}(y_i,x_i) \ ,
\end{equation*}
for some $y_i \in \Crit f$ with $\mu(y_i) = \mu(x_i)+1$. Analogously, the case $i=0$ leads to geometric convergence against elements of $\widehat{\MM}(x_0,y_0) \times \Acal^d_{\fatY}(y_0,x_1,\dots,x_d,T)$ for some $y_0 \in \Crit f$ with $\mu(y_0) = \mu(x_0)-1$. \bigskip 

It remains to discuss the case $F_1=\emptyset$ and $F_2\neq \emptyset$. Consider a sequence which has a geometrically convergent subsequence of that type and assume for the moment that all component sequences associated with external edges have convergent subsequences. Then the sequence of Morse ribbon trees has a subsequence for which all component subsequences converge. By Theorem \ref{LengthToZero}, the projection of that subsequence away from the finite-length trajectories associated with elements of $F_2$ converges against an element of
 \begin{equation*}
  \Acal^{d}_{\fatY}(x_0,x_1,\dots,x_d,T/F_2) \ .
 \end{equation*}
 This especially requires the space $\Acal^{d}_{\fatY}(x_0,x_1,\dots,x_d,T/F_2)$ to be non-empty. Consequently its expected dimension has to be non-negative. But from Theorem \ref{MainTheoremMorseTrees} and the regularity of $\fatY$ we know that
 \begin{align*}
  \dim \Acal^{d}_{\fatY}(x_0,x_1,\dots,x_d,T/F_2) &= \mu(x_0) - \sum_{i=1}^d \mu(x_i) + k(T) - |F_2| \\
      &= \mu(x_0) - \sum_{i=1}^d \mu(x_i) + d-2 - |F| \stackrel{(\ref{CritPointOneDim})}{=} 1- |F_2| \stackrel{!}{\geq} 0 \ .    
 \end{align*}
 This implies that $F$ contains at most one element. By assumption $F$ is non-empty, so it contains precisely one element, i.e. $F = \{e\}$ for some $e \in E_{int}(T)$. By similar arguments, one argues that for dimensional reasons, the case $F_2\neq \emptyset$ can not coincide with the geometric convergence of sequences associated with external edges. Hence, there are no further cases to consider. 

The claim follows by putting the different cases together.
\end{proof}

\begin{definition} \index{compactification}
 Let $A$ be a manifold and $B$ be a compact topological space with $A \subset B$. We say that \emph{$A$ can be compactified to $B$} if $B$ is homeomorphic to the topological closure of $A$. 
\end{definition}

The following theorem requires the use of so-called gluing methods in Morse theory. Analytically, this is a very delicate issue, and we refrain from giving a detailed discussion. \index{gluing in Morse theory}

Every situation occuring in the proof requiring gluing analysis is a straightforward application of the standard gluing results, either for perturbed negative or positive semi-infinite or for perturbed finite-length trajectories. 

The results for the unperturbed case are stated and proven in \cite{WehrheimMWC} and \cite{SchwarzEqui}. See also  \cite[Chapter 18]{KronheimerMrowka} for the case of finite-length trajectories. The results extend to the perturbed case, since the line of argument used to prove these theorems essentially relies on a local analysis of the moduli spaces of trajectories and locally the perturbed and the unperturbed case are described in the same way.

\begin{remark} 
\label{RemXXcond3}
For the first time in this article, we will need condition (\ref{XXcond3}) from the definition of the perturbation space $\XX_0(M)$ in the proof of the following statement. It will guarantee the differentiability of a certain map we will need for the boundary description in Theorem \ref{PreCompactficationOneDim}.  
\end{remark}

\begin{theorem}  \index{compactness of moduli spaces!of Morse ribbon trees!one-dimensional}
\label{PreCompactficationOneDim}
 Let $T \in \RTree_d$ be a binary tree. The space $\Acal^d_{\fatY}(x_0,x_1,\dots,x_d,T)$ can be compactified to a compact one-dimensional manifold $\bAcal^d_{\fatY}(x_0,x_1,\dots,x_d,T)$ of class $C^{n+1}$ whose boundary is given by 
 \begin{align*}
  &\partial \bAcal^d_{\fatY}(x_0,x_1,\dots,x_d,T) = \bigcup_{\stackrel{y_0 \in \Crit f}{\mu(y_0) = \mu(x_0)-1}}\widehat{\MM}(x_0,y_0) \times\Acal^d_{\fatY}(y_0,x_1,\dots,x_d,T) \\ 
  &\cup \bigcup_{i=1}^d \bigcup_{\stackrel{y_i \in \Crit f}{\mu(y_i) = \mu(x_i)+1}} \Acal^d_{\fatY}(x_0,x_1,\dots,x_{i-1},y_i,x_{i+1},\dots,x_d,T) \times \widehat{\MM}(y_i,x_i) \\
  &\cup \bigcup_{e \in E_{int}(T)} \bigcup_{x_e \in \Crit f} \BB_{\fatY}\left((x_0,x_{e}),(x_1,\dots,x_d,x_{e}),T,\{e\} \right) \cup \bigcup_{e \in E_{int}(T)} \Acal^{d}_{\fatY}(x_0,x_1,\dots,x_d,T/e) \ .
 \end{align*}
\end{theorem}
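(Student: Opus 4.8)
The plan is to combine the sequential compactness result of Theorem \ref{CompactnessOneDimModuli} with standard gluing analysis for the three types of perturbed Morse trajectories. The structure of the argument mirrors the classical proof that one-dimensional moduli spaces in Morse homology compactify to one-manifolds with boundary, so I will emphasize the new features coming from the ribbon-tree setting rather than the analytic core of the gluing estimates.

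\emph{Step 1: The candidate boundary is a finite set.} First I would verify that each of the four types of spaces appearing on the right-hand side is a finite set. For the first three types this follows from Theorem \ref{ZerodimFinite} together with the dimension formula of Theorem \ref{MainTheoremMorseTrees} (applied to $T$ and to $T/e$) and Proposition \ref{RegularityIntBreak}: in each case the relevant moduli space has expected dimension $0$, is a manifold of class $C^{n+1}$ by regularity of $\fatY$, and is sequentially compact by the arguments in the proof of Theorem \ref{ZerodimFinite}. The spaces $\widehat{\MM}(x_0,y_0)$ and $\widehat{\MM}(y_i,x_i)$ are finite by the standard compactness theorem for unparametrized Morse trajectories between critical points of index difference one. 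Hence $\partial\bAcal^d_{\fatY}(x_0,x_1,\dots,x_d,T)$, as defined, is a finite set, and it is precisely the set of geometric limit points of $\Acal^d_{\fatY}(x_0,x_1,\dots,x_d,T)$ by Theorem \ref{CompactnessOneDimModuli} (note that all the broken configurations in that theorem with more than one breaking, or with a quotient $T/F_2$ having $|F_2| \geq 2$, are excluded by the dimension count in its proof).

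\emph{Step 2: Gluing produces half-open collar neighborhoods.} For each boundary point I would invoke the appropriate gluing theorem to produce a $C^{n+1}$-embedding of a half-open interval $[0,\varepsilon)$ into $\bAcal^d_{\fatY}(x_0,x_1,\dots,x_d,T)$ whose image is a neighborhood of that point, with the point itself corresponding to the parameter $0$. There are four cases, each a direct transcription of an established result: (i) gluing a negative semi-infinite broken trajectory (the $\widehat{\MM}(x_0,y_0)$ factor) to the root edge — this is the gluing analysis of \cite{SchwarzEqui}; (ii) gluing a positive semi-infinite broken trajectory to a leaf edge — same reference, with reversed flow; (iii) gluing along an interior breaking of a finite-length trajectory associated to an internal edge, i.e. gluing the configuration in $\BB_{\fatY}$ back into $\Acal^d_{\fatY}(\dots,T)$ — this uses the finite-length gluing results of \cite{WehrheimMWC} and \cite[Chapter 18]{KronheimerMrowka}; (iv) un-collapsing an internal edge, i.e. the situation where $\lim l_{en} = 0$ and the limit lies in $\Acal^d_{\fatY}(x_0,\dots,x_d,T/e)$ — here the "gluing'' is really just the observation that near such a limit the moduli space of trees modelled on $T$ is parametrized by the length $l_e \in (0,\varepsilon)$ of the reappearing edge, and this parametrization is $C^{n+1}$ precisely because of condition \eqref{XXcond3} (see Remark \ref{RemXXcond3}), which guarantees the relevant vector field and its derivatives up to order $n+1$ vanish at $l_e = 0$. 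In each case the gluing map is injective on its domain and its image exhausts a neighborhood of the boundary point in the geometric topology; moreover for $\varepsilon$ small the images of gluing maps at distinct boundary points are disjoint, and are disjoint from the non-compactness locus, by the uniqueness/surjectivity part of the gluing theorems.

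\emph{Step 3: Assemble the compact one-manifold.} Finally I would argue that $\bAcal^d_{\fatY}(x_0,x_1,\dots,x_d,T) := \Acal^d_{\fatY}(x_0,x_1,\dots,x_d,T) \sqcup \partial\bAcal^d_{\fatY}(x_0,x_1,\dots,x_d,T)$, equipped with the topology of geometric convergence, is a compact one-dimensional manifold with boundary of class $C^{n+1}$: it is Hausdorff and second countable; away from $\partial$ it is the $C^{n+1}$-manifold $\Acal^d_{\fatY}(\dots,T)$ of dimension one by Theorem \ref{MainTheoremMorseTrees}; near each boundary point it is modelled on $[0,\varepsilon)$ by Step 2; and it is sequentially compact because every sequence either has a subsequence converging in $\Acal^d_{\fatY}(\dots,T)$ or, by Theorem \ref{CompactnessMorseTrees} and Theorem \ref{CompactnessOneDimModuli}, a subsequence converging geometrically to a point of $\partial$, which by construction lies in the space. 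Since a compact one-manifold is metrizable, the topological closure of $\Acal^d_{\fatY}(\dots,T)$ in it is all of it, so this is a genuine compactification in the sense of the preceding definition. The main obstacle in a fully detailed write-up is case (iv) of Step 2 — verifying that un-collapsing an internal edge yields a $C^{n+1}$ chart — because, unlike the other three cases, it is not a literal citation of a published gluing theorem but rather requires one to exploit the tangency condition \eqref{XXcond3} to control the regularity of the length parametrization; the other three cases are essentially bookkeeping on top of known analytic results.
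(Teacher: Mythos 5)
Your overall strategy matches the paper's: cases (i)--(iii) (root-breaking, leaf-breaking, interior breaking of a finite-length edge) are handled by citing the standard Morse-theoretic gluing results, and case (iv) (edge collapse) is the genuinely new step, hinging on the smoothness of the $l_e\to 0$ limit controlled by condition \eqref{XXcond3}. You even flag case (iv) as the sticking point, which is the right instinct. The problem is that your description of case (iv) --- ``the moduli space of trees modelled on $T$ is parametrized by the length $l_e$ of the reappearing edge, and this parametrization is $C^{n+1}$ because of \eqref{XXcond3}'' --- is not an argument but an assertion, and it is not the argument that works. The moduli space $\Acal^d_{\fatY}(x_0,\dots,x_d,T)$ is one-dimensional, and there is no reason a priori that $l_e$ is a coordinate near the would-be boundary point; in fact that is essentially what needs to be proved.

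What the paper actually does is reformulate the whole transversality problem. One removes the finite-length trajectory $\gamma_e$ as a variable and keeps only its length $l_e\in[0,+\infty)$, obtaining a manifold with boundary $\widetilde{\MM}^d_{\fatY}(T)$ whose boundary is the locus $l_e=0$. The lost coupling conditions at the endpoints of $e$ are re-inserted into a modified endpoint map $\widetilde{E}$ by composing with the time-$l_e$ flow map $g(l_e,\vec l,x)=\phi^{Y_e(\vec l)}_{0,l_e}(x)$; condition \eqref{XXcond3} is exactly what ensures $g$, and hence $\widetilde E$, is of class $C^{n+1}$ \emph{up to and including} $l_e=0$. One then checks that $\widetilde{E}^{-1}(\Delta_T)$ is a one-manifold with boundary because regularity of $\fatY$ forces both $\widetilde E$ and its restriction to $\partial\widetilde{\MM}^d_{\fatY}(T)$ to be transverse to $\Delta_T$; its interior is (a copy of) $\Acal^d_{\fatY}(x_0,\dots,x_d,T)$ and its boundary is identified with $\Acal^d_{\fatY}(x_0,\dots,x_d,T/e)$. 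Crucially, this last identification uses the \emph{universality} of $\fatY$ (so that $\pi_{\{e\}}(\fatY_T)=\fatY_{T/e}$); without it, the $l_e=0$ slice would be a moduli space of $T/e$-trees for a perturbation induced from $\fatY_T$, not for $\fatY_{T/e}$. Your write-up omits universality from case (iv) entirely. So the gap in your proposal is concrete: you need the reformulated transversality problem on $\widetilde{\MM}^d_{\fatY}(T)$ together with the universality input, not merely the regularity of a length parametrization.
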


\begin{proof}
 The gluing results from \cite{WehrheimMWC}, \cite{SchwarzEqui} and \cite[Chapter 18]{KronheimerMrowka} cover three of the four different types of moduli spaces in the boundary. Gluing analysis for perturbed negative semi-infinite trajectories shows that every element of 
 \begin{equation*}
  \bigcup_{\mu(y_0) = \mu(x_0)-1}\widehat{\MM}(x_0,y_0) \times\Acal^d_{\fatY}(y_0,x_1,\dots,x_d,T)
 \end{equation*}
bounds a unique component of $\Acal^d_{\fatY}(x_0,x_1,\dots,x_d,T)$, while gluing analysis for perturbed positive semi-infinite trajectories shows that every element of 
\begin{equation*}
\bigcup_{i=1}^d \bigcup_{\mu(y_i) = \mu(x_i)+1} \Acal^d_{\fatY}(x_0,x_1,\dots,x_{i-1},y_i,x_{i+1},\dots,x_d,T) \times \widehat{\MM}(y_i,x_i)
\end{equation*}
bounds a unique component of $\Acal^d_{\fatY}(x_0,x_1,\dots,x_d,T)$. Furthermore, gluing analysis for perturbed finite-length trajectories shows that indeed every element of 
\begin{equation*}
 \bigcup_{e \in E_{int}(T)} \bigcup_{x_e \in \Crit f} \BB_{\fatY}\left((x_0,x_{e}),(x_1,\dots,x_d,x_{e}),T,\{e\} \right)
\end{equation*}
bounds a unique component of $\Acal^d_{\fatY}(x_0,x_1,\dots,x_d,T)$. \bigskip	

The proof that every element of $\bigcup_{e \in E_{int}(T)} \Acal^{d}_{\fatY}(x_0,x_1,\dots,x_d,T/e)$ is indeed contained in the boundary of $\Acal^d_{\fatY}(x_0,x_1,\dots,x_d,T)$ requires the use of a slightly different method, and we will perform this proof in greater detail. Its main idea and structure will be the following for a given $e \in E_{int}(T)$:
\begin{itemize}
 \item Instead of considering families of trajectories associated with \emph{all} edges of $T$ and intersecting it transversely with $\Delta_T$, we consider certain families of trajectories associated with \emph{all edges but $e$}.
 \item In reverse, we will not use the map $\Eunder_T$ to intersect the family with $\Delta_T$, but another map to be defined, which ``includes'' the intersection conditions involving $e$. (We will make this precise in the course of the proof.)
 \item While all edge lengths of elements in $\Acal^d_{\fatY}(x_0,x_1,\dots,x_d,T)$ are by definition positive, the reformulated transversality problem will naturally extend to the case of the trajectory associated with $e$ having length zero. % as a boundary condition. 
 \item We show that in the case of a regular perturbation datum, the evaluation map on the families of trajectories under discussion will intersect both the interior and the boundary of this submanifold transversely. 
 \item We use the universality of the perturbation datum to identify the preimage of $\Delta_T$ under the abovementioned map with $\Acal^d_{\fatY}(x_0,x_1,\dots,x_d,T)\cup \Acal^d_{\fatY}(x_0,x_1,\dots,x_d,T/e)$ to show that this space is a one-dimensional manifold with boundary.
\end{itemize}
Let $e \in E_{int}(T)$ be fixed throughout the discussion and put $k := k(T)$. Consider the following map whose domain is a smooth manifold with boundary:
\begin{equation*}
 g: [0,+\infty) \times \left(0,+\infty \right)^{k-1} \times M \to M \ , \quad \left(l, \vec{l},x\right) \mapsto \phi^{Y_e\left(\vec{l}\right)}_{0,l}(x) \ .
\end{equation*}
Here, $\phi^{Y_e\left(\vec{l}\right)}_{0,l}$ denotes the time-$l$ map of the time-dependent vector field $Y_e\left(\vec{l}\right)$ with respect to the initial time zero. In other words, it is the time-$l$-map of the vector field $Y_e\left(\vec{l},0,\cdot\right): M \to TM$. In particular, 
\begin{equation}
\label{grestric0}
 g\left(0,\vec{l},x \right) = x
\end{equation}
for all $\vec{l} \in \left(0,+\infty \right)^{k-1}$, $x \in M$. Hence, the restriction of $g$ to $\{0\} \times \left(0,+\infty\right)^{k-1} \times M$ is independent of the $\left(0,+\infty \right)^{k-1}$-component. Condition (\ref{XXcond3}) in the definition of $\XX_0(M)$ implies that the vector field $Y_e$ and all existing derivatives tend to zero for $l \to 0$, so $g$ is of class $C^{n+1}$ at every $(0,\vec{l},x)$. The Parametrized Flow Theorem, see \cite[Theorem 21.4]{AbrahamRobbin}, implies that map $g$ is of class $C^{n+1}$ away from $l=0$, hence $g$ is of class $C^{n+1}$. 

\bigskip

Throughout the rest of the proof, we assume w.l.o.g. that $e$ is the \emph{first} edge in $E_{int}(T)$ according to the chosen ordering of $E_{int}(T)$. To prepare the following, we introduce another space whose meaning will become apparent momentarily:
\begin{align*}
 &\widetilde{\MM}^d_{\fatY}(T) := \left\{\left. (l_e, \gamma_0,(l_f,\gamma_f)_{f\neq e},\gamma_1,\dots,\gamma_d) \ \right| \ l_e \in [0,+\infty), \ \  \gamma_0 \in W^-\left(x_0,Y_0((l_f)_{f\neq e},l_e)\right) ,   \right. \\
 &\qquad \qquad \qquad (l_h,\gamma_h) \in \MM\left(Y_h\left((l_f)_{f\notin\{e,h\}},l_e \right)\right) \ \forall \ h \in E_{int}(T)\setminus \{e\} \ , \\ 
 &\left. \qquad \qquad \qquad \gamma_i \in W^+\left(x_i,Y_i\left((l_f)_{f\neq e},l_e \right)\right) \ \ \forall\  i \in \{1,2,\dots,d\}  \right\} \ .
 \end{align*}
One checks without difficulties that $\widetilde{\MM}^d_{\fatY}(T)$ is a manifold with boundary of class $C^{n+1}$ which is diffeomorphic to 
\begin{equation*}
 [0,+\infty)\times \WW^u(x_0) \times \prod_{f \in E_{int}(T)\setminus\{e\}} \MM \times \WW^s(x_1) \times \dots \times \WW^s(x_d) 
\end{equation*}
and that the following map is a submersion of class $C^{n+1}$: 
\begin{equation}
\label{MMtildeSubm}
\begin{aligned}
 \MM^d_{\fatY}(x_0,x_1,\dots,x_d,T) &\to \widetilde{\MM}^d_{\fatY}(T) \ , \\
 \left(\gamma_0,(l_f,\gamma_f)_{f\in E_{int}(T)},\gamma_1,\dots,\gamma_d\right) &\mapsto \left(l_e,\gamma_0,(l_f,\gamma_f)_{f\neq e},\gamma_1,\dots,\gamma_d\right) \ .
\end{aligned}
\end{equation}
Let $\left(\gamma_0,(l_e,\gamma_e),(l_f,\gamma_f)_{f\neq e},\gamma_1,\dots,\gamma_d \right) \in \MM^d_{\fatY}(x_0,x_1,\dots,x_d,T)$. This particularly means that 
\begin{equation*}
(l_e,\gamma_e) \in \MM\left(Y_e\left((l_f)_{f\neq e} \right) \right) \ .
\end{equation*}
In terms of the map $g$, this is equivalent to
\begin{equation}
\label{paramfloweq}
 \gamma_e(s) = \phi^{Y_e\left((l_f)_{f\neq e}\right)}_{0,s}(\gamma_e(0))=g\left(s,(l_f)_{f\neq e},\gamma_e(0) \right) \quad \forall s \in [0,l_e] \ .
\end{equation}
By definition, $\left(\gamma_0,(l_e,\gamma_e),(l_f,\gamma_f)_{f\neq e},\gamma_1,\dots,\gamma_d \right) \in \Acal^d_{\fatY}(x_0,x_1,\dots,x_d,T)$ if and only if 
\begin{align*}
 &\Eunder_T\left(\gamma_0,(l_e,\gamma_e),(l_f,\gamma_f)_{f\neq e},\gamma_1,\dots,\gamma_d \right) \in \Delta_T \\
 \Leftrightarrow \ &\left(\gamma_0(0),(\gamma_e(0),\gamma_e(l_e)),(\gamma_f(0),\gamma_f(l_f))_{f\neq e},\gamma_1(0),\dots,\gamma_d(0) \right) \in \Delta_T \\
 \stackrel{\eqref{paramfloweq}}{\Leftrightarrow} \ &\left(\gamma_0(0),\left(\gamma_e(0),g\left(l_e,(l_f)_{f\neq e}, \gamma_e(0) \right)\right),(\gamma_f(0),\gamma_f(l_f))_{f\neq e},\gamma_1(0),\dots,\gamma_d(0) \right) \in \Delta_T \ .
\end{align*}
Let $e' \in E(T)$ be the edge with $\vout(e')=\vin(e)$. We assume w.l.o.g. that $e' \in E_{int}(T)$, the remaining case $e'=e_0(T)$ is discussed along the same lines. 

Using the above equivalence, one checks that the map from \eqref{MMtildeSubm} maps the space $\Acal^d_{\fatY}(x_0,x_1,\dots,x_d,T)$ homeomorphically onto the following space:
\begin{align*}
 &\widetilde{\Acal}^d_{\fatY}(T) := \left\{ (l_e, \gamma_0,(l_f,\gamma_f)_{f\neq e},\gamma_1,\dots,\gamma_d) \in \widetilde{\MM}^d_{\fatY}(T) \ \Big| \ l_e > 0 \ , \right. \\
 &\left. \phantom{\widetilde{I}} \left(\gamma_{e'}(l_{e'}),g\left(l_e,(l_f)_{f\neq e},\gamma_{e'}(l_{e'}) \right)\right),\left(\gamma_0(0),(\gamma_f(0),\gamma_f(l_f))_{f\neq e},\gamma_1(0),\dots,\gamma_d(0) \right) \in \Delta_T \  \right\} \ .
\end{align*}
We will reformulate $\Acal^d_{\fatY}(x_0,x_1,\dots,x_d,T/e)$ in a very similar way. Without further mentioning, we will identify $E_{int}(T/e)$ with $E_{int}(T)\setminus\{e\}$. 

Let $\left(\gamma_0,(l_f,\gamma_f)_{f\neq e},\gamma_1,\dots,\gamma_d \right) \in \MM^d_{\fatY}(x_0,x_1,\dots,x_d,T/e)$. Then 
\begin{align*}
 &\left(\gamma_0,(l_f,\gamma_f)_{f\neq e},\gamma_1,\dots,\gamma_d \right) \in \Acal^d_{\fatY}(x_0,x_1,\dots,x_d,T/e) \\
 \Leftrightarrow \quad &\Eunder_{T/e}\left(\gamma_0,(l_f,\gamma_f)_{f\neq e},\gamma_1,\dots,\gamma_d \right)  \in \Delta_{T/e} \\
 \Leftrightarrow \quad &\left(\gamma_0(0),(\gamma_f(0),\gamma_f(l_f))_{f\neq e},\gamma_1(0),\dots,\gamma_d(0) \right) \in \Delta_{T/e} \ .
\end{align*}
 Comparing the definitions of $\Delta_T$ and $\Delta_{T/e}$ and letting $e'$ be given as above, this condition is equivalent to 
\begin{align*}
 &\left(\gamma_0(0), (\gamma_{e'}(0),\gamma_{e'}(0)),(\gamma_f(0),\gamma_f(l_f))_{f\neq e},\gamma_1(0),\dots,\gamma_d(0) \right) \in \Delta_{T} \\
 \stackrel{\eqref{grestric0}}{\Leftrightarrow} \ &\left(\gamma_0(0), \left(\gamma_{e'}(0),g\left(0,(l_f)_{f\neq e},\gamma_{e'}(0)\right)\right),(\gamma_f(0),\gamma_f(l_f))_{f\neq e},\gamma_1(0),\dots,\gamma_d(0) \right) \in \Delta_{T} \ .
\end{align*}
Using this condition and the universality of the perturbation datum, one checks that the space $\Acal^d_{\fatY}(x_0,x_1,\dots,x_d,T/e)$ is homeomorphic to 
\begin{align*}
 &\widetilde{\Acal}^d_{\fatY}(T/e) := \left\{ (0, \gamma_0,(l_f,\gamma_f)_{f\neq e},\gamma_1,\dots,\gamma_d) \in \widetilde{\MM}^d_{\fatY}(T) \  \right. \\
 &\quad \Bigl.\Big|  \left(\gamma_0(0),\left(\gamma_{e'}(0),g\left(0,(l_f)_{f\neq e},\gamma_{e'}(0)\right)\right),(\gamma_f(0),\gamma_f(l_f))_{f\neq e}, \gamma_1(0),\dots,\gamma_d(0) \right) \in \Delta_{T}  \Bigr\} \ .
\end{align*}
Note the strong similarity between the spaces $\widetilde{\Acal}^d_{\fatY}(T)$ and $\widetilde{\Acal}^d_{\fatY}(T/e)$. To state a transversality problem including both spaces, we consider the map 
\begin{align*}
 &\widetilde{E}: \widetilde{\MM}^d_{\fatY}(T) \to M^{1+2k+d} \ , \quad \left(l_e,\gamma_0,(l_f,\gamma_f)_{f\neq e},\gamma_1,\dots,\gamma_d\right) \mapsto \\ 
 &\qquad \left(\gamma_0(0),(\gamma_f(0),\gamma_f(l_f))_{f\neq e},\left(\gamma_{e'}(l_{e'}),g\left(l_e,(l_f)_{f\neq e},\gamma_{e'}(l_{e'}) \right)\right),\gamma_1(0),\dots,\gamma_d(0) \right) \ .
\end{align*}
Taking a close look at the definitions of $\widetilde{\Acal}^d_{\fatY}(T)$ and $\widetilde{\Acal}^d_{\fatY}(T/e)$, one deduces that 
\begin{equation*}
 \widetilde{E}^{-1}(\Delta_T) = \widetilde{\Acal}^d_{\fatY}(T) \cup \widetilde{\Acal}^d_{\fatY}(T/e) \ .
\end{equation*}
The regularity of $\fatY$ implies that both $\widetilde{E}$ and its restriction to the boundary of $\widetilde{\MM}^d_{\fatY}(T)$ are transverse to $\Delta_T$. Hence, $\widetilde{E}^{-1}(\Delta_T)$ is a one-dimensional manifold with boundary of class $C^{n+1}$. Its boundary is given by $\widetilde{\Acal}^d_{\fatY}(x_0,x_1,\dots,x_d,T/e)$ while its interior is given by $\widetilde{\Acal}^d_{\fatY}(x_0,x_1,\dots,x_d,T)$. In the light of the above identifications, this shows that every element of $\Acal^d_{\fatY}(x_0,x_1,\dots,x_d,T/e)$ bounds a unique component of $\Acal^d_{\fatY}(x_0,x_1,\dots,x_d,T)$, which we had to show. 
\end{proof}

The significance of Theorem \ref{PreCompactficationOneDim} will become apparent after a deeper investigation of the spaces $\BB_{\fatY}\left((x_0,x_{e}),(x_1,\dots,x_d,x_{e}),T,\{e\} \right)$. It will turn out that, if the $d$-perturbation datum $\fatY$ fulfills some additional condition, then for every $e \in E_{int}(T)$ and every choice of $x_e \in \Crit f$ the space $\BB_{\fatY}\left((x_0,x_{e}),(x_1,\dots,x_d,x_{e}),T,\{e\} \right)$ will be diffeomorphic to a product of two moduli spaces of perturbed Morse ribbon trees. 

This will be the key observation for proving that the $A_\infty$-equations are satisfied. \bigskip

We will use the following graph-theoretic lemma without proof:

\begin{lemma} 
\label{TheLayOfe}
 Let $T \in \RTree_d$, $e \in E_{int}(T)$. There exist $i \in \{1,2,\dots,d\}$ and $l \in \{0,1,\dots,d-i\}$ such that
\begin{equation*}
 e \in  \bigcap_{j=1}^{i-1} E(P_j(T))^c \cap  \bigcap_{k=i}^{i+l} E(P_k(T)) \cap  \bigcap_{m=i+l+1}^{d} E(P_m(T))^c 
\end{equation*}
where $\cdot^c$ denotes the complement in $E(T)$ and where $P_j(T)$ denotes the path from the root of $T$ to its $j$-th leaf for every $j \in \{1,2,\dots,d\}$. 
\end{lemma}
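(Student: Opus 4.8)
The plan is to prove Lemma \ref{TheLayOfe} by a direct structural analysis of the paths $P_1(T),\dots,P_d(T)$ in a $d$-leafed ribbon tree and the way a fixed internal edge $e$ sits among them. The key observation is the \emph{planarity} (ribbon structure) of $T$: the leaves $v_1(T),\dots,v_d(T)$ are cyclically ordered, and removing $e$ from $T$ disconnects it into exactly two components, one containing the root $v_0(T)$ and $\vin(e)$, the other containing $\vout(e)$. I would first record the elementary fact that for any internal edge $e$, the leaf $v_j(T)$ lies in the component of $T\setminus\{e\}$ containing $\vout(e)$ if and only if $e \in E(P_j(T))$; indeed $P_j(T)$ is the unique reduced path from $v_0$ to $v_j$, so it passes through $e$ precisely when $v_0$ and $v_j$ lie on opposite sides of $e$. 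Thus the set $J_e := \{\, j \in \{1,\dots,d\} \mid e \in E(P_j(T)) \,\}$ consists exactly of the indices of leaves lying in the ``outgoing'' component of $T \setminus \{e\}$, and it is nonempty because $\vout(e)$ is an internal vertex of a ribbon tree, hence of valence $\geq 3$, and every one of its subtrees not containing the root must contain at least one leaf.

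The heart of the argument is then to show that $J_e$ is an \emph{interval} in $\{1,\dots,d\}$, i.e. $J_e = \{i, i+1, \dots, i+l\}$ for some $i$ and $l \geq 0$; this is exactly the content of the displayed formula (with the two complement intersections encoding that $j \notin J_e$ for $j < i$ and for $j > i+l$). For this I would use the ribbon/planar structure: the outgoing component $T_e^{\mathrm{out}}$ of $T \setminus \{e\}$, together with the ordering induced from $T$, is itself (after attaching a new root at $\vout(e)$) an ordered subtree whose leaves form a contiguous block of the cyclically ordered leaf set of $T$, because a planar embedding of $T$ consistent with the ordering of the leaves cuts along $e$ into two planar pieces each carrying a contiguous arc of the leaf boundary. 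Concretely, I would argue by contradiction: if $j_1 < j_2 < j_3$ with $j_1, j_3 \in J_e$ but $j_2 \notin J_e$, then the paths $P_{j_1}(T)$ and $P_{j_3}(T)$ both traverse $e$ while $P_{j_2}(T)$ does not, which forces the leaves $v_{j_1}, v_{j_3}$ to be ``separated'' from $v_{j_2}$ in a way incompatible with the planar order — one builds an embedded arc in $T_e^{\mathrm{out}}$ from $v_{j_1}$ to $v_{j_3}$ and observes that $v_{j_2}$ would have to lie on the wrong side of it, contradicting that the leaf ordering of $T$ restricts to a total order on the leaves of $T_e^{\mathrm{out}}$.

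I expect the main obstacle to be making the planarity argument rigorous within the purely CW-complex-based definition of ribbon trees used in Section \ref{SectionModuliSpacesPerturbed}: the paper defines an ordered tree merely as an isomorphism class of CW complexes with an ordering of the external vertices, so the ``ribbon'' adjective carries the ordering data but not an explicit planar embedding, and I would need to extract the cyclic-order consequences (``leaves of a subtree form a contiguous block'') from that data. The cleanest route is probably to phrase everything in terms of the function $j \mapsto \mathbf{1}_{J_e}(j)$ and prove directly that it has no ``$1,0,1$'' pattern, using only Lemma \ref{LemmaDirected}, Lemma \ref{OutgoingUniqueness}, and the compatibility of the leaf ordering with the tree structure (which is implicit in how ribbon trees were set up), rather than invoking a planar embedding explicitly. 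Once the interval property of $J_e$ is established, the displayed formula follows immediately by setting $i := \min J_e$ and $l := \max J_e - \min J_e$, and checking $l \in \{0,1,\dots,d-i\}$ is automatic since $\max J_e \leq d$. Since the paper explicitly says proofs of such graph-theoretic facts are ``elementary and therefore left to the reader,'' I would keep the writeup short, stating the interval property of $J_e$ as the one substantive step and deriving the formula from it.

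\bigskip

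\emph{Remark on presentation.} Given the paper's stated convention of omitting elementary graph-theory proofs, one realistic option is to simply not include a proof at all and let Lemma \ref{TheLayOfe} stand as one of the ``facts from graph theory'' the text says it uses without proof; the excerpt's sentence ``We will use the following graph-theoretic lemma without proof'' already signals this. If a proof sketch is wanted, the above outline — reduce to showing $J_e := \{j : e \in E(P_j(T))\}$ is a contiguous interval, then invoke the ribbon (ordered) structure to rule out the forbidden pattern — is the approach I would take.
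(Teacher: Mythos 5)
The paper offers no proof of Lemma~\ref{TheLayOfe}: it is introduced by the sentence ``We will use the following graph-theoretic lemma without proof,'' so there is no in-text argument to compare your sketch against. Your reduction is the right one: setting $J_e:=\{j: e\in E(P_j(T))\}$, the displayed formula is precisely the assertion that $J_e$ is a nonempty interval in $\{1,\dots,d\}$, and your identification of $J_e$ with the set of leaves lying in the $\vout(e)$-component of $T\setminus\{e\}$ is correct (using Lemma~\ref{OutgoingUniqueness} for nonemptiness and the $\geq 3$-valence of internal vertices).

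You have, however, put your finger on a genuine soft spot, and I want to press it a bit harder than you did. The interval property of $J_e$ is \emph{not} a consequence of the paper's formal definitions. The paper defines an ordered tree as a tree complex together with an arbitrary bijection of its external $0$-cells to $\{0,1,\dots,d\}$, and a ribbon tree as an ordered tree all of whose internal vertices are at least trivalent. No compatibility between the numbering of the leaves and the tree structure is imposed. Under that literal reading, the lemma is false: take the $3$-leafed binary tree with a single internal edge $e$ whose outgoing component contains two leaves, and number those two leaves $1$ and $3$; then $J_e=\{1,3\}$ is not an interval. What the lemma (and the subsequent splitting construction of Definition~\ref{DefSplitting}, and the ``left-handed/right-handed'' discussion in Appendix~\ref{AppendixOrderingEdges}) actually use is a planar, or cyclic-order, structure: at each internal vertex the incident edges carry a cyclic ordering, and the leaf numbering is the one induced by a depth-first traversal. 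Once that structure is granted, the contiguity of the leaves below any internal edge follows exactly as you describe (no ``$1,0,1$'' pattern), and the proof is genuinely elementary. So your argument is correct modulo this implicit strengthening of the definition, and your instinct that this is ``the main obstacle'' was right; I would just state explicitly, rather than parenthetically, that the paper's definition of ribbon tree needs to be supplemented by a cyclic ordering of half-edges at each internal vertex (or equivalently a planar embedding) for the lemma to hold, since without it the lemma is not merely harder to prove but untrue.
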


\begin{figure}[h]
 \centering
 \includegraphics[scale=0.8]{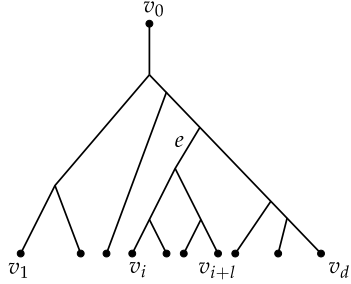}
 \caption{$e$ is of type $(i,l)$ in $T \in \RTree_d$.}
 \label{FigTypeil}
\end{figure}

See Figure \ref{FigTypeil} for an illustration of the situation in Lemma \ref{TheLayOfe}. 

\begin{definition} 
\label{Typeil} 
 Let $T \in \RTree_d$, $e \in E_{int}(T)$. If $i$ and $l$ are the numbers associated to $e$ by Lemma \ref{TheLayOfe}, then we say that \emph{$e$ is of type $(i,l)$ in $T$} and put $type(e):=(i,l)$.
\end{definition}

Consider a fixed $e \in E_{int}(T)$. We can construct a tree $T^e$ out of $T$ by removing $e$ from the edges of $T$, inserting a new vertex $v_e$ and two new edges $f_1$, $f_2$, such that:
\begin{align*}
 &V(T^e) = V(T) \cup \{v_e\} \ , \qquad E(T^e) = (E(T) \setminus \{e\}) \cup \{f_1,f_2\} \ , \\
 &\vout(f_1) = v_e \ , \quad \vin(f_1) = \vout(f) \quad \forall f \in E_{int}(T) \ \text{ with } \ \vout(f) = \vin(e) \ , \\
 &\vin(f_2) = v_e \ , \quad \vout(f_2) = \vin(f) \quad \forall f \in E_{int}(T) \ \text{ with } \ \vin(f) = \vout(e) \ . 
\end{align*}

\begin{figure}[h]
 \centering
 \includegraphics[scale=0.7]{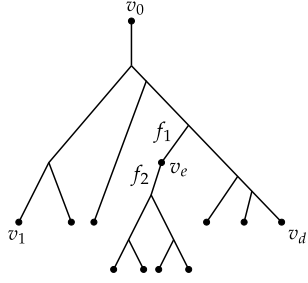}
 \caption{The tree $T^e$ for $T$ and $e$ from Figure \ref{FigTypeil}.}
 \label{FigureTe}
\end{figure}

For the tree $T$ and the edge $e$ from Figure \ref{FigTypeil}, the tree $T^e$ is depicted in Figure \ref{FigureTe}. 

Note that $T^e$ is not a ribbon tree, since the new vertex $v_e$ is by definition binary. Nevertheless, we can always decompose the tree $T^e$ into two ribbon trees as follows. 

\begin{definition}
\label{DefSplitting} 
 Let $e \in E_{int}(T)$ be of type $(i,l)$ for a convenient choice of $i \in \{1,\dots,d\}$ and $l \in \{1,\dots,d-i\}$. We define two subtrees $T^e_1,T^e_2 \subset T^e$ with
 \begin{equation*}
  \left(T^e_1,T^e_2 \right) \in \RTree_{d-l} \times \RTree_{l+1}
 \end{equation*}
by demanding that
\begin{itemize}
\item $v_0\left(T^e_1\right) = v_0(T)$, $v_0 \left(T^e_2\right) = v_e$, 
\item the leaves of $T^e_1$ are given by $\{v_1(T),\dots,v_{i-1}(T),v_e,v_{i+l+1}(T),\dots,v_d(T) \}$,
\item the leaves of $T^e_2$ are given by $\{v_i(T),v_{i+1}(T),\dots,v_{i+l}(T) \}$.
\end{itemize}

If we demand all of these conditions, the pair $(T^e_1,T^e_2)$ will be well-defined. We call $(T^e_1,T^e_2)$ \emph{the splitting of $T$ along $e$.} \index{splitting of a tree along an internal edge}
\end{definition}
 
 \begin{figure}[h]
 \centering
 \includegraphics[scale=0.7]{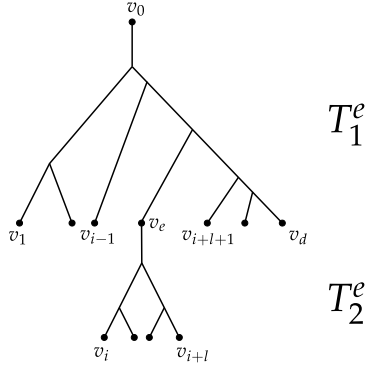}
 \caption{The splitting of $T$ into $(T^e_1,T^e_2)$ for the example from Figure \ref{FigTypeil}.}
 \label{FigTe12}
\end{figure}
 
 The situation is depicted in Figure \ref{FigTe12}. Note that especially:
\begin{equation*}
 E_{int}(T^e_1) \cup E_{int}(T^e_2) = E_{int}(T) \setminus \{e\} \ .
\end{equation*}
We state an important property of the respective diagonals.

\begin{lemma} \index{tree diagonal}
\label{PermutProductDiag}
 Let $T \in \RTree_d$ and $e \in E_{int}(T)$. There is a diffeomorphism %induced by a permutation
 \begin{equation*}
  \Delta_T \stackrel{\cong}{\to} \Delta_{T^e_1} \times \Delta_{T^e_2} \ .
 \end{equation*}
\end{lemma}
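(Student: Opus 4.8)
The plan is to unpack the definitions of the $T$-diagonal (in the form given in Remark~\ref{DeltaTexplicitly}) for $T$, $T^e_1$ and $T^e_2$, and exhibit the claimed diffeomorphism as an explicit permutation-of-coordinates map between products of copies of $M$, restricted to the respective diagonals. The key point is bookkeeping: $M^{1+2k(T)+d}$ carries one factor for the root edge $e_0(T)$, a pair of factors $(q^f_{\mathrm{in}},q^f_{\mathrm{out}})$ for each internal edge $f\in E_{int}(T)$, and one factor $q_j$ for each leaf $e_j(T)$; similarly for $T^e_1$ and $T^e_2$. Recall from the construction of $T^e$ and the splitting $(T^e_1,T^e_2)$ that $E_{int}(T^e_1)\sqcup E_{int}(T^e_2)=E_{int}(T)\setminus\{e\}$, that the new binary vertex $v_e$ is the root of $T^e_2$ and one of the leaves of $T^e_1$, and that the leaves of $T^e_1$ and $T^e_2$ together account for the $d$ leaves of $T$ (with $v_e$ replacing the block $v_i(T),\dots,v_{i+l}(T)$ in $T^e_1$). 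So $1+2k(T^e_1)+(d-l)=1+2(k(T)-1-l+\text{(edges in }T^e_2))+(d-l)$ — more simply, adding the factor counts for $T^e_1$ and $T^e_2$ and subtracting one for the fact that the leaf factor of $T^e_1$ at $v_e$ and the root factor of $T^e_2$ both record the single point sitting at $v_e$, one recovers exactly $1+2k(T)+d$ after accounting for the single pair $(q^e_{\mathrm{in}},q^e_{\mathrm{out}})$ for the collapsed edge $e$.

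First I would fix, once and for all, an ordering of $E_{int}(T)$ compatible with the splitting (so that $e$ is separated out and the remaining internal edges are grouped into those of $T^e_1$ and those of $T^e_2$), and write down the coordinate map
\[
 \Phi: M^{1+2k(T)+d} \to M^{1+2k(T^e_1)+(d-l)} \times M^{1+2k(T^e_2)+(l+1)}
\]
that sends a tuple $(q_0,(q^f_{\mathrm{in}},q^f_{\mathrm{out}})_{f\in E_{int}(T)},q_1,\dots,q_d)$ to the pair built as follows: in the first factor, put $q_0$ as the root value, the pairs $(q^f_{\mathrm{in}},q^f_{\mathrm{out}})$ for $f\in E_{int}(T^e_1)$, the leaf values $q_1,\dots,q_{i-1}$, then $q^e_{\mathrm{in}}$ in the slot of the leaf $v_e$, then $q_{i+l+1},\dots,q_d$; in the second factor, put $q^e_{\mathrm{out}}$ as the root value, the pairs $(q^f_{\mathrm{in}},q^f_{\mathrm{out}})$ for $f\in E_{int}(T^e_2)$, and the leaf values $q_i,\dots,q_{i+l}$. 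This $\Phi$ is visibly a permutation of coordinates together with a diagonal doubling only at the slot of $q^e_{\mathrm{in}}$/$q^e_{\mathrm{out}}$ — actually it is just a projection-and-permutation once one observes that the two coordinates $q^e_{\mathrm{in}}$ and $q^e_{\mathrm{out}}$ of the source each appear exactly once in the target, so $\Phi$ is a linear isomorphism of coordinate spaces; hence $\Phi$ is a diffeomorphism of the ambient products.

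Next I would verify that $\Phi$ carries $\Delta_T$ onto $\Delta_{T^e_1}\times\Delta_{T^e_2}$. Using the explicit description in Remark~\ref{DeltaTexplicitly}, the defining equations of $\Delta_T$ are of three kinds: $q^f_{\mathrm{out}}=q^{f'}_{\mathrm{in}}$ whenever $\vout(f)=\vin(f')$ for internal edges, $q^f_{\mathrm{out}}=q_j$ whenever $\vout(f)=\vin(e_j(T))$, and $q_0=q^f_{\mathrm{in}}$ whenever $\vout(e_0(T))=\vin(f)$. I would partition these equations according to whether the vertex $\vout(f)$ (equivalently $\vin(f')$) lies in $T^e_1$, lies in $T^e_2$, or equals $\vin(e)$ or $\vout(e)$. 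The equations entirely within $T^e_1$ become precisely the defining equations of $\Delta_{T^e_1}$; those within $T^e_2$ become those of $\Delta_{T^e_2}$; the equations involving $e$ itself — namely $q^{e'}_{\mathrm{out}}=q^e_{\mathrm{in}}$ for the unique $e'$ with $\vout(e')=\vin(e)$, and $q^e_{\mathrm{out}}=q^{f}_{\mathrm{in}}$ (or $=q_j$) for the $f$ (or leaf) with $\vin(f)=\vout(e)$ — become, under $\Phi$, the "leaf $v_e$" equation of $\Delta_{T^e_1}$ (matching $q^{e'}_{\mathrm{out}}$ with the leaf value $q^e_{\mathrm{in}}$) and the "root" equation of $\Delta_{T^e_2}$ (matching $q^e_{\mathrm{out}}$ with the first internal edge or leaf of $T^e_2$). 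Since these are exactly the equations cutting out $\Delta_{T^e_1}$ and $\Delta_{T^e_2}$ and no others, $\Phi(\Delta_T)=\Delta_{T^e_1}\times\Delta_{T^e_2}$, and the restriction is a diffeomorphism.

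The main obstacle is purely combinatorial rather than analytic: one must be scrupulous in tracking which vertex of $T$ becomes which vertex of $T^e_i$ under the splitting, so that every defining equation of $\Delta_T$ is matched with exactly one defining equation of the product diagonal with nothing left over and nothing double-counted — in particular the subtle point that the single physical point at $\vin(e)$ and at $\vout(e)$ now plays the role of a leaf value in $T^e_1$ and of the root value in $T^e_2$ simultaneously, which is precisely what makes $\Phi$ a bijection of coordinates rather than needing an extra diagonal embedding. Once the indexing is set up carefully (using $type(e)=(i,l)$ to locate the leaf block), the verification is routine and I would present it as a short unwinding of Remark~\ref{DeltaTexplicitly} with a reference to Figures~\ref{FigureTe} and~\ref{FigTe12} for orientation of the reader.
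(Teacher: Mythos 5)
Your proposal is correct and takes essentially the same approach as the paper: the paper exhibits precisely the coordinate permutation you describe (sending $q^e_{\mathrm{in}}$ to the slot of the new leaf $v_e$ in $T^e_1$ and $q^e_{\mathrm{out}}$ to the root slot of $T^e_2$, and reordering the remaining internal-edge and leaf factors), and then notes that comparing the definitions of the respective diagonals shows this permutation restricts to the desired diffeomorphism. Your extra paragraph partitioning the defining equations of $\Delta_T$ into those internal to $T^e_1$, internal to $T^e_2$, and involving $e$ is a reasonable fleshing-out of the paper's ``one checks.''
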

\begin{proof}
 Consider the following permutation:
 \begin{align*}
  &M^{1+2k(T)+d} \stackrel{\cong}{\to} M^{1+2k(T^e_1)+d-l}\times M^{1+2k(T^e_2)+l+1} \ , \qquad \Bigl(q_0,\left(q^f_{in},q^f_{out}\right)_{f \in E_{int}(T)},q_1,\dots,q_d\Bigr) \mapsto \\ 
  &\Bigl( q_0,\left(q^f_{in},q^f_{out}\right)_{f \in E_{int}(T^e_1)},q_1,\dots q_{i-1},q^e_{in},q_{i+l+1},\dots,q_d,  q^e_{out},\left(q^f_{in},q^f_{out}\right)_{f \in E_{int}(T^e_2)},q_i,\dots,q_{i+l} \Bigr) \ ,
 \end{align*}
 where the components associated with internal edges are reordered according to the chosen orderings of $E_{int}(T^e_1)$ and $E_{int}(T^e_2)$. Comparing the definitions of the respective diagonals, one checks that this permutation induces the desired diffeomorphism.
\end{proof}

Let $T \in \RTree_d$ be a binary tree and $e \in E_{int}(T)$. Before we continue, we are going to make a few remarks on the space $\BB_{\fatY}\left((x_0,x_{e}),(x_1,\dots,x_d,x_{e}),T,\{e\} \right)$. In Section \ref{SectionModuliSpacesPerturbed} we have written background perturbations $\fatX \in \XXb(T)$ as
\begin{align*}
 &\fatX = \left(\fatX^-, \left(\fatX^0_e\right)_{e \in E_{int}(T)}, \fatX^+_1,\dots,\fatX^+_d \right) \ , \quad \text{where} \\
&\fatX^- = \left(X^-_e\right)_{e \in E_{int}(T)} \ , \quad \fatX^0_e = \left(\left(X^0_{ef}\right)_{f \in E_{int}(T)\setminus\{e\}},X_{e+},X_{e-} \right) \ \forall e \in E_{int}(T) \ , \\ 
&\fatX^+_i = \left(X^+_{ei} \right)_{e \in E_{int}(T)} \ \forall i \in \{1,2,\dots,d\} \ .
\end{align*}
In this section, we prefer to reorder the components of $\fatX$ and to write:
\begin{equation}
\label{BackPertReordered}
 \fatX = \left(X^-_e, (X^0_{ef})_{f \in E_{int}(T)\setminus \{e\}},X_{e+},X_{e-},X^+_{e1},\dots,X^+_{ed} \right)_{e \in E_{int}(T)} =: \left(\fatX_e\right)_{e \in E_{int}(T)}.
\end{equation}
The reason for this change of notation is the following proposition. \index{background perturbations!for ribbon trees}

\begin{prop} 
\label{BackPertConv}
 Let $\fatX \in \XXb(T)$. If $\fatY \in \XX(T,\fatX)$, then
 \begin{equation*}
  \fatY_{\{e\}} = \fatX_e
 \end{equation*}
for every $e \in E_{int}(T)$ in the notation of (\ref{YF}).
\end{prop}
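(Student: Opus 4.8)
The statement asserts that taking all the relevant limits $\lim_{\lambda\to\infty}$ of the perturbations in $\fatY$ (as encoded in the operation $\fatY \mapsto \fatY_{\{e\}}$ from \eqref{YF}, specialized to the singleton $F=\{e\}$) recovers exactly the $e$-component $\fatX_e$ of the background perturbation, in the reordered notation \eqref{BackPertReordered}. The plan is to simply unwind the two definitions on both sides and check they match componentwise.

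First I would recall that, by definition of $\XX(T,\fatX) = \XX(1,k(T),d,\fatX)$ (Definition \ref{DefNonlocalBackgroundPert}), a perturbation $\fatY = (Y_0, (Y_f)_{f\in E_{int}(T)}, Y_1,\dots,Y_d) \in \XX(T,\fatX)$ satisfies, for the $Y_0$-component, $\lim_{\lambda\to\infty} c_e(\lambda, Y_0) = X^-_e$ where $\fatX^- = (X^-_f)_{f}$; for each internal-edge component $Y_h$ with $h \neq e$, the limit $\lim_{\lambda\to\infty}c_e(\lambda, Y_h)$ together with the appropriate $\Split$-limits equals the corresponding entry of $\fatX^0_h$; for the component $Y_e$ itself, the relevant limit is the $\Split_{k(T)-1}$-limit, which by the compatibility built into $\XXb_0(M,k(T)-1)$ and the definition of $\XX_0(M,k,\fatX^0_e)$ gives $(X_{e+}, X_{e-})$; and for each leaf component $Y_i$, $\lim_{\lambda\to\infty}c_e(\lambda,Y_i) = X^+_{ei}$. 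On the other side, $\fatY_{\{e\}}$ as defined in \eqref{YF} (with $F = \{e\}$) is precisely the tuple $\big((Y_0)_{\{e\}}, (Y_{e-})_{\{e\}}, \big((Y_h)_{\{e\}}\big)_{h\neq e}, (Y_1)_{\{e\}},\dots,(Y_d)_{\{e\}}, (Y_{e+})_{\{e\}}\big)$, where $(Z)_{\{e\}}$ for $Z \in \XX_*(M,k(T))$ is the single limit $\lim_{\lambda\to\infty} c_e(\lambda, Z)$ from \eqref{Ylimlimlim}, and $Y_{e\pm}$ are the components of $\lim_{\lambda\to\infty}\Split_{k(T)-1}(\lambda, Y_e)$.

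The key step is then to match these two tuples entry by entry. The entry $(Y_0)_{\{e\}} = \lim_{\lambda\to\infty}c_e(\lambda,Y_0) = X^-_e$ matches the first slot of $\fatX_e$ in \eqref{BackPertReordered}; the entries $(Y_h)_{\{e\}}$ for $h\in E_{int}(T)\setminus\{e\}$ match the slots $(X^0_{ef})_{f}$ (here one must be a little careful about the bookkeeping: the limit $\lim_\lambda c_e(\lambda, Y_h)$ is a parametrized vector field in $\XX_0(M,k(T)-1,\ldots)$, and it equals $X^0_{he}$ respectively $X^0_{eh}$ according to the commuting condition \eqref{finitecommute} — this is where one invokes that $\fatX \in \XXb(T)$, so the off-diagonal background entries are genuinely symmetric in the required sense); the entries $(Y_{e+})_{\{e\}}$ and $(Y_{e-})_{\{e\}}$ come from the $\Split$-limit of $Y_e$, which by definition of $\XX_0(M,k(T)-1,\fatX^0_e)$ equals $(X_{e+},X_{e-})$; and finally $(Y_i)_{\{e\}} = X^+_{ei}$ for each leaf $i$. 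Collecting, this is exactly $\fatX_e = (X^-_e, (X^0_{ef})_f, X_{e+}, X_{e-}, X^+_{e1},\dots,X^+_{ed})$.

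The only genuine subtlety — the main obstacle, such as it is — is making sure the iterated-versus-single limit and the $\Split$-vs-contraction operations are correctly paired up, i.e. that the component of $\fatY_{\{e\}}$ labelled "$(Y_{e-})_{\{e\}}$" in \eqref{YF} really is the $X_{e-}$ slot and not, say, the $X_{e+}$ slot, and that the reordering permutation $\sigma_{\{e\}}$ implicit in the definition of $\MM_{\fatY_F}$ does not scramble things. Since $F = \{e\}$ is a singleton, the iterated limits in \eqref{Ylimlimlim} collapse to a single limit $\lim_{\lambda\to\infty}c_e(\lambda,\cdot)$, and the $\Split$-limit on the $e$-component is by fiat the last limit in the tuple; the remaining verification is a purely notational check against Definitions \ref{DefNonlocalBackgroundPert}, \ref{PertDatum} and the reordering \eqref{BackPertReordered}. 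I would present this as a short direct computation, remarking that it "follows immediately by comparing the definitions" once the notational alignment is spelled out.
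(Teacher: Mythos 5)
Your overall plan --- unwind both sides of the equality and match components --- is exactly the paper's own proof, which is literally the one-line remark that the statement "follows immediately from writing down the definitions." Most of your component-by-component matching is correct: $(Y_0)_{\{e\}}=\lim_\lambda c_e(\lambda,Y_0)=X^-_e$ because $Y_0\in\XX_-(M,k(T),\fatX^-)$, the $\Split$-limit of $Y_e$ gives $(X_{e+},X_{e-})$ because $Y_e\in\XX_0(M,k(T)-1,\fatX^0_e)$, and $(Y_i)_{\{e\}}=X^+_{ei}$ because $Y_i\in\XX_+(M,k(T),\fatX^+_i)$.

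However, the step you single out as "the only genuine subtlety" --- the finite-length entries $(Y_h)_{\{e\}}$ for $h\neq e$ --- you resolve by invoking the commuting condition~\eqref{finitecommute} to claim that "the off-diagonal background entries are genuinely symmetric." That is not what~\eqref{finitecommute} says. That condition couples the slot indices \emph{inside a single} $\fatX^0_e\in\XXb_0(M,k(T)-1)$ (it guarantees that iterated contractions of one fixed background tuple are well defined); it imposes no relation at all between $\fatX^0_e$ and $\fatX^0_h$ for $e\neq h$, since in $\XXb(T)=\XXb_-(M,k(T))\times\bigl(\XXb_0(M,k(T)-1)\bigr)^{k(T)}\times\bigl(\XXb_+(M,k(T))\bigr)^d$ the factors $\fatX^0_e$ are chosen independently. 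The correct resolution is purely bookkeeping: since $Y_h\in\XX_0(M,k(T)-1,\fatX^0_h)$, the definition of $\XX_0(M,k,\fatX)$ gives directly $(Y_h)_{\{e\}}=\lim_\lambda c_e(\lambda,Y_h)=X^0_{he}$, the $e$-labelled entry of $\fatX^0_h$; no symmetry $X^0_{he}=X^0_{eh}$ is needed or available. This is also what the proof of Theorem~\ref{BoundaryDescriptionT} uses. (Read against the Section~\ref{SectionModuliSpacesPerturbed} convention for $X^0_{ef}$, the display~\eqref{BackPertReordered} appears to have the two subscripts swapped; if one instead declares $X^0_{ef}$ in~\eqref{BackPertReordered} to denote the $e$-labelled entry of $\fatX^0_f$, the proposition is an identity with no further argument, which is what the paper intends.)
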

\begin{proof}
 This is nothing but a slight change of notation and follows immediately from writing down the definitions.
\end{proof}

By definition of $\BB_{\fatY}\left((x_0,x_{e}),(x_1,\dots,x_d,x_{e}),T,\{e\} \right)$, we know that for every element
\begin{equation*}
 \left(\gamma_0,\left(l_f,\gamma_f\right)_{f \in E_{int}(T) \setminus e},\gamma_+,\gamma_-,\gamma_1,\dots,\gamma_d\right) \in \BB_{\fatY}\left((x_0,x_{e}),(x_1,\dots,x_d,x_{e}),T,\{e\} \right)
\end{equation*}
the following holds:
\begin{equation}
\label{BoundaryinDeltaT}
 \left(\gamma_0(0),\left(\gamma_f(0),\gamma_f(l_f)\right)_{f \in E_{int}(T) \setminus \{e\}},\gamma_+(0),\gamma_-(0),\gamma_1(0),\dots,\gamma_d(0)\right) \in \sigma_{\{e\}}(\Delta_T) \ ,
\end{equation}
where $\sigma_{\{e\}}$ is given by the map $\sigma_F$ from Theorem \ref{Fconvergence} with $F=\{e\}$. By Lemma \ref{PermutProductDiag} and the permutation constructed in its proof, (\ref{BoundaryinDeltaT}) is equivalent to
\begin{align}
 &\left(\gamma_0(0),\left(\gamma_f(0),\gamma_f(l_f)\right)_{f \in E_{int}(T^e_1)},\gamma_1(0),\dots,\gamma_{i-1}(0),\gamma_+(0),\dots,\gamma_d(0)\right) \in \Delta_{T^e_1} \ ,  \label{refactorDeltaT1} \\
 &\left(\gamma_-(0),\left(\gamma_f(0),\gamma_f(l_f)\right)_{f \in E_{int}(T^e_2)},\gamma_i(0),\dots,\gamma_{i+l}(0) \right) \in \Delta_{T^e_2} \ . \label{refactorDeltaT2}
\end{align}
Moreover, if $\fatY \in \XX(T,\fatX)$ for some $\fatX \in \XXb(T)$ as in (\ref{BackPertReordered}), Proposition \ref{BackPertConv} implies
\begin{align}
 &\gamma_0 \in W^-\left(x_0,X_{e0}\left((l_f)_{f \in E_{int}(T) \setminus \{e\}} \right) \right) \ , \label{BackPertConseq1} \\ 
 &(l_f,\gamma_f) \in \MM\left(X_{ef}\left((l_g)_{g \in E_{int}(T) \setminus \{e,f\}} \right) \right) \quad \forall f \in E_{int}(T)\setminus \{e\} , \\
 &\gamma_i \in W^+\left(x_i,X_{ei}\left((l_f)_{f \in E_{int}(T) \setminus\{e\}} \right) \right) \quad \forall i \in \{1,2,\dots,d\} \ , \\
 &\gamma_+ \in W^+\left(x_e,X_{e+}\left((l_f)_{f \in E_{int}(T) \setminus\{e\}} \right) \right), \ \ \gamma_- \in W^-\left(x_e,X_{e-}\left((l_f)_{f \in E_{int}(T) \setminus\{e\}} \right) \right) . \label{BackPertConseq5}
\end{align}
Comparing these properties with (\ref{refactorDeltaT1}) and (\ref{refactorDeltaT2}), it seems plausible to hope that for a convenient choice of $\fatX$, we can reorder the components of 
\begin{equation*}
\gammaunder := \left(\gamma_0,\left(l_f,\gamma_f\right)_{f \in E_{int}(T) \setminus \{e\}},\gamma_+,\gamma_-,\gamma_1,\dots,\gamma_d\right)
\end{equation*}
to obtain an element of the product space
\begin{equation}
\label{ProductTreeSpaces}
 \Acal^{d-l}_{\fatY_1}(x_0,x_1,\dots,x_{i-1},x_e,x_{i+l+1},\dots,x_d,T^e_1) \times \Acal^{l+1}_{\fatY_2}(x_i,\dots,x_{i+l},T^e_2)
\end{equation}
for certain perturbation data $\fatY_1 \in \XX(T^e_1)$ and $\fatY_2 \in \XX(T^e_2)$. This does not hold true for arbitrary choices of $\fatX$, since the perturbing vector field associated with a component of $\gammaunder$ depends on the parameters $l_f$ for \emph{all} $f \in E_{int}(T)\setminus\{e\}$. 

A necessary and sufficient condition for an identification of $\BB_{\fatY}\left((x_0,x_{e}),(x_1,\dots,x_d,x_{e}),T,\{e\} \right)$ with a product of moduli spaces as in (\ref{ProductTreeSpaces}) is that the parametrized vector fields in $\fatX_e$ associated with an edge of $T^e_i$, $i \in \{1,2\}$, depend \emph{only on those parameters $l_f$ with $f \in E(T^e_i)$}. \bigskip

In the following, we will introduce a method of constructing background perturbations. Given a $k$-perturbation datum for every $k < d$, we will inductively construct a background perturbation having the desired property for every $d$-leafed tree. \bigskip

Assume we have chosen a family $\fatY = \left(\fatY^k\right)_{2 \leq k < d}$ of perturbations, where $\fatY^k$ is a $k$-perturbation datum for every $2 \leq k < d$. We can write $\fatY$ as 
\begin{equation*}
 \fatY= \left(\fatY_T\right)_{T \in \bigcup_{k=2}^{d-1} \RTree_k} \ . 
\end{equation*}
For every $T \in \RTree_d$ we will construct an $\fatX^{\fatY} \in \XXb(T)$ out of the family $\fatY$. Fix $T \in \RTree_d$ and let $e \in E_{int}(T)$. We have seen above that
\begin{equation*}
 k(T^e_1) = d-l < d \ , \quad k(T^e_2) = l+1 < d \ .
\end{equation*}
Therefore, the family $\fatY$ especially contains perturbation data for the trees $T^e_1$ and $T^e_2$ which we denote by
$$ \fatY_{T^e_1} = \left(Y_{10},\left(Y_{1f} \right)_{f \in E_{int}(T^e_1)}, Y_{11},\dots,Y_{1(d-l)}\right) , \quad
 \fatY_{T^e_2} = \left(Y_{20},\left(Y_{2f} \right)_{f \in E_{int}(T^e_2)}, Y_{21},\dots,Y_{2(l+1)}\right) .$$
If $e$ is of type $(i,l)$ we define $\fatX^{\fatY} = \left(\fatX^{\fatY}_e\right)_{e \in E_{int}(T)} \in \XXb(T)$ by 
\begin{equation*}
 \fatX^{\fatY}_e := \left(Y_{10},\left(Z_f\right)_{f \in E_{int}(T) \setminus \{e\}},(Y_{1i},Y_{20}),  Y_{11},\dots,Y_{1(i-1)}, Y_{21},\dots,Y_{2(l+1)},Y_{1(i+1)},\dots,Y_{1(d-l)} \right) ,
\end{equation*}
where 
$$ Z_f := \begin{cases}
             Y_{1f} & \text{if } \ f \in E_{int}\left(T^e_1\right) \ , \\
             Y_{2f} & \text{if } \ f \in E_{int}\left(T^e_2\right) \ .
        \end{cases} $$
Here, we identify every $Y_{1j} \in \XX_\pm(M,k(T^e_1))$, $j \in \{0,1,\dots,d-l\}$, with the corresponding element of $\XX_\pm(M,k(T))$ which is independent of all parameters associated with internal edges which do not belong to $E_{int}(T^e_1)$. Analogously, we identify every 
$Y_{2i} \in \XX_\pm(M,k(T^e_2))$, $i \in \{0,1,\dots,l+1\}$, with the corresponding element of $\XX_\pm(M,k(T))$ which is independent of all parameters associated with internal edges which do not belong to $E_{int}(T^e_2)$. Similar identifications hold for the perturbations $Y_{1f}$ and $Y_{2g}$ for all $f \in E_{int}(T^e_1)$ and $g \in E_{int}(T^e_2)$. 

\begin{definition}
\begin{enumerate}[a)]
 \item A family $\fatY = \left( \fatY_d\right)_{d \geq 2}$,
 where $\fatY_d$ is a $d$-perturbation datum for every $d\geq 2$, is called a \emph{perturbation datum}. 
 
 $\fatY$ will be called \emph{regular} if $\fatY_d$ is regular for every $d \geq 2$. $\fatY$ will be called \emph{universal} if $\fatY_d$ is universal for every $d \geq 2$. 
 \item A perturbation datum $\fatY = \left( \fatY_d\right)_{d \geq 2}$ will be called \emph{consistent} if for all $d \geq 2$ and $T \in \RTree_d$ it holds that $\fatY_T \in \XX\left(T,\fatX^{\fatY_{< d}}\right)$, where $\fatY_{<d} := (\fatY_k)_{2\leq k<d}$ and $\fatX^{\fatY_{< d}} \in \XXb(T)$ is the background perturbation associated with $\fatY_{<d}$ as described above.
 \item A perturbation datum is called \emph{admissible} if it is regular, universal and consistent. \index{perturbation datum!admissible}
\end{enumerate}
\end{definition}

If $\fatY = \left(\fatY_d\right)_{d \geq 2}$ is a regular and universal perturbation datum we will write 
\begin{equation*}
 a^d_{\fatY}(x_0,x_1,\dots,x_d) := a^d_{\fatY_d}(x_0,x_1,\dots,x_d)
\end{equation*}
for the numbers from Definition \ref{AinftyMorseCoefficients} and $\mu_{d,\fatY} := \mu_{d,\fatY_d}$ for the maps from Definition \ref{AinftyMorseMaps}. \\

Our introduction of admissible perturbation data is justified by the following lemma. 

\begin{lemma}
\label{AdmissExistence} \index{perturbation datum!admissible}
 Admissible perturbation data exist.
\end{lemma}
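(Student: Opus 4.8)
The plan is to build an admissible perturbation datum $\fatY = (\fatY_d)_{d\geq 2}$ by induction on $d$, using Lemma \ref{ExistRegUniv} at each inductive step. The key point is that the definition of \emph{consistent} only constrains $\fatY_d$ in terms of the \emph{lower} perturbation data $\fatY_{<d} = (\fatY_k)_{2\leq k<d}$: it requires $\fatY_T \in \XX(T,\fatX^{\fatY_{<d}})$ for every $T\in\RTree_d$, where $\fatX^{\fatY_{<d}}\in\XXb(T)$ is the background perturbation manufactured from $\fatY_{<d}$ by the explicit construction preceding the definition. So at stage $d$, assuming $\fatY_2,\dots,\fatY_{d-1}$ have already been chosen (and are regular, universal, and consistent), the family $(\fatX^{\fatY_{<d}}_T)_{T\in\RTree_d^*}$ is completely determined, and it only remains to find a regular and universal $d$-perturbation datum $\fatY_d$ lying in the prescribed background spaces. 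This is precisely what Lemma \ref{ExistRegUniv} provides, \emph{once we know that $(\fatX^{\fatY_{<d}}_T)_{T\in\RTree_d^*}$ is a universal background $d$-perturbation datum}.

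Therefore the first substantive step is to verify that $\fatX^{\fatY_{<d}} := (\fatX^{\fatY_{<d}}_T)_{T\in\RTree_d^*}$ is universal in the sense of Definition \ref{PertDatum}, i.e. that $\pib_F(\fatX^{\fatY_{<d}}_T) = \fatX^{\fatY_{<d}}_{T/F}$ for all $T\in\RTree_d^*$ and $F\subset E_{int}(T)$ with $k(T/F)>0$, and that $\pib_F(\fatX^{\fatY_{<d}}_T) = \pib_{F'}(\fatX^{\fatY_{<d}}_{T'})$ for all $T,T'\in\RTree_d^*$ with $F = E_{int}(T)$, $F' = E_{int}(T')$. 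This is where the universality of the lower datum $\fatY_{<d}$ enters: collapsing an internal edge $f\neq e$ of $T$ corresponds to collapsing $f$ in whichever of the two splitting trees $T^e_1, T^e_2$ contains it, and since the $\fatY_{T^e_i}$ are obtained from a \emph{universal} $(k(T^e_i))$-perturbation datum, the contractions $c_j(\lambda,\cdot)$ and the maps $Z/F$ commute appropriately with the splitting construction. Unwinding the explicit formula for $\fatX^{\fatY}_e$ given in the excerpt and matching it against the definitions of $\pib_{\pm,F}$, $\pib_{0,F}$ one sees this reduces to bookkeeping about which parameters the identified vector fields depend on. The second-to-last bullet in the definition of universal background datum (the common-value condition for $T,T'$ with all internal edges collapsed) follows because collapsing all internal edges of any $d$-leafed tree produces the unique tree $T_0$ with $k(T_0)=0$, and the background perturbation on $T_0$ is forced.

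Granting that $\fatX^{\fatY_{<d}}$ is universal, Lemma \ref{ExistRegUniv} yields a regular and universal $d$-perturbation datum $\fatY_d$ with $\fatY_{d,T}\in\XX(T,\fatX^{\fatY_{<d}}_T)$ for every $T\in\RTree_d$; by construction this $\fatY_d$ is consistent relative to $\fatY_{<d}$, and the induction continues. The base case $d=2$ is handled by applying Lemma \ref{ExistRegUniv} directly with the trivial background datum (there is a unique $2$-leafed ribbon tree, with no internal edges, so $\RTree_2^* = \emptyset$ and consistency is vacuous), or equivalently by observing that Theorem \ref{MainTheoremMorseTrees} and Proposition \ref{RegularityMostGeneral} already guarantee generic regularity there. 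Assembling the $\fatY_d$ over all $d\geq 2$ gives the desired $\fatY = (\fatY_d)_{d\geq 2}$, which is regular, universal and consistent, hence admissible. I expect the main obstacle to be the verification that $\fatX^{\fatY_{<d}}$ is universal: the bookkeeping of parameter dependencies in the splitting construction is intricate, and one must be careful that the identifications of $\XX_\pm(M,k(T^e_i))$ with subspaces of $\XX_\pm(M,k(T))$ are compatible with the limit operations $c_j(\lambda,\cdot)$ and $\Split_k$ defining the background conditions \eqref{CommutingCondition}, \eqref{finitecommute}, \eqref{finitecommutesplit}.
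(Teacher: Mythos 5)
Your proposal matches the paper's proof: both argue by induction on $d$, take the base case $d=2$ as trivial since consistency is vacuous there, and at each inductive step construct the background datum $\fatX^{\fatY_{<d}}$ from the lower data, observe it is universal because the $\fatY_k$ are, and then invoke Lemma~\ref{ExistRegUniv} to produce a regular, universal $\fatY_d$ in the prescribed background spaces, which is consistent by construction. The only difference is that you flag the verification that $\fatX^{\fatY_{<d}}$ is universal as a nontrivial bookkeeping step, whereas the paper simply asserts it follows from the universality of the $\fatY_k$ for $k<d$.
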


\begin{proof}
 We will argue inductively over the number of leaves of the trees. For $d = 2$, there are no consistency conditions, so we can use an arbitrary regular and universal $2$-perturbation datum $\fatY_2$ to build an admissible perturbation datum. Assume now that for some $d \geq 2$ we have found a family $(\fatY_k)_{2 \leq k < d}$, where $\fatY_k$ is a $k$-perturbation datum for every $k$, such that the condition defining consistency is satisfied for every ribbon tree with at most $d-1$ leaves. 
 
 Let $\fatX^\fatY=(\fatX^{\fatY}_T)_{T \in \RTree_d}$ be the background $d$-perturbation datum constructed out of the family $(\fatY_k)_{2 \leq k < d}$ as described above. Since $\fatY_k$ is universal for every $k<d$, it follows that $\fatX^{\fatY}$ is universal. Thus, by Lemma \ref{ExistRegUniv} there exists a regular and universal $d$-perturbation datum $\fatY_d=(\fatY_T)_{T \in \RTree_d}$, such that $\fatY_T \in \XX(T,\fatX^{\fatY}_T)$ for every $T \in \RTree_d$. By definition of $\fatX^{\fatY}$, this $d$ perturbation datum is consistent, hence admissible. 
\end{proof}

\begin{theorem}
\label{BoundaryDescriptionT} 
 Let $\fatY$ be an admissible perturbation datum, $d \in \NN$ with $d \geq 2$, $T \in \RTree_d$ and $x_0,x_1,\dots,x_d \in \Crit f$. Then $\Acal^d_{\fatY}(x_0,x_1,\dots,x_d,T)$ can be compactified to a compact one-dimensional manifold $\bAcal^d_{\fatY}(x_0,x_1,\dots,x_d,T)$, whose boundary is given by
  \begin{align*}
  &\partial \bAcal^d_{\fatY}(x_0,x_1,\dots,x_d,T) =  \bigcup_{e \in E_{int}(T)} \Acal^d_{\fatY}(x_0,x_1,\dots,x_d,T/e) \\
  &\cup \bigcup_{\stackrel{y_0 \in \Crit f}{\mu(y_0) = \mu(x_0)-1}}\widehat{\MM}(x_0,y_0) \times\Acal^d_{\fatY}(y_0,x_1,\dots,x_d,T) \\ 
  &\cup \bigcup_{i=1}^d \bigcup_{\stackrel{y_i \in \Crit f}{\mu(y_i) = \mu(x_i)+1}} \Acal^d_{\fatY}(x_0,x_1,\dots,x_{i-1},y_i,x_{i+1},\dots,x_d,T) \times \widehat{\MM}(y_i,x_i) \\
  &\cup \bigcup_{e \in E_{int}(T)} \bigcup_{x_e \in \Crit f} \Acal^{d-l}_{\fatY}(x_0,x_1,\dots,x_{i-1},x_e,x_{i+l+1},\dots,x_d,T^e_1) \times \Acal^{l+1}_{\fatY}(x_e,x_i,\dots,x_{i+l},T^e_2) \ ,
 \end{align*}
 where in the last row the respective values of $i$ and $l$ are defined by $e$ being of type $(i,l)$.
\end{theorem}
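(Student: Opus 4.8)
\textbf{Proof plan for Theorem \ref{BoundaryDescriptionT}.} The plan is to combine the compactification statement of Theorem \ref{PreCompactficationOneDim} with a careful identification of the ``interior breaking'' boundary pieces $\BB_{\fatY}\left((x_0,x_e),(x_1,\dots,x_d,x_e),T,\{e\}\right)$ as products of lower-dimensional moduli spaces of perturbed Morse ribbon trees. The case $d=2$ is trivial since $\RTree_2$ consists of the single tree with no internal edge, so $\Acal^2_{\fatY}(x_0,x_1,x_2,T)$ is already compact by Theorem \ref{ZerodimFinite}-type arguments (one checks $k(T)=0$ forces the dimension to be bounded and the relevant boundary sets to be empty); more precisely, for $d=2$ the boundary description reduces to the semi-infinite breaking terms, which are covered directly by Theorem \ref{PreCompactficationOneDim}. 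So assume $d\geq 3$ and proceed by induction on $d$, so that admissibility of $\fatY$ provides us with the background perturbation datum $\fatX^{\fatY_{<d}}$ and the consistency condition $\fatY_T\in\XX(T,\fatX^{\fatY_{<d}})$.

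First I would invoke Theorem \ref{PreCompactficationOneDim}: since $\fatY_d$ is regular and universal, for any binary tree $T\in\RTree_d$ the space $\Acal^d_{\fatY}(x_0,x_1,\dots,x_d,T)$ compactifies to a compact one-dimensional manifold with boundary whose boundary consists of the four types of strata listed there (the two semi-infinite breaking terms, the collapsed-edge terms $\Acal^d_{\fatY}(x_0,\dots,x_d,T/e)$, and the interior-breaking terms $\bigcup_{e\in E_{int}(T)}\bigcup_{x_e}\BB_{\fatY}\left((x_0,x_e),(x_1,\dots,x_d,x_e),T,\{e\}\right)$). For $T$ non-binary (so $T$ has strictly fewer than $d-2$ internal edges), $\Acal^d_{\fatY}(x_0,\dots,x_d,T)$ is already zero-dimensional or empty by Theorem \ref{MainTheoremMorseTrees}, but the statement of the theorem is phrased for general $T$; one handles the non-binary case by noting that under the Morse-index constraint forcing $\dim=1$ the space is empty unless $T$ is binary, and otherwise $\Acal^d_{\fatY}$ is $0$-dimensional and compact with empty boundary, so the asserted boundary formula holds vacuously (the right-hand side is then also empty for dimensional reasons, using Proposition \ref{RegularityMostGeneral}). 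Hence the content is entirely in the binary case, and it remains to rewrite the interior-breaking strata.

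The key step is the identification
\begin{equation*}
 \BB_{\fatY}\left((x_0,x_e),(x_1,\dots,x_d,x_e),T,\{e\}\right) \cong \Acal^{d-l}_{\fatY}(x_0,x_1,\dots,x_{i-1},x_e,x_{i+l+1},\dots,x_d,T^e_1) \times \Acal^{l+1}_{\fatY}(x_e,x_i,\dots,x_{i+l},T^e_2) \ ,
\end{equation*}
where $e$ is of type $(i,l)$ as in Definition \ref{Typeil}. This is where admissibility is used decisively. Starting from the defining properties of $\BB_{\fatY}$, i.e. conditions \eqref{BoundaryinDeltaT}--\eqref{BackPertConseq5}, I would apply Lemma \ref{PermutProductDiag} to factor the diagonal condition $\sigma_{\{e\}}(\Delta_T)$ into the pair of conditions \eqref{refactorDeltaT1} and \eqref{refactorDeltaT2} living in $\Delta_{T^e_1}$ and $\Delta_{T^e_2}$ respectively. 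Next, using Proposition \ref{BackPertConv} together with the fact that $\fatY$ is consistent, the perturbation $\fatY_{\{e\}} = \fatX^{\fatY_{<d}}_e$ has, by the explicit construction of $\fatX^{\fatY}_e$ preceding the definition of admissibility, the crucial property that each parametrized vector field associated with an edge of $T^e_1$ (resp. $T^e_2$) depends only on the length parameters $l_f$ with $f\in E_{int}(T^e_1)$ (resp. $f\in E_{int}(T^e_2)$), and equals the corresponding component of $\fatY_{T^e_1}$ (resp. $\fatY_{T^e_2}$). Substituting this into \eqref{BackPertConseq1}--\eqref{BackPertConseq5} shows that the two ``halves'' of an element $\gammaunder\in\BB_{\fatY}$ decouple: the half indexed by $E(T^e_1)$ (together with $\gamma_0,\gamma_1,\dots,\gamma_{i-1},\gamma_+,\gamma_{i+l+1},\dots,\gamma_d$) is precisely an element of $\Acal^{d-l}_{\fatY}(x_0,\dots,x_{i-1},x_e,x_{i+l+1},\dots,x_d,T^e_1)$, and the half indexed by $E(T^e_2)$ (together with $\gamma_-,\gamma_i,\dots,\gamma_{i+l}$) is an element of $\Acal^{l+1}_{\fatY}(x_e,x_i,\dots,x_{i+l},T^e_2)$. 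The reordering-of-components map is a homeomorphism, and on the regular loci it is a diffeomorphism of $C^{n+1}$-manifolds, giving the claimed product description. Substituting this identification into the boundary formula of Theorem \ref{PreCompactficationOneDim} yields exactly the asserted $\partial\bAcal^d_{\fatY}(x_0,\dots,x_d,T)$.

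\textbf{Main obstacle.} The hard part will be verifying the decoupling rigorously, i.e. that the parameter-dependence of the perturbing vector fields in $\fatX^{\fatY_{<d}}_e$ genuinely splits along the partition $E_{int}(T)\setminus\{e\} = E_{int}(T^e_1)\sqcup E_{int}(T^e_2)$, and that the limiting conditions built into the definitions of $\XX_\pm(M,k,\fatX)$ and $\XX_0(M,k,\fatX)$ (the $\lim_{\lambda\to+\infty} c_j$ and $\Split_k$ conditions) are compatible with this splitting — this is precisely the content encoded in the construction of $\fatX^{\fatY}_e$ preceding the definition of admissibility, and one must track the identifications of $\XX_\pm(M,k(T^e_j))$ with subspaces of $\XX_\pm(M,k(T))$ carefully. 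A secondary subtlety is that the homeomorphism $\BB_{\fatY}\cong\Acal^{d-l}_{\fatY}\times\Acal^{l+1}_{\fatY}$ must be promoted to a diffeomorphism compatible with the $C^{n+1}$-structures; this follows from the fact that on both sides the smooth structure is inherited from the same product of trajectory spaces via a permutation of factors, but it should be stated explicitly. Orientation compatibility (needed later, in Section \ref{CompactificationsOneDimAinfty}, to extract the $A_\infty$-relations) is not required for the present statement and is deferred to the appendix.
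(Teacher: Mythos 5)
Your proposal is correct and follows essentially the same route as the paper: invoke Theorem \ref{PreCompactficationOneDim} for the compactification, then use consistency of $\fatY$ together with Proposition \ref{BackPertConv}, Lemma \ref{PermutProductDiag} (via conditions \eqref{refactorDeltaT1}--\eqref{refactorDeltaT2}), and the componentwise description \eqref{BackPertConseq1}--\eqref{BackPertConseq5} to factor $\BB_{\fatY}\left((x_0,x_e),(x_1,\dots,x_d,x_e),T,\{e\}\right)$ as a product of lower-leaf moduli spaces — which is exactly what the paper does. Two minor remarks: the opening sentence claiming the case $d=2$ is trivial because the space ``is already compact'' is misleading (for $\mu(x_0)=\mu(x_1)+\mu(x_2)+1$ the space is one-dimensional and typically non-compact), though your immediate correction — that for $d=2$ the boundary consists only of the semi-infinite breaking terms, covered by Theorem \ref{PreCompactficationOneDim} — is right; and the announced ``induction on $d$'' is not actually needed, since the decoupling supplied by admissibility is a pointwise identity and does not invoke the boundary description for trees with fewer leaves (the recursion is already built into the definition of an admissible datum rather than into this proof).
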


\begin{remark}
In the last row of the boundary description in Theorem \ref{BoundaryDescriptionT}, the respective space is only non-empty if 
\begin{equation*}
 \mu(x_e) = \sum_{q=i}^{i+l} \mu(x_q)+1-l \ ,
\end{equation*}
since otherwise one of the factors, and thus the cartesian product, would be the empty set for dimensional reasons. 
\end{remark}

\begin{proof}[Proof of Theorem \ref{BoundaryDescriptionT}]
 In the light of Theorem \ref{PreCompactficationOneDim}, it suffices to show that the admissibility condition implies for all $e \in E_{int}(T)$ and $x_e \in \Crit f$ of the right index that
 \begin{align*}
  &\BB_{\fatY}\left((x_0,x_{e}),(x_1,\dots,x_d,x_{e}),T,\{e\} \right) \\
  &\qquad \qquad \stackrel{!}{=} \Acal^{d-l}_{\fatY}\left(x_0,\dots,x_{i-1},x_e,x_{i+l+1},\dots,x_d,T^e_1\right) \times \Acal^{l+1}_{\fatY}\left(x_e,x_i,\dots,x_{i+l},T^e_2\right) \ .
 \end{align*}
 As previously discussed, this is our original motivation for the notion of admissibile perturbation data. Since (\ref{refactorDeltaT1}) and (\ref{refactorDeltaT2}) hold, we only need to show that
 \begin{align*}
  &\left(\gamma_0,\left(l_f,\gamma_f \right)_{f \in E_{int}(T^e_1)},\gamma_1,\dots,\gamma_{i-1},\gamma_+,\gamma_{i+l+1},\dots,\gamma_d\right) \\
  &\qquad \qquad \qquad \qquad \qquad \qquad \in \MM^d_{\fatY}(x_0,x_1,\dots,x_{i-1},x_e,x_{i+l+1},\dots,x_d,T^e_1)  \ ,\\
 &\left(\gamma_-,\left(l_f,\gamma_f\right)_{f \in E_{int}(T^e_2)},\gamma_i,\dots,\gamma_{i+l} \right) \in \MM^d_{\fatY}(x_e,x_i,\dots,x_{i+l},T^e_2) \ .
 \end{align*}
 This condition is equivalent to the following componentwise description:
 \begin{align*}
 &\gamma_0 \in W^-\left(x_0,Y_{10}\left((l_f)_{f \in E_{int}(T) \setminus \{e\}} \right) \right) \ , \\ 
 &(l_f,\gamma_f) \in \MM\left(Y_{if}\left((l_g)_{g \in E_{int}(T^e_i)\setminus f } \right) \right) \qquad  \forall f \in E_{int}(T^e_i) , \ i \in \{1,2\} \ , \\
 &\gamma_j \in W^+\left(x_j,Y_{1j}\left((l_f)_{f \in E_{int}(T^e_1)} \right) \right) \qquad \forall j \in \{1,2,\dots,i-1\} \ , \\
 &\gamma_j \in W^+\left(x_j,Y_{2(j-i+1)}\left((l_f)_{f \in E_{int}(T^e_2)} \right) \right) \qquad  \forall j \in \{i,i+1,\dots,i+l\} \\
 &\gamma_j \in W^+\left(x_j,Y_{1(j-l)}\left((l_f)_{f \in E_{int}(T^e_1)} \right) \right)  \qquad \forall j \in \{i+l+1,i+l+2,\dots,d\} , \\
 &\gamma_+ \in W^+\left(x_e,Y_{1i}\left((l_f)_{f \in E_{int}(T^e_1)} \right) \right) , \qquad \gamma_- \in W^-\left(x_e,Y_{20}\left((l_f)_{f \in E_{int}(T^e_2)} \right) \right) \ , 
\end{align*}
 where we put
 \begin{align*}
  &\fatY_{T^e_1} =: \left(Y_{10},\left(Y_{1f}\right)_{f \in E_{int}(T^e_1)},Y_{11},\dots,Y_{1(d-l)}\right) \ , \\
  &\fatY_{T^e_2} =: \left(Y_{20},\left(Y_{2f}\right)_{f \in E_{int}(T^e_2)},Y_{21},\dots,Y_{2(l+1)}\right) \ .
 \end{align*}
 But these conditions are a direct consequence of the admissibility of the perturbation datum $\fatY$. They are obtained by inserting the definition of the background perturbations $\fatX^{\fatY}$ into conditions (\ref{BackPertConseq1}) to (\ref{BackPertConseq5}).
 \end{proof}

Our final aim is to consider the  union of all moduli spaces of perturbed Morse ribbon trees modelled on all possible choices of ribbon trees at once. We will construct a certain quotient space of the union of their compactifications which will finally enable us to prove that the higher order multiplications constructed in this section satisfy the defining equations of an $A_\infty$-algebra. 

The starting point is the following lemma from graph theory which can be proven by elementary methods. We omit the details. 

\begin{lemma}
\label{TwoQuotientTrees} 
 Let $d \geq 2$ and $T \in \RTree_d$ with $k(T) = d-3$. Then there are precisely two binary trees $T_1,T_2 \in \RTree_d$ as well as unique edges $e_i \in E_{int}(T_i)$ for $i \in \{1,2\}$ such that
 \begin{equation*}
  T_1/e_1 = T = T_2/e_2 \ .
 \end{equation*}
\end{lemma}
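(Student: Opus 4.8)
\textbf{Proof plan for Lemma \ref{TwoQuotientTrees}.}

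The plan is to analyze the combinatorics of the vertex of $T$ where the new edge gets inserted when one passes from $T$ to a binary tree $T_i$ with $T_i/e_i = T$. Since $T \in \RTree_d$ has $k(T) = d-3$ internal edges and a binary tree on $d$ leaves has exactly $d-2$ internal edges (the graph-theoretic fact quoted in the text), any binary tree $T'$ with $T'/e' = T$ must arise from $T$ by \emph{splitting} exactly one internal vertex of $T$ into two vertices joined by a new internal edge $e'$. First I would show that $T$ has precisely one vertex $v$ that is \emph{not} trivalent, and that this vertex is necessarily four-valent: indeed, collapsing an internal edge of a binary tree merges two trivalent vertices into one vertex whose valence is $3+3-2 = 4$, and all other vertices of $T_i$ are left untouched, so $T$ is a tree in which every internal vertex is trivalent except for a single four-valent vertex $v$. (Conversely, a tree with $k(T) = d-3$ internal edges and all internal vertices at least trivalent must, by an Euler-characteristic/valence count, have exactly one four-valent internal vertex and all others trivalent; this is where the hypothesis $k(T) = d-3$ is used.)

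Next I would enumerate the ways to ``expand'' the four-valent vertex $v$ back into two trivalent vertices connected by a new internal edge. Let the four edges meeting $v$ be $a_0, a_1, a_2, a_3$, where $a_0$ is the unique edge on the path from $v$ to the root (the incoming edge at $v$), and $a_1, a_2, a_3$ are the three outgoing edges listed in the cyclic/ribbon order induced by the planar structure of $T$. To split $v$ into $v', v''$ joined by a new edge $e'$ with $\vout(e') = v''$, we must distribute the three outgoing edges $a_1, a_2, a_3$ between $v'$ and $v''$ so that each of $v', v''$ becomes trivalent; since $v'$ carries $a_0$ and $e'$, it must receive exactly one of $\{a_1, a_2, a_3\}$, and $v''$ receives the other two. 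The ribbon (planarity) condition forces the two edges going to $v''$ to be \emph{cyclically adjacent} among $a_1, a_2, a_3$ in the order at $v$; there are exactly three such adjacent pairs, namely $\{a_1,a_2\}$, $\{a_2,a_3\}$, $\{a_1,a_3\}$. However, I would check that these three combinatorial choices yield only \emph{two distinct} ordered ribbon trees: because $T$ itself is an honest ribbon tree and we are free to reattach $e'$ on either side, one verifies that two of the three splittings give isomorphic ordered ribbon trees (this is the standard associahedron-edge phenomenon: the two binary trees adjacent to a codimension-one face). Concretely, writing out the resulting trees and comparing their edge sets and incoming/outgoing vertex data via Definition \ref{DefSplitting} and the notion of isomorphism of ordered tree complexes, one sees exactly two isomorphism classes $T_1, T_2$ arise, with the respective new edges $e_1 \in E_{int}(T_1)$, $e_2 \in E_{int}(T_2)$, and by construction $T_1/e_1 = T = T_2/e_2$.

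Finally I would argue uniqueness of $e_i$ inside each $T_i$: if $T_i/e_i = T_i/e_i'$ for two internal edges of the binary tree $T_i$, then collapsing either produces the same four-valent vertex, which forces $e_i$ and $e_i'$ to have the same pair of endpoints after collapse, hence $e_i = e_i'$ by Lemma \ref{OutgoingUniqueness} applied to $T_i$ (an internal edge is determined by its outgoing vertex). The main obstacle I anticipate is not any deep argument but rather the bookkeeping needed to make the ``two out of three adjacent pairs coincide'' step airtight: one must carefully track the ordering of the leaves $V_{ext}(T_i)$ induced by the ribbon structure and confirm that the two splittings producing isomorphic ordered trees really do respect the leaf-ordering bijection, which requires unwinding Definition \ref{DefSplitting} and the cyclic order at the four-valent vertex. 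Since the text explicitly permits citing such elementary graph-theoretic facts without proof, I would present the valence count and the enumeration in reasonable detail and then invoke elementarity for the final isomorphism verification.
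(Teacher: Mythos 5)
Your overall strategy — identify the unique four-valent vertex of $T$ and enumerate the ways to resolve it into two trivalent vertices joined by a new edge — matches the paper's proof. The valence count showing that $T$ has exactly one four-valent internal vertex (and all others trivalent) is correct, as is the observation that the new vertex $v''$ must receive $e'$ together with exactly two of the three outgoing edges $a_1, a_2, a_3$. However, the enumeration step contains a genuine error. You say the planarity condition forces the pair sent to $v''$ to be cyclically adjacent \emph{among $a_1, a_2, a_3$}, and list three "adjacent" pairs $\{a_1,a_2\}$, $\{a_2,a_3\}$, $\{a_1,a_3\}$. This treats the outgoing edges as a cyclic order on three elements, in which every pair is trivially adjacent; but the cyclic structure at $v$ is on all four edges $a_0, a_1, a_2, a_3$ (the rooted ribbon structure singles out $a_0$), and in that four-cycle the pair $\{a_1, a_3\}$ is \emph{not} adjacent. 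Equivalently, the outgoing edges carry a linear (not cyclic) order once $a_0$ is designated, and only consecutive pairs qualify. There are therefore exactly two admissible pairs, $\{a_1,a_2\}$ and $\{a_2,a_3\}$ — precisely the two resolutions the paper writes down.

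Your attempted repair — that two of the three splittings produce isomorphic ordered ribbon trees — does not hold and is not what the associahedron picture says. The three splittings differ in which set of leaves ends up below the new internal edge $e'$, and since isomorphisms of ordered trees must preserve the leaf ordering, they would give pairwise \emph{non}-isomorphic ordered trees. What actually rules out the $\{a_1,a_3\}$ splitting is that it produces an internal edge under which the leaves form a non-contiguous subset of $\{1,\dots,d\}$, contradicting the ribbon/planarity constraint implicitly in force throughout the paper and made explicit in Lemma \ref{TheLayOfe} (which asserts the leaves below any internal edge are a contiguous interval). So the filtering from three candidate splittings to two should be a planarity check, not an isomorphism check; as written, your argument would conclude incorrectly if the planarity constraint were dropped, because then all three splittings would be legitimate and distinct.
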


\begin{proof}[Sketch of proof]
 For $d=3$, the claim is obvious, see Figure \ref{FigTwoQuotients}. For arbitrary $d \geq 2$, one checks that a tree $T \in \RTree_d$ with $k(T)=d-3$ contains a unique $4$-valent internal vertex while every other internal vertex is trivalent. This $4$-valent vertex $v$ can be ``resolved`` in precisely two different ways. 
 
 If we add another edge $e_{new}$ to the tree whose incoming vertex is identified with $v$, then there are precisely two ways to connect the other edges connected to $v$ with $e_{new}$. Let $e_0$ be the unique edge with $\vout(e_0)=v$ and let $e_1,e_2,e_3 \in E(T)$ denote the edges with $\vin(e_i)=v$ for $i \in \{1,2,3\}$, ordered according to the ordering induced by the given one of the leaves of $T$. The two different ribbon trees including $e_{new}$ are obtained as follows:
 \begin{itemize}
  \item either we define a tree by putting
  \begin{equation*}
  \vin(e_1)=\vin(e_2)=\vout(e_{new}) \quad \text{and} \quad \vin(e_3)=\vin(e_{new})=v \ ,  
  \end{equation*}
  \item or we construct a tree by putting
  \begin{equation*}
  \vin(e_1)=\vin(e_{new})=v \quad \text{and} \quad \vin(e_2)=\vin(e_3)=\vout(e_{new}) \ .
  \end{equation*}
 \end{itemize}
The two resulting trees are obviously different from each other and one checks that these two ways are indeed the only ways of constructing binary trees which reduce to $T$ after collapsing a single edge.
\end{proof}

\begin{figure}[h]
 \centering
 \includegraphics[scale=0.9]{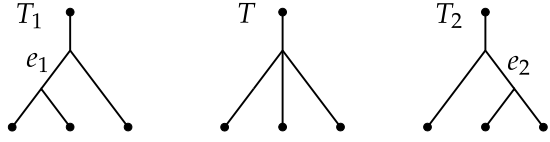}
 \caption{An example of the situation of Lemma \ref{TwoQuotientTrees} for $d=3$.}
 \label{FigTwoQuotients}
\end{figure}

See Figure \ref{FigTwoQuotients} for an illustration of Lemma \ref{TwoQuotientTrees}. Theorem \ref{PreCompactficationOneDim} and Lemma \ref{TwoQuotientTrees} together yield:

\begin{cor}
\label{TwiceAsBoundary}
 Let $\fatY$ be a regular and universal perturbation datum. Let $d \geq 2$ and $T \in \RTree_d$ with $k(T) = d-3$. Then there are precisely two binary trees $T_1, T_2 \in \RTree_d$ with 
 \begin{equation*}
  \Acal^d_{\fatY}(x_0,x_1,\dots,x_d,T) \subset \partial \bAcal^d_{\fatY}(x_0,x_1,\dots,x_d,T_i) \qquad \forall i \in \{1,2\} \ . 
 \end{equation*}
\end{cor}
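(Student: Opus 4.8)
\textbf{Proof plan for Corollary \ref{TwiceAsBoundary}.} The plan is to combine the two results that immediately precede the corollary. Let $d\ge 2$ and $T\in\RTree_d$ with $k(T)=d-3$. By Lemma \ref{TwoQuotientTrees} there are precisely two binary trees $T_1,T_2\in\RTree_d$ together with unique edges $e_i\in E_{int}(T_i)$ such that $T_i/e_i=T$ for $i\in\{1,2\}$. Since $T$ has $d-3$ internal edges, each $T_i$ has $k(T_i)=k(T)+1=d-2$, hence is indeed a binary tree, so Theorem \ref{PreCompactficationOneDim} applies to each $T_i$.

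The first step is to observe that the hypothesis on the critical points is inherited correctly: we are in the situation of equation \eqref{CritPointOneDim}, i.e. $\mu(x_0)=\sum_{i=1}^d\mu(x_i)+3-d$, which by Theorem \ref{MainTheoremMorseTrees} is exactly the condition making $\Acal^d_{\fatY}(x_0,x_1,\dots,x_d,T)$ a finite set (its dimension is $\mu(x_0)-\sum_{i=1}^d\mu(x_i)+k(T)=0$) and $\Acal^d_{\fatY}(x_0,x_1,\dots,x_d,T_i)$ a one-dimensional manifold. The second step is to apply Theorem \ref{PreCompactficationOneDim} to $T_1$ and to $T_2$: for each $i$, the space $\Acal^d_{\fatY}(x_0,x_1,\dots,x_d,T_i)$ compactifies to $\bAcal^d_{\fatY}(x_0,x_1,\dots,x_d,T_i)$ whose boundary contains, among its listed pieces, the union $\bigcup_{e\in E_{int}(T_i)}\Acal^d_{\fatY}(x_0,x_1,\dots,x_d,T_i/e)$. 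The third step is simply to pick out the term $e=e_i$ from this union: since $T_i/e_i=T$, we get
\begin{equation*}
 \Acal^d_{\fatY}(x_0,x_1,\dots,x_d,T)=\Acal^d_{\fatY}(x_0,x_1,\dots,x_d,T_i/e_i)\subset\partial\bAcal^d_{\fatY}(x_0,x_1,\dots,x_d,T_i)
\end{equation*}
for each $i\in\{1,2\}$, which is the assertion.

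There is essentially no obstacle here — the corollary is a bookkeeping consequence of the two cited results, and the only thing to be careful about is making sure the dimension/index conventions line up (that $k(T)=d-3$ forces $k(T_i)=d-2$, so the $T_i$ really are binary and Theorem \ref{PreCompactficationOneDim} is applicable, and that the critical-point condition \eqref{CritPointOneDim} is the standing assumption under which that theorem was proved). I would also remark, for completeness, that $T_1\neq T_2$ by Lemma \ref{TwoQuotientTrees}, so the two boundary inclusions are genuinely distinct, justifying the phrase ``twice as boundary'' in the label; but this requires no further argument beyond citing the lemma.
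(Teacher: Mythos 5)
Your proof is correct and follows exactly the argument the paper intends: the corollary is presented in the paper as a direct consequence of combining Theorem \ref{PreCompactficationOneDim} with Lemma \ref{TwoQuotientTrees}, which is precisely your three-step plan. The bookkeeping you supply — that $k(T)=d-3$ forces $k(T_i)=d-2$ so the $T_i$ are binary and the theorem applies, and that the standing hypothesis \eqref{CritPointOneDim} makes $\Acal^d_{\fatY}(x_0,\dots,x_d,T)$ zero-dimensional — is exactly what the reader needs to see that the two cited results interlock.
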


Let $\BinTree_d \subset \RTree_d$ denote the set of all binary trees with $d$ leaves. For our choice of $\fatY$, $x_0,x_1,\dots,x_d \in \Crit f$ and $T \in \BinTree_d$, the space $\bAcal^d_{\fatY}(x_0,x_1,\dots,x_d,T)$ is a compact one-dimensional manifold with boundary, hence the space $\bigsqcup_{T \in \BinTree_d} \bAcal^d_{\fatY}(x_0,x_1,\dots,x_d,T)$ is a compact one-dimensional manifold with boundary as well. By Corollary \ref{TwiceAsBoundary}, every element of 
\begin{equation*}
 \bigsqcup_{\stackrel{T \in \RTree_d}{k(T)=d-3}} \Acal^d_{\fatY}(x_0,x_1,\dots,x_d,T)
\end{equation*}
appears precisely twice as a boundary element of $\bigsqcup_{T \in \BinTree_d} \bAcal^d_{\fatY}(x_0,x_1,\dots,x_d,T)$. \bigskip

Note that any compact one-dimensional manifold with boundary is diffeomorphic to a disjoint union of finitely many compact intervals and circles, so we can easily glue components of one-dimensional manifolds along their boundaries. We use this gluing procedure to define a single moduli space including elements of $\Acal^d_{\fatY}(x_0,x_1,\dots,x_d,T)$ for different choices of $T \in \RTree_d$. 

\begin{definition}
 Let $x_0,x_1,\dots,x_d \in \Crit f$ with 
 $$\mu(x_0)= \sum_{i=1}^d \mu(x_i)+3-d \ $$
 and let $\fatY$ be a regular and universal perturbation datum. We introduce an equivalence relation on $\bigsqcup_{T \in \BinTree_d} \bAcal^d_{\fatY}(x_0,x_1,\dots,x_d,T)$ by identifying both copies of the respective elements of $\bigsqcup_{k(T)=d-3} \Acal^d_{\fatY}(x_0,x_1,\dots,x_d,T)$, which exist by Corollary \ref{TwiceAsBoundary}. With respect to this equivalence relation, we define 
 \begin{equation*}
  \bAcal^d_{\fatY}(x_0,x_1,\dots,x_d) :=  \bigsqcup_{T \in \BinTree_d} \bAcal^d_{\fatY}(x_0,x_1,\dots,x_d,T) \Big/ \sim \ , 
 \end{equation*}
 Moreover, we put
 \begin{equation*}
  \Acal^d_{\fatY}(x_0,x_1,\dots,x_d) := \bigsqcup_{T \in \RTree_d} \Acal^d_{\fatY}(x_0,x_1,\dots,x_d,T) \ .
 \end{equation*}
\end{definition}

It is clear that the space $\bAcal^d_{\fatY}(x_0,x_1,\dots,x_d)$ can again be given the structure of a compact one-dimensional manifold with boundary, and that 
\begin{equation*}
 \Acal^d_{\fatY}(x_0,x_1,\dots,x_d) \subset \mathrm{int}\left(\bAcal^d_{\fatY}(x_0,x_1,\dots,x_d) \right) \ .
\end{equation*}
Furthermore, if we additionally choose $\fatY$ to be consistent, its boundary has a useful description as we will see in the next theorem.

\begin{theorem} \index{boundary spaces of moduli spaces!of Morse ribbon trees} \index{compactness of moduli spaces!of Morse ribbon trees!one-dimensional}
\label{boundaryaftergluingT}
 Let $d \geq 2$, $x_0,x_1,\dots,x_d \in \Crit f$ with 
 \begin{equation*}
  \mu(x_0) = \sum_{i=1}^d \mu(x_i) +3-d \ ,
 \end{equation*}
 and let $\fatY$ be an admissible perturbation datum.  The space $\bAcal^d_{\fatY}(x_0,x_1,\dots,x_d)$ can be equipped with the structure of a compact one-dimensional manifold with boundary, whose boundary is given by
$$ \partial \bAcal^d_{\fatY}(x_0,x_1,\dots,x_d)= \bigcup_{i=1}^d \bigcup_{l=0}^{d-i} \bigcup_y \Acal^{d-l}_{\fatY}(x_0,x_1,\dots,x_{i-1},y,x_{i+l+1},\dots,x_d) \times \Acal^{l+1}_{\fatY}(y,x_i,\dots,x_{i+l}) \ ,$$
 where we put $\Acal^1_{\fatY}(x,y) := \widehat{\MM}(x,y)$ for every $x,y \in \Crit f$ and where the union over $y$ is taken over all $y \in \Crit f$ satisfying
 \begin{equation*}
  \mu(y) = \sum_{q=i}^{i+l}\mu(x_q)+1-l \ .
 \end{equation*}
\end{theorem}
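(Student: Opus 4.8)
The plan is to assemble the boundary description of $\bAcal^d_{\fatY}(x_0,x_1,\dots,x_d)$ by combining the boundary description of each individual $\bAcal^d_{\fatY}(x_0,x_1,\dots,x_d,T)$ from Theorem \ref{BoundaryDescriptionT} with the gluing procedure along the spaces $\Acal^d_{\fatY}(x_0,x_1,\dots,x_d,T)$ for $T$ with $k(T)=d-3$. First I would recall that $\bAcal^d_{\fatY}(x_0,x_1,\dots,x_d)$ is obtained from $\bigsqcup_{T \in \BinTree_d}\bAcal^d_{\fatY}(x_0,x_1,\dots,x_d,T)$ by identifying, for every $T' \in \RTree_d$ with $k(T')=d-3$, the two copies of $\Acal^d_{\fatY}(x_0,x_1,\dots,x_d,T')$ that appear (by Corollary \ref{TwiceAsBoundary}) as boundary pieces of $\bAcal^d_{\fatY}(x_0,x_1,\dots,x_d,T_1)$ and $\bAcal^d_{\fatY}(x_0,x_1,\dots,x_d,T_2)$, where $T_1,T_2$ are the two binary trees with $T_i/e_i=T'$ from Lemma \ref{TwoQuotientTrees}. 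Since a compact one-dimensional manifold with boundary is a disjoint union of arcs and circles, gluing two such manifolds along a common boundary point yields again a compact one-dimensional manifold with boundary; this justifies the manifold-with-boundary structure and shows $\Acal^d_{\fatY}(x_0,x_1,\dots,x_d) \subset \mathrm{int}(\bAcal^d_{\fatY}(x_0,x_1,\dots,x_d))$.

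Next I would compute the boundary. By Theorem \ref{BoundaryDescriptionT}, for each binary $T \in \BinTree_d$ the boundary $\partial \bAcal^d_{\fatY}(x_0,x_1,\dots,x_d,T)$ decomposes into four types of pieces: (a) the quotient-tree pieces $\Acal^d_{\fatY}(x_0,x_1,\dots,x_d,T/e)$ for $e \in E_{int}(T)$; (b) the root-breaking pieces $\widehat{\MM}(x_0,y_0)\times \Acal^d_{\fatY}(y_0,x_1,\dots,x_d,T)$; (c) the leaf-breaking pieces $\Acal^d_{\fatY}(x_0,\dots,y_i,\dots,x_d,T)\times \widehat{\MM}(y_i,x_i)$; and (d) the internal-edge-breaking pieces $\Acal^{d-l}_{\fatY}(x_0,\dots,x_{i-1},x_e,x_{i+l+1},\dots,x_d,T^e_1)\times \Acal^{l+1}_{\fatY}(x_e,x_i,\dots,x_{i+l},T^e_2)$. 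When passing to the quotient $\bigsqcup_T \bAcal^d_{\fatY}(\cdots,T)/\!\!\sim$, the type (a) pieces are precisely the pieces being glued: since $k(T/e)=d-3$, each such $\Acal^d_{\fatY}(x_0,x_1,\dots,x_d,T/e)$ is identified with another type (a) piece coming from the second binary tree $T'$ with $T'/e'=T/e$, and after identification these pieces become interior points. Hence the type (a) pieces disappear from the boundary of $\bAcal^d_{\fatY}(x_0,x_1,\dots,x_d)$. This is the conceptual heart of the argument: the gluing is engineered exactly to cancel the collapse-of-internal-edge phenomenon.

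It remains to organize the surviving pieces, i.e. types (b), (c), (d), summed over all $T \in \BinTree_d$, into the stated union. For type (d), as $T$ ranges over $\BinTree_d$ and $e$ over $E_{int}(T)$, the splitting $(T^e_1,T^e_2)$ ranges — by the standard bijection between pairs (binary tree, internal edge) and pairs of binary trees with a marked breaking point, which I would invoke from graph theory — over all pairs in $\bigcup_{i,l}\BinTree_{d-l}\times \BinTree_{l+1}$ with the leaf labelling dictated by $e$ being of type $(i,l)$; summing over $T'\in\BinTree_{d-l}$ and $T''\in\BinTree_{l+1}$ reconstitutes $\Acal^{d-l}_{\fatY}(x_0,\dots,x_{i-1},y,x_{i+l+1},\dots,x_d)\times\Acal^{l+1}_{\fatY}(y,x_i,\dots,x_{i+l})$, which with $\Acal^1:=\widehat{\MM}$ is the $1\le l\le d-i$ part of the claimed formula. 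Types (b) and (c), summed over $T\in\BinTree_d$, give $\widehat{\MM}(x_0,y_0)\times\Acal^d_{\fatY}(y_0,x_1,\dots,x_d)$ and $\Acal^d_{\fatY}(x_0,\dots,y_i,\dots,x_d)\times\widehat{\MM}(y_i,x_i)$; these are exactly the $l=0$ terms of the formula — the first being the $i=1,\dots,d$ term with a single factor $\widehat{\MM}$ on the right when $l=0$ and $y=x_i$ (leaf breaking), together with... actually I would present the bookkeeping carefully: the $l=0$, $i\in\{1,\dots,d\}$ terms $\Acal^d_{\fatY}(x_0,\dots,x_{i-1},y,x_{i+1},\dots,x_d)\times\widehat{\MM}(y,x_i)$ are the leaf-breakings (type (c)), and the root-breaking (type (b)) is absorbed by noting $\Acal^1_{\fatY}(x_0,y_0)\times\Acal^d_{\fatY}(y_0,\dots)$ does not literally appear in the stated formula, so I would instead double-check the index conventions in Theorem \ref{BoundaryDescriptionT} and the stated formula and reconcile them — this indexing reconciliation, rather than any analysis, is where I expect the only real friction. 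Finally I would note that the Morse-index constraint $\mu(y)=\sum_{q=i}^{i+l}\mu(x_q)+1-l$ is forced by the dimension formula of Theorem \ref{MainTheoremMorseTrees}: a product of two moduli spaces can only be a zero-dimensional (nonempty) boundary stratum of a one-dimensional manifold if both factors are zero-dimensional, which pins down $\mu(y)$.

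The main obstacle, then, is purely combinatorial-notational: verifying that the disjoint-union-over-$T$ of the four boundary strata from Theorem \ref{BoundaryDescriptionT}, after the $\sim$-identification kills the quotient-tree strata, repackages exactly into the single union over $i$, $l$, and $y$ in the statement, including the convention $\Acal^1_{\fatY}(x,y)=\widehat{\MM}(x,y)$ handling the $l=0$ and ``leaf factor'' degenerate cases. No new analysis is needed beyond what is already in Theorems \ref{PreCompactficationOneDim}, \ref{BoundaryDescriptionT} and Corollary \ref{TwiceAsBoundary}; the one-dimensional-manifold gluing is elementary.
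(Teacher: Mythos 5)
Your approach is exactly the paper's: use Theorem~\ref{BoundaryDescriptionT} for each binary $T$, observe that the $\Acal^d_{\fatY}(\cdots,T/e)$ strata become interior after the $\sim$-identification (Corollary~\ref{TwiceAsBoundary}), repackage the internal-edge-breaking strata via the bijection of Lemma~\ref{TreeBreakingLemma}, and recognize the root/leaf breakings as degenerate cases of the unified formula. The one place you leave a real gap is precisely where you say you expect friction: you assert that $\Acal^1_{\fatY}(x_0,y_0)\times\Acal^d_{\fatY}(y_0,x_1,\dots,x_d)$ ``does not literally appear in the stated formula'' and defer the reconciliation. It \emph{does} appear. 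With $l=d-1$ the index $i$ is forced to equal $1$ (since $l\le d-i$), and then $d-l=1$, so the first factor is $\Acal^{1}_{\fatY}(x_0,y)=\widehat{\MM}(x_0,y)$ and the second is $\Acal^{d}_{\fatY}(y,x_1,\dots,x_d)$ — the root-breaking stratum. Likewise the $l=0$, $i\in\{1,\dots,d\}$ terms are the leaf breakings, as you note. Spelling out these two boundary cases of the double union is what closes the argument; otherwise your bookkeeping tracks the paper's computation on page~\pageref{boundarydescriptionalleT} step for step. Also worth stating explicitly (as the paper does via Lemma~\ref{TreeBreakingLemma}) is that the union in the type-(d) repackaging actually runs over \emph{all} ribbon trees $\RTree_{d-l}\times\RTree_{l+1}$, not merely binary ones, since $\Acal^k_{\fatY}(\cdots)$ is defined as the union over all of $\RTree_k$; the non-binary contributions vanish for dimension reasons but the notation should reflect this.
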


We need another graph-theoretic lemma to prove Theorem \ref{boundaryaftergluingT}.

\begin{lemma}
\label{TreeBreakingLemma} 
 For $d \geq 2$ let $\widetilde{\RTree}_d := \{(T,e) \ | \ T \in \RTree_d , \ e \in E_{int}(T) \}$. Then the following map is a bijection:
 \begin{align*}
  G: \widetilde{\RTree}_d &\to \bigcup_{l=1}^{d-2} \left(\RTree_{d-l} \times \RTree_{l+1} \times \{1,2,\dots,d-l\} \right) \ , \\
(T,e) &\mapsto \left(T^e_1,T^e_2,i\right) \ , \quad  \text{if} \ e \ \text{is of type} \ (i,l).
 \end{align*}
\end{lemma}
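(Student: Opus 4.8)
\textbf{Proof proposal for Lemma \ref{TreeBreakingLemma}.}

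The plan is to construct an explicit inverse to the map $G$ and to check that the two compositions are the identity, which suffices to establish bijectivity. First I would fix $d \geq 2$ and analyse the map $G$ more carefully. Given a pair $(T,e) \in \widetilde{\RTree}_d$, Lemma \ref{TheLayOfe} associates to $e$ a unique type $(i,l)$ with $i \in \{1,\dots,d\}$ and $l \in \{0,1,\dots,d-i\}$; since $e$ is internal, the subtree $T^e_2$ has at least two leaves, so in fact $l \geq 1$, and since $e$ is not $e_0(T)$ and $T^e_1$ has at least two leaves we get $d-l \geq 2$, hence $l \leq d-2$. Thus $G$ genuinely lands in the asserted codomain $\bigcup_{l=1}^{d-2}\left(\RTree_{d-l}\times\RTree_{l+1}\times\{1,\dots,d-l\}\right)$. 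The splitting $(T^e_1,T^e_2)$ and the index $i$ are well-defined by Definition \ref{DefSplitting}.

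Next I would define the candidate inverse $H$ on a triple $(S_1,S_2,i)$ with $S_1 \in \RTree_{d-l}$, $S_2 \in \RTree_{l+1}$ and $i \in \{1,\dots,d-l\}$. The construction is the obvious grafting: take $S_1$ and identify its $i$-th leaf $v_i(S_1)$ with the root $v_0(S_2)$ of $S_2$, turning that identified vertex into an internal vertex $v$; call $e$ the edge of $S_1$ incident to $v_i(S_1)$, which now becomes an internal edge of the grafted tree. Relabel the leaves so that the leaves of $S_2$ (in their given order) occupy positions $i, i+1, \dots, i+l$ and the remaining leaves $v_1(S_1),\dots,v_{i-1}(S_1),v_{i+1}(S_1),\dots,v_{d-l}(S_1)$ occupy positions $1,\dots,i-1,i+l+1,\dots,d$; the root of $S_1$ becomes the root of the grafted tree. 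One checks that the result $T$ is a $d$-leafed ribbon tree: all internal vertices of $S_1$ and $S_2$ remain at least trivalent, and the new vertex $v$ has valence equal to $1 + (\text{valence of } v_0(S_2) \text{ in } S_2) \geq 1 + 2 = 3$. Set $H(S_1,S_2,i) := (T,e)$.

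Then I would verify $G \circ H = \id$ and $H \circ G = \id$. For $G\circ H = \id$: given $(S_1,S_2,i)$, by construction the distinguished edge $e$ in $T = H(S_1,S_2,i)$ separates exactly the leaves in positions $i,\dots,i+l$ from the rest, so $e$ is of type $(i,l)$ by Lemma \ref{TheLayOfe}; comparing with Definition \ref{DefSplitting}, the splitting of $T$ along $e$ recovers $T^e_1 = S_1$ and $T^e_2 = S_2$ together with the same index $i$, so $G(T,e) = (S_1,S_2,i)$. For $H \circ G = \id$: given $(T,e)$ of type $(i,l)$, grafting $T^e_1$ and $T^e_2$ back together at the $i$-th leaf of $T^e_1$ reconstructs the tree $T^e$ of Definition \ref{DefSplitting} with the binary vertex $v_e$ promoted back to an internal vertex — that is, it undoes the operation $T \mapsto T^e$ — and hence recovers $(T,e)$; this is essentially the observation that $T^e_1$ and $T^e_2$ are defined so that $T$ is obtained from $T^e$ by collapsing one of the two new edges, equivalently $T^e$ is obtained from $T$ by expanding $v_{\mathrm{in}}(e)$, and grafting is exactly the inverse of expansion. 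Both checks are purely combinatorial manipulations of CW tree complexes and I would carry them out by tracking vertices, edges, and the leaf ordering.

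The main obstacle I anticipate is bookkeeping the leaf orderings precisely: one must confirm that the grafting construction $H$ produces the ordering conventions of Definition \ref{DefSplitting} on the nose (positions $1,\dots,i-1$ from $T^e_1$, then $i,\dots,i+l$ from $T^e_2$, then $i+l+1,\dots,d$ from $T^e_1$ again), and that the index $i$ returned by $G$ is exactly the one fed into $H$. This is elementary but error-prone; once the ordering conventions are pinned down, the verification that $G$ and $H$ are mutually inverse is routine, so the lemma follows. (As noted in the text, the paper states this lemma without proof and leaves the elementary details to the reader; the above is the natural argument.)
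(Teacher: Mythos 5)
Your overall strategy (construct an explicit grafting map $H$ and verify it is a two-sided inverse of $G$) matches the paper, which asserts injectivity as obvious and proves surjectivity by exactly this kind of explicit grafting. However, your definition of $H$ does not actually produce a ribbon tree, and the valence bound you write down is false. You identify the leaf $v_i(S_1)$ with the root $v_0(S_2)$ as vertices, so the identified vertex $v$ has exactly the two edges $e_i(S_1)$ and $e_0(S_2)$ incident to it. Its valence is therefore $2$, not ``$1 + (\text{valence of } v_0(S_2) \text{ in } S_2) \geq 1 + 2 = 3$'' --- the root $v_0(S_2)$ is by definition an \emph{external} vertex of $S_2$ and hence has valence $1$, not $\geq 2$. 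A bivalent internal vertex is not allowed in a ribbon tree, so what your $H$ builds is the subdivided tree $T^e$ of Definition \ref{DefSplitting}, not $T$ itself.

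The paper avoids this by removing \emph{both} external vertices $v_i(T_1)$ and $v_0(T_2)$ along with the edges $e_i(T_1)$ and $e_0(T_2)$, and then adjoining a single new edge $e_{\mathrm{new}}$ with $\vin(e_{\mathrm{new}}) = \vin(e_i(T_1))$ and $\vout(e_{\mathrm{new}}) = \vout(e_0(T_2))$; in other words, the two external edges are merged into one rather than kept as a chain through a bivalent vertex. You come close to saying this in your $H\circ G$ check when you observe that ``grafting is exactly the inverse of expansion,'' but the stated definition of $H$ is the expansion $T^e$, not $T$, so as written the compositions land in the wrong space and the valence check fails. Replacing your vertex-identification by the edge-merging operation (equivalently, suppressing the bivalent vertex $v$) fixes the construction and then the rest of your argument goes through as you anticipate.
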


\begin{proof}
 The injectivity of $G$ is obvious, so we will only show its surjectivity. 
 
 Let $l \in \{1,2,\dots,d-2\}$ and $(T_1,T_2,i) \in \RTree_{d-l} \times \RTree_{l+1} \times \{1,2,\dots,d-l\}$. With respect to these choices, let $T \in \RTree_d$ the unique ribbon tree with
 \begin{align*}
  V(T) &= \left(V(T_1)\setminus\{v_i(T_1)\}\right) \cup \left(V(T_2) \setminus \{v_0(T_2)\}\right) \ , \\
  E(T) &= \left(E(T_1)\setminus\{e_i(T_1)\}\right) \cup \left(E(T_2) \setminus \{e_0(T_2)\}\right) \cup \{e_{\text{new}}\} \ ,
 \end{align*}
 where $e_{\text{new}}$ satisfies $\vin(e_{\text{new}}) = \vin(e_i(T_1))$ and $\vout(e_{\text{new}}) = \vout(e_0(T_2))$. Moreover, let $V(T)$ be ordered as $(v_1(T_1),\dots,v_{i-1}(T_1),v_1(T_2),\dots,v_{l+1}(T_2),v_{i+1}(T_1),\dots,v_{d-l}(T_1))$. Then it is clear from its definition that $e_{\text{new}}$ is of type $(i,l)$ in $T$, and that $G(T,e_{\text{new}}) = (T_1,T_2,i)$.
\end{proof}

\begin{proof}[Proof of Theorem \ref{boundaryaftergluingT}]
 We have discussed the manifold property before, so it only remains to describe the boundary of $\bAcal^d_{\fatY}(x_0,x_1,\dots,x_d)$. From the definition of $\bAcal^d_{\fatY}(x_0,x_1,\dots,x_d)$ as a quotient of $\bigsqcup_{T \in \RTree_d} \Acal^d_{\fatY}(x_0,x_1,\dots,x_d,T)$, we derive
 \begin{equation*}
  \partial\bAcal^d_{\fatY}(x_0,x_1,\dots,x_d) \subset \bigsqcup_{T \in \RTree_d} \partial \bAcal^d_{\fatY}(x_0,x_1,\dots,x_d,T) \ .
 \end{equation*}
 The boundary spaces $\partial \bAcal^d_{\fatY}(x_0,x_1,\dots,x_d,T)$ were described in Theorem \ref{BoundaryDescriptionT}. Note that the boundary parts of the form $\Acal^d_{\fatY}(x_0,x_1,\dots,x_d,T/e)$ are identified in the gluing procedure used to define $\bAcal^d_{\fatY}(x_0,x_1,\dots,x_d)$. All other boundary curves of any of the spaces $\bAcal^d_{\fatY}(x_0,x_1,\dots,x_d,T)$ are also boundary curves of $\bAcal^d_{\fatY}(x_0,x_1,\dots,x_d)$, so we obtain
 \label{boundarydescriptionalleT}
 \begin{align*}
  &\partial\bAcal^d_{\fatY}(x_0,x_1,\dots,x_d) =\bigcup_{T \in \RTree_d} \bigcup_{\stackrel{y_0 \in \Crit f}{\mu(y_0)=\mu(x_0)-1}} \widehat{\MM}(x_0,y_0) \times \Acal^d_{\fatY}(y_0,x_1,\dots,x_d,T) \\
  &\cup \bigcup_{T \in \RTree_d} \bigcup_{e \in E_{int}(T)} \bigcup_{x_e \in \Crit f} \Acal^{d-l}_{\fatY}(x_0,x_1,\dots,x_{i-1},x_e,x_{i+l+1},\dots,x_d,T^e_1) \times \Acal^{l+1}_{\fatY}(x_e,x_i,\dots,x_{i+l},T^e_2) \\
  &\cup \bigcup_{T \in \RTree_d} \bigcup_{i=1}^d \bigcup_{\stackrel{y_i \in \Crit f}{\mu(y_i)=\mu(x_i)+1}} \Acal^d_{\fatY}(x_0,x_1,\dots,x_{i-1},y_i,x_{i+1},\dots,x_d,T) \times \widehat{\MM}(y_i,x_i) \\
  &=\bigcup_{\stackrel{y_0 \in \Crit f}{\mu(y_0)=\mu(x_0)-1}} \Acal^1_{\fatY}(x_0,y_0) \times \Acal^d_{\fatY}(y_0,x_1,\dots,x_d) \\
  &\cup \bigcup_{T \in \RTree_d} \bigcup_{e \in E_{int}(T)} \bigcup_{x_e \in \Crit f} \Acal^{d-l}_{\fatY}(x_0,x_1,\dots,x_{i-1},x_e,x_{i+l+1},\dots,x_d,T^e_1)   \times \Acal^{l+1}_{\fatY}(x_e,x_i,\dots,x_{i+l},T^e_2) \\
  &\cup \bigcup_{i=1}^d \bigcup_{\stackrel{y_i \in \Crit f}{\mu(y_i)=\mu(x_i)+1}} \Acal^d_{\fatY}(x_0,x_1,\dots,x_{i-1},y_i,x_{i+1},\dots,x_d)\times \Acal^1_{\fatY}(y_i,x_i) \ ,
 \end{align*}
 where the unions over $x_e$ are taken over all $x_e \in \Crit f$ satisfying $\mu(x_e)= \sum_{q=i}^{i+l} \mu(x_q)+1-l$. Using Lemma \ref{TreeBreakingLemma}, we can identify
 \begin{align*} 
  &\bigcup_{T \in \RTree_d} \bigcup_{e \in E_{int}(T)} \bigcup_{x_e \in \Crit f} \Acal^{d-l}_{\fatY}(x_0,x_1,\dots,x_{i-1},x_e,x_{i+l+1},\dots,x_d,T^e_1)   \times \Acal^{l+1}_{\fatY}(x_e,x_i,\dots,x_{i+l},T^e_2) \\
% &= \bigcup_{l=1}^{d-2} \bigcup_{(T_1,T_2) \in \RTree_{d-l}\times\RTree_{l+1}}  \bigcup_{i=1}^{d-l} \bigcup_{y \in \Crit f} \Acal^{d-l}_{\fatY}(x_0,x_1,\dots,x_{i-1},y,x_{i+l+1},\dots,x_d,T_1) \\ 
% &\qquad \qquad \qquad \qquad \qquad \qquad \qquad \qquad \times \Acal^{l+1}_{\fatY}(y,x_i,\dots,x_{i+l},T_2) \\
&= \bigcup_{l=1}^{d-2} \bigcup_{i=1}^{d-1} \bigcup_{y \in \Crit f} \Acal^{d-l}_{\fatY}(x_0,x_1,\dots,x_{i-1},y,x_{i+l+1},\dots,x_d) \times \Acal^{l+1}_{\fatY}(y,x_i,\dots,x_{i+l}) \ ,
 \end{align*}
 where the unions over $y$ are taken over all $y \in \Crit f$ satisfying 
  \begin{equation}
  \label{yindexcond}
 \mu(y)= \sum_{q=i}^{i+l} \mu(x_q)+1-l \ .
 \end{equation}
Inserting this into our description of $\partial\bAcal^d_{\fatY}(x_0,x_1,\dots,x_d)$ yields:
\begin{align*}
 &\partial\bAcal^d_{\fatY}(x_0,x_1,\dots,x_d) =\bigcup_{\stackrel{y_0 \in \Crit f}{\mu(y_0)=\mu(x_0)-1}} \Acal^1_{\fatY}(x_0,y_0) \times \Acal^d_{\fatY}(y_0,x_1,\dots,x_d) \\
  &\cup  \bigcup_{l=1}^{d-2} \bigcup_{i=1}^{d-l} \bigcup_{y \in \Crit f} \Acal^{d-l}_{\fatY_1}(x_0,x_1,\dots,x_{i-1},y,x_{i+l+1},\dots,x_d) \times \Acal^{l+1}_{\fatY_2}(y,x_i,\dots,x_{i+l}) \\
  &\cup \bigcup_{i=1}^d \bigcup_{\stackrel{y_i \in \Crit f}{\mu(y_i)=\mu(x_i)+1}} \Acal^d_{\fatY}(x_0,x_1,\dots,x_{i-1},y_i,x_{i+1},\dots,x_d)\times \Acal^1_{\fatY}(y_i,x_i)  \\
  &= \bigcup_{i=1}^{d} \bigcup_{l=0}^{d-i}  \bigcup_{y \in \Crit f} \Acal^{d-l}_{\fatY_1}(x_0,x_1,\dots,x_{i-1},y,x_{i+l+1},\dots,x_d) \times \Acal^{l+1}_{\fatY_2}(y,x_i,\dots,x_{i+l}) \ ,  
  \end{align*}
where the unions over $y$ are again taken over all $y \in \Crit f$ satisfying \eqref{yindexcond}. 
\end{proof}

Theorem \ref{boundaryaftergluingT} is the decisive Theorem to derive relations on the twisted intersection numbers. We will combine it with the following basic fact from differential topology: \bigskip

\emph{Every compact one-dimensional manifold with boundary has an even number of boundary points.} \bigskip

This follows immediately from the fact that a compact one-dimensional manifold with boundary is diffeomorphic to a finite union of compact intervals and circles. See \cite[Appendix 2]{GuilleminPollack} for details. 

In addition to the twisted intersection numbers $a^d_{\fatY}(x_0,x_1,\dots,x_d)$ for $d \geq 2$ from Definition \ref{AinftyMorseCoefficients}, we further define for any $x_0,x_1 \in \Crit f$ and any admissible perturbation datum $\fatY$:
\begin{equation*}
 a^1_{\fatY}(x_0,x_1) := (-1)^{n+1} n(x_0,x_1) := (-1)^{n+1} \algint \widehat{\MM}(x_0,x_1) \ ,
\end{equation*}
where $n(x_0,x_1)$ is the corresponding coefficient of the Morse codifferential as explained in the introduction of this article. See Appendix \ref{AppendixOrientMorseTraj} for details on these coefficients. 

\begin{cor} \index{coefficient relation!of $\left\{\mu_{d,\fatY}\right\}_{d\in\NN}$}
\label{CoeffRelMod2}
Let $d \geq 2$, $x_0,x_1,\dots,x_d \in \Crit f$ with 
 \begin{equation*}
  \mu(x_0) = \sum_{i=1}^d \mu(x_i) +3-d
 \end{equation*}
 and let $\fatY= \left(\fatY_T\right)_{T \in \bigcup_{d \geq 2} \RTree_d}$ be an admissible perturbation datum. Then the following congruence modulo two is true:
 \begin{equation*}
  \sum_{i=1}^d \sum_{l=0}^{d-i} \sum_{y \in \Crit f} a^{d-l}_{\fatY}(x_0,x_1,\dots,x_{i-1},y,x_{i+l+1},\dots,x_d)\cdot a^{l+1}_{\fatY}(y,x_i,\dots,x_{i+l}) \equiv 0 \ ,
 \end{equation*}
 where the sum over $y$ is taken over all $y \in \Crit f$ satisfying $\mu(y)= \sum_{q=i}^{i+l} \mu(x_q)+1-l$.
\end{cor}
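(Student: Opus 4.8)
The plan is to read off the congruence directly from the boundary description of the compactified one‑dimensional moduli space $\bAcal^d_{\fatY}(x_0,x_1,\dots,x_d)$ in Theorem \ref{boundaryaftergluingT}, combined with the elementary topological fact that a compact one‑dimensional manifold with boundary has an even number of boundary points. First I would fix $d\ge 2$ and critical points $x_0,x_1,\dots,x_d$ with $\mu(x_0)=\sum_{i=1}^d\mu(x_i)+3-d$, and invoke Theorem \ref{boundaryaftergluingT} to present $\bAcal^d_{\fatY}(x_0,x_1,\dots,x_d)$ as a compact one‑dimensional manifold with boundary whose boundary is
\begin{equation*}
\partial\bAcal^d_{\fatY}(x_0,x_1,\dots,x_d)=\bigcup_{i=1}^d\bigcup_{l=0}^{d-i}\bigcup_y \Acal^{d-l}_{\fatY}(x_0,\dots,x_{i-1},y,x_{i+l+1},\dots,x_d)\times\Acal^{l+1}_{\fatY}(y,x_i,\dots,x_{i+l})\ ,
\end{equation*}
where the union over $y$ ranges over $y\in\Crit f$ with $\mu(y)=\sum_{q=i}^{i+l}\mu(x_q)+1-l$, and with the convention $\Acal^1_{\fatY}(x,y)=\widehat{\MM}(x,y)$. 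Each factor appearing here is a zero‑dimensional moduli space: for the $\Acal^{d-l}$ and $\Acal^{l+1}$ factors with the listed Morse index this follows from Theorem \ref{MainTheoremMorseTrees} together with the index count, and for the $\widehat{\MM}$ factors it is the classical fact recalled in the introduction. By Theorem \ref{ZerodimFinite} (applied to the perturbation data $\fatY_{d-l}$ and $\fatY_{l+1}$, which are regular and universal since $\fatY$ is admissible) these zero‑dimensional spaces are finite sets, so each product on the right‑hand side is a finite set and the whole boundary is finite.

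Next I would count the boundary points with signs stripped, i.e. modulo two. The cardinality of $\partial\bAcal^d_{\fatY}(x_0,x_1,\dots,x_d)$ is even, hence
\begin{equation*}
\sum_{i=1}^d\sum_{l=0}^{d-i}\sum_{y}\bigl|\Acal^{d-l}_{\fatY}(x_0,\dots,x_{i-1},y,x_{i+l+1},\dots,x_d)\bigr|\cdot\bigl|\Acal^{l+1}_{\fatY}(y,x_i,\dots,x_{i+l})\bigr|\equiv 0\ \modt.
\end{equation*}
It remains to relate the $\mathbb{Z}/2$‑reductions of these cardinalities to the $\mathbb{Z}/2$‑reductions of the twisted intersection numbers $a^{d}_{\fatY}$ and of $a^1_{\fatY}(x,y)=(-1)^{n+1}n(x,y)$. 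For any oriented finite set, the algebraic count and the unsigned count agree modulo two; the sign twists by $(-1)^{\sigma(\dots)}$ in Definition \ref{AinftyMorseCoefficients} and the $(-1)^{n+1}$ in the definition of $a^1_{\fatY}$ are irrelevant mod $2$; and the sum over $T\in\RTree_d$ in Definition \ref{AinftyMorseCoefficients} only adds up the (at most binary, by the graph‑theoretic fact cited before Definition \ref{AinftyMorseCoefficients}) contributions of the various trees, so $a^{m}_{\fatY}(z_0,z_1,\dots,z_m)\equiv\bigl|\Acal^{m}_{\fatY}(z_0,z_1,\dots,z_m)\bigr|\ \modt$ for all $m\ge 1$. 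Substituting these congruences into the displayed parity relation yields exactly
\begin{equation*}
\sum_{i=1}^d\sum_{l=0}^{d-i}\sum_{y\in\Crit f} a^{d-l}_{\fatY}(x_0,x_1,\dots,x_{i-1},y,x_{i+l+1},\dots,x_d)\cdot a^{l+1}_{\fatY}(y,x_i,\dots,x_{i+l})\equiv 0\ \modt,
\end{equation*}
with $y$ running over the critical points with the stated Morse index, which is the claim.

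The only genuinely delicate point is bookkeeping rather than analysis: one must make sure that the index constraint $\mu(x_0)=\sum_{i=1}^d\mu(x_i)+3-d$ forces precisely the factors in Theorem \ref{boundaryaftergluingT} to be zero‑dimensional (so that Theorem \ref{ZerodimFinite} applies and the boundary is finite), and that boundary strata of positive‑dimensional moduli spaces do not contribute — but this is already encoded in Theorem \ref{boundaryaftergluingT}, whose proof handled exactly these dimension counts. So the main obstacle is essentially trivial here, having been absorbed into the earlier theorems; the present corollary is a short formal consequence of Theorem \ref{boundaryaftergluingT}, Theorem \ref{ZerodimFinite}, and the parity principle for compact $1$‑manifolds with boundary, once one observes that all sign conventions wash out modulo two.
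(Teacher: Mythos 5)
Your proof is correct and takes essentially the same route as the paper: both pass from the boundary description of Theorem \ref{boundaryaftergluingT} to the parity of $\bigl|\partial\bAcal^d_{\fatY}(x_0,\dots,x_d)\bigr|$ and observe that all orientation/sign twists in the coefficients $a^m_{\fatY}$ are invisible modulo two. Your version just spells out more explicitly (via Theorem \ref{ZerodimFinite} and the index bookkeeping) why the factors in the boundary are finite zero-dimensional sets, which the paper leaves implicit.
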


\begin{proof}
 By definition of the coefficients as twisted oriented intersection numbers, the following holds as a congruence modulo two for all $i \in \{1,2,\dots,d\}$, $l \in \{0,1,\dots,d-i\}$ and $y \in \Crit f$ with $\mu(y) = \sum_{q=i}^{i+l}\mu(x_q)+1-l$:
 \begin{align*}
  &\sum_{i=1}^d \sum_{l=0}^{d-i} \sum_{y \in \Crit f}a^{d-l}_{\fatY}(x_0,x_1,\dots,x_{i-1},y,x_{i+l+1},\dots,x_d)\cdot a^{l+1}_{\fatY}(y,x_i,\dots,x_{i+l}) \\
%  &\equiv \sum_{i=1}^d \sum_{l=0}^{d-i} \sum_{y \in \Crit f}\left| \Acal^{d-l}_{\fatY}(x_0,x_1,\dots,x_{i-1},y,x_{i+l+1},\dots,x_d) \right| \cdot  \left| \Acal^{l+1}_{\fatY}(y,x_i,\dots,x_{i+l})\right| \\
  &\equiv \sum_{i=1}^d \sum_{l=0}^{d-i} \sum_{y \in \Crit f} \left|\Acal^{d-l}_{\fatY}(x_0,x_1,\dots,x_{i-1},y,x_{i+l+1},\dots,x_d) \times \Acal^{l+1}_{\fatY}(y,x_i,\dots,x_{i+l}) \right|  \\
&\equiv \left| \partial \bAcal^d_{\fatY}(x_0,x_1,\dots,x_d)\right| \equiv 0 \quad \text{mod} \ 2 \ ,
\end{align*}
where the last congruence is a consequence of Theorem \ref{boundaryaftergluingT}.
\end{proof}

We want to derive a relation of the twisted intersection numbers from the one in Corollary \ref{CoeffRelMod2}, which is an equality of integers and not a congruence modulo two. To do so, one needs to take a look at the orientations of the moduli spaces $\Acal^d_{\fatY}(x_0,x_1,\dots,x_d,T)$ and of the compatibility of these orientations with respect to the gluing procedure that we used to define $\bAcal_{\fatY}^d(x_0,x_1,\dots,x_d)$. 

Afterwards, one needs to perform a thorough sign investigation of the sum of coefficients appearing in Corollary \ref{CoeffRelMod2}. Since the line of argument is very technical, the proof of Theorem \ref{CoeffRelAinfty} is postponed to Appendix \ref{AppendixProofTheoremCoeffRel}.

\begin{theorem} \index{coefficient relation!of $\left\{\mu_{d,\fatY}\right\}_{d\in\NN}$}
\label{CoeffRelAinfty}
Let $d \geq 2$, $x_0,x_1,\dots,x_d \in \Crit f$ with 
 \begin{equation*}
  \mu(x_0) = \sum_{i=1}^d \mu(x_i) +3-d 
 \end{equation*}all and let $\fatY$ be an admissible perturbation datum. Then
 \begin{equation*}
  \sum_{i=1}^d \sum_{l=0}^{d-i} \sum_{y \in \Crit f} (-1)^{\maltese_1^{i-1}} a^{d-l}_{\fatY}(x_0,x_1,\dots,x_{i-1},y,x_{i+l+1},\dots,x_d)\cdot a^{l+1}_{\fatY}(y,x_i,\dots,x_{i+l})=0 \ ,
 \end{equation*}
 where\footnote{The use of the Maltese cross ($\maltese$) for the coefficients defining the signs in this thesis happens in accordance with the notation of the works of M. Abouzaid and P. Seidel, e.g. \cite{SeidelBook} or \cite{AbouzaidPlumbings}. In particular, the author distances himself from any political meaning or implication of this symbol.} 
 \begin{equation*}
  \maltese_1^{i-1} := \maltese_1^{i-1}(x_1,\dots,x_d) = \sum_{q=1}^{i-1}\|x_q\| = \sum_{q=1}^{i-1}\mu(q) - i \ ,
 \end{equation*}
 and where the sum over $y$ is taken over all $y \in \Crit f$ satisfying $\displaystyle\mu(y) = \sum_{q=i}^{i+l} \mu(x_q)+1-l$. 
\end{theorem}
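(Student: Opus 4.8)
The plan is to upgrade the mod-two relation of Corollary \ref{CoeffRelMod2} to a signed integer identity by carefully tracking orientations through the compactification $\bAcal^d_{\fatY}(x_0,x_1,\dots,x_d)$ and its gluing construction. First I would fix, for each binary tree $T \in \BinTree_d$, the induced boundary orientation on $\partial \bAcal^d_{\fatY}(x_0,x_1,\dots,x_d,T)$ coming from the orientation of $\Acal^d_{\fatY}(x_0,x_1,\dots,x_d,T)$ (which is the transverse-intersection orientation of $\Eunder_{\fatY_T}$ with $\Delta_T$, as set up in Appendix \ref{SectionOrientPerturbedMRT}). The key point is that a compact oriented one-dimensional manifold with boundary has the property that the \emph{signed} count of its boundary points is zero; so once I know $\bAcal^d_{\fatY}(x_0,x_1,\dots,x_d)$ is a compact oriented $1$-manifold with boundary, I automatically get $\sum_{p \in \partial} \varepsilon(p) = 0$, and the whole task reduces to identifying the sign $\varepsilon(p)$ attached to each boundary component in terms of the product of twisted intersection numbers.

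The main work is the sign bookkeeping for the three types of boundary components from Theorem \ref{BoundaryDescriptionT}, keeping in mind that the collapsed-edge strata $\Acal^d_{\fatY}(x_0,\dots,x_d,T/e)$ are interior after gluing and contribute nothing. For the semi-infinite breaking at the root, $\widehat{\MM}(x_0,y_0) \times \Acal^d_{\fatY}(y_0,x_1,\dots,x_d,T)$, and at the $i$-th leaf, $\Acal^d_{\fatY}(x_0,\dots,y_i,\dots,x_d,T) \times \widehat{\MM}(y_i,x_i)$, I would use the gluing-orientation conventions for perturbed half-trajectories (the perturbed analogue of the standard Morse gluing signs from \cite{SchwarzEqui}, \cite{WehrheimMWC}) together with the sign twist $\sigma(x_0,x_1,\dots,x_d)$ from Definition \ref{AinftyMorseCoefficients} and the definition $a^1_{\fatY}(x_0,x_1) = (-1)^{n+1} n(x_0,x_1)$; the point of these particular twists — as the remark after Definition \ref{AinftyMorseCoefficients} already advertises — is precisely that they absorb all the awkward index-dependent signs and leave only the Koszul-type sign $(-1)^{\maltese_1^{i-1}}$. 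For the internal breaking $\Acal^{d-l}_{\fatY}(x_0,\dots,x_{i-1},x_e,x_{i+l+1},\dots,x_d,T^e_1) \times \Acal^{l+1}_{\fatY}(x_e,x_i,\dots,x_{i+l},T^e_2)$ I would invoke Lemma \ref{PermutProductDiag}, which provides the diffeomorphism $\Delta_T \cong \Delta_{T^e_1} \times \Delta_{T^e_2}$ realized by an explicit permutation $\sigma_{\{e\}}$ of the $M$-factors; the sign of this permutation, combined with the reordering of the factors of $\MM^d_{\fatY}(\dots,T)$ implicit in the splitting, produces the boundary-orientation sign, and again the $\sigma$-twist is designed so that the net contribution is $(-1)^{\maltese_1^{i-1}}$ times the product $a^{d-l}_{\fatY}(\cdots)\, a^{l+1}_{\fatY}(\cdots)$. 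A separate small check is that the two occurrences of each $T/e = T_1/e_1 = T_2/e_2$ stratum (Lemma \ref{TwoQuotientTrees}, Corollary \ref{TwiceAsBoundary}) are glued with \emph{opposite} induced orientations, so that the glued manifold $\bAcal^d_{\fatY}(x_0,x_1,\dots,x_d)$ is genuinely an oriented manifold-with-boundary and not just a topological one; this is where one uses that $\Acal^d_{\fatY}(x_0,\dots,x_d,T/e)$ sits in the boundary of each $\bAcal^d_{\fatY}(\dots,T_j)$ with the two orientations being negatives of each other — a local model computation near a collapsed edge.

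Having identified every boundary sign, I would assemble: Theorem \ref{boundaryaftergluingT} rewrites $\partial \bAcal^d_{\fatY}(x_0,x_1,\dots,x_d)$ as the disjoint union over $i \in \{1,\dots,d\}$, $l \in \{0,\dots,d-i\}$ and admissible $y$ of the products $\Acal^{d-l}_{\fatY}(x_0,\dots,x_{i-1},y,x_{i+l+1},\dots,x_d) \times \Acal^{l+1}_{\fatY}(y,x_i,\dots,x_{i+l})$, where the $l=0$ and ``outermost'' cases reproduce the two semi-infinite-breaking families via $\Acal^1_{\fatY}(x,y) = \widehat{\MM}(x,y)$. Taking the signed count of the boundary, using that it is zero, and substituting the sign computations above, yields exactly
\begin{equation*}
 \sum_{i=1}^d \sum_{l=0}^{d-i} \sum_{y \in \Crit f} (-1)^{\maltese_1^{i-1}} a^{d-l}_{\fatY}(x_0,x_1,\dots,x_{i-1},y,x_{i+l+1},\dots,x_d)\cdot a^{l+1}_{\fatY}(y,x_i,\dots,x_{i+l})=0 \ ,
\end{equation*}
which is the claim. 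The main obstacle, and the reason this is deferred to Appendix \ref{AppendixProofTheoremCoeffRel}, is the internal-breaking sign: one must compare the transverse-intersection orientation of $\Eunder_{\fatY_T}$ with $\Delta_T$ near an internal-edge-length-to-infinity degeneration against the product orientation on the two smaller moduli spaces, which requires carefully propagating the permutation sign of $\sigma_{\{e\}}$ from Lemma \ref{PermutProductDiag} through the orientation conventions of Appendix \ref{AppendixOrientMorseTraj}, and checking it meshes with the $\sigma$-twists of Definition \ref{AinftyMorseCoefficients}; the half-trajectory breakings are comparatively routine once the unperturbed Morse-theoretic gluing signs are in hand, since the perturbation is compactly supported in time and does not affect the relevant asymptotic orientation data.
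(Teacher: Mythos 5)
Your high-level strategy matches the paper's: orient the glued space $\bAcal^d_{\fatY}(x_0,\dots,x_d)$, invoke the fact that the signed boundary count of a compact oriented $1$-manifold is zero, and then identify each boundary sign with a product of twisted intersection numbers, feeding in the gluing-orientation computations for root, leaf, and internal-edge breakings together with the $\sigma$-twist. The decomposition of the boundary and the role of $\Delta_T \cong \Delta_{T^e_1}\times\Delta_{T^e_2}$ are also as in the paper.

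There is, however, a genuine gap in the step you flag as ``a separate small check.'' You assert that the two copies of $\Acal^d_{\fatY}(x_0,\dots,x_d,T/e)$ inherited from $\partial\bAcal^d_{\fatY}(\dots,T_1)$ and $\partial\bAcal^d_{\fatY}(\dots,T_2)$ carry \emph{opposite} orientations, and you attribute this to a local model computation near a collapsed edge. The paper's Lemma~\ref{LemmaBoundaryOrientCollapse} proves the opposite: with the transverse-intersection orientations and the canonical orderings of internal edges, the two boundary orientations \emph{coincide}. What saves the day is the dexterity twist $(-1)^{r(T)}$ already built into the definition of $a^d_{\fatY}$ (this is the content of the ``sign that only depends on the topological type of $T$'' alluded to after Definition~\ref{AinftyMorseCoefficients}); since $|r(T_1)-r(T_2)|=1$ by Proposition~\ref{PropDexterities}, twisting each $\bAcal^d_{\fatY}(\dots,T)$ by $(-1)^{r(T)}$ flips exactly one of the two copies, producing the opposite orientations needed to glue (Proposition~\ref{PropTwoCopiesOrient}). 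Your proposal never mentions $(-1)^{r(T)}$, and without it the glued space is not orientable in the way you need; moreover, $r(T^e_1)+r(T^e_2)+r(T)$ enters nontrivially into the internal-breaking sign (Theorem~\ref{ThmGluingIntEdge}), so the $\sigma$-twist alone cannot make those terms close up. A secondary omission, less a gap than an underestimate of the work, is the dependence of all these signs on the canonical ordering of internal edges and on left-/right-handedness of $e$ (Definition~\ref{DefCanonicalOrdering}, Theorem~\ref{TheoremSigntaue}); your ``explicit permutation $\sigma_{\{e\}}$'' is not enough on its own, because the orientations on the $T$-diagonals themselves are defined via a choice of ordering, so Lemma~\ref{LemmaDeltaTOrientPermute} must be combined with the reordering signs $\sign\tau_e$ to get the actual diagonal-orientation contribution.
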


Remember that we have defined the maps $\left(\mu_{d,\fatY}\right)_{d \geq 2}$ in Definition \ref{AinftyMorseMaps} in terms of the numbers $a^d_{\fatY}(\cdots)$. Hence, using Theorem \ref{CoeffRelAinfty}, we are finally able to prove that these operations fulfil the defining equations of an $A_\infty$-algebra. 

\begin{theorem} \index{$A_\infty$-algebra!structure on $C^*(f)$} \index{higher order multiplications}
\label{TheoremMorseAinftyalgebra}
 For every admissible perturbation datum $\fatY$, the Morse cochain complex
 \begin{equation*}
 \left(C^*(f), \left(\mu_{d,\fatY}:C^*(f)^{\otimes d} \to C^*(f) \right)_{d \in \NN}\right)
 \end{equation*}
 is an $A_\infty$-algebra, where $\mu_{1,\fatY}:= (-1)^{n+1} \delta: C^*(f) \to C^*(f)$ denotes the twisted Morse codifferential.
\end{theorem}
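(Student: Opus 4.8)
The plan is to translate the combinatorial identity from Theorem~\ref{CoeffRelAinfty} directly into the $A_\infty$-relations. Recall that the $A_\infty$-equations for a graded algebra $(C^*(f),(\mu_{d,\fatY})_{d \geq 1})$ read, for every $d \geq 1$,
\begin{equation*}
 \sum_{i=1}^d \sum_{l=0}^{d-i} (-1)^{\maltese_1^{i-1}} \mu_{d-l,\fatY}\left(x_1,\dots,x_{i-1},\mu_{l+1,\fatY}(x_i,\dots,x_{i+l}),x_{i+l+1},\dots,x_d\right) = 0 \ ,
\end{equation*}
with the sign convention $\maltese_1^{i-1} = \sum_{q=1}^{i-1}\|x_q\|$ and $\|x\| = \mu(x)-1$, matching the Maltese-cross coefficients of Theorem~\ref{CoeffRelAinfty}. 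First I would fix $d$ and critical points $x_1,\dots,x_d$, and reduce to checking, for each prospective output generator $x_0 \in \Crit f$, that the $x_0$-coefficient of the left-hand side vanishes. By Definition~\ref{AinftyMorseMaps} and the definition of $a^1_{\fatY}$, the $x_0$-coefficient of $\mu_{d-l,\fatY}(x_1,\dots,x_{i-1},\mu_{l+1,\fatY}(x_i,\dots,x_{i+l}),x_{i+l+1},\dots,x_d)$ equals $\sum_y a^{d-l}_{\fatY}(x_0,x_1,\dots,x_{i-1},y,x_{i+l+1},\dots,x_d) \cdot a^{l+1}_{\fatY}(y,x_i,\dots,x_{i+l})$, where $y$ ranges over $\Crit f$.

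Next I would carry out the degree bookkeeping that makes the triple sum over $y$ finite and matches the hypothesis of Theorem~\ref{CoeffRelAinfty}. The inner coefficient $a^{l+1}_{\fatY}(y,x_i,\dots,x_{i+l})$ is nonzero only if $\mu(y) = \sum_{q=i}^{i+l}\mu(x_q)+1-l$ (for $l=0$ this is the Morse codifferential condition $\mu(y)=\mu(x_i)+1$, consistent with $a^1_{\fatY}$), and then $a^{d-l}_{\fatY}(x_0,\dots,y,\dots)$ is nonzero only if $\mu(x_0) = \mu(y)+\sum_{q \neq i,\dots,i+l}\mu(x_q) + 3 - (d-l)$; substituting the first constraint into the second gives exactly $\mu(x_0) = \sum_{q=1}^d \mu(x_q)+3-d$. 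So the only output generators $x_0$ that can appear are those of this index, and for such $x_0$ Theorem~\ref{CoeffRelAinfty} gives precisely
\begin{equation*}
 \sum_{i=1}^d \sum_{l=0}^{d-i} \sum_{y} (-1)^{\maltese_1^{i-1}} a^{d-l}_{\fatY}(x_0,x_1,\dots,x_{i-1},y,x_{i+l+1},\dots,x_d)\cdot a^{l+1}_{\fatY}(y,x_i,\dots,x_{i+l}) = 0 \ .
\end{equation*}
I should double-check the two boundary cases absorbed into this formula: $l=0$, $i\in\{1,\dots,d\}$ reproduces the terms $\mu_{d,\fatY}(\dots,\mu_{1,\fatY}(x_i),\dots)$, and the term $i=1$, $l=d-1$ together with whatever encodes $\mu_{1,\fatY}\circ\mu_{d,\fatY}$ must also be present; here one uses that $a^1_{\fatY}(x_0,y) = (-1)^{n+1}n(x_0,y)$ is by construction the $y$-coefficient of $(-1)^{n+1}\delta$, so $\mu_{1,\fatY} = (-1)^{n+1}\delta$ is consistent with the $d=1$ instance $\mu_{1,\fatY}\circ\mu_{1,\fatY}=0$, i.e. $\delta\circ\delta=0$, which is classical Morse theory.

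The remaining work is purely a matter of matching the two sign conventions, and this is the step I expect to be the only genuine subtlety. One must verify that the Koszul sign $\maltese_1^{i-1} = \sum_{q=1}^{i-1}(\mu(x_q)-1)$ appearing in the $A_\infty$-relations --- which on the nose involves the shifted degrees $\|x_q\|$ of the \emph{inputs}, and which for a graded map $\mu_{l+1,\fatY}$ of degree $2-(l+1) = 1-l$ inserted into slot $i$ does not pick up further signs because the standard $A_\infty$-sign is already built into the statement of Theorem~\ref{CoeffRelAinfty} --- coincides with the coefficient $\maltese_1^{i-1}(x_1,\dots,x_d) = \sum_{q=1}^{i-1}\mu(x_q) - i$ as defined in that theorem. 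Since $\sum_{q=1}^{i-1}(\mu(x_q)-1) = \sum_{q=1}^{i-1}\mu(x_q) - (i-1) \equiv \sum_{q=1}^{i-1}\mu(x_q) - i \pmod 2$, the two agree, so no correction is needed; this is where the sign twists in Definition~\ref{AinftyMorseCoefficients} (the parity of $\sigma(x_0,\dots,x_d)$) and the twisted codifferential $\mu_{1,\fatY} = (-1)^{n+1}\delta$ have already done their job, having been engineered in Appendix~\ref{AppendixProofTheoremCoeffRel} precisely so that Theorem~\ref{CoeffRelAinfty} holds with exactly the $A_\infty$-signs. Thus the $x_0$-coefficient of the $A_\infty$-expression vanishes for every $x_0$, hence the expression itself vanishes for all $x_1,\dots,x_d$, and by $\ZZ$-linearity the $A_\infty$-relations hold on all of $C^*(f)^{\otimes d}$. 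This proves that $(C^*(f),(\mu_{d,\fatY})_{d\in\NN})$ is an $A_\infty$-algebra.
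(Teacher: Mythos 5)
Your proposal takes essentially the same route as the paper: expand each $\mu_{d-l,\fatY}(\dots,\mu_{l+1,\fatY}(\dots),\dots)$ in the $a$-coefficients, observe that the degree constraints force $\mu(x_0)=\sum_q\mu(x_q)+3-d$, and then invoke Theorem~\ref{CoeffRelAinfty} to kill the $x_0$-coefficient. The reduction, the degree bookkeeping, and the role of the $\sigma$-twist and the $(-1)^{n+1}$ in $\mu_{1,\fatY}$ are all identified correctly.

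One step of your sign discussion contains a false claim: you assert
\[
\sum_{q=1}^{i-1}\mu(x_q)-(i-1)\;\equiv\;\sum_{q=1}^{i-1}\mu(x_q)-i \pmod 2 ,
\]
but these two integers differ by $1$ and so are \emph{not} congruent modulo two. The conclusion you draw (that the Koszul sign in the $A_\infty$-equations agrees with the $\maltese_1^{i-1}$ of Theorem~\ref{CoeffRelAinfty}) is nevertheless correct, because the last equality in the paper's display
\[
\maltese_1^{i-1}=\sum_{q=1}^{i-1}\|x_q\|=\sum_{q=1}^{i-1}\mu(q)-i
\]
is itself a typo (and already internally inconsistent: $\sum_{q=1}^{i-1}\|x_q\|=\sum_{q=1}^{i-1}\mu(x_q)-(i-1)$, not $-i$). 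What is actually used in the sign computations of Theorem~\ref{SignGluingRootedLeafed} and Lemma~\ref{SignGluingPermute} is $\maltese_1^{i-1}\equiv\sum_{q=1}^{i-1}\mu(x_q)-(i-1)=\sum_{q=1}^{i-1}\|x_q\|$, which is precisely the standard $A_\infty$ Koszul sign. So the correct justification is not a spurious mod-$2$ identity, but simply reading $\maltese_1^{i-1}$ from its defining expression $\sum\|x_q\|$ rather than from the mis-typed right-hand side.
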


\begin{proof}
 Note that the map $\mu_{1,\fatY}:C^*(f) \to C^*(f)$ is explicitly defined as the $\ZZ$-linear extension of  
 \begin{equation*}
 (x_0 \in \Crit f) \ \mapsto \sum_{\stackrel{x_1 \in \Crit f}{\mu(x_1)=\mu(x_0)+1}} a^1_{\fatY}(x_0,x_1)x_1 \ .
 \end{equation*}
 It suffices to show that the defining equations of an $A_\infty$-algebra hold on generators of $C^*(f)$, i.e. that for every $d \in \NN$ and every $x_1,\dots,x_d \in \Crit f$ the following holds:
 \begin{align*}
  &\sum_{\stackrel{d_1,d_2 \in \NN}{d_1+d_2=d+1}} \sum_{i=1}^{d+1-d_1}(-1)^{\maltese_1^{i-1}}\mu_{d_2,\fatY}(x_1,\dots,x_{i-1},\mu_{d_1,\fatY}(x_i,\dots,x_{i+d_1-1}),a_{i+d_1},\dots,x_d) \\
  &=\sum_{l=0}^{d-1} \sum_{i=1}^{d-l}(-1)^{\maltese_1^{i-1}}\mu_{d-l,\fatY}(x_1,\dots,x_{i-1},\mu_{l+1,\fatY}(x_i,\dots,x_{i+l}),a_{i+l+1},\dots,x_d) =0 \ .
 \end{align*}
 For all $l \in \{0,1,\dots,d-1\}$ and  $i \in \{1,2\dots,d-l\}$ we compute that
 \begin{align}
  &\mu_{d-l,\fatY}(x_1,\dots,x_{i-1},\mu_{l+1,\fatY}(x_i,\dots,x_{i+l}),x_{i+l+1},\dots,x_d) \notag \\
 &=\mu_{d-l,\fatY}(x_1,\dots,x_{i-1},\sum_{y}a^{l+1}_{\fatY}(y,x_i,\dots,x_{i+l})\cdot y,x_{i+l+1},\dots,x_d) \notag \\
% &=\sum_y a^{l+1}_{\fatY}(y,x_i,\dots,x_{i+l})\cdot  \mu_{d-l,\fatY}(x_1,\dots,x_{i-1},y,x_{i+l+1},\dots,x_d) \notag \\
 &=\sum_y a^{l+1}_{\fatY}(y,x_i,\dots,x_{i+l})\cdot \Bigl(\sum_{x_0} a^{d-l}_{\fatY}(x_0x_1,\dots,x_{i-1},y,x_{i+l+1},\dots,x_d)\cdot x_0 \Bigr) \ , \label{letzteformel}
 \end{align}
 where the sums over $x_0$ and $y$ are taken over all $x_0,y \in \Crit f$ satisfying
 \begin{equation*}
 \mu(y) = \sum_{q=i}^{i+l} \mu(x_q)+1-l \ , \quad \mu(x_0) = \sum_{q=1}^{i-1} \mu(x_q)+\mu(y)+\sum_{q=i+l+1}^d \mu(x_q) +2-d+l \ .
 \end{equation*}
 One checks that \eqref{letzteformel} is the same as the sum
 \begin{equation*}
 \sum_{x_0} \quad \sum_{y} a^{d-l}_{\fatY}(x_0x_1,\dots,x_{i-1},y,x_{i+l+1},\dots,x_d) \cdot a^{l+1}_{\fatY}(y,x_i,\dots,x_{i+l}) \cdot x_0 \ ,
 \end{equation*}
 where this time the sums are taken over all $x_0,y\in\Crit f$ satisfying 
 \begin{equation}
 \label{x0ycond}
  \mu(y) = \sum_{q=i}^{i+l} \mu(x_q)+1-l \ , \quad \mu(x_0) = \sum_{q=1}^{d} \mu(x_q)+1-d \ .
 \end{equation}
 Consequently, 
 \begin{align*}
 &\sum_{l=0}^{d-1} \sum_{i=1}^{d-l}(-1)^{\maltese_1^{i-1}}\mu_{d-l,\fatY}(x_1,\dots,x_{i-1},\mu_{l+1,\fatY}(x_i,\dots,x_{i+l}),a_{i+l+1},\dots,x_d) \\
% &=\sum_{l=0}^{d-1} \sum_{i=1}^{d-l} \sum_{x_0} \sum_{y} (-1)^{\maltese_1^{i-1}} a^{d-l}_{\fatY}(x_0,x_1,\dots,x_{i-1},y,x_{i+l+1},\dots,x_d) \cdot a^{l+1}_{\fatY}(y,x_i,\dots,x_{i+l}) \cdot x_0 \\
 &=\sum_{x_0} \Bigl( \sum_{i=1}^{d} \sum_{l=0}^{d-i} \sum_{y} (-1)^{\maltese_1^{i-1}} a^{d-l}_{\fatY}(x_0x_1,\dots,x_{i-1},y,x_{i+l+1},\dots,x_d) \cdot a^{l+1}_{\fatY}(y,x_i,\dots,x_{i+l}) \Bigr) \cdot x_0 \ ,
 \end{align*}
 where the sums are taken over all $x_0,y \in \Crit f$ satisfying \eqref{x0ycond}. 
 
 But by Theorem \ref{AinftyMorseCoefficients}, the sum inside the brackes vanishes for every $x_0 \in \Crit f$ of the right index. Therefore, the whole sum vanishes and the claim follows.
\end{proof}

\appendix

\section{Orientations and sign computations for perturbed Morse ribbon trees}
\label{AppendixOrient}

We have mentioned the necessity of orientations on the Morse-theoretic moduli spaces involved. More precisely, we have defined the maps $\mu_{d,\fatY}$ in terms of the twisted oriented intersection numbers of the form $a^d_{\fatY}(x_0,x_1,\dots,x_d)$, which requires a choice of orientations on the spaces $\Acal^d_{\fatY}(x_0,x_1,\dots,x_d)$. 

For clarity's sake, we refrained from explaining the details in the main part of this article. Nevertheless, we will provide the constructions and explanations in this appendix.
It contains several tedious, but indispensable computations related to orientations on Morse-theoretic moduli spaces and the coefficients $a^d_{\fatY}(x_0,x_1,\dots,x_d)$.

\bigskip

In Section \ref{AppendixOrientMorseTraj} we define orientations on spaces of semi-infinite and finite-length Morse trajectories by using identifications of these moduli spaces with subsets of the target manifold. Afterwards, we investigate the compatibilities of the chosen orientations with Morse-theoretic gluing maps. 

Section \ref{SectionOrientPerturbedMRT} extends the constructions of the aforegoing section to spaces of perturbed Morse trajectories. The orientations on the perturbed trajectories are then used to define orientations on the moduli spaces of type $\Acal^d_{\fatY}(x_0,x_1,\dots,x_d,T)$, filling the gaps in the definition of the coefficients $a^d_{\fatY}(x_0,x_1,\dots,x_d)$. A similar discussion is given by Abouzaid in \cite[Section 8]{AbouzaidPlumbings}.

Eventually, Section \ref{AppendixProofTheoremCoeffRel} provides all missing computations for the proof of Theorem \ref{CoeffRelAinfty}. In other words it concludes the proof of the maps $\mu_{d,\fatY}$ fulfilling the defining equations of an $A_\infty$-algebra by a number of long, but straightforward computations. This section is strongly oriented on \cite[Appendix C]{AbouzaidMorseTrop}, which discusses the unperturbed analogue of Theorem \ref{CoeffRelAinfty}.

\bigskip

In contrast to the more general approach of coherent orientations as in \cite{Schwarz} and \cite{FloerHoferCoherent}, we will pursue the classic finite-dimensional approach as presented in \cite[Appendix B]{Schwarz}, \cite[Section 7.1]{BanyagaHurtubise}, \cite[Section 6.6]{JostRiemGeom} or \cite[Section 6.6]{Schaetz}.  

\bigskip

\textit{Let always '$\equiv$' denote the congruence modulo two of two integers. }

\subsection{Orientations on Morse trajectory spaces}
\label{AppendixOrientMorseTraj}

By the Stable/Unstable Manifold Theorem from Morse theory, see \cite[Theorem 4.2]{BanyagaHurtubise}, both the unstable and the stable manifolds of a negative gradient flow of a Morse function on a complete manifold without boundary are diffeomorphic to open balls, hence orientable. 

We orient the unstable and stable manifolds with respect to $f \in C^\infty(M)$ as follows: \index{orientations!of unstable and stable manifolds}

\begin{itemize}
 \item We choose an arbitrary orientation on $W^u(x)$ for every $x \in \Crit f$. 
 \item We equip every $W^s(x)$ with the unique orientation such that the orientation on $T_xM$ which is induced by the splitting
 \begin{equation*}
  T_xM = T_xW^s(x) \oplus T_x W^u(x)
 \end{equation*}
 coincides with the given orientation on $M$ for every $x \in \Crit f$. In other words,  a positive basis of $T_x W^s(x)$ followed by a positive basis of $T_x W^u(x)$ is a positive basis of $T_xM$.
\end{itemize}

{\it Throughout this article, we assume such orientations on the unstable and stable manifolds to be chosen and fixed.We further equip the spaces $\WW^u(x)$ and $\WW^s(x)$ for every $x \in \Crit f$ with the unique orientations which make the endpoint evaluations
$\WW^u(x)\stackrel{\cong}{\to} W^u(x)$ and $\WW^s(x) \stackrel{\cong}{\to} W^s(x)$, both given by $\gamma \mapsto \gamma(0)$, orientation-preserving.}

\bigskip

The spaces of finite-length trajectories of the negative gradient flow of $f$ are even easier to orient. By construction, there is a diffeomorphism 
\begin{equation}
\label{MMRMdiffeo}
\MM(f,g) \stackrel{\cong}{\to} [0,+\infty) \times M \ ,  \quad (l,\gamma) \mapsto (l,\gamma(0)) \ .
\end{equation}
We equip $[0,+\infty)$ with the canonical and $[0,+\infty)\times M$ with the product orientation. 

\bigskip

\textit{Throughout this article, we equip the space $\MM(f,g)$ with the unique orientation, such that the diffeomorphism in (\ref{MMRMdiffeo}) is orientation-preserving if and only if $n$ is odd. In other words, such that the sign of the diffeorphism in (\ref{MMRMdiffeo}) is given by $(-1)^{n+1}$.} \index{orientations!of finite-length trajectory spaces}

\bigskip
This choice of orientation for the space of finite-length trajectories might seem awkward at first sight. For technical reasons, this choice of orientation will be the right one to establish the $A_\infty$-equations for the Morse-theoretic operations on $C^*(f)$.

\bigskip

Remember from the introduction that the Morse codifferential $\delta: C^*(f) \to C^*(f)$ is given as the $\ZZ$-linear extension of
\begin{equation*}
 \delta(x) = \sum_{\stackrel{z \in \Crit f}{\mu(z) = \mu(x)+1}} n(z,x) \cdot z 
\end{equation*}
for $x \in \Crit f$, where $n(z,x) := \algint \widehat{\MM}(z,x)$, i.e. the oriented intersection number of the zero-dimensional manifold $\widehat{\MM}(z,x)$. (We will discuss oriented intersection numbers in Section \ref{SectionOrientPerturbedMRT}.)

\bigskip

The last types of Morse trajectory spaces that we use in this article are moduli spaces of pa\-ra\-me\-trized and unparametrized Morse trajectories starting and ending in fixed critical points, i.e. moduli spaces of type $\MM(x,y):= \MM(x,y,g)$ and $\widehat{\MM}(x,y):=\widehat{\MM}(x,y,g)$ for $x,y \in \Crit f$ which we defined in the introduction. We will proceed in strict analogy with \cite[Appendix B]{Schwarz} and define orientations on these moduli spaces. 

\bigskip

For any $x,y \in \Crit f$, there is an inclusion
\begin{equation*}
 \iota: \MM(x,y) \to \WW^u(x) \times \WW^s(y) \ , \quad  \gamma \mapsto \left(\gamma|_{(-\infty,0]},\gamma|_{[0,+\infty)} \right) \ .
\end{equation*}
One checks without difficulties that $\iota$ is a smooth embedding. Let $\gamma \in \MM(x,y)$ and put $(\gamma_1,\gamma_2) := \iota(\gamma)$. The following sequence is a short exact sequence of vector spaces:
\begin{equation}
\label{MMxySES}
 0 \longrightarrow T_\gamma \MM(x,y) \stackrel{i}{\longrightarrow} T_{\gamma_1}\WW^u(x)\oplus T_{\gamma_2} \WW^s(y) \stackrel{p}{\longrightarrow} T_{\gamma(0)}M \longrightarrow 0 \ , 
\end{equation}
where $\gamma_1 := \gamma|_{(-\infty,0]}$, $\gamma_2 := \gamma|_{[0,+\infty)}$, 
\begin{equation}
\label{Mapsiandp}
i(\xi) = \left(\xi|_{(-\infty,0]},-\xi|_{[0,+\infty)} \right) \ , \qquad p(\xi_1,\xi_2) = \xi_1(0) + \xi_2(0) \ . 
\end{equation}
The injectivity of $i$ is clear while the surjectivity of $p$ follows from the Morse-Smale property, i.e. because all unstable and stable manifolds intersect transversely. 

Given a short exact sequence of vector spaces 
\begin{equation*}
0 \to V_0 \stackrel{i}{\to} V_1 \stackrel{p}{\to} V_2 \to 0 
\end{equation*}
and orientations on $V_1$ and $V_2$, these orientations induce a well-defined orientation on $V_0$ by using the following convention: A basis of $V_0$ is positive if and only if the image of this basis under $i$ followed by the preimage of a positive basis of $V_2$ is a positive basis on $V_1$. 

\bigskip

\textit{For all $x,y \in \Crit f$ with $\mu(x) > \mu(y)$, we equip $\MM(x,y)$ with the orientation induced by the orientation on $M$, the chosen orientations on $\WW^u(x)$ and $\WW^s(y)$ and the short exact sequence (\ref{MMxySES}).} \index{orientations!of Morse trajectory spaces!parametrized}

\bigskip

Let $x,y \in \Crit f$ with $\MM(x,y)\neq \emptyset$ and $x\neq y$. Since $f$ is decreasing along the trajectories of its negative gradient flow, this implies: $f(x) > f(y)$. Let $a \in \RR$ be a regular value of $f$ with 
\begin{equation*}
 f(x)>a>f(y) \ . 
\end{equation*}
Then the following map is well-defined and smooth:
\begin{equation}
\label{iotaa}
 \iota_a: \widehat{\MM}(x,y) \to \MM(x,y) \ , \quad  \hat{\gamma} \mapsto \gamma_a \ , 
\end{equation}
where $\gamma_a$ denotes the unique element of the equivalence class $\hat{\gamma}$ satisfying $f(\gamma_a(0))=a$. For $\hat{\gamma} \in \widehat{\MM}(x,y)$ let $(v_1,\dots,v_N)$ be a basis of $T_{\hat{\gamma}}\widehat{\MM}(x,y)$. Then 
\begin{equation*}
 \left(-\nabla f\circ \gamma,(D\iota_a)_{\hat{\gamma}}(v_1),\dots,(D\iota_a)_{\hat{\gamma}}(v_N) \right)
\end{equation*}
is a basis of $T_{\gamma_a}\MM(x,y)$, see \cite[Appendix B]{Schwarz} or \cite[Section 2.5]{Schaetz}. We call $(v_1,\dots,v_N)$ a \emph{positive} basis of $T_{\hat{\gamma}}\widehat{\MM}(x,y)$ if and only if the thus-induced basis of $T_{\gamma_a}\MM(x,y)$ is positive with respect to the orientation defined above.  

\bigskip

\textit{For every $x,y \in \Crit f$ with $f(x)> f(y)$, this definition of positive bases yields a well-defined orientation of $\widehat{\MM}(x,y)$. This orientation is independent of the choice of regular value $a$ and we will always consider $\widehat{\MM}(x,y)$ as equipped with this orientation.}  \index{orientations!of spaces of Morse trajectories!unparameterized}

\bigskip

We want to investigate how the Morse-theoretic gluing maps that we used in the introduction behave with respect to the orientations on Morse trajectory spaces. Before we do so, we discuss the relation between Morse trajectory spaces with respect to the functions $f$ and $-f$ that will be useful in the discussing of gluing maps and orientations. 

We will derive the result for positive semi-infinite Morse trajectories from the result for negative semi-infinite Morse trajectories by identifying positive half-trajectories of the negative gradient flow of $f$ with negative half-trajectories of its positive gradient flow. 

\bigskip 

We make the simple observation that the critical points of the Morse functions $f$ and $-f$ coincide. In the following, we will define all negative and positive gradient flow lines with respect to the same given Riemannian metric on $M$. 

The following map is a smooth diffeomorphism for every $x \in \Crit f$: 
\begin{equation*}
 \varphi_s: \WW^s(x,f) \to \WW^u(x,-f) \ , \qquad
  \left(\varphi_s(\gamma)\right)(t) := \gamma(-t) \quad \forall t \in (-\infty,0] \ . 
\end{equation*}
We equip every $\WW^u(x,-f)$ with the unique orientation that makes the map $\varphi_s$ orientation-preserving. 

Moreover, we equip every $\WW^s(x,-f)$ with the complementary orientation, i.e. the unique orientation induced by the following splitting:
\begin{equation*}
 T_x M = T_xW^s(x,-f)\oplus T_x W^u(x,-f) \cong T_{\gamma_x} \WW^s(x,-f) \oplus T_{\gamma_x} \WW^u(x,-f) \ , 
\end{equation*}
where $\gamma_x$ denotes both the constant positive and negative semi-infinite Morse trajectory, given by $\gamma_x(t)=x$ for every $t$. 

In addition to the map $\varphi_s$, we consider the following smooth diffeomorphism:
\begin{equation*}
 \varphi_u: \WW^u(x,f) \to \WW^s(x,-f) \ , \qquad \left(\varphi_u(\gamma)\right)(t) = \gamma(-t) \quad \forall t \in [0,+\infty) \ .
\end{equation*}

\index{orientations!of unstable and stable manifolds}
\begin{lemma}
\label{OrOnWsminusf}
 With respect to the chosen orientations, the map $\varphi_u$ is orientation-preserving if and only if $(n+1)\mu(x)$ is even. 
\end{lemma}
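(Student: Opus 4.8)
The plan is to reduce the sign of $\varphi_u$ to a purely finite-dimensional comparison of orientations at the single critical point $x$, exploiting that every trajectory space in sight is connected (diffeomorphic to an open ball) and is identified with a submanifold of $M$ by the endpoint evaluation $\mathrm{ev}_0\colon\gamma\mapsto\gamma(0)$, so that the desired sign is locally constant and may be computed at one point.

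First I would record the elementary identifications between the $f$- and $(-f)$-worlds. Since $-\nabla^g(-f)=\nabla^g f$ and the flow of $\nabla^g f$ is the time-reversal of the flow of $-\nabla^g f$, the functions $f$ and $-f$ have the same critical points, $\mu_{-f}(x)=n-\mu(x)$, and, as submanifolds of $M$, one has $W^u(x,-f)=W^s(x,f)$ and $W^s(x,-f)=W^u(x,f)$. Both time-reversal maps satisfy $\mathrm{ev}_0\circ\varphi_s=\mathrm{ev}_0$ and $\mathrm{ev}_0\circ\varphi_u=\mathrm{ev}_0$, because $(\varphi_\bullet(\gamma))(0)=\gamma(0)$. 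Hence, after applying $\mathrm{ev}_0$ on both sides, $\varphi_u$ becomes literally the identity map of the submanifold $W:=W^u(x,f)=W^s(x,-f)\subset M$, and $\operatorname{sgn}(\varphi_u)$ is exactly the sign by which the two orientations $W$ inherits — one pushed forward from $\WW^u(x,f)$, the other from $\WW^s(x,-f)$ — differ.

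Next I would unwind these two orientations. By the convention fixed for $f$, $\mathrm{ev}_0\colon\WW^u(x,f)\to W^u(x,f)$ is orientation-preserving for the chosen orientation $o^u_f$ on $W^u(x,f)$. On the other side: $\mathrm{ev}_0$ intertwines $\varphi_s$ with the identity, so the orientation of $\WW^u(x,-f)$ pushes forward under $\mathrm{ev}_0$ to the orientation $o^s_f$ of $W^u(x,-f)=W^s(x,f)$; and the orientation of $\WW^s(x,-f)$, which by definition is the one induced at the constant trajectory $\gamma_x$ by the splitting $T_xM=T_xW^s(x,-f)\oplus T_xW^u(x,-f)$, pushes forward under $\mathrm{ev}_0$ to the orientation $o^s_{-f}$ of $W^s(x,-f)$ characterised by: a positive basis of $T_xW^s(x,-f)$ followed by an $o^s_f$-positive basis of $T_xW^u(x,-f)$ is positive for $T_xM$. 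Thus everything is localised at $x$, and it remains to compare $o^u_f$ with $o^s_{-f}$ on $T_xW^u(x,f)=T_xW^s(x,-f)$.

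Finally I would carry out the block-swap bookkeeping. Set $m=\mu(x)$, $k=n-m$; let $(s_1,\dots,s_k)$ be $o^s_f$-positive in $T_xW^s(x,f)$ and $(u_1,\dots,u_m)$ be $o^u_f$-positive in $T_xW^u(x,f)$. The $f$-convention says $(s_1,\dots,s_k,u_1,\dots,u_m)$ is positive for $T_xM$, so swapping the two blocks, $(u_1,\dots,u_m,s_1,\dots,s_k)$ has sign $(-1)^{km}$ in $T_xM$. The $(-f)$-convention, using $T_xW^u(x,-f)=T_xW^s(x,f)$ with orientation $o^s_f$, says a basis $(t_1,\dots,t_m)$ of $T_xW^s(x,-f)$ is $o^s_{-f}$-positive iff $(t_1,\dots,t_m,s_1,\dots,s_k)$ is positive for $T_xM$. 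Comparing, the change of basis from an $o^s_{-f}$-positive basis to $(u_1,\dots,u_m)$ has sign $(-1)^{km}$, i.e.\ $o^u_f=(-1)^{km}o^s_{-f}$; since the orientations of all the manifolds involved are determined by their values at $x$, this is the global sign of $\varphi_u$. Now $km=(n-\mu(x))\mu(x)\equiv n\mu(x)-\mu(x)^2\equiv(n-1)\mu(x)\equiv(n+1)\mu(x)\pmod 2$, so $\operatorname{sgn}(\varphi_u)=(-1)^{(n+1)\mu(x)}$, which is the assertion. The only genuinely delicate point is tracking the chain of orientation conventions — in particular the ``complementary orientation'' and the identification of $W^u(x,-f)$ with $W^s(x,f)$ as \emph{oriented} manifolds — without sign slips; once that is set up, the remaining step is the single block transposition above.
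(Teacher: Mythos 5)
Your proof is correct and is essentially the same as the paper's: both arguments come down to the sign of the block transposition $T_x W^s(x,f)\oplus T_x W^u(x,f)\to T_x W^u(x,f)\oplus T_x W^s(x,f)$, namely $(-1)^{(n-\mu(x))\mu(x)}\equiv(-1)^{(n+1)\mu(x)}$. The only difference is cosmetic --- you push everything to $T_xM$ via evaluation at $0$ and work with explicit bases there, while the paper phrases the same comparison abstractly through the orientation-preserving diffeomorphism $\varphi_u^{-1}\times\varphi_s^{-1}$ between the trajectory spaces.
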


\begin{proof}
 We will write $\varphi_u(\WW^u(x,f))$ for $\WW^s(x,-f)$ whenever we want to consider it as equipped the orientation induced by $\varphi_u$. 
 
 By definition, $\varphi_u$ is orientation-preserving if and only if the following splitting induces a positive orientation on $T_xM$:
 \begin{equation}
 \label{SplittingVarphiu}
  T_xM \cong T_{\gamma_x}\varphi_u(\WW^u(x,f)) \oplus T_{\gamma_x} \WW^u(x,-f) \ , 
 \end{equation}
 where $\gamma_x$ again denotes the respective constant trajectory.  The following map is an orientation-preserving diffeomorphism:
 \begin{equation*}
  \varphi_u^{-1} \times \varphi_s^{-1}: \varphi_u(\WW^u(x,f)) \times \WW^u(x,-f) \to \WW^u(x,f) \times \WW^s(x,f) \ .
 \end{equation*}
 Thus, there is an orientation-preserving isomorphism 
 \begin{equation*}
 T_{\gamma_x}\varphi_u(\WW^u(x,f)) \oplus T_{\gamma_x} \WW^u(x,-f) \cong T_{\gamma_x}\WW^u(x,f) \oplus T_{\gamma_x} \WW^s(x,f) \ .
 \end{equation*}
 Moreover, there are isomorphisms 
 \begin{equation*}
  T_{\gamma_x} \WW^u(x,f) \oplus T_{\gamma_x}\WW^s(x,f) \cong  T_{\gamma_x}\WW^s(x,f) \oplus T_{\gamma_x} \WW^u(x,f) \cong T_xM \ , 
 \end{equation*}
 where the latter one is by definition orientation-preserving and the former one is a permutation of factors. It is orientation-preserving if and only if the following number is even:
 \begin{equation*}
  \dim \WW^s(x,f) \cdot \dim \WW^u(x,f) = (n-\mu(x))\mu(x) \equiv (n+1)\mu(x) \ .
 \end{equation*}
 Consequently, $\varphi_u$ induces the complementary orientation in (\ref{SplittingVarphiu}) iff $(n+1)\mu(x)$ is even.
\end{proof}

Another way of stating the content of Lemma \ref{OrOnWsminusf} is the following: If $\WW^s(x,-f)$ is equipped with the orientation complementary to the one of $\WW^u(x,-f)$, then the sign of the diffeomorphism $\varphi_u: \WW^u(x,f) \to \WW^s(x,-f)$ will be given by 
\begin{equation*}
 \sign \varphi_u = (-1)^{(n+1)\mu(x)} \ . 
\end{equation*}
We can also identify spaces of the form $\MM(x,y)$ with trajectories of the positive gradient flow of $f$. For clarity's sake, we will denote $\MM(x,y)$ by $\MM(x,y,f)$ for all $x,y \in \Crit f$ and put
\begin{equation*}
 \MM(y,x,-f):= \left\{\gamma \in C^\infty(\RR,M) \ \left| \ \dot{\gamma}-\nabla^g f \circ \gamma = 0, \ \lim_{s \to -\infty}\gamma(s)=y, \ \lim_{s \to+\infty}\gamma(s)=x \right. \right\} \ .
\end{equation*}
Since $(-f,g)$ is obviously a Morse-Smale pair if $(f,g)$ is and since 
\begin{equation*}
\morseind(x,-f)=n-\mu(x)
\end{equation*}
for every $x \in \Crit f$, the space $\MM(y,x,-f)$ is a smooth manifold with 
\begin{equation*}
\dim \MM(y,x,-f)=(n-\mu(y))-(n-\mu(x)) = \mu(x)-\mu(y)\ .
\end{equation*}

For every $x,y \in \Crit f$ the following map is a smooth diffeomorphism: 
\begin{equation*}
 \psi: \MM(x,y,f) \to \MM(y,x,-f) \ , \qquad (\psi(\gamma))(t) := \gamma(-t) \quad \forall t \in \RR \ . 
\end{equation*}

\begin{lemma} \index{orientations!of Morse trajectory spaces!parametrized}
\label{PsiMMOr}
 The map $\psi: \MM(x,y,f) \to \MM(y,x,-f)$ is orientation-preserving if and only if $(\mu(x)+1)\mu(y)$ is even. 
\end{lemma}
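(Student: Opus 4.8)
The strategy parallels the proof of Lemma~\ref{OrOnWsminusf}: I want to relate the orientation on $\MM(x,y,f)$ --- which is defined via the short exact sequence \eqref{MMxySES} --- to the orientation on $\MM(y,x,-f)$, which is defined via the analogous short exact sequence for the Morse function $-f$. The map $\psi$ intertwines the identifications $\iota$ of $\MM(x,y,f)$ into $\WW^u(x,f)\times\WW^s(y,f)$ and of $\MM(y,x,-f)$ into $\WW^u(y,-f)\times\WW^s(x,-f)$; under $\psi$, an element $\gamma$ with $\iota(\gamma)=(\gamma_1,\gamma_2)$ is sent to $\psi(\gamma)$ with first component $\psi_u(\gamma_2)$, say, lying in $\WW^u(y,-f)$, and second component $\psi_s(\gamma_1)$ lying in $\WW^s(x,-f)$, where $\psi_u,\psi_s$ are the time-reversal diffeomorphisms already discussed before Lemma~\ref{OrOnWsminusf} (noting $\varphi_s:\WW^s(y,f)\to\WW^u(y,-f)$ is orientation-preserving by construction and $\varphi_u:\WW^u(x,f)\to\WW^s(x,-f)$ has sign $(-1)^{(n+1)\mu(x)}$ by Lemma~\ref{OrOnWsminusf}).

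\textbf{Key steps.} First I would write down the commutative diagram whose top row is \eqref{MMxySES} for $(x,y,f)$ and whose bottom row is the corresponding sequence for $(y,x,-f)$, with vertical maps $D\psi$, then $\varphi_u\oplus\varphi_s$ composed with the swap of the two summands $T\WW^u(y,-f)\oplus T\WW^s(x,-f)\cong T\WW^s(x,-f)\oplus T\WW^u(y,-f)$ appropriately, and then the identity on $T_{\gamma(0)}M=T_{\psi(\gamma)(0)}M$ (time reversal fixes $\gamma(0)$). Second, I would track the sign contributions: (i) the swap of the two summands $T_{\gamma_1}\WW^u(x,f)\to T\WW^s(x,-f)$ (dimension $\mu(x)$) and $T_{\gamma_2}\WW^s(y,f)\to T\WW^u(y,-f)$ (dimension $n-\mu(y)$) inside the middle term contributes $(-1)^{\mu(x)(n-\mu(y))}$; (ii) the map $\varphi_u$ contributes its sign $(-1)^{(n+1)\mu(x)}$; (iii) the sign arising from comparing the two maps $i$ in \eqref{MMxySES}, which differ in the placement of the minus sign --- here $i(\xi)=(\xi|_{(-\infty,0]},-\xi|_{[0,\infty)})$ versus the $-f$ version, and the minus sign lands on a space of dimension either $\mu(x)$ or $n-\mu(x)$ depending on orientation conventions; (iv) any sign from the compatibility of the orientation-inducing convention for short exact sequences with these permutations. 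Third, I would collect all these exponents modulo $2$ and simplify, using $\dim\MM(x,y,f)=\mu(x)-\mu(y)$, to obtain the parity of $(\mu(x)+1)\mu(y)$.

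\textbf{Main obstacle.} The delicate part is bookkeeping the signs correctly --- in particular getting the sign contribution of the swap of the two middle summands right (it depends on their \emph{dimensions}, $\mu(x)$ and $n-\mu(y)$, not on $\mu(x)$ and $\mu(y)$ directly, and one must then reduce $\mu(x)(n-\mu(y))$ modulo $2$ together with the other terms), and correctly handling the minus sign in the map $i$ under time reversal, since $-\xi|_{[0,\infty)}$ becomes $-\xi(-\,\cdot\,)|_{(-\infty,0]}$ and one must decide on which summand the $-1$ acts after the reversal. I expect that after assembling everything, a total exponent such as $(n+1)\mu(x)+\mu(x)(n-\mu(y))+\mu(x)$ or similar collapses modulo $2$ to $\mu(x)\mu(y)+\mu(y)=(\mu(x)+1)\mu(y)$; verifying this collapse carefully, and confirming the claimed answer rather than its complement, is where the real care is needed. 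I would also sanity-check against the special case $\mu(x)=\mu(y)+1$ (zero-dimensional $\MM$) to make sure the sign matches the conventions used for $n(z,x)$ and the Morse codifferential.
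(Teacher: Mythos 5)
Your overall approach is the same as the paper's: set up the commutative diagram whose rows are the short exact sequence \eqref{MMxySES} for $(x,y,f)$ and its analogue for $(y,x,-f)$, with $D\psi$ on the left, and read off the sign of $D\psi$ from the signs of the other two vertical maps. However, you have one concrete error that will throw off the final parity.

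You claim the right vertical map is the identity on $T_{\gamma(0)}M$. It is not: it is $-\id_{T_{\gamma(0)}M}$, contributing an extra $(-1)^n$. This is forced. Since the left vertical map is prescribed to be $D\psi$, commutativity of the left square forces the middle map to be $V(\xi_1,\xi_2)=\bigl(-(D\varphi_s)_{\gamma_2}\xi_2,\ -(D\varphi_u)_{\gamma_1}\xi_1\bigr)$ — note \emph{both} components acquire a minus sign, not just one; any other choice fails to commute with $i(\xi)=(\xi_1,-\xi_2)$ in both rows. Then, because $p(\xi_1,\xi_2)=\xi_1(0)+\xi_2(0)$ and time reversal fixes the evaluation at $0$, one computes $p\circ V=-p$, so the right map is $-\id$. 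Your description of the middle map as ``swap then $\varphi_u\oplus\varphi_s$'' omits exactly this global $-\id$ factor; the paper isolates it as a separate permutation $V_3=-\id$ acting on a space of dimension $\dim\WW^u(y,-f)+\dim\WW^s(x,-f)=(n-\mu(y))+\mu(x)$, with sign $(-1)^{n-\mu(y)+\mu(x)}$. Relatedly, when you say the minus sign in $i$ ``lands on a space of dimension either $\mu(x)$ or $n-\mu(x)$'': the relevant dimensions are $\mu(x)$ (for $\WW^s(x,-f)$) and $n-\mu(y)$ (for $\WW^s(y,f)$), not $n-\mu(x)$. Your candidate exponent $(n+1)\mu(x)+\mu(x)(n-\mu(y))+\mu(x)$ reduces mod $2$ to $\mu(x)\mu(y)$, which is off from the correct $(\mu(x)+1)\mu(y)=\mu(x)\mu(y)+\mu(y)$ by exactly $\mu(y)$. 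That missing $\mu(y)$ is precisely what the two pieces you dropped supply: the $(-1)^n$ from the right vertical map together with the $(-1)^{n-\mu(y)}$ from the $-\id$ acting on the $\WW^u(y,-f)$-summand give $(-1)^{2n-\mu(y)}\equiv(-1)^{\mu(y)}$.
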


\begin{proof}
 The rows of the following diagram are short exact sequences and one checks without difficulties that the diagram commutes: 
 \begin{equation*}
 \minCDarrowwidth22pt
 \begin{CD}
  0 @>>> T_\gamma \MM(x,y,f) @>{i}>> T_{\gamma_1}\WW^u(x,f)\oplus T_{\gamma_2}\WW^s(y,f) @>{p}>> T_{\gamma(0)}M @>>> 0 \\
  @. @V{\cong}V{(D\psi)_{\gamma}}V @V{\cong}V{V}V @V{\cong}V{-\id_{T_{\gamma(0)}M}}V @. \\
  0 @>>> T_{\psi(\gamma)} \MM(y,x,-f) @>{i}>> T_{\bar{\gamma}_2}\WW^u(y,-f)\oplus T_{\bar{\gamma}_1}\WW^s(x,-f) @>{p}>> T_{\gamma(0)}M @>>> 0 , 
  \end{CD}
 \end{equation*}
 where $\gamma_1 :=\gamma|_{(-\infty,0]}$, $\gamma_2:= \gamma|_{[0,+\infty)}$, $\bar{\gamma}_1 := \varphi_u(\gamma_1)$ and $\bar{\gamma}_2:= \varphi_s(\gamma_2)$. Here, $i$ and $p$ are defined as in \eqref{Mapsiandp} and the middle vertical map $V$ is given by 
 \begin{equation*}
  V(\xi_1,\xi_2) = \left(-(D\varphi_s)_{\gamma_2}(\xi_2),-(D\varphi_u)_{\gamma_1}(\xi_1) \right) \ .
 \end{equation*}
 Since the diagram commutes, it follows that 
 \begin{equation}
 \label{EqSignPsi}
  \sign \psi = \sign (D\psi)_{\gamma} = (\sign V)\left(\sign \left(-\id_{T_{\gamma(0)}M}\right)\right) = (-1)^n \cdot \sign V \ . 
 \end{equation}
 To determine the sign of $V$, we note that $V$ factorizes as
 \begin{align*}
  T_{\gamma_1}\WW^u(x,f)\oplus &T_{\gamma_2}\WW^s(y,f) \stackrel{V_1}{\longrightarrow} T_{\gamma_2}\WW^s(y,f)\oplus T_{\gamma_1}\WW^u(x,f) \stackrel{V_2}{\longrightarrow} \\ 
  &T_{\bar{\gamma}_2}\WW^u(y,-f)\oplus T_{\bar{\gamma}_1}\WW^s(x,-f) \stackrel{V_3}{\longrightarrow} T_{\bar{\gamma}_2}\WW^u(y,-f)\oplus T_{\bar{\gamma}_1}\WW^s(x,-f) \ ,
 \end{align*}
 where $V_1$ is the map which transposes the two factors, $V_2$ is given by applying $(D\varphi_s)_{\gamma_2}$ and $(D\varphi_u)_{\gamma_1}$ and $V_3 = - \id$. Since $V = V_3 \circ V_2 \circ V_1$, it follows that
 \begin{equation*}
  \sign V = \sign V_1 \cdot \sign V_2 \cdot \sign V_3 \ . 
 \end{equation*}
The sign of $V_1$ is given by the parity of 
\begin{equation*}
 \left(\dim T_{\gamma_1}\WW^u(x,f)\right) \cdot \left(\dim T_{\gamma_2}\WW^s(y,f)\right) = \mu(x) (n-\mu(y)) \ . 
\end{equation*}
The sign of $V_2$ is given by 
\begin{equation*}
 \sign V_2 = \sign (D\varphi_s)_{\gamma_2} \cdot \sign (D\varphi_u)_{\gamma_1} = \sign (D\varphi_u)_{\gamma_1} = (-1)^{(n+1)\mu(x)} \ ,
\end{equation*}
where we used Lemma \ref{OrOnWsminusf} and the fact that $\varphi_s$ is orientation-preserving. The sign of $V_3$ is obviously given by the parity of 
\begin{equation*}
 \dim T_{\bar{\gamma}_2} \WW^u(y,-f) + \dim T_{\bar{\gamma}_1}\WW^s(x,-f) = n-\mu(y)+\mu(x) \ . 
\end{equation*}
We conclude that the sign of $V$ is given by the parity of 
$$ \mu(x) (n-\mu(y)) + (n+1)\mu(x) + n-\mu(y)+\mu(x) \equiv (\mu(x)+1)\mu(y)+n \ .$$
Consequently, we derive from (\ref{EqSignPsi}) that $\sign \psi = (-1)^n (-1)^{(\mu(x)+1)\mu(y)+n}= (-1)^{(\mu(x)+1)\mu(y)}$.

\end{proof}

One observes that the map $\psi: \MM(x,y,f) \to \MM(y,x,-f)$ induces a well-defined map
\begin{equation*}
 \widehat{\psi}:\widehat{\MM}(x,y,f) \to \widehat{\MM}(y,x,-f) \ ,
\end{equation*}
which assigns to each $\hat{\gamma}$ the equivalence class of $\psi(\gamma)$, where $\gamma$ is a representative of $\hat{\gamma}$. Since $\psi$ is a smooth diffeomorphism, one derives that $\widehat{\psi}$ is a smooth diffeomorphism as well. 

\begin{prop} \index{orientations!of Morse trajectory spaces!unparametrized}
 The map $\widehat{\psi}:\widehat{\MM}(x,y,f) \to \widehat{\MM}(y,x,-f)$ is orientation-preserving if and only if $(\mu(x)+1)\mu(y)$ is odd.
\end{prop}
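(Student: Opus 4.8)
The strategy is to relate the unparametrized map $\widehat{\psi}$ to the parametrized map $\psi:\MM(x,y,f) \to \MM(y,x,-f)$ from Lemma \ref{PsiMMOr} via the commuting squares built from the maps $\iota_a$ of \eqref{iotaa}. Concretely, pick a regular value $a$ of $f$ with $f(x)>a>f(y)$; then $a$ is a regular value of $-f$ with $(-f)(y)>-a>(-f)(x)$, so both $\iota_a^f:\widehat{\MM}(x,y,f)\to\MM(x,y,f)$ and $\iota_{-a}^{-f}:\widehat{\MM}(y,x,-f)\to\MM(y,x,-f)$ are defined. One checks directly from the definitions that these fit into a square with $\psi$ on one side and $\widehat\psi$ on the other, which commutes up to the obvious identification of representatives (the representative of $\hat\gamma$ with $f$-value $a$ at time $0$ is sent by $\psi$ to the representative of $\widehat\psi(\hat\gamma)$ with $(-f)$-value $-a$ at time $0$).

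\textbf{Key steps.} First I would recall the convention defining the orientation on $\widehat{\MM}(x,y,f)$: a basis $(v_1,\dots,v_N)$ of $T_{\hat\gamma}\widehat{\MM}(x,y,f)$ is positive iff $\bigl(-\nabla^g f\circ\gamma_a,(D\iota_a^f)_{\hat\gamma}(v_1),\dots,(D\iota_a^f)_{\hat\gamma}(v_N)\bigr)$ is a positive basis of $T_{\gamma_a}\MM(x,y,f)$, and the analogous convention for $\widehat{\MM}(y,x,-f)$ using $-\nabla^g(-f)\circ\bar\gamma_{-a} = +\nabla^g f\circ\bar\gamma_{-a}$. Next, applying $(D\psi)$ to a positive basis of $T_{\gamma_a}\MM(x,y,f)$ of the form $\bigl(-\nabla^g f\circ\gamma_a,(D\iota_a^f)(v_1),\dots\bigr)$ and using $(\psi(\gamma))(t)=\gamma(-t)$, one sees that the reparametrization sends the flow direction $-\nabla^g f\circ\gamma_a$ to $+\nabla^g f\circ\bar\gamma_{-a} = -\nabla^g(-f)\circ\bar\gamma_{-a}$, i.e.\ exactly the distinguished first vector used to orient $\widehat{\MM}(y,x,-f)$, while the remaining vectors go over to $(D\iota_{-a}^{-f})$ applied to $D\widehat\psi(v_i)$. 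Therefore $\sign\widehat\psi$ and $\sign\psi$ differ only by how the flow-direction vector is normalized: comparing $-\nabla^g f\circ\gamma_a$ with $+\nabla^g f\circ\gamma_a$ in the chain of identifications costs exactly one sign. Hence
\begin{equation*}
\sign\widehat\psi = -\,\sign\psi = -(-1)^{(\mu(x)+1)\mu(y)} = (-1)^{(\mu(x)+1)\mu(y)+1}\ ,
\end{equation*}
using Lemma \ref{PsiMMOr}. This says precisely that $\widehat\psi$ is orientation-preserving iff $(\mu(x)+1)\mu(y)$ is odd.

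\textbf{Expected main obstacle.} The bookkeeping with the sign contributed by the flow direction is the delicate point: one must be careful that the map $\psi$ reverses time, so the distinguished vector $-\nabla^g f\circ\gamma$ used in orienting $\widehat{\MM}(x,y,f)$ is \emph{not} literally carried to the distinguished vector $-\nabla^g(-f)\circ\bar\gamma = \nabla^g f\circ\bar\gamma$ used in orienting $\widehat{\MM}(y,x,-f)$ — the derivative $(D\psi)_\gamma$ acts on the tangent space of the ambient manifold $\MM$ by precomposition with $t\mapsto -t$, which fixes the \emph{direction} of the velocity field of a fixed curve but flips its sign. Isolating this single sign flip, and verifying that no further sign is hidden in the identification of the two regular-value slices (i.e.\ that choosing $a$ for $f$ versus $-a$ for $-f$ is the ``matching'' choice and introduces nothing extra), is the one place where care is required; everything else is a direct translation through Lemma \ref{PsiMMOr} and the definitions.
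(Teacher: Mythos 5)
Your approach mirrors the paper's exactly: both pass through the regular-value slices $\iota_a$, reduce to Lemma \ref{PsiMMOr}, and track the contribution of the flow-direction vector, arriving at the correct sign. The one issue is an internal inconsistency in the ``Key steps'' paragraph: you first assert that $(D\psi)$ sends $-\nabla^g f\circ\gamma_a$ ``exactly'' to the distinguished first vector $\nabla^g f\circ\bar\gamma_{-a}=-\nabla^g(-f)\circ\bar\gamma_{-a}$ of $\widehat{\MM}(y,x,-f)$, and then nonetheless claim that comparing $-\nabla^g f\circ\gamma_a$ with $+\nabla^g f\circ\gamma_a$ costs a sign --- but both cannot hold. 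Your ``Expected main obstacle'' paragraph actually names the correct mechanism: since $(D\psi)_\gamma$ acts on tangent vectors by $\xi\mapsto\xi(-\cdot)$, one computes $(D\psi)_\gamma\bigl(-\nabla^g f\circ\gamma\bigr)=-\nabla^g f\circ\psi(\gamma)$, which is the \emph{negative} of the distinguished vector $\nabla^g f\circ\psi(\gamma)=-\nabla^g(-f)\circ\psi(\gamma)$, and that single flip is precisely the factor $(-1)$ turning $\sign\psi$ into $\sign\widehat\psi=(-1)^{(\mu(x)+1)\mu(y)+1}$. If the ``Key steps'' claim is corrected to this statement (the paper's own phrasing at this point is similarly compressed and hides the same sign flip in a single clause), the argument closes and agrees with the paper's proof in every respect.
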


\begin{proof}
For every regular value $a$ of $f$ with $f(x)>a>f(y)$, the following diagram obviously commutes, where $\iota_a$ is defined as in (\ref{iotaa}):
\begin{equation}
\label{psihatCD}
 \begin{CD}
  \widehat{\MM}(x,y,f) @>{\widehat{\psi}}>> \widehat{\MM}(y,x-f) \\
  @VV{\iota_a}V @VV{\iota_a}V \\
  \MM(x,y,f) @>{\psi}>> \MM(y,x,-f) 
 \end{CD}
\end{equation}
Let $\hat\gamma \in \widehat{\MM}(x,y,f)$ and let $(v_1,\dots,v_N)$ be a positive basis of $T_{\hat\gamma}\widehat{\MM}(x,y,f)$. We need to determine the sign of the basis 
\begin{equation*}
 \left(\left(D\widehat\psi\right)_{\hat\gamma}(v_1),\dots,\left(D\widehat\psi\right)_{\hat\gamma}(v_N)\right)
\end{equation*}
of $T_{\widehat\psi(\hat{\gamma})}\widehat{\MM}(y,x,-f)$. By definition of the orientation on $\widehat{\MM}(y,x,-f)$, this basis is positive if and only if the following is a positive basis of $T_{\left(\iota_a\circ\widehat{\psi}\right)\left(\hat{\gamma}\right)}\MM(y,x,-f)$:
\begin{align}
 &\left(\nabla f \circ \left(\iota_a\circ \widehat{\psi}\right)(\hat\gamma),\left(D\left(\iota_a \circ \widehat{\psi}\right)\right)_{\hat\gamma}(v_1),\dots,\left(D\left(\iota_a \circ \widehat{\psi}\right)\right)_{\hat\gamma}(v_N)\right) \label{firstbasis} \\
 =&\left(\nabla f \circ \psi(\gamma_a),\left(D\left(\psi \circ \iota_a\right)\right)_{\hat\gamma}(v_1),\dots,\left(D\left(\psi \circ \iota_a\right)\right)_{\hat\gamma}(v_N)\right) \ , \notag
\end{align}
where the equality follows from the commutativity of diagram (\ref{psihatCD}).

 Since $\psi$ inverts the flow direction, the following obviously holds for every $\gamma \in \MM(x,y,f)$:
 \begin{equation*}
  (D\psi)_{\gamma}(-\nabla f \circ \gamma) = \nabla f \circ \psi(\gamma) \ .
 \end{equation*}
 Hence the basis in (\ref{firstbasis}) is given by 
 \begin{align*}
 \left((D\psi)_{\gamma_a}(-\nabla f \circ \gamma_a),(D\psi)_{\gamma_a}\left(\left(D\iota_a\right)_{\hat\gamma}(v_1)\right),\dots,(D\psi)_{\gamma_a}\left(\left(D\iota_a\right)_{\hat\gamma}(v_N)\right)\right) \ .
 \end{align*}
 By Lemma \ref{PsiMMOr}, the sign of $(D\psi)_{\gamma_a}$ is given by the parity of $(\mu(x)+1)\mu(y)$. Since $(D\psi)_{\gamma_a}$ maps the first vector of the basis to its opposite, the effect on the last $N$ vectors is given by the parity of $(\mu(x)+1)\mu(y)+1$. By definition of the orientations involved, this implies that the basis in (\ref{firstbasis}) is positive if and only if $(\mu(x)+1)\mu(y)+1$ is even, which shows the claim.
 \end{proof}

 \begin{remark}
 \label{SignPsiHatZeroDim}
 In particular, if $\mu(x) = \mu(y)+1$, i.e. if $\widehat{\MM}(x,y,f)$ is zero-dimensional, then $\psi$ is orientation preserving if and only if the following number is even:
 \begin{equation*}
  (\mu(x)+1)\mu(y)+1 = (\mu(y)+2)\mu(y)+1 \equiv \mu(y)^2+1 \equiv \mu(y)+1 \equiv \mu(x) \ .
 \end{equation*}
 In terms of oriented intersection numbers, this implies that
 \begin{equation*}
  \algint \widehat{\MM}(x,y,f) = (-1)^{\mu(y)+1} \  \algint \widehat{\MM}(y,x,-f) =(-1)^{\mu(x)} \  \algint \widehat{\MM}(y,x,-f) \ .
 \end{equation*} 
 \end{remark}

We continue with the abovementioned investigation of the behaviour of Morse-theoretic gluing maps with respect to the chosen orientations on the trajectory spaces, starting with proper definitions of gluing maps. 

\begin{prop}[{\cite[Lemma 4.1]{SchwarzEqui}}] \index{orientations!of unstable and stable manifolds}
\label{PropGluingUnstable}
 Let $x,y \in \Crit f$ with $\mu(x) =\mu(y)+1$ and let $V \subset \WW^u(y)$ be an open subset. There exist $\rho_V>0$ and a map 
 $$G: \widehat{\MM}(x,y) \times V \times [\rho_V,+\infty) \to \WW^u(x)  $$
 with the following properties:
 \begin{enumerate}[(i)]
  \item $G$ is a smooth orientation-preserving embedding,
  \item if $\{r_n\}_{n \in \NN}$ is a sequence in $[\rho_V,+\infty)$ that diverges against $+\infty$, then $\left\{G\left(\hat\gamma,\gamma_0,r_n\right) \right\}_{n \in \NN}$ will converge geometrically against $\left(\hat{\gamma},\gamma_0\right)$ for all $\hat\gamma \in \widehat\MM(x,y)$ and $\gamma_0 \in V$. 
 \end{enumerate} 
 We will call such a map a \emph{gluing map for negative half-trajectories}.
\end{prop}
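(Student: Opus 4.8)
The plan is to obtain $G$ from the standard pre-gluing construction followed by a Newton iteration, treating the orientation clause of (i) by a determinant-line comparison against the conventions fixed in Section~\ref{AppendixOrientMorseTraj}. First I would fix a regular value $a$ of $f$ with $f(x) > a > f(y)$ and, for each $\hat\gamma \in \widehat\MM(x,y)$, the normalized representative $\gamma_{\hat\gamma} := \iota_a(\hat\gamma) \in \MM(x,y)$ from \eqref{iotaa}. Since $\gamma_{\hat\gamma}(s) \to y$ as $s \to +\infty$ and every $\gamma_0 \in V \subset \WW^u(y)$ satisfies $\gamma_0(s) \to y$ as $s \to -\infty$, both curves eventually lie in a fixed Morse chart around $y$; splicing there, by means of a cutoff function, the restriction of $\gamma_{\hat\gamma}$ with the shifted curve $s \mapsto \gamma_0(s - 2r)$ produces a pre-glued curve $w_r = w_r(\hat\gamma,\gamma_0) \in \PP_-(x)$. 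By the exponential convergence of negative gradient flow lines to the hyperbolic critical point $y$, $w_r$ solves $\dot{w}_r + \nabla f \circ w_r = 0$ outside a bounded interval around the splicing time, with $\|\partial^g_-(w_r)\|_{L^2} \le C e^{-\delta r}$ for constants $C, \delta > 0$ uniform as $(\hat\gamma, \gamma_0)$ ranges over a precompact set and $r \ge \rho_V$.

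Next I would correct $w_r$ to an exact solution. By Theorem~\ref{unperturbedFredholm} the linearization $D(\partial^g_-)_{w_r}\colon T_{w_r}\PP_-(x) \to (\LL_-(x))_{w_r}$ is Fredholm of index $\mu(x)$, and a standard gluing estimate — using that Morse--Smale transversality makes the linearized operators over $\gamma_{\hat\gamma}$ and over $\gamma_0$ surjective — shows it is surjective with a right inverse $Q_r$ of norm bounded uniformly in $r \ge \rho_V$. Combining this bound with the quadratic estimate for the nonlinearity in the Banach fixed point theorem gives, for $\rho_V$ large enough, a unique small $\xi_r = \xi_r(\hat\gamma,\gamma_0) \in \im Q_r$ with $\exp_{w_r}(\xi_r) \in \WW^u(x)$ and $\|\xi_r\| \le C' e^{-\delta r}$, and I would set $G(\hat\gamma,\gamma_0,r) := \exp_{w_r}(\xi_r)$.

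Smoothness of $G$ follows from the smooth dependence of $w_r$, $Q_r$ and the fixed point on all parameters, including $r$. Property (ii) is immediate: for $r_n \to +\infty$ one has $w_{r_n} \to \gamma_{\hat\gamma}$ in $\Cloc$ on every $(-\infty, T]$, its shift $w_{r_n}(\cdot - 2r_n) \to \gamma_0$ in $\Cloc$ on every $[-T, 0]$, and $\|\xi_{r_n}\| \to 0$, which is exactly geometric convergence against $(\hat\gamma, \gamma_0)$. For the embedding property I would check that $DG$ is injective for $r$ large: the images of $TV$ and of $\partial_r$ — the latter detected since increasing $r$ translates the transition region monotonically along the trajectory, and $\widehat\MM(x,y)$ is discrete because $\mu(x) = \mu(y) + 1$ — span a subspace of $T\WW^u(x)$ of dimension $\dim V + 1 = \mu(y) + 1 = \mu(x) = \dim\WW^u(x)$; global injectivity for $\rho_V$ large then follows from (ii), the uniqueness in the fixed point theorem, and the $C^0$-separation of the pre-glued curves attached to distinct $\hat\gamma$.

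The main obstacle is the orientation clause of (i). Here I would use the linear gluing isomorphism of determinant lines attached to the broken configuration: splicing induces a canonical isomorphism relating $\det D(\partial^g_-)_{w_r}$ to the tensor product of the determinant lines of the linearized operators over $\hat\gamma$ and over $\gamma_0$, together with one extra factor for the gluing parameter, which I would orient by $+\partial_r$. Trading this isomorphism against the short exact sequences and commutative diagrams of Section~\ref{AppendixOrientMorseTraj} that define the orientations on $\widehat\MM(x,y)$ (via $\iota_a$ and the flow direction $-\nabla f$), on $\WW^u(y)$ and on $\WW^u(x)$ reduces the claim to a single parity count; I expect — and this is exactly the kind of bookkeeping carried out in Section~\ref{AppendixOrientMorseTraj} — that with those conventions the count comes out even, so that $G$ preserves orientations. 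Since the statement reproduces \cite[Lemma~4.1]{SchwarzEqui}, one may alternatively just invoke that reference for both the analytic construction and the orientation computation.
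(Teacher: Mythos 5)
The paper offers no proof of Proposition~\ref{PropGluingUnstable}: it is attributed to \cite[Lemma~4.1]{SchwarzEqui}, and the paragraph following it points to \cite[Proposition~2.34]{Schaetz} for the analytic details. Your outline --- normalize $\hat\gamma$ via $\iota_a$, pre-glue exponentially close to $y$, correct by Newton iteration with a uniformly bounded right inverse of the surjective Fredholm linearization, check injectivity of the differential --- is the standard argument those references carry out, and your closing fallback to \cite{SchwarzEqui} is exactly what the paper does.

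If, however, the outline is meant to stand on its own, the genuine gap is the orientation clause of~(i). You defer it with ``I expect the count comes out even,'' appealing to Appendix~\ref{AppendixOrientMorseTraj}; but that appendix only \emph{fixes conventions} --- the sign in $i$ of~\eqref{Mapsiandp}, the exact sequence~\eqref{MMxySES}, and the insertion of $-\nabla f\circ\gamma$ as first vector via~\eqref{iotaa} to orient $\widehat\MM(x,y)$ --- and uses Proposition~\ref{PropGluingUnstable} as an \emph{input} for the later sign computations (e.g.\ in the proof of Proposition~\ref{PropGluingStable}), not as an output. Verifying that the linear gluing isomorphism of determinant lines is positive under these conventions is precisely the content that distinguishes Lemma~4.1 of \cite{SchwarzEqui} from ``some gluing embedding exists.'' A complete argument must identify $\partial_r G$ asymptotically with $-\dot\gamma$ along the transition region, push a positive basis of $T_{\gamma_0}\WW^u(y)$ through the splice and the Newton correction, and read off the sign from~\eqref{MMxySES}; asserting the expectation leaves that computation undone. (A minor bookkeeping point as well: as written, $s\mapsto\gamma_0(s-2r)$ has domain $(-\infty,2r]$, so your spliced curve $w_r$ does not lie in $\PP_-(x)$; you need to translate $\gamma_{\hat\gamma}$ backward in time, or reparametrize after splicing, so that the pre-glued curve has domain $(-\infty,0]$.)
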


See also \cite[Proposition 2.34]{Schaetz} for a detailed proof of this existence result.

\begin{prop} \index{orientations!of unstable and stable manifolds}
\label{PropGluingStable}
 Let $x_0,x_1 \in \Crit f$ with $\mu(x_0) =\mu(x_1)+1$ and let $V \subset \WW^s(x_0)$ be open. There exist $\rho_V>0$ and a map $G: V \times  \widehat{\MM}(x_0,x_1)\times [\rho_V,+\infty) \to \WW^s(x_1)$ with the following properties:
 \begin{enumerate}[(i)]
  \item $G$ is a smooth embedding which is orientation-preserving if and only if $\mu(x_0)$ is even,
  \item if a sequence $\{r_n\}_{n \in \NN}$ in $[\rho_V,+\infty)$ diverges against $+\infty$, then $\left\{G\left(\gamma_0,\hat\gamma,r_n\right) \right\}_{n \in \NN}$ will converge geometrically against $\left(\gamma_0,\hat{\gamma}\right)$ for all $\gamma_0 \in V$ and $\hat\gamma \in \widehat\MM(x_0,x_1)$. 
 \end{enumerate} 
 We will call such a map a \emph{gluing map for positive half-trajectories}.
\end{prop}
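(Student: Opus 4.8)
The plan is to deduce this statement from the already-established gluing result for negative half-trajectories, namely Proposition \ref{PropGluingUnstable}, by applying the $f \leftrightarrow -f$ duality developed in the intervening lemmas. Concretely: a positive semi-infinite negative-gradient-flow half-trajectory of $f$ ending at a critical point is the same thing, after reversing the time parameter, as a negative semi-infinite negative-gradient-flow half-trajectory of $-f$ emanating from that critical point. Recall $\morseind(x,-f) = n - \mu(x)$, and note that $\mu(x_0) = \mu(x_1)+1$ is equivalent to $\morseind(x_1,-f) = \morseind(x_0,-f)+1$, so the index bookkeeping needed to invoke Proposition \ref{PropGluingUnstable} for the pair $(x_1,x_0)$ with respect to $-f$ is exactly our hypothesis.

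The steps, in order, would be: First I would recall the diffeomorphisms $\varphi_u \colon \WW^u(x,f) \stackrel{\cong}{\to} \WW^s(x,-f)$ and $\varphi_s \colon \WW^s(x,f) \stackrel{\cong}{\to} \WW^u(x,-f)$ introduced before Lemma \ref{OrOnWsminusf}, together with the induced diffeomorphism $\widehat{\psi} \colon \widehat{\MM}(x_0,x_1,f) \to \widehat{\MM}(x_1,x_0,-f)$. Second, given the open subset $V \subset \WW^s(x_0,f)$, I would set $V' := \varphi_u(V) \subset \WW^u(x_0,-f)$, which is open, and apply Proposition \ref{PropGluingUnstable} to the Morse function $-f$, the pair of critical points $x_1, x_0$ (in that role), and the open set $V'$, obtaining $\rho_{V'} > 0$ and a smooth orientation-preserving embedding
\begin{equation*}
G' \colon \widehat{\MM}(x_1,x_0,-f) \times V' \times [\rho_{V'},+\infty) \to \WW^u(x_1,-f) \ .
\end{equation*}
Third, I would define the desired gluing map $G$ as the composite
\begin{equation*}
G \colon V \times \widehat{\MM}(x_0,x_1,f) \times [\rho_{V'},+\infty) \xrightarrow{\ (\mathrm{swap})\ } \widehat{\MM}(x_0,x_1,f) \times V \times [\rho_{V'},+\infty) \xrightarrow{\ \widehat{\psi}\times\varphi_u\times\id\ } \widehat{\MM}(x_1,x_0,-f)\times V'\times[\rho_{V'},+\infty) \xrightarrow{\ G'\ } \WW^u(x_1,-f) \xrightarrow{\ \varphi_s^{-1}\ } \WW^s(x_1,f) \ ,
\end{equation*}
with $\rho_V := \rho_{V'}$. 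Smoothness and the embedding property are inherited from $G'$ and the diffeomorphisms. The geometric convergence assertion (ii) follows because $\varphi_u$, $\varphi_s$ and $\widehat{\psi}$ are time-reversal maps and geometric convergence of negative half-trajectories of $-f$ reverses to geometric convergence of positive half-trajectories of $f$ (one checks that the $C^\infty_{\mathrm{loc}}$-convergence of the reparametrized restrictions in the definition of geometric convergence transforms correctly under $t \mapsto -t$).

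The only substantive point is the sign computation for (i). Here I would multiply together the signs of the four maps in the composite: the swap of the first two factors contributes $(-1)^{\dim V \cdot \dim \widehat{\MM}(x_0,x_1,f)} = (-1)^{(n-\mu(x_0))\cdot 0} = 1$ since $\widehat{\MM}(x_0,x_1,f)$ is zero-dimensional; the factor $\widehat\psi \times \varphi_u \times \id$ contributes $\sign(\widehat\psi)\cdot\sign(\varphi_u)$, where $\varphi_u \colon \WW^u(x_0,f) \to \WW^s(x_0,-f)$ has sign $(-1)^{(n+1)\mu(x_0)}$ by Lemma \ref{OrOnWsminusf} and $\widehat\psi$ has sign $(-1)^{(\mu(x_0)+1)\mu(x_1)+1}$ by the proposition preceding Remark \ref{SignPsiHatZeroDim} (and one simplifies this in the zero-dimensional case as in that remark); $G'$ is orientation-preserving, contributing $1$; and finally $\varphi_s^{-1} \colon \WW^u(x_1,-f) \to \WW^s(x_1,f)$ is orientation-preserving by definition of the orientation on $\WW^u(x_1,-f)$, contributing $1$. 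Assembling these parities with $\mu(x_0) = \mu(x_1)+1$ and simplifying modulo two (using $\mu(x_1)^2 \equiv \mu(x_1)$) should reduce the total sign to $(-1)^{\mu(x_0)}$, matching the claim; this bookkeeping, while elementary, is the step most prone to error and is where I would be most careful, double-checking against the zero-dimensional simplifications already recorded in Remark \ref{SignPsiHatZeroDim}.
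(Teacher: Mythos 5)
Your strategy is exactly the one the paper uses: reduce to Proposition \ref{PropGluingUnstable} applied to $-f$ via the time-reversal diffeomorphisms and $\widehat\psi$, then track signs. However, there is a concrete error in the execution. You send $V \subset \WW^s(x_0,f)$ to a subset of $\WW^u(x_0,-f)$ via a map you call $\varphi_u$, and accordingly you insert the sign $(-1)^{(n+1)\mu(x_0)}$ from Lemma \ref{OrOnWsminusf} into your bookkeeping. But $\varphi_u$ is defined on $\WW^u(x_0,f)$, not $\WW^s(x_0,f)$; the map that actually takes $\WW^s(x_0,f)$ to $\WW^u(x_0,-f)$ is $\varphi_s$, which is \emph{orientation-preserving by definition} of the orientation on $\WW^u(x,-f)$ and contributes sign $+1$. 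Your spurious factor of $(-1)^{(n+1)\mu(x_0)}$ does not cancel against anything else in the composite, so the sign you would actually obtain is $(-1)^{\mu(x_0) + (n+1)\mu(x_0)} = (-1)^{n\mu(x_0)}$, which equals the claimed $(-1)^{\mu(x_0)}$ only when $n$ is odd. You flag the sign bookkeeping as the step ``most prone to error'' and assert it ``should'' simplify to $(-1)^{\mu(x_0)}$, but you never carry it through, which is why the discrepancy went unnoticed.

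Once you replace $\varphi_u$ with $\varphi_s$, the computation collapses to the paper's: the transposition of the factors is orientation-preserving because $\widehat{\MM}(x_0,x_1,f)$ is zero-dimensional, $\varphi_s$ applied to the $\WW^s(x_0,f)$ factor is orientation-preserving, $G'$ and $\varphi_s^{-1}$ on the target side are orientation-preserving, and the only nontrivial contribution is $\sign \widehat\psi = (-1)^{\mu(x_0)}$ from Remark \ref{SignPsiHatZeroDim}. The rest of your argument (setup, geometric convergence under time reversal, index bookkeeping for $-f$) is correct.
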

\begin{proof}
Assume w.l.o.g. that $V= \WW^s(x_0)$ and put $\rho_0 := \rho_{\WW^s(x_0)}$. We will show the claim by identifying the stable manifolds of $f$ with the unstable manifolds of $-f$ as we did earlier in this section. Afterwards, we apply part 1 of the proposition. For clarity's sake, we will write the domain of $G_0$ as $\WW^s(x_0,f)\times \widehat{\MM}(x_0,x_1,f) \times [\rho_0,+\infty)$. 
 
 Let $G': \MM(x_1,x_0,-f)\times \WW^u(x_0,-f) \times [\rho_0,+\infty) \to \WW^u(x_1,-f)$ be a gluing map for negative half-trajectories with respect to the Morse function $-f$. Define $G$ as the map that makes the following diagram commputative: 
 \begin{equation*}
  \begin{CD}
   \WW^s(x_0,f)\times \widehat{\MM}(x_0,x_1,f) \times [\rho_0,+\infty) @>{G}>> \WW^s(x_1,f) \\
   @V{\cong}V{\sigma}V @V{\cong}V{\varphi_s}V \\
   \widehat\MM(x_1,x_0,-f)\times \WW^u(x_0,-f) \times [\rho_0,+\infty) @>{G'}>> \WW^u(x_1,-f) 
  \end{CD}
 \end{equation*}
 where $\sigma$ is the diffeomorphism defined as the composition of the transposition of the first two factors and the product of the maps $\varphi_s$ and $\widehat\psi$ from above. 
 Since $G'$ is a smooth embedding and the vertical maps are diffeomorphisms, $G$ is a smooth embedding as well. Moreover, one checks without difficulties that geometric convergence is preserved under the vertical diffeomorphisms, such that property (ii) of $G'$ implies property (ii) for $G$. It remains to discuss the behaviour of $G$ with respect to orientations.
 
 By construction, the map $\varphi_s: \WW^s(x_1,f) \to \WW^u(x_1,-f)$ is orientation-preserving. Moreover, the bottom gluing map is orientation-preserving by Proposition \ref{PropGluingUnstable}. The commutativity of the diagram thus implies that the sign of $G$ is given by the sign of $\sigma$, which we compute as follows. 
 
 We first transpose $\widehat{\MM}(x_0,x_1,f)$ and $\WW^s(x_0,f)$. Since $\widehat{\MM}(x_0,x_1,f)$ is zero-dimensional, this transposition is orientation-preserving. Afterwards, we apply $\varphi_s$ to the factor $\WW^s(x_0,f)$ and $\widehat\psi$ to $\widehat{\MM}(x_0,x_1,f)$. As previously mentioned, $\varphi_s$ is orientation-preserving and $\widehat{\psi}$ preserves orientations if and only if $\mu(x_0)$ is even, see Remark \ref{SignPsiHatZeroDim}. Hence, $\sign \sigma = (-1)^{\mu(x_0)}$, which shows that $G$ has the desired properties. 
\end{proof}

The existence of gluing maps for positive half-trajectories follows immediately from the existence of gluing maps for negative half-trajectories by applying Proposition \ref{PropGluingUnstable} to the Morse function $-f$. 

\begin{prop} \index{orientations!of finite-length trajectory spaces}
\label{PropGluingFinite}
 Let $x \in \Crit f$ and $V_0 \subset \WW^s(x)$ and $V_1 \subset \WW^u(x)$ be open subsets. There exists $\rho_{V_0,V_1}>0$ and a map $G: [\rho_{V_0,V_1},+\infty) \times V_0 \times V_1 \to \MM(f,g)$ with the following properties:
 \begin{enumerate}[(i)]
  \item $G$ is a smooth embedding,
  \item if $\{r_n\}_{n \in \NN}$ is a sequence in $[\rho_V,+\infty)$ that diverges against $+\infty$, then $\left\{G\left(r_n,\gamma_0,\gamma_1\right) \right\}_{n \in \NN}$ will converge geometrically against $\left(\gamma_0,\gamma_1\right)$ for all $\gamma_0 \in V_0$ and $\gamma_1 \in V_1$, 
  \item for all $\gamma_0 \in V_0$ and $\gamma_1\in V_1$ the map $[\rho_{V_0,V_1},+\infty) \to [0,+\infty)$, $r \mapsto \ell(r,\gamma_0,\gamma_1)$, is strictly increasing, where $\ell(r,\gamma_0,\gamma_1)$ denotes the interval length of the finite-length trajectory $G(r,\gamma_0,\gamma_1)$. 
 \end{enumerate} 
 We will call such a map a \emph{gluing map for finite-length trajectories}.
\end{prop}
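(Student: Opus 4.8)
The plan is to reduce the existence of a gluing map for finite-length trajectories to the already-established gluing maps for semi-infinite trajectories, exactly in the spirit of the proof of Proposition \ref{PropGluingStable}. The starting observation is that a finite-length trajectory of large length ``looks like'' a broken configuration consisting of a long positive half-trajectory followed by a long negative half-trajectory, joined near a critical point. More precisely, first I would pass to the target manifold via the diffeomorphism $\MM(f,g) \cong [0,+\infty)\times M$ from \eqref{MMRMdiffeo} together with the identifications $\WW^u(x)\cong W^u(x)$ and $\WW^s(x)\cong W^s(x)$ from the beginning of this section, so that the task becomes constructing, for each pair of points $p_0 \in W^s(x)$ and $p_1 \in W^u(x)$ (representing $\gamma_0,\gamma_1$ via $\gamma_0(0)=p_1$ after the obvious reparametrization convention, etc.), a one-parameter family of points $q(r)\in M$ together with a length $\ell(r)$ such that the negative gradient flow trajectory of length $\ell(r)$ starting at $q(r)$ has its two endpoints converging to $p_0$ and $p_1$ as $r\to\infty$, and such that all three asserted properties hold.

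\textbf{Key steps.} The construction itself I would carry out by a standard pre-gluing plus implicit-function-theorem argument, or alternatively by directly invoking the finite-length gluing results of \cite{WehrheimMWC} and \cite[Chapter 18]{KronheimerMrowka} which is the route the paper takes elsewhere. Concretely: (1) For large $r$, define a pre-glued finite-length trajectory by running the flow $\phi^{-\nabla^g f}$ backward from $\gamma_0(0)$ for time roughly $r$, running it forward from $\gamma_1(0)$ for time roughly $r$, and interpolating in a small neighbourhood of $x$ using a cut-off; the interpolation length is chosen so that the glued object is an honest solution after a Newton-type correction. (2) Apply the implicit function theorem in the relevant Banach setting — here using that the linearized operator is surjective with a uniformly bounded right inverse, which follows from the Morse--Smale condition, i.e. transversality of $W^u(x)$ and $W^s(x)$ — to obtain the correction and hence the map $G$ defined on $[\rho_{V_0,V_1},+\infty)\times V_0 \times V_1$. (3) Verify smoothness and the embedding property from the usual gluing estimates. (4) Verify property (ii), geometric convergence as $r\to\infty$, which is built into the gluing construction. (5) Verify property (iii): the length $\ell(r,\gamma_0,\gamma_1)$ of the glued trajectory is strictly increasing in $r$. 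This last point is where I would be slightly careful, but it should follow because in the gluing construction the gluing parameter $r$ is, up to a bounded error term, exactly half the total flow time spent near $x$, so $\tfrac{\partial \ell}{\partial r}$ is close to $2$ and in particular positive for $r$ large; shrinking $V_0,V_1$ and enlarging $\rho_{V_0,V_1}$ if necessary makes this rigorous.

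\textbf{An alternative, more economical route} is to leverage Proposition \ref{PropGluingUnstable} and Proposition \ref{PropGluingStable} together with the identification of $\MM(f,g)$ near a critical point with a product of stable and unstable directions. Near $x$ one can write a finite-length trajectory passing close to $x$ in terms of its ``entry data'' in $W^s(x)$ and ``exit data'' in $W^u(x)$, and the local structure of the flow near the hyperbolic fixed point $x$ (linearization, Hartman--Grobman-type normal form, or the more precise results used in \cite{WehrheimMWC}, Section 2) gives directly a family parametrized by a gluing length. Since the paper has already declared that it refrains from detailed gluing analysis and cites \cite{WehrheimMWC}, \cite{SchwarzEqui}, \cite{KronheimerMrowka}, I expect the intended proof to be short: state that the result is the finite-length analogue of Propositions \ref{PropGluingUnstable} and \ref{PropGluingStable}, observe that the construction is a local one near a single critical point, and refer to the cited gluing theorems, with the only new point being the monotonicity assertion (iii), which is verified by the flow-time computation above.

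\textbf{Main obstacle.} The genuinely new content compared to the semi-infinite cases is property (iii), the strict monotonicity of $r\mapsto \ell(r,\gamma_0,\gamma_1)$; everything else is a routine transcription of the semi-infinite gluing analysis. So the hard part is pinning down the precise relationship between the abstract gluing parameter and the geometric interval length of the resulting finite-length trajectory, and arguing that it can be arranged to be strictly increasing (possibly after reparametrizing the gluing parameter). Once that relationship is made explicit the rest follows from the standard machinery.
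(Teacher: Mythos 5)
The paper in fact offers no proof of this proposition at all: it simply states the result and refers to \cite[Chapter 18]{KronheimerMrowka} for the existence of gluing maps for finite-length trajectories, before moving on to prove the orientation statement in Proposition~\ref{PropGluingFiniteOrient}. You anticipated this correctly in your last paragraph, and your sketch of what lies behind the citation — pre-gluing plus Newton iteration using the Morse--Smale transversality, or alternatively the hyperbolic local structure near $x$ as in \cite{WehrheimMWC} — is the standard and correct account. Your observation that the only genuinely new content relative to the semi-infinite gluing maps of Propositions~\ref{PropGluingUnstable} and~\ref{PropGluingStable} is the monotonicity assertion~(iii), and that it follows because the interval length of the glued trajectory is, up to a bounded correction, comparable to (twice) the gluing parameter so that $\partial\ell/\partial r$ is bounded away from zero for $r$ large, is exactly the right thing to highlight; the paper leaves this implicit, and in fact Proposition~\ref{PropGluingFiniteOrient} later relies on $\ell$ being strictly increasing. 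So your proposal is consistent with, and somewhat more informative than, the paper's own treatment.
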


 The existence of gluing maps for finite-length trajectories is discussed in \cite[Chapter 18]{KronheimerMrowka}. We want to investigate their behaviour with respect to orientations. 
 
 \begin{prop}
 \label{PropGluingFiniteOrient}
  Let $x \in \Crit f$, $V_0 \subset \WW^s(x)$ and $V_1 \subset \WW^u(x)$ be open subsets and let 
  $$G:[\rho_{V_0,V_1},+\infty) \times V_0 \times V_1 \to \MM(f,g)$$
  be a gluing map for finite-length trajectories. Then $G$ is orientation-preserving if and only if $n+1$ is even. 
 \end{prop}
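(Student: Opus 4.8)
\textbf{Proof plan for Proposition \ref{PropGluingFiniteOrient}.}

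The plan is to reduce the orientation statement for the finite-length gluing map to the orientation statement for the negative half-trajectory gluing map (Proposition \ref{PropGluingUnstable}), exactly in the spirit of the proof of Proposition \ref{PropGluingStable}. Recall that $\MM(f,g)$ carries the orientation making the endpoint diffeomorphism $\MM(f,g) \stackrel{\cong}{\to} [0,+\infty) \times M$, $(l,\gamma) \mapsto (l,\gamma(0))$, have sign $(-1)^{n+1}$, and that the domain $[\rho_{V_0,V_1},+\infty) \times V_0 \times V_1$ carries the product orientation where $V_0 \subset \WW^s(x)$ and $V_1 \subset \WW^u(x)$ are equipped with the orientations fixed earlier in this appendix. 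Under these orientations, a positive basis of $T_x M$ is obtained by a positive basis of $T_x W^s(x)$ followed by a positive basis of $T_x W^u(x)$; this is the key compatibility I will exploit.

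First I would set up the geometric picture of $G$: for large $r$, the glued finite-length trajectory $G(r,\gamma_0,\gamma_1)$ is obtained by following $\gamma_0 \in \WW^s(x)$ backwards from a point near $x$, passing through a long ``neck'' near the critical point $x$, and continuing along $\gamma_1 \in \WW^u(x)$; the gluing parameter $r$ controls the neck length $\ell(r,\gamma_0,\gamma_1)$, which by property (iii) is strictly increasing in $r$. I would then compose $G$ with the endpoint diffeomorphism $\MM(f,g) \to [0,+\infty) \times M$ to obtain a map $\tilde{G} : [\rho_{V_0,V_1},+\infty) \times V_0 \times V_1 \to [0,+\infty) \times M$. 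The sign of $G$ equals $(-1)^{n+1}$ times the sign of $\tilde{G}$ (because of the $(-1)^{n+1}$ twist in the definition of the orientation on $\MM(f,g)$), so it suffices to show $\tilde{G}$ is orientation-preserving with respect to the product orientations, i.e. that $\tilde{G}$ has sign $+1$; then $\sign G = (-1)^{n+1}$, which is the claim. To compute $\sign \tilde{G}$, I would argue that, up to orientation-preserving reparametrizations and homotopies through the family of large $r$, the map $\tilde{G}$ factors as: the length coordinate $r \mapsto \ell$ (orientation-preserving on the first factor by property (iii)), followed on the $V_0 \times V_1 \to M$ part by the map $(\gamma_0,\gamma_1) \mapsto \gamma_0(0)$ composed with the flow identification that records the endpoint near $x$; in the limit $r \to \infty$ this endpoint map is governed precisely by the identification $W^s(x) \times W^u(x) \to$ (a neighbourhood in $M$ of $x$), $(p,q) \mapsto$ the point where the broken trajectory through $p$ and $q$ crosses a fixed level set. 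By the orientation convention on stable and unstable manifolds recalled above, this last identification is orientation-preserving, so $\tilde G$ is orientation-preserving and $\sign G = (-1)^{n+1}$.

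The cleanest way to make the previous paragraph rigorous is to avoid recomputing everything and instead mimic the commutative-diagram trick from the proof of Proposition \ref{PropGluingStable}: I would introduce the gluing map $G'$ for negative half-trajectories of $f$ at $x$ (with domain $\widehat{\MM}(\cdot)$-free, using instead the direct product version $\WW^s(x) \times \WW^u(x)$ modelled on the neck-length parameter), relate $G$ to $G'$ via the diffeomorphisms $\varphi_s$, $\varphi_u$ between stable/unstable manifolds of $f$ and $-f$, and read off $\sign G$ from $\sign G' = +1$ together with the signs of those diffeomorphisms (Lemma \ref{OrOnWsminusf}) and the sign of the relevant transposition of factors. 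In this route the main obstacle is bookkeeping: keeping track of the orientation of $\MM(f,g)$ versus $[0,+\infty) \times M$ (the $(-1)^{n+1}$ twist), the order in which $V_0$ and $V_1$ appear, the dimension-dependent signs $\dim W^s(x) \cdot \dim W^u(x) = (n-\mu(x))\mu(x) \equiv (n+1)\mu(x)$ from transpositions, and the sign $(-1)^{(n+1)\mu(x)}$ from $\varphi_u$; these $\mu(x)$-dependent contributions must cancel so that only $(-1)^{n+1}$ survives, independently of $x$. I expect that after carefully assembling the diagram they do cancel in pairs, leaving exactly the factor $(-1)^{n+1}$ coming from the normalization of the orientation on $\MM(f,g)$, which is precisely why that seemingly awkward normalization was chosen.
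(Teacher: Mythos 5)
Your first paragraph contains the right overall architecture and the key observation: composing $G$ with the endpoint diffeomorphism $\MM(f,g)\stackrel{\cong}{\to}[0,+\infty)\times M$ introduces the $(-1)^{n+1}$ twist, and the remaining map should then be orientation-preserving because of the convention making $T_xW^s(x)\oplus T_xW^u(x)\cong T_xM$ orientation-compatible. That is indeed the skeleton of the paper's argument. However, your justification of that last ``orientation-preserving'' claim is where the gap lies. You argue by passing ``to the limit $r\to\infty$'' and appealing to a level-set identification of $W^s(x)\times W^u(x)$ with a neighbourhood of $x$; but the gluing map is not defined at $r=+\infty$, the level-set picture is not the identification you are actually using to orient $W^s(x)$, and ``up to orientation-preserving reparametrizations and homotopies'' is exactly the step one must make precise. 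The paper resolves this by a much more elementary device that you do not mention: evaluate $DG$ at the specific point where $\gamma_1\in\WW^s(x)$ and $\gamma_2\in\WW^u(x)$ are the \emph{constant} trajectories at $x$. There $G(\rho,\gamma_1,\gamma_2)$ is the constant finite-length trajectory $[0,\ell(\rho)]\to M$, $t\mapsto x$, the commutative square
$$
\begin{CD}
T_\rho[\rho_0,\infty)\oplus T_{\gamma_1}\WW^s(x)\oplus T_{\gamma_2}\WW^u(x) @>{DG}>> T_{G(\rho,\gamma_1,\gamma_2)}\MM(f,g) \\
@VVV @VVV \\
T_\rho[\rho_0,\infty)\oplus T_xW^s(x)\oplus T_xW^u(x) @>>> T_{\ell(\rho)}[0,\infty)\oplus T_xM
\end{CD}
$$
is concrete (the bottom map is $D\ell_\rho$ together with the orientation-preserving splitting $T_xW^s(x)\oplus T_xW^u(x)\cong T_xM$, the left map is evaluation, the right map is the chart with sign $(-1)^{n+1}$), and then local constancy of the sign on the connected domain finishes it. No limit is needed.

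Your second paragraph proposes reducing to Proposition \ref{PropGluingUnstable} via $\varphi_s,\varphi_u$, mimicking the proof of Proposition \ref{PropGluingStable}. This is not what the paper does, and it is not a clean replacement: in Proposition \ref{PropGluingStable} the target $\WW^s(x_1,f)$ has a direct counterpart $\WW^u(x_1,-f)$ under $\varphi_s$, so the diagram closes. Here the target is $\MM(f,g)$, the space of finite-length trajectories, which has no analogue among unstable manifolds of $\pm f$; you would need an additional identification of $\MM(f,g)$ with $\MM(-f,g)$ and a separate orientation comparison there, defeating the purpose of the reduction. You yourself flag that the $\mu(x)$-dependent contributions ``must cancel'' and that you ``expect'' they do --- that expectation is correct in the end, but it is an indication that this route is more bookkeeping than the direct computation at the constant trajectory, which sidesteps all $\mu(x)$-dependence outright.
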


\begin{proof}
Assume w.l.o.g. that $V_0=\WW^s(x)$ and $V_1 = \WW^u(x)$ and put $\rho_0 := \rho_{\WW^s(x),\WW^u(x)}$. It suffices to show the claim at a fixed point $(\rho,\gamma_1,\gamma_2) \in [\rho_0,+\infty) \times \WW^s(x) \times \WW^u(x)$. Let $\gamma_1$ and $\gamma_2$ be the constant trajectories given by 
 \begin{equation*}
  \gamma_1(s)=x \quad \forall s \in [0,+\infty) \ , \quad \gamma_2(s)=x \quad \forall s \in (-\infty,0] \ ,
 \end{equation*}
 and put $\ell: [\rho_0,+\infty) \to [0,+\infty)$, $\ell(r):= \ell(r,\gamma_1,\gamma_2)$, given as in Proposition \ref{PropGluingFinite}. By definition of a gluing map for finite-length trajectories, $\ell$ is smooth and strictly increasing, i.e. orientation-preserving. From this observation and the properties of gluing maps, it follows for the constant trajectories $\gamma_1$ and $\gamma_2$ and for all $\rho \in [\rho_0,+\infty)$ that $G(\rho,\gamma_1,\gamma_2)$ is the constant trajectory $[0,\ell(\rho)]\to M$, $t \mapsto x$. % Fix $\rho$ and put $\gamma_0 := G(\rho,\gamma_1,\gamma_2)$. 
 
One observes that the differential of $G$ at $(\rho,\gamma_1,\gamma_2)$ is a map 
\begin{equation*}
DG_{(\rho,\gamma_1,\gamma_2)}: T_\rho [\rho_2,+\infty) \oplus T_{\gamma_1}\WW^s(x) \oplus T_{\gamma_2}\WW^u(x) \oplus \to T_{G(\rho,\gamma_1,\gamma_2)} \MM(f,g) 
\end{equation*}
that makes the following diagram commutative:
 \begin{equation*}
  \begin{CD}
   T_\rho [\rho_0,+\infty) \oplus T_{\gamma_1}\WW^s(x) \oplus T_{\gamma_2}\WW^u(x) @>{DG_{(\rho,\gamma_1,\gamma_2)}}>> T_{G(\rho,\gamma_1,\gamma_2)} \MM(f,g) \\
   @VV{\cong}V @VV{\cong}V \\
   T_\rho [\rho_0,+\infty) \oplus T_xW^s(x) \oplus T_xW^u(x) @>{\cong}>> T_{\ell(\rho)} [0,+\infty) \oplus T_x M \ , \\
  \end{CD}
 \end{equation*}
 Here, the bottom map is induced by $D\ell_\rho: T_{\rho}[\rho_2,+\infty) \to T_{l(\rho)} [0,+\infty)$ and the identification $T_xW^s(x) \oplus T_xW^u(x)\cong  T_x M$, which are both orientation-preserving by definition of the orientation on $W^s(x)$ and the fact that $\ell$ is orientation-preserving. The left-hand vertical map is given by the differentials of the evaluation maps and is therefore orientation-preserving. The right-hand vertical map is given by the differential of the diffeomorphism $\MM(f,g) \stackrel{\cong}{\to} [0,+\infty) \times M$, $(l,\gamma)\mapsto(l,\gamma(0))$, which has sign $(-1)^{n+1}$ by definition of the orientation on $\MM(f,g)$. 
 
 Consequently, the differential of the gluing map has sign $(-1)^{n+1}$ at $(\rho,\gamma_1,\gamma_2)$ and thus at every point since its domain is connected.  
\end{proof}

\begin{remark}
 In \cite[Chapter 3]{Schwarz}, the notion of coherent orientations is introduced to define orientations and algebraic counts of moduli spaces of unparametrized Morse trajectories. This approach has the big advantage that an orientation on the ambient manifold $M$ is no longer required, which makes it possible to define Morse homology with integer coefficients on non-orientable manifolds. 
 In \cite[Appendix B]{Schwarz} it is shown that on an oriented closed manifold, both approaches to orienting trajectory spaces are indeed equivalent. We will not discuss coherent orientations in this article any further.
\end{remark}

We conclude this section by a general statement on orientations of transverse intersections that is not directly connected to the above, but will be of great use in later sections. 

For two finite-dimensional oriented vector spaces $V_1$ and $V_2$ we let \emph{the direct sum orientation} on $V_1\oplus V_2$ be given by demanding that a positive basis of $V_1$ followed by a positive basis of $V_2$ is a positive basis of $V_1\oplus V_2$. \bigskip 

The following statement is shown by standard methods from elementary differential topology. We leave the details to the reader.

\begin{theorem} 
\label{TheoremOrientDiffeoTransverseint}
 Let $M_1$, $M_2$, $P_1$ and $P_2$ be smooth oriented manifolds and $N_1 \subset P_1$ and $N_2 \subset P_2$ be oriented submanifolds. Let $f_1: M_1 \to P_1$ and $f_2:M_2 \to P_2$ be smooth maps with $f_1 \tv N_1$ and $f_2 \tv N_2$.
 Let $S_1 := f_1^{-1}(N_1)$ and $S_2 := f_2^{-1}(N_2)$ be equipped with the orientations induced by the transverse intersections. 
 
 If there are smooth diffeomorphisms $\varphi: M_1 \to M_2$ and $\psi: P_1 \to P_2$, such that 
 \begin{itemize}
  \item $\psi$ restricts to a diffeomorphism $\psi|_{N_1}: N_1 \stackrel{\cong}{\to} N_2$,
  \item the following diagram commutes: 
     $\qquad \begin{CD}
    M_1 @>{\varphi}>{\cong}> M_2 \\
    @V{f_1}VV @V{f_2}VV \\
    P_1 @>{\psi}>{\cong}> P_2 \ , 
   \end{CD}$
 \end{itemize}
then $\varphi$ restricts to a diffeomorphism $\varphi|_{S_1}:S_1 \stackrel{\cong}{\to} S_2$ with 
$$\sign (\varphi|_{S_1})= \sign \varphi \cdot \sign \psi \cdot \sign(\psi|_{N_1}) \ . $$
\end{theorem}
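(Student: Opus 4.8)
\textbf{Proof plan for Theorem \ref{TheoremOrientDiffeoTransverseint}.}

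The plan is to reduce the statement to a purely linear-algebraic fact about orientations under a commuting square of short exact sequences. First I would set up notation: fix a point $p \in S_1$, write $q := \varphi(p) \in M_2$ and note that $q \in S_2$ since $f_2(q) = \psi(f_1(p)) \in \psi(N_1) = N_2$, so $\varphi$ does restrict to a map $S_1 \to S_2$, and applying the same argument to $\varphi^{-1}$ and $\psi^{-1}$ shows this restriction is a diffeomorphism. The point $f_1(p)$ lies in $N_1$ and $f_2(q)$ lies in $N_2$. The transverse intersection orientations are defined, at the level of tangent spaces, by the convention that $T_pS_1$ is oriented so that the sequence
\begin{equation*}
0 \to T_pS_1 \to T_pM_1 \xrightarrow{(Df_1)_p} (Df_1)_p^{-1}\text{-complement} \to 0
\end{equation*}
is compatible; more precisely, using the short exact sequence
\begin{equation*}
0 \to T_pS_1 \xrightarrow{\iota} T_pM_1 \xrightarrow{\pi \circ (Df_1)_p} T_{f_1(p)}P_1/T_{f_1(p)}N_1 \to 0
\end{equation*}
where $\pi$ is the quotient projection and $T_{f_1(p)}P_1/T_{f_1(p)}N_1$ carries the quotient orientation from $P_1$ and $N_1$. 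I would recall the convention for inducing orientations on the kernel of a short exact sequence (a basis of the kernel is positive iff its image in the middle, followed by a lift of a positive basis of the quotient, is positive in the middle), which is exactly the convention already used elsewhere in the paper for $\MM(x,y)$.

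Next I would write down the commuting ladder of short exact sequences induced by the two differentials $(D\varphi)_p$, $(D\psi)_{f_1(p)}$ and the restriction $(D(\psi|_{N_1}))_{f_1(p)}$, whose rightmost vertical arrow is the induced isomorphism on quotients $T_{f_1(p)}P_1/T_{f_1(p)}N_1 \to T_{q}P_2/T_{f_2(q)}N_2$. The key step is then the elementary observation that in a commuting ladder
\begin{equation*}
\begin{CD}
0 @>>> K_1 @>>> V_1 @>>> W_1 @>>> 0 \\
@. @VV{a}V @VV{b}V @VV{c}V @. \\
0 @>>> K_2 @>>> V_2 @>>> W_2 @>>> 0
\end{CD}
\end{equation*}
of oriented vector spaces, with the kernel orientations induced by the convention above, the induced isomorphism $a: K_1 \to K_2$ satisfies $\sign a = \sign b \cdot \sign c$. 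This is proved by choosing a positive basis of $K_1$ and a positive basis of $W_1$, lifting the latter to $V_1$, combining into a basis of $V_1$ whose sign is by definition $1$, applying $b$, and comparing with the analogous basis of $V_2$ built from positive bases of $K_2$ and $W_2$; the discrepancy is $\sign b$, and it distributes as $\sign a$ on the kernel part and $\sign c$ on the quotient part since the two blocks are complementary. Finally I would identify $\sign c = \sign(\psi|_{N_1})$ using the standard fact that for a diffeomorphism $\psi$ restricting to $\psi|_{N_1}$, the sign of the induced map on the normal/quotient bundle equals $\sign\psi \cdot \sign(\psi|_{N_1})$ — wait, more carefully: the quotient orientation convention gives $\sign(\text{quotient map}) = \sign\psi / \sign(\psi|_{N_1}) = \sign\psi \cdot \sign(\psi|_{N_1})$ over $\ZZ/2$. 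Substituting, $\sign(\varphi|_{S_1}) = \sign a = \sign b \cdot \sign c = \sign\varphi \cdot \sign\psi \cdot \sign(\psi|_{N_1})$, as claimed; since all manifolds and maps in question are connected on each component this pointwise computation suffices.

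\textbf{Main obstacle.} The only genuinely delicate point is bookkeeping the orientation conventions consistently: there are three independent conventions in play (the quotient orientation on $T_{f_1(p)}P_1/T_{f_1(p)}N_1$, the kernel orientation on $T_pS_1$ induced by the short exact sequence, and the direct-sum orientation introduced just before the theorem), and a sign error in any one of them propagates. I would therefore be careful to fix, once and for all at the start, that the transverse-intersection orientation on $f^{-1}(N)$ is \emph{defined} via the short exact sequence with the kernel convention — which matches the paper's earlier usage for $\MM(x,y)$ in \eqref{MMxySES} — so that the ladder argument applies verbatim, and I would double-check the one place where the quotient orientation interacts with the splitting $T_{f_1(p)}P_1 \cong T_{f_1(p)}N_1 \oplus (T_{f_1(p)}P_1/T_{f_1(p)}N_1)$ to make sure no extra $(-1)^{\dim N \cdot \operatorname{codim} N}$ factor is being silently absorbed or dropped. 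Everything else is routine multilinear algebra, which is why the theorem is stated with the proof left to the reader.
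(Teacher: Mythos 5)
The paper provides no proof of this theorem --- it is explicitly left to the reader ("The following statement is shown by standard methods from elementary differential topology. We leave the details to the reader.") --- so there is no paper argument to compare against. Your proof is correct, and it is the natural one.

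The two ladder arguments you invoke (first for the SES $0\to T_pS_1\to T_pM_1 \to T_{f_1(p)}P_1/T_{f_1(p)}N_1\to 0$ and its $\psi$-image, then for $0\to TN_1\to TP_1 \to TP_1/TN_1\to 0$ to identify $\sign c = \sign\psi\cdot\sign(\psi|_{N_1})$) are both instances of the same elementary observation, and you verify it in the right way: push a compatibly ordered basis of $V_1$ through $b$ and compare with a compatibly ordered basis of $V_2$, distributing the discrepancy as $\sign a$ on the kernel block and $\sign c$ on the quotient block. Your caveat about the $(-1)^{\dim N\cdot\codim N}$ ambiguity between the Guillemin--Pollack preimage convention (which the paper cites) and the SES/kernel convention (which the paper also states explicitly, preceding \eqref{MMxySES}) is well placed and correctly dismissed: any such universal sign is applied identically to both $S_1$ and $S_2$, so it cancels in $\sign(\varphi|_{S_1})$, and the claimed identity lives in $\{\pm 1\}$ anyway so that $\sign c$ and $\sign c^{-1}$ coincide. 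The closing remark about connectedness is needed for $\sign$ to be a single number rather than a locally constant function, but this is satisfied in all the paper's applications. No gaps.
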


\subsection{Orientations of perturbed Morse trajectory spaces}
\label{AppendixOrientPertubedTraj}

We will use the similarity between spaces of perturbed and unperturbed trajectories to equip spaces of perturbed Morse trajectories with orientations as well using the orientations on Morse trajectory spaces that we constructed in the previous section. 

\bigskip

For $Y \in \XX_-(M)$ and $x \in \Crit f$ we define a map
$$ \varphi_Y: W^-(x,Y) \to \WW^u(x,f,g) \ ,  \qquad 
 \gamma \mapsto \left( s \mapsto \begin{cases}
                  \gamma(s) & \text{if } \ s \leq -1 \ , \\
                   \phi_{s+1}(\gamma(-1)) & \text{if } \ s \in (-1,0] \ ,
                 \end{cases} \right) $$
where $\phi$ denotes the negative gradient flow of $f$ with respect to $g$. 

\begin{prop} \index{perturbed negative half-trajectory}
\label{NegPertUnpert}
 For every $Y \in \XX_-(M)$, the map $\varphi_Y: W^-(x,Y) \to \WW^u(x,f,g)$ is a diffeomorphism of class $C^{n+1}$.
\end{prop}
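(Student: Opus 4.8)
The plan is to show that $\varphi_Y$ is a well-defined bijection and then upgrade this to a diffeomorphism of class $C^{n+1}$ by identifying both sides with the target manifold $M$ via endpoint evaluation.

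\textbf{Step 1: Well-definedness.} First I would check that $\varphi_Y(\gamma)$ actually lands in $\WW^u(x,f,g)$. Since $Y \in \XX_-(M)$ vanishes for $s \leq -1$, any $\gamma \in W^-(x,Y)$ satisfies the \emph{unperturbed} negative gradient flow equation $\dot\gamma + \nabla^g f \circ \gamma = 0$ on $(-\infty,-1]$, and by definition still converges to $x$ as $s \to -\infty$. The modification on $(-1,0]$ replaces $\gamma|_{(-1,0]}$ by the genuine negative gradient flow line issuing from $\gamma(-1)$, so the resulting curve $\varphi_Y(\gamma)$ solves $\dot\gamma + \nabla^g f \circ \gamma = 0$ on all of $(-\infty,0]$ and still limits to $x$; hence it lies in $\WW^u(x,f,g)$. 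One must also check $C^{n+1}$-regularity at the gluing point $s = -1$: since $Y$ vanishes for $s \le -1$ and by Remark \ref{EvalTrajUnstable}(2) unperturbed flow lines are smooth, the two pieces match to the order we need (in fact the left piece already solves the unperturbed equation near $-1$, so the concatenation is as regular as $\gamma$ itself on a neighbourhood of $-1$).

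\textbf{Step 2: Bijectivity via endpoint evaluation at $s=-1$.} The key trick is that $\varphi_Y$ preserves the value at $s = -1$: $(\varphi_Y(\gamma))(-1) = \gamma(-1)$. By the unique solvability of the initial value problem for $\dot\gamma + \nabla^g f\circ\gamma + Y(s,\gamma(s)) = 0$ (respectively the unperturbed equation), specifying the value at $s=-1$ determines $\gamma$ on all of $(-\infty,0]$ in both the perturbed and the unperturbed setting. So consider the two evaluation maps $e^-_Y : W^-(x,Y) \to M$, $\gamma \mapsto \gamma(-1)$, and $e^- : \WW^u(x,f,g) \to M$, $\gamma \mapsto \gamma(-1)$. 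Both are smooth (resp.\ $C^{n+1}$) injective immersions — the argument is exactly the one used for the evaluation at $0$ in Remark \ref{EvalTrajUnstable}(1) and Theorem \ref{unperturbedFredholm}, since translating the time parameter is a harmless reparametrization. Moreover $\varphi_Y$ fits into the commuting triangle $e^- \circ \varphi_Y = e^-_Y$. Since both evaluation maps have the \emph{same image} in $M$ — namely the set of points $p$ whose unperturbed backward flow line converges to $x$ after flowing, i.e.\ $W^u(x,f,g)$ flowed forward by time $1$, which equals $W^u(x,f,g)$ itself as it is flow-invariant — the triangle exhibits $\varphi_Y = (e^-)^{-1} \circ e^-_Y$ as a composition of $C^{n+1}$-diffeomorphisms onto their images.

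\textbf{Step 3: Conclusion.} Both $e^-_Y$ and $e^-$ are $C^{n+1}$-diffeomorphisms onto the (same) $C^{n+1}$-submanifold $W^u(x,f,g) \subset M$: for $e^-$ this is the classical fact that $\WW^u(x,f,g) \cong W^u(x,f,g)$ via evaluation (shift the time variable by $1$), and for $e^-_Y$ it follows from Corollary \ref{PerturbedUnstMfld} together with the same initial-value-problem argument showing $\gamma \mapsto \gamma(-1)$ is a $C^{n+1}$-embedding with image $W^u(x,f,g)$ (using $Y \equiv 0$ near and below $-1$). Hence $\varphi_Y = (e^-)^{-1}\circ e^-_Y$ is a diffeomorphism of class $C^{n+1}$. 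The main obstacle — and the point deserving the most care — is verifying the $C^{n+1}$-regularity of $\varphi_Y$ at the junction $s = -1$ and confirming that $e^-_Y$ is genuinely a $C^{n+1}$-embedding with image exactly $W^u(x,f,g)$; both reduce to smooth dependence of ODE solutions on initial conditions combined with the vanishing of $Y$ for $s \le -1$, but they are where the argument could go wrong if the perturbation did not vanish there.
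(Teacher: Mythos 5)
Your argument is correct and runs structurally parallel to the paper's, but via a different intermediate target. Both proofs exhibit $\varphi_Y$ as a composition $\Phi_2^{-1}\circ\Phi_1$, where $\Phi_1$ and $\Phi_2$ are injections from $W^-(x,Y)$ and $\WW^u(x,f,g)$ into a common space with the same image, and in both the crucial observation enabling "same image" is that $Y$ vanishes for $s\le -1$. The paper takes $\Phi_1,\Phi_2$ to be the \emph{restriction} maps $\gamma\mapsto\gamma|_{(-\infty,-1]}$, landing in a Banach manifold $\widetilde{\PP}_-(x)$ of curves on $\bRR_{\le -1}$; restriction is a formally smooth operation, so the differentiability of $\Phi_1$ and $\Phi_2$ is essentially free, and there is no need to identify the common image geometrically. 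You instead take $\Phi_1,\Phi_2$ to be \emph{evaluation} at $s=-1$, landing in the finite-dimensional manifold $M$, and appeal to the classical Morse-theoretic fact (Remark~\ref{EvalTrajUnstable}(1), after a time shift) that endpoint evaluation is a $C^{n+1}$-embedding onto the unstable manifold, together with its perturbed analogue; this buys a more concrete picture (the common image is literally $W^u(x,f,g)$) at the cost of having to verify the embedding property for $W^-(x,Y)$. Both versions leave the same small hole to fill — namely that $\Phi_2$ is a diffeomorphism onto its image, not merely an injection, so that $\Phi_2^{-1}$ makes sense as a $C^{n+1}$-map — and you at least indicate how to close it (unique solvability plus smooth dependence on initial conditions), whereas the paper asserts it without elaboration. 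So your route is a legitimate and slightly more concrete alternative, while the paper's is slightly more self-contained in that it never needs the image-identification.
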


\begin{proof}
 Using the unique existence of solutions of ordinary differential equations, one checks that the map $\varphi_Y$ is indeed well-defined and bijective for every $Y \in \XX_-(T)$. In the following, we will identify the map with a composition of $(n+1)$-times differentiable maps to show its differentiability properties. Consider the Banach manifold
 \begin{equation*}
  \widetilde{\PP}_-(x) := \Bigl\{ \gamma \in H^1\left(\bRR_{\leq -1},M\right) \ \Big| \ \lim_{s \to -\infty} \gamma(s) = x \Bigr\} \ ,
 \end{equation*}
 where $\bRR_{\leq -1} := \{-\infty\} \cup (-\infty,-1]$, equipped with a smooth structure induced by the one on $\bRR_{\leq 0}$. The restriction map $r: \PP_-(x) \to \widetilde{\PP}_-(x)$, $\gamma \mapsto \gamma|_{\bRR_{\leq -1}}$, is obviously a smooth map of Banach manifolds. Moreover, again by the unique existence of solutions of ordinary differential equations, both of the restrictions
 $$ r|_{W^-(x,Y)}: W^-(x,Y) \to  \widetilde{\PP}_-(x) \ , \qquad r|_{\WW^u(x,f,g)}:\WW^u(x,f,g) \to \widetilde{\PP}_-(x) \ ,$$
 are injective. Since by definition of $\XX_-(M)$, the time-dependent vector field $Y$ vanishes in $(s,x)$ if $s\leq -1$, the restriction $\gamma|_{(-\infty,-1]}$ satisfies the negative gradient flow equation of $f$ with respect to $g$. It immediately follows that 
$ \im r|_{W^-(x,Y)} = \im r|_{\WW^u(x,f,g)}$. In terms of the restriction maps, we can therefore write $\varphi_Y$ as the following well-defined composition:
 \begin{equation*}
  \varphi_Y = \left(r|_{\WW^u(x,f,g)} \right)^{-1} \circ r|_{W^-(x,Y)} \ .
 \end{equation*}
 Thus, $\varphi_Y$ is a composition of a smooth map and a map of class $C^{n+1}$ and is therefore itself of class $C^{n+1}$. Moreover, its inverse is given by $\varphi_Y^{-1} = \left(r|_{W⁻(x,Y)} \right)^{-1} \circ r|_{\WW^u(x,f,g)}$, which is a map of class $C^{n+1}$ by the very same argument. Therefore, $\varphi_Y$ is a diffeomorphism of class $C^{n+1}$.
\end{proof}

\bigskip

{\it For all $x \in \Crit f$ and $Y \in \XX_-(M)$, we equip $W^-(x,Y)$ with the unique orientation for which $\varphi_Y: W^-(x,Y) \stackrel{\cong}{\to} \WW^u(x)$ is orientation-preserving.} \index{orientations!of $W^-(x,Y)$} \\

We define a map for positive half-trajectories along the same lines. Let $x \in \Crit f$, $Y \in \XX_+(M)$ and define
$$ \varphi_Y: W^+(x,Y) \to \WW^s(x,f,g) \ , \qquad  \gamma \mapsto \left( s \mapsto \begin{cases}
                     \phi_{s-1}(\gamma(1)) & \text{if } \ s \in [0,1) \ , \\
                  \gamma(s) & \text{if } \ s \geq 1 \ .
                 \end{cases} \right) $$
In the same way as in Proposition \ref{NegPertUnpert}, we show that in the positive case, the map $\varphi_Y$ is a diffeomorphism of class $C^{n+1}$ for any $Y \in \XX_+(M)$. \index{perturbed positive half-trajectory}

\bigskip

{\it For all $x \in \Crit f$ and  $Y \in \XX_+(M)$, we equip $W^+(x,Y)$ with the unique orientation for which $\varphi_Y:W^+(x,Y) \stackrel{\cong}{\to} \WW^s(x)$ is orientation-preserving.} \index{orientations!of $W^+(x,Y)$}

\bigskip

We recall from Proposition \ref{FiniteLengthManifold} that for $Y \in \XX_0(M)$ there is a diffeomorphism
\begin{equation*}
 \psi_Y: \MM(Y) \to \MM(f,g) \ ,
 \end{equation*} 
which is constructed in the spirit of the maps $\varphi_Y$. 

\bigskip
 
 {\it For every $Y \in \XX_0(M)$, we equip $\MM(Y)$ with the unique orientation for which the diffeomorphism $\psi_Y: \MM(Y) \stackrel{\cong}{\to} \MM(f,g)$ is orientation-preserving.} \index{orientations!of $\MM(Y)$}

\subsection{Ordering the internal edges of ribbon trees}
\label{AppendixOrderingEdges}

Let $T \in \BinTree_d$ for some $d \geq 3$, such that $E_{int}(T)\neq \emptyset$. Given $e \in E_{int}(T)\cup \{e_0(T)\}$, the set $\left\{ f \in E(T) \ | \ \vin(f) = \vout(e) \right\}$ has precisely two elements since $T$ is a binary tree. We denote them by $f_1(e)$ and $f_2(e)$, which are uniquely defined by demanding that there exist $i,j \in \{1,2,\dots,d\}$ with $i <j$, such that 
$$f_1(e) \in E(P_i(T)) \quad \text{and} \quad f_2(e) \in E(P_j(T)) \setminus E(P_i(T)) \ , $$
and consider the maps 
$$f_1,f_2:E_{int}(T) \cup \{e_0(T)\} \to E(T) \ , \qquad e \mapsto f_i(e) \ \ \text{for  } i \in \{1,2\} \ . $$
Intuitively, drawing the trees as before, $f_1(e)$ denotes the left-hand edge and $f_2(e)$ the right-hand edge emanating from $\vout(e)$. For each $j \in \NN$ for which it is well-defined we let $f^j_1(e) := (f_1 \circ f_1 \circ \dots \circ f_1)(e)$ denote the $j$-fold iterate of $f_1$.

In addition we define a map 
$$h: E(T) \setminus \{e_0(T)\} \to E(T) \ , $$
where $h(e)$ is the unique edge with $\vout(h(e))=\vin(e)$ for every $ e \in E(T)\setminus \{e_0(T)\}$. For every $j \in \NN$ for which it is well-defined we denote the $j$-fold iterate of $h$ by $h^j$.

We want to define a total ordering on $E_{int}(T)$. For this purpose, we will inductively label the edges as $\{g_1,g_2,\dots,g_{d-2}\}=E_{int}(T)$ and define the ordering by demanding that $g_i$ defines the $i$-th element of $E_{int}(T)$. We first give a formal definition of the ordering procedure and afterwards describe it in a more informal and intuitive way. \\

Let $g_0 \in E(T)$ be the unique edge with $\vout(g_0)=\vin(e_1(T))$. We put 
$$g_1 := \begin{cases}
          g_0 & \text{if} \ \ g_0 \in E_{int}(T) \ , \\
          f_2(g_0) & \text{if} \ \ g_0 \notin E_{int}(T) \ . 
         \end{cases} $$
Since $d \geq 3$, it holds that $g_1 \in E_{int}(T)$.  \bigskip 

Assume that we have already labelled edges $\{g_1,g_2,\dots,g_{i-1}\}$ for some $i \in \{2,3,\dots,d-2\}$. 
\begin{itemize}
 \item If $f_1(g_{i-1}) \in E_{int}(T)\setminus \{g_1,g_2,\dots,g_{i-1}\}$, then we put $$g_i := f_1^k(g_{i-1})\ ,$$
 where $k = \max \{j \in \NN \ | \ f^j_1(g_{i-1}) \in E_{int}(T)\}$. 
 \item Assume that $f_1(g_{i-1}) \in E_{ext}(T) \cup \{g_1,\dots,g_{i-1}\}$.
 \begin{itemize}
  \item If $f_2(g_{i-1}) \in E_{int}(T)$, then put $g_i := f_2(g_{i-1})$. 
  \item If $f_2(g_{i-1}) \in E_{ext}(T)$, then put $g_i:=h^k(g_{i-1})$, where $$k = \min \{j \in \NN \ | \ h^j(g_{i-1}) \in E_{int}(T)\setminus \{g_1,\dots,g_{i-1}\}\} \ . $$  
 \end{itemize}
\end{itemize}

Less formally speaking, the above ordering can be described in simpler words. We first consider the edge for which $e_1(T)$ is the (left-hand) edge attached to its outgoing vertex. If this edge is internal, we define it as the first edge of the ordering. 

If this edge is external, it is necessarily given by $e_0(T)$. Since $T$ is binary and has at least three leaves, it follows that $f_2(e_0(T))$ is internal and we define $f_2(e_0(T))$ as the first edge. Intuitively, if the left-hand subtree emanating from $\vout(e_0(T))$ consists of a single external edge, then we define the edge of the right-hand subtree that is closest to the root of $T$ as the first one. 

Assuming that we have labelled the first $i-1$ edges, we want to define the $i$-th one, for which we have to distinguish several cases. 

If the left-hand edge emanating from the $(i-1)$-st edge, call it $e$, is internal and has not been labelled yet, then we check if the left-hand edge emanating from $e$ has the very same property. We iterate this procedure and follow the unlabelled left-hand edges until we have reached the bottom of the tree. The last internal edge that we obtain is this way is then defined as the $i$-th edge of the ordering. 

If the left-hand edge emanating from the $(i-1)$-st edge is external or already labelled, then we consider the right-hand edge emanating from the $(i-1)$-st edge. If that edge is internal, then we define it as $g_i$. If not, then we ``backtrack'' along the tree. We check if the edge ``on top'' of $g_{i-1}$, i.e. the edge $g$ whose outgoing vertex is attached to the incoming vertex of $g_{i-1}$, is labelled or not. If $g$ is unlabelled we let it be the $i$-th edge. If $g$ is labelled, we continue with the edge whose outgoing vertex coincides with $\vin(g)$ and check if it is labelled.  We iterate this procedure until we arrive at an unlabelled edge. This edge is then defined to be the $i$-th one. 

\begin{definition} 
\label{DefCanonicalOrdering}
 Let $T \in \BinTree_d$ for some $d \geq 3$. We call the ordering of $E_{int}(T)$ which is obtained by the above procedure \emph{the canonical ordering of $E_{int}(T)$}.
\end{definition}

\begin{figure}[h]
 \centering
 \includegraphics[scale=0.8]{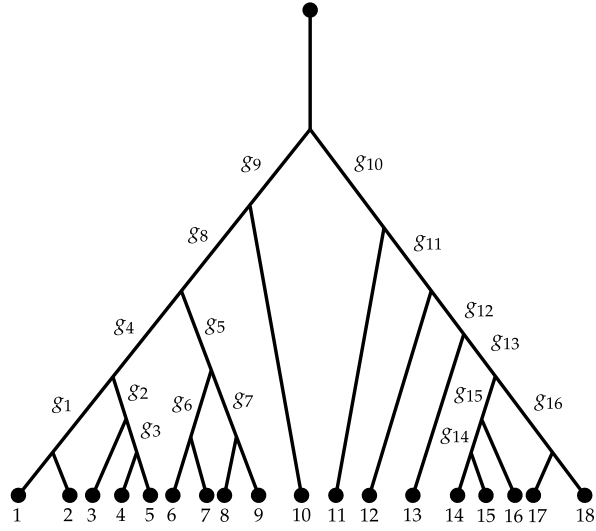}
 \caption{An example of the canonical ordering of the internal edges of a binary tree with $d=18$ leaves.}
 \label{FigureLabelledTree}
\end{figure}

See Figure \ref{FigureLabelledTree} for an example of the canonical ordering of a binary tree. The canonical ordering is described as an implementation of a depth-first search algorithm for binary trees. Thus we refrain from a proof of the following statement. 

\begin{prop}
 The canonical ordering is a well-defined total order of $E_{int}(T)$ for every $T \in \BinTree_d$ with $d \geq 3$. 
\end{prop}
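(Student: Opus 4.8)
The plan is to verify that the recursive labelling procedure in Definition \ref{DefCanonicalOrdering} always terminates after having assigned labels $g_1,\dots,g_{d-2}$ to pairwise distinct elements of $E_{int}(T)$, so that the map $i \mapsto g_i$ is a well-defined bijection $\{1,\dots,d-2\} \to E_{int}(T)$, which is exactly the assertion that the canonical ordering is a well-defined total order. Throughout I would use the graph-theoretic fact already quoted in the excerpt that a $d$-leafed binary tree has exactly $d-2$ internal edges, together with the observation that for a binary tree every vertex in $E_{int}(T) \cup \{e_0(T)\}$ has exactly two outgoing edges $f_1,f_2$, so the maps $f_1$, $f_2$ and $h$ are well-defined where claimed.

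First I would set up the basic invariants. I would show by induction on $i$ that at the moment $g_i$ is about to be defined, the set $\{g_1,\dots,g_{i-1}\}$ consists of $i-1$ distinct internal edges, and that the edge produced by the three cases of the procedure is (a) internal, and (b) not among $\{g_1,\dots,g_{i-1}\}$. Point (a) is immediate in the first two cases by the explicit conditions imposed, and in the third (``backtracking'') case it holds because $k$ is chosen minimal with $h^k(g_{i-1}) \in E_{int}(T)\setminus\{g_1,\dots,g_{i-1}\}$ — here one must check that such a $k$ exists, which I would do by a counting argument: if no unlabelled internal edge could be reached by iterating $f_1$, $f_2$ or $h$ from $\{g_1,\dots,g_{i-1}\}$, then $\{g_1,\dots,g_{i-1}\}$ would already be a union of connected components of the internal-edge adjacency structure not containing the edge adjacent to $v_1(T)$, contradicting connectedness of $T$ together with $i-1 < d-2 = |E_{int}(T)|$. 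Point (b) is built into the case distinctions (the first case requires $f_1(g_{i-1}) \notin \{g_1,\dots,g_{i-1}\}$, the second requires $f_1(g_{i-1})$ already dealt with, and the third explicitly excludes labelled edges).

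Next I would argue termination and exhaustiveness. Since each step produces a new internal edge and $|E_{int}(T)| = d-2$, the procedure runs for exactly $d-2$ steps and then stops (there is no unlabelled internal edge left, so the backtracking case finds no valid $k$, signalling completion). To see that \emph{every} internal edge is eventually labelled — equivalently that the process does not stall before step $d-2$ — I would phrase the procedure as a depth-first traversal: define the ``internal skeleton'' as the subtree spanned by the internal edges and $e_0(T)$, rooted appropriately, and observe that the rules ``go to the deepest unlabelled left descendant, else the right child, else backtrack'' are precisely the transitions of a DFS on this rooted tree, which is known to visit every node exactly once. I would make this identification precise and then invoke the standard correctness of DFS, which is the cleanest way to avoid an ad hoc case analysis.

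The main obstacle I anticipate is the third case: making rigorous that the minimum $k$ with $h^k(g_{i-1}) \in E_{int}(T) \setminus \{g_1,\dots,g_{i-1}\}$ exists whenever $i-1 < d-2$, and that iterating $h$ from $g_{i-1}$ stays within $E(T)$ long enough (i.e. one never ``falls off the root''). This requires the combinatorial fact that the set of already-labelled internal edges, at each stage, forms a subtree containing the edge adjacent to $v_1(T)$ and ``growing along the DFS order'', so that its complement in $E_{int}(T)$ is always reachable by first backtracking up to the nearest ancestor with an unexplored branch. I would isolate this as a short lemma (stated but perhaps proven only by reference to the DFS picture, in keeping with the paper's practice of leaving elementary graph-theoretic facts to the reader) and then the well-definedness and totality of the ordering follow formally.
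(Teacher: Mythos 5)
Your overall strategy --- recasting the labelling procedure as a depth-first traversal of the internal skeleton and appealing to correctness of DFS --- is precisely what the paper has in mind: it in fact refrains from giving a proof, remarking only that the canonical ordering ``is described as an implementation of a depth-first search algorithm for binary trees.'' So your approach matches the paper's, and you go further by actually sketching the details the paper omits.

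One inaccuracy to fix: your claim that point (b) (the new label is not already in $\{g_1,\dots,g_{i-1}\}$) is ``built into the case distinctions'' is not quite right. In the first case the guard only requires $f_1(g_{i-1}) \notin \{g_1,\dots,g_{i-1}\}$, whereas the edge actually assigned is $f_1^k(g_{i-1})$ with $k$ maximal, which is not a priori unvisited; and in the second case the guard requires only $f_2(g_{i-1}) \in E_{int}(T)$, with no reference to the labelled set at all. Both steps do in fact produce an unvisited edge, but for that you need the invariant you only isolate at the end --- that at every stage $\{g_1,\dots,g_{i-1}\}$ is a contiguous DFS-prefix, so that if an edge is unvisited then so is every edge in the subtree rooted below it. The cleaner organization is to state and prove that invariant first (by induction on $i$, simultaneously with the existence of the backtracking step), and then derive (a), (b), termination, and exhaustiveness as corollaries; as written, (b) has a gap that your own later lemma happens to plug.
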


We will next investigate how the canonical orientations behave with respect to the decomposition operation of binary trees and the splitting operation along an internal edge. For this purpose, we introduce the following notion.

\begin{definition}\index{left- and right-handed eges}
 Let $d \in \NN$ with $d \geq 2$, $T \in \BinTree_d$ and $e \in E(T)\setminus \{e_0(T)\}$. The edge $e$ is called \emph{left-handed} if there exists $g \in E(T)$ with $e = f_1(g)$ and \emph{right-handed} if there exists $h \in E(T)$ with $e=f_2(h)$. Note that every element of $E(T) \setminus \{e_0(T)\}$ is either left-handed or right-handed. 
 \end{definition}

\begin{prop}
\label{PropEdgeOrderingBreaking}
 Let $d \in \NN$ with $d \geq 3$, $T \in \BinTree_d$ and $E_{int}(T)=\{g_1,g_2,\dots,g_{d-2}\}$, where the edges are labelled according to the canonical ordering. Let $i \in \{1,2,\dots,d\}$ and $l \in \{1,2,\dots,d-i\}$. If $e \in E_{int}(T)$ is of type $(i,l)$, then 
 $$E_{int}(T^e_2) = \left\{g_i,g_{i+1},\dots,g_{i+l-2} \right\} , \qquad e = \begin{cases}
                                                                               g_{i+l-1} & \text{if $e$ is left-handed,} \\
                                                                               g_{i-1} & \text{if $e$ is right-handed,}
                                                                              \end{cases}$$
 and the canonical ordering of $E_{int}(T^e_2)$ coincides with the ordering which is induced by the ordering of $E_{int}(T)$. 
\end{prop}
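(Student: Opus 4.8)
The plan is to unwind the definition of the canonical ordering of $E_{int}(T)$ in terms of the depth-first search algorithm and track what happens when the search enters the right-hand subtree $T^e_2$ hanging below $\vout(e)$. First I would recall the geometric picture behind Lemma \ref{TheLayOfe} and Definition \ref{DefSplitting}: if $e$ is of type $(i,l)$, then the subtree $T^e_2$ is exactly the maximal subtree whose leaves are precisely $\{v_i(T),\dots,v_{i+l}(T)\}$, so $k(T^e_2) = l-1$ and $E_{int}(T^e_2)$ consists of $l-1$ internal edges of $T$. The claim to be proved is then (a) that these $l-1$ edges occupy \emph{consecutive} positions $g_i,\dots,g_{i+l-2}$ in the canonical ordering, (b) that $e$ itself is either $g_{i+l-1}$ or $g_{i-1}$ depending on its handedness, and (c) that the restriction of the canonical ordering of $T$ to $E_{int}(T^e_2)$ is exactly the canonical ordering of $T^e_2$ intrinsically.

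The key structural observation I would isolate first is a ``contiguity lemma'': for any internal edge $g$ of a binary tree $T$, the set of internal edges lying in the subtree rooted at $\vout(g)$ (i.e.\ below $g$) occupies a contiguous block in the canonical ordering, and moreover that block starts immediately after the position where the search ``commits'' to entering that subtree. This follows because the depth-first search, once it descends past a vertex $v$, fully explores the subtree below $v$ (labelling every internal edge there) before ever backtracking above $v$ — this is precisely the content of the two bullet-point cases in the informal description, where we only backtrack via $h$ once both $f_1$ and $f_2$ directions are exhausted. I would prove this by induction on the number of internal edges below $g$, using the case distinction in the recursive definition of $g_i$.

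With the contiguity lemma in hand, the proof of Proposition \ref{PropEdgeOrderingBreaking} becomes bookkeeping. Consider the vertex $w := \vout(e)$ and the subtree below it, which is $T^e_2$ with its internal edges $E_{int}(T^e_2)$ together with the $l+1$ leaves $v_i,\dots,v_{i+l}$ of $T$. The search reaches $w$ by traversing $e$. Now split into the two handedness cases. If $e$ is right-handed, then $e = f_2(g)$ for some $g$; by the description of the ordering, $e$ gets labelled exactly when we backtrack to $g$ and find its $f_1$-direction already done, so $e$ is labelled \emph{before} descending into $T^e_2$, and by the contiguity lemma the $l-1$ internal edges of $T^e_2$ follow immediately; matching this against the type-$(i,l)$ normalization (which is exactly the statement that the leaves of $T^e_2$ are $v_i,\dots,v_{i+l}$, hence the first leaf of $T$ reached inside $T^e_2$ is $v_i$) pins $e = g_{i-1}$ and $E_{int}(T^e_2) = \{g_i,\dots,g_{i+l-2}\}$. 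If $e$ is left-handed, then $e = f_1(g)$ for some $g$, and by the ``follow the unlabelled left-hand edges to the bottom'' rule, $e$ is labelled only after the entire chain of left descendants below it — actually here one must be slightly careful: $e$ itself is part of a left-chain, so $e$ is labelled as part of the step that produces some $g_j$, and then $T^e_2$'s internal edges are explored next; the indexing arithmetic, again driven by the leaf labels $v_i,\dots,v_{i+l}$, forces $e = g_{i+l-1}$ and $E_{int}(T^e_2) = \{g_i,\dots,g_{i+l-2}\}$.

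Finally, for part (c), the claim that the induced ordering on $E_{int}(T^e_2)$ agrees with its own canonical ordering: this is because the depth-first search restricted to the subtree $T^e_2$ is \emph{literally} the depth-first search of $T^e_2$ — the maps $f_1,f_2,h$ restricted to edges of $T^e_2$ coincide with the corresponding maps for $T^e_2$ (the root $e_0(T^e_2)$ plays the role of $e$, and the leaf ordering is the restriction of that of $T$), so the recursion unfolds identically. I would phrase this as: the algorithm is ``local'' in the sense that its behaviour on a subtree depends only on the subtree. I expect the main obstacle to be getting the index arithmetic in the left-handed case exactly right — in particular correctly accounting for whether $e$ is counted among the edges of the left-chain and verifying the off-by-one between ``position where the search commits to $T^e_2$'' and the label $i+l-1$ versus $i-1$ — since a sign or index slip there would break the matching with the type-$(i,l)$ convention. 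Everything else is a routine, if tedious, induction on tree size driven by the case split in the definition of the canonical ordering.
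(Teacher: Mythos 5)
Your overall strategy --- prove a contiguity lemma for the subtree edges and then induct on leaf count --- is the right and essentially only approach, and your treatment of the handedness dichotomy and the locality argument for part (c) are on the right lines. The gap is in the contiguity lemma itself: it does not follow from Definition~\ref{DefCanonicalOrdering} as literally written, because the backtrack step there labels the parent edge of $g_{i-1}$ before the parent's other subtree has been visited, so the traversal is not a pre- or post-order walk of each subtree. Concretely, take the $5$-leafed binary tree in which $a = f_1(e_0(T))$ is internal, both $b = f_1(a)$ and $c = f_2(a)$ are internal, and $\vout(b)$, $\vout(c)$ have only leaf children: then $g_0 = b$ (the edge above $v_1$), so $g_1 = b$; both $f_1(b)$ and $f_2(b)$ are external, so the backtrack rule gives $g_2 = h(b) = a$; then $g_3 = f_2(a) = c$. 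The resulting ordering $(b,a,c)$ places $E_{int}(T^a_2) = \{b,c\} = \{g_1, g_3\}$, which is not a contiguous block, and $a = g_2$ rather than $g_{i+l-1} = g_3$ as the Proposition asserts ($a$ is left-handed of type $(1,3)$). For the balanced $4$-leafed binary tree the backtrack rule is not even defined, since after labelling $g_1$ there is no unlabelled internal ancestor to return to. So the intuition ``the search fully explores a subtree before backtracking above it'' is exactly what needs to be true, but it is not what the bullet-point rules produce.

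What does give contiguity and the stated positions is the recursion $\mathrm{Ord}(T) := \mathrm{Ord}(T_1),\ f_1(e_0(T)),\ f_2(e_0(T)),\ \mathrm{Ord}(T_2)$, with $T_1, T_2$ the subtrees hanging below $f_1(e_0(T))$ and $f_2(e_0(T))$ and external entries omitted. Under this recursion $\{e\}\cup E_{int}(T^e_2)$ is by construction one of the nested slots, so contiguity is automatic; $e$ is the last entry of its slot exactly when $e = f_1(\cdot)$ and the first when $e = f_2(\cdot)$, matching the left-/right-handed dichotomy; and the positions $i-1$ (right-handed) and $i+l-1$ (left-handed) fall out of your induction on leaf count using that an $s$-leafed binary tree has $s-2$ internal edges and that the leaf indices of $T_2$ are those of $T$ shifted by the leaf count of $T_1$. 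Part (c) then follows from the locality of the recursion exactly as you argue. You should carry out the proof against this recursion rather than against the literal traversal of Definition~\ref{DefCanonicalOrdering}, and flag that the two do not appear to agree.
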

\begin{proof}
 This is derived from the definition of the canonical ordering by an elementary line of argument. Thus, we omit the details. 
\end{proof}

\begin{theorem}
\label{TheoremSigntaue}
 Let $d \in \NN$ with $d\geq 3$, $T \in \RTree_d$, $i \in \{1,2,\dots,d\}$, $l \in \{1,2,\dots,d-i\}$ and $e \in E_{int}(T)$ be of type $(i,l)$. Let $E_{int}(T)$, $E_{int}(T^e_1)$, $E_{int}(T^e_2)$ be equipped with the canonical orderings. Then the sign of the permutation 
 $$\tau_e: \{e\} \times E_{int}(T^e_1) \times E_{int}(T^e_2) \to E_{int}(T)$$
 is given by 
 $$\sign \tau_e = \begin{cases}
 (-1)^{(d-i)l+d-1} & \text{if $e$ is left-handed,} \\
 (-1)^{(d-i)l+d} & \text{if $e$ is right-handed.} 
 \end{cases}$$
\end{theorem}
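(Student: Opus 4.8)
The plan is to reduce the claim to a purely combinatorial statement about the canonical ordering by carefully tracking where the block $E_{int}(T^e_2)$ of internal edges sits inside the canonically ordered list $E_{int}(T) = \{g_1,\dots,g_{d-2}\}$, and where $e$ itself lies in that list. The key input is Proposition \ref{PropEdgeOrderingBreaking}, which tells us that when $e$ is of type $(i,l)$ we have $E_{int}(T^e_2) = \{g_i,g_{i+1},\dots,g_{i+l-2}\}$ as a \emph{consecutive} block in the canonical ordering of $T$, that this block is in the same relative order as the canonical ordering of $E_{int}(T^e_2)$, and that $e = g_{i+l-1}$ if $e$ is left-handed while $e = g_{i-1}$ if $e$ is right-handed. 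One should also record the complementary fact, derivable by the same elementary argument: the remaining internal edges, which form $E_{int}(T^e_1)$, occupy exactly the positions $\{1,\dots,i-2\}\cup\{i-1\text{ or }i+l-1\}\cup\{i+l,\dots,d-2\}$ (depending on the handedness of $e$), and again in the relative order dictated by the canonical ordering of $T^e_1$.

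First I would fix notation: write $k := k(T) = d-2$, $p := k(T^e_1) = (d-l)-2 = d-l-2$, $q := k(T^e_2) = (l+1)-2 = l-1$, so that $p+q+1 = k$, and regard $\tau_e$ as the permutation of $\{1,2,\dots,k\}$ that describes how the concatenated list $\big(e,\; \text{(canonical list of }T^e_1),\; \text{(canonical list of }T^e_2)\big)$ maps onto the canonical list $(g_1,\dots,g_k)$ of $T$. Since each of the three sublists appears in its own canonically induced order inside the target, $\tau_e$ is a \emph{block interleaving}: its sign depends only on the positions of the three blocks, not on the internal permutations of the blocks (which are all identity). Concretely, $\tau_e$ is obtained from the identity by taking the symbol $e$ from front position $1$, the $T^e_1$-block from front positions $2,\dots,p+1$, and the $T^e_2$-block from front positions $p+2,\dots,p+q+1$, and re-interleaving so that: in the left-handed case, the $T^e_2$-block lands in target positions $i,\dots,i+l-2$, then $e$ lands in target position $i+l-1$, and the $T^e_1$-block fills the rest ($1,\dots,i-2$ and $i+l,\dots,k$); in the right-handed case, $e$ lands in target position $i-1$, the $T^e_2$-block in positions $i,\dots,i+l-2$, and the $T^e_1$-block fills $1,\dots,i-2$ and $i+l-1,\dots,k$.

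Then I would compute the sign of this block interleaving directly. The standard fact is that the sign of a permutation that rearranges ordered blocks of sizes $b_1,b_2,\dots$ is the sign of the induced permutation of the blocks, weighted by $(-1)^{\sum_{\text{blocks swapped}} b_r b_s}$ over pairs of blocks whose order gets reversed, times $(-1)$ for each \emph{odd-size} block that moves past another odd-size block, etc.; the cleanest route is to write $\tau_e$ as a composition of elementary block transpositions and multiply their signs. In the left-handed case one needs: move the length-$q$ block $E_{int}(T^e_2)$ from the back of the front-list to the front, past a length-$p$ block and past the singleton $e$ — but more precisely, count inversions against the target. Here $q = l-1$ is the size of the $T^e_2$-block and one of the two complementary pieces of the $T^e_1$-block has size $(i-2)$ and the other $(d-2)-(i+l-2) = d-i-l$; the singleton $e$ is odd (size $1$). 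Carrying this out, the number of transposed pairs of positions works out to be congruent mod $2$ to $(d-i)l + (d-1)$ in the left-handed case; the right-handed case differs only in that $e$ and the $T^e_2$-block switch roles in the interleaving, shifting the parity by exactly $1$ (one fewer, or one more, inversion of the singleton $e$ against the $T^e_2$-block of size $l-1$, whose parity contribution is $l-1 \equiv l+1$, but combined with the shift of $e$ across one extra adjacent slot this nets a single sign change), giving $(-1)^{(d-i)l+d}$. I expect the main obstacle to be exactly this bookkeeping: getting the inversion count right requires being scrupulous about whether one counts pairs $(a,b)$ with $a$ before $b$ in the source but after $b$ in the target, and in particular handling the two disjoint fragments of the $T^e_1$-block (sizes $i-2$ and $d-i-l$) correctly and reconciling $(d-i-l)q = (d-i-l)(l-1) \equiv (d-i)l + d - i + l \pmod 2$ with $(d-i)l$ — the cross-terms must collapse, and verifying that collapse, together with pinning down the $\pm 1$ difference between the two handedness cases, is the delicate part. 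Everything else is routine, and the case $e' = e_0(T)$ versus $e' \in E_{int}(T)$ from the ambient discussion does not affect $E_{int}$-orderings, so no separate argument is needed there.
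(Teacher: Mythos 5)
Your strategy matches the paper's: use Proposition \ref{PropEdgeOrderingBreaking} to locate the blocks $\{e\}$, $E_{int}(T^e_1)$, $E_{int}(T^e_2)$ inside the canonically ordered $E_{int}(T)=\{g_1,\dots,g_{d-2}\}$, then read off $\sign\tau_e$ as a block permutation. The paper's actual decomposition is $\tau_e = \tau_2 \circ \tau_1$, where $\tau_1$ moves $e$ past the first fragment of $E_{int}(T^e_1)$ (of size $i-1$ if $e$ is left-handed, $i-2$ if right-handed) and $\tau_2$ moves $E_{int}(T^e_2)=\{g_i,\dots,g_{i+l-2}\}$ past $(g_{i+l-1},\dots,g_{d-2})$, costing $(l-1)(d-i-l)$ transpositions in both cases, which yields the claimed parities after simplification.

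There are two concrete problems with your proposal. First, your block positions in the left-handed case are off by one: since $e = g_{i+l-1}$ and $E_{int}(T^e_2)=\{g_i,\dots,g_{i+l-2}\}$, the first fragment of $E_{int}(T^e_1)$ is $\{g_1,\dots,g_{i-1}\}$, not $\{g_1,\dots,g_{i-2}\}$ as you write; your stated union has only $d-l-3$ elements rather than $k(T^e_1)=d-l-2$, and the fragment sizes $(i-2)$ and $d-i-l$ that you give belong to the right-handed case. Carrying your positions through the inversion count would produce $(-1)^{(d-i)l+d}$ in both cases. Second, your argument for the single-sign difference between the two handedness cases does not close. You note that $e$ loses its $l-1$ inversions against $E_{int}(T^e_2)$ when it moves from behind that block to in front of it, and that $e$ moves one fewer step past the first $T^e_1$-fragment, but you omit the compensating fact that the \emph{second} $T^e_1$-fragment gains one element (size $d-i-l$ vs.\ $d-i-l-1$), which adds exactly $l-1$ inversions against $E_{int}(T^e_2)$. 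It is this cancellation, together with the $\pm 1$ from $e$ against the first fragment, that gives a net parity shift of $1$; your account as written would give a shift of $l$. Since the substantive inversion count is asserted rather than carried out, and the one delicate piece you attempt to justify is reasoned incorrectly, the proposal leaves a genuine gap even though the plan is sound.
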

\begin{proof}
 If $e$ is of type $(i,l)$, then by Proposition \ref{PropEdgeOrderingBreaking} it will hold that
 \begin{align*}
 E_{int}(T^e_2)&= \{g_i,g_{i+1},\dots,g_{i+l-2}\} \ , \ \
 E_{int}(T^e_1)= \begin{cases}
 \{g_1,\dots,g_{i-2},g_{i+l-1},\dots,g_{d-2}\} & \text{if} \ \ e = g_{i-1} \ , \\
 \{g_1,\dots,g_{i-1},g_{i+l},\dots,g_{d-2}\} & \text{if} \ \ e = g_{i+l-1} \ .
 \end{cases} 
 \end{align*}
 If $e=g_{i+l-1}$, i.e. if $e$ is left-handed, then $\tau_e= \tau_2 \circ \tau_1$, where $\tau_1$ permutes $e$ past $(g_1,\dots,g_{i-1})$ and $\tau_2$ permutes $(g_i,\dots,g_{i+l-2})$ past $(g_{i+l-1},\dots,g_{d-2})$ with respect to the orderings. Hence, if $e=g_{i+l-1}$, the sign of $\tau_e$ is given by the parity of 
$$ i-1 + (l-1)(d-2-(i+l-2)) \equiv d-1+l(d-i) \ . $$
 Analogously, if $e=g_{i-1}$, i.e. if $e$ is right-handed, then $\tau_e$ permutes $e$ past $(g_1,\dots,g_{i-2})$ and  $(g_i,\dots,g_{i+l-2})$ past $(g_{i+l-1},\dots,g_{d-2})$ with respect to the orderings, such that $\sign \tau_e$ is given by the parity of 
 $$i-2 + (l-1)(d-2-(i+l-2)) \equiv (d-i)l+d \ . $$
\end{proof}

So far, we have defined orderings of internal edges of \emph{binary} trees. We will generalize canonical orderings to edges of arbitrary ribbon trees starting from the binary case. One checks that for every $T \in \RTree_d$ there exist $\tilde{T} \in \BinTree_d$ and $F \subset E_{int}\left(\tilde{T}\right)$, such that 
$$T \cong \tilde{T}/F \ . $$

\begin{definition}
 Let $d \geq 3$ and $T \in \RTree_d$. Let $\tilde{T} \in \BinTree_d$ and $F \subset E_{int}\left(\tilde{T}\right)$ with $T \cong \tilde{T}/F$. Then $E_{int}(T) = E_{int}\left(\tilde{T}\right)\setminus F$ and we define \emph{the canonical ordering of $E_{int}(T)$} as the ordering which is induced by the canonical ordering of $E_{int}\left(\tilde{T}\right)$.
\end{definition}

By elementary arguments from graph theory, one shows that the canonical ordering of internal edges of ribbon trees is well-defined, i.e. that for every $T \in \RTree_d$ there exists a pair $(\tilde{T},F)$ as in the previous definition and that the ordering is independent of the choice of $(\tilde{T},F)$. \bigskip 

The following quantity will be used in Section \ref{SectionOrientPerturbedMRT}. 

\begin{definition} 
\label{DefDexterity} 
 Let $d \in \NN$ with $d\geq 2$ and $T \in \BinTree_d$. The \emph{dexterity of $T$} is the number $r(T) \in \NN_0$ defined by 
 $$r(T) = \left|\left\{g \in E_{int}(T) \ | \ g \ \text{is right-handed} \right\} \right| \  . $$
\end{definition}

\begin{prop}
\label{PropDexterities}
 Let $d \in \NN$ with $d \geq 3$. 
 \begin{enumerate}[a)]
  \item Let $T_1,T_2 \in \BinTree_d$, $e_1 \in E_{int}(T_1)$ and $e_2 \in E_{int}(T_2)$ be given such that $T_1/e_1 \cong T_2/e_2$.
 Then $$|r(T_1)-r(T_2)|=1 \ . $$
 \item Let $T \in \RTree_d$ and $e \in E_{int}(T)$. Then 
 \begin{equation*}
  r(T^e_1)+r(T^e_2) = \begin{cases}
                       r(T) & \text{if $e$ is left-handed,} \\
                       r(T)-1 & \text{if $e$ is right-handed.}
                      \end{cases}
 \end{equation*}
 \end{enumerate}
\end{prop}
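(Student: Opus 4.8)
The statement to prove is Proposition~\ref{PropDexterities}, which consists of two combinatorial identities about dexterities of binary trees. The plan is to derive part~b) first from the structure of the splitting operation, and then obtain part~a) as a consequence of part~b) together with Lemma~\ref{TwoQuotientTrees}.

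\emph{Part b).} Recall that $E_{int}(T)\setminus\{e\} = E_{int}(T^e_1)\sqcup E_{int}(T^e_2)$, so it suffices to track how the left-/right-handedness of the remaining internal edges changes (or does not change) when passing from $T$ to $T^e_1$ and $T^e_2$. The key observation is that the splitting operation modifies the local incidence data only near $\vin(e)$ and $\vout(e)$: the edge $e$ is replaced by the two new external edges $f_1$ (a leaf of $T^e_1$) and $f_2$ (the root of $T^e_2$), and these are external, hence contribute to neither dexterity. Every internal edge $g\in E_{int}(T)\setminus\{e\}$ keeps the same outgoing and incoming vertices in whichever of $T^e_1$, $T^e_2$ it lands in, and keeps the same sibling structure, \emph{unless} $g$ is one of the two edges whose incoming vertex is $\vout(e)$ or the (unique, since $T$ is binary) other edge sharing $\vin(e)$ as incoming vertex with $e$. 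I would go through these finitely many edges explicitly: the sibling of $e$ at $\vin(e)$ retains its handedness because $e$ is not its sibling's defining partner in the $f_1/f_2$ labelling once $f_1(h(e))$ is recomputed --- here a short case analysis on whether $e$ is left- or right-handed is needed, and this is exactly where the two cases in the statement arise. Concretely, if $e$ is right-handed, i.e. $e = f_2(h(e))$ for the parent edge $h(e)$, then in $T^e_1$ the former sibling $f_1(h(e))$ becomes (still) an edge at the same vertex but the count of right-handed edges among $E_{int}(T^e_1)\cup E_{int}(T^e_2)$ is exactly $r(T)-1$, since $e$ itself was right-handed and is removed while no other edge changes handedness. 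If $e$ is left-handed, $e$ was not counted in $r(T)$, and again nothing else changes, so $r(T^e_1)+r(T^e_2)=r(T)$.

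\emph{Part a).} By Lemma~\ref{TwoQuotientTrees}, writing $T_0 := T_1/e_1 = T_2/e_2$, there is a $4$-valent vertex $v$ of $T_0$ that gets resolved in two different ways to produce $T_1$ and $T_2$; all internal edges of $T_0$ other than those incident to $v$ appear unchanged in both $T_1$ and $T_2$ with unchanged handedness, and the only new edge is $e_i\in E_{int}(T_i)$. So $r(T_i) = r_0 + \varepsilon_i$, where $r_0$ counts the right-handed internal edges of $T_0$ that are not affected by the resolution (and this is the \emph{same} number for $i=1,2$ since those edges and their handedness are untouched), plus a correction coming from the at most three edges $e_1,e_2,e_3$ with $\vin(e_j)=v$ together with the newly inserted $e_i$. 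In the two resolutions described in the proof of Lemma~\ref{TwoQuotientTrees}, the edge $e_{\text{new}}=e_i$ is left-handed in one and right-handed in the other (equivalently, one of $e_2,e_3$ swaps from left- to right-handed or vice versa while the others stay fixed); tabulating the handedness of $e_1,e_2,e_3,e_{\text{new}}$ in both cases shows that $\varepsilon_1$ and $\varepsilon_2$ differ by exactly $1$. Hence $|r(T_1)-r(T_2)| = |\varepsilon_1-\varepsilon_2| = 1$.

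\emph{Main obstacle.} The only genuinely delicate point is the bookkeeping near the affected vertices --- making sure that \emph{no} internal edge other than the ones explicitly listed changes its handedness under splitting or resolution, and getting the left/right case split right (the parity shift in part~b depends on whether $e$ itself was counted). I would handle this by fixing, once and for all, the local picture at $\vin(e)$ and $\vout(e)$ (respectively at $v$) with the two possible orderings of the attached subtrees drawn out, and checking the $f_1/f_2$ labels before and after; everything else is then a clean transfer of handedness. This is elementary but requires care, which is presumably why the authors defer it (``derived \ldots by an elementary line of argument'').
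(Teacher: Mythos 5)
Your part b) is correct and matches what the paper takes for granted: under the splitting $T \mapsto (T^e_1,T^e_2)$, every edge in $E_{int}(T)\setminus\{e\}$ retains the same sibling and the same relative leaf-index ordering in whichever subtree it lands in, so no handedness changes; the only effect on the dexterity count is the removal of $e$ itself, and the case split on whether $e$ is right-handed gives the claimed formula.

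Part a), however, has a genuine gap at precisely the step you defer to a ``tabulation''. Using the notation from the proof of Lemma~\ref{TwoQuotientTrees}, write $a_1,a_2,a_3$ for the three ordered outgoing edges of the $4$-valent vertex $v$. In the first resolution $e_{\text{new}}$ is the left-hand child of $v$ (left-handed), while $a_2$ becomes the right-hand child of $\vout(e_{\text{new}})$ (right-handed); in the second resolution $e_{\text{new}}$ becomes right-handed, while $a_2$ becomes the left-hand child of $\vout(e_{\text{new}})$, hence left-handed. So your claim that ``the others stay fixed'' is false: the middle edge $a_2$ flips in the opposite direction to $e_{\text{new}}$, and whenever $a_2$ is itself an \emph{internal} edge the two flips cancel, leaving $r(T_1)=r(T_2)$. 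This already happens for $d=4$: let $T_1$ and $T_2$ be the $4$-leafed binary trees with bracketings $(1(23))4$ and $1((23)4)$. Collapsing the internal edge adjacent to $\vout(e_0)$ in either tree produces the same $4$-leafed tree whose root-adjacent vertex has children $1$, $(23)$, $4$, yet one checks directly that $r(T_1)=r(T_2)=1$. The defect is therefore not something that carrying out the tabulation would fix; your ``one of $e_2,e_3$ swaps while the others stay fixed'' is wrong, and the asserted difference of exactly $1$ does not hold. The paper's own one-sentence argument (``precisely one of $e_1,e_2$ is right-handed; consequently one tree has an additional right-handed edge'') rests on the same unwarranted assumption and overlooks the compensating flip of $a_2$.
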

\begin{proof}
\begin{enumerate}[a)]
 \item Apparently, precisely one of the two edges $e_1$ and $e_2$ is right-handed. Consequently, one of the two trees $T_1$ and $T_2$ has an additional right-handed edge. 
 \item This is obvious.
 \end{enumerate}
\end{proof}

\subsection{Orientations on moduli spaces of perturbed Morse ribbon trees}
 \label{SectionOrientPerturbedMRT}
  
{\it Throughout this section, we assume that $E_{int}(T)$ is equipped with the canonical ordering for all $T \in \RTree_d$ and $d \geq 3$. Assume that such a $T$ is chosen and that we consider a product of the form $\prod_{e\in E_{int}(T)} Q_e$, where $Q_e$ is an oriented manifold for every $e \in E_{int}(T)$. Throughout this section, we assume that such a product is oriented with the product orientation, given by 
$$\prod_{e\in E_{int}(T)}Q_e = Q_{g_1}\times Q_{g_2} \times \dots \times Q_{g_{k(T)}} \ , $$
where the elements of $E_{int}(T)=\{g_1,g_2,\dots,g_{k(T)}\}$ are labelled according to the canonical ordering.} 
 
 \bigskip
 
 Let $d \geq 2$, $T \in \RTree_d$ and $\fatY \in \XX(T)$. As a first step towards orientations on the spaces $\Acal^d_{\fatY}(x_0,x_1,\dots,x_d,T)$, we want to equip the spaces  
 \begin{equation*}
 \MM_{\fatY}^d(x_0,x_1,\dots,x_d,T) \ , \quad \text{for} \ x_0,x_1,\dots,x_d \in \Crit f \ , 
 \end{equation*}
 from Section \ref{CompactificationsOneDimAinfty} with orientations. Recall that in shorthand notation
 \begin{align*}
  &\MM^d_{\fatY}(x_0,x_1,\dots,x_d,T)= \left\{ \left. \left(\gamma_0, (l_e,\gamma_e)_{e \in E_{int}(T)},\gamma_1,\dots,\gamma_d\right) \ \right| \ \gamma_0 \in W^-(x_0,Y_0((l_e)_e)), \right. \\
  &\qquad \qquad \qquad \qquad \quad l_e>0, \ \ (l_e,\gamma_e) \in \MM(Y_e((l_f)_{f \neq e}))  \ \forall e , \ \  \gamma_i \in W^+(x_i,Y_i((l_e)_{e})) \ \forall i  \Big\} \ ,
 \end{align*}
 with $\fatY=(Y_0,(Y_e)_{e \in É_{int}(T)},Y_1,\dots,Y_d)$.
 
As we discussed in Section \ref{CompactificationsOneDimAinfty}, the space $\MM^d_{\fatY}(x_0,x_1,\dots,x_d,T)$ is a manifold of class $C^{n+1}$ and one checks without difficulties that the following map is a diffeomorphism of manifolds of class $C^{n+1}$:
\begin{align*}
 &F_T:\MM^d_{\fatY}(x_0,x_1,\dots,x_d,T) \stackrel{\cong}{\to} \WW^u(x_0) \times \prod_{e \in E_{int}(T)} \MM(f,g) \times \WW^s(x_1) \times \dots \times \WW^s(x_d) \ , \\
 &\ \ \ \left(\gamma_0,(l_e,\gamma_e)_e,\gamma_1,\dots,\gamma_d \right) \mapsto \\ 
 &\qquad \qquad \left(\varphi_{Y_0((l_e)_e)}(\gamma_0), \left(\psi_{Y_e((l_f)_{f\neq e})}(l_e,\gamma_e) \right)_{e \in E_{int}(T)}, \varphi_{Y_1((l_e)_e)}(\gamma_1),\dots, \varphi_{Y_d((l_e)_e)}(\gamma_d) \right) \ .
\end{align*}

 \textit{For all $d \geq 2$, $T \in \RTree_d$ and $x_0,x_1,\dots,x_d \in \Crit f$, we equip $\MM^d_{\fatY}(x_0,x_1,\dots,x_d,T)$ with the unique orientation with which $F_T$ becomes an orientation-preserving diffeomorphism.}  \index{orientations!of $\MM^d_{\fatY}(x_0,x_1,\dots,x_d,T)$}
 
 \bigskip

 As a second step towards the construction of orientations on moduli spaces of type $\Acal^d_{\fatY}(x_0,x_1,\dots,x_d,T)$, we investigate the orientability of the $T$-diagonals. 
 
 \begin{prop}
 \label{DeltaTDiffeo}
  Let $d \geq 2$. For every $T \in \RTree_d$ the canonical ordering of $E_{int}(T)$ induces a diffeomorphism  
  \begin{equation*}
   M^{1+k(T)} {\stackrel{\cong}{\longrightarrow}} \Delta_T \ .
  \end{equation*}  
 \end{prop}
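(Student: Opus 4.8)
\textbf{Proof plan for Proposition \ref{DeltaTDiffeo}.}

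The plan is to construct the diffeomorphism explicitly as a restriction of a coordinate projection, and then prove it is a bijection with smooth inverse by an induction on $k(T)$ that mirrors the structure of the ribbon tree. First I would recall from Remark \ref{DeltaTexplicitly} the explicit description of $\Delta_T$ as the set of tuples $(q_0,(q^e_{\mathrm{in}},q^e_{\mathrm{out}})_{e\in E_{int}(T)},q_1,\dots,q_d)\in M^{1+2k(T)+d}$ satisfying the three families of matching conditions (along internal edges meeting internal edges, along internal edges meeting leaves, and at the root). The point is that these conditions are exactly ``value at the incoming end of each edge equals the value transported along the tree from the root'', so once we know $q_0$ together with the values $q^e_{\mathrm{out}}$ for the $k(T)$ internal edges $e$, \emph{every} other coordinate of the tuple is forced: the $q^e_{\mathrm{in}}$ are determined because $\vin(e)$ equals $\vout(e')$ for a unique edge $e'$ by Lemma \ref{OutgoingUniqueness} (or equals $\vout(e_0(T))$), and the leaf coordinates $q_j$ are likewise determined by the outgoing value of the unique edge whose outgoing vertex is $\vin(e_j(T))$.

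Concretely I would define the map
\begin{equation*}
 \Phi_T: M^{1+k(T)} \to \Delta_T \ , \qquad (p, (p_e)_{e\in E_{int}(T)}) \mapsto (q_0,(q^e_{\mathrm{in}},q^e_{\mathrm{out}})_{e\in E_{int}(T)},q_1,\dots,q_d) \ ,
\end{equation*}
where the internal edges are labelled by the canonical ordering, $q_0 := p$, $q^e_{\mathrm{out}} := p_e$ for every $e\in E_{int}(T)$, and the remaining coordinates $q^e_{\mathrm{in}}$ and $q_j$ are assigned by the forced values just described: $q^e_{\mathrm{in}} = p_{e'}$ if $\vin(e)=\vout(e')$ with $e'\in E_{int}(T)$, $q^e_{\mathrm{in}} = p = q_0$ if $\vin(e)=\vout(e_0(T))$, and analogously $q_j = p_{e'}$ if $\vin(e_j(T))=\vout(e')$ with $e'\in E_{int}(T)$, respectively $q_j = q_0$ if $\vin(e_j(T))=\vout(e_0(T))$. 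One checks directly from Remark \ref{DeltaTexplicitly} that the image of $\Phi_T$ lies in $\Delta_T$, since every defining equation of $\Delta_T$ holds by construction. Conversely, the coordinate projection
\begin{equation*}
 \Psi_T:\Delta_T \to M^{1+k(T)} \ , \qquad (q_0,(q^e_{\mathrm{in}},q^e_{\mathrm{out}})_e,q_1,\dots,q_d)\mapsto (q_0,(q^e_{\mathrm{out}})_{e\in E_{int}(T)}) \ ,
\end{equation*}
is visibly smooth, and the argument above shows $\Psi_T\circ\Phi_T = \id$ while $\Phi_T\circ\Psi_T = \id$ follows because any point of $\Delta_T$ is, by the defining equations, equal to the tuple reconstructed from $(q_0,(q^e_{\mathrm{out}})_e)$. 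Since $\Delta_T$ carries the submanifold structure inherited from $M^{1+2k(T)+d}$ and both $\Phi_T$ and $\Psi_T$ are restrictions of smooth maps between products of copies of $M$, they are smooth as maps of manifolds, so $\Phi_T$ is the desired diffeomorphism.

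The only genuinely careful point is to verify that ``value at the incoming end is forced by the outgoing value of the preceding edge (or by $q_0$)'' really does cover \emph{all} coordinates other than $q_0$ and the $q^e_{\mathrm{out}}$: this is precisely Lemma \ref{OutgoingUniqueness} together with the fact that each internal vertex has a unique outgoing internal edge and that the root edge $e_0(T)$ has $\vin(e_0(T))=v_0(T)$. I would make this rigorous by induction on $k(T)$: the base case $k(T)=0$ is the tree with a single internal vertex $w_0=\vout(e_0(T))$, where $\Delta_T$ is cut out of $M^{1+d}$ by $q_0=q_j$ for all $j$, so $\Delta_T\cong M = M^{1+0}$; for the inductive step, pick the internal edge $e$ that is last in the canonical ordering — which one verifies is a ``terminal'' internal edge, i.e.\ all edges with incoming vertex $\vout(e)$ are external — split off the corresponding coordinates, and apply the hypothesis to $T/e$, noting $k(T/e)=k(T)-1$. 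The induction also makes transparent that the product decomposition of $M^{1+k(T)}$ used here is the one compatible with the canonical ordering of $E_{int}(T)$, which is what is needed for the orientation conventions fixed at the start of this section and for the subsequent orientation of $\Acal^d_{\fatY}(x_0,x_1,\dots,x_d,T)$. The main obstacle is purely bookkeeping: keeping the indexing of edges, incoming/outgoing vertices, and the canonical ordering consistent throughout the induction; there is no analytic difficulty.
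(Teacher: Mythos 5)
Your proof is correct and takes essentially the same route as the paper: you construct the identical map $M^{1+k(T)}\to M^{1+2k(T)+d}$, $(p,(p_e)_e)\mapsto(q_0,(q^e_{\mathrm{in}},q^e_{\mathrm{out}})_e,q_1,\dots,q_d)$, with $q_0=p$, $q^e_{\mathrm{out}}=p_e$, and the remaining coordinates forced by Lemma \ref{OutgoingUniqueness} (the paper calls them $x_0,(x_e)_e$). The only addition on your part — explicitly exhibiting the inverse as the coordinate projection and sketching an optional induction on $k(T)$ — fills in what the paper dismisses with ``one checks without difficulties.''
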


 \begin{proof}
  We identify $M^{k(T)}$ with $M^{E_{int}(T)}$ and $M^{2k(T)}$ with $\left(M^2\right)^{E_{int}(T)}$ according to the canonical ordering and define a map 
  \begin{align*}
  M^{1+k(T)} &\to M^{1+2k(T)+d} \ , \quad
   \left(x_0,(x_e)_{e \in E_{int}(T)}\right) \mapsto \left(x_0,\left(q^e_{in},x_e\right)_{e \in E_{int}(T)},q_1,q_2,\dots,q_d\right) \  ,
  \end{align*}
  where
  \begin{align*}
   q^e_{in} &:= \begin{cases} 
		      x_0 &  \text{if} \ \ \vin(e)=\vout(e_0(T)) \ , \\  
		      x_g & \text{if} \ \ \vin(e)=\vout(g) \ \ \text{for some} \ \ g \in E_{int}(T) \ , 
                \end{cases} \\
   q_i &:=  \begin{cases}
   x_0 & \text{if} \ \ \vin(e_i(T))=\vout(e_0(T)) \ , \\
   x_g  & \text{if} \ \ \vin(e_i(T))=\vout(g) \ \ \text{for some} \ \ g \in E_{int}(T) \ ,
   \end{cases}
  \end{align*}
  for all $e \in E_{int}(T)$ and $i \in \{1,2,\dots,d\}$. One checks without difficulties that this map is well-defined and a diffeomorphism onto its image. Moreover, it follows from the description of $\Delta_T$ in Remark \ref{DeltaTexplicitly} that the image of this map is $\Delta_T$. 
 \end{proof}

 \textit{For all $d \geq 2$ and $T \in \RTree_d$, we equip $\Delta_T$ with the unique orientation which makes the diffeomorphism $M^{1+k(T)} \cong \Delta_T$ from Proposition \ref{DeltaTDiffeo} orientation-preserving.} \index{orientations!of $T$-diagonals}
 
 \bigskip
 
 Next we will use the last results to construct the desired orientations on moduli spaces of perturbed Morse ribbon trees. 
 
 Let $T \in \RTree_d$ and $x_0,x_1,\dots,x_d \in \Crit f$. As we have described in Section \ref{CompactificationsOneDimAinfty}, the space $\Acal^d_{\fatY}(x_0,x_1,\dots,x_d,T)$ is a transverse intersection of manifolds of class $C^{n+1}$, namely 
 \begin{equation*}
  \Acal^d_{\fatY}(x_0,x_1,\dots,x_d,T) = \Eunder_{\fatY}^{-1}\left(\Delta_T\right) \ ,
 \end{equation*} 
 where $\Eunder_{\fatY}: \MM^d_{\fatY}(x_0,x_1,\dots,x_d,T) \to M^{1+2k(T)+d}$ is the endpoint evaluation map defined in (\ref{DefofEunderY}). Hence, if $\fatY$ is a regular perturbation datum, the space $\Acal^d_{\fatY}(x_0,x_1,\dots,x_d,T)$ is the transverse intersection of an oriented manifold with an oriented submanifold of an oriented manifold. 
 
By standard results from differential topology the given orientations induce an orientation on $\Acal^d_{\fatY}(x_0,x_1,\dots,x_d,T)$ in this case, see \cite[Chapter 3]{GuilleminPollack} or \cite[Section 5.6]{BanyagaHurtubise}. 
 
 \bigskip
 
 \textit{For all $x_0,x_1,\dots,x_d \in \Crit f$ and $T \in \RTree_d$ let $\Acal^d_{\fatY}(x_0,x_1,\dots,x_d,T)$ be equipped with the orientation induced by its description as a transverse intersection of oriented manifolds.} \bigskip 
 
%  \bigskip
 
 In the following, we will focus on the zero-dimensional case and use the notion of oriented intersection numbers, following the approach of \cite[Chapter 3]{GuilleminPollack}. \bigskip 

 We make this explicit for zero-dimensional spaces $\Acal^d_{\fatY}(x_0,x_1,\dots,x_d,T)$. 
 Let $\fatY$ be a regular perturbation datum, $d \geq 2$, $T \in \RTree_d$ and $x_0,x_1,\dots,x_d \in \Crit f$ such that $\Acal^d_{\fatY}(x_0,x_1,\dots,x_d,T)$ is a zero-dimensional manifold and put 
 $$\epsilon_T := \epsilon_{\Acal^d_{\fatY}(x_0,x_1,\dots,x_d,T)}: \Acal^d_{\fatY}(x_0,x_1,\dots,x_d,T) \to \{-1,1\} \ .$$
 Let $\gammaunder \in \Acal^{d}_{\fatY}(x_0,x_1,\dots,x_d,T)$. For suitable numbers $N_1,N_2 \in \NN_0$, we let $(b_1,b_2,\dots,b_{N_1})$ be a positive basis of $T_{\gammaunder}\MM^d_{\fatY}(x_0,x_1,\dots,x_d,T)$ and $(\beta_1,\beta_2,\dots,\beta_{N_2})$ be a positive basis of $T_{\Eunder_{\fatY}(\gammaunder)}\Delta_T$. Then $\epsilon_T\left(\gammaunder\right) = +1$ if and only if  
 \begin{equation}
 \label{inducedbasis}
  \left(\left(D\Eunder_{\fatY}\right)_{\gammaunder}[b_1],\left(D\Eunder_{\fatY}\right)_{\gammaunder}[b_2],\dots,\left(D\Eunder_{\fatY}\right)_{\gammaunder}[b_{N_1}],\beta_1,\beta_2,\dots,\beta_{N_2}\right) 
 \end{equation}
  is a \emph{positive} basis of $T_{\Eunder_{\fatY}\left(\gammaunder\right)}M^{1+2k(T)+d}$.  \bigskip 
  
  Finally, we are in a position to define the coefficients of the higher order multiplications in terms of oriented intersection numbers. 

  \begin{definition}
  Let $d \in \NN$ with $d \geq 2$ and $x_0,x_1,\dots,x_d \in \Crit f$ satisfy
  \begin{equation}
  \label{EqAcalTzerodim}
  \mu(x_0)=\sum_{i=1}^d \mu(x_i)+2-d \ . 
  \end{equation}
  We define $a^d_{\fatY}(x_0,x_1,\dots,x_d) \in \ZZ$ by 
  $$a^d_{\fatY}(x_0,x_1,\dots,x_d) = \sum_{T \in \RTree_d} (-1)^{\sigma(x_0,x_1,\dots,x_d)+r(T)} \orint \Acal^d_{\fatY}(x_0,x_1,\dots,x_d,T) \ , $$ 
  where $r(T)$ denotes the dexterity of $T$ for every $T \in \RTree_d$. 
  \end{definition}
  
 Note that this definition of the coefficients will coincide with the one from Definition \ref{AinftyMorseCoefficients} if we put 
 $$\algint \Acal^d_{\fatY}(x_0,x_1,\dots,x_d,T) := (-1)^{r(T)} \orint \Acal^d_{\fatY}(x_0,x_1,\dots,x_d,T)$$
 for every $T \in \RTree_d$. For critical points satisfying \eqref{EqAcalTzerodim}, it holds that $\Acal^d_{\fatY}(x_0,x_1,\dots,x_d,T)=\emptyset$ whenever $T \notin \BinTree_d$, so it follows that
 $$a^d_{\fatY}(x_0,x_1,\dots,x_d) = \sum_{T \in \BinTree_d} (-1)^{\sigma(x_0,x_1,\dots,x_d)+r(T)} \orint \Acal^d_{\fatY}(x_0,x_1,\dots,x_d,T) \ .$$
 In analogy with the unperturbed case, it is possible to define gluing maps for spaces of perturbed Morse trajectories and more generally for the spaces discussed in Section \ref{NonlocalGeneralizations}, in particular for spaces of perturbed Morse ribbon trees. 
 Since the technical effort behind carrying out the details of the constructions is disproportionate, we will not carry out the analysis of gluing maps for spaces of perturbed Morse trajectories. We note that, as in the unperturbed case, one reduces the question of existence of gluing maps to an application of the Banach Fixed-Point Theorem. Instead of carrying out the details, we will work with the following slightly vague definition that we already give for later purposes.
 
 \begin{definition} \index{geometric gluing map}
 \label{DefGeomGluingMap}
  Let $\Acal$ and $\BB$ be differentiable manifolds whose elements are families of perturbed Morse trajectories of all three types (negative semi-infinite, positive semi-infinite and finite-length).  We call a map $G: [\rho_0,+\infty) \times \BB \to \Acal$ a \emph{geometric gluing map} if it has the following properties: 
  \begin{itemize}
   \item $G$ is a differentiable embedding,
   \item if $\{r_n\}_{n\in \NN}$ is a sequence in $[\rho_0,+\infty)$ diverging to $+\infty$, then the sequence $\left\{G\left(r_n,\gammaunder\right) \right\}_{n \in \NN}$ in $\Acal$ will converge geometrically against $\gammaunder$ for every $\gammaunder \in \BB$. 
  \end{itemize}
  Here, we define geometric convergence by demanding that all component sequences of perturbed Morse trajectories converge or converge geometrically. 
 \end{definition}

\subsection{Proof of Theorem \ref{CoeffRelAinfty}}
\label{AppendixProofTheoremCoeffRel}
 
 Throughout this section, we let $\fatY$ be an admissible perturbation datum. We further let $d \geq 2$ and $x_0,x_1,\dots,x_d \in \Crit f$ be chosen such that 
 \begin{equation}
 \label{AppEqCritOneDim}
  \mu(x_0)=\sum_{q=1}^d \mu(x_q)+3-d \ , 
 \end{equation}
 i.e. such that $\Acal^d_{\fatY}(x_0,x_1,\dots,x_d,T)$ is a one-dimensional manifold for every $T \in \BinTree_d$. We will further make frequent use of the following helpful observation whose proof can e.g. be found in \cite[p. 101]{GuilleminPollack}.
 
 \begin{prop}
 \label{PropOrientTransverseBoundary}
  Let $P$ be a smooth oriented manifold with boundary, $Q$ be a smooth oriented manifold and $N\subset Q$ be a smooth oriented submanifold, both without boundary. Let $f: P \to Q$ be of class $C^1$ and assume that both $f$ and $f|_{\partial M}$ are transverse to $N$, such that $S:= f^{-1}(N)$ is a manifold with boundary. Equip $S$ with the orientation induced by the transverse intersection. 
  
  The boundary orientation on $\partial S$ coincides with the orientation induced by the transverse intersection of $f|_{\partial M}$ with $N$ if and only if $\codim_Q N$ is even. 
  \end{prop}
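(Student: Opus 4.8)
The plan is to deduce Proposition~\ref{PropOrientTransverseBoundary} from the standard local model for transverse preimages of submanifolds, keeping careful track of the order in which complementary subspaces are written. First I would recall the relevant orientation conventions already fixed in the text: for a short exact sequence $0 \to V_0 \to V_1 \to V_2 \to 0$ of oriented vector spaces, $V_0$ gets the orientation such that (positive basis of $V_0$), followed by (lift of positive basis of $V_2$), is a positive basis of $V_1$; and for a boundary $\partial P \subset P$, the boundary orientation is fixed by the outward-normal-first convention. With these in hand, at a point $p \in \partial S = (f|_{\partial P})^{-1}(N)$ I would write down the two relevant exact sequences of tangent spaces. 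On the one hand, transversality of $f$ with $N$ gives
\begin{equation*}
0 \longrightarrow T_p S \longrightarrow T_p P \xrightarrow{\ \overline{Df_p}\ } \frac{T_{f(p)}Q}{T_{f(p)}N} \longrightarrow 0 \ ,
\end{equation*}
which defines the orientation on $T_p S$. On the other hand, transversality of $f|_{\partial P}$ with $N$ gives the analogous sequence with $T_p P$ replaced by $T_p (\partial P)$ and $T_p S$ replaced by $T_p (\partial S)$, which defines the orientation on $T_p(\partial S)$ induced by the boundary intersection.

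Next I would compare these two orientations of $T_p(\partial S)$. The key is that $T_p S = T_p(\partial S) \oplus \mathbb{R}\cdot \nu$ where $\nu$ is an outward-pointing vector (we may take $\nu \in T_p S$ since $S$ is a manifold with boundary whose boundary is $\partial S$), and likewise $T_p P = T_p(\partial P) \oplus \mathbb{R}\cdot \nu$ with the same $\nu$ serving as outward normal for both (because $\overline{Df_p}(\nu)$ can be arranged to vanish: $\nu$ is tangent to $S$, hence maps into $T_{f(p)}N$). So one gets a commuting ladder of short exact sequences
\begin{equation*}
\begin{CD}
0 @>>> T_p(\partial S) @>>> T_p(\partial P) @>>> \dfrac{T_{f(p)}Q}{T_{f(p)}N} @>>> 0 \\
@. @VVV @VVV @| @. \\
0 @>>> T_p S @>>> T_p P @>>> \dfrac{T_{f(p)}Q}{T_{f(p)}N} @>>> 0 \ ,
\end{CD}
\end{equation*}
where the vertical maps are the inclusions $W \hookrightarrow W \oplus \mathbb{R}\nu$. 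The orientation on $T_p(\partial S)$ from the top row, versus the orientation it inherits as the boundary of $(T_p S, \text{intersection orientation})$ from the bottom row, differ by the discrepancy between placing the vector $\nu$ \emph{last} (needed to match the ``outward normal first'' boundary convention, which inserts $\nu$ at the front, i.e.\ one transposition past the codimension-of-$N$-many ``quotient'' directions when comparing the two ways of building up $T_p P$). Chasing signs through the two exact-sequence conventions, the mismatch is exactly $(-1)^{\codim_Q N}$: moving $\nu$ past the $\codim_Q N$ lifted quotient vectors costs $(-1)^{\codim_Q N}$.

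The main obstacle — really the only delicate point — is bookkeeping the interaction of the boundary convention (outward normal written \emph{first}) with the short-exact-sequence convention (the $V_0$-basis written \emph{first}, the quotient lift \emph{last}). I would handle this by fixing, once and for all, an adapted basis: choose a positive basis $(e_1,\dots,e_s)$ of $T_p(\partial S)$, let $\nu$ be outward so that $(\nu, e_1,\dots,e_s)$ is, by the boundary convention applied to $S$, the relevant oriented basis of $T_p S$; choose lifts $w_1,\dots,w_c \in T_p(\partial P)$ of a positive basis of $T_{f(p)}Q/T_{f(p)}N$ with $c = \codim_Q N$; then $(e_1,\dots,e_s,w_1,\dots,w_c)$ is positive in $T_p(\partial P)$ by the top exact sequence, so $(\nu,e_1,\dots,e_s,w_1,\dots,w_c)$ is positive in $T_p P$ by the outward-normal convention for $P$, whereas the bottom exact sequence declares $(e_1',\dots,e_{s+1}',w_1,\dots,w_c)$ positive in $T_p P$ for $(e_1',\dots,e_{s+1}')$ a positive basis of $T_p S$. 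Writing the first list as $(\nu, e_1,\dots,e_s,w_1,\dots,w_c)$ and permuting $\nu$ to just before $w_1$ costs no sign, but the boundary orientation of $S$ wants $\nu$ \emph{first} among $(e_1',\dots,e_{s+1}')$, i.e.\ before the $w_j$'s, whereas the top-row orientation of $\partial S$ effectively positions the ``missing'' direction after the $w_j$'s; reconciling the two positions of $\nu$ requires $c$ transpositions. Hence the two orientations on $\partial S$ agree iff $c = \codim_Q N$ is even, which is the claim. Finally I would remark that this is the statement used later (e.g.\ in the compactification arguments for one-dimensional moduli spaces where $\codim \Delta_T = (k(T)+d)n$), and note it is proved in \cite[p.~101]{GuilleminPollack}, so a short indication along the above lines suffices.
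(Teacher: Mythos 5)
Your overall strategy (set up the two transversality short exact sequences for $S$ and $\partial S$, relate them via the commutative ladder, and count how many vectors the outward normal $\nu$ has to be transposed past) is the right one, but the execution has a genuine convention error that changes the answer. You fix the preimage orientation by the rule ``positive basis of $V_0$, followed by a lift of a positive basis of $V_2$, is a positive basis of $V_1$.'' That is the paper's short-exact-sequence convention for $\MM(x,y)$, but it is \emph{not} the Guillemin--Pollack convention used for the transverse preimage orientations to which this proposition is applied: there the complement $H$ of $T_p S$ in $T_p P$ is written \emph{first}, with $H$ oriented by $Df(H)$ followed by $T_{f(p)}N$ being positive in $T_{f(p)}Q$. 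With the convention you actually wrote down, a careful run through your own adapted basis gives \emph{no} sign at all: if $(e_1,\dots,e_s)$ is $o_2$-positive then $(\nu,e_1,\dots,e_s)$ is $o_S$-positive, hence $(\nu,e_1,\dots,e_s,w_1,\dots,w_c)$ is positive in $T_pP$; on the other hand $(e_1,\dots,e_s)$ is $o_1$-positive iff $(e_1,\dots,e_s,w_1,\dots,w_c)$ is positive in $T_p(\partial P)$, i.e.\ iff $(\nu,e_1,\dots,e_s,w_1,\dots,w_c)$ is positive in $T_pP$. These are the same condition, so $o_1 = o_2$ always --- contradicting the proposition. The $(-1)^{\codim N}$ discrepancy only appears in the normal-first convention, where one compares $(\nu,w_1,\dots,w_c,e_1,\dots,e_s)$ with $(w_1,\dots,w_c,\nu,e_1,\dots,e_s)$, and moving $\nu$ past the $c$ normal vectors genuinely costs $(-1)^c$.

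The confusion surfaces concretely in your final bookkeeping paragraph: you assert that ``permuting $\nu$ to just before $w_1$ costs no sign,'' but moving $\nu$ from the front past $e_1,\dots,e_s$ costs $(-1)^s$, not $(-1)^0$; and the subsequent claim that ``the top-row orientation of $\partial S$ effectively positions the missing direction after the $w_j$'s'' does not correspond to anything in the ladder you wrote, since $\nu$ does not appear in the top row at all. The correct route to $(-1)^{\codim N}$ is to use the Guillemin--Pollack normal-first convention and then your transposition count does apply verbatim. As written, the proof does not establish the proposition: adopt the normal-first convention, orient $H$ by $(Df(H),T_{f(p)}N)$ positive in $T_{f(p)}Q$, write $T_pP = H\oplus T_pS$ and $T_p\partial P = H\oplus T_p\partial S$, and then the only sign arises from exchanging $\nu$ with the $c$ basis vectors of $H$.
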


  In the previous section, we have defined orientations on the spaces $\Acal^d_{\fatY}(x_0,x_1,\dots,x_d,T)$ for fixed $T \in \RTree_d$. If the critical points are chosen to satisfy (\ref{AppEqCritOneDim}), these spaces are at most one-dimensional. In the one-dimensional case their orientations induce orientations of their compactifications $\bAcal^d_{\fatY}(x_0,x_1,\dots,x_d,T)$. We will consider the space $\bAcal^d_{\fatY}(x_0,x_1,\dots,x_d,T)$ as equipped with this extended orientation for every $T \in \RTree_d$. 
 
 \bigskip
 
 The space $\bAcal^d_{\fatY}(x_0,x_1,\dots,x_d)$ is defined in Section \ref{CompactificationsOneDimAinfty} as a quotient space of the disjoint union of the $\bAcal^d_{\fatY}(x_0,x_1,\dots,x_d,T)$ over all binary trees $T$.
 
 To prove Theorem \ref{CoeffRelAinfty}, we need to define an orientation on the space $\bAcal^d_{\fatY}(x_0,x_1,\dots,x_d)$. It is a natural question if the orientations on the spaces $\bAcal^d_{\fatY}(x_0,x_1,\dots,x_d,T)$ induce an orientation on $\bAcal^d_{\fatY}(x_0,x_1,\dots,x_d)$ and it is precisely at this point that we will finally use the properties of the canonical orderings of sets of internal edges. 
  
 \begin{lemma}
 \label{LemmaBoundaryOrientCollapse}
  Let $T_1,T_2 \in \BinTree_d$, $T \in \RTree_d$, $e_1 \in E_{int}(T_1)$ and $e_2 \in E_{int}(T_2)$, such that 
  $$T_1/e_1 = T = T_2/e_2 \ . $$
  The two boundary orientations on $\Acal^d_{\fatY}(x_0,x_1,\dots,x_d,T)$, which are obtained by identifying it with a subset of the boundary of 
  $$\text{both} \quad \bAcal^d_{\fatY}(x_0,x_1,\dots,x_d,T_1) \quad \text{and} \quad \bAcal^d_{\fatY}(x_0,x_1,\dots,x_d,T_2) \ , $$
  coincide. 
 \end{lemma}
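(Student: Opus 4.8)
The plan is to reduce the claim to a comparison of two signs, each arising from a transverse-intersection description of the boundary curve, and then to trace through how the orientation conventions interact with the canonical ordering of internal edges. First I would recall the key geometric picture established in the proof of Theorem \ref{PreCompactficationOneDim}: for a binary tree $T_i$ and the edge $e_i$ with $T_i/e_i = T$, the boundary piece $\Acal^d_{\fatY}(x_0,x_1,\dots,x_d,T)$ arises as $\partial \widetilde{E}^{-1}(\Delta_{T_i})$, where $\widetilde{E}$ is the evaluation map on $\widetilde{\MM}^d_{\fatY}(T_i)$ constructed there, and the boundary corresponds exactly to the locus where the finite-length parameter $l_{e_i}$ equals zero. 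Applying Proposition \ref{PropOrientTransverseBoundary} with $Q = M^{1+2k(T_i)+d}$ and $N = \Delta_{T_i}$, the boundary orientation on $\Acal^d_{\fatY}(x_0,x_1,\dots,x_d,T)$ coming from $\bAcal^d_{\fatY}(x_0,x_1,\dots,x_d,T_i)$ differs from the intersection orientation induced directly by $\widetilde{E}|_{\partial}$ with $\Delta_{T_i}$ precisely by the parity of $\codim_M \Delta_{T_i} = (k(T_i)+d)n$ (via Proposition \ref{PropcodimDeltaT}); since $k(T_1) = k(T_2) = d-2$, this correction factor is the \emph{same} for $i = 1$ and $i = 2$, so it cancels in the comparison and can be ignored.

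Next I would identify the intersection orientation on the boundary locus $\{l_{e_i} = 0\}$ with the orientation on $\Acal^d_{\fatY}(x_0,x_1,\dots,x_d,T)$ coming from viewing it as $\Eunder_{\fatY}^{-1}(\Delta_T)$ directly. Using the relation $\codim_M \Delta_T = (k(T)+d)n = (d-3+d)n$, the diffeomorphism $\widetilde{\Acal}^d_{\fatY}(T/e_i) \cong \Acal^d_{\fatY}(\dots,T)$ from the proof of Theorem \ref{PreCompactficationOneDim}, and Theorem \ref{TheoremOrientDiffeoTransverseint}, the sign relating these two orientations is a product of: the sign of the permutation reordering the factors $M^{1+2k(T_i)+d} \to M^{1+2k(T)+d}$ induced by deleting the component of $e_i$ and reinserting $g(0,\dots) = $ constant; the sign of the corresponding permutation on $\Delta_{T_i}$ versus $\Delta_T$ (controlled by Proposition \ref{DeltaTDiffeo} and the canonical orderings of $E_{int}(T_i)$ and $E_{int}(T)$); and a sign depending on whether $e_i$ is left-handed or right-handed, analogous to the computation in Theorem \ref{TheoremSigntaue}. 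Since exactly one of $e_1, e_2$ is left-handed and the other right-handed (this is the content of Proposition \ref{PropDexterities} a), namely $|r(T_1) - r(T_2)| = 1$), the handedness-dependent signs from $T_1$ and $T_2$ differ by exactly a factor of $-1$. The twisted count $\algint = (-1)^{r(T)}\orint$ in Definition \ref{AinftyMorseCoefficients} was introduced precisely to absorb this discrepancy: the extra factor $(-1)^{r(T_1)}$ versus $(-1)^{r(T_2)}$ supplies the compensating $-1$, so that after incorporating the dexterity twist the two boundary orientations agree.

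The main obstacle will be bookkeeping the permutation signs correctly: one must carefully track how the canonical ordering of $E_{int}(T_i) = E_{int}(T) \cup \{e_i\}$ places $e_i$ relative to the other edges (Proposition \ref{PropEdgeOrderingBreaking} and the depth-first-search description of Section \ref{AppendixOrderingEdges} are the tools here), and then verify that the net sign — permutation on the ambient $M^{1+2k+d}$, permutation on the diagonal $\Delta$, plus the handedness sign, all modulo the common factor $(-1)^{(k+d)n}$ that drops out — reduces, after the $(-1)^{r(T_i)}$ twist, to something independent of $i$. Concretely I would carry out the computation in two cases (\,$e_1$ left-handed, $e_2$ right-handed, and vice versa\,), show that in each case the total sign differs between $T_1$ and $T_2$ by exactly $(-1)^{r(T_1) - r(T_2)} = -1$, and conclude that the two \emph{dexterity-twisted} boundary orientations coincide, which is the assertion of the lemma. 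The remaining details are routine sign chases of the type already performed in the proofs of Theorems \ref{TheoremSigntaue} and \ref{TheoremOrientDiffeoTransverseint}, and I would relegate them accordingly.
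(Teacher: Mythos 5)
Your proposal misidentifies what the lemma asserts, and as a consequence arrives at the opposite sign from what is needed. The lemma compares the two boundary orientations on $\Acal^d_{\fatY}(x_0,\dots,x_d,T)$ induced from $\bAcal^d_{\fatY}(x_0,\dots,x_d,T_1)$ and $\bAcal^d_{\fatY}(x_0,\dots,x_d,T_2)$ \emph{with their native orientations} and claims they \emph{coincide} — no dexterity twist is involved here. The twisted spaces $\widetilde{\Acal}^d_{\fatY}(x_0,\dots,x_d,T)$ carrying the extra factor $(-1)^{r(T)}$ are only introduced \emph{after} the lemma, and Proposition~\ref{PropTwoCopiesOrient} then combines the lemma with $|r(T_1)-r(T_2)|=1$ to show the two \emph{twisted} boundary copies carry \emph{opposite} orientations. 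That is exactly what is needed for the glued space $\bAcal^d_{\fatY}(x_0,\dots,x_d)$ to be coherently oriented: if the twisted boundary orientations agreed, as you claim, the quotient would fail to be orientable. So your intermediate assertion that ``the handedness-dependent signs from $T_1$ and $T_2$ differ by exactly a factor of $-1$'' is the wrong conclusion, and invoking the $(-1)^{r(T)}$ twist to ``absorb'' it compounds the error.

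The source of the sign mistake is the appeal to Theorem~\ref{TheoremSigntaue}: that result computes the sign of $\tau_e\colon\{e\}\times E_{int}(T^e_1)\times E_{int}(T^e_2)\to E_{int}(T)$, i.e.\ the reordering that arises when \emph{splitting} $T$ along $e$ into $(T^e_1,T^e_2)$, which is a genuinely different operation from \emph{collapsing} $e_i$ to pass from $T_i$ to $T_i/e_i = T$. For the collapse one instead uses that $e_1$ and $e_2$ occupy the \emph{same} position in the canonical orderings of $E_{int}(T_1)$ and $E_{int}(T_2)$, so there is no residual ordering permutation at all. The paper's proof exploits this by constructing a direct diffeomorphism $\Acal^d_{\fatY}(\dots,T_1)\to\Acal^d_{\fatY}(\dots,T_2)$: the identity on trajectory data induces an orientation-preserving diffeomorphism of the $\MM^d_{\fatY}$-spaces, and the accompanying permutation $\varphi\colon M^{1+2k(T_1)+d}\to M^{1+2k(T_2)+d}$ has sign $(-1)^n$, as does $\varphi|_{\Delta_{T_1}}$; Theorem~\ref{TheoremOrientDiffeoTransverseint} then gives overall sign $(-1)^n\cdot(-1)^n=+1$, independent of handedness, and passing to boundary orientations closes the proof. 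You should discard the handedness-versus-twist cancellation and instead verify that your two intersection orientations already agree on the nose.
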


 Because of its technicalities, we will only outline the proof of Lemma \ref{LemmaBoundaryOrientCollapse}. 
 
\begin{proof}[Sketch of proof]
Apparently, precisely one of the two edges $e_1$ and $e_2$ is left-handed and we assume w.l.o.g. that $e_1$ is the left-handed one. % Let $i \in \{1,2,\dots,d\}$ and $l \in \{0,1,\dots,d-1\}$ be given such that $e_1$ is of type $(i,l)$ in $T_1$. 
In the following, we identify $$E_{int}(T)=E_{int}(T_1)\setminus \{e_1\}=E_{int}(T_2)\setminus\{e_2\} \ .$$
Let $f_1,f_2,f_0,g \in E_{int}(T)$ be given by demanding that $f_1$ is left-handed and that in $T_1$, it holds that
$$\vin(e_1) = \vout(f_0), \quad \vout(e_1)=\vin(f_1)=\vin(f_2), \quad \vin(e_1)=\vin(g) \ , $$
such that $g$ is right-handed in $T_1$. One checks from the definitions of the trees that $g$ is then left-handed in $T_2$ and that in $T_2$, it holds that
$$\vin(e_2)=\vout(f_0), \quad \vout(e_2)=\vin(g)=\vin(f_2), \quad \vin(e_2)=\vin(f_1) \ . $$
Moreover, all other vertex identifications of $T_1$ coincide with identifications of $T_2$ and vice versa. Thus, one checks from these equalities that one gets a diffeomorphism 
$$\varphi: M^{1+2k(T_1)+d} \to M^{1+2(T_2)+d}$$
with $\varphi(\Delta_{T_1}) = \Delta_{T_2}$ as follows: writing elements of $M^{1+2k(T_2)+d}$ as $(q_0,(q_e^1,q_e^2)_{e\in E_{int}(T_1)},q_1,\dots,q_d)$, we need to interchange the $q_g^2$-component and the $q_{f_1}^2$-component and afterwards permute the pairs $(q_e^1,q_e^2)$ according to the orderings of the edges, where $(q_{e_1}^1,q_{e_1}^2)$ takes the role of the component associated with $e_2$. Since the latter permutation only permutes the order of even-dimensional manifolds, it is orientation-preserving. Moreover, one checks that transposing the two components is orientation-preserving if and only if $n$ is even, independent of the orderings of the edges. 

Taking a closer look at orientations of $T$-diagonals one checks that the sign of $\varphi|_{\Delta_{T_1}}$ coincides with the sign of the permutation $M^{1+k(T_1)} \to M^{1+k(T_2)}$ that interchanges the factors associated with $f_1$ and $g$ and leaves all other factors invariant. Consequently, 
$$\sign \varphi = \sign (\varphi|_{\Delta_{T_1}}) = (-1)^n \ . $$
Let $j \in \{1,2,\dots,d-2\}$ be given such that $e_1$ is the $j$-th internal edge of $T_1$ with respect to the canonical ordering. Then it follows from the definition of the canonical orderings that $e_2$ is the $j$-th internal edge of $T_2$ as well and that the map 
$$E_{int}(T_1) \to E_{int}(T_2), \qquad e \mapsto \begin{cases}
                                                  e & \text{if $e \neq e_1$,} \\
                                                  e_2 & \text{if $e=e_1$,}
                                                 \end{cases} $$
preserves the canonical orderings. By definition of the moduli spaces, one derives that this map induces a diffeomorphism 
$$\MM^d_{\fatY}(x_0,x_1,\dots,x_d,T_1) \to \MM^d_{\fatY}(x_0,x_1,\dots,x_d,T_2)$$
of class $C^{n+1}$ which is orientation-preserving. Since $\Acal^d_{\fatY}(x_0,x_1,\dots,x_d,T_i)$ is for $i \in \{1,2\}$ given as the transverse intersection of $\MM_{\fatY}(x_0,x_1,\dots,x_d,T_i)$ with $\Delta_{T_i}$, it follows from Theorem \ref{TheoremOrientDiffeoTransverseint} that the identity induces an diffeomorphism 
$$\Acal^d_{\fatY}(x_0,x_1,\dots,x_d,T_1) \to \Acal^d_{\fatY}(x_0,x_1,\dots,x_d,T_2)$$
whose sign is given by 
$$\sign \varphi \cdot \sign (\varphi|_{\Delta_{T_1}}) = (-1)^n\cdot(-1)^n= 1 \ , $$
i.e. the diffeomorphism is orientation-preserving. The claim then follows from passing to the boundary orientations in both cases.
\end{proof}

 For every $T \in \BinTree_d$ we let $\widetilde{\Acal}^d_{\fatY}(x_0,x_1,\dots,x_d,T)$ denote the oriented manifold diffeomorphic to $\bAcal^d_{\fatY}(x_0,x_1,\dots,x_d,T)$, but whose orientation coincides with the one of $\bAcal^d_{\fatY}(x_0,x_1,\dots,x_d,T)$ if and only if $r(T)$ is even. 
   F
 \begin{prop}
 \label{PropTwoCopiesOrient}
  Let $T_0 \in \RTree_d$ have a unique four-valent internal vertex while all other internal vertices are three-valent. Then the space $\Acal^d_{\fatY}(x_0,x_1,\dots,x_d,T_0)$ appears twice in the boundary of the oriented manifold
  $$\bigsqcup_{T \in \BinTree_d} \widetilde{\Acal}^d_{\fatY}(x_0,x_1,\dots,x_d,T)$$
  and the boundary orientations of the two copies are opposite to each other. 
 \end{prop}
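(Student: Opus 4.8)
\textbf{Proof strategy for Proposition \ref{PropTwoCopiesOrient}.}

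The plan is to combine Lemma \ref{TwoQuotientTrees}, Lemma \ref{LemmaBoundaryOrientCollapse} and part a) of Proposition \ref{PropDexterities}. By Lemma \ref{TwoQuotientTrees}, there are precisely two binary trees $T_1, T_2 \in \BinTree_d$ together with unique internal edges $e_i \in E_{int}(T_i)$ such that $T_1/e_1 = T_0 = T_2/e_2$. By Theorem \ref{PreCompactficationOneDim}, the space $\Acal^d_{\fatY}(x_0,x_1,\dots,x_d,T_0)$ appears as a boundary component of both $\bAcal^d_{\fatY}(x_0,x_1,\dots,x_d,T_1)$ and $\bAcal^d_{\fatY}(x_0,x_1,\dots,x_d,T_2)$, and these are the only two binary trees for which this happens. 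Hence $\Acal^d_{\fatY}(x_0,x_1,\dots,x_d,T_0)$ indeed appears exactly twice as a boundary component of the disjoint union $\bigsqcup_{T \in \BinTree_d} \widetilde{\Acal}^d_{\fatY}(x_0,x_1,\dots,x_d,T)$.

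The remaining task is to compare the two boundary orientations. By Lemma \ref{LemmaBoundaryOrientCollapse}, the two boundary orientations that $\Acal^d_{\fatY}(x_0,x_1,\dots,x_d,T_0)$ inherits from $\bAcal^d_{\fatY}(x_0,x_1,\dots,x_d,T_1)$ and from $\bAcal^d_{\fatY}(x_0,x_1,\dots,x_d,T_2)$ \emph{agree}. However, the tilded spaces $\widetilde{\Acal}^d_{\fatY}(x_0,x_1,\dots,x_d,T_i)$ carry the orientation of $\bAcal^d_{\fatY}(x_0,x_1,\dots,x_d,T_i)$ twisted by the sign $(-1)^{r(T_i)}$. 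Therefore the boundary orientation of $\Acal^d_{\fatY}(x_0,x_1,\dots,x_d,T_0)$ coming from $\widetilde{\Acal}^d_{\fatY}(x_0,x_1,\dots,x_d,T_i)$ differs from the common orientation of Lemma \ref{LemmaBoundaryOrientCollapse} by exactly the sign $(-1)^{r(T_i)}$. Consequently, the two boundary orientations under consideration differ from each other by the sign $(-1)^{r(T_1)+r(T_2)}$.

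It now suffices to observe that by part a) of Proposition \ref{PropDexterities}, applied to $T_1/e_1 = T_0 = T_2/e_2$, we have $|r(T_1)-r(T_2)| = 1$, so that $r(T_1)+r(T_2)$ is odd and $(-1)^{r(T_1)+r(T_2)} = -1$. Hence the two boundary orientations of the two copies of $\Acal^d_{\fatY}(x_0,x_1,\dots,x_d,T_0)$ inside $\partial\bigsqcup_{T \in \BinTree_d}\widetilde{\Acal}^d_{\fatY}(x_0,x_1,\dots,x_d,T)$ are opposite, as claimed. The main point requiring care is bookkeeping the sign twist $(-1)^{r(T)}$ relative to the intrinsic boundary orientation; once Lemma \ref{LemmaBoundaryOrientCollapse} is available this reduces to the parity statement of Proposition \ref{PropDexterities} a), which is the crux of why the canonical orderings of internal edges were set up the way they were.
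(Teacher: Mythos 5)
Your proof is correct and follows essentially the same route as the paper's: identify the two copies via Lemma \ref{TwoQuotientTrees} and Theorem \ref{PreCompactficationOneDim}, use Lemma \ref{LemmaBoundaryOrientCollapse} to see that the two untwisted boundary orientations agree, and then invoke Proposition \ref{PropDexterities} a) to show the dexterity twist flips exactly one of them. You simply spell out the bookkeeping of the $(-1)^{r(T)}$-twist more explicitly than the paper does.
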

\begin{proof}
Consider the union 
$$\bigsqcup_{T \in \BinTree_d} \bAcal^d_{\fatY}(x_0,x_1,\dots,x_d,T)  \ . $$
There are two copies of $\Acal^d_{\fatY}(x_0,x_1,\dots,x_d,T_0)$ in the boundary of this union. By Lemma \ref{LemmaBoundaryOrientCollapse}, the boundary orientations of both of these copies have the same orientation. Introducing orientation twists by $(-1)^{r(T)}$, Proposition \ref{PropDexterities} implies that both copies of $\Acal^d_{\fatY}(x_0,x_1,\dots,x_d,T_0)$ will have opposite sign in $\bigsqcup_{T \in \BinTree_d} \widetilde{\Acal}^d_{\fatY}(x_0,x_1,\dots,x_d,T)$.  
\end{proof}

By Proposition \ref{PropTwoCopiesOrient}, the orientation of $\bigsqcup_{T \in \BinTree_d} \widetilde{\Acal}^d_{\fatY}(x_0,x_1,\dots,x_d,T)$
 induces an orientation of 
 $$\bAcal^d_{\fatY}(x_0,x_1,\dots,x_d) = \bigsqcup_{T \in \BinTree_d} \widetilde{\Acal}^d_{\fatY}(x_0,x_1,\dots,x_d,T)  \Big/ \sim \ , $$
 since the pairs of boundary curves that we are gluing have opposite orientations. \bigskip 
 
 {\it Throughout the rest of this section, let $\bAcal^d_{\fatY}(x_0,x_1,\dots,x_d)$ be equipped with the orientation that we have just described.} 
 
 \bigskip 
 
As we discussed in the proof of Corollary \ref{CoeffRelMod2}, the coefficients appearing in Theorem \ref{CoeffRelAinfty} are obtained by counting elements of moduli spaces which are components of the boundary of $\bAcal^d_{\fatY}(x_0,x_1,\dots,x_d)$. We have proven Corollary \ref{CoeffRelMod2} by using the simple fact that the compact one-dimensional manifold $\bAcal^d_{\fatY}(x_0,x_1,\dots,x_d)$ has an even number of boundary points.

In an \emph{oriented} compact one-dimensional manifold with boundary, every component with non-empty boundary is diffeomorphic to a compact interval. Hence, it has precisely two boundary points of which precisely one is positively oriented. If we denote the boundary orientation on $\partial \bAcal^d_{\fatY}(x_0,x_1,\dots,x_d)$ by 
\begin{equation*}
 \epsilon_\partial: \partial \bAcal^d_{\fatY}(x_0,x_1,\dots,x_d) \to \{-1,1\} \ , 
\end{equation*} 
this implies that 
\begin{equation}
\label{BoundarySumZero}
 \sum_{\left(\gammaunder_1,\gammaunder_2\right) \in \partial \bAcal^d_{\fatY}(x_0,x_1,\dots,x_d)} \epsilon_\partial\left(\gammaunder_1,\gammaunder_2\right) = 0 \ . 
\end{equation}
Equation (\ref{BoundarySumZero}) will be the decisive tool for the proof of Theorem \ref{CoeffRelAinfty}. Using this equation, the proof reduces to a comparison of the boundary orientations and the oriented intersection numbers which define the coefficients appearing in the statement.

We will proceed along the following line of argument: 
 \begin{itemize}
%  \item All coefficients in Theorem \ref{CoeffRelAinfty} are obtained by counting elements of moduli spaces which are components of the boundary of $\bAcal^d(x_0,x_1,\dots,x_d)$.
  \item We will compare the different orientations on the corresponding boundary components of the spaces $\MM^d_{\fatY}(x_0,x_1,\dots,x_d,T)$, which are the domains of the endpoint evaluations used to define the intersection numbers. The three cases of convergence phenomena along the rooted edge, an internal edge and a leafed edge will be treated seperately, see Lemma \ref{SignGluingPermute} and Theorem \ref{SignGluingRootedLeafed}.
  \item While these considerations suffice for external edges,  we will compare the orientations of the different $T$-diagonals involved in the situation for internal edges, see Lemma \ref{LemmaDeltaTOrientPermute}.
  \item Moreover, we compute the contribution of the twisting signs $(-1)^{\sigma(x_0,x_1,\dots,x_d)}$ in the definition of the coefficients in Proposition \ref{SigmaComputations}.
  \item Finally, we will conclude the proof by putting all the results together to compute the differences between the coefficients under investigation and the sums of the boundary orientations and use equation (\ref{BoundarySumZero}) to derive the statement. 
 \end{itemize} 
For brevity's sake, we will denote the space of finite-length trajectories of positive length by 
\begin{equation*}
\MM := \{(l,\gamma) \in \MM(f,g) \ | \ l > 0 \} \ .
\end{equation*}
We first deal with those components of $\partial \bAcal^d_{\fatY}(x_0,x_1,\dots,x_d)$ coming from the geometric convergence along an external edge. 

\begin{theorem} \index{orientations!and gluing of perturbed Morse ribbon trees}
 \label{SignGluingRootedLeafed}
 Let $T \in \BinTree_d$. 
\begin{enumerate}[a)]
 \item Let $y_0 \in \Crit f$ satisfy $\mu(y_0)=\mu(x_0)-1$. Then the product orientation on 
 \begin{equation*}
  \widehat{\MM}(x_0,y_0) \times \Acal^d_{\fatY}(y_0,x_1,\dots,x_d,T)
 \end{equation*}
 coincides with the boundary orientation with respect to $\bAcal^d_{\fatY}(x_0,x_1,\dots,x_d,T)$ if and only if $\mu(x_0)$ is odd.
 \item For $i \in \{1,2,\dots,d\}$ let $y_i \in \Crit f$ with $\mu(y_i)=\mu(x_i)+1$. Then the product orientation on 
  \begin{equation*}
   \Acal^d_{\fatY}(x_0,x_1,\dots,x_{i-1},y_i,x_{i+1},\dots,x_d,T) \times \widehat{\MM}(y_i,x_i)
  \end{equation*}
 coincides with the induced boundary orientation with respect to $\bAcal^d_{\fatY}(x_0,x_1,\dots,x_d,T)$ if and only if the following number is even:
 \begin{equation*}
  \mu(x_0)+1+(d-i)(n+1)+ \maltese_1^{i-1} \ .
 \end{equation*} 
\end{enumerate}
\end{theorem}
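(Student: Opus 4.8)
The statement is a comparison of two orientations on a boundary component of the one--dimensional manifold $\bAcal^d_{\fatY}(x_0,x_1,\dots,x_d,T)$: the product orientation on the relevant factor of the boundary stratum, and the boundary orientation coming from the transverse--intersection description $\Acal^d_{\fatY}(x_0,x_1,\dots,x_d,T) = \Eunder_{\fatY}^{-1}(\Delta_T)$. The plan is to prove both parts by tracing how a geometric gluing map along an external edge of $T$ interacts with all the orientation conventions introduced in Sections \ref{AppendixOrientMorseTraj}--\ref{SectionOrientPerturbedMRT}. Concretely, I would fix the external edge $e_0(T)$ in part a) (resp.\ $e_i(T)$ in part b)) and invoke a geometric gluing map $G$ in the sense of Definition \ref{DefGeomGluingMap}, obtained by plugging the gluing map for negative half--trajectories of Proposition \ref{PropGluingUnstable} (resp.\ the gluing map for positive half--trajectories of Proposition \ref{PropGluingStable}) into the single component $\gamma_0$ (resp.\ $\gamma_i$) of an element of $\MM^d_{\fatY}$, leaving all the other components untouched. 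Via the diffeomorphism $F_T$ from Section \ref{SectionOrientPerturbedMRT}, which identifies $\MM^d_{\fatY}(x_0,x_1,\dots,x_d,T)$ with $\WW^u(x_0)\times\prod_{e}\MM(f,g)\times\prod_i \WW^s(x_i)$ orientation--preservingly, this reduces the question to the behaviour of a gluing map on a \emph{single} factor.

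\textbf{Key steps.} First I would reduce to the model situation: since $F_T$ is orientation--preserving and since gluing touches only one factor, the sign relating the product orientation on the boundary stratum to the boundary orientation of $\MM^d_{\fatY}$ is exactly the sign of the gluing map on that factor, times a permutation sign accounting for where the new glued factor sits in the ordered product (for $e_0(T)$ the $\WW^u(x_0)$--factor is first, so no permutation sign arises; for $e_i(T)$ the $\WW^s(x_i)$--factor must be moved past the factors $\WW^s(x_{i+1}),\dots,\WW^s(x_d)$, each of dimension $n-\mu(x_j)\equiv n+1 \pmod 2$ after adding one for the extra $\widehat\MM$ factor, contributing the $(d-i)(n+1)$ term). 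Second, I would quote the sign of the gluing maps: Proposition \ref{PropGluingUnstable} says the negative gluing map is orientation--preserving, while Proposition \ref{PropGluingStable} says the positive gluing map is orientation--preserving iff $\mu(x_0')$ is even, where $x_0'$ is the critical point at which the gluing happens — here $y_0$ in part a) with $\mu(y_0)=\mu(x_0)-1$, and $y_i$ in part b) with $\mu(y_i)=\mu(x_i)+1$. Third, I would account for the passage from the orientation of $\MM^d_{\fatY}$ and $\Delta_T$ to the transverse--intersection orientation of $\Acal^d_{\fatY}$: by Proposition \ref{PropOrientTransverseBoundary}, the boundary orientation on $\Eunder_{\fatY}^{-1}(\Delta_T)$ agrees with the orientation induced by restricting $\Eunder_{\fatY}$ to the boundary iff $\codim_{M^{1+2k(T)+d}}\Delta_T$ is even; by Proposition \ref{PropcodimDeltaT}, $\codim\Delta_T = (k(T)+d)n$, and for a binary tree $k(T)=d-2$, so this codimension is $(2d-2)n$, which is always even — hence no extra sign arises from this step. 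Fourth, I would assemble the signs. For part a): the negative gluing map contributes $+1$, the permutation contributes $+1$, but I must also track the effect of the reparametrisation direction $-\nabla f\circ\gamma$ used to pass from $\widehat\MM(x_0,y_0)$ to $\MM(x_0,y_0)$ inside the boundary-orientation conventions, which is where the factor of $\mu(x_0)$ versus $\mu(y_0)=\mu(x_0)-1$ enters; following the conventions of Section \ref{AppendixOrientMorseTraj} this yields ``odd $\mu(x_0)$'' as the condition. For part b), combining the positive--gluing sign $(-1)^{\mu(y_i)}=(-1)^{\mu(x_i)+1}$, the permutation sign $(-1)^{(d-i)(n+1)}$, and the analogous reparametrisation bookkeeping (which contributes the $\mu(x_0)+1$ term — the same phenomenon as in part a) but now at the root), gives evenness of $\mu(x_0)+1+(d-i)(n+1)+\maltese_1^{i-1}$, once one recognises $\maltese_1^{i-1} = \sum_{q=1}^{i-1}\mu(x_q)-i \equiv \sum_{q=1}^{i-1}\mu(x_q)+i$ and matches it against the accumulated index contributions from the critical points $x_1,\dots,x_{i-1}$ sitting to the left of the $i$--th leaf in the ordered evaluation map $\Eunder_{\fatY}$.

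\textbf{Main obstacle.} The delicate point is the precise bookkeeping in part b) of the term $\maltese_1^{i-1}$ and of the $(d-i)(n+1)$ permutation sign simultaneously: one has to be extremely careful about the order in which the $d$ leaf--components appear in $\MM^d_{\fatY}$ versus the order in which the corresponding copies of $M$ appear in the target $M^{1+2k(T)+d}$ of $\Eunder_{\fatY}$, and about how inserting an extra $\widehat\MM(y_i,x_i)$ factor of odd dimension shifts the parity of everything to its right. A clean way to control this is to do the computation for $T$ a fixed binary tree and isolate the dependence on $i$ by commuting the glued boundary factor through the product one factor at a time, keeping a running parity count; the reparametrisation signs (the $-\nabla f$ direction, responsible for the $\mu(x_0)$ and $\mu(x_0)+1$ terms) should be extracted first, using exactly the conventions fixed around equations \eqref{MMxySES}--\eqref{Mapsiandp} and the definition of positive bases of $\widehat\MM(x,y)$, before the permutation signs are tallied, so that the two sources of signs do not get conflated. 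The remaining parts of the argument — the orientation--preserving property of $F_T$, the vanishing of the codimension sign via Proposition \ref{PropcodimDeltaT}, and the cited gluing--map signs — are routine inputs.
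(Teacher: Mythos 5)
Your overall framework matches the paper's: reduce via the orientation‑preserving diffeomorphism $F_T$, construct a geometric gluing map, quote Propositions~\ref{PropGluingUnstable} and \ref{PropGluingStable} for the factor‑wise gluing signs, and invoke Proposition~\ref{PropOrientTransverseBoundary} with $\codim\Delta_T=(2d-2)n\equiv 0$ to kill the codimension sign. But your sign bookkeeping misidentifies where the terms $\mu(x_0)$ and $\maltese_1^{i-1}$ come from, and as a result the plan does not actually produce the stated parities.

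In the paper's proof, after $F_T$ the model gluing map has the form
$[\rho_0,+\infty)\times\widehat{\MM}(x_0,y_0)\times\WW^u(y_0)\times\prod_e\MM\times\prod_q\WW^s(x_q)\to\WW^u(x_0)\times\prod_e\MM\times\prod_q\WW^s(x_q)$
in part a), and the only nontrivial sign is the permutation moving the one‑dimensional factor $[\rho_0,+\infty)$ past $\widehat{\MM}(x_0,y_0)\times\WW^u(y_0)$ to align it for the gluing map $\widehat{\MM}(x_0,y_0)\times\WW^u(y_0)\times[\rho_0,+\infty)\to\WW^u(x_0)$; this transposition has sign $(-1)^{\mu(y_0)}=(-1)^{\mu(x_0)+1}$. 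Your claim that ``for $e_0(T)$ the $\WW^u(x_0)$--factor is first, so no permutation sign arises'' overlooks this: the permutation is not about where the glued factor sits but about moving $[\rho_0,+\infty)$ into position. There is no separate ``reparametrisation'' sign from the $-\nabla f\circ\gamma$ direction — that convention is already absorbed into the definition of the orientation on $\widehat{\MM}(x,y)$ via $\iota_a$, and the gluing map of Proposition~\ref{PropGluingUnstable} is formulated directly on $\widehat{\MM}(x,y)\times V\times[\rho_V,+\infty)$ without ever passing through $\MM(x,y)$. Inventing such a sign is a misattribution, even though it coincidentally yields the right answer in part a).

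In part b) the same misidentification becomes a genuine gap. The permutation that moves $[\rho_0,+\infty)$ past $\WW^u(x_0)\times\prod_e\MM\times\prod_{q=1}^{i-1}\WW^s(x_q)\times\WW^s(y_i)\times\widehat{\MM}(y_i,x_i)$ contributes (mod $2$) $\mu(x_0)+(d-i)(n+1)+\maltese_1^{i-1}+\mu(y_i)+1$; this is where \emph{both} $\mu(x_0)$ and $\maltese_1^{i-1}$ come from, and the $\mu(y_i)+1$ piece then cancels against the positive‑gluing sign $(-1)^{\mu(y_i)}$ of Proposition~\ref{PropGluingStable}. Your plan instead contributes only $(d-i)(n+1)$ from permutations, $\mu(x_i)+1$ from gluing, and a spurious $\mu(x_0)+1$ from ``reparametrisation''; this totals $\mu(x_0)+\mu(x_i)+(d-i)(n+1)$, which does not equal $\mu(x_0)+1+(d-i)(n+1)+\maltese_1^{i-1}$ in general. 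Your ``main obstacle'' paragraph flags exactly this uncertainty, but the fix is not to add yet another sign source; it is to track the full permutation of $[\rho_0,+\infty)$ through the ordered product $\WW^u(x_0)\times\prod_e\MM\times\prod_{q}\WW^s(\cdot)\times\widehat{\MM}$, whose dimensions supply $\mu(x_0)$, $(d-i)(n+1)$, $\maltese_1^{i-1}$, and $\mu(y_i)+1$ all at once.
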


\begin{proof}
The main tool in proving the two parts of the theorem will be the construction of geometric gluing maps 
\begin{align}
G_0: &[\rho_0,+\infty) \times \widehat{\MM}(x_0,y_0) \times \Acal^d_{\fatY}(y_0,x_1,\dots,x_d,T) \to \Acal^d_{\fatY}(x_0,x_1,\dots,x_d,T)  \ , \label{EqDefG0} \\
G_i: &[\rho_i,+\infty) \times \Acal^d_{\fatY}(x_0,x_1,\dots,x_{i-1},y_i,\dots,x_d,T) \times \widehat{\MM}(y_i,x_i)\to \Acal^d_{\fatY}(x_0,x_1,\dots,x_d,T) \ , \notag 
\end{align}
for all $i \in \{1,2,\dots,d\}$. If we equip their domains with the product orientations, the definition of a geometric gluing map will imply that the $G_i$, where $i \in \{0,1,\dots,d\}$, will be orientation-preserving if and only if the product orientation of 
$$\begin{cases} 
   \widehat{\MM}(x_0,y_0) \times \Acal^d_{\fatY}(y_0,x_1,\dots,x_d,T) & \text{for $i=0$,} \\
   \Acal^d_{\fatY}(x_0,x_1,\dots,x_{i-1},y_i,x_{i+1},\dots,x_d,T) \times \widehat{\MM}(y_i,x_i) & \text{for $i \in \{1,2,\dots,d\}$,}
  \end{cases}$$
 coincides with its boundary orientation as a boundary of $\bAcal^d_{\fatY}(x_0,x_1,\dots,x_d,T)$.
\begin{enumerate}[a)]
  \item We first want to consider a geometric gluing map of the form 
 \begin{equation*}
 G'_0: [\rho_0,+\infty) \times \widehat{\MM}(x_0,y_0) \times \MM^d_{\fatY}(y_0,x_1,\dots,x_d,T) \to \MM^d_{\fatY}(x_0,x_1,\dots,x_d,T) \ .
 \end{equation*}
 One checks that one obtains such a geometric gluing map by 
 $$G'_0 := F_T^{-1} \circ G_0'' \circ (\id_{[\rho_0,+\infty)\times \widehat\MM(x_0,y_0)}\times F'_T) \ , $$ 
 where $F_T$ and $F_T'$ are the diffeomorphisms 
 \begin{align*}
  F_T:  \Acal^d_{\fatY}(x_0,x_1,\dots,x_d,T) &\to  \WW^u(x_0) \times \prod_{e \in E_{int}(T)} \MM \times \prod_{q=1}^d \WW^s(x_q) \ , \\
  F_T':  \Acal^d_{\fatY}(y_0,x_1,\dots,x_d,T) &\to  \WW^u(y_0) \times \prod_{e \in E_{int}(T)} \MM \times \prod_{q=1}^d \WW^s(x_q) \ , 
 \end{align*}
described in Section \ref{SectionOrientPerturbedMRT}, which are by definition orientation-preserving, and where
 \begin{align*}
  G_0'': [\rho_0,+\infty) \times \widehat{\MM}(x_0,y_0) \times \WW^u(y_0) \times &\prod_{e \in E_{int}(T)} \MM \times \prod_{q=1}^d \WW^s(x_q) \to \\
  &\WW^u(x_0)\times \prod_{e \in E_{int}(T)} \MM \times \prod_{q=1}^d \WW^s(x_q) \ ,
 \end{align*}
 where $\rho_0>0$ is sufficiently big, is defined as a composition of permutations of the factors and a gluing map for negative half-trajectories. We explicitly compute the sign of this map. We permute $[\rho_0,+\infty)$ along $\widehat{\MM}(x_0,y_0) \times \WW^u(y_0)$. The sign of this permutation is given by the parity of
 \begin{equation*}
  \dim [\rho_0,+\infty)\cdot \left(\dim \widehat{\MM}(x_0,y_0)+\dim \WW^u(y_0)\right) = \mu(y_0)\equiv \mu(x_0)+1 \ .
 \end{equation*}
 Afterwards, we apply a gluing map for negative half-trajectories
 \begin{equation*}
 \widehat{\MM}(x_0,y_0) \times \WW^u(y_0) \times [\rho_0,+\infty) \to \WW^u(x_0) \ ,
 \end{equation*}
 which is by definition orientation-preserving. Hence, the sign of $G_0'$ is given by the parity of $\mu(x_0)+1$. By standard gluing analysis methods of Morse theory, one shows that one can choose the gluing map in the construction of $G''_0$ in such a way that $G'_0$ restricts to a map $G_0$ given as in \eqref{EqDefG0} and the properties of $G''_0$ imply that $G_0$ is indeed a geometric gluing map. The domain and target of $G_0$ in \eqref{EqDefG0} are obtained as the transverse intersections of the domain and target of $G'_0$ with $\Delta_T$. By Proposition \ref{PropOrientTransverseBoundary}, the orientation of the space we obtain from intersecting $\widehat{\MM}(x_0,y_0) \times \MM^d_{\fatY}(y_0,x_1,\dots,x_d,T)$ with the boundary orientation with $\Delta_T$ differs from the boundary orientation of $\bAcal^d_{\fatY}(x_0,x_1,\dots,x_d,T)$ by the parity of 
 \begin{align*}
  \codim \Delta_T &= ((1+2k(T)+d)-(k(T)+1))n=(k(T)+d)n = (2d-2)n \equiv 0 \ . 
 \end{align*}
 Thus, we have shown that the orientation of $\Acal^d_{\fatY}(y_0,x_1,\dots,x_d,T)$ coincides with the boundary orientation of $\bAcal^d_{\fatY}(x_0,x_1,\dots,x_d,T)$ if and only if $\mu(x_0)+1$ is even. 
 \item Fix $i \in \{1,2,\dots,d\}$. Similar to part a), one constructs a geometric gluing map 
 \begin{align*}
   G'_i: [\rho_0,+\infty) \times \MM^d_{\fatY}(y_0,x_1,\dots,x_{i-1},y_i,x_{i+1},\dots,x_d,T) &\times \widehat\MM(y_i,x_i) \\
   &\to \MM^d_{\fatY}(x_0,x_1,\dots,x_d,T) \ .
 \end{align*}
 for sufficiently big $\rho_0>0$ that restricts to a gluing map of the form $G_i$, such that $G'_i$ is orientation-preserving if and only if the following embedding is orientation-preserving, which is defined as a composition of permutations and a gluing map for positive half-trajectories:
 \begin{align*}
  [\rho_0,+\infty)\times \WW^u(x_0) \times &\prod_{e \in E_{int}(T)} \MM \times \prod_{q=0}^{i-1}\WW^s(x_q)\times \WW^s(y_i)\times \prod_{q=i+1}^d \WW^s(x_q) \times \widehat{\MM}(y_i,x_i) \\
  &\qquad \qquad \qquad \qquad \qquad \qquad \to \WW^u(x_0) \times \prod_{e \in E_{int}(T)} \MM \times \prod_{q=1}^d \WW^s(x_q) \ .
 \end{align*}
 We compute the sign of this map explicitly by decomposing it into three simpler steps. 
 
 Firstly, we permute the factor $\widehat{\MM}(y_i,x_i)$ with $\prod_{q=i+1}^d \WW^s(x_q)$. Since $\widehat{\MM}(y_i,x_i)$ is zero-dimensional, this does not affect the sign. Secondly, we move $[\rho_0,+\infty)$ along $\WW^u(x_0) \times \prod_{e \in E_{int}(T)} \MM \times \prod_{q=1}^{i-1}\WW^s(x_q)\times \WW^s(y_i) \times \widehat{\MM}(y_i,x_i)$. The sign of this permutation is given by the parity of
 \begin{align*}
  &\dim \WW^u(x_0) + \sum_{e \in E_{int}(T)} \dim \MM + \sum_{q=1}^{i-1} \dim \WW^s(x_q)+ \dim \WW^s(y_i) + \dim \widehat{\MM}(y_i,x_i) \\
%  &= \mu(x_0) + k(T)(n+1) + \sum_{q=1}^{i-1}(n-\mu(x_q)) + n-\mu(y_i) \\
%  &= \mu(x_0)+ (d-2)(n+1) + i n - \sum_{q=1}^{i-1}\mu(x_q) - \mu(y_i) \\
%   &\equiv \mu(x_0) + (d-i)(n+1)+ \sum_{q=1}^{i-1} \mu(x_q)+i-\mu(y_i) \\
  &\equiv \mu(x_0)+ (d-i)(n+1)+ \maltese_1^{i-1} +\mu(y_i)+1 \ .
 \end{align*}
 Thirdly, we apply a gluing map $\WW^s(y_i)\times \widehat{\MM}(y_i,x_i) \times [\rho_0,+\infty)\to \WW^s(x_i)$ for positive half-trajectories, whose sign is by Proposition \ref{PropGluingStable} given by the parity of $\mu(y_i)$. Hence, the total sign of the above diffeomorphism is given by the parity of 
 \begin{equation*}
  \mu(x_0)+1+ (d-i)(n+1)+ \maltese_1^{i-1} \ .
 \end{equation*}
 Continuing the line of argument as in part 1 shows the claim.
 \end{enumerate}
\end{proof}

We turn our attention to boundary spaces of $\bAcal^d_{\fatY}(x_0,x_1,\dots,x_d,T)$ that are obtained by geometric convergence along internal edges of $T$. The proof of the following lemma is given in strict analogy with the line of argument in \cite[Appendix C]{AbouzaidMorseTrop}.

\begin{lemma} \index{orientations!and gluing of perturbed Morse ribbon trees}
\label{SignGluingPermute}
 Let $T \in \BinTree_d$, $e \in E_{int}(T)$ be of type $(i,l)$, $y \in \Crit f$ satisfy
 \begin{equation}
 \label{parityindexeqy}
  \mu(y) = \sum_{q=i}^{i+l} \mu(x_q)+1-l 
 \end{equation}
and let $(T^e_1,T^e_2)$ denote the splitting of $T$ along $e$. For sufficiently big $\rho_0>0$ there exists a geometric gluing map 
\begin{align*}
 G_{T,e}: [\rho_0,+\infty)\times\MM^{d-l}_{\fatY}(x_0,x_1,\dots,x_{i-1},y,x_{i+l+1},\dots,x_d,T^e_1) \times &\MM^{l+1}_{\fatY}(y,x_i,\dots,x_{i+l},T^e_2) \\
    &\to \MM^d_{\fatY}(x_0,x_1,\dots,x_d,T) \ , 
\end{align*}
which is orientation-preserving if and only if the following number is even:
 \begin{equation*}
 \begin{cases}
 \mu(x_0) + \maltese_1^{i-1} + (n+1)\left( l\cdot \maltese_1^{i-1} +dl+i+l+d+1\right) & \text{if $e$ is left-handed,} \\
 \mu(x_0) + \maltese_1^{i-1} + (n+1)\left( l\cdot \maltese_1^{i-1} +dl+i+l+d\right) & \text{if $e$ is right-handed.} 
 \end{cases}
 \end{equation*}
\end{lemma}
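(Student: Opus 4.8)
The plan is to build the geometric gluing map $G_{T,e}$ as a composition of the two diffeomorphisms $F_{T^e_1}$, $F_{T^e_2}$ identifying the perturbed moduli spaces with products of unperturbed trajectory spaces (as in Section \ref{SectionOrientPerturbedMRT}), a gluing map for finite-length trajectories from Proposition \ref{PropGluingFinite}, and the inverse of the diffeomorphism $F_T$ for the big tree; I would then track the sign contributed by each building block separately and multiply them together at the end. Concretely, under $F_{T^e_1}$ the space $\MM^{d-l}_{\fatY}(x_0,\dots,y,\dots,x_d,T^e_1)$ becomes $\WW^u(x_0)\times\prod_{f\in E_{int}(T^e_1)}\MM\times\WW^s(x_1)\times\cdots\times\WW^s(x_{i-1})\times\WW^s(y)\times\WW^s(x_{i+l+1})\times\cdots\times\WW^s(x_d)$, and under $F_{T^e_2}$ the space $\MM^{l+1}_{\fatY}(y,x_i,\dots,x_{i+l},T^e_2)$ becomes $\WW^u(y)\times\prod_{f\in E_{int}(T^e_2)}\MM\times\WW^s(x_i)\times\cdots\times\WW^s(x_{i+l})$; these are orientation-preserving by the conventions fixed in Section \ref{SectionOrientPerturbedMRT}, so they contribute no sign. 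The gluing map glues the $\WW^s(y)$ factor of the first product to the $\WW^u(y)$ factor of the second through an interpolating finite-length trajectory, producing a new factor $\MM$ associated to the edge $e$; this is an application of Proposition \ref{PropGluingFinite}, whose associated gluing map for finite-length trajectories is orientation-preserving precisely when $n+1$ is even, by Proposition \ref{PropGluingFiniteOrient}. That accounts for the factor $(-1)^{(n+1)(\cdots)}$ in the statement.

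The remaining sign bookkeeping is purely combinatorial: after gluing, the factors appear in the order dictated by the two trees $T^e_1$, $T^e_2$, but the target $\MM^d_{\fatY}(x_0,\dots,x_d,T)$ has its factors ordered according to $T$ itself and, crucially, according to the \emph{canonical} ordering of $E_{int}(T)$. First I would move the interpolation parameter $[\rho_0,+\infty)$ into place --- sliding it past all the $\WW^s(x_q)$ and $\MM$ factors that precede the new $e$-factor --- and the sign of that permutation is the parity of $\mu(x_0) + \sum_{f\in E_{int}(T^e_1), f\text{ before }e}\dim\MM + \maltese_1^{i-1} + \dim\WW^s(y)$, up to the usual $n-\mu(y)\equiv n+\mu(y)$ reductions and the index relation \eqref{parityindexeqy}. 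Then I would permute the $l+1$ stable-manifold factors $\WW^s(x_i),\dots,\WW^s(x_{i+l})$ coming from $T^e_2$ into their correct slots inside the $T$-ordering (between $\WW^s(x_{i-1})$ and $\WW^s(x_{i+l+1})$), which contributes $\dim\WW^s(y)\cdot\sum_{q=i}^{i+l}\dim\WW^s(x_q)\equiv\mu(y)\,\maltese_i^{i+l}\pmod 2$ or a similar cross-term, reducible via \eqref{parityindexeqy}. Finally --- and this is the step that forces the split into the left-handed and right-handed cases --- I would reorder the internal-edge factors $\{e\}\cup E_{int}(T^e_1)\cup E_{int}(T^e_2)$ into the canonical ordering of $E_{int}(T)$; the sign of that permutation is exactly $\sign\tau_e$ as computed in Theorem \ref{TheoremSigntaue}, namely $(-1)^{(d-i)l+d-1}$ when $e$ is left-handed and $(-1)^{(d-i)l+d}$ when $e$ is right-handed. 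Since each $\MM$ factor is $(n+1)$-dimensional, permuting $k$ of them past $m$ others contributes $(n+1)km$ to the exponent, which is where the $(n+1)(dl+i+l+d+\cdots)$ terms and the $(n+1)\,l\,\maltese_1^{i-1}$ term originate --- the latter being the sign of moving the $l$ finite-length factors from $T^e_2$ past the $\maltese_1^{i-1}$-dimensional block $\WW^s(x_1)\times\cdots\times\WW^s(x_{i-1})$.

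The main obstacle will be the third permutation step: correctly identifying which internal edges of $T$ lie "before" $e$ in the canonical ordering and how the blocks $E_{int}(T^e_1)$, $E_{int}(T^e_2)$ interleave, so that the exponent simplifies to exactly the expression in the statement. Here I would lean on Proposition \ref{PropEdgeOrderingBreaking}, which says $E_{int}(T^e_2)=\{g_i,g_{i+1},\dots,g_{i+l-2}\}$ forms a contiguous block and that $e$ is the edge $g_{i+l-1}$ (left-handed case) or $g_{i-1}$ (right-handed case), together with the explicit sign $\sign\tau_e$ from Theorem \ref{TheoremSigntaue}. With those inputs the reordering sign is determined, and combining it with the parameter-sliding sign, the stable-manifold reordering sign, the $(n+1)$-factor from Proposition \ref{PropGluingFiniteOrient}, and the index relations $\mu(y)\equiv\sum_{q=i}^{i+l}\mu(x_q)+1-l$ and $n-\mu(x_q)\equiv n+\mu(x_q)$, a routine (if lengthy) modulo-$2$ computation collapses everything to the claimed parity, with the left/right-handed dichotomy entering solely through $\sign\tau_e$ and through $r(T^e_1)+r(T^e_2)$ via Proposition \ref{PropDexterities}. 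Once $G_{T,e}$ and its sign are established, the standard argument --- that a geometric gluing map restricts, after intersecting source and target transversely with the respective $T$-diagonals, to a gluing map on the $\Acal$-level whose sign changes by the parity of the relevant codimension (which is $(k(T)+d)n$, an even multiple of $n$ when $d\ge2$) --- will be invoked exactly as in the proof of Theorem \ref{SignGluingRootedLeafed}, but that lies beyond the present lemma.
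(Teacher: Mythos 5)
Your proposal takes essentially the same route as the paper: identify the perturbed moduli spaces with products of unperturbed trajectory spaces via the orientation-preserving $F_T$-diffeomorphisms, realize the gluing map as a gluing map for finite-length trajectories (contributing $(-1)^{n+1}$ by Proposition \ref{PropGluingFiniteOrient}) composed with permutations of factors, and invoke Theorem \ref{TheoremSigntaue} to evaluate the sign of the internal-edge reordering $\tau_e$, which is indeed the sole source of the left-/right-handed dichotomy. The paper's decomposition of the permutation into six explicit steps differs from your sketch in which factors slide past which, but that is bookkeeping and leads to the same modulo-$2$ total. One small slip: in your closing sentence you say the dichotomy enters ``through $\sign\tau_e$ and through $r(T^e_1)+r(T^e_2)$ via Proposition \ref{PropDexterities}'' --- the dexterities do not appear in the sign of Lemma \ref{SignGluingPermute} itself; they are introduced only in the subsequent Theorem \ref{ThmGluingIntEdge}, where this lemma is combined with Lemma \ref{LemmaDeltaTOrientPermute} and the transverse-intersection comparison. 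For the present lemma the dichotomy is carried by $\sign\tau_e$ alone.
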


\begin{proof}
  We consider a map 
 \begin{align*}
  &G_{T,e}':[\rho_0,\infty) \times \WW^u(x_0)\times \prod_{e' \in E_{int}(T^e_1)} \MM \times \prod_{q=1}^{i-1} \WW^s(x_q) \times \WW^s(y)\times \prod_{q=i+l+1}^d \WW^s(x_q) \\
  &\qquad \qquad \times \WW^u(y) \times \prod_{e' \in E_{int}(T^e_2)}\MM \times \prod_{q=i}^{i+l} \WW^s(x_q) \to \WW^u(x_0) \times \prod_{e' \in E_{int}(T)} \MM \times \prod_{q=1}^d \WW^s(x_q) 
 \end{align*}
  which is defined in the obvious way as a composition of permutations and a gluing map for finite-length trajectories. Via the maps $F_{T^e_1}$, $F_{T^e_2}$ and $F_T$ from Section \ref{SectionOrientPerturbedMRT}, domain and target of $G_{T,e}'$ are identified with the domain and target of the map $G_{T,e}$ that we want to show to exist. Indeed, one obtains a geometric gluing map of the form $G_{T,e}$ by composing $G'_{T,e}$ with $F_{T^e_1} \times F_{T^e_2}$ and $F_T^{-1}$ in analogy with the line of argument in the proof of Theorem \ref{SignGluingRootedLeafed}. Moreover, $G_{T,e}$ is orientation-preserving if and only if $G'_{T,e}$ is and we will compute the sign of $G'_{T,e}$ explicitly. We start by computing the sign of the permutation which maps the domain of $G'_{T,e}$ onto  
 \begin{equation}
 \label{paritytarget}
  \WW^u(x_0)\times [\rho_0,\infty) \times \WW^s(y)\times \WW^u(y) \times \prod_{e' \in E_{int}(T^e_1)} \MM \times \prod_{e' \in E_{int}(T^e_2)}\MM  \times \prod_{q=1}^d\WW^s(x_q) \ .
 \end{equation}
 We view this permutation as a composition of simpler permutations in the following way:
 \begin{enumerate}
  \item We move $\WW^s(y)$ past the factors $\WW^s(x_1)\times \dots \times \WW^s(x_{i-1})$. The sign of this permutation is given by the parity of 
  \begin{align}
   \dim \WW^s(y) \cdot \sum_{q=1}^{i-1} \dim \WW^s(x_q) \equiv n\left( (i-1)\mu(y) + \maltese_1^{i-1}\right) + \mu(y) \sum_{q=1}^{i-1} \mu(x_q) \ . \label{parity0}
  \end{align}
  \item In the modified product, we move $\WW^u(y)$ past 
  \begin{equation*}
  \WW^s(x_1)\times \dots \times \WW^s(x_{i-1})\times \WW^s(x_{i+l+1}) \times \dots \times \WW^s(x_d) \ .
  \end{equation*}
  The sign of this permutation is given by the parity of 
  \begin{align}
   \mu(y) \Bigl( (d-l-1)n - \sum_{q=1}^{i-1}\mu(x_q)-\sum_{q=i+l+1}^d \mu(x_q) \Bigr) \ . \label{parity1}
  \end{align}
  \item Afterwards, we move $\prod_{e' \in E_{int}(T^e_2)} \MM$ past 
  \begin{equation*}
  \WW^s(y) \times \WW^u(y) \times \WW^s(x_1)\times \dots \times \WW^s(x_{i-1})\times \WW^s(x_{i+l+1}) \times \dots \times \WW^s(x_d) \ .
  \end{equation*}
  This affects the sign by the parity of 
  \begin{equation}
   (l-1)(n+1) \Bigl(\sum_{q=1}^{i-1} \mu(x_q) + \sum_{q=i+l+1}^d \mu(x_q) \Bigr) \ . \label{parity2}
  \end{equation}
  \item We permute $\WW^s(x_i)\times \dots \times \WW^s(x_{i+l})$ past $\WW^s(x_{i+l+1}) \times \dots \times \WW^s(x_d)$ such that all the $\WW^s(x_q)$ appear in the right order. The sign of this permutation is given by the parity of 
  \begin{align}
   &\sum_{q=i}^{i+l} \dim \WW^s(x_q) \cdot \sum_{q=i+l+1}^d \dim \WW^s(x_q)= \sum_{q=i}^{i+l}(n-\mu(x_q)) \cdot \sum_{q=i+l+1}^d (n-\mu(x_q)) \notag \\
     &= n \Bigl( (d-i-l)\Bigl(\sum_{q=i}^{i+l}\mu(x_q)+l+1\Bigr) + (l+1)\sum_{q=i+l+1}^d \mu(x_q)\Bigr)  + \sum_{q=i}^{i+l}\mu(x_q) \cdot \sum_{q=i+l+1}^d \mu(x_q)  \ . \label{parity3}
  \end{align}
  \item We permute $\WW^s(y) \times \WW^u(y)$ along $\prod_{e' \in E_{int}(T^e_1)} \MM \times \prod_{e' \in E_{int}(T^e_2)} \MM$. The sign of this permutation is given by 
  \begin{align*}
   &(\dim \WW^s(y) + \dim \WW^u(y)) \cdot \Bigl(\sum_{e' \in E_{int}(T^e_1)\cup E_{int}(T^e_2)} \dim \MM\Bigr) \equiv n \cdot (n+1) \cdot (d-1) \equiv 0  \ . 
  \end{align*}
  \item Finally, we permute $[\rho_0,\infty)$ past $\WW^u(x_0)$, which modifies the sign by the parity of   
  \begin{equation}
  \label{parity4}
   \dim \WW^u(x_0) =\mu(x_0) \ .
  \end{equation}
 \end{enumerate}

 The sign of the permutation under investigation is given by the parity of the sum of (\ref{parity0}), (\ref{parity1}), (\ref{parity2}), (\ref{parity3}) and (\ref{parity4}). A straightforward computation shows that this results in the parity of 
$$\mu(x_0) + \maltese_1^{i-1} + (n+1)\left( l\cdot \maltese_1^{i-1}  +il+i+l+1\right) \ . $$
 This is the parity of the sign of the permutation which permutes the domain of our map onto (\ref{paritytarget}). Since the gluing map $[\rho_0,\infty)\times\WW^s(y) \times \WW^u(y) \to \MM(f,g)$ has sign $(-1)^{n+1}$ by Proposition \ref{PropGluingFiniteOrient}, we need to add $n+1$ to the above, which yields: 
 \begin{equation}
 \label{parityzwischen}
  \mu(x_0) + \maltese_1^{i-1} + (n+1)\left( l\cdot \maltese_1^{i-1}  +il+i+l\right) \ .
 \end{equation}
 The final step is to take into account the sign of the permutation
 \begin{equation}
 \label{permintedges}
  \MM \times \prod_{e' \in T^e_1} \MM \times \prod_{e' \in T^e_2} \MM \to \prod_{e' \in E_{int}(T)} \MM 
 \end{equation}
 according to the ordering of the edges of the trees. This sign is given by $(-1)^{n+1} \cdot \sign \tau_e$, where $\tau_e$ again denotes the order-preserving permutation 
\begin{equation*}
 \tau_e: \{e\} \times E_{int}(T^e_1) \times E_{int}(T^e_2) \to E_{int}(T) \ .
\end{equation*}
Since $e$ is of type $(i,l)$, it follows from Theorem \ref{TheoremSigntaue} that the sign of (\ref{permintedges}) is given by the parity of 
\begin{equation*}
\begin{cases}
 (n+1)((d-i)l+d-1) & \text{if $e$ is left-handed,} \\
 (n+1)((d-i)l+d) & \text{if $e$ is right-handed.} 
 \end{cases}
\end{equation*}
Adding this to (\ref{parityzwischen}) shows the claim.
\end{proof}

The definition of the oriented intersection numbers $\orint \Acal^d_{\fatY}(x_0,x_1,\dots,x_d,T)$  relies on the orientations of the $T$-diagonals. Therefore, we need to establish the following lemma to compare the boundary orientations under consideration:

\begin{lemma}
\label{LemmaDeltaTOrientPermute}
 Let $T \in \RTree_d$ and let $e \in E_{int}(T)$ be of type $(i,l)$. The diffeomorphism 
$$ s_{T,e}:\Delta_{T^e_1} \times \Delta_{T^e_2} \to \Delta_T \ , $$
 given as in Lemma \ref{PermutProductDiag}, is orientation-preserving if and only if the following number is even: 
 $$\begin{cases} 
n((d-i-1)l+1) & \text{if $e$ is left-handed,} \\    
n(d-i-1)l & \text{if $e$ is right-handed.}
   \end{cases}$$
 It extends to a diffeomorphism $\sigma_{T,e}: M^{1 + 2 k(T^e_1) + d-l} \times M^{1+2k(T^e_2)+l+1} \to M^{1+2k(T)+d} $
 which is orientation-preserving if and only if $n(d-i-1)l$ is even. 
\end{lemma}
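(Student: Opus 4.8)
The plan is to compute the sign of $s_{T,e}$ by comparing how the orientation of $\Delta_T$ is built out of the product orientation on $M^{1+k(T)}$ via the diffeomorphism of Proposition \ref{DeltaTDiffeo}, and how the orientations of $\Delta_{T^e_1}$ and $\Delta_{T^e_2}$ are built out of the product orientations on $M^{1+k(T^e_1)}$ and $M^{1+k(T^e_2)}$, respectively. Since all three $T$-diagonals are, by definition, oriented so that the respective parametrizations $M^{1+k(\cdot)}\to \Delta_{(\cdot)}$ are orientation-preserving, the sign of $s_{T,e}$ equals the sign of the induced permutation-of-factors diffeomorphism
\begin{equation*}
 M^{1+k(T^e_1)} \times M^{1+k(T^e_2)} \to M^{1+k(T)}
\end{equation*}
fitting into the commutative square with $s_{T,e}$ and the three parametrizations. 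So the first step is to write down this square explicitly and identify the induced map on the factors.

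Next I would analyze that induced map on $M^{1+k(T)} = M \times M^{E_{int}(T)}$. The coordinates are indexed by $\{v_0\} \sqcup E_{int}(T)$ on the right, by $\{v_0\} \sqcup E_{int}(T^e_1)$ and $\{v_e\} \sqcup E_{int}(T^e_2)$ on the two left factors. Under the identification $E_{int}(T^e_1) \sqcup E_{int}(T^e_2) = E_{int}(T) \setminus \{e\}$ together with the extra copy of $M$ (coming from $v_e$ on the $T^e_2$-side, which corresponds to the edge $e$ of $T$), the map is a permutation of the $1 + k(T)$ copies of $M$. Since each copy of $M$ has dimension $n$, the sign is $(-1)^{n\cdot(\text{sign of the index permutation})}$, i.e.\ $(-1)^{n\,\mathrm{sgn}(\pi)}$ where $\pi$ is the permutation of the index set. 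I would then compute $\mathrm{sgn}(\pi)$ by splitting it into: (a) moving the $v_e$-factor into position as the ``$e$-factor'' of $T$ among the other edge-factors, and (b) reordering the edge-factors of $T^e_1$ followed by those of $T^e_2$ into the canonical ordering of $E_{int}(T)$ — this second piece is precisely $\mathrm{sgn}(\tau_e)$ from Theorem \ref{TheoremSigntaue}, but as a permutation of $(d-2)$ edges rather than as a signed contribution weighted by $n+1$. The bookkeeping for piece (a) uses Proposition \ref{PropEdgeOrderingBreaking}: the edge $e$ sits as $g_{i+l-1}$ (left-handed case) or $g_{i-1}$ (right-handed case), and $E_{int}(T^e_2) = \{g_i,\dots,g_{i+l-2}\}$, so the $v_e$-factor must be moved past the $l-1$ factors of $E_{int}(T^e_2)$ to land just after them, contributing $(-1)^{l-1}$ to $\mathrm{sgn}(\pi)$ in the left-handed case (and analogously in the right-handed case). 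Combining, $\mathrm{sgn}(\pi)$ should work out to $(d-i-1)l + 1$ (left-handed) resp.\ $(d-i-1)l$ (right-handed) mod $2$, whence the sign of $s_{T,e}$ is $(-1)$ to those powers times $n$, matching the claim.

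For the extension to $\sigma_{T,e}$: by Lemma \ref{PermutProductDiag} and the permutation constructed in its proof, $\sigma_{T,e}$ is the permutation of the $1+2k(T)+d$ copies of $M$ sending
\begin{equation*}
 M^{1+2k(T^e_1)+d-l} \times M^{1+2k(T^e_2)+l+1} \to M^{1+2k(T)+d}
\end{equation*}
which reorders the pairs $(q^f_{in},q^f_{out})$ associated with internal edges according to the canonical orderings of $E_{int}(T^e_1), E_{int}(T^e_2), E_{int}(T)$, and slots the leaf-components into place. Reordering pairs of $M$-copies is a permutation of $2n$-dimensional blocks, which is always orientation-preserving; likewise permuting among the $d$ leaf-copies and the edge-pair-copies costs nothing once we track that $\mathrm{sgn}(\tau_e)$ now appears as a permutation of even-dimensional blocks. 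The only odd-dimensional contribution left is the transposition internal to the $e$-pair relative to the $v_e$-factor arrangement — which is exactly the $(-1)^{n(d-i-1)l}$-part identified for $s_{T,e}$ minus the isolated ``$+1$'' in the left-handed case, since that ``$+1$'' came from an odd-dimensional move that disappears when everything is grouped into pairs. Thus $\sigma_{T,e}$ is orientation-preserving iff $n(d-i-1)l$ is even, uniformly in the handedness of $e$.

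The main obstacle I anticipate is the careful bookkeeping in step two: keeping straight which copies of $M$ in $\Delta_T$ correspond to which vertices (an $M$-factor indexed by an edge $g$ of $T$ plays the role of $q_{out}^g$, which is identified with $q_{in}^{g'}$ and with leaf-coordinates for all edges $g'$ with $\vin(g') = \vout(g)$), and tracking exactly how the extra $M$-copy for the new vertex $v_e$ on the $T^e_2$-side matches the $e$-factor of $T$. Getting the parity of the index permutation $\pi$ right — in particular reconciling it with Proposition \ref{PropEdgeOrderingBreaking} and with the sign $\mathrm{sgn}(\tau_e)$ of Theorem \ref{TheoremSigntaue}, and correctly isolating the single odd-dimensional discrepancy between the $s_{T,e}$-sign and the $\sigma_{T,e}$-sign — is where the argument could easily go off by one unless done methodically. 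Everything else is routine linear algebra of orientations on products and the general fact (already used repeatedly in this appendix) that a transposition of two blocks of dimensions $a$ and $b$ has sign $(-1)^{ab}$.
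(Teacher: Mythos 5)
Your plan for $s_{T,e}$ is essentially the paper's: reduce to the induced permutation of $M$-factors on $M^{1+k(T^e_1)}\times M^{1+k(T^e_2)}\to M^{1+k(T)}$ via Proposition \ref{DeltaTDiffeo}, then compute the parity using Proposition \ref{PropEdgeOrderingBreaking} and Theorem \ref{TheoremSigntaue}. There is, however, a bookkeeping inconsistency in how you combine steps (a) and (b): if (b) is literally $\tau_e:\{e\}\times E_{int}(T^e_1)\times E_{int}(T^e_2)\to E_{int}(T)$, then (a) must move the $v_e$-factor past the $k(T^e_1)=d-l-2$ factors of $E_{int}(T^e_1)$ so that $e$ comes first among the edge-factors — not past the $l-1$ factors of $E_{int}(T^e_2)$. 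Taking your $(l-1)$ for (a) together with $\tau_e$'s exponent $(d-i)l+d-1$ (left-handed) gives parity $(d-i-1)l+d$, which is not the claimed $(d-i-1)l+1$. Your stated answer is right, but your stated decomposition doesn't produce it; to keep $(l-1)$ in (a), the permutation in (b) must be $E_{int}(T^e_1)\times E_{int}(T^e_2)\times\{e\}\to E_{int}(T)$, whose sign differs from $\sign\tau_e$ by $(-1)^{d-3}$. (The paper uses (a) $=k(T^e_1)$ and (b) $=\tau_e$.)

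The $\sigma_{T,e}$ half has a more substantive gap. You assert that once the internal-edge coordinates are grouped into $M^2$-pairs, the only odd-dimensional contribution is one transposition, accounting precisely for the ``$+1$'' dropped from the left-handed $s_{T,e}$-sign. But in the permutation underlying $\sigma_{T,e}$ (as written in the proof of Lemma \ref{PermutProductDiag}), the pair $(q^e_{in},q^e_{out})$ does \emph{not} travel as a block: $q^e_{in}$ sits among the leaf-factors of $M^{1+2k(T^e_1)+d-l}$ and $q^e_{out}$ is the first factor of $M^{1+2k(T^e_2)+l+1}$, so they must be brought together by two separate odd-block moves. Moreover, the $d$ leaf-factors $q_1,\dots,q_d$ are single $n$-dimensional copies of $M$ rather than $M^2$-pairs, and the block $q_i,\dots,q_{i+l}$ has to be moved past $q_{i+l+1},\dots,q_d$, an odd-by-odd permutation in general. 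None of these contributions is visible in your ``everything pairs up except one transposition'' picture. The paper's proof handles this by decomposing $\sigma_{T,e}$ into five explicit block-moves, of which three carry nontrivial parities $(i-1)n$, $(d-l-1)n$, and $n(d-i)(l+1)$, whose sum reduces to $n(d-i-1)l$ modulo $2$. The fact that this differs from the $s_{T,e}$-parity only by the extra $n$ in the left-handed case is an output of that accounting, not something one can read off a priori from the $s_{T,e}$ computation — which is presumably why the paper computes $\sigma_{T,e}$ first and $s_{T,e}$ second, in the opposite order to your proposal.
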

\begin{proof}
 One checks in the proof of Lemma \ref{PermutProductDiag} that $s_{T,e}$ is given as the restriction of a diffeomorphism
 $$\sigma_{T,e}: M^{1 + 2 k(T^e_1) + d-l} \times M^{1+2k(T^e_2)+l+1} \to M^{1+2k(T)+d} $$
 given by permuting copies of $M$, where we put $k_1 := k(T^e_1)$ and $k_2 := k(T^e_2)$. We first compute the sign of $\sigma_{T,e}$ by decomposing it into a sequence of simpler permutations and compute the respective signs:
\begin{enumerate}
 \item We permute the $i$-th of the last $d-l$ factors of $M^{1+2k_1+d-l}$ along $M^{2k_1+i-1}$. The sign of this map is given by the parity of $(2k_1+i-1)n \equiv (i-1)n$.
 \item We move the first factor of $M^{1+2k_2+l+1}$ along $M^{2k_1+d-l-1}$. This affects the sign by the parity of $(2k_1+d-l-1)n \equiv (d-l-1)n$.
 \item We interchange $M^{2k_2}$ with $M^{d-l-1}$. This permutation is orientation-preserving, since $M^{2k_2}$ is always even-dimensional. 
 \item We move $M^{l+1}$ along $M^{d-i-l}$. The sign of this map is given by the parity of $n(d-i-l)(l+1) \equiv n(d-i)(l+1)$. 
 \item Finally, we need to permute $M \times M^2 \times M^{2k_1} \times M^{2k_2} \times M^d$ onto $M \times M^{2k} \times M^d$ according to the orderings of $E_{int}(T^e_1)$, $E_{int}(T^e_2)$ and $E_{int}(T)$. This permutation is always orientation-preserving, because we are only moving even-dimensional manifolds. 
\end{enumerate}
Therefore, the total sign of the permutation $M^{1+2k_1+d-l} \times M^{1+2k_2+l+1} \stackrel{\cong}{\longrightarrow} M^{1+2k+d}$ is given by 
\begin{equation*}
 n( (d-i)(l+1)+d-i-l) \equiv n((d-i)l-l) \equiv n(d-i-1)l \ .
\end{equation*}
We next compute the sign of the map $s_{T,e}:= \sigma_{T,e}|_{\Delta_{T^e_1}\times \Delta_{T^e_2}}:\Delta_{T^e_1} \times \Delta_{T^e_2} \to \Delta_T$. We have defined the orientations on $\Delta_{T^e_1}$, $\Delta_{T^e_2}$ and $\Delta_T$ such that there are orientation-preserving diffeomorphisms $M^{1+k_1}\stackrel{\cong}{\to}\Delta_{T^e_1}$, $M^{1+k_2}\stackrel{\cong}{\to}\Delta_{T^e_2}$ and $M^{1+k} \stackrel{\cong}{\to} \Delta_T$. 

Consequently, the sign of $s_{T,e}$ is given by the sign of the diffeomorphism 
$$M^{1+k_1} \times M^{1+k_2} \to M^{1+k}\ , $$
obtained via moving the first copy of $M$ in $M^{1+k_2}$ along the last $k_1$ copies of $M$ in the product $M^{1+k_1}$ and afterwards permute the last $1+k_1+k_2$ factors of $M^{1+1+k_1+k_2}$ according to the ordering of $E_{int}(T)$. Since $T^e_1$ is binary, it holds that $k_1=d-l-2$. Consequently, the sign is given by the parity of
 \begin{align*}
 n(d-l-2+\sign \tau_e) &\equiv n(d-l+\sign \tau_e) \equiv \begin{cases}
                                                 n((d-i-1)l+1) & \text{if $e$ is left-handed,} \\
                                                 n(d-i-1)l & \text{if $e$ is right-handed,} 
                                                \end{cases}
 \end{align*}
 where we have used Theorem \ref{TheoremSigntaue}. This completes the proof. 
\end{proof}

The following lemma will turn out to be useful in several of the upcoming sign computations. 

\begin{lemma}
\label{LemmaBinaryTreeMMevendimensional}
 Let $r \in \NN$, $T \in \RTree_r$ and let $y_0,y_1,\dots,y_r \in \Crit f$ satisfy 
 $$\mu(y_0)=\sum_{q=1}^r \mu(y_q) +2-r \ . $$
 If $T$ is a binary tree, then $\MM^r_{\fatY}(y_0,y_1,\dots,y_r,T)$ will be even-dimensional. 
\end{lemma}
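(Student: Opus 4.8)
The plan is to compute the dimension of $\MM^r_{\fatY}(y_0,y_1,\dots,y_r,T)$ directly from its description as a product of unperturbed trajectory spaces, using the diffeomorphism $F_T$ introduced in Section \ref{SectionOrientPerturbedMRT}. Recall from that section that $F_T$ identifies $\MM^r_{\fatY}(y_0,y_1,\dots,y_r,T)$ with
\begin{equation*}
 \WW^u(y_0) \times \prod_{e \in E_{int}(T)} \MM(f,g) \times \WW^s(y_1) \times \dots \times \WW^s(y_r) \ ,
\end{equation*}
so that
\begin{equation*}
 \dim \MM^r_{\fatY}(y_0,y_1,\dots,y_r,T) = \mu(y_0) + k(T)(n+1) + \sum_{q=1}^r (n - \mu(y_q)) \ .
\end{equation*}
Alternatively, this is exactly the dimension formula for $\MM^d_{\fatY}$ recorded in Section \ref{CompactificationsOneDimAinfty} (there with $d$ in place of $r$), namely $\mu(y_0) - \sum_{q=1}^r \mu(y_q) + (k(T)+r)n + k(T)$, which is the same expression.

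First I would substitute the hypothesis $\mu(y_0) = \sum_{q=1}^r \mu(y_q) + 2 - r$ into this formula. This gives
\begin{equation*}
 \dim \MM^r_{\fatY}(y_0,y_1,\dots,y_r,T) = \sum_{q=1}^r \mu(y_q) + 2 - r + k(T)(n+1) + rn - \sum_{q=1}^r \mu(y_q) = 2 - r + k(T)(n+1) + rn \ ,
\end{equation*}
where the sums over $\mu(y_q)$ cancel. Reducing modulo $2$, the term $k(T)(n+1)$ is congruent to $k(T)(n+1)$, and $2 - r + rn \equiv r(n+1) \pmod 2$, so the parity of the dimension is that of $(k(T)+r)(n+1)$. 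Now I would invoke the graph-theoretic fact, stated in Section \ref{CompactificationsOneDimAinfty}, that a binary tree with $r$ leaves has exactly $k(T) = r-2$ internal edges. Hence $k(T) + r = 2r - 2 = 2(r-1)$ is even, and therefore $(k(T)+r)(n+1)$ is even regardless of the parity of $n$. This shows $\dim \MM^r_{\fatY}(y_0,y_1,\dots,y_r,T)$ is even, which is the claim.

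There is no serious obstacle here: the proof is a one-line parity count once the dimension formula from Section \ref{SectionOrientPerturbedMRT} (equivalently Section \ref{CompactificationsOneDimAinfty}) and the binary-tree edge count $k(T) = r - 2$ are in hand. The only point requiring a little care is bookkeeping the cancellation of the $\sum \mu(y_q)$ terms and correctly tracking the mod-$2$ reduction of $2 - r + rn + k(T)(n+1)$; I would present that computation explicitly but briefly. Note also that the binary hypothesis is genuinely used — for a non-binary $T$ one has $k(T) < r-2$, and $k(T)+r$ need not be even — which is consistent with the fact that the lemma is only asserted for binary trees.
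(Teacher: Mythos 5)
Your proof is correct and is exactly the ``elementary computation'' the paper has in mind (it gives no details): substitute the index hypothesis into the dimension formula $\dim \MM^r_{\fatY} = \mu(y_0)-\sum_q\mu(y_q)+(k(T)+r)n+k(T)$, observe the $\mu(y_q)$ terms cancel to leave $2-r+k(T)(n+1)+rn \equiv (k(T)+r)(n+1) \pmod 2$, and use $k(T)=r-2$ for binary trees so that $k(T)+r = 2(r-1)$ is even. The arithmetic checks out, and your closing remark that the binary hypothesis is essential is a nice sanity check.
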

\begin{proof}
This follows from an elementary computation.
\end{proof}

\begin{theorem}
\label{ThmGluingIntEdge}
  Let $T \in \BinTree_d$, $e \in E_{int}(T)$ be of type $(i,l)$, $y \in \Crit f$ satisfy
 \begin{equation*}
%  \label{parityindexeqy}
  \mu(y) = \sum_{q=i}^{i+l} \mu(x_q)+1-l \ .
 \end{equation*}
 and let $(T^e_1,T^e_2)$ be the splitting of $T$ along $e$. The product orientation of 
$$\Acal^{d-l}_{\fatY}(x_0,x_1,\dots,x_{i-1},y,x_{i+l+1},\dots,x_d,T^e_1) \times \Acal^{l+1}_{\fatY}(y,x_i,\dots,x_{i+l},T^e_2)$$ 
coincides with its boundary orientation with respect to $\bAcal_{\fatY}^d(x_0,x_1,\dots,x_d,T)$ if and only if the following number is even:
 \begin{equation*}
 r(T^e_1)+r(T^e_2) + r(T) + \mu(x_0) + 1+\maltese_1^{i-1} + (n+1)\left( l\cdot \maltese_1^{i-1} +dl+i+l+d\right)  \ .
 \end{equation*}
\end{theorem}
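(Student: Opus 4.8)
\textbf{Proof plan for Theorem \ref{ThmGluingIntEdge}.}

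The plan is to assemble the desired parity from three already-established ingredients: the gluing map $G_{T,e}$ on the ambient moduli spaces $\MM^\bullet_{\fatY}$ from Lemma \ref{SignGluingPermute}, the comparison of the $T$-diagonal orientations from Lemma \ref{LemmaDeltaTOrientPermute}, and Proposition \ref{PropOrientTransverseBoundary}, which relates the orientation of a transverse intersection on a boundary to the boundary orientation of the transverse intersection. First I would set $k_1:=k(T^e_1)=d-l-2$ and $k_2:=k(T^e_2)=l-1$ (recall $T^e_1, T^e_2$ are binary, so their numbers of internal edges are determined), and observe that
\begin{align*}
 &\Acal^{d-l}_{\fatY}(x_0,\dots,x_{i-1},y,x_{i+l+1},\dots,x_d,T^e_1) = \Eunder_{\fatY_{T^e_1}}^{-1}(\Delta_{T^e_1}) \ , \\
 &\Acal^{l+1}_{\fatY}(y,x_i,\dots,x_{i+l},T^e_2) = \Eunder_{\fatY_{T^e_2}}^{-1}(\Delta_{T^e_2}) \ , \\
 &\Acal^{d}_{\fatY}(x_0,\dots,x_d,T) = \Eunder_{\fatY_T}^{-1}(\Delta_T) \ ,
\end{align*}
and that, via the geometric gluing map $G_{T,e}$ together with the permutation diffeomorphism $\sigma_{T,e}$ of ambient products from Lemma \ref{LemmaDeltaTOrientPermute}, we obtain a commuting square (target of $G_{T,e}$ over $\Eunder_{\fatY_T}$, source a product of the two factors over $\Eunder_{\fatY_{T^e_1}}\times\Eunder_{\fatY_{T^e_2}}$, horizontal maps $G_{T,e}$ and $\sigma_{T,e}$). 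This is precisely the setup of Theorem \ref{TheoremOrientDiffeoTransverseint}, which tells us that the induced diffeomorphism on the intersection spaces has sign $\sign G_{T,e} \cdot \sign \sigma_{T,e} \cdot \sign(\sigma_{T,e}|_{\Delta_{T^e_1}\times\Delta_{T^e_2}})$.

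Next I would feed in the explicit parities. Lemma \ref{SignGluingPermute} gives $\sign G_{T,e}$ as $(-1)$ raised to $\mu(x_0)+\maltese_1^{i-1}+(n+1)(l\cdot\maltese_1^{i-1}+dl+i+l+d+1)$ in the left-handed case and the same with the final $+1$ dropped in the right-handed case. Lemma \ref{LemmaDeltaTOrientPermute} gives $\sign \sigma_{T,e}=(-1)^{n(d-i-1)l}$ in both cases, and $\sign(\sigma_{T,e}|_{\Delta_{T^e_1}\times\Delta_{T^e_2}})=(-1)^{n((d-i-1)l+1)}$ (left-handed) or $(-1)^{n(d-i-1)l}$ (right-handed). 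Multiplying these three signs, the two copies of $n(d-i-1)l$ coming from $\sigma_{T,e}$ and its restriction cancel in the right-handed case, and in the left-handed case leave an extra factor $(-1)^n$; combined with the extra $(-1)^{n+1}$ visible in the left-handed gluing sign, this produces a net factor $(-1)$, which exactly accounts for the difference between the two cases and collapses both into the single uniform parity written in the statement — after I also incorporate the dexterity-twist signs $(-1)^{r(T^e_1)}$, $(-1)^{r(T^e_2)}$, $(-1)^{r(T)}$ coming from the fact that $\Acal^\bullet_{\fatY}$ carries the twisted orientation (equivalently, one works with $\widetilde{\Acal}^\bullet_{\fatY}$), using Proposition \ref{PropDexterities}(b) to relate $r(T^e_1)+r(T^e_2)$ to $r(T)$ according to whether $e$ is left- or right-handed. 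Finally I would apply Proposition \ref{PropOrientTransverseBoundary}: the correction between ``orientation induced by the transverse intersection of the boundary evaluation with $\Delta_T$'' and ``boundary orientation of $\bAcal^d_{\fatY}(x_0,\dots,x_d,T)$'' is trivial because $\codim_{M^{1+2k(T)+d}}\Delta_T = (k(T)+d)n = (2d-2)n$ is even (as already used in the proof of Theorem \ref{SignGluingRootedLeafed}), so no further sign enters.

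The main obstacle I anticipate is purely bookkeeping: ensuring that the three commuting-square sign contributions are combined against the \emph{correct} convention for the product orientation on the source (the canonical ordering of internal edges forces $E_{int}(T^e_1)$ before $E_{int}(T^e_2)$, and Theorem \ref{TheoremSigntaue} for $\sign\tau_e$ is already baked into Lemmas \ref{SignGluingPermute} and \ref{LemmaDeltaTOrientPermute}, so one must be careful not to double-count it), and that the zero-dimensionality of the various $\widehat{\MM}$ factors and the even-dimensionality of binary-tree moduli spaces (Lemma \ref{LemmaBinaryTreeMMevendimensional}) are invoked wherever a transposition of factors needs to be declared orientation-preserving. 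Once the cancellations $n(d-i-1)l$ against itself, and $(n+1)$ against $(-1)^n$, are carried out cleanly, the stated parity follows by direct addition of the remaining exponents; I would present the final arithmetic as a short displayed congruence chain modulo two rather than prose.
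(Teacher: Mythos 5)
Your proposal reproduces the paper's argument essentially step for step: Theorem \ref{TheoremOrientDiffeoTransverseint} applied to the commuting square built from $G_{T,e}$ and $\sigma_{T,e}$, the parities fed in from Lemmas \ref{SignGluingPermute} and \ref{LemmaDeltaTOrientPermute}, Lemma \ref{LemmaBinaryTreeMMevendimensional} to justify the factor transposition between the product of transverse intersections and the transverse intersection with the product, Proposition \ref{PropDexterities}(b) to unify the left- and right-handed cases, and the evenness of $\codim\Delta_T=(2d-2)n$ so that Proposition \ref{PropOrientTransverseBoundary} contributes no extra sign; the arithmetic you sketch (the residual $(-1)^n$ from $\sigma_{T,e}$ and its restriction pairing with the extra $(-1)^{n+1}$ in the left-handed gluing sign) is exactly what the paper carries out.

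One small conceptual correction, so the bookkeeping stays honest in future applications: the dexterity sum $r(T^e_1)+r(T^e_2)+r(T)$ in the parity formula does \emph{not} arise because the spaces $\Acal^\bullet_{\fatY}(\dots,T)$ carry a twisted orientation --- throughout this theorem they carry the plain transverse-intersection orientation, and the $(-1)^{r(\cdot)}$ twists are reserved for the coefficients $a^d_{\fatY}$ and the spaces $\widetilde{\Acal}^\bullet_{\fatY}$ used to glue up $\bAcal^d_{\fatY}(x_0,\dots,x_d)$. The dexterity sum appears in the statement purely as a device which, via Proposition \ref{PropDexterities}(b) ($\equiv 0$ for left-handed $e$, $\equiv 1$ for right-handed $e$), repackages the two distinct parities you obtain from Lemmas \ref{SignGluingPermute} and \ref{LemmaDeltaTOrientPermute} into one uniform formula. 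Your worry about double-counting $\sign\tau_e$ is unfounded for a similar reason: it contributes once inside $\sign G_{T,e}$ (reordering the $\MM$-factors of $\MM^d_{\fatY}(\cdots,T)$) and once inside $\sign(\sigma_{T,e}|_{\Delta_{T^e_1}\times\Delta_{T^e_2}})$ (reordering the copies of $M$ in the diagonal), and Theorem \ref{TheoremOrientDiffeoTransverseint} genuinely requires all three sign factors.
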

\begin{proof}
By standard methods of gluing analysis in Morse theory, one shows that there exists a geometric gluing map of the form $G_{T,e}$ as in Lemma \ref{SignGluingPermute}, which restricts to a map 
\begin{align*}
 G: [\rho_0,+\infty)\times\Acal^{d-l}_{\fatY}(x_0,x_1,\dots,x_{i-1},y,x_{i+l+1},\dots,x_d,T^e_1) \times &\Acal^{l+1}_{\fatY}(y,x_i,\dots,x_{i+l},T^e_2) \\
    &\to \Acal^d_{\fatY}(x_0,x_1,\dots,x_d,T) \ .
\end{align*}
In analogy with the line of argument in the proof of Theorem \ref{SignGluingRootedLeafed} the two orientations under consideration coincide if and only if $G$ is orientation-preserving. In the following, we will compute the sign of $G$ explicitly. The orientation of the domain of $G$ is given by the product of
\begin{itemize}
 \item the standard orientation of $[\rho_0,+\infty)$, 
 \item the orientation induced by the transverse intersection of $$\MM^{d-l}_{\fatY}(x_0,x_1,\dots,x_{i-1},y,x_{i+l+1},\dots,x_d,T^e_1)$$ with $\Delta_{T^e_1}$ under $\Eunder_{\fatY}$, 
 \item the orientation induced by the transverse intersection of $\MM^{l+1}_{\fatY}(y,x_i,\dots,x_{i+l},T^e_2)$ with $\Delta_{T^e_2}$ under $\Eunder_{\fatY}$.
\end{itemize}
We want to compare this orientation to the one induced by the transverse intersection of 
\begin{equation}
\label{EqDingsbumsGluingDomain}
[\rho_0,+\infty)\times\MM^{d-l}_{\fatY}(x_0,x_1,\dots,x_{i-1},y,\dots,x_d,T^e_1) \times \MM^{l+1}_{\fatY}(y,x_i,\dots,x_{i+l},T^e_2) 
\end{equation}
with $\Delta_{T^e_1} \times \Delta_{T^e_2}$ under the product of the evaluation maps. In the zero-dimensional case which we are considering, these two orientations coincide if and only if the following number is even: 
$$\dim \MM^{l+1}_{\fatY}(y,x_i,\dots,x_{i+l},T^e_2) \cdot \dim \Delta_{T^e_1} \equiv 0 \ , $$
where we have applied Lemma \ref{LemmaBinaryTreeMMevendimensional}, using that $T^e_2$ is a binary tree. Thus, the two orientations coincide such that it suffices to compare the orientation induced by the transverse intersection of \eqref{EqDingsbumsGluingDomain} with $\Delta_{T^e_1} \times \Delta_{T^e_2}$ with the orientation of $\Acal^d_{\fatY}(x_0,x_1,\dots,x_d)$. This situation is a special case of Theorem \ref{TheoremOrientDiffeoTransverseint}. More precisely, that theorem applies with
\begin{align*}
&M_1 = [\rho_0,+\infty)\times\MM^{d-l}_{\fatY}(x_0,x_1,\dots,x_{i-1},y,x_{i+l+1},\dots,x_d,T^e_1) \times \MM^{l+1}_{\fatY}(y,x_i,\dots,x_{i+l},T^e_2) \ , \\
&S_1 = [\rho_0,+\infty)\times\Acal^{d-l}_{\fatY}(x_0,x_1,\dots,x_{i-1},y,x_{i+l+1},\dots,x_d,T^e_1) \times \Acal^{l+1}_{\fatY}(y,x_i,\dots,x_{i+l},T^e_2) \ , \\
&M_2 = \MM^d_{\fatY}(x_0,x_1,\dots,x_d,T) \ , \quad S_2 = \Acal^d_{\fatY}(x_0,x_1,\dots,x_d,T) \ , \quad N_1 = \Delta_{T^e_1} \times \Delta_{T^e_2} \ , \quad N_2 = \Delta_T \ , \\
&\varphi = G_{T,e} \ , \quad \psi = \sigma_{T,e} \ , \quad f_1 = \Eunder_{\fatY} \times \Eunder_{\fatY} \ , \quad f_2 = \Eunder_{\fatY} \ ,
\end{align*}
 where $G_{T,e}$ is suitably chosen as explained above and where $\sigma_{T,e}$ is the diffeomorphism from Lemma \ref{LemmaDeltaTOrientPermute}. If $e$ is left-handed, then combining Theorem \ref{TheoremOrientDiffeoTransverseint} with Lemmas \ref{SignGluingPermute} and \ref{LemmaDeltaTOrientPermute} will yield that $G_{T,e}|_{S_1}$ is orientation-preserving if and only if the following number is even: 
\begin{align*}
&\mu(x_0) + \maltese_1^{i-1} + (n+1)\left( l\cdot \maltese_1^{i-1} +dl+i+l+d+1\right)  +n((d-i-1)l-1)+n(d-i-1)l\\
&\equiv \mu(x_0) +1+ \maltese_1^{i-1} + (n+1)\left( l\cdot \maltese_1^{i-1} +dl+i+l+d\right) \ .
\end{align*}
If $e$ is right-handed, then one shows along the same lines that the sign of $G_{T,e}|_{S_1}$ is given by the parity of 
$$\mu(x_0) + \maltese_1^{i-1} + (n+1)\left( l\cdot \maltese_1^{i-1} +dl+i+l+d\right) \ .$$
The claim follows from applying part b) of Proposition \ref{PropDexterities}.
\end{proof}

The last ingredient for the proof of Theorem \ref{CoeffRelAinfty} is the computation of the contribution of the signs $(-1)^{\sigma(x_0,x_1,\dots,x_d)}$. The occuring cases are subsumed in Proposition \ref{SigmaComputations}. We will eventually see in the proof of Theorem \ref{CoeffRelAinfty} that the twisting numbers $\sigma(x_0,x_1,\dots,x_d)$ are defined such that their contributions ensure the validity of the desired equations.

\begin{prop}
\label{SigmaComputations}
Let $x_0,x_1,\dots,x_d \in \Crit f$ satisfy $\mu(x_0)= \sum_{q=1}^d \mu(x_q) +3-d$.
\begin{enumerate}
\item Let $i \in \{1,2,\dots,d\}$, $l \in \{0,1,\dots,d-i\}$ and $y \in \Crit f$ satisfying  
 \begin{equation}
 \label{yindexorient}
  \mu(y) = \sum_{q=i}^{i+l} \mu(x_q)+1-l \ .
 \end{equation}
 Then the following congruence holds modulo two:
 \begin{align*}
   &\sigma(x_0,x_1,\dots,x_d)+\sigma(x_0,x_1,\dots,x_{i-1},y,x_{i+l+1},\dots,x_d) + \sigma(y,x_i,\dots,x_{i+l}) \\
   &\equiv (n+1)\left( l\cdot \maltese_1^{i-1} +dl+d+l+i\right) \ .
 \end{align*}
 \item Let $y_0 \in \Crit f$ with $\mu(y_0)=\mu(x_0)-1$. The following congruence holds modulo two:
  \begin{equation*}
  \sigma(x_0,y_0)+\sigma(y_0,x_1,\dots,x_d) \equiv \sigma(x_0,x_1,\dots,x_d) \ .
 \end{equation*}
 \item For $i \in \{1,2,\dots,d\}$ and $y_i \in \Crit f$ with $\mu(y_i)=\mu(x_i)+1$ the following congruence holds modulo two:
 \begin{equation*}
  \sigma(x_0,x_1,\dots,x_{i-1},y_i,x_{i+1},\dots,x_d)+\sigma(y_i,x_i)+\sigma(x_0,x_1,\dots,x_d)\equiv (n+1)(d-i) \ .
 \end{equation*}
 \end{enumerate}
\end{prop}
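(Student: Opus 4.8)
The plan is to reduce all three congruences to elementary arithmetic modulo two. Writing $\sigma(x_0,\dots,x_d)=(n+1)\,B(x_0,\dots,x_d)$ with $B(x_0,\dots,x_d):=\mu(x_0)+\sum_{j=1}^{d}(d+1-j)\mu(x_j)\in\ZZ$, and using that $(n+1)a\equiv(n+1)b$ whenever $a\equiv b\ \mathrm{mod}\ 2$, it suffices in each part to verify the analogous congruence with every $\sigma$ replaced by the corresponding $B$-value and the right-hand side replaced by the integer appearing inside the factor $(n+1)$ (in part 2 simply by $B(x_0,x_1,\dots,x_d)$ itself).

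Parts 2 and 3 then require only a direct expansion. In part 2 one has $B(x_0,y_0)=\mu(x_0)+\mu(y_0)$ and $B(y_0,x_1,\dots,x_d)=\mu(y_0)+\sum_{j=1}^{d}(d+1-j)\mu(x_j)$, so their sum equals $\mu(x_0)+2\mu(y_0)+\sum_{j=1}^{d}(d+1-j)\mu(x_j)\equiv B(x_0,x_1,\dots,x_d)$; no index relation is needed. In part 3, adding $B(x_0,\dots,x_{i-1},y_i,x_{i+1},\dots,x_d)$, $B(y_i,x_i)=\mu(y_i)+\mu(x_i)$ and $B(x_0,\dots,x_d)$, every $\mu(x_j)$ with $j\neq i$ and $\mu(x_0)$ occur an even number of times, so what remains is $(d+2-i)\bigl(\mu(x_i)+\mu(y_i)\bigr)$; the relation $\mu(y_i)=\mu(x_i)+1$ gives $\mu(x_i)+\mu(y_i)\equiv 1$, hence the sum is $\equiv d+2-i\equiv d-i$, and restoring the factor $(n+1)$ yields the claim.

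Part 1 is the substantive case, and I would start by reindexing the leaves of the two auxiliary trees. The tree $B(x_0,x_1,\dots,x_{i-1},y,x_{i+l+1},\dots,x_d)$ has $\mu(x_q)$-coefficient $d-l+1-q$ for $q\le i-1$, $\mu(y)$-coefficient $d-l+1-i$, and $\mu(x_q)$-coefficient $d+1-q$ for $q\ge i+l+1$, while $B(y,x_i,\dots,x_{i+l})$ has $\mu(y)$-coefficient $1$ and $\mu(x_q)$-coefficient $l+1+i-q$ for $i\le q\le i+l$. Adding these to $B(x_0,x_1,\dots,x_d)$ and reducing modulo two, the two copies of $\mu(x_0)$ cancel, the leaves $x_q$ with $q>i+l$ cancel, the leaves $x_q$ with $q\le i-1$ survive with coefficient $\equiv l$, the leaves $x_q$ with $i\le q\le i+l$ survive with coefficient $\equiv d+l+i$, and $\mu(y)$ survives with coefficient $\equiv d+l+i$.

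Finally I would insert the index constraint $\mu(y)\equiv\sum_{q=i}^{i+l}\mu(x_q)+1+l\ \mathrm{mod}\ 2$, which cancels the block $i\le q\le i+l$ completely and leaves $l\sum_{q=1}^{i-1}\mu(x_q)+(d+l+i)(1+l)$; using $l^2\equiv l$ this is $\equiv l\sum_{q=1}^{i-1}\mu(x_q)+d+dl+i+il$. Since $\maltese_1^{i-1}=\sum_{q=1}^{i-1}\mu(x_q)-(i-1)$, one has $l\,\maltese_1^{i-1}\equiv l\sum_{q=1}^{i-1}\mu(x_q)+li+l$, so the expression equals $l\,\maltese_1^{i-1}+dl+d+l+i$ modulo two, which is exactly the bracket on the right-hand side; multiplying by $(n+1)$ completes part 1. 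The only genuinely delicate point anywhere in the argument is keeping the leaf-coefficient arrays in part 1 straight under the two different reindexings — routine, but the place where sign errors would creep in.
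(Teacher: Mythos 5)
Your proof is correct and proceeds by the same direct expansion of the $\sigma$-formula that the paper uses, with parts 2 and 3 being the routine cancellations the paper leaves implicit. The only worthwhile remark is that your reduction in part 1 depends on the standard Seidel convention $\maltese_1^{i-1}=\sum_{q=1}^{i-1}\mu(x_q)-(i-1)$, whereas the paper's displayed definition has a typo ($-i$ instead of $-(i-1)$, and $\mu(q)$ for $\mu(x_q)$); your convention is the one the paper actually uses in its other sign computations, so your computation matches the intended statement.
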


\begin{proof} 
In the situation of (1), one explicitly computes that
 \begin{align*}
&\sigma(x_0,x_0,x_1,\dots,x_{i-1},y,x_{i+l+1}, \dots,x_d) \\ 
 &=(n+1)\Bigl(\mu(x_0) + \sum_{j=1}^{i-1} (d-l+1-j)\mu(x_j) + (d-l+1-i)\mu(y)  +\sum_{k=i+l+1}^{d} (d+1-k)\mu(x_{k})\Bigr) \ , \\
 &\sigma(y,x_i,\dots,x_{i+l}) =(n+1)\Bigl(\mu(y) + \sum_{k=i}^{i+l}(l+1-k-i)\mu(x_k)\Bigr) \ .  
\end{align*}
Combining this with the definition of $\sigma(x_0,x_1,\dots,x_d)$, one derives
\begin{align*}
&\sigma(x_0,x_1,\dots,x_d)+\sigma(x_0,x_0,x_1,\dots,x_{i-1},y,x_{i+l+1}, \dots,x_d) + \sigma(y,x_i,\dots,x_{i+l}) \\
&\equiv (n+1) \Bigl(l \cdot \sum_{j=1}^{i-1} \mu(x_j) + (d-i-l) \Bigl(\mu(y)-\sum_{k=i}^{i+l} \mu(x_k)\Bigr) \Bigr) \\
&\stackrel{\eqref{yindexorient}}{\equiv}(n+1) \Bigl(l \cdot \sum_{j=1}^{i-1} \mu(x_j) + (d-i)(l-1)\Bigr)\equiv (n+1) \Bigl(l \cdot \maltese_1^{i-1} +dl+d+l+i \Bigr) \  .
\end{align*}
Parts (2) and (3) are shown by simple computations.
\end{proof}

We have collected all ingredients required to prove Theorem \ref{CoeffRelAinfty}. 

\begin{proof}[Proof of Theorem \ref{CoeffRelAinfty}]
 By definition of the coefficients, it holds for all $i \in \{1,\dots,d-1\}$, $l \in \{1,2,\dots,d-1-i\}$ and $y \in \Crit f$ of the right index that 
 \begin{align*}
  &a^{d-l}_{\fatY}(x_0,x_1,\dots,x_{i-1},y,x_{i+l+1},\dots,x_d)\cdot a^{l+1}_{\fatY}(y,x_i,\dots,x_{i+l}) \\
%   &= (-1)^{\sigma(x_0,x_1,\dots,x_{i-1},y,x_{i+l+1},\dots,x_d)}\cdot \sum_{T_1 \in \RTree_{d-l}} \algint \Acal^{d-l}_{\fatY}(x_0,x_1,\dots,x_{i-1},y,x_{i+l+1},\dots,x_d,T_1)\\ &\qquad \qquad \qquad \qquad  \cdot (-1)^{\sigma(y,x_i,\dots,x_{i+l})}\sum_{T_2 \in \RTree_{l+1}} \algint \Acal^{l+1}_{\fatY}(y,x_i,\dots,x_{i+l},T_2) \\
  &=\sum_{(T_1,T_2)} (-1)^{\sigma(x_0,x_1,\dots,x_{i-1},y,x_{i+l+1},\dots,x_d)+r(T_1)+\sigma(y,x_i,\dots,x_{i+l})+r(T_2)} \\
  &\qquad \qquad \orint\Acal^{d-l}_{\fatY}(x_0,x_1,\dots,x_{i-1},y,x_{i+l+1},\dots,x_d,T_1)\cdot \orint \Acal^{l+1}_{\fatY}(y,x_i,\dots,x_{i+l},T_2) \\
  &=\sum_{T \in \BinTree_d} \sum_{e}\sum_{\left(\gammaunder_1,\gammaunder_2\right)} (-1)^{\sigma(x_0,x_1,\dots,x_{i-1},y,\dots,x_d)+\sigma(y,x_i,\dots,x_{i+l})+r(T^e_1)+r(T^e_2)} \epsilon_{T^e_1}\left(\gammaunder_1\right)\cdot \epsilon_{T^e_2}\left(\gammaunder_2\right) \ , \notag
 \end{align*}
 where we used Lemma \ref{TreeBreakingLemma} and  where the sum over $\left(\gammaunder_1,\gammaunder_2\right)$ in the last line is taken over all 
 \begin{equation*}
  \left(\gammaunder_1,\gammaunder_2\right) \in \Acal^{d-l}_{\fatY}\left(x_0,x_1,\dots,x_{i-1},y,x_{i+l+1},\dots,x_d,T^e_1\right)\times \Acal^{l+1}_{\fatY}\left(y,x_i,\dots,x_{i+l},T^e_2\right)=:\Acal_e \ .
 \end{equation*}
 As a consequence of Theorem \ref{ThmGluingIntEdge},
 \begin{align*}
 &(-1)^{r(T^e_1)+r(T^e_2)}\epsilon_{T^e_1}\left(\gammaunder_1\right)\cdot\epsilon_{T^e_2}\left(\gammaunder_2\right) \\
 &= (-1)^{r(T)+\mu(x_0) +1+ \maltese_1^{i-1} + (n+1)\left( l\cdot \maltese_1^{i-1} +dl+i+l+d\right)} \epsilon_{\partial,T} \left(\gammaunder_1,\gammaunder_2\right)\qquad \forall \left(\gammaunder_1,\gammaunder_2\right) \in \Acal_e \ ,
 \end{align*}
  where $\epsilon_{\partial,T}$ denotes the boundary orientation of $\bAcal^d_{\fatY}(x_0,x_1,\dots,x_d,T)$. Thus,
 $$(-1)^{r(T^e_1)+r(T^e_2)}\epsilon_{T^e_1}\left(\gammaunder_1\right)\cdot\epsilon_{T^e_2}\left(\gammaunder_2\right) = (-1)^{\mu(x_0) +1+ \maltese_1^{i-1} + (n+1)\left( l\cdot \maltese_1^{i-1} +dl+i+l+d\right)} \epsilon_{\partial} \left(\gammaunder_1,\gammaunder_2\right) $$ 
 and part 1 of Proposition \ref{SigmaComputations} implies
  \begin{align*}
  &(-1)^{\sigma(x_0,x_1,\dots,x_{i-1},y,\dots,x_d)+r(T^e_1)+\sigma(y,x_i,\dots,x_{i+l})+r(T^e_2)}\epsilon_{T^e_1}\left(\gammaunder_1\right)\cdot\epsilon_{T^e_2}\left(\gammaunder_2\right) \\
  &\qquad \qquad \qquad \qquad \qquad = (-1)^{\mu(x_0)+1+\maltese_1^{i-1} +\sigma(x_0,x_1,\dots,x_d)} \epsilon_\partial \left(\gammaunder_1,\gammaunder_2\right) \qquad \forall \left(\gammaunder_1,\gammaunder_2\right) \in \Acal_e \ . 
  \end{align*}
 Inserting this into the above yields
  \begin{align*}
  (-1)^{\maltese_1^{i-1}}&a^{d-l}_{\fatY}(x_0,x_1,\dots,x_{i-1},y,x_{i+l+1},\dots,x_d)\cdot a^{l+1}_{\fatY}(y,x_i,\dots,x_{i+l}) \\
  &\qquad \qquad \qquad =(-1)^{\sigma(x_0,x_1,\dots,x_d)+\mu(x_0)+1}\sum_{T \in \BinTree_d} \sum_{e}\sum_{\left(\gammaunder_1,\gammaunder_2\right)} \epsilon_\partial\left(\gammaunder_1,\gammaunder_2\right) \ .
 \end{align*}
 where the sums are given as above. By taking the sum of the last equation over all $i$ and $l$, i.e. over all types of internal edges, we derive the following result:
 \begin{equation}
 \label{EqIntSign}
 \begin{aligned}
   &\sum_{i=1}^{d-1} \sum_{l=1}^{d-1-i}\sum_y (-1)^{\maltese_1^{i-1}}a^{d-l}_{\fatY}(x_0,x_1,\dots,x_{i-1},y,x_{i+l+1},\dots,x_d)\cdot a^{l+1}_{\fatY}(y,x_i,\dots,x_{i+l}) \\
   &\qquad \qquad =(-1)^{\sigma(x_0,x_1,\dots,x_d)+\mu(x_0)+1}\sum_{T \in \RTree_d} \sum_{e \in E_{int}(T)}\sum_{\left(\gammaunder_1,\gammaunder_2\right)} \epsilon_\partial\left(\gammaunder_1,\gammaunder_2\right) \ .
 \end{aligned}
\end{equation}
We keep this equation in mind and continue by considering the other two types of boundary curves of $\Acal^d_{\fatY}(x_0,x_1,\dots,x_d)$, namely elements of spaces of type $\widehat{\MM}(x_0,y_0) \times \Acal^d_{\fatY}(y_0,x_1,\dots,x_d,T)$ and $\Acal^d_{\fatY}(x_0,x_1,\dots,x_{i-1},y_i,x_{i+1},\dots,x_d,T)\times \widehat{\MM}(y_i,x_i)$ for $i \in \{1,2,\dots,d\}$ and $T \in \BinTree_d$. We start with the former type. Let $y_0 \in \Crit f$ with $\mu(y_0)=\mu(x_0)-1$. By definition, it holds that
 \begin{align*}
  &a^1_{\fatY}(x_0,y_0) \cdot a^d_{\fatY}(y_0,x_1,\dots,x_d) \\% &= (-1)^{n+1} n(x_0,y_0) \cdot(-1)^{\sigma(y_0,x_1,\dots,x_d)}\cdot \sum_{T \in \RTree_d} \algint \Acal^d_{\fatY}(y_0,x_1,\dots,x_d,T) \\
%  &=  \sum_{T \in \BinTree_d} (-1)^{n+1+\sigma(y_0,x_1,\dots,x_d)+r(T)} \orint \widehat{\MM}(x_0,y_0) \cdot \orint \Acal^d_{\fatY}(y_0,x_1,\dots,x_d,T) \\
  &= \sum_{T \in \BinTree_d} \sum_{\left(\hat\gamma,\gammaunder\right)\in\widehat{\MM}(x_0,y_0) \times \Acal^d_{\fatY}(y_0,x_1,\dots,x_d,T)} (-1)^{\sigma(x_0,x_1\dots,x_d)+r(T)} \epsilon\left(\hat\gamma\right) \cdot \epsilon_T\left(\gammaunder\right) \ ,
 \end{align*}
 where we have used part 2 of Proposition \ref{SigmaComputations}. Part a) of Theorem \ref{SignGluingRootedLeafed} implies 
 \begin{equation*}
  \epsilon\left(\hat\gamma\right)\cdot \epsilon_T\left(\gammaunder\right) = (-1)^{\mu(x_0)+1} \epsilon_{\partial,T}\left(\hat\gamma,\gammaunder\right)=(-1)^{\mu(x_0)+1+r(T)} \epsilon_{\partial}\left(\hat\gamma,\gammaunder\right)
 \end{equation*}
 for all $\left(\hat\gamma,\gammaunder\right)\in\widehat{\MM}(x_0,y_0) \times \Acal^d_{\fatY}(y_0,x_1,\dots,x_d,T)$ and $T \in \BinTree_d$. Inserting this into the above computation yields:
 \begin{equation}
 \label{EqRootedSign}
 \begin{aligned}
  &a^1_{\fatY}(x_0,y_0) \cdot a^d_{\fatY}(y_0,x_1,\dots,x_d) \\
  &= (-1)^{\sigma(x_0,x_1\dots,x_d)+\mu(x_0)+1} \sum_{T \in \BinTree_d} \sum_{\left(\hat\gamma,\gammaunder\right)\in\widehat{\MM}(x_0,y_0) \times \Acal^d_{\fatY}(y_0,x_1,\dots,x_d,T)} \epsilon_\partial\left(\hat\gamma,\gammaunder\right) \ .
 \end{aligned}
\end{equation}
 Let $i \in \{1,2,\dots,d\}$ and $y_i \in \Crit f$ with $\mu(y_i)=\mu(x_i)+1$. Then
 \begin{align*}
  &a^d_{\fatY}(x_0,x_1,\dots,x_{i-1},y_i,x_{i+1},\dots,x_d) \cdot a^1_{\fatY}(y_i,x_i) \\
%   &= (-1)^{\sigma(x_0,x_1,\dots,x_{i-1},y_i,x_{i+1},\dots,x_d)} \\ &\qquad \qquad \sum_{T \in \RTree_d} \algint \Acal^d_{\fatY}(x_0,x_1,\dots,x_{i-1},y_i,x_{i+1},\dots,x_d,T) \cdot (-1)^{n+1}n(y_i,x_i) \\
%   &= (-1)^{\sigma(x_0,x_1,\dots,x_{i-1},y_i,x_{i+1},\dots,x_d)+n+1} \\ &\qquad \qquad \sum_{T \in \RTree_d} \algint \Acal^d_{\fatY}(x_0,x_1,\dots,x_{i-1},y_i,x_{i+1},\dots,x_d,T) \cdot \algint \widehat{\MM}(y_i,x_i) \\
  &=\sum_{T \in \BinTree_d} \sum_{\left(\gammaunder,\hat{\gamma} \right) \in\Acal^d_{\fatY}(x_0,x_1,\dots,y_i,\dots,x_d,T) \times \widehat\MM(y_i,x_i) }  (-1)^{\sigma(x_0,x_1,\dots,x_{i-1},y_i,x_{i+1},\dots,x_d)+r(T)+\sigma(y_i,x_i)}  \epsilon\left(\gammaunder\right) \cdot \epsilon\left(\hat\gamma\right).
 \end{align*}
 We derive from part b) of Theorem \ref{SignGluingRootedLeafed} that
 \begin{align*}
  \epsilon_T\left(\gammaunder\right)\cdot\epsilon\left(\hat\gamma\right)=(-1)^{\mu(x_0)+ 1+(d-i)(n+1)+ \maltese_1^{i-1}+r(T)} \epsilon_{\partial} \left(\gammaunder,\hat\gamma\right) 
 \end{align*}
 for all $\left(\gammaunder,\hat{\gamma} \right) \in\Acal^d_{\fatY}(x_0,x_1,\dots,x_{i-1},y_i,x_{i+1},\dots,x_d,T) \times \widehat\MM(y_i,x_i)$ and $T \in \BinTree_d$. Inserting this into the above computation yields
 \begin{align*}
  &(-1)^{\maltese_1^{i-1}}a^d_{\fatY}(x_0,x_1,\dots,x_{i-1},y_i,x_{i+1},\dots,x_d) \cdot a^1_{\fatY}(y_i,x_i) \\
  &= (-1)^{\mu(x_0)+1+\sigma(x_0,x_1,\dots,x_{i-1},y_i,x_{i+1},\dots,x_d)+\sigma(y_i,x_i)+(d-i)(n+1)+\maltese_1^{i-1}} \\ &\qquad \qquad \sum_{T \in \BinTree_d} \sum_{\left(\gammaunder,\hat{\gamma} \right) \in\Acal^d_{\fatY}(x_0,x_1,\dots,x_{i-1},y_i,x_{i+1},\dots,x_d,T) \times \widehat\MM(y_i,x_i) } \epsilon_\partial\left(\gammaunder,\hat\gamma\right) \\
  &= (-1)^{\sigma(x_0,x_1,\dots,x_d)+\mu(x_0)+1}  \sum_{T \in \BinTree_d} \sum_{\left(\gammaunder,\hat{\gamma} \right) \in\Acal^d_{\fatY}(x_0,x_1,\dots,x_{i-1},y_i,x_{i+1},\dots,x_d,T) \times \widehat\MM(y_i,x_i) } \epsilon_\partial\left(\gammaunder,\hat\gamma\right) \ ,
  \end{align*}
  where we have applied part 3 of Proposition \ref{SigmaComputations}. Finally, one combines this last result with equations (\ref{EqIntSign}), (\ref{EqRootedSign}) and (\ref{BoundarySumZero}) as well as the boundary description from page \pageref{boundarydescriptionalleT} to prove that 
 \begin{align*}
  &\sum_{i=1}^d \sum_{l=0}^{d-i} \sum_{y} (-1)^{\maltese_1^{i-1}} a^{d-l}_{\fatY}(x_0,x_1,\dots,x_{i-1},y,x_{i+l+1},\dots,x_d)\cdot a^{l+1}_{\fatY}(y,x_i,\dots,x_{i+l}) \\
%  &=\sum_{\stackrel{y_0 \in \Crit f}{\mu(y_0)=\mu(x_0)-1}} a^1_{\fatY}(x_0,y_0) \cdot a^d_{\fatY}(y_0,x_1,\dots,x_d) \\
%  &+ \sum_{i=1}^{d-1} \sum_{l=1}^{d-1-i} \sum_{y} (-1)^{\maltese_1^{i-1}} a^{d-l}_{\fatY}(x_0,x_1,\dots,x_{i-1},y,x_{i+l+1},\dots,x_d)\cdot a^{l+1}_{\fatY}(y,x_i,\dots,x_{i+l}) \\
%  &+\sum_{i=1}^d \sum_{\stackrel{y_i \in \Crit f}{\mu(y_i)=\mu(x_i)+1}}(-1)^{\maltese_1^{i-1}}a^d_{\fatY}(x_0,x_1,\dots,x_{i-1},y_i,x_{i+1},\dots,x_d) \cdot a^1_{\fatY}(y_i,x_i) \\
  &= (-1)^{\sigma(x_0,x_1\dots,x_d)+\mu(x_0)+1} \sum_{\left(\gammaunder_1,\gammaunder_2\right) \in \partial \bAcal^d_{\fatY}(x_0,x_1,\dots,x_d)}\epsilon_\partial\left(\gammaunder_1,\gammaunder_2\right) \quad = 0  \ .
 \end{align*} 
\end{proof}

 \bibliography{diss}
 \bibliographystyle{amsalpha}

 \end{document}